\newtheorem{theorem}{Theorem}[section]
\newtheorem{lemma}[theorem]{Lemma}
\newtheorem{proposition}[theorem]{Proposition}
\newtheorem{corollary}[theorem]{Corollary}
\newtheorem{definition}[theorem]{Definition}
\theoremstyle{remark}
\newtheorem{remark}[theorem]{\it \bf{Remark}\/}
\numberwithin{equation}{section}
\def\section{\@startsection{section}{1}%
  \z@{1.5\linespacing\@plus\linespacing}{.5\linespacing}%
  {\normalfont\bfseries\large\centering}}
\newcommand{\be}{\begin{equation}}
\newcommand{\ee}{\end{equation}}
\newcommand{\bea}{\begin{eqnarray}}
\newcommand{\eea}{\end{eqnarray}}
\newcommand{\bee}{\begin{eqnarray*}}
\newcommand{\eee}{\end{eqnarray*}}
\def\pa{\partial}
\def\RR{\mathbb{R}}
\def\fref#1{{\rm (\ref{#1})}}
\def\supess{\mathop{\operator@font Sup\,ess}}
\def\un{{\mathbbmss{1}}}
\def\RR{\mathbb{R}}
\def\e{\varepsilon}
\def\bar#1{{\overline #1}}
\def\fref#1{{\rm (\ref{#1})}}
\def\R2+{\RR ^2_+}
\def\pa{\partial}
\def\lim{\mathop{\rm lim}}
\def\l{\lambda}
\def\log{{\rm log}}
\def\pa{\partial}
\def\pa{\partial}
\def\matchal{\mathcal}
\def\bos{\boldsymbol}
\title[]{Type II blow up manifolds for the energy supercritical semilinear wave equation}
\author[C.Collot]{Charles Collot}
\address{Laboratoire J.A. Dieudonn\'e, Universit\'e de Nice-Sophia Antipolis, France}
\email{ccollot@unice.fr}
\keywords{ blow up \*\ concentration \*\ manifold \*\ soliton \*\ wave equation }
\subjclass[2010]{primary 35B44, secondary 35L05 58B99}
\begin{document}
\maketitle

\begin{abstract}
We consider the semilinear focusing wave equation 
$$\pa_{tt}u-\Delta u-u|u|^{p-1}=0$$
in large dimensions $d\geq 11$ and in the radial case. For a range of supercritical nonlinearities $p>p(d)>1+\frac{4}{d-2}$, for each integer large enough $\ell>\alpha (d,p)>2$, we construct a Lipschitz manifold of codimension $\ell -1$ of solutions blowing up in finite time $T$ by concentrating the soliton (stationnary state) profile:
$$u(t,r)\sim \frac{1}{\l(t)^{\frac 2{p-1}}}Q\left(\frac{r}{\l(t)}\right)$$
at the quantized  blow up rate : 
$$\l(t)\sim c_u(T-t)^{\frac \ell\alpha}.$$
The solutions can be chosen $C^{\infty}$ and compactly supported. In that case the blow up is of type II i.e all norms below scaling remain bounded 
$$\limsup_{t\uparrow T}\|\nabla^su(t),\nabla^{s-1}\partial_tu(t)\|_{L^2}<+\infty\ \ \mbox{for}\ \ 1\leq s<s_c=\frac{d}{2}-\frac{2}{p-1}.$$
Our analysis adapts the robust energy method developed for the study of energy critical bubbles \cite{MRRod1}, \cite{RaphRod}, \cite{RSc1,RSc2}, the study of this issue for the supercritical semilinear heat equation \cite{Velas}, \cite{MaM2}, \cite{Mizo1,Mizo2} and the analogous result for the energy supercritical schr\"odinger equation \cite{MRRod2}. 
\end{abstract}


\section{Introduction}



\subsection{The semilinear wave equation}


We study in this paper the focusing semilinear wave equation in the radial case:
\be
\label{nlw}
(NLW)\ \ \left\{\begin{array}{ll}\pa_{tt}u-\Delta u-|u|^{p-1}u=0, \\u_{|t=0}=u_0, \ \ \pa_tu|_{|t=0}=u_1,\end{array}\right.\ \ (t,x)\in \Bbb R^+\times \Bbb R^d, \ \ u(t,x)\in \Bbb R.
\ee
If $u(t,x)$ is a solution then $u_\l(\lambda t,x)=\l^{\frac2{p-1}}u(\l t,\l x)$ for $\l>0$ is also a solution. This scaling symmetry is an isometry of the critical homogeneous Sobolev space 
$$\|u_\l(\lambda t,\cdot),\partial_t (u_\l(\lambda t,\cdot))\|_{\dot{H}^{s_c}\times\dot{H}^{s_c-1}}=\|u(\l t,\cdot),(\partial_t u)(\lambda t,\cdot)\|_{\dot{H}^{s_c}\times \dot{H}^{s_c-1}}$$
for $s_c=\frac d 2-\frac {2}{p-1}.$ Here we consider energy supercritical nonlinearities: 
$$p> 2^*-1=1+\frac{4}{d-2}, \ \ d\ge 3\ \ (\mbox{ie}\ \ s_c> 1).$$
Under these conditions (NLW) is locally well posed in $H^{s_c}\times H^{s_c-1}$ (see \cite{LS}, \cite{Tao} and references therein). If the nonlinearity is analytic $p=2q+1, \ \ q\in \Bbb N^*,$ the flow propagates Sobolev regularity. If a solution has a finite maximal time of existence $T<+\infty$ (we then say that it blows up) all supercritical norms must explode: 
$$ \lim_{t\uparrow T}\|u(t),u_t(t)\|_{\dot{H}^s\times \dot{H}^{s-1}}=+\infty \ \ \mbox{for}\ \ s>s_c.$$
This paper is part of the study of the blow up phenomenon. We investigate here a special blow up scenario: the concentration at a point.


\subsection{Blow up for (NLW)}


The question of singularity formation for (NLW) has attracted a considerable attention since the pioneering works by John \cite{john}. From finite speed of propagation, using the constant in space solution
$$
u(t,x)=\frac{C}{(T-t)^{\frac{2}{p-1}}}
$$
one can construct solutions blowing up like the ODE $u_{tt}=u^p$. They are called type I blow up and correspond to a {\it complete} blow up 
$$\lim_{t\to T}\| u(t),\partial_tu(t)\|_{\dot{H}^1\times L^2}\to +\infty.$$
In the subconformal case ($s_c\leq \frac{1}{2}$) the recent works by Merle and Zaag \cite{MZ1,MZ5,MZ2} give in particular a complete description of the local singularity, being always a type I blow up bubble, and we refer to this monumental series of works for complete references on the history of the problem. Recently also, general upper bounds on the blow up rates have been obtained \cite{HZ}, \cite{KV} in the superconformal, energy subcritical case ($\frac{1}{2}<s_c<1$).\\

The situation is much more poorly understood in the energy critical and super critical regime $s_c\geq 1$. In this case, a new stationary solution arises: the soliton profile $Q$ which is the unique up to scaling radially symmetric solution to $$\Delta Q+Q^p=0, \ \ d\geq 3, \ \ s_c\geq 1.$$ Other blow ups than Type I appear, because this profile is at the heart of a new concentration phenomenon. The first construction of such blow up solution in the energy critical setting goes back to Krieger, Schlag and Tataru \cite{KSTwave} (the result being improved in \cite{KS}) in dimension 3 where blow up bubbles of the type 
\be
\label{cbejbevbive}
u(t,r)\sim \frac{1}{\l(t)^{\frac12}}Q\left(\frac{r}{\l(t)}\right),  \ \ \l(t)\sim (T-t)^{\nu}
\ee
 for all $\nu>1$ are constructed. This result is a by product of the approach developed for the 2-dimensional energy critical wave map problem in the seminal work \cite{KST}. A different approach is implemented in \cite{HR} in the continuation of the energy method developed by Merle, Rapha\"el and Rodnianski for the study of the energy critical wave map problem \cite{RaphRod} and the energy critical Schr\"odinger map problem \cite{MRRod1}. In particular Hillairet and Rapha\"el obtained in the energy critical case in dimension 4 blow up bubbles of the form
\be
\label{cbejibeiiev}
u(t,r)\sim \frac{1}{\l(t)}Q\left(\frac{r}{\l(t)}\right),  \ \ \l(t)\sim (T-t)e^{-\sqrt{|\log (T-t)|}}.
\ee
 An essential difference between these two constructions is the rigidity in the law \fref{cbejibeiiev} with respect to the continuum of blow up speeds \fref{cbejbevbive} which reflects the fact that all solutions corresponding to \fref{cbejibeiiev} are arbitrarily smooth, while the continuum \fref{cbejbevbive} generically corresponds to the propagation of a singularity on the light cone. In the related work \cite{RSc2} for the energy critical heat flow, the existence of a countable family of blow up rates for $\matchal C^{\infty}$ data is showed, and this result could be propagated to the energy critical wave equation as well.
\\

In the energy super critical setting $s_c>1$, much less is known. Recently, Duyckaerts, Kenig and Merle showed in \cite{DKM} the explosion of the critical norm at blow up time in dimension $3$ (see also \cite{KM} and \cite{Dod} in dimension $5$). Type I or ODE blow up solutions of course still exist, and their stability is addressed by Donninger and Sch\"orkhuber in \cite{Donn}. The existence of large global solutions is proved by Krieger and Schlag in \cite{KS2}. In comparison, the nonlinear heat equation is better understood. Another type of blow up solution was predicted in large dimensions $d\ge 11$ and large nonlinearities $p\geq p(d)$ in the pioneering work by Herrero and Velasquez \cite{Velas}. These so called type II blow up bubbles are rigorously constructed in \cite{Mizo1, Mizo2} using the breakthrough approach developed by Matano and Merle \cite{MaM1,MaM2}. The collection of these works yields a complete classification of the type II blow up scenario for the radially symmetric energy supercritical heat equation. The main restriction of these techniques however is the systematic use of the maximum principle which cannot be extended to the dispersive setting. 
\\

\noindent In the breakthrough work \cite{MRRod2}, the authors fully revisit the construction of type II blow up bubbles and show how the energy critical approach developed in \cite{RaphRod}, \cite{MRRod1}, \cite{RSc1, RSc2} can be extended to the energy supercritical setting to construct type II blow up solutions for the Schr\"odinger equation. Our main claim in this paper is that this analysis can be propagated to the wave equation to construct the first family of type II blow up bubbles in the energy supercritical setting.


\subsection{Statement of the result}


Let us introduce some numbers attached to the super critical numerology. Let $d\geq 11$ and let the Joseph-Lundgren exponent be 
\be
\label{exponentpjl}
p_{JL}=1+\frac{4}{d-4-2\sqrt{d-1}}.
\ee
Then for $p>p_{JL}$, the soliton profile admits an asymptotic expansion \be
  \label{expansionq}
  Q(r)=\frac{c_{\infty}}{r^{\frac{2}{p-1}}}+\frac{a_1}{r^{\gamma}}+o\left(\frac1{r^{\gamma}}\right), \ \ a_1\neq 0,
  \ee 
  with 
 \be
 \label{defgamma}
 c_\infty=\left[\frac{2}{p-1}\left(d-2-\frac 2{p-1}\right)\right]^{\frac 1{p-1}}, \ \ \gamma=\frac12(d-2-\sqrt{{\triangle}})>0
 \ee
 and where $${\triangle}=(d-2)^2-4pc_{\infty}^{p-1}\ \ (\triangle>0 \  \mbox{for}\ \ p>p_{JL}).$$ These numbers describing the asymptotic behavior of the soliton are essential in the analysis of type II blow up bubbles and we claim:

\begin{theorem}[Type II blow up for the energy super critical wave equation]
\label{thmmain}
Let $d\geq 11$, $p_{JL}$ be given by \fref{exponentpjl} and a nonlinearity 
\be
\label{plarge}  p=2q+1,\ \ q\in \Bbb N^*,\ \ p> p_{JL}.
\ee 
Let $\gamma$ be given by \fref{defgamma} and define:
\be \label{intro:eq:def alpha}
\alpha=\gamma-\frac{2}{p-1}.
\ee
Assume moreover:
\be
\label{conditiongamma}
\left(\frac d2-\gamma\right)\notin \Bbb N.
\ee
Pick an integer 
\be
\label{codnionrionfeell}
 \ell\in \Bbb N \ \ \mbox{with}\ \ \ell>\alpha,
 \ee
 and a large enough regularity exponent 
$$s_+\in \Bbb N, \ \ s_+\geq s(\ell) \ (s(\ell) \to +\infty\ \ \mbox{as}\ \ \ell\to +\infty).$$ Then there exists a radially symmetric initial data $(u_0, u_1)\in H^{s_+}\times H^{s_+-1}(\Bbb R^d)$ such that the corresponding solution to \fref{nlw} blows up in finite time $0<T<+\infty$ by concentrating the soliton profile:
\be
\label{concnenergy}
u(t,r)=\frac{1}{\l(t)^{\frac 2{p-1}}}(Q+\e)\left(\frac{r}{\l(t)}\right)
\ee
with:\\
\noindent{\em (i) Blow up speed}: 
\be
\label{Pexciitedlaw}
\l(t)=c(u_0)(1+o_{t\uparrow T}(1))(T-t)^{\frac{\ell}{\alpha}}, \ \ c(u_0)>0;
\ee
\noindent{\em (iii) Asymptotic stability above scaling in renormalized variables}: 
\be 
\label{intro:eq:convergence surcritique}
\lim_{t\uparrow T}\|\e(t,\cdot),\lambda(\partial_t u)_{\lambda}(t,\cdot)\|_{\dot{H}^s\times \dot{H}^{s-1}}=0  \ \ \mbox{for all}\ \ s_c<s\leq s_+;
\ee
\noindent{\em (iv) Boundedness below scaling}:
\be
\label{intro:eq:bornitude sous critique}
\limsup_{t\uparrow T}\|u(t),\partial_t u(t)\|_{\dot{H}^s\times \dot{H}^{s-1}}<+\infty  \ \ \mbox{for all}\ \ 1\leq s<s_c;
\ee
\noindent{\em (v) Behavior of the critical norms}:
\be
\label{intro:eq:comportement norme critique}
\|u(t)\|_{\dot{H}^{s_c}}=\left[c(d,p)\sqrt{\ell}+o_{t\uparrow T}(1)\right]\sqrt{|\log(T-t)|},
\ee
\be
\label{intro:eq:comportement norme critique 2}
\limsup_{t\uparrow T} \|\partial_t u(t)\|_{\dot{H}^{s_c-1}}<+\infty.
\ee
\end{theorem}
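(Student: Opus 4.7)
The plan is to adapt the robust energy method of \cite{MRRod2}, \cite{RaphRod}, \cite{MRRod1} to the wave setting. One decomposes the candidate solution in renormalized form
$$u(t,r)=\frac{1}{\lambda(t)^{2/(p-1)}}(Q_{b(t)}+\e)\left(\frac{r}{\lambda(t)}\right),\qquad b(t)=(b_1,\ldots,b_\ell)\in\RR^\ell,$$
and, first, one constructs the approximate profile $Q_b$. The profile is built by adding to $Q$ successive corrections $T_i$, $i=1,\ldots,\ell$, each carrying a $b_i$-power, which formally solve the resonance equations $\L T_i=-(i+\alpha) T_{i-1}$ with $\L=-\Delta-pQ^{p-1}$. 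The tails of these $T_i$ determine the far-field behavior and motivate the definition of $\alpha=\gamma-2/(p-1)$: the formal solvability chain together with the modulation law
$$\dot b_i+(i+c_i\alpha)\tfrac{\lambda_s}{\lambda} b_i-b_{i+1}\approx 0,\qquad b_{\ell+1}\equiv 0,\qquad \tfrac{\lambda_s}{\lambda}+b_1\approx 0,$$
possesses an explicit $(\ell-1)$-parameter family of unstable equilibria whose stable component yields precisely the rate $\lambda(t)\sim c(T-t)^{\ell/\alpha}$ announced in \fref{Pexciitedlaw}. The ansatz $Q_b$ must be truncated inside the light cone, which introduces extra error terms concentrated near the boundary $r\sim 1/b_1^{(1+\eta)}$ and requires a careful balance between spatial decay and growth of the formal series, precisely as in \cite{MRRod2}.

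\textbf{Modulation and energy estimates.} Second, impose $\ell+1$ orthogonality conditions on $\e$ (against a suitable dual basis associated with $Q_b$) to fix $\lambda$ and $b$, obtaining the modulation ODEs with remainder driven by the $\e$-norms. The heart of the proof consists then in bootstrapping, on a maximal time interval, two distinct controls on $\e$: a \emph{subcritical} control in $\dot H^1\times L^2$ (actually in a weight-adapted version) that profits from the finite speed of propagation and the conservation law, and a \emph{supercritical} control at a large Sobolev level $s_+$. The high-regularity bound is obtained by differentiating the linearized equation $\partial_{tt}\e+\L\e=\text{error}$ many times, but since $\L$ has no coercivity above $\dot H^1$ one instead propagates energy for the iterated wave operator $\partial_{tt}+\L^k$, that is, the Lyapunov functional
$$\calE_{2k+2}(t)=\int\bigl(\partial_t(\L^k\e)\bigr)^2+\int\bigl|\nabla\L^k\e\bigr|^2,\qquad k\sim s_+,$$
after renormalization in the variable $s=\int_0^t\lambda^{-1}$. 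Monotonicity of $\calE_{2k+2}/\lambda^{2(2k+2-s_c)}$ modulo the modulation and nonlinear interaction terms is established via weighted Hardy and Sobolev embeddings, with the nonlinear terms absorbed thanks to $p=2q+1$ being integer. Coupled with a low-regularity control exploiting the finite speed of propagation (which is the key input for \fref{intro:eq:bornitude sous critique}, absent in the Schrödinger setting), these estimates close the bootstrap for all modes $b_j$ except the first $\ell-1$ unstable ones.

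\textbf{Brouwer shooting and conclusion.} Third, a standard topological argument in the spirit of \cite{CoteZaag} selects the initial data. On the $(\ell-1)$-dimensional finite parameter space describing the unstable directions of $b$ at initial time one shows that the bootstrap can only fail through one of these directions; a continuity argument and a Brouwer-type no-retraction fixed point theorem then produce one choice of parameters for which the bootstrap never exits, giving a trajectory satisfying \fref{concnenergy}--\fref{Pexciitedlaw}. Varying the unconstrained Sobolev-level data in a small neighborhood in $H^{s_+}\times H^{s_+-1}$ yields the Lipschitz manifold of codimension $\ell-1$. The asymptotic stability \fref{intro:eq:convergence surcritique} and the critical norm behavior \fref{intro:eq:comportement norme critique} follow from an explicit computation of $\|Q_b\|_{\dot H^{s_c}}^2\sim c(d,p)^2\ell|\log(T-t)|$ using the slow variable integration $\int_0^{1/b_1}r^{d-1-2s_c}(\log r)\,dr$, whereas \fref{intro:eq:comportement norme critique 2} comes from $\|\partial_t u\|_{\dot H^{s_c-1}}\approx \lambda_t\lambda^{-(s_c+1)}\|\Lambda Q\|_{\dot H^{s_c-1}}$ which is bounded since $\lambda_t\sim\lambda^{1-\alpha/\ell}$ and the exponents balance.

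\textbf{Main obstacle.} The truly new difficulty relative to \cite{MRRod2} is the wave-specific structure of the linearized flow: the second-order time derivative prevents the use of the ``magical'' $\Lt$-conjugation identity available for Schrödinger, and the correct iterated operator must be built from $\partial_{tt}+\L$ rather than from factorizations of $\L$ alone. This forces a careful rewriting of the modulation equations so that the projections of $\partial_{tt}\e$ onto the $\ell+1$ resonances are nondegenerate, and it makes the interaction of modulation parameters with the high-order energy $\calE_{2k+2}$ considerably more delicate; controlling the ``cross term'' $\int\partial_t(\L^k\e)\cdot\L^k(\text{Mod})$ and showing it can be absorbed into the monotonicity of $\calE_{2k+2}$ is the main technical point to verify.
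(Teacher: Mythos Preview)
Your overall skeleton---approximate profile, modulation, high-regularity energy, Brouwer shooting---is correct, but several load-bearing details are wrong and would prevent the bootstrap from closing.

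\textbf{Number of profile corrections.} You build $Q_b$ with only $\ell$ parameters $b=(b_1,\ldots,b_\ell)$. In the paper the profile carries $L$ parameters with $L\gg\ell$ (indeed $s_+=s_L=L+k_0+1$). The extra corrections $T_{\ell+1},\ldots,T_L$ and $S_2,\ldots,S_{L+2}$ are needed so that the error $\tilde\psi_b$ satisfies $\|\tilde\psi_b\|_L^2\lesssim b_1^{2L+2+2(1-\delta_0)(1+\eta)}$; with only $\ell$ terms the error is of size $b_1^{2\ell+\cdots}$, which is far too large to close the Lyapunov estimate at level $s_L$. The directions $b_{\ell+1},\ldots,b_L$ are additional \emph{stable} modulation parameters. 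Also, the resonance chain is simply $\bos HT_{i+1}=-T_i$ (no $(i+\alpha)$ factor); the coefficient $(i-\alpha)$---with a minus sign, not plus---appears only through the tail identity $\Lambda T_i=(i-\alpha)T_i+\text{l.o.t.}$ and hence in the modulation ODEs $b_{i,s}+(i-\alpha)b_1b_i-b_{i+1}\approx0$.

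\textbf{Second control norm.} You propose a subcritical $\dot H^1\times L^2$ control exploiting finite speed of propagation and the energy law. This is not how the bootstrap closes: the paper uses a slightly \emph{supercritical} norm $\mathcal E_\sigma$ at level $\sigma$ with $0<\sigma-s_c\ll1$, propagated via its own Lyapunov monotonicity (Proposition~\ref{trappedregime:pr:low sobo}). The role of $\mathcal E_\sigma$ is to interpolate with $\mathcal E_{s_L}$ in the nonlinear estimates. The subcritical bounds \fref{intro:eq:bornitude sous critique} are recovered only \emph{a posteriori}, after the blow-up solution is constructed, by a separate $\dot H^1\times L^2$ argument (Lemma~\ref{end:lem:borne L2}) and interpolation; they play no role in the bootstrap itself.

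\textbf{Missing Morawetz step.} In the monotonicity formula for $\mathcal E_{s_L}$, the small-linear and scale-changing terms produce a local contribution $\mathcal E_{s_L,\text{loc}}$ (supported on $y\le N$) that cannot be bounded by $\mathcal E_{s_L}$ directly. The paper introduces a Morawetz-type quantity $\mathcal M$ (Proposition~\ref{thetrapped:pr:morawetz}) whose time derivative dominates $\mathcal E_{s_L,\text{loc}}$ up to acceptable errors; this is combined with the high-Sobolev estimate to close. Your write-up does not account for this local term, and without the Morawetz device the estimate for $\mathcal E_{s_L}$ does not close.

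\textbf{The main obstacle is misidentified.} The factorization $\mathcal L=A^*A$ and the associated adapted derivatives work exactly as in the Schr\"odinger case; there is no lost ``conjugation identity''. The genuinely wave-specific issues are (a) the vector reformulation $\bos u=(u,\partial_t u)$ with first-order operator $\bos H$, which dictates the correct energy $\mathcal E_{s_L}=\int(\e^{(1)}_{s_L})^2+\int(\e^{(2)}_{s_L-1})^2$ enjoying the cancellation $\int w^{(1)}\mathcal L^{s_L}w^{(2)}-\int w^{(2)}\mathcal L^{s_L-1}(\mathcal L w^{(1)})=0$; (b) the need for the Morawetz step above; and (c) the improved modulation for $b_L$ (Lemma~\ref{trappedregime:improvedmodulation:lem:improvedmodulation}), where the bad linear term $\langle \bos H\e,\bos H^{*L}\Phi_M\rangle$ is absorbed by tracking the explicit correction $\langle \bos H^L\e,\chi_{B_0}\Lambda Q\rangle$ and integrating it by parts in time inside the high-Sobolev estimate. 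Your cross-term $\int\partial_t(\mathcal L^k\e)\cdot\mathcal L^k(\text{Mod})$ is indeed delicate, but the mechanism that handles it is this $b_L$-correction, not a failure of factorization.
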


The proof of Theorem \ref{thmmain} relies on an explicit construction of blow up solutions. It allows us to find a whole set of initial data leading to such a blow up, and to investigate its topological properties:

\begin{theorem} \label{thmmain2}
We keep the notations and assumptions of Theorem \ref{thmmain}. Let a slightly supercritical regularity exponent $\sigma=\sigma(\ell)$ satisfying:
$$
0<\sigma-s_c\ll 1 \ \ \text{small enough}.
$$
There exists a locally Lipschitz manifold of codimension $\ell-1$ in the Banach space $\dot{H}^{\sigma}\cap \dot{H}^{s_+}\times \dot{H}^{\sigma-1}\cap \dot{H}^{s_+-1}$ of initial data leading to the blow up scenario described by Theorem \ref{thmmain}. We point out that as $\alpha>2$, the codimension satisfies $\ell-1>2$.
\end{theorem}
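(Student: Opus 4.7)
The plan is to upgrade the topological shooting argument used in the construction of Theorem~\ref{thmmain} into a Lipschitz parameterization of a codimension $\ell-1$ manifold, along the lines of \cite{RSc1,RSc2,MRRod2}. Recall the decomposition
$$u(t,r)=\frac{1}{\lambda(t)^{\frac{2}{p-1}}}\bigl(Q_{b(t)}+\varepsilon(t)\bigr)\!\left(\frac{r}{\lambda(t)}\right),$$
where $b=(b_1,\dots,b_\ell)$ collects the $\ell$ modulation parameters attached to the excited eigenmodes of the linearized operator. Among these, $\ell-1$ are linearly unstable and must be tuned to produce the prescribed blow up; the remaining one drives the self-similar rate \fref{Pexciitedlaw}. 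The manifold will be produced as the graph, over ``stable'' initial data, of the Lipschitz map that assigns the correct tuning of these $\ell-1$ unstable parameters.

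First I would reparameterize initial data through coordinates $(\varepsilon_0,V)$, with $V=(V_2,\dots,V_\ell)\in\mathbb{R}^{\ell-1}$ encoding the unstable modes and $\varepsilon_0$ lying in the $\ell$-admissible orthogonal complement inherited from the proof of Theorem~\ref{thmmain}. Existence of a selection $V_*=V_*(\varepsilon_0)$ whose associated solution blows up as in \fref{concnenergy}--\fref{Pexciitedlaw} is a direct rerun of the Brouwer shooting used there: one defines the exit time $t^*(V)$ at which $V(t)$ first reaches $\partial\mathcal{B}_\eta\subset\mathbb{R}^{\ell-1}$, uses continuity of the flow together with the outward transversality $\frac{d}{dt}|V(t)|^2>0$ on $\partial\mathcal{B}_\eta$ built into the bootstrap, and concludes that if $t^*(V)$ were finite for every $V\in\mathcal{B}_\eta$ the map $V\mapsto V(t^*(V))/\eta$ would produce a continuous retraction $\mathcal{B}_\eta\to\partial\mathcal{B}_\eta$, a contradiction.

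The heart of the proof, and the main obstacle, is upgrading this existence statement to the Lipschitz estimate
$$|V_*(\varepsilon_0^{(1)})-V_*(\varepsilon_0^{(2)})|\lesssim \|\varepsilon_0^{(1)}-\varepsilon_0^{(2)}\|_{\dot H^\sigma\cap \dot H^{s_+}\times\dot H^{\sigma-1}\cap \dot H^{s_+-1}}.$$
Given two trapped trajectories $u^{(1)},u^{(2)}$ produced by the shooting, I would compare them in the renormalized variable
$$\tilde w(s,y)=\lambda^{\frac{2}{p-1}}(u^{(1)}-u^{(2)})(t,\lambda y),\qquad \frac{ds}{dt}=\frac{1}{\lambda},$$
which satisfies a linear wave equation driven by the same linearized potential $-p\,Q^{p-1}(y)$ that governs the $\varepsilon$-equation in Theorem~\ref{thmmain}, modulo perturbative terms controlled by the bootstrap. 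Consequently the coercivity of the iterated Hardy inequalities and the weighted high-Sobolev energies $\mathcal{E}_{2k}$ developed there transfer verbatim to $\tilde w$. The outward transversality at the exit sphere then converts the resulting $\tilde w$ estimate into the desired Lipschitz control of $V_*$, so that the graph $\{(\varepsilon_0,V_*(\varepsilon_0))\}$ defines a locally Lipschitz codimension $\ell-1$ submanifold of $\dot H^{\sigma}\cap \dot H^{s_+}\times \dot H^{\sigma-1}\cap \dot H^{s_+-1}$.

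The delicate point is that the rescaled time $s\to+\infty$ as $t\uparrow T$ and the weighted energies degenerate at the same rate as $\lambda\to 0$; without additional input, raw energy estimates on $\tilde w$ lose a power of $|\log(T-t)|$ at each level of regularity and destroy the Lipschitz bound. The trick, exactly as in \cite{MRRod2}, is to exploit that the two trapped trajectories share the leading profile $Q_{b}$ at every step of the high-order decomposition, so their difference $\tilde w$ inherits the same good monotonicity structure as the $\varepsilon$-flow order by order, and the potential logarithmic losses cancel. Once this is in place, the graph description and the codimension count $\ell-1>2$ follow immediately, yielding Theorem~\ref{thmmain2}.
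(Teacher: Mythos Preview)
Your high-level plan is right, but it glosses over two difficulties that the paper confronts head-on and that your sketch does not resolve.

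First, the two trapped solutions $u^{(1)},u^{(2)}$ have \emph{different} scales $\lambda(t),\lambda'(t)$ and different renormalized times $s,s'$. Renormalizing both by a single $\lambda$ as you propose destroys the structure: $u^{(2)}$ viewed at scale $\lambda$ is no longer close to its own profile $\tilde Q_{b'}$, and the claim that $\tilde w$ ``satisfies a linear wave equation driven by the same linearized potential modulo perturbative terms controlled by the bootstrap'' fails --- the mismatch $\lambda/\lambda'-1$ is only $O(s^{-\tilde\eta})$, far too large to be absorbed. The paper's fix is to compare at matched scales rather than matched times: one introduces the adapted time $\hat s'(s)=(\lambda')^{-1}(\lambda(s))$ so that $\lambda'(\hat s'(s))=\lambda(s)$ identically, and then compares $(\varepsilon(s),b(s))$ with $(\hat\varepsilon'(s),\hat b'(s)):=(\varepsilon'(\hat s'(s)),b'(\hat s'(s)))$. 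This is what makes the difference equation tractable, but it introduces a new term $(1-\tfrac{d\hat s'}{ds})\bos H(\hat\varepsilon')$ in the evolution of $\varepsilon-\hat\varepsilon'$.

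Second, that extra linear term requires control of \emph{one more} adapted derivative of $\hat\varepsilon'$ than the bootstrap provides --- you need $\mathcal E_{s_L+1}'$, not just $\mathcal E_{s_L}'$. This is a genuine loss, not a logarithmic artifact, and it does not ``cancel'' by any monotonicity argument. The paper first proves the Lipschitz bound under this extra regularity hypothesis (Proposition~\ref{variete:prop:parametres lipschitz}), then removes it by a clever trick: redo the decomposition with $\bar L=L-1$ modes instead of $L$. Under this lower-order decomposition the original $\mathcal E_{s_L}$ bound becomes the needed $\overline{\mathcal E}_{s_{\bar L}+1}$ bound, and one checks that the change of decomposition perturbs all differences only at higher order (Lemmas~\ref{variete:lem:borne sur changement epsilon epsilonbar} and~\ref{variete:lem: estimation difference ini}). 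Your proposal contains neither mechanism; the sentence ``the potential logarithmic losses cancel'' does not address the actual obstruction. Also note a smaller inaccuracy: there are $L\gg\ell$ modulation parameters $b_1,\dots,b_L$, not $\ell$; the $\ell-1$ unstable directions sit inside the first $\ell$, and the remaining $L-\ell$ stable directions must also be tracked in the difference estimates.
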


\noindent {\it Comments on Theorem \ref{thmmain} and Theorem \ref{thmmain2} }\\

\noindent{\it 1. On the assumptions on the nonlinearity $p$}. The assumption \fref{conditiongamma} is a technical one that avoids the presence of logarithmic losses in some weighted Hardy inequalities that we use; a similar assumption can be found in \cite{MRRod2}. These logarithms appear in some analysis tools, but not in the construction of the approximate blow up solution as in the critical settings \cite{HR}, \cite{RaphRod}, \cite{MRRod1}. This is why we believe that the assumption \fref{conditiongamma} could be removed. The assumption $p=2q+1$ makes the nonlinearity analytic and hence $ C^{\infty}$ regularity is propagated by the flow. For a nonlinearity with limited regularity, given a large integer $\ell$, a blow up solution satisfying \fref{Pexciitedlaw} can be constructed for $p\geq p(\ell)$ large enough using the same methodology.\\

\noindent{\it 2. The manifold construction}. To prove Theorem \ref{thmmain} we employ an approximate blow up profile having $\ell -1$ directions of unstability. We use Brouwer's fixed point theorem to obtain the existence of an initial datum for which these unstable modes stay under control, following \cite{MRRod2}, \cite{RSc2}. Here we further show the Lipschitz regularity of the set of initial data we consider. All other solutions starting close to that manifold undergo finite time instabilities and leave a neighborhood of that manifold. Nonetheless we cannot say anything once they have left. Manifold construction is an important step toward the understanding of the dynamics near the ground state (see \cite{KNS} for the energy critical wave equation) and the control of some parameters in blow up dynamics can be subtle \cite{KS3}.\\

\noindent{\it 3. On quantization of blow up rates}. The quantization of blow up rates \fref{Pexciitedlaw} exists in the case of the heat equation where it is sharp (a classification theorem is in \cite{Mizo2}), and for the harmonic heat flow \cite{RSc2}. It is a consequence of the regularity and decay associated to our initial data which in particular can be chosen in $\mathcal C^{\infty}_{c}(\Bbb R^d)$.\\

The strength and robustness of our approach is first that it relies on the derivation of the universal system of ODE's driving the evolution of the approximate blow up profile avoiding any sort of matching procedure, second that the control of the error term is performed using energy estimates only and not spectral estimates. For both these reasons, we expect that our analysis can be propagated to the non radial problem as well, this will be addressed in a forthcoming work.\\

\noindent{\bf Acknowledgment}. The author is supported by the ERC advanced grant BLOW-DISOL. This paper is part of the author PhD, and I would like to thank my advisor P. Rapha\"el for his guidance and advice during the preparation of this work.\\
\\


\noindent{\bf Notations}: Here are the main notations and relations used all along the paper.\\
\underline{Super critical numerology:} Given $d\geq 11$, $p> p_{JL}$ (defined in \fref{exponentpjl}), we let $\alpha$ and $\alpha_2$ be the roots of the polynomial $X^2-(d-2-\frac{4}{p-1})X+2(d-2-\frac{2}{p-1})$ satisfying $\alpha<\alpha_2$. One can check that the condition $p>p_{JL}$ ensures the reality of $\alpha$ and $\alpha_2$, and that they are not equal (see Lemma \ref{annexe:lem:valeur dalpha}). This definition is coherent with the formula \fref{intro:eq:def alpha}. We recall the following relation:
$$
\alpha = \gamma -\frac{2}{p-1}>2,
$$
where $\gamma$ was defined in \fref{defgamma}. We define\footnote{where we recall the definition of the entire part $E[x]\leq x < E[x]+1, \ E(x)\in \mathbb{Z}$.}:

\begin{equation} \label{intro:eq:def k0}
\left\{ \begin{array}{l l} k_0:=E[\frac{d}{2}-\gamma]>1, \\
\delta_0:=\frac{d}{2}-\gamma-k_0, \ 0< \delta_0<1.
\end{array} \right.
\end{equation}

\noindent because we are assuming $\left( \frac{d}{2}-\gamma \right) \notin \mathbb{N}$, so that

\begin{equation}
d=2\gamma+2k_0+2\delta_0.
\end{equation}
We let
\begin{equation}\label{eq:def:gain1}
g:= \text{min}(\alpha,\alpha_2-\alpha_1)-\epsilon>0
\end{equation}
and
\begin{equation}\label{eq:def:gain2}
g':=\text{min}(g,2,1+\delta_0-\epsilon)>0
\end{equation}
be the two real numbers that will quantify some gain in the asymptotics of our objects later on. $\epsilon$ stands for a very small constant $0<\epsilon\ll 1$ that can be chosen independently of the sequel. The presence of $-\epsilon$ and $1+\delta_0$ is just a way to simplify the writing of results later on.\\
\noindent \underline{Notations for the analysis:} For the sake of simplicity, we will use the following equivalent formulation for the focusing nonlinear wave equation (NLW):

\begin{equation}\label{eq:NLW}
(NLW) \ \left\{ \begin{array}{l l} \partial_{t} \bos{u} =\bos{F}(\bos{u}), \\  \bos{u}_{|t=0}=\bos{u}_0 \end{array}  \right. \ \ \ (t,x)\in\mathbb{R}^+ \times \mathbb{R}^d, \ \bos{u}(t,x): \mathbb{R}^d\rightarrow \mathbb{R}\times\mathbb{R}.
\end{equation}

\noindent We will consider radial solutions: $\bos{u}(x)=\bos{u}(r)$ where $r=|x|$. We refer to the coordinates of a function $\bos{u}$ as $u^{(1)}$ and $u^{(2)}$:
\be \label{intro:eq:def vecteur}
\bos{u}=\begin{pmatrix}
u^{(1)} \\
u^{(2)}
\end{pmatrix} .
\ee
We let the expression $\bos{F}$ be:

\begin{equation}
\bos{F}(\bos{u}):=\begin{pmatrix}
u^{(2)} \\ \Delta u^{(1)}+f(u^{(1)})
\end{pmatrix}, \ f(t):=|t|^{p-1}t .
\end{equation}

\noindent The bold notations will always refer to vectors. We make an abuse of notation (regarding \fref{intro:eq:def vecteur}) by still denoting the stationnary state introduced earlier by $\bos{Q}$:
$$
\bos{Q}:=\begin{pmatrix}
Q \\ 0
\end{pmatrix} .
$$
Given a large integer $L\gg 1$, we define the Sobolev exponent:

\begin{equation} \label{intro:eq:def sL}
s_L:=k_0+1+L .
\end{equation}

\noindent We will use the standard scalar product on $L^2(\mathbb{R}^d)$ and $L^2(\mathbb{R}^d)\times L^2(\mathbb{R}^d)$:
$$
\langle u, v \rangle := \int_{\mathbb{R}^d} uv \ \ \text{and}\ \ \ \langle \bos{u}, \bos{v} \rangle := \int_{\mathbb{R}^d} u^{(1)}v^{(1)}+\int_{\mathbb{R}^d} u^{(2)}v^{(2)} .
$$
Let $0<\lambda$, we denote the renormalized function by:
\begin{equation}\label{intro:eq: def fonction renormalisee}
\bos{u}_{\lambda}(x):=\begin{pmatrix}
\lambda^{\frac{2}{p-1}}u^{(1)}(\lambda y) \\
\lambda^{\frac{2}{p-1}+1}u^{(2)}(\lambda y)
\end{pmatrix} .
\end{equation}
The rescaled coordinates are then:
\be
\bos{u}_{\lambda}:=\begin{pmatrix}
u^{(1)}_{\lambda } \\
u^{(2)}_{\lambda}
\end{pmatrix} .
\ee
We let the generator of the scaling be:
$$
\bos{\Lambda} \bos{u} := \begin{pmatrix}
\Lambda^{(1)}u^{(1)} \\
\Lambda^{(2)}u^{(2)}
\end{pmatrix}
:= \begin{pmatrix}
\left( \frac{2}{p-1} +y.\nabla\right)u^{(1)} \\
\left( \frac{2}{p-1}+1 +y.\nabla\right)u^{(2)}
\end{pmatrix} .
$$
We introduce the renormalized space variable:
$$
y:=\frac{r}{\lambda} .
$$
Given $b_1>0$, we define:

\begin{equation}\label{eq:def:B1etB0}
B_0:=\frac{1}{b_1}, \ B_1:=B_0^{1+\eta}
\end{equation}

\noindent where $\eta$ is a small number $0<\eta\ll 1$ which will be choosen later. We denote by
$$
\begin{array}{r c l}
\mathcal{B}^n(R)&:=&\{x=(x_1,...,x_n)\in \mathbb{R}^n, \ \sum_{i=1}^d x_i^2\leq R^2 \}, \\
\mathcal{S}^n(R)&:=&\{x=(x_1,...,x_n)\in \mathbb{R}^n, \ \sum_{i=1}^d x_i^2=R^2 \} ,\\
\mathcal{C}^n(r,R)&:=&\{x=(x_1,...,x_n)\in \mathbb{R}^n, \ r^2\leq \sum_{i=1}^d x_i^2\leq R^2 \},
\end{array}
$$
the standard closed ball, sphere and ring of the standard euclidian n-dimension real space. For $u\in \mathbb{R}^n$ we denote the standard euclidian norm by:
$$
|u|:=\left(\sum_{i=1}^n u_i^2\right)^{\frac{1}{2}}.
$$ 
We introduce a generic radial, $C^{\infty}$ cut-off function:
\be \label{intro:eq:definition chi}
\chi \equiv 1 \ on \ \mathcal{B}^d(1), \ \chi\equiv 0 \ on \ \mathbb{R}^d\backslash \mathcal{B}^d(2).
\ee
And we adjust the zone of the cut by denoting, for $B>0$:
\be \label{intro:eq:definition scaling chi}
\chi_B:y\mapsto \chi\left(\frac{y}{B} \right).
\ee 

\noindent We use the Kronecker delta notation:
\be
\delta_{i,j}:= \left\{ \begin{array}{l l} 1 \ \text{if} \ i=j ,\\ 0 \ \text{otherwise}. \end{array} \right.
\ee

\noindent \underline{Analysis near the ground state:} The linearized operator near $\bos{Q}$ of equation \fref{eq:NLW} is given by:

\begin{equation}\label{eq:def:H}
\bos{H}\bos{\varepsilon}:=\begin{pmatrix}
-\varepsilon^{(2)} \\ -\Delta \varepsilon^{(1)} - pQ^{p-1}\varepsilon^{(1)}\end{pmatrix}=
\begin{pmatrix}
 0 & -1 \\ -\Delta -pQ^{p-1} & 0
\end{pmatrix}
\bos{\varepsilon} ,
\end{equation}
so that:
\begin{equation}
\bos{F}(\bos{Q}+\bos{\varepsilon})=-\bos{H}\bos{\varepsilon}+\bos{NL}.
\end{equation}
Here $\bos{NL}$ stands for the purely nonlinear term:

\begin{equation}
\bos{NL}:=\begin{pmatrix}
0 \\ f(Q+\varepsilon^{(1)})-f(Q)-pQ^{p-1}\varepsilon^{(1)}
\end{pmatrix}.
\end{equation}
We define:
\begin{equation}
\mathcal{L}:=-\Delta -pQ^{p-1},
\end{equation}
so that:
\begin{equation}
\bos{H}=\begin{pmatrix}
0 & -1 \\
\mathcal{L} & 0
\end{pmatrix}.
\end{equation}
Eventually, we note the potential:
\begin{equation}
V:=pQ^{p-1} .
\end{equation}


\subsection{Strategy of the proof}


We start by a summary of the main ideas involved in the proof of Theorem \ref{thmmain} and Theorem \ref{thmmain2}. We employ the same notations as in the critical settings, and use the formulation of (NLW) via \fref{eq:NLW}.\\

\emph{(i) Constructing of an approximate blow-up profile:} We study the dynamics close to the family of solitons $\left(\bos{Q}_{\frac{1}{\lambda}}\right)_{\lambda>0}$. We start at the scale $\lambda (0)=1$ and look for a perturbation $\bos{T}_1$ such that at first order the dynamics moves along the branch:
\be \label{intro:eq:matching premier ordre}
-\lambda_t\bos{\Lambda} \bos{Q} =\lambda_t \frac{\partial}{\partial \lambda}\left(\bos{Q}_{\frac{1}{\lambda}}\right)_{|\lambda=1} \sim \partial_t\left(\bos{Q}_{\frac{1}{\lambda}}+b_1\bos{T}_{1,\frac{1}{\lambda}}\right)_{|t=0}= \bos{F}(\bos{Q}+b_1\bos{T}_1)\sim -b_1\bos{H}(\bos{T}_1).
\ee
So $\bos{T}_1$ is given by: $\bos{T}_1=-\bos{H}^{-1} \bos{\Lambda} \bos{Q}$. When applying a scale change we get: 
$$
\bos{F}\left(\bos{Q}_{\frac{1}{\lambda}}+b_1\bos{T}_{1,\frac{1}{\lambda}}\right)\sim -b_1 \frac{\partial}{\partial \lambda'}\left(\bos{Q}_{\frac{1}{\lambda'}}\right)_{|\lambda'=\lambda}.
$$
Consequently, for the approximate solution $\bos{Q}_{\frac{1}{\lambda(t)}}+b_1(t)\bos{T}_{1,\frac{1}{\lambda(t)}} $ the evolution of the scaling is given by $\lambda_t=-b_1$. $b_1$ is supposed to be a small parameter. In the previous equations, we omitted the time evolution of $b_1\bos{T}_{1,\frac{1}{\lambda}}$, and the non linear terms $\bos{NL}$ because we expect them to be of higher order. We now include them in \fref{intro:eq:matching premier ordre} to look for a time evolution of $b_1$ given by higher order terms:
$$
b_{1,t}\bos{T}_{1,\frac{1}{\lambda}}+\frac{b_1^2}{\lambda} \bos{\Lambda} \bos{T}_{1,\frac{1}{\lambda}} \sim b_{1,t}\bos{T}_{1,\frac{1}{\lambda}}-\lambda_t \frac{b_1}{\lambda} \bos{\Lambda} \bos{T}_{1,\frac{1}{\lambda}}  = \partial_t\left(b_1\bos{T}_{1,\frac{1}{\lambda}}\right) \sim \bos{NL}.
$$
Surprisingly, as will be explained just afterwards, one has that $\bos{\Lambda} \bos{T}_{1,\frac{1}{\lambda}}\sim (1-\alpha)\bos{T}_{1,\frac{1}{\lambda}}$, and that $\bos{NL}$ is negligeable compared to $\frac{b_1^2}{\lambda}\bos{T}_{1,\frac{1}{\lambda}}$. So we end up with: $b_{1,t}=-\frac{1-\alpha}{\lambda}b_1^2$. In short: we have a perturbation that at first order makes the solution move along the branch, and at second order influences its own time evolution, the error in this approximation being of third order.\\
\\
In the same spirit, to allow additional movement along $\bos{T}_1$ we let $\bos{T}_2=-\bos{H}^{-1}(\bos{T}_1)$, and do the same matching technique for the profile $\bos{Q}_{\frac{1}{\lambda}}+b_1\bos{T}_{1,\frac{1}{\lambda}}+b_2\bos{T}_{2,\frac{1}{\lambda}}$ with $b_2$ of order $ b_1^2$ since we already know that $\lambda b_{1,t}$ should be of this order. This gives: $\lambda_t=-b_1$, $b_{1,t}=\frac{1}{\lambda}(-(1-\alpha)b_1^2+b_2) $, and $b_{2,t}=-\frac{2-\alpha}{\lambda}b_1b_2$.\\
\\
Letting $\bos{T}_{i}=(-1)^i\bos{H}^{-i}\bos{\Lambda} \bos{Q}$ and considering a general approximate profile of the form $\bos{Q}_{b,\frac{1}{\lambda}}:=\bos{Q}_{\frac{1}{\lambda}}+\sum_{i=1}^L b_i \bos{T}_{i,\frac{1}{\lambda}}$ gives in turn at first orders\footnote{with the convention $b_{L+1}\equiv 0$.}:
\be\label{intro:sd des parametres}
\left\{
\begin{array}{r c l}
\lambda_t&=&-b_1, \\
b_{i,t}&=&\frac{1}{\lambda}(-(i-\alpha)b_1b_i+b_{i+1}).
\end{array}
\right.
\ee
We point out at this stage that what we are doing is to build an approximate center manifold $\mathcal{M}_{ap}=\{ (\bos{Q}_{b,\frac{1}{\lambda}})_{b,\lambda}\}$ close to $\left(\bos{Q}_{\frac{1}{\lambda}} \right)_{\lambda>0}$, tangent to the vector space $Span(\bos{T}_i)$ being the generalized kernel of the operator $\bos{H}$. This manifold is determined by $L+1$ parameters. Thanks to a matching technique we have an insight for the parameters behavior under the dynamics of (NLW): their time evolution should be given by \fref{intro:sd des parametres}. We now explain what is the matching technique.\\
\\

\emph{(ii) Tail dynamics:} When constructing the profiles $\bos{T}_i$ one has\footnote{where mod stands for the Euclidean division $a=a \ mod2+b$, $0\leq b \leq 1$.}:
\be \label{intro:eq:comportement asymptotique Ti}
\bos{T}_i(r)\sim r^{-\gamma+i-(i \ mod 2)}, \ as \ r\rightarrow +\infty.
\ee
Hence for $i$ big enough it has an irrelevant growth at infinity. For this reason, to obtain a reasonable approximate profile we cut the $\bos{T}_{i,\frac{1}{\lambda}}$'s in the zone $y\sim B_1$ because it is the zone where $b_i\bos{T}_i$ has the same size as $\bos{\Lambda} \bos{Q}$. The true approximate profile is in fact of the form $(\bos{Q}+\chi_{B_1}\sum_{i=1}^L \bos{T}_i)_{\frac{1}{\lambda}}$.\\
\\
The important computations of the analysis are then done in the zone $r\sim \lambda B_1$ (in renormalized variable $y\sim B_1$), where $b_i\bos{T}_{i,\frac{1}{\lambda}}$ behaves like \fref{intro:eq:comportement asymptotique Ti} because $B_1\gg 1$. As we will compute later in the analysis:
$$
\Lambda \bos{T}_i = (i-\alpha)\bos{T}_i+O(r^{-\gamma+i-(i \ \text{mod}2)-g'}), \ as \ r\rightarrow +\infty.
$$
This explains why we say that $\Lambda \bos{T}_{i,\frac{1}{\lambda}} \sim (i-\alpha)\bos{T}_{i,\frac{1}{\lambda}}$: their difference is of lower order in the relevant zone $y\sim B_1$. To truly understand that point, one has to read the analysis to see how the size of a profile in the zone $y\sim  B_1$ is directly related to a polynomial size in terms of the main parameter $b_1$ for some importants norms of this profile.\\
\\
This way, the system of ODE's \fref{intro:sd des parametres} is just computed on the asymptotics of the profiles. This heuristic has been extensively used in blow-up problems.\\

\emph{(iii) Approximate blow-up profiles:}
The natural question is: what type of special solutions does the approximate dynamics possess? For $\ell>\alpha$, there exists\footnote{see Lemma \ref{lem:soldusystemd}. }  a solution $(\lambda^e(t),b^e(t))$ of \fref{intro:sd des parametres} such that $\lambda^e(t)$ goes to $0$ in finite time $T$ with asymptotics $\lambda^e\sim (T-t)^{\frac{\ell}{\alpha}}$. This means that the approximate dynamics makes $\bos{Q}_{b^e(t),\frac{1}{\lambda^e(t)}}$ blow up in finite time. It is the approximate blow-up profile we are going to work with. We note that for this special solution, the parameters have the following size:
\be \label{intro:estimation parametres}
b_i^e\lesssim (b_1^e)^i, \ b_{i,s}^e\lesssim (b_1^e)^{i+1}
\ee
We write the approximate dynamics under the form:
$$
\begin{array}{r c l}
\bos{F}(\bos{Q}_{b,\frac{1}{\lambda}})&=&-b_1\frac{\partial }{\partial \lambda'}\left(\bos{Q}_{b,\frac{1}{\lambda'}} \right)_{|\lambda'=\lambda} + \frac{1}{\lambda}\sum_{i=1}^L (-(i-\alpha)b_1b_i+b_{i+1})\frac{\partial}{\partial b_i} \left(\bos{Q}_{b',\frac{1}{\lambda}} \right)_{b'=b} \\
&& +\bos{\psi}
\end{array}
$$
where $\bos{\psi}$ denote the remainder which is of higher order.\\

\emph{(iv) Obtaining a blow-up solution for the full dynamics:} We now want to prove that this special solution persists in the full (NLW) dynamics. We look for a true solution under the form $\bos{u}(t)=\bos{Q}_{b(t),\frac{1}{\lambda(t)}}+\bos{\varepsilon}_{\frac{1}{\lambda}}(t)$. $\bos{\varepsilon}$ is the error term "orthogonal" to the manifold $\mathcal{M}_{ap}$. $b(t)=b^e(t)+b'(t)$ and $\lambda(t)=\lambda^e(t)+\lambda'(t)$ are perturbations of the special trajectory $(b^e(t),\lambda^e(t))$, they represent the projection of $\bos{u}$ on the manifold $\mathcal{M}_{ap} $. We hope to find a solution for which $\bos{\varepsilon}$, $b'$ and $\lambda'$ stay small, so that the blow-up still happens.\\
\\
To do that we use a bootstrap technique. We look at all the solutions starting in a neighborhood $\mathcal{O}$ of the curve $\left(\bos{Q}_{b^e(t),\frac{1}{\lambda^e(t)}}\right)_{0\leq t\leq T}\subset \mathcal{M}_{ap}$, and we prove that at least one has to stay in this neighborhood, leading to a blow up. We write:
$$
\mathcal{O}=\left(\bos{Q}_{b^e(t),\frac{1}{\lambda^e(t)}}\right)_{0\leq t \leq T}+\mathcal{O}_1\times \mathcal{O}_2,
$$
meaning that $U\in \mathcal{O}$ if and only if $\bos{\varepsilon} \in \mathcal{O}_1$ and $(\lambda',b')\in \mathcal{O}_2$. To measure the size of the objects, as \fref{intro:estimation parametres} holds, $b_1$ will be the quantity of reference. Our analysis has three main steps.\\
\\
\underline{Modulation:} We compute the time evolution of the parameters $\lambda$ and $b$. We show an inequality of the type:
$$
\left|(b_i^e+b'_i)_t+\frac{1}{\lambda}((i-\alpha)(b^e_1+b'_1)(b^e_i+b'_i)-(b^e_{i+1}+b'_{i+1})) \right|\leq  \frac{1}{\lambda} (\parallel \bos{\varepsilon} \parallel_{\text{loc}}+b_1^{L+3}).
$$
$\parallel \bos{\varepsilon} \parallel_{\text{loc}}$ comes from a local interaction term. It means that as long as $\bos{\varepsilon}$ stays in $\mathcal{O}_1$, it does not influence too much the evolution of the parameters. That is to say, as long as $\bos{u}(t)\in \mathcal{O}$, the dynamics of $\lambda^e+\lambda'$ and $b_e+b'$ are given at first orders by \fref{intro:sd des parametres}.\\
\\
\underline{Energy method:} We want to estimate the size of the error term $\bos{\varepsilon}$. Its time evolution is given by:
$$
\lambda \partial_t \bos{\varepsilon} = -\bos{H}_{\frac{1}{\lambda}}\bos{\varepsilon} +\bos{NL} +\bos{\psi}+\tilde{\bos{\psi}} ,
$$
where $\tilde{\bos{\psi}}$ is a corrective term as $\bos{\varepsilon}$ is orthogonal to $\mathcal{M}_{ap}$. Under the smallness assumption $(b',\lambda')\in \mathcal{O}_2$, $\bos{\psi}$ can be estimated, and under the smallness assumption $\bos{\varepsilon} \in \mathcal{O}_1$ so can be $\tilde{\bos{\psi}}$. To measure the size of $\bos{\varepsilon}$ we introduce two norms. The first one at high regularity:
$$
\mathcal{E}_{s_L}=\int \varepsilon^{(1)} \mathcal{L}^{s_L}\varepsilon^{(1)}+\int \varepsilon^{(2)} \mathcal{L}^{s_L-1}\varepsilon^{(2)}.
$$
This quantity is coercive, and in particular it controls the usual Sobolev norm (see Corollary \ref{annexe:cor:coercivite mathcalEsL}):
$$
\mathcal{E}_{s_L}\gtrsim \parallel \bos{\varepsilon} \parallel_{\dot{H}^{s_L}\times \dot{H}^{s_L-1}}^2.
$$
The second norm we use is at a low regularity level:
$$
\mathcal{E}_{\sigma}= \int |\nabla^{\sigma}\varepsilon^{(1)}|^2 + \int |\nabla^{\sigma-1}\varepsilon^{(2)}|^2
$$
for $\sigma>s_c$ slightly supercritical. The first one is the most essential for the analysis, because it is with this adapted norm that one can see that the error stays smaller than the perturbation involved in the approximate profile. We exhibit a Lyapunov type monotonicity formula for this term:
$$
\frac{d}{dt}\left\{\frac{\mathcal{E}_{s_L}}{\lambda^{2(s_L-s_c)}} \right\}\lesssim \frac{b_1^{2L+1+\delta}}{\lambda^{2(s_L-s_c)+1}}
$$
for $\delta=\delta(d,p,L)>0$. This can be integrated to obtain:
$$
\mathcal{E}_{s_L}\lesssim b_1^{2L+\delta}.
$$
When deriving this estimate, we need to control derivatives at a lower level to deal with the non-linear term. This is why we also aim at controling $\mathcal{E}_{\sigma}$. For this norm we exhibit a similar estimate:
$$
\frac{d}{dt}\left\{\frac{\mathcal{E}_{\sigma}}{\lambda^{2(\sigma-s_c)}} \right\}\lesssim \frac{b_1^{1+\delta'}}{\lambda^{2(\sigma-s_c)+1}}.
$$
When integrated in time it gives:
$$
\mathcal{E}_{\sigma} \lesssim b_1 ^{2(\sigma-s_c)(1+\delta'')} .
$$
When establishing the monotonicity formula for $\mathcal{E}_{s_L}$, we also need to control a local term that cannot be estimated directly with $\mathcal{E}_{s_L}$ and $\mathcal{E}_{\sigma}$. This is done through the use of a third tool: a Morawetz type quantity whose time evolution controls this local term.\\
\\
All these estimates show the following fact: if $\bos{u}(t')\in \mathcal{O}$ for $0\leq t'\leq t$, $\bos{\varepsilon}$ enjoys in fact better estimates giving in particular $\bos{\varepsilon}(t) \in \mathring{\mathcal{O}_1}$.\\

\noindent \underline{Conclusion a la Brouwer:} We recapitulate what we have shown so far in the analysis: as long as $\bos{u}(t)\in \mathcal{O}$, the parameters evolve according to \fref{intro:sd des parametres} plus a small perturbation, and the error enjoys a better estimate $\bos{\varepsilon} \in\mathring{\mathcal{O}_1}$. So a solution escapes from $\mathcal{O}$ if and only if $(b',\lambda')$ escape from $(\lambda^e,b^e)+\mathcal{O}_2$. We look at the dynamics given by \fref{intro:sd des parametres} in the set $(\lambda^e,b^e)+\mathcal{O}_2$. It admits $(\lambda^e,b^e) $ as an hyperbolic equilibrium. From standard argument a la Brouwer, even perturbed this equilibrium should persist in some sense: there must exist at least one orbit staying forever in $(\lambda^e,b^e)+\mathcal{O}_2$. This ends the proof of the existence of a true blow-up profile.\\

\emph{(v) The manifold construction:} Once we have the existence of our special blow up solutions, we investigate the topological properties of the set of their initial data. If we assume that two solutions $\bos{Q}_{b,\frac{1}{\lambda}}+\bos{\varepsilon}$ and $\bos{Q}_{b',\frac{1}{\lambda'}}+\bos{\varepsilon}'$ blow up according to the scenario we previously set up, we have enough informations and estimates to study their difference. We analyse the evolution equations for the differences of parameters $b_i-b'_i$ and errors $\bos{\varepsilon}-\bos{\varepsilon}'$. We find that $\ell-1$ differences of parameters evolve according to an unstable linear dynamics, and that the dynamics of the $L-\ell+1$ others and the difference of errors is stable. The differences of the stable parameters and errors only have a small feedback on the time evolution of the unstable parameters. Thus, if the initial difference of the unstable parameters is too big compared to the initial differences of the stable parameters and errors, the unstable linear dynamics wins and expells the differences of unstable parameters away from $0$. Hence one of the two solutions cannot blow up according to our scenario, yielding a contradiction. This gives that the unstable parameters have Lipschitz dependance on the stable parameters and on the error and proves Theorem \ref{thmmain2}.\\

The paper is organized as follows. In section 2 we present the main tools to understand the linear operator $\bos{H}$. After that we are able to construct or primary approximate profile in Proposition \ref{pr:constructionprofilnoncoupe}. We then localize this profile in the zone $y\leq B_1$ and estimate the remainder of the approximate dynamics in Proposition \ref{pr:profilapprochecoupe}. We end this section by studying the special solutions of the approximate dynamics: the existence of special solutions for \fref{intro:sd des parametres} is done in Lemma \ref{lem:soldusystemd}, their linear stability is studied in Lemma \ref{lem:linearisationsystemdyn}. In section 3 we implement our bootstrap method and state our main result of existence in Proposition \ref{prop:bootstrap}. First we explain how to "project" the full (NLW) on the manifold of approximate solutions in Lemma \ref{lem:conditiondortho}. Then we estimate the impact of $\bos{\varepsilon}$ on the dynamics of the parameters $b$ and $\lambda$ by computing the modulations equations in Lemmas \ref{lem:modulation} and \ref{trappedregime:improvedmodulation:lem:improvedmodulation}. In the second part we estimate the error term $\bos{\varepsilon}$. We start by deriving the monotonicity formula for the low Sobolev norm in Proposition \ref{trappedregime:pr:low sobo}, then we do it for the high regularity norm in Proposition \ref{trappedregime:pr:high sobo}, which is the main result of the section. We end the section with deriving a Morawetz identity to control a local term that appeared earlier in the computations in Proposition \ref{thetrapped:pr:morawetz}. In section 4 we end the proof of Proposition \ref{prop:bootstrap}. We show that in fact better bounds hold for the error term $\bos{\varepsilon}$ in Lemma \ref{endoftheproof:lem:integration des equations pour les normes}. We then examine the dynamics for the parameters in Lemmas \ref{endoftheproof:lem: controle des modes stables} and \ref{endoftheproof:lem:proprietes de f}, we show the existence of a true blow-up solution by topological arguments. For the completeness of the result we study the behavior of Sobolev norms in subsection \ref{end:subsection:normes}. In Section 5 we investigate the topological properties of the set of initial data leading to such a blow up scenario. In Proposition \ref{variete:prop:parametres lipschitz} we show that the for such solutions starting at the same scale with some additional regularity, we have Lipschitz dependence in adapted variables. We remove the extra assumptions in Proposition \ref{variete:prop:parametres lipschitz 2}, which allows us to prove that the set of initial data staying in our blow up scenario is a Lipschitz manifold whose codimension is explicit.


\section{The linearized dynamics and the construction of the approximate blow-up profile}

To understand the dynamics close to the 1-parameter family of ground states $\left(\bos{Q}_{\frac{1}{\lambda}}\right)_{\lambda>0}$ we study first its linearization. In this section we start by the presentation of appropriate notions, and technical lemmas about the linearized operator $\bos{H}$. Once we have these tools, we are able to create an approximate blow up profile in the second part of this section.

\subsection{The stationnary state and its numerology}

From standard argument, all smooth radially symmetric solutions to:
$$
-\Delta \phi - \phi^p=0,
$$
are dilates of a given normalized ground state profile:
$$
\phi=Q_{\lambda}, \ \lambda>0, \ \left\{ \begin{array}{l l}
-\Delta Q-Q^p=0 \\
Q(0)=1
\end{array}
\right. .
$$
We will now recall the asymptotic behavior of $Q$. Most of them are known properties, see \cite{YiLi}, \cite{KaS}.
 
\begin{lemma}[Asymptotic expansion of the ground state] Let $p>p_{JL}$ (defined in \fref{exponentpjl}). We recall that $g>0$, $c_{\infty}$ and $\gamma $ are defined in \fref{defgamma} and \fref{eq:def:gain1}. One has:
\label{lem:Q}
\begin{itemize}
\item[(i)]\emph{Asymptotics at infinity:}
\begin{equation} \label{linearized:eq:asymptotique soliton}
\forall k\geq 0, \ \partial^k_y Q=\partial^k_y \left[ \frac{c_{\infty}}{y^{\frac{2}{p-1}}}+\frac{a_1}{y^{\gamma}}\right]+O\left( \frac{1}{y^{\gamma+g+k}}\right), \ \text{as} \ y \rightarrow +\infty,
\end{equation}
for a non null constant $a_1\neq 0$.
\item[(ii)]\emph{Degeneracy:}
\begin{equation}\label{eq:degenerescencesoliton}
\forall k\geq 0, \ \partial^k_y \Lambda^{(1)} Q =\partial^k_y \left[ \frac{c}{y^{\gamma}} \right]+O\left( \frac{1}{y^{\gamma+g+k}}\right), \ \ \text{as} \ y \rightarrow +\infty,
\end{equation}
for a non null constant $c\neq 0$.
\item[(iii)]\emph{Positivity of $\mathcal{L}$:}
\begin{equation}\label{positivitelinearise}
\mathcal{L}>\frac{\delta(p)}{y^2}>0 \ on \ H^1(\mathbb{R}^d),
\end{equation} 
\item[(iv)]\emph{Positivity of $\Lambda^{(1)}Q$:}
\begin{equation}\label{cons:eq:positivite lambdaQ}
\Lambda^{(1)}Q>0.
\end{equation} 
\end{itemize}
\end{lemma}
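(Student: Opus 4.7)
\medskip

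\noindent\textbf{Proof plan.} I would treat the four items in the order (i), (ii), (iv), (iii), since each step feeds into the next. For (i), the approach is classical singular perturbation analysis of the radial ODE $Q'' + \frac{d-1}{y} Q' + Q^p = 0$, as developed in \cite{KaS},\cite{YiLi}. Plugging the formal ansatz $Q \sim A/y^s$ forces $s = \frac{2}{p-1}$ and $A = c_\infty$ via the algebraic identity $c_\infty^{p-1} = \frac{2}{p-1}(d-2-\frac{2}{p-1})$. Writing $Q = c_\infty/y^{2/(p-1)} + R$ and linearizing, $R$ satisfies at leading order an Euler equation whose indicial polynomial $s^2 - (d-2)s + pc_\infty^{p-1}$ has discriminant $\triangle > 0$ exactly when $p > p_{JL}$, with smaller root $\gamma$. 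A fixed-point argument near $y = +\infty$ then produces the correction $a_1/y^\gamma$; the coefficient $a_1$ is nontrivial because the faster-decaying free mode alone is incompatible with the boundary condition $Q(0) = 1$ in the supercritical regime. The estimate for arbitrary $k$ follows by differentiating the ODE. Item (ii) is then immediate from $\Lambda^{(1)} Q = \frac{2}{p-1} Q + yQ'$: the $y^{-2/(p-1)}$ pieces cancel exactly, the $y^{-\gamma}$ pieces combine with coefficient $c = (\frac{2}{p-1}-\gamma)a_1 = -\alpha a_1 \neq 0$ since $\alpha > 2$, and the remainder estimate is inherited.

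For (iv), I would rely on the classical phase plane analysis in Emden-Fowler variables $w(s) = y^{2/(p-1)} Q(y)$, $s = \log y$: above Joseph-Lundgren the interior equilibrium $w = c_\infty$ is a stable node (not a spiral), so the unique trajectory coming from $w(-\infty) = 0$ approaches $c_\infty$ monotonically. Since $\Lambda^{(1)} Q$ is proportional to $y^{-2/(p-1)} \dot w(s)$, it has constant sign, and $\Lambda^{(1)} Q(0) = \frac{2}{p-1} > 0$ fixes the sign to be positive.

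For (iii), the plan is to combine the sharp Hardy inequality $\int |\nabla u|^2 \geq \frac{(d-2)^2}{4} \int u^2/y^2$, the pointwise bound $pQ^{p-1}(y) \leq pc_\infty^{p-1}/y^2 + O(y^{-2-g'})$ furnished by (i), and the Joseph-Lundgren gap $\frac{(d-2)^2}{4} - pc_\infty^{p-1} = \frac{\triangle}{4} > 0$. These ingredients yield coercivity of $\mathcal{L}$ at the level $\delta_0/y^2$ up to a compactly supported defect. To remove the defect I would invoke (iv): $\Lambda^{(1)} Q$ is a strictly positive zero-mode of $\mathcal{L}$ (obtained by differentiating $-\Delta Q = Q^p$ in scale), and the Allegretto-Piepenbrink principle then upgrades nonnegativity of the quadratic form to strict positivity, provided $\Lambda^{(1)} Q \notin L^2$---which follows from (ii) since $\gamma < d/2$ (encoded in $k_0 \geq 1$) makes the profile $y^{-\gamma}$ non-square-integrable at infinity. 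The delicate point I foresee is combining the $1/y^2$ Hardy bound with the Allegretto-Piepenbrink positivity without losing the $1/y^2$ weight, which requires a Persson-type localization of the essential spectrum at infinity; this is the main obstacle but is essentially contained in \cite{KaS} and adapted in \cite{MRRod2} for the related Schr\"odinger analysis.
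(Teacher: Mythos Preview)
Your treatment of (ii), (iv), and the outline for (iii) is sound and broadly matches what the paper cites from \cite{KaS} and \cite{YiLi}. The genuine gap is in (i), specifically the claim that $a_1\neq 0$ because ``the faster-decaying free mode alone is incompatible with the boundary condition $Q(0)=1$''. This is not a proof. In the Emden-Fowler phase plane the equilibrium $c_\infty$ is a stable node, and there \emph{is} one exceptional trajectory approaching along the fast eigendirection $-\alpha_2$; ruling out that the ground-state trajectory is this exceptional one requires an independent argument, not just matching at $y=0$. Without this, you have no mechanism forcing $a_1\neq 0$.

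The paper's argument (Appendix A) is quite different and, notably, runs in the opposite logical order from yours. It first takes (iii) and (iv) from \cite{KaS}, then uses them to force $a_1\neq 0$ as follows. A recurrence on the asymptotic coefficients shows $a_i=P_i(a_1)$ with $P_i(0)=0$ for all $i$ up to the first logarithmic order; hence if $a_1=0$ then $\partial_y\Lambda^{(1)}Q=O(y^{-\frac{2}{p-1}-\alpha_2-1}\log y)$, which is in $L^2$ since $d-\frac{4}{p-1}-2\alpha_2-2=-\sqrt{\triangle}<0$. This would give $\Lambda^{(1)}Q\in\dot H^1$, contradicting $\mathcal{L}\Lambda^{(1)}Q=0$ together with the strict positivity (iii) of $\mathcal{L}$ on $\dot H^1$. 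So the paper uses (iii) as the \emph{input} to (i), whereas you try to prove (i) first and feed it into (iii). If you want to keep your ordering, you must supply a genuine dynamical-systems argument excluding the fast-direction trajectory; otherwise, reorder and import the paper's contradiction via the positivity of $\mathcal{L}$.
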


\begin{proof}[Proof of lemma \ref{lem:Q}]
Only the fact that $a_1\neq 0$ is not proven in the references we quoted. To prove it, we have to enter in details in their proof of the asymptotic expansion. This is done in Lemma \ref{lem:expansion_soliton} of Appendix A.
\end{proof}

We now state important properties of the numbers attached to the asymptotic expansion of the ground state. A proof can be found in \cite{MRRod2}, Lemma A1.

\begin{lemma}[supercritical numerology]\label{annexe:lem:valeur dalpha}
Let $d\geq 11$, $p_{JL}$ and, $\alpha$ be given by \fref{exponentpjl} and \fref{intro:eq:def alpha}. Then:
\begin{itemize}
\item[(i)]the condition $p>p_{JL}$ is equivalent to:
$$
2+\sqrt{d-1}<s_c<\frac{d}{2} .
$$
\item[(ii)]$\alpha$ is real if and only if $p>p_{JL}$. In that case there holds the bounds:
$$
2<\alpha<\frac{d}{2}-1 .
$$
\end{itemize}
\end{lemma}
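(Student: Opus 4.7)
My plan is to reduce each assertion to elementary algebra on $p$ and $d$ using the explicit expressions for $p_{JL}$, $s_c$, $\gamma$, and the coefficients of the quadratic defining $\alpha$. Throughout I will write $a:=\frac{2}{p-1}$, so that $s_c=\frac{d}{2}-a$, $c_\infty^{p-1}=a(d-2-a)$ and $p=1+\frac{2}{a}$. The quadratic defining $\alpha,\alpha_2$ becomes $P(X)=X^2-(d-2-2a)X+2(d-2-a)$.

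For (i), the upper bound $s_c<\frac{d}{2}$ is immediate from $a>0$. The lower bound $s_c>2+\sqrt{d-1}$ rearranges to $a<\frac{d-4-2\sqrt{d-1}}{2}$, i.e.\ $p-1>\frac{4}{d-4-2\sqrt{d-1}}=p_{JL}-1$. The only preliminary check is that $d-4-2\sqrt{d-1}>0$ for $d\geq 11$, which comes from $(d-4)^2>4(d-1)\Leftrightarrow d^2-12d+20>0$, true since the largest root of this quadratic in $d$ is $10$.

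For the reality statement in (ii), I would compute directly the discriminant of $P$ and show that it equals $\triangle$: expanding both $(d-2-2a)^2-8(d-2-a)$ and $(d-2)^2-4pc_\infty^{p-1}$ using the substitutions above yields the same polynomial in $(d,a)$. Reality of $\alpha$ is then equivalent to $\triangle\geq 0$, which by construction of $p_{JL}$ (it is the unique zero of $p\mapsto\triangle(p)$ on $(1,\infty)$, beyond which $\triangle$ is positive) is equivalent to $p>p_{JL}$. The identification $\alpha=\gamma-a$ advertised in \fref{intro:eq:def alpha} then follows by comparing the explicit formula $\alpha=\tfrac{1}{2}((d-2-2a)-\sqrt{\triangle})$ for the smaller root of $P$ with $\gamma=\tfrac{1}{2}(d-2-\sqrt{\triangle})$.

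For the bounds in (ii), since $\alpha$ is the smaller root of $P$, the inequality $\alpha>2$ is equivalent to the conjunction $P(2)>0$ and $\tfrac{1}{2}(d-2-2a)>2$. The first is $P(2)=4+2a>0$. The second reads $p-1>\frac{4}{d-6}$; the only subtle step is to observe that $d-6\geq d-4-2\sqrt{d-1}$ for $d\geq 11$ (equivalent to $\sqrt{d-1}\geq 1$), so $\frac{4}{d-6}\leq p_{JL}-1<p-1$. The upper bound $\alpha<\frac{d}{2}-1$ rewrites, after multiplying by $2$ and rearranging, as $\sqrt{\triangle}+2a>0$, which is trivial from $a>0$. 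The only genuinely non-trivial step is the discriminant identity of Part (ii); once that is in place, everything else reduces to one-line sign checks, so I expect no obstacle beyond bookkeeping.
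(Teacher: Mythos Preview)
The paper does not give its own proof of this lemma; it simply cites \cite{MRRod2}, Lemma~A1. Your elementary computation is essentially correct and self-contained, with one slip: the claim that $p_{JL}$ is the \emph{unique} zero of $p\mapsto\triangle(p)$ on $(1,\infty)$ is false. Writing $\triangle$ in terms of $s_c$ via your substitution gives $\triangle=4(s_c-2)^2-4(d-1)$, so $\triangle=0$ has the two solutions $s_c=2\pm\sqrt{d-1}$, hence two zeros $p_\pm\in(1,\infty)$, and $\triangle>0$ holds on $(1,p_-)\cup(p_{JL},\infty)$ rather than only above $p_{JL}$.

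The fix is immediate once you invoke the energy supercritical standing assumption $s_c>1$: for $d\geq 11$ one has $2-\sqrt{d-1}<0<1$, so the branch $s_c<2-\sqrt{d-1}$ is excluded and $\triangle>0\Longleftrightarrow s_c>2+\sqrt{d-1}\Longleftrightarrow p>p_{JL}$, which is exactly the equivalence you need. The remaining steps---the discriminant identity $\mathrm{disc}(P)=\triangle$, the check $P(2)=4+2a>0$ together with the vertex bound $\frac{4}{d-6}<p_{JL}-1<p-1$, and the trivial inequality $\sqrt{\triangle}+2a>0$---are all correct as written.
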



\subsection{factorization of $\mathcal{L}$}

The positivity of $\Lambda^{(1)}Q$ \fref{cons:eq:positivite lambdaQ} implies from a direct calculation the factorization of this operator.
\begin{lemma}[Factorization of $\mathcal{L}$]\label{lem:factorisation} Let:
\begin{equation}
W:=\partial_y(\text{log}(\Lambda^{(1)}Q)),
\end{equation}
and define the first order operators on radial functions:
\begin{equation}
A:u\mapsto -\partial_y u+Wu, \ A^*:u\mapsto \frac{1}{y^{d-1}}\partial_y(y^{d-1}u)+Wu.
\end{equation}
Then we have:
\begin{equation}
\mathcal{L}=A^*A.
\end{equation}
\end{lemma}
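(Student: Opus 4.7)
The plan is to use the standard Schrödinger-operator factorization via a positive zero-energy eigenfunction: here the role of such an eigenfunction is played by $\Lambda^{(1)}Q$.

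First I would note that, by differentiating the profile equation $-\Delta Q_\lambda - Q_\lambda^p = 0$ (where $Q_\lambda = \lambda^{2/(p-1)}Q(\lambda y)$) in $\lambda$ at $\lambda=1$, one obtains
\begin{equation}
\mathcal{L}\,\Lambda^{(1)}Q \;=\; -\Delta(\Lambda^{(1)}Q) - pQ^{p-1}\,\Lambda^{(1)}Q \;=\; 0.
\end{equation}
Combined with the strict positivity $\Lambda^{(1)}Q>0$ from \fref{cons:eq:positivite lambdaQ}, this makes $W = \partial_y \log(\Lambda^{(1)}Q)$ a smooth function on $(0,\infty)$.

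Second, I would compute $A^{\ast}A u$ directly on radial functions. Writing $\Delta u = u'' + \tfrac{d-1}{y}u'$ and unfolding the definitions,
\begin{equation}
A^{\ast}Au \;=\; \partial_y(Au) + \tfrac{d-1}{y}(Au) + W(Au)
\;=\; -u'' - \tfrac{d-1}{y}u' + \left(W' + W^2 + \tfrac{d-1}{y}W\right)u.
\end{equation}
Hence proving the factorization reduces to the pointwise identity
\begin{equation}
W' + W^2 + \tfrac{d-1}{y}W \;=\; -pQ^{p-1}.
\end{equation}

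Third, I would verify this identity using the fact that $\phi := \Lambda^{(1)}Q$ is annihilated by $\mathcal{L}$. Since $W = \phi'/\phi$, the Riccati relation $W' + W^2 = \phi''/\phi$ holds, so
\begin{equation}
W' + W^2 + \tfrac{d-1}{y}W \;=\; \frac{\phi'' + \tfrac{d-1}{y}\phi'}{\phi} \;=\; \frac{\Delta \phi}{\phi} \;=\; -pQ^{p-1},
\end{equation}
the last equality coming from $\mathcal{L}\phi = 0$. Combining the two displays gives $A^{\ast}A u = -\Delta u - pQ^{p-1}u = \mathcal{L}u$ on radial functions, which is the claim.

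There is essentially no serious obstacle: the positivity of $\Lambda^{(1)}Q$ is used only to define $W$ globally in $y>0$, and the algebraic identity is a classical Darboux/supersymmetric factorization of a Schrödinger operator at a positive zero mode. The only point worth double-checking during the write-up is the correct form of the formal adjoint $A^\ast$ with respect to the radial measure $y^{d-1}dy$, which is precisely what yields the $\tfrac{d-1}{y}$ term needed to reconstitute $-\Delta$.
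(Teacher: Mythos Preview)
Your proof is correct and follows essentially the same approach as the paper: both compute $A^{\ast}Au = -\Delta u + (W' + W^2 + \tfrac{d-1}{y}W)u$ directly and then identify the potential as $-pQ^{p-1}$ via the Riccati relation coming from $\mathcal{L}(\Lambda^{(1)}Q)=0$. The paper's write-up is slightly terser but the argument is identical.
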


\begin{remark}
The adjunction is taken with respect to the radially symmetric Lebesgue measure:
$$
\int_{y>0} (Au)vy^{d-1}dy=\int_{y>0} u(A^*v)y^{d-1}dy.
$$
\end{remark}

\begin{proof}[Proof of Lemma \ref{lem:factorisation}] 
This factorization relies on the fact that $\Lambda^{(1)}Q>0$, and then it is a standard property of Schr\"odinger operators with a non-vanishing zero. One can compute:
$$
A^*Au=-\Delta u+(\frac{d-1}{y}W+\partial_y W+W^2)u.
$$
Then the result follows from:
$$
\frac{d-1}{y}W+\partial_y W+W^2=\frac{\Delta \Lambda^{(1)}Q}{\Lambda^{(1)}Q}=\frac{-\mathcal{L}\Lambda^{(1)}Q-V\Lambda^{(1)}Q}{\Lambda^{(1)}Q}=-V,
$$
where we used the fact that $\mathcal{L}\Lambda^{(1)}Q=0$.
\end{proof}

We collect here the informations about the asymptotic behavior of the potentials $V$ and $W$ which will be used many times in the sequel. These results are a direct implication of the previous Lemma \ref{lem:Q}.

\begin{lemma}(Asymptotic behavior of the potentials:)\label{lem:asymptoticdupotentiel}
There holds:
\begin{itemize}
\item[(i)]\emph{Asymptotics:}
\begin{equation}
\partial_y^k V= \left\{ \begin{array}{l l} O(1) \ as \ y\rightarrow 0 \\ \frac{c_k}{y^{2+k}}+O\left( \frac{1}{y^{2+\alpha+k}}\right) \ \text{as} \ y\rightarrow +\infty  \end{array}  \right. ,
\end{equation}
\begin{equation} \label{linearized:eq:asymptotique W}
\partial_y^k W= \left\{ \begin{array}{l l} O(1) \ as \ y\rightarrow 0 \\ \frac{c'_k}{y^{1+k}}+O\left( \frac{1}{y^{1+g+k}}\right) \ \text{as} \ y\rightarrow +\infty  \end{array}  \right. ,
\end{equation}
with $c_k\neq 0$, $c'_k\neq 0$ and $c'_1=-\gamma$.
\item[(ii)]\emph{Degeneracy:}
\begin{equation} \label{linearized:eq:degenerescence scaling}
\partial_y^k \left( \frac{d}{d\lambda} [(Q_{\lambda})^{p-1}]_{|\lambda=1} \right)= O\left(\frac{1}{y^{2+\alpha+k}}\right) \ \text{as} \ y \rightarrow +\infty .
\end{equation}
\end{itemize}

\end{lemma}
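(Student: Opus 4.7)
The plan is to reduce everything to the asymptotic expansions of $Q$ and $\Lambda^{(1)}Q$ collected in Lemma \ref{lem:Q}, together with the observation that these expansions are given with all derivatives, so that we may freely differentiate the asymptotic identities.

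First I would deal with the behavior near the origin. Since $Q$ is a smooth solution of $-\Delta Q-Q^{p}=0$ with $Q(0)=1>0$, the function $V=pQ^{p-1}$ is smooth in a neighborhood of $0$, which yields $\pa_y^k V=O(1)$ as $y\to 0$. For $W=\pa_y(\log \Lambda^{(1)}Q)$, the positivity \fref{cons:eq:positivite lambdaQ} and smoothness of $\Lambda^{(1)}Q$ ensure that $\log \Lambda^{(1)}Q$ is smooth near the origin, so the same conclusion holds for $W$.

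Next I would treat the asymptotics at infinity of $V$. Writing the expansion \fref{linearized:eq:asymptotique soliton} of Lemma \ref{lem:Q} as $Q(y)=\frac{c_\infty}{y^{2/(p-1)}}\bigl(1+O(y^{-\alpha})\bigr)$, using $\alpha=\gamma-\frac{2}{p-1}$, and raising to the power $p-1$, one obtains
\[
V(y)=pQ(y)^{p-1}=\frac{pc_\infty^{p-1}}{y^2}+O\!\left(\frac{1}{y^{2+\alpha}}\right),\qquad y\to +\infty.
\]
The constant $c_0=pc_\infty^{p-1}$ is nonzero because $c_\infty\neq 0$. To obtain the derivative version I would differentiate the expansion \fref{linearized:eq:asymptotique soliton} $k$ times (this is legitimate because \fref{linearized:eq:asymptotique soliton} is stated for all $k$) and apply Fa\`a di Bruno's formula to the composition $y\mapsto py^{p-1}\circ Q$; the leading term becomes $\pa_y^k(pc_\infty^{p-1}/y^2)=c_k/y^{2+k}$ with $c_k=pc_\infty^{p-1}(-2)(-3)\cdots(-1-k)\ne 0$, and the remainder is controlled by the same argument applied to the $O(y^{-\gamma-g})$ term in \fref{linearized:eq:asymptotique soliton}.

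For $W$ at infinity, the expansion \fref{eq:degenerescencesoliton} gives $\Lambda^{(1)}Q(y)=\frac{c}{y^\gamma}\bigl(1+O(y^{-g})\bigr)$ with $c\neq 0$. Taking the logarithm,
\[
\log \Lambda^{(1)}Q(y)=\log c-\gamma\log y+O(y^{-g}),
\]
and differentiating gives $W(y)=-\gamma/y+O(y^{-1-g})$. Differentiating \fref{eq:degenerescencesoliton} $k$ further times and iterating this computation (again using that \fref{eq:degenerescencesoliton} is valid for every $k$) yields $\pa_y^k W=c'_k/y^{1+k}+O(y^{-1-g-k})$ with explicit non-vanishing constants $c'_k$; in particular the leading coefficient of $W$ itself is $-\gamma$, which is what is meant by $c'_1=-\gamma$.

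Finally, for the degeneracy statement \fref{linearized:eq:degenerescence scaling}, a direct computation from the definition $Q_\lambda(y)=\lambda^{2/(p-1)}Q(\lambda y)$ gives
\[
\frac{d}{d\lambda}\bigl[(Q_\lambda)^{p-1}\bigr]_{|\lambda=1}=(p-1)Q^{p-2}\,\Lambda^{(1)}Q.
\]
Plugging in the leading terms $Q^{p-2}\sim c_\infty^{p-2}/y^{2(p-2)/(p-1)}$ and $\Lambda^{(1)}Q\sim c/y^\gamma$, and using $\frac{2(p-2)}{p-1}+\gamma=2+\alpha$, one reaches $O(1/y^{2+\alpha})$; the statement for arbitrary $k$ follows by the Leibniz rule together with the derivative asymptotics already established.

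The only real subtlety is to ensure that the remainders in all expansions survive differentiation with the expected gain in decay; this is precisely the content of Lemma \ref{lem:Q}, which is stated with all derivatives included, so no additional work is needed.
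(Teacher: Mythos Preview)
Your proof is correct and follows exactly the approach the paper indicates: the paper does not give a detailed argument but simply states that the lemma is ``a direct implication of the previous Lemma \ref{lem:Q}'', and you have filled in those details accurately, including the computation $\frac{d}{d\lambda}[(Q_\lambda)^{p-1}]_{|\lambda=1}=(p-1)Q^{p-2}\Lambda^{(1)}Q$ and the exponent check $\tfrac{2(p-2)}{p-1}+\gamma=2+\alpha$. Your remark interpreting ``$c'_1=-\gamma$'' as the leading coefficient of $W$ itself (i.e.\ $W\sim -\gamma/y$) is the sensible reading.
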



\subsection{Inverting $\bos{H}$ on radially symmetric functions}

We first start by inverting $\mathcal{L}$. We are only considering radially symmetric functions, so $\Delta=\partial_{yy}+(d-1)\frac{\partial_y}{y}$, and we can apply basic results from ODE theory. We will do this thanks to the explicit knowledge of the kernel of $\mathcal{L}$. Indeed from the rewriting:
\begin{equation}\label{eq:reecrituredeA}
A:u\mapsto -\Lambda^{(1)}Q\partial_y\left( \frac{u}{\Lambda^{(1)} Q}\right), \ A^*:u\mapsto \frac{1}{y^{d-1}\Lambda^{(1)}Q}\partial_y (y^{d-1}\Lambda^{(1)}Q u),
\end{equation}
we note that:
\begin{equation}
Au=0 \ \text{iff} \ u\in \text{Span}(\Lambda^{(1)}Q), \ A^*u=0 \ \text{iff} \ u\in \text{Span}\left( \frac{1}{y^{d-1}\Lambda Q}\right).
\end{equation}
It implies that for radially symetric functions:
\begin{equation}
\mathcal{L}u=0 \ \text{iff} \ u\in \text{Span}(\Lambda^{(1)}Q,\Gamma),
\end{equation}
with:
\begin{equation}\label{eq:definitionGamma}
\Gamma (y):=\Lambda^{(1)}Q(y) \int_1^y \frac{dx}{x^{d-1}(\Lambda^{(1)} Q(x))^2}.
\end{equation}
We already knew $\Lambda^{(1)}Q$ was in the kernel of $\mathcal{L}$ since it is the tangent vector to the branch of stationnary solutions $(Q_{\lambda})_{\lambda>0}$. We just found the second vector in the kernel: $\Gamma$. From the asymptotic behavior \fref{eq:degenerescencesoliton} of $\Lambda^{(1)}Q$, we deduce the following asymptotic for $\Gamma$:
\begin{equation}
\Gamma \underset{y\rightarrow 0}{\sim} \frac{-c}{y^{d-2}} \ \ \text{and} \ \ \Gamma \underset{y\rightarrow +\infty}{\sim} \frac{c'}{y^{\gamma}},
\end{equation}
$c$ and $c'$ being two positive constants. Both results are obtained from \fref{eq:definitionGamma}, with the fact that $\Lambda^{(1)}Q>0$ and the asymptotic \fref{eq:degenerescencesoliton} that implies:
$$
0<\int_1^{+\infty} \frac{dx}{x^{d-1}(\Lambda^{(1)}Q)^2} \leq C \int_1^{+\infty} \frac{dx}{x^{d-1-2\gamma}}<+\infty ,
$$
where we used the relation from \fref{defgamma}: $d-1-2\gamma$>1.\\
\\
Now that we know the Green's functions of $\mathcal{L}$ we can introduce the formal inverse:
\begin{equation}\label{eq:definitioninverseformel}
\mathcal{L}^{-1}f:=-\Gamma(y)\int_0^y f\Lambda^{(1)} Q x^{d-1} dx +\Lambda^{(1)}Q(y)\int_0^y f\Gamma x^{d-1} dx .
\end{equation}
One can check that for $f$ smooth and radial we have indeed $\mathcal{L}(\mathcal{L}^{-1}f)=f$. As we do not have uniqueness for the equation $\mathcal{L}u=f$, one may wonder if this definition is the "right" one. The answer is yes because this inverse has the good asymptotic behavior at the origin and $+\infty$, see Lemma \ref{lem:actiondeHetH-1suradmissile}. To compute easily the asymptotic, we will use the following computational lemma.

\begin{lemma}\label{lem:calculdelinversedeL}(Inversion of $\mathcal{L}$:) Let $f$ be a $C^{\infty}$ radially symmetric function, and denote by $u$ its inverse by $\mathcal{L}$: $u=\mathcal{L}^{-1}f$ given by \fref{eq:definitioninverseformel}, then:
\begin{equation}
Au=\frac{1}{y^{d-1}\Lambda^{(1)}Q}\int_0^y f\Lambda^{(1)} Q x^{d-1} dx, \ u =-\Lambda^{(1)}Q \int_0^y \frac{Au}{\Lambda^{(1)}Q}dx .
\end{equation} 
\end{lemma}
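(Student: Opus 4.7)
The plan is a direct calculation exploiting the rewriting \fref{eq:reecrituredeA} of $A$ together with the defining formulas \fref{eq:definitionGamma} and \fref{eq:definitioninverseformel}. The main observation that makes everything clean is that $\Gamma/\Lambda^{(1)}Q$ is, by definition, an antiderivative of $1/(y^{d-1}(\Lambda^{(1)}Q)^2)$.

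First I would establish the formula for $Au$. Set $I_1(y):=\int_0^y f\Lambda^{(1)}Q\, x^{d-1}dx$ and $I_2(y):=\int_0^y f\Gamma\, x^{d-1}dx$, so that $u=-\Gamma I_1+\Lambda^{(1)}Q\, I_2$ and
\[
\frac{u}{\Lambda^{(1)}Q}=-\frac{\Gamma}{\Lambda^{(1)}Q}I_1+I_2.
\]
By \fref{eq:definitionGamma} one has $\partial_y(\Gamma/\Lambda^{(1)}Q)=1/(y^{d-1}(\Lambda^{(1)}Q)^2)$. Differentiating and using the fundamental theorem of calculus for $I_1'$ and $I_2'$, the two terms coming from $\partial_y I_1$ and $\partial_y I_2$ cancel, leaving
\[
\partial_y\!\left(\frac{u}{\Lambda^{(1)}Q}\right)=-\frac{I_1(y)}{y^{d-1}(\Lambda^{(1)}Q)^2}.
\]
Multiplying by $-\Lambda^{(1)}Q$ and applying the first identity of \fref{eq:reecrituredeA} yields the first claimed formula $Au=\frac{1}{y^{d-1}\Lambda^{(1)}Q}\int_0^y f\Lambda^{(1)}Q\,x^{d-1}dx$.

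Next I would deduce the second formula by integrating back the relation $\partial_y(u/\Lambda^{(1)}Q)=-Au/\Lambda^{(1)}Q$ from $0$ to $y$. The only nontrivial point is checking that the boundary term at $0$ vanishes, i.e. $u(0)=0$. For this I use the asymptotics recorded just after \fref{eq:definitionGamma}: $\Lambda^{(1)}Q(0)$ is a finite nonzero constant, $\Gamma(y)\sim -c/y^{d-2}$ as $y\to 0^+$, and smoothness of $f\Lambda^{(1)}Q$ and $f\Gamma$ near the origin gives $I_1(y)=O(y^d)$ and $\Lambda^{(1)}Q(y)I_2(y)=O(y^2)$; hence $\Gamma(y)I_1(y)=O(y^2)\to 0$ and $u(0)=0$. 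Integrating then gives
\[
\frac{u(y)}{\Lambda^{(1)}Q(y)}=-\int_0^y \frac{Au}{\Lambda^{(1)}Q}\,dx,
\]
which is the second identity after multiplying by $\Lambda^{(1)}Q$.

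There is no real obstacle here; the only place requiring a touch of care is the behaviour at the origin, where $\Gamma$ is singular, to ensure that the primitive chosen in \fref{eq:definitioninverseformel} actually produces a function vanishing at $y=0$ so that the integrated identity for $u$ closes.
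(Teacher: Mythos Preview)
Your proof is correct and follows essentially the same approach as the paper: both exploit the key identity $\partial_y(\Gamma/\Lambda^{(1)}Q)=1/(y^{d-1}(\Lambda^{(1)}Q)^2)$ to compute $Au$ (the paper phrases this as computing $A\Gamma$ first and then applying $A$ to $u$, but the cancellation is identical), and both then check the vanishing at the origin to fix the integration constant in the formula for $u$.
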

This lemma says that to compute $u=\mathcal{L}^{-1}f $, we can do it in a rather easy way in two times: first we compute $Au$, then we compute $u$ knowing $Au$.

\begin{proof}[Proof of Lemma \ref{lem:calculdelinversedeL}]
We compute from the definition of $\Gamma$  \fref{eq:definitionGamma}:
$$
A\Gamma=-\partial_y \Gamma +\frac{\partial_y(\Lambda^{(1)}Q)}{\Lambda Q}\Gamma=-\frac{1}{y^{d-1}\Lambda^{(1)}Q} .
$$
We therefore apply $A$ to the definition of $u$ given by \fref{eq:definitioninverseformel}, and using the cancellation $A(\Lambda Q)=0$, we find:
$$
Au=\frac{1}{y^{d-1}\Lambda^{(1)}Q}\int_0^y f\Lambda^{(1)}Q x^{d-1}dx .
$$
which, together with the definition of $A$ \fref{eq:reecrituredeA} gives:
$$
u= -\Lambda^{(1)}Q\int_0^y \frac{Au}{\Lambda^{(1)}Q}dx+c_u\Lambda Q,
$$
$c_u$ being an integration constant. But from \fref{eq:definitioninverseformel} we see that: $u=O(y^2)$ and $Au = O(y)$ as $y\rightarrow 0$. From that we deduce the nullity of the constant: $c_u=0$, which establishes the formula.
\end{proof}

Knowing how to invert $\mathcal{L}$, we define the inverse of $\bos{H}$ by the following formula:
\begin{equation}
\bos{H}^{-1}:=\begin{pmatrix}
0& \mathcal{L}^{-1} \\
-1 & 0
\end{pmatrix} .
\end{equation}


\subsection{Adapted derivatives, admissible and homogeneous functions}

The usual derivatives, that is to say the $\nabla^k$ ones, are not fit for the study of (NLW) close to the family of ground states $(\bos{Q}_{\lambda})_{\lambda>0}$, because they do not commute with the linearized operator $\mathcal{L}$. In this subsection we describe the adapted derivatives we will use. The asymptotic behavior of the adapted derivatives of the profiles, at the origin and at infinity, is going to play an important role. The second significant property is the vectorial position (when a function $\bos{f}$ has only one of its coordinate being non null). For the profiles we will use later, these informations are contained in the notion of admissible function. Given a radial function $f(x)=f(|x|)$, we define the sequence:
$$
f_k=\mathcal{A}^kf 
$$
of adapted derivatives of $f$ by induction:
\begin{equation} \label{linearized:eq:def derivees adaptees}
f_0:=f \ \text{and} \ f_{k+1}:=\left\{ \begin{array}{l l}
A f_k \ \text{for} \ k \ \text{even} ,\\
A^* f_k \ \text{for} \ k \ \text{odd} .
\end{array}   \right.
\end{equation}

\begin{definition}(Admissible functions:) \label{linearized:def:fonctions admissibles}
Let $p_1$ be a positive integer, $p_2$ be a real number, and $\iota$ an indice $\iota \in \{0;1\}$.\\
We say that a vector of functions $\bos{f}=\begin{pmatrix}f^{(1)} \\ f^{(2)} \end{pmatrix}$ of two $C^{\infty}$ radially symmetric functions is admissible of degree $(p_1,p_2,\iota)$ if:
\begin{itemize}
\item[(i)]\emph{$\iota$ is the position:}
\begin{equation}
\bos{f}=\begin{pmatrix} f^{(1)} \\ 0 \end{pmatrix} \ (\text{ie} \ f^{(2)}=0) \ \text{if} \ \iota=0, \ \ \text{and} \ \bos{f}=\begin{pmatrix} 0 \\ f^{(2)} \end{pmatrix} \ (\text{ie} \ f^{(1)}=0) \ \text{if} \ \iota=1 .
\end{equation}
We will then write indifferently $f$ to denote $f^{(1)}$ or $f^{(2)}$ in the two cases.
\item[(ii)]\emph{$p_1$ describes the behavior near $0$:}
\begin{equation}
\forall 2p\geq p_1, \ f(y)=\sum_{k=p_1-\iota, \ k \ \text{even}}^{2p} c_ky^k+ O(y^{2p+2}), \ \text{as} \ y\rightarrow 0 .
\end{equation}
\item[(iii)]\emph{$p_2$ describes the behavior at infinity:}
\begin{equation} \label{linearized:eq:asymptotique infini fonction admissible}
\forall k\in \mathbb{N}, \ \ |f_k(y)|=O (y^{p_2-\gamma-\iota-k}) \ \text{a}s \ y\rightarrow +\infty .
\end{equation}
\end{itemize}
\end{definition}

The actions of $\bos{H}$ and $\bos{H}^{-1}$ on admissible functions enjoy the following properties:

\begin{lemma}\label{lem:actiondeHetH-1suradmissile}(Action of $\bos{H}$ and $\bos{H}^{-1}$ on admissible functions:) Let $\bos{f}$ be an admissible function of degree $(p_1,p_2,\iota)$, with $p_2\geq -1$ then:
\begin{itemize}
\item[(i)] $\forall i\geq 0$, $\bos{H}^i \bos{f}$ is admissible of degree $(\text{max}(p_1-i,\iota),p_2-i,\iota+i\text{mod}2)$.
\item[(ii)] $\forall i\geq 0$, $\bos{H}^{-i}\bos{f}$ is admissible of degree $(p_1+i,p_2+i,\iota+i\text{mod}2)$.
\end{itemize}
\end{lemma}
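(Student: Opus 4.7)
\emph{Proof plan.} Both (i) and (ii) are proved by induction on $i$: composition reduces to the base case $i=1$. For each of $\bos{H}$ and $\bos{H}^{-1}$ I split into the cases $\iota = 0$ and $\iota = 1$, verifying in each case the three clauses of Definition \ref{linearized:def:fonctions admissibles}, namely the vectorial position, the Taylor expansion at the origin, and the algebraic decay of all adapted derivatives at infinity. The position clause is immediate from the matrix forms of $\bos{H}$ and $\bos{H}^{-1}$, each of which swaps coordinates.

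\emph{Action of $\bos{H}$.} If $\iota = 1$, then $\bos{H}\bos{f} = (-f^{(2)}, 0)^T$ is a sign change and relabeling into the first slot; the Taylor and infinity information of $f^{(2)}$ transfers unchanged, and re-indexing against the new position $\iota' = 0$ yields the degree $(p_1-1, p_2-1, 0)$. If $\iota = 0$, the non-trivial component is $\mathcal{L} f^{(1)}$, and the key observation is
\begin{equation*}
(\mathcal{L} f^{(1)})_k = f^{(1)}_{k+2} \qquad (k \ge 0),
\end{equation*}
a direct consequence of the definitions $f_{2j} = \mathcal{L}^j f$, $f_{2j+1} = A\mathcal{L}^j f$ and $\mathcal{L} = A^* A$. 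This transfers the infinity bound with the shift $p_2 \mapsto p_2 - 1$. Near the origin, the identity $\mathcal{L} y^k = -k(k+d-2)y^{k-2} - p Q^{p-1} y^k$ combined with the smooth even Taylor expansion of $Q^{p-1}$ at $0$ preserves the even-power structure while lowering the leading power by $2$.

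\emph{Action of $\bos{H}^{-1}$ and the main obstacle.} If $\iota = 0$, then $\bos{H}^{-1}\bos{f} = (0, -f^{(1)})^T$ is a relabeling and the degree shifts by $(+1, +1, 1)$ through re-indexing. If $\iota = 1$, one studies $u := \mathcal{L}^{-1} f^{(2)}$ via the two-step representation from Lemma \ref{lem:calculdelinversedeL}:
\begin{equation*}
A u = \frac{1}{y^{d-1}\Lambda^{(1)} Q}\int_0^y f^{(2)}\Lambda^{(1)} Q \, x^{d-1}\, dx, \qquad u = -\Lambda^{(1)} Q \int_0^y \frac{Au}{\Lambda^{(1)} Q}\, dx.
\end{equation*}
Near the origin, the smoothness and non-vanishing of $\Lambda^{(1)} Q$ at $0$ give that $Au$ vanishes to order $p_1$ and $u$ to order $p_1+1$ with the correct parity. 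At infinity, the identity $u_k = f^{(2)}_{k-2}$ for $k \ge 2$, coming from $\mathcal{L} u = f^{(2)}$, handles all derivatives of order at least $2$ directly. The genuinely delicate step is estimating $u_0 = u$ and $u_1 = A u$ at infinity, which requires direct analysis of the integrals using $\Lambda^{(1)} Q \sim c/y^\gamma$ and $\Gamma \sim c'/y^\gamma$ from Lemma \ref{lem:Q}. A case analysis on the sign of $p_2 + d - 2\gamma - 2$ determines whether the integrand is integrable at infinity or grows algebraically; the inequality $d - 2\gamma > 2$ (equivalent to $k_0 \ge 2$, guaranteed by $p > p_{JL}$ via Lemma \ref{annexe:lem:valeur dalpha}) ensures the bookkeeping always closes on the side yielding the claimed decay $y^{(p_2+1)-\gamma-k}$, and the threshold hypothesis $p_2 \ge -1$ is precisely what prevents spurious logarithmic losses at the boundary of this case analysis.
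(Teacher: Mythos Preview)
Your proof is correct and follows essentially the same route as the paper: reduction to the base case $i=1$, trivial handling of the coordinate-swap cases, and analysis of $u=\mathcal L^{-1}f^{(2)}$ via the two-step integral representation of Lemma~\ref{lem:calculdelinversedeL}, using $u_k=f^{(2)}_{k-2}$ for $k\ge2$ and direct integral estimates for $k=0,1$. The one point where you overcomplicate: no case analysis on the sign of $p_2+d-2\gamma-2$ is needed, since $d-2\gamma=2+\sqrt{\triangle}$ and $p_2\ge-1$ force the integrand exponent $p_2-2\gamma+d-2 = p_2+\sqrt{\triangle} > -1$, so the integral is always polynomially divergent and yields the claimed decay in one stroke (this is exactly how the paper argues). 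Your parenthetical ``equivalent to $k_0\ge2$'' is also slightly off: $d-2\gamma>2$ gives only $\tfrac d2-\gamma>1$, hence $k_0\ge1$.
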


\begin{proof}[Proof of Lemma \ref{lem:actiondeHetH-1suradmissile}]
\underline{Action of $\bos{H}$:}
We compute:
\be \label{thetrapped:eq:puissances de H}
\bos{H}^{2k}=(-1)^k\begin{pmatrix}
\mathcal{L}^k & 0 \\
0 & \mathcal{L}^k
\end{pmatrix}
, \ and \ \bos{H}^{2k+1}=(-1)^k\begin{pmatrix}
0 & -\mathcal{L}^k \\
\mathcal{L}^{k+1} & 0
\end{pmatrix} .
\ee
So that the property we claim holds by a direct check at the definitions of adapted derivatives and admissible functions.\\

\underline{Action of $\bos{H}^{-1}$:} We are going to prove the property by induction on $i$. We will prove it for $\iota=0$, the proof being the same for $\iota=1$. We can suppose without loss of generality that $p_1$ is even. The property is true, of course, for $i=0$. Suppose now it is true for $i$. If $i$ is even, then:
$$
\bos{H}^{-(i+1)}\bos{f}=\bos{H}^{-1}\bos{H}^{-i}\bos{f}=\begin{pmatrix}
0 & \mathcal{L}^{-1} \\
-1 & 0
\end{pmatrix}
\begin{pmatrix}
(H^{-i}f)^{(1)} \\
0
\end{pmatrix}
=\begin{pmatrix}
0 \\
-(H^{-i}f)^{(1)}
\end{pmatrix} .
$$
The induction hypothesis for $\bos{H}^{-i}\bos{f}$ implies that the function $\bos{H}^{-(i+1)}\bos{f}$ is of degree $(p_1+i+1,p_2+i+1,1)$. Suppose now $i$ is odd. Then we have:
$$
\bos{H}^{-(i+1)}\bos{f}=\begin{pmatrix} 0 & \mathcal{L}^{-1} \\ -1 & 0 \end{pmatrix} \begin{pmatrix} 0 \\ (H^{-i}f)^{(2)} \end{pmatrix}=\begin{pmatrix} \mathcal{L}^{-1}(H^{-i}f)^{(2)} \\ 0 \end{pmatrix} .
$$
We write $u=\mathcal{L}^{-1}(H^{-i}f)^{(2)})$. We have from the induction hypothesis:
$$
(H^{-i}f)^{(2)}= \sum_{k=p_1+i-1, \ k \ \text{even}}^{2p}c_k y^{k} +O(y^{2p+2}), \ as \ y\rightarrow 0 .
$$
From \fref{eq:definitioninverseformel} one can see the gain:
$$
u= \sum_{k=p_1+i+1, \ k \ \text{even}}^{2p} c'_k y^{k} +O(y^{2p+2}), \ as \ y \rightarrow 0 ,
$$
and since $\iota(\bos{H}^{-(i+1)}\bos{f})=0$, we get $p_1(\bos{H}^{-(i+1)}\bos{f})=p_1+1$.\\
\\
From the induction hypothesis for $\bos{H}^{-i}\bos{f}$, and the relation $u_k=(H^{-i}f)^{(2)}_{k-2}$ for $k\geq 2$, the asymptotic \fref{linearized:eq:asymptotique infini fonction admissible} at $+\infty$ for $u$ is true for $k\geq 2$. One only needs to check the asymptotic at $+\infty$ for $k=0$ and $k=1$. We use the computational Lemma \ref{lem:calculdelinversedeL}:
$$
\begin{array}{r c l}
Au=\frac{1}{y^{d-1}\Lambda^{(1)} Q}\int_0^y (H^{-i}f)^{(2)}\Lambda^{(1)}Q x^{d-1}dx &=& O\left( \frac{1}{y^{d-1-\gamma}}\int_0^y x^{p_2+i-1-2\gamma+d-1}dx \right) \\
&=& O(y^{p_2+i-\gamma}) ,
\end{array}
$$
where we used the asymptotic \fref{eq:degenerescencesoliton} of $\Lambda^{(1)}Q$. Indeed the integral in the right hand side is divergent from:
$$
p_2+i-1-2\gamma+d=p_2+i+\sqrt{\triangle}+1>0 .
$$
We then do the same for $u$:
$$
u=-\Lambda^{(1)}Q\int_0^y \frac{Au}{\Lambda^{(1)} Q}dx= O\left( y^{-\gamma}\int_0^y x^{p_2+i-\gamma+\gamma} \right)=O(y^{p_2+i+1-\gamma}) ,
$$
and from $\iota(\bos{H}^{-1}f)=0$ we deduce $p_2(\bos{H}^{-1}f)=p_2+i+1$.
\end{proof}

This notion of admissible function will be helpful to construct the approximate blow-up profile. The building blocks of this profile are the generators of the kernel of the iterates of $\bos{H}$.

\begin{lemma}\label{lem:profilsTi}(\emph{Generators of the kernel of $\bos{H}^i$}:) We recall that the numbers $\alpha$ and $g'$ are defined in \fref{intro:eq:def alpha}, \fref{eq:def:gain2}. Let $(\bos{T}_i)_{i\in \mathbb{N}}$ denote the sequence of profiles given by:
\begin{equation}
\bos{T}_0:=\bos{\Lambda} \bos{Q}, \ \ \bos{T}_{i+1}:=-\bos{H}^{-1}\bos{T}_{i}, \ i \in \mathbb{N} .
\end{equation}
Let $(\bos{\Theta}_i)_{i\in \mathbb{N}}$ be the associated sequence defined by:
\begin{equation}
\bos{\Theta}_i:=\bos{\Lambda} \bos{T}_i-(i-\alpha)\bos{T}_i, \ i\in \mathbb{N} .
\end{equation}
Then:
\begin{itemize}
\item[(i)] $\bos{T}_i$ is admissible of degree $(i,i,i\text{mod} \ 2)$.
\item[(ii)] $\bos{\Theta}_i$ is admissible of degree $(i,i-g',i,i \text{mod} \ 2)$.
\end{itemize}
\end{lemma}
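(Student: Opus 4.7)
The plan is to prove both parts by induction on $i$, using Lemma \ref{lem:actiondeHetH-1suradmissile} for the action of $\bos{H}^{-1}$ on admissible functions and, for (ii), a commutator identity between $\bos{H}$ and $\bos{\Lambda}$.

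For (i), the base case is $\bos{T}_0 = \bos{\Lambda}\bos{Q}$: its second component vanishes, so $\iota = 0$; $\Lambda^{(1)}Q(0) = \tfrac{2}{p-1} \neq 0$ combined with the smoothness of $Q$ yields $p_1 = 0$; and \fref{eq:degenerescencesoliton} together with \fref{linearized:eq:asymptotique W} and the definition \fref{linearized:eq:def derivees adaptees} of the adapted derivatives gives $(\Lambda^{(1)}Q)_k = O(y^{-\gamma-k})$, hence $p_2 = 0$. The induction step is immediate from $\bos{T}_{i+1} = -\bos{H}^{-1}\bos{T}_i$ and Lemma \ref{lem:actiondeHetH-1suradmissile}(ii), which shifts $p_1,p_2$ by $+1$ and flips $\iota$.

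For (ii), the key ingredient is the commutator identity
$$[\bos{H}, \bos{\Lambda}] = \bos{H} + \bos{W}_1, \qquad \bos{W}_1 \bos{u} := \begin{pmatrix} 0 \\ W_1 u^{(1)} \end{pmatrix}, \quad W_1 := 2V + y\partial_y V,$$
which follows from $[\Lambda^{(1)}, \Delta] = -2\Delta$, $[\Lambda^{(1)}, V]u = (y\partial_y V)u$ and $\Lambda^{(2)} = \Lambda^{(1)} + 1$. Crucially, Lemma \ref{lem:asymptoticdupotentiel}(i) yields $V = c_0/y^2 + O(y^{-2-\alpha})$ and the cancellation $2V + y\partial_y V = O(y^{-2-\alpha})$ produces the gain $\alpha$ in the decay of $W_1$. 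Inverting gives $[\bos{\Lambda}, \bos{H}^{-1}] = \bos{H}^{-1} + \bos{H}^{-1}\bos{W}_1\bos{H}^{-1}$, which with $\bos{T}_{i+1} = -\bos{H}^{-1}\bos{T}_i$ and the inductive hypothesis $\bos{\Lambda}\bos{T}_i = (i-\alpha)\bos{T}_i + \bos{\Theta}_i$ produces the recursion $\bos{\Theta}_{i+1} = -\bos{H}^{-1}\bos{\Theta}_i + \bos{H}^{-1}\bos{W}_1\bos{T}_{i+1}$. The base case $\bos{\Theta}_0 = \bos{\Lambda}^2\bos{Q} + \alpha\bos{\Lambda}\bos{Q}$ is handled by explicit asymptotics: from \fref{linearized:eq:asymptotique soliton} and $\Lambda^{(1)}(y^{-2/(p-1)}) = 0$ one gets $\Lambda^{(1)} Q = -\alpha a_1 y^{-\gamma} + O(y^{-\gamma-g})$, so that $\Lambda^{(1)}\Lambda^{(1)} Q = \alpha^2 a_1 y^{-\gamma} + O(y^{-\gamma-g})$ and the two leading $y^{-\gamma}$ terms in $\bos{\Theta}_0^{(1)}$ cancel; the value $\bos{\Theta}_0^{(1)}(0) = \tfrac{2\gamma}{p-1} \neq 0$ gives $p_1 = 0$, and the adapted-derivative decay is inherited from $A(y^{-\gamma-g}) \sim g\,y^{-\gamma-g-1}$ via \fref{linearized:eq:asymptotique W}.

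For the induction step, Lemma \ref{lem:actiondeHetH-1suradmissile}(ii) shows that $-\bos{H}^{-1}\bos{\Theta}_i$ is admissible of the desired degree $(i+1,\,i+1-g',\,(i+1)\,\text{mod}\,2)$. The coupling term $\bos{H}^{-1}\bos{W}_1\bos{T}_{i+1}$ vanishes when $i+1$ is odd (since then $T_{i+1}^{(1)} = 0$), and otherwise the decay $W_1 = O(y^{-2-\alpha})$ improves the $p_2$-index of $\bos{W}_1\bos{T}_{i+1}$ by $\alpha$ while flipping the position, so a second application of Lemma \ref{lem:actiondeHetH-1suradmissile}(ii) together with the inequality $g' \leq \alpha$ from \fref{eq:def:gain2} delivers the claimed degree. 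The main obstacle is the careful bookkeeping of the adapted asymptotics of this coupling term: one must verify, via a Leibniz-type expansion, that each adapted derivative $\mathcal{A}^k(W_1 T_{i+1}^{(1)})$ carries the expected power of $y$, obtained by combining the decay of $\partial_y^j W_1$ coming from Lemma \ref{lem:asymptoticdupotentiel} with the inductive asymptotics of $(T_{i+1})_{k-j}$.
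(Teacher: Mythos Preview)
Your proof is correct and follows the same core idea as the paper: the key commutator identity $[\bos{H},\bos{\Lambda}]=\bos{H}+\bos{W}_1$ with $W_1=2V+y\partial_yV=O(y^{-2-\alpha})$, together with Lemma~\ref{lem:actiondeHetH-1suradmissile}, drives the induction. The packaging differs slightly: you work at the vector level and step from $i$ to $i+1$ via the recursion $\bos{\Theta}_{i+1}=-\bos{H}^{-1}\bos{\Theta}_i+\bos{H}^{-1}\bos{W}_1\bos{T}_{i+1}$, whereas the paper separates odd and even $i$, observing for odd $i$ that $\Theta_i^{(2)}=\pm\Theta_{i-1}^{(1)}$ directly, and for even $i$ computing $\mathcal{L}\Theta_i^{(1)}=-\Theta_{i-2}^{(1)}+W_1T_i^{(1)}$ before inverting. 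Your route is arguably cleaner.

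One step you leave implicit deserves mention: the passage from $[\bos{H},\bos{\Lambda}]=\bos{H}+\bos{W}_1$ to $[\bos{\Lambda},\bos{H}^{-1}]=\bos{H}^{-1}+\bos{H}^{-1}\bos{W}_1\bos{H}^{-1}$ uses $\bos{H}^{-1}\bos{H}=\mathrm{Id}$, which for the formal inverse \fref{eq:definitioninverseformel} only holds modulo $\mathrm{Span}(\Lambda^{(1)}Q,\Gamma)$ on the first component. Equivalently, your recursion identifies $\bos{\Theta}_{i+1}$ with $\bos{H}^{-1}\bos{H}\bos{\Theta}_{i+1}$, and one must rule out a spurious $c\Lambda^{(1)}Q$ contribution. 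The paper does this explicitly by checking the behaviour at the origin: since $\Theta_{i+1}^{(1)}(y)\to 0$ and $\mathcal{L}^{-1}\mathcal{L}\Theta_{i+1}^{(1)}(y)\to 0$ as $y\to 0$ (both vanish to order $y^{i+1}$ for $i\ge 1$), while $\Lambda^{(1)}Q(0)\neq 0$ and $\Gamma\sim y^{-(d-2)}$, the integration constants vanish. This is easy but not automatic, and should be stated.
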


This lemma states that the $T_i$'s and $\Theta_i$'s have only one coordinate being non null, depending on the parity of $i$. We will then make the following abuse of notation (with respect to \fref{intro:eq:def vecteur}):
\be \label{linearized:eq:localisation Ti}
\bos{T}_{2i}= \begin{pmatrix} T_{2i} \\ 0  \end{pmatrix} , \ \bos{T}_{2i+1}= \begin{pmatrix} 0 \\  T_{2i+1}  \end{pmatrix}, \ \bos{\Theta}_{2i}= \begin{pmatrix} \Theta_{2i} \\ 0  \end{pmatrix} \ \text{and} \ \bos{\Theta}_{2i+1}= \begin{pmatrix} 0 \\  \Theta_{2i+1}  \end{pmatrix}
\ee

\begin{proof}[Proof of Lemma \ref{lem:profilsTi}] From the degenerescence \fref{eq:degenerescencesoliton} and the fact that $A\Lambda^{(1)} Q=0$, $\bos{\Lambda} \bos{Q}$ is admissible of degree $(0,0,0)$. Hence due to the properties of the action of $\bos{H}^{-1}$ on admissible functions, the previous Lemma \ref{lem:actiondeHetH-1suradmissile}, we get that $\bos{T}_i$ is admissible of degree $(i,i,i\text{mod}2)$.\\
\\
To prove the second part about the $\bos{\Theta}_i$'s we will procede by induction. The asymptotic behavior of the solitary wave \fref{eq:degenerescencesoliton} ensures that the property is true for $\bos{\Theta}_0=\bos{\Lambda}(\bos{\Lambda} \bos{Q})+\alpha\bos{\Lambda}\bos{Q}$. For $i$ odd we have:
$$
\begin{array}{r c l}
\bos{\Theta}_i&=&\begin{pmatrix}
0 \\ \Lambda^{(2)}T_i^{(2)}-(i-\alpha)T_i^{(2)}
\end{pmatrix} =\begin{pmatrix}
0 \\ -\left((\Lambda^{(1)}+1)T_{i-1}^{(1)}-(i-1+1-\alpha)T_{i-1}^{(1)}\right)
\end{pmatrix} \\
&=& \begin{pmatrix}
0 \\ -\Theta_{i-1}^{(1)}
\end{pmatrix} .
\end{array} 
$$
So if the property is true for $i$ even, it is true for $i+1$ from a direct check at the definition of the degree. Let us now assume that $i$ is even, $i\geq 2$. We compute the following relation:
\begin{equation}
\mathcal{L}(\Lambda^{(1)}u)=2\mathcal{L}u+\Lambda^{(1)}\mathcal{L} u+(2V+y.\nabla V)u .
\end{equation}
The asymptotic behavior of the potential (Lemma \ref{lem:asymptoticdupotentiel}), implies the improved decay:
\begin{equation}
2V+y.\nabla V=O\left( \frac{1}{y^{2+\alpha}} \right) .
\end{equation}
We then compute:
\begin{equation}
\mathcal{L}(\Theta_{i}^{(1)})=-\Theta_{i-2}^{(1)}+(2V+y.\nabla V)T_{i}^{(1)} .
\end{equation}
The induction hypothesis, together with the decay property of the potential and the degree of $\bos{T}_i$ give that $\bos{H}\bos{\Theta}_i$ is of degree $(i-1,i-1-g',1)$. As $0<g'\leq 2$ we have that $p_2(\bos{H}\bos{\Theta}_i)=i-1-g'\geq -1$ and we can apply the inversion Lemma \ref{lem:actiondeHetH-1suradmissile} about admissible functions: $\bos{H}^{-1}(\bos{H}\bos{\Theta}_i)$ is of degree $(i,i-g',0)$. One has $\mathcal{L}^{-1}\mathcal{L}(\Theta_i)=\Theta_i+a\Lambda^{(1)}Q+b\Gamma $, with $a$ and $b$ two integration constants. From the asymptotics $\Theta_i(y)\underset{y\rightarrow 0}{\rightarrow} 0$, $\mathcal{L}^{-1}\mathcal{L}(\Theta_i) \underset{y\rightarrow 0}{\rightarrow} 0$, $\Lambda^{(1)}Q(y)\underset{y\rightarrow 0}{\rightarrow} c>0$ and $\Gamma (y)\underset{y\rightarrow 0}{\rightarrow} +\infty$ one deduces $a=b=0$. This means that $\Theta_i=\mathcal{L}^{-1}\mathcal{L}(\Theta_i)$ is of degree $(i,i-g',0)$.
\end{proof}

In the following, we will have to deal with polynomial functions of the coefficients $b_i$. Knowing in advance that $b_i\approx b_1^i$ for the approximate blow-up profile\footnote{see Lemma \ref{lem:soldusystemd}.}, we have that $\prod b_i^{J_i}\approx b_1^{\sum iJ_i}$. Given a $L$-tuple $J$ of integers, we define:
\begin{equation}
|J|_1=\sum_1^L J_i, \  \text{and} \ |J|_2=\sum_1^L iJ_i .
\end{equation}

\begin{definition}[Homogeneous functions]\label{def:homogeneousfunctions}
$b$ denotes a $L$-tuple $(b_i)_{1\leq i \leq L}$. $p_1$ is an integer, $p_2$ is a real number, $\iota$ is an indice $\iota \in \{0;1\}$ and $p_3$ is an integer. We say that a function $\bos{S}(b,y)$ is homogeneous of degree $(p_1,p_2,\iota,p_3)$ if it can be written as a finite sum:
$$
\bos{S}=\sum_{J\in \mathcal{J}, \ |J|_2=p_3} \left( \prod_{i=1}^L b_i^{J_i} \bos{S}_J(y) \right) , 
$$
$\# \mathcal{J}<+\infty$, where for each $J$, $\bos{S}_J$ is an admissible function of degree $(p_1,p_2,\iota)$.
\end{definition}

Because of the asymptotics of the potential $W$, see \fref{lem:asymptoticdupotentiel}, asking that $\mathcal{A}^k f$ behave like $y^{-\gamma+k+p_2}$ at infinity is equivalent to say that $\partial_y^k f$ behaves the same way. As a consequence, the asymptotics can be multiplied, derived etc... which is the object of the following computational lemma. It is a straightforward application of Lemma \ref{annexe:lem:equivalence degre} from the Appendix.

\begin{lemma}[Calculus on homogeneous functions:]\label{lem:calculsurfonctionsadmissibles}
Let $\bos{f}=\begin{pmatrix}f\\ 0 \end{pmatrix}$, $\bos{g}=\begin{pmatrix}g\\ 0 \end{pmatrix}$ be homogeneous of degree\footnote{we just state the result for $\iota=0$ as in (NLW) the nonlinearity only acts on the first coordinate.} $(p_1,p_2,0,p_3)$ and $(p'_1,p'_2,0,p_3')$ ($p_1$ and $p_1'$ even). Then:
\begin{itemize}
\item[(i)]\emph{Multiplication:} the product $\bos{fg}:=\begin{pmatrix} fg \\ 0 \end{pmatrix}$ is an homogeneous profile of degree $(p_1+p_1',p_2+p_2'-\gamma,0,p_3+p_3')$.
\item[(ii)]\emph{Multiplication by the potentials involved in the analysis:} $\bos{fQ}^k:=\begin{pmatrix} fQ^k \\ 0 \end{pmatrix}$ is an homogeneous profile of degree $(p_1,p_2-k\frac{2}{p-1},0,p_3)$
\end{itemize}
\end{lemma}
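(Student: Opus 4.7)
The plan is to reduce everything to a statement about products of admissible functions, leveraging the explicit assumption that the appendix lemma \ref{annexe:lem:equivalence degre} converts the asymptotic behavior of adapted derivatives $\mathcal{A}^k$ into the asymptotic behavior of standard derivatives $\partial_y^k$. The key observation is that by Definition \ref{def:homogeneousfunctions}, an homogeneous function is simply a finite sum $\sum_J \prod_i b_i^{J_i} S_J(y)$ with $|J|_2 = p_3$, so distributing the product over the two finite sums we obtain
$$
fg = \sum_{J,J'} \left(\prod_i b_i^{J_i+J'_i}\right)\, (S_J T_{J'})(y),
$$
and since $|J+J'|_2 = |J|_2 + |J'|_2 = p_3 + p_3'$, both claims reduce to a product rule at the level of admissible functions.

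Thus I would first state and prove the following product rule: if $S$ is admissible of degree $(p_1,p_2,0)$ and $T$ is admissible of degree $(p_1',p_2',0)$ (both $p_1,p_1'$ even), then $ST$ is admissible of degree $(p_1+p_1',p_2+p_2'-\gamma,0)$. The position $\iota = 0$ is trivial. Near the origin, Taylor expanding $S = \sum_{2k\ge p_1} c_k y^{2k}$ and $T = \sum_{2k\ge p_1'} c_k' y^{2k}$ and multiplying gives an expansion in even powers starting at $y^{p_1+p_1'}$, which matches Definition \ref{linearized:def:fonctions admissibles}(ii). For the behavior at infinity, the asymptotic condition \fref{linearized:eq:asymptotique infini fonction admissible} on admissible functions — expressed through adapted derivatives $\mathcal{A}^k$ — is equivalent by Lemma \ref{annexe:lem:equivalence degre} to the same estimate on standard derivatives $\partial_y^k$. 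On standard derivatives the classical Leibniz rule applies, so
$$
|\partial_y^k(ST)| \leq \sum_{i+j=k}\binom{k}{i}|\partial_y^i S|\,|\partial_y^j T| = O\!\left(y^{(p_2-\gamma-i)+(p_2'-\gamma-j)}\right) = O\!\left(y^{(p_2+p_2'-\gamma)-\gamma-k}\right),
$$
which is exactly the required decay for degree $(p_1+p_1',p_2+p_2'-\gamma,0)$. Re-translating through \ref{annexe:lem:equivalence degre} concludes (i).

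For (ii), it suffices to verify that $Q^k$ itself behaves like an admissible factor of degree $(0,\gamma - k\tfrac{2}{p-1},0)$ in the asymptotic sense used by the multiplication rule. At the origin, $Q(0)=1$ and $Q$ is radial and smooth, so $Q^k$ has a power series in even powers of $y$ starting at constant order — hence multiplication by $Q^k$ does not change the vanishing order of $f$ near $0$. At infinity, the asymptotic \fref{linearized:eq:asymptotique soliton} gives $\partial_y^j(Q^k) = O(y^{-2k/(p-1)-j})$, and the same Leibniz argument as above gives
$$
|\partial_y^j(fQ^k)| = O\!\left(y^{p_2-\gamma-j-2k/(p-1)}\right),
$$
which identifies the resulting degree as $(p_1,p_2 - k\tfrac{2}{p-1},0,p_3)$ after restoring the polynomial prefactor in $b$.

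The only non-bookkeeping step is the passage between adapted and standard derivatives at infinity, which is exactly the content of the cited appendix lemma \ref{annexe:lem:equivalence degre}; this is plausible because $A = -\partial_y + W$ and $A^* = \partial_y + \tfrac{d-1}{y} + W$ differ from $\pm\partial_y$ only by multiplication operators that decay like $O(1/y)$ at infinity by \fref{linearized:eq:asymptotique W}, so each such substitution produces a lower-order remainder compatible with the claimed decay. Once that equivalence is available, the rest of the proof is indeed the straightforward computation the authors announce.
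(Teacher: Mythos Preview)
Your proof is correct and follows exactly the approach the paper indicates: the paper gives no detailed proof here, merely stating that the lemma ``is a straightforward application of Lemma \ref{annexe:lem:equivalence degre} from the Appendix,'' and you have filled in precisely those details (reduction to admissible profiles via the finite-sum structure, Leibniz at infinity after translating adapted to standard derivatives, and the Taylor expansion at the origin).
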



\subsection{Slowly modulated blow profiles and growing tails}

We now construct an approximate blow up profile using the tools we previously displayed. First, we construct an approximate blow-up profile generating a blow up locally around the origin, but far away nonetheless it is irrelevant because it has polynomial growth (Proposition \ref{pr:constructionprofilnoncoupe}). Secondarily we cut this profile in a relevant zone to avoid this problem (Proposition \ref{pr:profilapprochecoupe}). This cutting procedure creates additional error terms which will be estimated.\\
\\ 
To manipulate the topological properties of the dynamics we will make use of the following adapted norms for $k\in \mathbb{N}$:
\begin{equation} \label{linearized:eq:def normes adaptees}
\begin{array}{r c l}
\parallel \bos{u}\parallel_k^2 &=& \parallel u^{(1)}_{k_0+1+k}\parallel_{L^2}^2 +\parallel u^{(2)}_{k_0+k}\parallel_{L^2}^2 \\
&=& \int u^{(1)}\mathcal{L}^{k_0+1+k}u^{(1)} + \int u^{(2)}\mathcal{L}^{k_0+k}u^{(2)} ,
\end{array} 
\end{equation}
involving the $k$-th adapted derivative of $u$ defined in \fref{linearized:eq:def derivees adaptees}. We will also the local version of these norms:
\begin{equation} \label{linearized:eq:def normes adaptees locales}
\parallel \bos{u} \parallel_{k,(y\leq M)}^2=\parallel u^{(1)}_{k_0+1+k}\parallel_{L^2(|y|\leq M)}^2 +\parallel u^{(2)}_{k_0+k}\parallel_{L^2(|y|\leq M)}^2 .
\end{equation}
As the scale $\lambda$ of our solution is changing with time, we want to work with the appropriate space variable $y=\frac{r}{\lambda}$. The appropriate renormalized time is:
\be \label{linearized:eq:def s}
s(t)=s_0+\int_{t_0}^t \frac{1}{\lambda(\tau)}d\tau .
\ee
Let $\bos{u}$ be a solution of (NLW) on the time interval $[0,T[$, and $\lambda:[0,T[\rightarrow \mathbb{R}^*_+$ be a $C^1$ function. We define the associated renormalized solution by: 
$$
\bos{v}(y,s)=\bos{u}_{\lambda(t)}(y,t) .
$$
The time evolution of $\bos{v}$ is then given by:
\be \label{linearized:eq:def NLW renormalisee}
\partial_s \bos{v} =\bos{F}(\bos{v})+\frac{\lambda_s}{\lambda} \bos{\Lambda} \bos{v} .
\ee
It is often easier to work with this renormalized flow.\\
\\

In the next proposition we state the existence of a primary blow up profile. This construction is related to the so-called center manifolds. The idea is to construct a manifold, tangent to the vector space of the generalized kernel of the linearized operator at the point $\bos{Q}_{\lambda}$, displaying a special dynamics. At the linear level, this dynamics is driven by the linearized operator. At the quadratic level it is driven by the scaling. The non linear terms only affect the dynamics at higher order, thus being invisible as we work in a perturbative setting\footnote{this point will be made clearer when studying the full non-linear dynamics.}. The dynamics on this manifold is then easy to write down.

\begin{proposition}\label{pr:constructionprofilnoncoupe}\emph{(Construction of the approximate profile)} Let a very large odd integer \footnote{we take $L$ to be odd just to know the coordinates of the objects we are manipulating, but it is not important.}:
\begin{equation} \label{linearized:eq:def L}
L\gg 1
\end{equation}
and let $b=(b_1,...,b_L)$ denote a $L$-tuple of real numbers, with $b_1>0$. There exists a $L$-dimensional manifold of $C^{\infty}$ radially symmetric functions $(Q_{b})_{b\in \mathbb{R}_+^* \times \mathbb{R}^{L-1}}$ satisfying the following identity:
\begin{equation} \label{linearized:eq:dynamique Qb}
\bos{F}(\bos{Q}_b)= b_1\bos{\Lambda} \bos{Q}_b+\sum_{i=1}^{L} (-(i-\alpha)b_1b_i+b_{i+1}) \frac{\partial \bos{Q}_b}{b_i} -\bos{\psi}_b,
\end{equation}
where we used the convention $b_{L+1}=0$. $\bos{\psi}_b$ stands for a higher order remainder term situated on the second coordinate:
\be \label{linearized:eq:position psib}
\bos{\psi}_b= \begin{pmatrix} 0 \\ \psi_b \end{pmatrix} .
\ee
Let $B_1$ be defined by \fref{eq:def:B1etB0}. In the regime in which $|b_i|\lesssim|b_1|^i$, $0<b_1\ll 1$, it enjoys the following estimates (the adapted norm is defined by  \fref{linearized:eq:def normes adaptees locales}):

\begin{itemize}

\item[(i)]\emph{Global\footnote{here the zone $y\leq B_1$ is called global because we will cut the profile $Q_b$ in the next section at this precise location.} bounds:} For $0\leq j\leq L$:
\begin{equation}
\parallel \bos{\psi}_b \parallel_{j,(y\leq 2B_1)}^2 \leq C(L) b_1^{2j+2+2(1-\delta_0)+2g'-C\eta} 
\end{equation}

\item[(ii)]\emph{Local improved bounds:}
\begin{equation} \label{linearized:eq:estimations locales non coupe}
\forall j\geq 0, \ \forall B >1, \ \int_{y\leq B} |\nabla^{j}\psi_b^{(1)}|^2+|\nabla^{j}\psi_b^{(2)}|^2 \leq C(j,L)B^{C(j,L)} b_1^{2L+6} .
\end{equation}
\end{itemize}

\noindent The profile $\bos{Q}_b$ is of the form:
\begin{equation}\label{eq:def:Qb}
\bos{Q}_{b}:=\bos{Q}+\bos{\alpha}_{b}, \ \bos{\alpha}_{b}:=\sum_{i=1}^L b_i\bos{T}_i+\sum_{i=2}^{L+2} \bos{S}_i ,
\end{equation}
where $\bos{T}_i$ is given by Lemma \ref{lem:profilsTi}, and the $\bos{S}_i$'s is are homogeneous functions in the sense of definition \ref{def:homogeneousfunctions}:
$$
\left\{ \begin{array}{l l}
\bos{S}_i:=\bos{S}_i(b,y), \ 1\leq i \leq L+2 \\
\bos{S}_1=0
\end{array}
\right. ,
$$
with:
\begin{equation} \label{linearized:eq:degre Si}
\left\{ \begin{array}{l l}
deg(\bos{S}_i)=(i,i-g',i \ \text{mod}2,i) \\
\frac{\partial \bos{S}_i}{\partial b_j}=0 \ \text{for} \ 2\leq i\leq j \leq L
\end{array}
\right. .
\end{equation}
\end{proposition}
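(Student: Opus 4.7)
The plan is to construct $\bos{Q}_b$ by a perturbative expansion in powers of $b$, introducing corrections $\bos{S}_i$ iteratively to cancel the residual order by order. I write $\bos{\alpha}_b = \sum_{i=1}^L b_i \bos{T}_i + \bos{\alpha}_b^{(S)}$ with $\bos{\alpha}_b^{(S)} := \sum_{i=2}^{L+2} \bos{S}_i$, substitute into \eqref{linearized:eq:dynamique Qb}, and expand $\bos{F}(\bos{Q}+\bos{\alpha}_b) = -\bos{H}\bos{\alpha}_b + \bos{NL}$. Using $\bos{H}\bos{T}_i = -\bos{T}_{i-1}$ for $i \geq 1$ (with $\bos{T}_0 = \bos{\Lambda}\bos{Q}$) together with $\bos{\Lambda}\bos{T}_i = (i-\alpha)\bos{T}_i + \bos{\Theta}_i$ from Lemma \ref{lem:profilsTi}, the pure $\bos{T}_i$ contributions cancel exactly: the $b_{i+1}\bos{T}_i$ part of $\sum b_{i+1}\frac{\partial \bos{Q}_b}{\partial b_i}$ matches the $b_{i+1}\bos{T}_i = -b_{i+1}\bos{H}\bos{T}_{i+1}$ produced by the linear part, and the $(i-\alpha)b_1 b_i \bos{T}_i$ from $b_1 \bos{\Lambda}\bos{Q}_b$ matches the term $-(i-\alpha)b_1 b_i \frac{\partial \bos{Q}_b}{\partial b_i}$. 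The equation thus collapses to
\[
\bos{\psi}_b \;=\; \bos{H}\bos{\alpha}_b^{(S)} \;+\; \sum_{i=1}^L b_1 b_i \bos{\Theta}_i \;+\; b_1 \bos{\Lambda}\bos{\alpha}_b^{(S)} \;+\; \sum_{i=1}^L \bigl(-(i-\alpha)b_1 b_i + b_{i+1}\bigr)\frac{\partial \bos{\alpha}_b^{(S)}}{\partial b_i} \;-\; \bos{NL}.
\]

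I then define the $\bos{S}_i$ inductively on $i = 2, 3, \ldots, L+2$: at step $i$, collect every contribution of $b$-homogeneity exactly $i$ appearing on the right-hand side and involving only previously constructed $\bos{S}_j$ with $j < i$, and set $\bos{S}_i := -\bos{H}^{-1}$ of this sum. For $i=2$ the only obstruction is $b_1^2 \bos{\Theta}_1$ (the Taylor term from $\bos{NL}$ at order $2$ vanishes because $\bos{T}_1$ sits in the second coordinate, so $\alpha_b^{(1)}$ starts at order $2$), giving $\bos{S}_2 = -\bos{H}^{-1}(b_1^2 \bos{\Theta}_1)$. For $i \geq 3$ the obstruction also contains $b_1 \bos{\Lambda}\bos{S}_{i-1}$, lower-order derivative terms in $\frac{\partial \bos{S}_j}{\partial b_k}$, and the appropriate Taylor slice of $\bos{NL}$; by Lemma \ref{lem:calculsurfonctionsadmissibles} each is a homogeneous function of degree $(i-1, i-1-g', (i-1)\bmod 2, i)$, and Lemma \ref{lem:actiondeHetH-1suradmissile} then lifts it via $\bos{H}^{-1}$ to the required degree $(i, i-g', i \bmod 2, i)$ of \eqref{linearized:eq:degre Si}. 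The constraints $\bos{S}_1 = 0$ and $\partial_{b_j}\bos{S}_i = 0$ for $j \geq i$ are automatic, the first because no obstruction of homogeneity one survives the cancellations, the second because $\bos{S}_i$ is built only from $b_1, \ldots, b_{i-1}$.

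After the induction, the residual $\bos{\psi}_b$ is precisely the sum of all contributions of $b$-homogeneity at least $L+3$: the remaining tail pieces $b_1 b_i \bos{\Theta}_i$ for $i$ close to $L$, the $b_1 \bos{\Lambda}$ and $\partial_{b_j}$ actions on $\bos{S}_{L+1}, \bos{S}_{L+2}$, and the Taylor tail of $\bos{NL}$ of order $\geq L+3$. For the global bound (i), the leading contribution carries a $\bos{\Theta}$-type profile whose asymptotic decay at infinity contains the gain $g'$; computing the adapted-derivative norm \eqref{linearized:eq:def normes adaptees locales} on $\{y \leq 2B_1\}$ with $B_1 = b_1^{-(1+\eta)}$ and integrating the admissible asymptotic \eqref{linearized:eq:asymptotique infini fonction admissible} against the weight $y^{d-1}$ yields a loss controlled by $B_1^{C\eta}$ and a gain of $b_1^{2(1-\delta_0) + 2g'}$ (the $(1-\delta_0)$ coming from the fact that the integration dimension is $d = 2\gamma + 2k_0 + 2\delta_0$), producing the announced exponent $2j + 2 + 2(1-\delta_0) + 2g' - C\eta$. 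For the improved local bound (ii), every term in $\bos{\psi}_b$ has $b$-homogeneity at least $L+3$ and its profile is admissible hence of polynomial growth, so on $\{y \leq B\}$ it is pointwise bounded together with all its derivatives by $C(j,L) B^{C(j,L)} b_1^{L+3}$, whence \eqref{linearized:eq:estimations locales non coupe} upon squaring and integrating in dimension $d$.

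The main obstacle is the careful order-by-order bookkeeping: I must verify that at stage $i$ no obstruction of strictly smaller homogeneity has been overlooked, that the Taylor slice $\sum_{k\geq 2}\binom{p}{k} Q^{p-k} (\alpha_b^{(1)})^k$ of $\bos{NL}$ remains a homogeneous admissible function (using that $\alpha_b^{(1)}$ starts at $b$-homogeneity $2$, so the $k$-th slice has homogeneity $\geq 2k$ and at worst produces degrees handled by the multiplication rule of Lemma \ref{lem:calculsurfonctionsadmissibles}), and above all that the gain $g'$ is not spoiled when $\bos{H}^{-1}$ is iterated: this is where it is crucial that $\bos{\Theta}_i$ enjoys strictly better decay than $\bos{T}_i$, so that the inductive $\bos{S}_i$ also inherit the $g'$ gain, ultimately ensuring the sharp exponent in (i).
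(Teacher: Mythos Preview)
Your proposal is correct and follows the same inductive construction as the paper: define $\bos{S}_i = -\bos{H}^{-1}$ of the homogeneity-$i$ obstruction, check the degree $(i,i-g',i\bmod 2,i)$ via Lemmas~\ref{lem:actiondeHetH-1suradmissile} and~\ref{lem:calculsurfonctionsadmissibles}, and estimate the residual of homogeneity $\geq L+3$. One small correction to your description of the residual: every $b_1 b_i \bos{\Theta}_i$ term has homogeneity $i+1\leq L+1$ and is therefore absorbed into $\bos{S}_{i+1}$ during the induction, so $\bos{\psi}_b$ consists only of $b_1\bos{\Lambda}\bos{S}_{L+2}$, the terms $(-(j-\alpha)b_1b_j+b_{j+1})\partial_{b_j}\bos{S}_{L+2}$, and the nonlinear tail $\bos{R}$ --- this does not affect your bounds, since $\bos{S}_{L+2}$ inherits the $g'$ gain through the induction exactly as you describe.
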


\begin{remark}
Because of the form \fref{eq:def:Qb} of the profile $\bos{Q}_b$, including its time evolution in \fref{linearized:eq:dynamique Qb} yields:
\begin{equation}
\partial_s \bos{Q}_b-\bos{F}(\bos{Q}_b)+b_1\bos{\Lambda} \bos{Q}_b= \bos{Mod}(t) +\bos{\psi}_b ,
\end{equation}
where:
\begin{equation} \label{linearized:eq:def Mod}
\bos{Mod}(t)=\sum_{i=1}^L [b_{i,s}+(i-\alpha)b_1b_i-b_{i+1}]\left[ \bos{T}_i+\sum_{j=i+1}^{L+2}\frac{\partial \bos{S}_j}{\partial b_i} \right] .
\end{equation}
From the homogeneity property of the $\bos{S}_i$'s \fref{linearized:eq:degre Si}, we have the following position depending on the parity of $i$, and make the abuse of notation (regarding \fref{intro:eq:def vecteur}):
\be \label{linearized:eq:position Si}
\bos{S}_{2i}=\begin{pmatrix} S_{2i} \\ 0 \end{pmatrix}, \ \bos{S}_{2i+1}=\begin{pmatrix} 0 \\ S_{2i+1} \end{pmatrix}.
\ee
\end{remark}

\begin{proof}[Proof of Proposition \ref{pr:constructionprofilnoncoupe}]
\underline{Step 1:} Computation of the error. We take a profile having the form \fref{eq:def:Qb} and compute the following identity:
$$
-\bos{F}(Q_b)+b_1\bos{\Lambda} \bos{Q}_b=\bos{A}_1-\bos{A}_2 ,
$$
with:
$$
\begin{array}{l l}
\bos{A}_1:=b_1\bos{\Lambda} \bos{Q} + \sum_{i=1}^L[\bos{T}_i+b_i\bos{H}\bos{T}_i+b_1b_i\bos{\Lambda} \bos{T}_i]+\sum_{i=2}^{L+2}[\bos{H} \bos{S}_i+b_1 \bos{\Lambda} \bos{S}_i], \\
\bos{A}_2:=\begin{pmatrix}
0 \\ f(Q+\alpha_b^{(1)})-f(Q)+f'(Q)\alpha_b^{(1)} 
\end{pmatrix}.
\end{array} 
$$
Knowing in advance the fact that $S_i\sim b_1^i$ and $b_i\sim b_1^i$ we rearange all the term according to the power of $b_1$:
$$
\begin{array}{r c l}
\bos{A}_1&=&  b_1(\bos{\Lambda} \bos{Q} +\bos{H}\bos{T}_1)  +\sum_1^{L-1}[b_1b_i\bos{\Lambda} \bos{T}_i+b_{i+1}\bos{H}\bos{T}_{i+1}+\bos{H}\bos{S}_{i+1}+b_1\bos{\Lambda} \bos{S}_{i}] \\
&& + b_1b_L \bos{\Lambda} \bos{T}_L +\bos{H}\bos{S}_{L+1}+b_1\bos{\Lambda} \bos{S}_{L}  + b_1 \bos{\Lambda} \bos{S}_{L+1} +\bos{H}\bos{S}_{L+2}  +b_1\bos{\Lambda} \bos{S}_{L+2} \\
&=& \sum_1^{L-1}[b_1b_i\bos{\Lambda} \bos{T}_i-b_{i+1}\bos{T}_i+\bos{H}\bos{S}_{i+1}+b_1\bos{\Lambda} \bos{S}_{i}] \\
&&+ b_1b_L \bos{\Lambda} \bos{T}_L +\bos{H}\bos{S}_{L+1}+b_1\bos{\Lambda} \bos{S}_{L}  + b_1 \bos{\Lambda} \bos{S}_{L+1} +\bos{H}\bos{S}_{L+2}  +b_1\bos{\Lambda} \bos{S}_{L+2}. \\
\end{array} 
$$
Because we have assumed $p$ to be an integer, and from the localization of the $\bos{T}_i$'s \fref{linearized:eq:localisation Ti}, we can expand\footnote{For the moment we include all the $S_i^{(1)}$ because we still have not proved their localization.} $\bos{A}_2$ as a sum of polynomials of order higher or equal to 2:
$$
A_2^{(2)}=\sum_{j=2}^{p} C_j Q^{p-j}(\alpha_b^{(1)})^j=\sum_{j=2}^{p} C_j Q^{p-j}\left( \sum_{i=2, \ i \ \text{even}}^{L-1}b_iT_i+\sum_{i=2}^{L+2} S_i^{(1)}   \right)^j  .
$$
Again, we reorder these polynomials according to:
$$
A_2^{(2)}= \sum_{i=2}^{L+2}P_i+R .
$$
where:
$$
P_i=\sum_{j=2}^{p}C_jQ^{p-j} \left( \sum_{J, |J|_1=j, |J|_2=i}  \prod_{k=2, \ k \ \text{even}}^{L-1} b_i^{J_k}T_k^{J_k} \prod_{k=2}^{L+2} (S_k^{(1)})^{\tilde{J}_k} \right) ,
$$
where here $J=(J_2,...,J_{L-1},\tilde{J}_2,...,\tilde{J}_{L+2})$ and the way to count the powers of $b_1$ is: $|J|_2=\sum_{k=1}^{\frac{L-1}{2}} 2kJ_{2k}+\sum_{k=1}^{L+2} k\tilde{J}_k$. The remainder is:
$$
R=\sum_{j=2}^{p}C_jQ^{p-j} \sum_{J, |J|_1=j, |J|_2\geq L+3} \left(\prod_{k=2, \ k \ \text{even}}^{L-1} b_k^{J_k}T_k^{J_k} \prod_{k=1}^{L+2} (S_k^{(1)})^{\tilde{J}_k} \right) .
$$
We make an abuse of notation by denoting $\bos{P}_i:=\begin{pmatrix} 0 \\ P_i\end{pmatrix}$ and $\bos{R}:=\begin{pmatrix} 0 \\ R \end{pmatrix}$. The error term $\bos{\psi}_b$ has then the following expression (anticipating that $\frac{\partial \bos{S}_j}{\partial b_i}=0 \ \text{for} \ j\leq i$):
\be \label{linearized:eq: expression psib}
\begin{array}{r c l}
\bos{\psi}_b&=& \sum_{i=1}^{L} (-(i-\alpha)b_1b_i+b_{i+1})\frac{\partial \bos{Q}_b}{\partial b_i}+\bos{A}_1-\bos{A}_2 \\
&=& \sum_1^L (-(i-\alpha)b_1b_i+b_{i+1})\left[\bos{T}_i+\sum_{j=i+1}^{L+2} \frac{\partial \bos{S}_j}{\partial b_i}\right]+\bos{A}_1-\bos{A}_2 \\
&=& \sum_1^L [\bos{H}(\bos{S}_{i+1})+b_1b_i\bos{\Theta}_i+b_1\bos{\Lambda} \bos{S}_i + \bos{P}_{i+1}+\sum_{j=2}^{i-1}((j-\alpha)b_1b_j-b_{j+1})\frac{\partial \bos{S}_i}{\partial b_j}] \\
&& + \bos{H}(\bos{S}_{L+2})+b_1 \bos{\Lambda} \bos{S}_{L+1}+\bos{P}_{L+2}+\sum_{j=2}^L (-(j-\alpha)b_1b_j+b_{j+1})\frac{\partial \bos{S}_{L+1}}{\partial b_i} \\
&& + b_1 \bos{\Lambda} \bos{S}_{L+2}+\sum_{j=2}^L (-(j-\alpha)b_1b_j+b_{j+1})\frac{\partial \bos{S}_{L+2}}{\partial b_i} +\bos{R}_1 .
\end{array} 
\ee

\underline{Step 2:} Expression of the $\bos{S}_i$'s, simplification of $\bos{\psi}_b$. We define the $\bos{S}_i$'s by induction, in order to cancel the terms with a power of $b_1$ less than $L+2$ in \fref{linearized:eq: expression psib}:
\begin{equation} \label{linearized:eq:expression Si}
\left\{ \begin{array}{l l}
\bos{S}_1=0 ,\\
\bos{S}_{i}=-\bos{H}^{-1}(\bos{\Phi} _i) \ \text{for} \ 2\leq i \leq L+2,
\end{array}
\right. 
\end{equation}
with the following expression for the profiles $\bos{\Phi}_i$:
\begin{equation} \label{linearized:eq:def Phii}
\left\{ \begin{array}{l l}
\bos{\Phi}_{i+1}=b_1b_i\bos{\Theta}_i+b_1\bos{\Lambda} \bos{S}_i + \bos{P}_{i+1} +\sum_{j=1}^{i-1}(-(j-\alpha)b_1b_j-b_{j+1})\frac{\partial \bos{S}_i}{\partial b_j} \ \text{for} \ 1\leq i \leq L, \\
\bos{\Phi}_{L+2}=b_1\bos{\Lambda} \bos{S}_{L+1} +\bos{P}_{L+2}+\sum_{j=1}^{L-1}(-(j-\alpha)b_1b_j-b_{j+1})\frac{\partial \bos{S}_{L+1}}{\partial b_j} .
\end{array} \right.
\end{equation}
The $\bos{S}_i$'s being defined by \fref{linearized:eq:expression Si}, $\bos{\psi}_b$ has now the following expression:
\be \label{linearized:eq:expression psib 2}
\bos{\psi}_b= b_1 \bos{\Lambda} \bos{S}_{L+2}+\sum_{j=1}^L (-(j-\alpha)b_1b_j+b_{j+1})\frac{\partial \bos{S}_{L+2}}{\partial b_i} +\bos{R} .
\ee

\underline{Step 3:} Properties of the $\bos{S}_i$'s. We claim the following facts (we recall that the homogeneity is defined in Definition \ref{def:homogeneousfunctions}):
\begin{itemize}
\item[(i)] $\bos{S}_i$ is homogeneous of degree $(i,i-g',i \ \text{mod}2,i)$
\item[(ii)] $\bos{P}_i=0$ for $i$ odd,
\item[(iii)] the condition $\frac{\partial \bos{S}_j}{\partial b_i}=0$ for $j\leq i$ is fullfiled.
\end{itemize}
The proof of the fact that $\bos{P}_i=0$ for $i$ odd is an easy induction left to the reader. We will also prove the two other facts by induction. For $i=2$ we have:
$$
\bos{S}_2=\bos{H}^{(-1)}(b_1^2\bos{\Theta}_1+\bos{P}_2) ,
$$
and it is straightforward to check that $\bos{P}_2=0$. Hence from the result about the $\bos{\Theta}_i$'s given by Lemma \ref{lem:profilsTi}, we have that $\bos{S}_2$ is of degree $(2,2-g',0,2)$. It is also clear from the previous identity that $\frac{\partial S_2}{\partial b_i}=0$ for $2\leq i \leq L$.\\
\\
We now suppose $i\geq 3$, and that the properties (i) and (iii) are true for all $2\leq j<i$, which is our induction hypothesis. We look at all the terms in the right hand side of \fref{linearized:eq:def Phii}. $b_1b_{i-1} \bos{\Theta}_{i-1}$ is of degree $(i-1,i-1-g',i-1\text{mod}2,i)$. By the induction hypothesis, $b_1\bos{\Lambda} \bos{S}_{i-1}$ is of degree $(i-1,i-1-g',i-1 \ \text{mod}2,i)$, and so is the profile $(-(j-\alpha)b_1b_j-b_{j+1})\frac{\partial \bos{S}_{i-1}}{\partial b_j}$. If $i$ is odd, $\bos{P}_i=0$ and there is nothing to prove. If $i$ is even, from the position of the $\bos{T}_i$'s \fref{linearized:eq:localisation Ti}, and the position \fref{linearized:eq:position Si} of the $\bos{S}_j$'s for $j<i$ given by the induction hypothesis (i), $\bos{P}_{i}$ is a linear combination of terms of the form:
$$
Q^{p-j}\prod_{k<i, \ k \ \text{even}} b_k^{J_k}T_k^{J_k} \prod_{k<i, \ k \ \text{even}} S_k^{\tilde{J}_k} ,
$$
for $2 \leq j\leq p$, $|J|_1=j$ and $|J|_2=i$. From the induction hypothesis and the Calculus Lemma for admissible functions \ref{lem:calculsurfonctionsadmissibles}, we deduce the asymptotics:
$$
\begin{array}{r c l}
Q^{p-j}\underset{k<i, \ k \ \text{even}}{\prod} b_k^{J_k}T_k^{J_k} \underset{k<i, \ k \ \text{even}}{\prod} S_k^{\tilde{J}_k} &=& O\left(b_1^{i}\frac{1}{1+y^{(p-j)\frac{2}{p-1}+\sum J_k(\gamma-k) +\sum \tilde{J}_k(\gamma-k+g')}}\right) \\
&=& O\left(b_1^{i}\frac{1}{1+y^{2+\frac{2}{p-1}+j\alpha+\sum \tilde{J}_kg'-i}} \right)\\
&=& O\left( b_1^i \frac{1}{1+y^{2+\gamma +(j-1)\alpha+\sum \tilde{J}_k g'-i}} \right),
\end{array}
$$
which adapts for higher derivatives (ie deriving $k$ times the left hand side amounts to divide the right hand side by $y^k$). As $j\geq 2$ and $\alpha\geq 2 \geq g'$ we conclude that $\bos{P}_i$ is of degree $(i-1,i-1-g',1,i)$ (the expansion at the origin can be checked the same way). In this step, so far, we have proven that $\bos{\Phi}_i$ is of degree $(i-1,i-1-g',i-1 \ \text{mod}2,i)$, hence from the inversion Lemma \ref{lem:actiondeHetH-1suradmissile} $\bos{S}_i$ is of degree $(i,i-g',i\text{mod}2,i)$.\\

\underline{Step 4:} Bounds for the error term. We now turn to the expression of the error $\bos{\psi}_b$ given by $\fref{linearized:eq:expression psib 2}$, and estimate all terms in the right hand side. We showed in step 3 that $\bos{S}_{L+2}$ is of degree $(L+2,L+2-g',L+2 \ \text{mod}2,L+2)$. As $L$ is odd, and as $\bos{R}$ is situated on the second coordinate we obtain the localization of $\bos{\psi}_b$:
$$
\bos{\psi}_b=\begin{pmatrix}
0 \\
b_1 \Lambda^{(2)}S_{L+2}^{(2)}+\sum_{j=1}^L(-(j-\alpha)b_1b_j+b_{j+1})\frac{\partial S_{L+2}^{(2)}}{\partial b_i}+R
\end{pmatrix} .
$$
We start by estimating the first two terms. We already know that $b_1\bos{\Lambda} \bos{S}_{L+2}$ and $\sum_{j=1}^L(-(j-\alpha)b_1b_j+b_{j+1})\frac{\partial \bos{S}_{L+2}^{(2)}}{\partial b_i}$ are of degree $(L+2,L+2-g',1,L+3)$. This leads to the following estimates (the local adapted norm was defined in \fref{linearized:eq:def normes adaptees locales}):
$$
\begin{array}{r c l}
& \parallel b_1\bos{\Lambda} \bos{S}_{L+2}+\sum_{j=1}^L(-(j-\alpha)b_1b_j+b_{j+1})\frac{\partial \bos{S}_{L+2}^{(2)}}{\partial b_i} \parallel_{j,(\leq B_1)}^2 \\
\leq& C(L) \int_0^{B_1} \left|\frac{|b_1|^{L+3}}{y^{\gamma-(L+2-g')+1+k_0+j}}\right|^2 y^{d-1}dy \\
=& C(L) b_1^{2L+6}\int_0^{B_1} y^{2\delta_0-2g'+2L+2-2j-1}dy = C(L)b_1^{2j+2+2(1-\delta_0)+2g'}.\\
\end{array} 
$$
The integral in the right hand side is always divergent as $j \leq L$, and as $1+\delta_0-g'\geq 0$ (see the definition of $g'$ \fref{eq:def:gain2}, the presence of $1+\delta_0$ was made to produce this result). We now prove the local estimates. We recall that we proved in step 3 that $b_1\bos{\Lambda} \bos{S}_{L+2}+\sum_{j=1}^L(-(j-\alpha)b_1b_j+b_{j+1})\frac{\partial \bos{S}_{L+2}}{\partial b_i}$ is homogeneous of degree $p_3=L+3$. This means that:
$$
b_1\bos{\Lambda} S_{L+2}+\sum_{j=1}^L(-(j-\alpha)b_1b_j+b_{j+1})\frac{\partial S_{L+2}}{\partial b_i} = \sum_{|J|_2= L+3}b^J f_J ,
$$
for a finite number of functions $f_J$ such that $|\partial_y^kf_J|\lesssim y^{-\gamma+L+2-1-g'-k}$ at infinity, and with $b^J=\prod b_i^{J_i}$. Hence the brute force upper bound:
$$
\left| \partial_y^k \left(b_1\Lambda S_{L+2}+\sum_{j=1}^L(-(j-\alpha)b_1b_j+b_{j+1})\frac{\partial S_{L+2}^{(2)}}{\partial b_i}\right)\right| \lesssim b_1^{L+3} (1+y)^{-\gamma+L+2-1-g'-k}
$$
which implies the local bound \fref{linearized:eq:estimations locales non coupe} for this term. We now turn to the bounds for the $\bos{R}$ term. Thanks to the homogeneity property of the $\bos{S}_i$'s, $R$ is of the form:
$$
R=\sum_{|J|_2\geq L+3} \prod_{i=1}^L b_i^{J_i} g_J ,
$$
for a finite number of functions $g_J$ whose derivatives have polynomial growth at infinity. This directly implies the local bounds \fref{linearized:eq:estimations locales non coupe} for this term. For the global bounds, we rewrite $R$ as a linear sum of terms of the form:
$$
Q^{p-j} \left(\prod_{i=2, \ i \ \text{even}}^{L} b_i^{J_i}T_i^{J_i} \prod_{i=2, \ i \ \text{even}}^L S_i^{\tilde{J}_i} \right) ,
$$
for $|J|_2\geq L+3$ and $2\leq j \leq p$. Using again the Calculus Lemma for admissible functions \ref{lem:calculsurfonctionsadmissibles}, each term has the asymptotic behavior:
$$
\begin{array}{r c l}
Q^{p-j} \left(\prod b_i^{J_i}T_i^{J_i} \prod S_i^{\tilde{J}_i} \right) &=& O\left( \frac{b_1^{|J|_2}}{1+y^{\frac{2}{p-1}(p-j)+\sum (\gamma-J_i)+\sum (\gamma-\tilde{J}_i+g')}}\right) \\
&=& O\left( \frac{b_1^{|J_2|}}{1+y^{2+\gamma+(j-1)\alpha +(\sum\tilde{J}_i)g'-|J_2|}} \right).
\end{array} 
$$
For all $k\in \mathbb{N}$:
$$
\partial_y^k \left(Q^{p-j} \left(\prod b_i^{J_i}T_i^{J_i} \prod S_i^{\tilde{J}_i} \right)\right)=O\left(  \frac{b_1^{|J_2|}}{1+y^{2+\gamma+(j-1)\alpha +(\sum\tilde{J}_i)g'-|J_2|+k}} \right).
$$
From the fact that $(j-1)\alpha>2\geq g'$ we conclude that the global estimates of the term $\bos{R}$ are in all cases better (ie with a higher power of $b_1$, $b_1$ being small $0< b_1\ll1$) than the ones for $b_1\bos{\Lambda} \bos{S}_{L+2}+\sum_{j=1}^L(-(j-\alpha)b_1b_j+b_{j+1})\frac{\partial \bos{S}_{L+2}^{(2)}}{\partial b_i}$, which concludes the proof.
\end{proof}

As we have seen with the previous estimates of the error term $\bos{\psi}_b$, we have a good approximate dynamics for $y\leq B_1$. However, as
$$
T_i\sim y^{-\gamma+i-\delta_{i \ \text{odd}}}\rightarrow +\infty \ \text{as} \ y \rightarrow +\infty \ (\text{as soon as} \ i> \gamma+1),
$$
the approximate dynamic is irrelevant far away of the origin. Consequently, we will now localise the profiles of Proposition \ref{pr:constructionprofilnoncoupe} in the zone $y\leq B_1$, where $\frac{b_{2i}T_{2i}}{\Lambda^{(1)}Q}$ is nearly of order 1. To do this, we will simply multiply by a cut-off function. This cut will create additional error terms that we will estimate in the next proposition. We recall that our cut-off function $\chi$ is defined by \fref{intro:eq:definition chi}. We denote by $\chi_{B_1}\bos{\alpha}_b$:
\begin{equation}
\chi_{B_1}\bos{\alpha}_b:=\begin{pmatrix}
\chi_{B_1} \alpha_b^{(1)} \\ \chi_{B_1}\alpha_b^{(2)}
\end{pmatrix} .
\end{equation}

\begin{proposition}[Localization of the approximate profile]\label{pr:profilapprochecoupe}
We use the assumptions and notations of Proposition \ref{pr:constructionprofilnoncoupe}. Let $I=]s_0,s_1[$ denote a renormalized time interval, and 
$$
\begin{array}{r c l}b:I &\rightarrow& \mathbb{R}^L \\ s &\mapsto& (b_i(s))_{1\leq i\leq L} \end{array} 
$$
be a $C^1$ map such that: $|b_i|\lesssim b_1^i$ with $0<b_1\ll 1$. Assume the a priori bound:
\begin{equation}
|b_{1,s}|\lesssim b_1^2 .
\end{equation}
Let $\tilde{\bos{Q}}_b$ denote the localized profile, given by:
\begin{equation}\label{eq:def:Qbtilde}
\tilde{\bos{Q}}_b=\bos{Q}+\chi_{B_1}\bos{\alpha}_b .
\end{equation}
Then for $0<\eta\ll 1$ small enough one has the following identity ($\bos{Mod}(t)$ being defined by \fref{linearized:eq:def Mod}):
\begin{equation}
\partial_s \tilde{\bos{Q}}_b-\bos{F}(\tilde{\bos{Q}}_b)+b_1 \bos{\Lambda} \tilde{\bos{Q}}_b = \tilde{\bos{\psi}}_b +\chi_{B_1}\bos{Mod}(t).
\end{equation}
$\tilde{\bos{\psi}}_b$, the new error term, satisfies (the adapted norm being defined in \fref{linearized:eq:def normes adaptees locales}):
\begin{itemize}
\item[(i)]\emph{Global weighted bounds:}
\begin{equation} \label{linearized:eq:global bound jleqL}
\forall 0\leq j \leq L-1, \ \parallel \tilde{\bos{\psi}}_b \parallel_j^2\leq C(L) b_1^{2j+2+2(1-\delta_0)-C_j\eta} ,
\end{equation}
\begin{equation} \label{linearized:eq:global bound L}
\text{for} \ j=L, \ \parallel \tilde{\bos{\psi}}_b \parallel_L^2\leq C(L) b_1^{2L+2+2(1-\delta_0)(1+\eta)}.
\end{equation}
\item[(ii)]\emph{Local improved bounds:} For $x\leq \frac{B_1}{2}$, $\tilde{\bos{\psi}}_b(x)=\bos{\psi}_b(x)$, where $\bos{\psi}_B$ is the former error term of Proposition \ref{pr:constructionprofilnoncoupe}. Hence $\forall j\geq 0, \ \forall 1\leq B\leq \frac{B_1}{2}:$
\begin{equation}
\int_{|y|\leq  B} |\nabla^j \tilde{\psi}_b^{(1)}|^2 +|\nabla^j \tilde{\psi}_b^{(2)}|^2 = \int_{|y|\leq B}|\nabla^j \psi_b^{(2)}|^2 \lesssim C(L,j)B^{C(L,j)}b_1^{2L+6} .
\end{equation}
\end{itemize}
\end{proposition}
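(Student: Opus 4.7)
The natural starting point is to write $\tilde{\bos{Q}}_b = \bos{Q}_b - (1-\chi_{B_1})\bos{\alpha}_b$ and apply the identity \eqref{linearized:eq:dynamique Qb} from Proposition \ref{pr:constructionprofilnoncoupe}. On the linear side this produces $\chi_{B_1}(\bos{Mod}(t)+\bos{\psi}_b)$ together with three commutator remainders: one from $[\chi_{B_1},\bos{H}]\bos{\alpha}_b$ (which unpacks to $-2\nabla\chi_{B_1}\cdot\nabla\alpha_b^{(1)}-\Delta\chi_{B_1}\cdot\alpha_b^{(1)}$ on the second coordinate, the other coordinate being trivial), one from $b_1[\chi_{B_1},\bos{\Lambda}]\bos{\alpha}_b=b_1\,y\cdot\nabla\chi_{B_1}\cdot\bos{\alpha}_b$, and one time-derivative $(\partial_s\chi_{B_1})\bos{\alpha}_b$ which, since $B_1=b_1^{-(1+\eta)}$, is of order $b_{1,s}B_1^{-1}|y|\chi'(y/B_1)\bos{\alpha}_b$. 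On the nonlinear side the discrepancy $f(Q+\chi_{B_1}\alpha_b^{(1)})-\chi_{B_1}f(Q+\alpha_b^{(1)})-(1-\chi_{B_1})f(Q)$ is handled by expanding via $f(Q+u)-f(Q)-f'(Q)u$ and using that $p$ is an odd integer, so the discrepancy is a finite polynomial in $\alpha_b^{(1)}$ and $\chi_{B_1}\alpha_b^{(1)}$. All these cut-off contributions are supported where $\nabla\chi_{B_1}\neq 0$, i.e.\ on the annulus $\{B_1\leq y\leq 2B_1\}$, or further out where $\tilde{\bos{Q}}_b=\bos{Q}$ so the equation is automatically satisfied.

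The local improved bound is then immediate: on $\{|y|\leq B_1/2\}$ the cut-off is identically $1$, every $\partial^k\chi_{B_1}$ vanishes, the nonlinear discrepancy reduces to $0$, and thus $\tilde{\bos{\psi}}_b=\bos{\psi}_b$ pointwise. The estimate is then inherited verbatim from part (ii) of Proposition \ref{pr:constructionprofilnoncoupe}.

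For the weighted global bounds the contribution of $\chi_{B_1}\bos{\psi}_b$ is controlled by the local bound \eqref{linearized:eq:estimations locales non coupe} on $\{y\leq 2B_1\}$ combined with crude commutations of $\chi_{B_1}$ through the iterated operators $A,A^*$ defining $\|\cdot\|_j$ (each such commutator falls on $\chi_{B_1}$ and produces a further annular term, absorbed below). The cut-off-induced terms are estimated by the scaling $|\partial_y^k\chi_{B_1}|\lesssim B_1^{-k}\mathbf{1}_{B_1\leq y\leq 2B_1}$, the a priori bound $|b_{1,s}|\lesssim b_1^2$, and the asymptotic behaviour of $\bos{\alpha}_b$: since $\bos{T}_i\sim y^{i-\gamma-(i\bmod 2)}$ and each $\bos{S}_i$ gains an extra $y^{-g'}$, while $b_i\lesssim b_1^i$, one has at $y\sim B_1$ the pointwise size $|\bos{\alpha}_b|\lesssim b_1\,B_1^{1-\gamma}$ (the worst term being $b_1\bos{T}_1$). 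With $k_0+j+\iota$ adapted derivatives and the radial measure $y^{d-1}\,dy$ integrated over the annulus, straightforward accounting gives
\[
\|\cdot\|_j^2\;\lesssim\;b_1^{2}\cdot B_1^{2(1-\gamma)-2(k_0+j)}\cdot B_1^{d}\;\sim\;b_1^{2j+2+2(1-\delta_0)-C_j\eta},
\]
where the $-C_j\eta$ reflects that $B_1=b_1^{-(1+\eta)}$ and that each power of $B_1$ loses an $\eta$ factor in terms of $b_1$. The nonlinear discrepancy is a polynomial of degree $\geq 2$ in $\alpha_b^{(1)}$ or its cut-off version multiplied by $Q^{p-j}$; using Lemma \ref{lem:calculsurfonctionsadmissibles} and the fact that $(j-1)\alpha\geq 2\geq g'$ for $j\geq 2$, exactly as in Step 4 of the proof of Proposition \ref{pr:constructionprofilnoncoupe}, its contribution is strictly better in powers of $b_1$ and is therefore absorbed.

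The delicate point, and the main obstacle, is the sharp exponent $2j+2+2(1-\delta_0)$ rather than something worse: it requires that the weighted $L^2$ integral over the annulus $\{B_1\le y\le 2B_1\}$ does not produce a logarithmic loss, which is exactly guaranteed by the hypothesis $\tfrac{d}{2}-\gamma\notin\mathbb{N}$, i.e.\ $\delta_0>0$. The special case $j=L$, yielding the slightly better bound \eqref{linearized:eq:global bound L}, comes from the fact that at maximal regularity the integration of the top derivative over the annulus becomes controlled by a convergent integral, so only one factor of $B_1$ is lost rather than several, and the $-C\eta$ is replaced by $+2(1-\delta_0)\eta$.
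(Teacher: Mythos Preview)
Your approach is essentially the same decomposition as the paper's, and most of the accounting is correct in spirit, but there is one genuine gap and one imprecision worth flagging.

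\textbf{The soliton tail is not zero.} Your claim that ``further out where $\tilde{\bos{Q}}_b=\bos{Q}$ the equation is automatically satisfied'' is false. For $y>2B_1$ one has $\partial_s\bos{Q}=0$, $\bos{F}(\bos{Q})=0$, $\chi_{B_1}\bos{Mod}(t)=0$, but $b_1\bos{\Lambda}\tilde{\bos{Q}}_b=b_1\bos{\Lambda}\bos{Q}\neq 0$. Hence $\tilde{\bos{\psi}}_b=b_1(1-\chi_{B_1})\bos{\Lambda}\bos{Q}$ there. Equivalently, in your commutator bookkeeping the term $b_1(\bos{\Lambda}\tilde{\bos{Q}}_b-\chi_{B_1}\bos{\Lambda}\bos{Q}_b)$ splits as
\[
b_1(1-\chi_{B_1})\bos{\Lambda}\bos{Q}\;+\;b_1\,y\partial_y\chi_{B_1}\cdot\bos{\alpha}_b,
\]
and you have only kept the second piece. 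The paper treats the first piece explicitly. The reason the estimate still works is that $A\Lambda^{(1)}Q=0$, so $(\Lambda^{(1)}Q)_k=0$ for all $k\ge 1$; consequently $((1-\chi_{B_1})\Lambda^{(1)}Q)_{k_0+j+1}$ is forced to have at least one adapted derivative land on $(1-\chi_{B_1})$ and is therefore supported on the annulus after all. One then gets
\[
\int\bigl|b_1((1-\chi_{B_1})\Lambda^{(1)}Q)_{k_0+j+1}\bigr|^2 \lesssim b_1^{\,2}\int_{B_1}^{2B_1} y^{-2\gamma-2(k_0+j+1)+d-1}\,dy \lesssim b_1^{\,2+2j+2(1-\delta_0)(1+\eta)},
\]
which is harmless. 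But this cancellation is a nontrivial structural point and must be argued; your ``automatically satisfied'' shortcut misses it.

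\textbf{The $j=L$ improvement.} Your explanation (``convergent integral, only one factor of $B_1$ lost'') is too vague. The precise mechanism, visible in the paper's computation, is that in the annular integrals the worst contribution comes from the \emph{highest}-index profile $b_LT_L$ (not $b_1T_1$ as you wrote): the term $b_1^{2i}B_1^{2\delta_0-2+2i-2j}$ is most dangerous when $i$ is largest. For $j<L$ the choice $i=L$ produces an $\eta$-loss of size $2\eta(L-j)$, giving the $-C_j\eta$ in \eqref{linearized:eq:global bound jleqL}. For $j=L$ the same term gives exactly $b_1^{2L+2(1-\delta_0)(1+\eta)}$ with no loss, yielding \eqref{linearized:eq:global bound L}.
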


\begin{remark}
When comparing the estimates given by this proposition, and the ones given in the proposition \ref{pr:constructionprofilnoncoupe}, we note a loss. Indeed the first non cut profile creates an error seen on the corrective terms $\bos{S}_{L+2}$ and $\bos{R}$ which enjoy additional gains $y^{-g'}$ or $y^{-\alpha}$ away from the origin compared to the $\bos{T}_i$'s. When cutting, we see in the additional error term the profiles $\bos{T}_i$'s, giving a worst estimate as they do not have this additional gain.\\
\\
However, the error created in the zone $\leq B_1$ is left unperturbed by the cut. The fact that the error enjoys two different estimates: a good one in the zone $y\leq B_1$ and a bad one in the zone $B_1\leq y \leq 2B_1$ will be helpful in the analysis later.

\end{remark}

\begin{proof}[Proof of Proposition \ref{pr:profilapprochecoupe}]
We compute the error in localizing:
$$
\begin{array}{r c l}
\partial_s \tilde{\bos{Q}}_b -\bos{F}(\tilde{\bos{Q}}_b)+b_1 \bos{\Lambda} \tilde{\bos{Q}}_b &=& \chi_{B_1}\bos{\psi}_b +\chi_{B_1}\bos{Mod}(t)\\
&& + \partial_s(\chi_{B_1})\tilde{\bos{\alpha}_b} + b_1(\bos{\Lambda} \tilde{\bos{Q}_b}-\chi_{B_1}\bos{\Lambda} \bos{Q}_b) \\
&& -(\bos{F}(\tilde{\bos{Q}}_b)-\bos{F}(\bos{Q})-\chi_{B_1}(\bos{F}(\bos{Q}_b)-\bos{F}(\bos{Q})) .
\end{array} 
$$
So we have the following expression for the new error term:
\begin{equation} \label{eq:def:psibtilde}
\begin{array}{ r c l}
\tilde{\bos{\psi}}_b&=& \chi_{B_1}\bos{\psi}_b + \partial_s(\chi_{B_1})\bos{\alpha}_b+b_1(\bos{\Lambda} \tilde{\bos{Q}_b}-\chi_{B_1}\bos{\Lambda} \bos{Q}_b) \\
&& -(\bos{F}(\tilde{\bos{Q}}_b)-\bos{F}(\bos{Q})-\chi_{B_1}(\bos{F}(\bos{Q}_b)-\bos{F}(\bos{Q})) ,
\end{array} 
\end{equation}
and we aim at estimating all these terms in global and local norms.\\

\underline{Local bounds:} From \fref{eq:def:psibtilde} we clearly see that $\tilde{\bos{\psi}}_b\equiv \bos{\psi}_b$ for $|y|\leq \frac{B_1}{2}$, because the new error terms appearing when cutting are created in the zone $B_1\leq |y| \leq 2B_1$. Therefore the local bounds are a direct consequence of the local ones established in \fref{linearized:eq:estimations locales non coupe}. \\

\underline{Global bounds:} We recall that $\parallel \bos{f} \parallel_j^2=\parallel f^{(1)}_j \parallel_{L^2}^2+\parallel f^{(2)}_{j-1} \parallel_{L^2}^2$ where the $j$-th adapted derivative of a function is defined by \fref{linearized:eq:def derivees adaptees}. We will now compute this norm for all the terms in the right hand side of \fref{eq:def:psibtilde}.\\
$\bullet$ \emph{$\chi_{B_1}\bos{\psi}_b$ term:} When applying the differential operators $A$ or $A^*$ to any product $\chi_{B_1}f$, we have:
\begin{equation}\label{eq:actiondeAA*surchif}
\begin{array}{r c l}
A(\chi_{B_1}f)&=&\chi_{B_1}f_1-b^{1+\eta}\partial_y \chi \left(\frac{y}{B_1}\right)f, \\
A^*A(\chi_{B_1}f)&=&\chi_{B_1}f_2+b^{1+\eta}\partial_y \chi\left(\frac{y}{B_1} \right)f_1 \\ 
&&-\left[b^{2+2\eta}\partial_y^2 \chi (\frac{y}{B_1})+b^{1+\eta}\partial_y \chi \left(\frac{y}{B_1} \right)\left(2W+\frac{d-1}{y}\right)\right]f.
\end{array}
\end{equation}
And so on for higher powers of $A$ and $A^*$. Because of the asymptotic of $W$, see Lemma \ref{lem:asymptoticdupotentiel}, the general expression is of the form:
$$
(\chi_{B_1}f)_{i}=\chi_{B_1}f_i+\un _{B_1\leq y \leq 2B_1} \sum_{j=1}^i a_jf_j ,
$$
where $a_i(y)=O( y^{-(i-j)})$. It means that deriving $\chi_{B_1}$ amounts to dividing by $B_1$ and localizing in the zone $B_1\leq y \leq 2B_1$. Hence for $0\leq j\leq L$:
\be \label{linearized:eq:estimation finale psib}
\begin{array}{r c l}
\parallel \chi_{B_1}\bos{\psi}_b \parallel_j^2&=&\int \left|(\chi_{B_1}\psi_b^{(2)})_{k_0+j}\right|^2 \\
&\leq&C(L) \sum_{i=1}^{k_0+j} \int_{B_1\leq |y|\leq 2B_1} b^{2(1+\eta)i}|\psi_{b,k_0+j-i}^{(2)}|^2 + \int_{|y|\leq 2B_1} |\psi_{b,k_0+j}^{(2)}|^2 \\
&\leq & C(L) \parallel \psi_b \parallel_{j,\leq2B_1}^2 +C(L)\sum_{i=1}^{k_0+j} \int b^{2(1+\eta)i}\left|  \frac{b^{L+3}}{y^{\gamma-(L+2)+1+g'+k_0+j-i}}\right|^2 \\
&\leq & C(L)b^{2j+2+2(1-\delta_0)(1+\eta)},
\end{array}
\ee
thanks to the Proposition \ref{pr:constructionprofilnoncoupe}.
\\
$\bullet$ \emph{$\partial_s(\chi_{B_1})\bos{\alpha}_b$ term:} We have from the assumption $|b_{1,s}|\lesssim b_1^2$:
$$
\partial_s(\chi_{B_1})=(1+\eta)b^{\eta}b_sy\partial_y \chi (\frac{y}{B_1})\lesssim b_1 b_1^{1+\eta}y \partial_y \chi (\frac{y}{B_1}).
$$
Again, deriving $y\partial_y \chi {\frac{y}{B_1}}$ amounts to dividing by $B_1$,  we get:
\be \label{linearized:eq:estimation intermediaire alpha}
\begin{array}{r c l}
\parallel \partial_s(\chi_{B_1})\bos{\alpha}_b\parallel_j^2 &=& \int |(\partial_s(\chi_{B_1})\alpha_b^{(1)})_{k_0+j+1}|^2 + |(\partial_s(\chi_{B_1})\alpha_b^{(2)})_{k_0+j}|^2 \\
&\leq& C(L)b_1^2\int_{B_1\leq y\leq 2B_1} |\alpha_{b,k_0+j+1}^{(1)}|^2+ |\alpha_{b,k_0+j}^{(2)}|^2.
\end{array}
\ee
We estimate the two terms using the asymptotic of the $T_i$'s from Lemma \ref{lem:profilsTi} and \fref{linearized:eq:degre Si} for the $S_i$'s:
\be \label{linearized:eq:estimation intermediaire alpha1}
\begin{array}{r c l}
\int_{B_1\leq |y|\leq 2B_1} |\alpha_{b,k_0+j+1}^{(1)}|^2&\leq &\int_{B_1\leq |y| \leq 2B_1} \sum_{i=2, \ i \ \text{even}}^{L-1} |b_iT_{i,k_0+j+1}|^2 \\
&&+\int_{B_1\leq |y| \leq 2B_1}  \sum_{i=2, \ i \ even}^{L+1} |S_{i,k_0+j+2}|^2 \\
&\leq& C(L)\sum_{i=2, \ i \ \text{even}}^{L-1} b_1^{2i} \int_{B_1}^{2B_1} \frac{1}{y^{2\gamma-2i+2k_0+2j+2}}y^{d-1}dy \\
&& +C(L)\sum_{i=2, \ i \ \text{even}}^{L+1} b_1^{2i} \int_{B_1}^{2B_1} \frac{1}{y^{2\gamma-2i+2k_0+2j+2+2g'}}y^{d-1}dy \\
&=&C(L)\sum_{i=2, \ i \ \text{even}}^{L-1} b_1^{2i} \int_{B_1}^{2B_1} y^{2\delta_0-2+2i-2j-1}dy \\
&& +C(L)\sum_{i=2, \ i \ \text{even}}^{L+1} b_1^{2i} \int_{B_1}^{2B_1} y^{2\delta_0-2+2i-2j-2g'-1}dy .
\end{array}
\ee
Similarly:
\be
\begin{array}{r c l} \label{linearized:eq:estimation intermediaire alpha2}
\int_{B_1}^{2B_1} |\alpha_{b,k_0+j}^{(2)}|^2 &\leq& C(L) \sum_{i=1, \ i \ \text{odd}}^{L} b_1^{2i} \int_{B_1}^{2B_1} y^{2\delta_0-2+2i-2j-1}dy \\
&& +C(L)\sum_{i=3, \ i \ \text{odd}}^{L+2} b_1^{2i} \int_{B_1}^{2B_1} y^{2\delta_0-2+2i-2j-2g'-1}dy .
\end{array}
\ee
The first upper bound \fref{linearized:eq:estimation intermediaire alpha}, combined with the two we just proved,  \fref{linearized:eq:estimation intermediaire alpha1} and \fref{linearized:eq:estimation intermediaire alpha2}, lead to (because $0<\delta_0<1$ avoids a possible log-term in the first sum):
\be \label{linearized:eq:estimation finale alpha}
\begin{array}{r c l}
\parallel \partial_s(\chi_{B_1})\bos{\alpha}_b\parallel_j^2 &\leq& C(L)\sum_{i=1}^L b_1^{2i} B_1^{2\delta_0-2+2i-2j}\\
&& +C(L) \sum_{i=2}^{L+2} b_1^{2i} B_1^{2\delta_0-2+2i-2j-2g'}\text{log}(B_1) \\
&\leq& C(L)\sum_1^L b^{2j+2(1+\eta)(1-\delta_0)-\eta(2i-2j)} \\
&& +C(L) \sum_2^{L+2} b^{2j+2(1+\eta)(1-\delta_0)-\eta(2i-2j-2g')+2g'} \text{log}(B_1) \\
&\leq& \left\{ \begin{array}{l l}
C(L)b^{2j+2(1-\delta_0)-C_j\eta} \ \text{for} \ j\leq L-1, \\
C(L)b^{2L+2(1+\eta)(1-\delta_0)} \ \text{for} \ j=L,
\end{array} .
\right.
\end{array}
\ee
for $\eta$ small enough.\\
$\bullet$ \emph{$\bos{F}(\tilde{\bos{Q}_b})-\bos{F}(\bos{Q})-\chi_{B_1}(\bos{F}(\bos{Q}_b)-\bos{F}(\bos{Q}))$ term:} We compute:
\be \label{linearized:eq:expression erreur terme F}
\begin{array}{r c l}
&\bos{F}(\tilde{\bos{Q}_b})-\bos{F}(\bos{Q})-\chi_{B_1}(\bos{F}(\bos{Q}_b)-\bos{F}(\bos{Q}))\\
=&\begin{pmatrix}
0 \\ \Delta (\chi_{B_1}\alpha_b^{(1)})-\chi_{B_1}\Delta(\alpha_b^{(1)})+f(\tilde{Q}_b)-f(Q)-\chi_{B_1}(f(Q_b)-f(Q))
\end{pmatrix} .
\end{array}
\ee
We estimate the two terms in the right hand side of \fref{linearized:eq:expression erreur terme F}:
$$
\begin{array}{r c l}
\Delta (\chi_{B_1}\alpha_b^{(1)})-\chi_{B_1}\Delta(\alpha_b^{(1)})&=& \partial_y(\chi_{B_1})\partial_y(\alpha_b^{(1)}) +\Delta(\chi_{B_1})\alpha_b^{(1)} \\
&=& b^{1+\eta}\partial_y \chi (\frac{y}{B_1})\partial_y(\alpha_b^{(1)})+ b^{2(1+\eta)}\Delta \chi (\frac{y}{B_1}) \alpha_b^{(1)} .
\end{array}
$$
Considering the asymptotics of $\alpha_b^{(1)}$ we have:
\be \label{linearized:eq:estimation intermediaire erreur terme F}
\begin{array}{r c l}
&&\int |(\Delta (\chi_{B_1}\alpha_b^{(1)})-\chi_{B_1}\Delta(\alpha_b^{(1)}))_{k_0+j}|^2 \\
&\leq& C(L)b^{2(1+\eta)}\int_{B_1}^{2B_1} \left( \sum_{i=2}^{L-1} b^{2i}y^{2\delta_0-2j+2i-2}+ \sum_{i=2,L+1} b^{2i}y^{2\delta_0-2j+2i-2-2g'}\right)dy \\
&+& C(L)b^{4(1+\eta)}\int_{B_1}^{2B_1} \left( \sum_{i=2}^{L-1} b^{2i}y^{2\delta_0-2j+2i}+ \sum_{i=2,L+1} b^{2i}y^{2\delta_0-2j+2i-2g'} \right)dy \\
&\leq & \left\{ \begin{array}{l l} 
C(L)b^{2+2j+2(1-\delta_0)-C_j\eta} \ for \ 0\leq j\leq L, \\
C(L)b^{2+2L+2(1-\delta_0)(1+\eta)} \ for \ j= L.
\end{array} \right. 
\end{array}
\ee
because $i<L-1$ in the sum concerning the $\bos{T}_i$'s and because of the gain $g'>0$ in the one of the $\bos{S}_i$'s. The second term is:
$$
f(\tilde{Q}_b)-f(Q)-\chi_{B_1}(f(Q_b)-f(Q))= \chi_{B_1}\sum_{k=2}^{p}C_kQ^{p-k}(\chi_{B_1}^{k-1}-1)\alpha_b^{(1)k} .
$$
For each $2\leq k\leq p$, we can expand the polynomial and we have a linear sum of terms of the form:
$$
\chi_{B_1}Q^{p-k}(\chi_{B_1}^{k-1}-1) \prod_{i=2, \ i \ \text{even}}^{L-1} (b_iT_i)^{J_i}\prod_{i=2, \ i \ \text{even}}^{L+1} (S_i)^{\tilde{J}_i} ,
$$
for $|J|_1=k$. According to the calculus Lemma \ref{lem:calculsurfonctionsadmissibles} for homogeneous functions:
$$
\begin{array}{r c l}
\partial_y^l \left(Q^{p-k}\underset{i=2, \ i \ \text{even}}{\overset{L-1}{\prod}}(b_iT_i)^{J_i}\underset{i=2, \ i \ \text{even}}{\overset{L+1}{\prod}} (S_i)^{\tilde{J}_i}\right) &\underset{y\rightarrow +\infty}{=}& O\left( \frac{b_1^{|J|_2}}{y^{(p-k)\frac{2}{p-1}+k\gamma +\sum \tilde{J}_i g'-|J|_2+l}}\right) \\
&\underset{y\rightarrow +\infty}{=}& O\left(\frac{b_1^{|J|_2}}{y^{2+\gamma+(k-1)\alpha-|J|_2+\sum \tilde{J}_i g'+l}}\right) .
\end{array}
$$
As we have seen before, the presence of the term $\chi_{B_1}$ does not affect the computation (deriving $\chi_{B_1}$ amounts to divide by $y$):
\be \label{linearized:eq:estimation intermediaire f}
\begin{array}{r c l}
&&\int_{B_1}^{2B_1} |(Q^{p-k}\prod_{i=2, \ i  \ \text{even}}^{L-1} (b_iT_i)^{J_i}\prod_{i=2, \ i \ \text{even}}^{L+1} (S_i)^{\tilde{J}_i})_{k_0+j}|^2 \\
&\leq& C(L)\int_{B_1}^{2B_1} \frac{b_1^{2|J|_2}}{y^{4+2\gamma+2(k-1)\alpha-2|J|_2+2\sum \tilde{J}_i g'+2k_0+2j}}y^{d-1}dy \\
&\leq& C(L)\int_{B_1}^{2B_1} b_1^{2|J|_2}y^{-4+2\delta_0-2(k-1)\alpha+2|J|_2-2\sum \tilde{J}_i g'-2j-1}dy \\
&\leq & C(L)b^{2+2j+2(1-\delta_0)(1+\eta)} \ \text{for} \ 0\leq j\leq L .
\end{array}
\ee
because of the gain $(k-1)\alpha>\alpha>2$. The bound \fref{linearized:eq:estimation intermediaire f} then implies the bound for $1\leq j \leq L$:
\be \label{linearized:eq:estimation intermediaire erreur term F 2}
\int |(f(\tilde{Q}_b)-f(Q)-\chi_{B_1}(f(Q_b)-f(Q)))_{j+k_0}|^2\leq C(L)b^{2+2j+2(1-\delta_0)(1+\eta)} \ \text{for} \ 0\leq j\leq L.
\ee
The primary decomposition \fref{linearized:eq:expression erreur terme F}, with the bounds \fref{linearized:eq:estimation intermediaire erreur terme F} and \fref{linearized:eq:estimation intermediaire erreur term F 2} implies the bound we were looking for:
\be \label{linearized:eq:estimation finale F}
\parallel \bos{F}(\tilde{\bos{Q}_b})-\bos{F}(\bos{Q})-\chi_{B_1}(\bos{F}(\bos{Q}_b)-\bos{F}(\bos{Q}))\parallel_{j}^2 \leq \left\{ \begin{array}{l l} 
C(L)b^{2+2j+2(1-\delta_0)-C_j\eta} \ \text{for} \ 0\leq j\leq L, \\
C(L)b^{2+2L+2(1-\delta_0)(1+\eta)} \ \text{for} \ j= L. \end{array}\right.
\ee
$\bullet$ \emph{$b_1(\bos{\Lambda} \tilde{\bos{Q}}_b-\chi_{B_1}\bos{\Lambda} \bos{Q}_b)$ term:} We compute:
$$
\bos{\Lambda} \tilde{\bos{Q}}_b-\chi_{B_1}\bos{\Lambda} \bos{Q}_b=(1-\chi)\bos{\Lambda} \bos{Q} + y\partial_y(\chi_{B_1})\bos{\alpha}_b .
$$
We have that:
$$
y\partial_y(\chi_{B_1})=b_1^{1+\eta}y\partial_y \chi (\frac{y}{B_1}) .
$$
So the term $y\partial_y(\chi_{B_1})\bos{\alpha}_b$ behaves the same way as the term $\partial_s(\chi_{B_1})\bos{\alpha}_b$ previously treated and enjoys the same estimations. Finally we estimate the soliton contribution, because of which we had to derive $k_0$ times at least in order to have integrability. We again use the fact that deriving k times $\chi_{B_1}$ amounts to divide by $y^k$ and to localize in the zone $B_1\leq y \leq 2B_1$.
$$
\begin{array}{r c l}
\int |b_1 (1-\chi_{B_1})\Lambda^{(1)}Q_{k_0+j+1}|^2 & \leq & C(L)b_1^2 \int_{B_1}^{\infty} y^{-2\gamma-2k_0-2-2j+d-1} dy\\
& \leq & C(L)b_1^{2+2j+2(1-\delta_0)+(2j+2(1-\delta_0))\eta} .
\end{array}
$$
So that finally:
\be \label{linearized:eq:estimation finale lambdaQb}
\parallel b_1(\bos{\Lambda} \tilde{\bos{Q}}_b-\chi_{B_1}\bos{\Lambda} \bos{Q}_b) \parallel_{j}^2 \lesssim \left\{ \begin{array}{l l}
C(L)b^{2j+2(1+\eta)(1-\delta_0)-C_j\eta} \ \text{for} \ j\leq L-1, \\
C(L)b^{2L+2(1+\eta)(1-\delta_0)} \ \text{for} \ j=L,
\end{array} .
\right.
\ee

The decomposition \fref{eq:def:psibtilde}, with the bounds for each term \fref{linearized:eq:estimation finale psib}, \fref{linearized:eq:estimation finale alpha}, \fref{linearized:eq:estimation finale F} and \fref{linearized:eq:estimation finale lambdaQb} give the global bounds \fref{linearized:eq:global bound jleqL} and \fref{linearized:eq:global bound L} we had to prove.
\end{proof}


\subsection{Study of the dynamical system driving the evolution of the parameters $(b_i)_{1\leq i \leq L}$}

We have constructed in the preceding propositions \ref{pr:constructionprofilnoncoupe} and \ref{pr:profilapprochecoupe} a manifold of functions near the solitary wave such that:
$$
\bos{F}(\tilde{\bos{Q}}_b)\sim b_1\bos{\Lambda} \tilde{\bos{Q}}_b+\sum_{i=1}^L (-(i-\alpha)b_1b_i+b_{i+1})\frac{\partial \tilde{\bos{Q}}_b}{\partial b_i} .
$$
By applying scaling, and the identity $\frac{\partial (\bos{f}_{\lambda})}{\partial \lambda}=\frac{1}{\lambda}\bos{\Lambda}\bos{f}_{\lambda}$ we have that:
$$
\bos{F}(\tilde{\bos{Q}}_{b,\frac{1}{\lambda}})\sim \frac{b_1}{\lambda} (\bos{\Lambda} \tilde{\bos{Q}}_b)_{\lambda}+\sum_{i=1}^L \frac{1}{\lambda}(-(i-\alpha)b_1b_i+b_{i+1})\frac{\partial \tilde{\bos{Q}}_b}{\partial b_i} .
$$
Hence approximately a solution of (NLW) on this manifold gives:
$$
\begin{array}{r c l}
-\frac{\lambda_t}{\lambda}\bos{\Lambda}(\tilde{\bos{Q}}_b)_{\frac{1}{\lambda}}+\sum b_{i,t} \left(\frac{\partial \tilde{\bos{Q}}_{b}}{\partial b_i}\right)_{\frac{1}{\lambda}}&=&\partial_t(\tilde{\bos{Q}}_{b,\frac{1}{\lambda}}) \\
&=& \bos{F}(\tilde{\bos{Q}}_{b,\frac{1}{\lambda}}) \\
&\sim& \frac{b_1}{\lambda} (\bos{\Lambda} \tilde{\bos{Q}}_b)_{\frac{1}{\lambda}}+\sum(-(i-\alpha)b_1b_i+b_{i+1})\left( \frac{\partial \tilde{\bos{Q}}_b}{\partial {b_i}} \right)_{\frac{1}{\lambda}} .
\end{array}
$$
By identifying the termswe obtain:
\begin{equation}
\left\{
\begin{array}{l l}
\lambda_t=-b_1, \\
b_{i,t}=\frac{1}{\lambda}(-(i-\alpha)b_1b_i+b_{i+1}) \ \text{for} \ 1\leq i\leq L+1, \\
b_{L,t}=-\frac{1}{\lambda}(L-\alpha)b_1b_L.
\end{array}
\right.
\end{equation}
We thus want to study the behavior of the solutions of this dynamical system in order to understand the behavior of a real solution close to the manifold of approximate solutions. Writing it in renormalized variables (the renormalized time being defined by \fref{linearized:eq:def s}), the evolution of the $b_i$'s is given by:
\begin{equation}\label{eq:systemdyndesb}
\left\{
\begin{array}{l l}
b_{i,s}=-(i-\alpha)b_1b_i+b_{i+1} \ \ \text{for} \ \ 1\leq i\leq L-1, \\
b_{L,s}=-(L-\alpha)b_1b_L .
\end{array}
\right.
\end{equation}
We show in this section that this dynamical system admits exceptional solutions leading to an explosive scenario, and that the stability of such solutions can be explicitly computed.

\begin{lemma}(Special solutions for the dynamical system:)\label{lem:soldusystemd} Let $\ell$ be an integer such that $\alpha<\ell$. Then\footnote{We forget the dependence with $\ell$ and write $b^e$ to avoid additional notations, as $\ell$ will be fixed throughout the paper} $b^e:]0,+\infty[\rightarrow \mathbb{R}^L$ given by:
\begin{equation} \label{linearized:eq:def ci}
\left\{
\begin{array}{l l}
b^e_i(s)=\frac{c_i}{s^i} \ \ \text{for} \ \ 1\leq i \leq \ell ,\\
b^e_i \equiv 0 \ \ \text{for} \ \ \ell<i ,
\end{array}
\right.
\end{equation}
with the constant $c_i$ given by:
\begin{equation}
c_1=\frac{\ell}{\ell-\alpha} \ \ \text{and} \ \ c_{i+1}=-\frac{\alpha(\ell-i)}{\ell-\alpha}c_i \ \ \text{for} \ \ 1\leq i \leq \ell-1 ,
\end{equation}
is a solution of \fref{eq:systemdyndesb}. Moreover, if the renormalized time $s$ and the scaling satisfy:
$$
\frac{ds}{dt}=\frac{1}{\lambda}, \ s(0)=s_0>0, \ \frac{d}{dt}\lambda =-b_1, \ \lambda(0)=1 ,
$$
then there exists $T>0$ with $s(t)\rightarrow +\infty$ as $t\rightarrow T$, and there holds:
$$
\lambda(t)\underset{t\rightarrow T}{\sim} (T-t)^{\frac{\ell}{\alpha}}
$$
\end{lemma}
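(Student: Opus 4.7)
The plan is to verify the closed-form ansatz $b_i^e(s)=c_i/s^i$ directly by substitution into \fref{eq:systemdyndesb}, then integrate the resulting scalar ODE for $\lambda(t)$ explicitly. Because the right hand sides of \fref{eq:systemdyndesb} are scale invariant in $s$, the power $1/s^i$ is the only natural ansatz, and the truncation at level $\ell$ will supply exactly one algebraic constraint that pins down $c_1$.

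First I would substitute $b_i^e=c_i/s^i$ into \fref{eq:systemdyndesb}. For $i>\ell$ everything vanishes and there is nothing to check. For $1\leq i<\ell$, matching the $1/s^{i+1}$ coefficients yields the scalar recursion
\[
c_{i+1}=\bigl[(i-\alpha)c_1-i\bigr]c_i.
\]
The decisive equation is $i=\ell$: by the ansatz $b^e_{\ell+1}=0$, it collapses to $-\ell\,c_\ell=-(\ell-\alpha)c_1 c_\ell$, and since $c_\ell\neq 0$ is needed for a nontrivial solution, this \emph{forces} $c_1=\ell/(\ell-\alpha)$. Plugging this value back into the recursion and simplifying the bracket gives $(i-\alpha)c_1-i=-\alpha(\ell-i)/(\ell-\alpha)$, which is the stated formula. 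It also produces $c_{\ell+1}=0$ automatically, so the whole ansatz is self-consistent, and all $c_i$ for $1\leq i\leq \ell$ are nonzero.

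For the blow-up law, with $b_1^e(s)=c_1/s$ the coupled ODEs $\dot\lambda=-b_1$ and $\dot s=1/\lambda$ combine into $d\lambda/\lambda=-c_1\,ds/s$, whence
\[
\lambda(s)=\left(\frac{s_0}{s}\right)^{c_1},\qquad c_1=\frac{\ell}{\ell-\alpha}>1
\]
using $\alpha>0$ and $\ell>\alpha$. Integrating $dt=\lambda(s)\,ds$ from $s_0$ then shows that $t$ has a finite limit $T=s_0/(c_1-1)$ as $s\to+\infty$, with $T-t$ equal to a constant times $s^{1-c_1}$. Solving for $s$ in terms of $T-t$ and substituting back into $\lambda=(s_0/s)^{c_1}$ gives $\lambda(t)=C(s_0)\,(T-t)^{c_1/(c_1-1)}$, and the arithmetic identity $c_1/(c_1-1)=\ell/\alpha$ closes the argument.

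There is no real analytic obstacle here: the lemma is essentially one substitution followed by one quadrature. The conceptual point worth emphasising is that $c_1$ is not a free initial condition but is selected by the requirement that the chain closes at level $\ell$; this mechanism is what picks out the quantized family of blow-up speeds $\lambda(t)\sim (T-t)^{\ell/\alpha}$ appearing in Theorem~\ref{thmmain}, and the reason why the construction is indexed by an integer $\ell>\alpha$ rather than by a continuum.
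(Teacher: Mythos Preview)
Your proof is correct and is exactly the direct computation the paper has in mind; indeed the paper does not spell out this proof at all, stating only ``We do not write here the proof as it is a direct computation.'' Your substitution and quadrature fill in precisely what is needed, and your observation that the closing condition at level $\ell$ \emph{selects} $c_1=\ell/(\ell-\alpha)$ (rather than leaving it free) is the right way to see why the blow-up speeds are quantized.
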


We do not write here the proof as it is a direct computation. When dealing with the real equation (NLW), we want these special solutions to persist. A real solution will imply a corrective term "orthogonal" to the manifold $\left(\tilde{Q}_{b,\lambda}\right)_{b,\lambda}$ and a corrective term for the parameters. Therefore, to understand the time evolution of the part of the error on the manifold $\left(\tilde{Q}_{b,\lambda} \right)_{b,\lambda}$, we have to understand the dynamics of \fref{eq:systemdyndesb} close to the special solution $(b^e(s))_{s>0}$.

\begin{lemma}(Linearization around the special trajectories)\label{lem:linearisationsystemdyn}
Let us denote a perturbated solution around $b^e$ by:
\begin{equation}
b_k(s)=b^e_k(s)+\frac{U_k(s)}{s^k}, \ \text{for} \ 1\leq k\leq L ,
\end{equation}
and note $U=(U_1,...,U_L)$ the perturbation. Suppose $b$ is a solution of \fref{eq:systemdyndesb}, then the evolution of $U$ is given by:
\begin{equation}
\partial_t U=\frac{1}{s}A_{\ell}U +O\left( \frac{|U|^2}{s}\right) ,
\end{equation}
with:
\begin{equation}
A_{\ell}=\begin{pmatrix} -(1-\alpha)c_1 +\alpha \frac{\ell-1}{\ell-\alpha} & 1 &  & & & & & & &  \\ . & . & . & & & & & & &  \\ -(i-\alpha)c_i & & \alpha\frac{\ell-i}{\ell-\alpha} & 1 & & & & & &  \\ . &  & & . &. & & & (0)& &  \\ -(\ell-\alpha)c_{\ell} & & & & 0 & 1 & & & &  \\ 0 & & & & & \alpha\frac{-1}{\ell-\alpha} & . & & &  \\  . & & & & & & . & 1 & &  \\ 0 & & (0) & & & & & \alpha\frac{\ell-i}{\ell-\alpha} &. &  \\ . & & & & & & & &  . & 1  \\ 0 & & & & & & & & & \alpha \frac{(\ell-i)}{\ell-\alpha}   \end{pmatrix}
\end{equation}
$A_{\ell}$ is diagonalizable into the matrix $\text{diag}(-1, \frac{2\alpha}{\ell-\alpha},.,\frac{i\alpha}{\ell-\alpha},., \frac{\ell\alpha}{\ell-\alpha},\frac{-1}{\ell-\alpha},.,\frac{\ell-L}{\ell-\alpha})$. We denote the eigenvector associated to the eigenvalue $-1$ by $v_1$ and the eigenvectors associated to the unstable modes $\frac{2\alpha}{\ell-\alpha},...,\frac{\ell \alpha}{\ell-\alpha}$ by $v_2,...,v_{\ell}$. They are a linear combination of the $\ell$ first components only. That is to say there exists a $L\times L$ matrix coding a change of variables:
\be \label{linearized:eq:def P}
P_{\ell}:=\begin{pmatrix} P_{\ell}' & 0 \\ 0 & \text{Id}_{L-\ell} \end{pmatrix},
\ee
with $P_{\ell}'$ an invertible $\ell \times \ell$ matrix and $\text{Id}_{L-\ell}$ the $L-\ell\times L-\ell$ identity such that:
\be \label{linearized:eq:diagonalisation}
P_{\ell}A_{\ell}P_{\ell}^{-1}=\begin{pmatrix} -1 &  & (0)  & & q_1 & & &  \\  & \frac{2\alpha}{\ell-\alpha} &  &  & q_2 & & &  \\  & & . &  & & & &  \\  &  & & \frac{\ell \alpha}{\ell-\alpha} &  q_{\ell} & &  (0) &  \\  & & & & \frac{-\alpha}{\ell-\alpha} & 1 & &  \\   & & & & & . & . &  \\  & & (0) & & & & .& 1   \\  & &  & & & &  & \alpha\frac{\ell-L}{\ell-\alpha}  \end{pmatrix}.
\ee
with $q_i$ being some coefficient $q_i\in \mathbb{R}$ for $1\leq i \leq \ell$.
\end{lemma}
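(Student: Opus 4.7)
The plan is to derive the linearization by Taylor expansion, read off the matrix $A_\ell$, then diagonalize its upper block by exploiting both the time-translation symmetry of the dynamical system and an explicit one-step recursion. First, I substitute $b_k = b_k^e + U_k/s^k$ into the ODEs \fref{eq:systemdyndesb}; using that $b^e$ itself solves \fref{eq:systemdyndesb} cancels the leading terms, and multiplying by $s^i$ and inserting $b_1^e = c_1/s$, $b_i^e = c_i/s^i$ leads to
\be
\partial_s U_i = \frac{1}{s}\!\left[\big(i - (i-\alpha)c_1\big)U_i - (i-\alpha)c_i\, U_1 + U_{i+1}\right] + O\!\left(\frac{|U|^2}{s}\right),
\ee
with the conventions $c_i = 0$ for $i > \ell$ and $U_{L+1} = 0$. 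The identity $i-(i-\alpha)c_1 = \alpha(\ell-i)/(\ell-\alpha)$, which follows from $c_1 = \ell/(\ell-\alpha)$, identifies the entries of $A_\ell$ exactly as stated. Since $c_i = 0$ for $i > \ell$, the matrix has the block upper-triangular structure $A_\ell = \begin{pmatrix} A_{11} & A_{12}\\ 0 & A_{22}\end{pmatrix}$ with respect to the splitting $\mathbb{R}^\ell \oplus \mathbb{R}^{L-\ell}$, where $A_{22}$ is itself upper-triangular with diagonal entries $-j\alpha/(\ell-\alpha)$ for $j=1,\dots,L-\ell$ (these provide the announced stable eigenvalues), and $A_{12}$ has a unique nonzero entry in position $(\ell,1)$.

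Diagonalizing $A_{11}$ now reduces to finding $\ell$ eigenvectors. The eigenvector for $\mu=-1$ is produced by the autonomy of \fref{eq:systemdyndesb}: differentiating the one-parameter family $s\mapsto b^e(s+\tau)$ at $\tau=0$ and converting to the $U$-coordinates yields $v^{(1)} = (-c_1,-2c_2,\dots,-\ell c_\ell)^T$ with evolution $\propto 1/s$, hence eigenvalue $-1$; the identity $A_{11}v^{(1)} = -v^{(1)}$ is then directly verified using $c_{i+1} = -\alpha(\ell-i)/(\ell-\alpha)\,c_i$ and $(\ell-\alpha)c_1 = \ell$. For the remaining eigenvalues $\mu_k := k\alpha/(\ell-\alpha)$, $k=2,\dots,\ell$, I normalize $u_i := v_i/c_i$ (well defined since $c_i \neq 0$ for $i\le \ell$) and recast $A_{11} v = \mu v$ as the first-order recursion
\be
\alpha(\ell-i)\,u_{i+1} = \big[\alpha(\ell-i) - \mu(\ell-\alpha)\big]u_i - (i-\alpha)\ell\,u_1, \qquad 1\le i\le \ell-1,
\ee
together with the closure condition $\mu u_\ell = -\ell u_1$ coming from the last row. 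Setting $u_1=1$, the recursion produces $u_\ell$ as a rational function of $\mu$, and the closure becomes a polynomial equation of degree $\ell$ in $\mu$; one verifies that its roots are precisely $\{-1,\mu_2,\dots,\mu_\ell\}$, which are pairwise distinct, so $A_{11}$ is diagonalizable.

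Finally, I assemble the change of basis. Let $P'_\ell$ be the $\ell\times\ell$ matrix sending the eigenvector basis $v^{(1)},\dots,v^{(\ell)}$ to the standard basis of $\mathbb{R}^\ell$; equivalently, the columns of $(P'_\ell)^{-1}$ are $v^{(1)},\dots,v^{(\ell)}$. Forming $P_\ell$ as in \fref{linearized:eq:def P}, the block computation yields
$$P_\ell A_\ell P_\ell^{-1} = \begin{pmatrix} P'_\ell A_{11}(P'_\ell)^{-1} & P'_\ell A_{12}\\ 0 & A_{22}\end{pmatrix},$$
in which the upper-left block is $\mathrm{diag}(-1,\mu_2,\dots,\mu_\ell)$, the lower-right block remains $A_{22}$, and $P'_\ell A_{12}$ has a unique nonzero column (the first) with entries $q_i := (P'_\ell)_{i,\ell}$; this matches \fref{linearized:eq:diagonalisation}. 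The main obstacle is the closed-form verification that the closure equation singles out precisely the values $\{-1,\mu_2,\dots,\mu_\ell\}$: one route is to compute $\det(A_{11}-\mu\,\mathrm{Id})$ by cofactor expansion along the first column and factor the result as $(\mu+1)\prod_{k=2}^\ell(\mu-\mu_k)$; a cleaner alternative introduces the integrating factor $\prod_{j=1}^{i-1}(\ell-j)/(\ell-j-k)$ to reduce the recursion to a telescoping sum, making the closure condition transparent.
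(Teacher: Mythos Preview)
Your linearization (Step 1) and the block upper-triangular decomposition of $A_\ell$ are correct and match the paper exactly. The autonomy argument producing the eigenvector $v^{(1)}=(-c_1,-2c_2,\dots,-\ell c_\ell)^T$ for the eigenvalue $-1$ is an elegant shortcut that the paper does \emph{not} use; there the $-1$ root simply emerges at the end of the characteristic-polynomial computation.

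The gap in your proposal is precisely the step you flag as ``the main obstacle'': you assert that the closure polynomial for the recursion has roots $\{-1,\mu_2,\dots,\mu_\ell\}$ without proving it, and the two routes you sketch are not carried out. Route~(1) --- cofactor expansion --- is exactly what the paper does, but the paper's execution is nontrivial: expanding $\det(A_\ell'-X\,\mathrm{Id})$ along the \emph{last} row, it introduces auxiliary quantities $A_i,B_i,C_i$ and proves the identity $C_i+B_1B_2A_{i+2}=B_{i+2}C_{i+1}$, which after iteration collapses the determinant to $(X+1)\prod_{i=2}^\ell\bigl(\tfrac{i\alpha}{\ell-\alpha}-X\bigr)$. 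This is several pages of explicit algebra. Route~(2) --- your integrating factor $\prod_{j=1}^{i-1}(\ell-j)/(\ell-j-k)$ --- is promising but runs into a division by zero at $j=\ell-k$ for each $2\le k\le \ell-1$, which you would need to handle separately (e.g.\ by observing that the recursion coefficient $\tfrac{\ell-i-k}{\ell-i}$ vanishes at $i=\ell-k$, forcing a specific value of $u_{\ell-k+1}$ and restarting the recursion there).

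In short: your framework is correct and your structural observations (block form, autonomy) slightly streamline the argument, but the hard combinatorial core --- the explicit factorization of the characteristic polynomial of the $\ell\times\ell$ block --- is deferred rather than done, and that is where the paper's proof spends its effort.
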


\begin{proof}[Proof of Lemma \ref{lem:linearisationsystemdyn}]
 \underline{Step 1:} Linearization. We compute:
$$
\begin{array}{ r c l}
0&=& b_{k,s}+(k-\alpha)b_1b_k-b_{k+1} \\
&=&\frac{1}{s^{k+1}}[s(U_{k,s}-kU_k+(k-\alpha)c_1 U_k (k-\alpha)c_k U_1-U_{k+1}+O(U_1 U_k)] \\
&=&\frac{1}{s^{k+1}}[s(U_{k,s}+\alpha\frac{k-\ell}{\ell-\alpha}U_k+(k-\alpha)c_k U_1-U_{k+1}+O(U_1 U_k)] .
\end{array}
$$
which gives the expression of $A_{\ell}$.\\

\underline{Step 2:} Diagonalization. We will compute by induction the characteristic polynomial. The case $\ell=3$ can be done by hand. We now assume $\ell\geq 4$ and let:
$$
\mathcal{P}_{\ell}(X)=\text{det}(A_{\ell}-XId) .
$$
We first notice that: $\mathcal{P}_{\ell}(X)=\text{det}(A_{\ell}'-XId)\text{det}(A_{\ell}''-XId)$ where $A_{\ell}'$ stands for the $\ell\times \ell$ matrix on the top left corner, and $A_{\ell}''$ for the $(L- \ell)\times (L-\ell)$ matrix on the bottom right corner:
\begin{equation}
A_{\ell}'=\begin{pmatrix} -(1-\alpha)c_1 +\alpha \frac{\ell-1}{\ell-\alpha} & 1 &  & & (0)  \\ . & . & . & &   \\ -(i-\alpha)c_i & & \alpha\frac{\ell-i}{\ell-\alpha} & 1 &   \\ . & (0) & & . &.   \\ -(\ell-\alpha)c_l & & & & 0  \end{pmatrix} ,
\end{equation}
\begin{equation}
A_{\ell}''=\begin{pmatrix}
-\frac{\alpha}{\ell-\alpha}& 1 & & &(0) \\
 &\cdot & \cdot & & \\
& & -\alpha\frac{i-\ell}{\ell-\alpha}& 1& \\
& & & \cdot& \cdot \\
(0) & & & & -\alpha \frac{L-\alpha}{\ell-\alpha}\\
\end{pmatrix}.
\end{equation}
We have:
\begin{equation}
\text{det}(A_{\ell}''-XId)=\prod_{i=\ell+1}^L (-1)\left(X+\frac{(i-\ell)\alpha}{\ell-\alpha}\right) .
\end{equation}
We write $\mathcal{P}_{\ell}'=\text{det}(A_{\ell}'-XId)$. We develop this determinant with respect to the last row and iterate this process. It gives for $\mathcal{P}_{\ell}'$ an expression of the form:
$$
\begin{array}{r c l}
\mathcal{P}_{\ell}'=(-1)^{\ell+1}(-1)(\ell-\alpha)c_{\ell} &+&(-X)\Bigl{[}(-1)^{\ell}(-1)(\ell-1-\alpha)+(\frac{\alpha}{\ell-\alpha}-X)\\
&\times&\left[(-1)^{\ell-1}(-1)(\ell-2-\alpha)c_{\ell-2}+(\frac{2\alpha}{\ell-\alpha}-X)[...]\right]\Bigr{]} .
\end{array}
$$
We let for $1 \leq i \leq \ell$:
\begin{equation}
A_i:=(-1)^{\ell+2-i}(-1)(\ell+1-i-\alpha)c_{\ell+1-i} ,
\end{equation}
and
\begin{equation}
B_i:=(i-1)\frac{\alpha}{\ell-\alpha}-X .
\end{equation}
We then rewrite:
$$
\mathcal{P}_{\ell}'=A_1+B_1\left(A_2+B_2\left[A_3+B_3\left[...]\right]\right]\right) .
$$
We now let for $1\leq i \leq \ell-1$:
\begin{equation}
C_i:=(-1)^{\ell+1-i}(X(\ell-i-\alpha)c_{\ell-i}+\frac{\ell-\alpha}{i}c_{\ell-i+1}) .
\end{equation}
We have the following relation for $1\leq i \leq \ell-2$:
\begin{equation}
C_i+B_1B_2A_{i+2}=B_{i+2}C_{i+1} .
\end{equation}
Indeed we compute:
$$
\begin{array}{r c l}
C_i+B_1B_2A_{i+2}&=&(-1)^{\ell+1-i}(X(\ell-i-\alpha)c_{\ell-i}+\frac{\ell-\alpha}{i}c_{\ell-i+1}) \\
&&+(-X)(\frac{\alpha}{\ell-\alpha}-X)(-1)^{\ell-i}(-1)(\ell-i-1-\alpha)c_{\ell-i-1} \\
&=& (-1)^{\ell-i}\big{(}-X(\ell-i-\alpha)c_{\ell-i}-\frac{\ell-\alpha}{i}c_{\ell-i+1} \\
&&+X(\frac{(i+1)\alpha}{\ell-\alpha}-i\frac{\alpha}{\ell-\alpha}-X)(\ell-i-1-\alpha)c_{\ell-i-1}\big{)} \\
&=& B_{i+2}(-1)^{\ell-i}(\ell-i-1-\alpha)c_{\ell-i-1} \\
&& + (-1)^{\ell-i}\big{[} -X(\ell-i-\alpha)c_{\ell-i}-\alpha c_{\ell-i}\\
&&-i\frac{\alpha}{\ell-\alpha}X(\ell-i-1-\alpha)(-\frac{\ell-\alpha}{\alpha(i+1)}c_{\ell-i})\big{]} \\
&=& B_{i+2}(-1)^{\ell-i}(\ell-i-1-\alpha)c_{\ell-i-1} \\
&& + (-1)^{\ell-i}c_{\ell-i}(-X(\ell-i-\alpha)+\alpha+\frac{i}{i+1}(\ell-i-1-\alpha)X) \\
&=& B_{i+2}(-1)^{\ell-i}(\ell-i-1-\alpha)c_{\ell-i-1}  + (-1)^{\ell-i}c_{\ell-i}(-\frac{\ell-\alpha}{i+1}X+\alpha)\\
&=& B_{i+2}(-1)^{\ell-i}(\ell-i-1-\alpha)c_{\ell-i-1}  + (-1)^{\ell-i}\frac{\ell-\alpha}{i+1}c_{\ell-i}B_{i+2} \\
&=& B_{i+2}C_{i+1} .
\end{array}
$$
We also have:
$$
A_1+B_1A_2=C_1 .
$$
By iterations we get:
$$
\begin{array}{r c l}
\mathcal{P}_{\ell}'&=&A_1+B_1A_2+B_1B_2A_3+B_1B_2B_3(A_4+B_4(...))\\
&=& C_1+B_1B_2A_3+B_1B_2B_3(A_4+B_4(...)) \\
&=& C_2B_3+B_1B_2B_3(A_4+B_4(...)) = B_3(C_2+B_1B_2(A_4+B_4(...)) \\
&=& B_3(B_4C_3+B_1B_2B_4(A_5+B_5(...))= B_3B_4(C_3+B_1B_2(A_5+B_5(...)) \\
&...& \\
&=& B_3...B_{\ell}(C_{\ell-1}+B_1B_2) .\\
\end{array}
$$
We compute the last polynomial:
$$
C_{\ell-1}+B_1B_2=X(1-\alpha)c_1+\frac{\ell-\alpha}{\ell-1}c_2+(-X)\left(\frac{\alpha}{\ell-\alpha}-X\right)=(X+1)\left(X-\frac{\alpha \ell}{\ell-\alpha}\right) .
$$
So:
$$
\mathcal{P}_{\ell}'=(X+1)\prod_{i=2}^{\ell}\left(\frac{i\alpha}{\ell-\alpha}-X\right) .
$$
This result, together with the result concerning $\mathcal{P}_{\ell}''$ shows that $A_{\ell}$ is diagonalizable and that its eigenvalues are: $(-1,\frac{2\alpha}{\ell-\alpha},...,\frac{\ell\alpha}{\ell-\alpha},\frac{-\alpha}{\ell-\alpha},...,\frac{(L-\ell)\alpha}{\ell-\alpha})$.\\
\\
In addition, from the form of $A_{\ell}$, one sees that the $\ell$ first components do not affect the $L-\ell$ last ones: $\mathbb{P}_{(\ell+1,L)}A\mathbb{P}_{(1,\ell)}=0$ where $\mathbb{P}_{(\ell+1,L)}$ and $\mathbb{P}_{(1,\ell)}$ are the projectors:
$$
\mathbb{P}_{(\ell+1,L)}(U_1,...,U_L)=(0,...,0,U_{\ell+1},...,U_L), \ \mathbb{P}_{(1,\ell)}(U_1,...,U_L)=(U_1,...,U_{\ell},0,...,0).
$$ 
This gives the last result stated in the lemma. The $v_i$'s are a linear combination of the $\ell$ first components only.
\end{proof}


\section{The trapped regime}

In this section we are considering a real solution of (NLW). We fix $1\ll L$ odd and $\alpha<\ell$. Our aim is to show that the approximate solution $(\tilde{\bos{Q}}_{b^e})_{\frac{1}{\lambda^e}}$ constructed in the last section does persist. That is to say that there exists an orbit of the (NLW) equation that stays asymptotically (with respect to renormalized time $s$) close to the family of special approximate solutions $(\tilde{\bos{Q}}_{b^e})_{\frac{1}{\lambda}}$. Note that we do not prescribe in advance the behavior of the scaling $\lambda$, but it will be shown to have the same asymptotical behavior as $\lambda^e $.\\
\\
In order to do that, we need to understand how the full dynamics affects the approximate one we exhibited in the last section. We decompose a true solution under the form $\bos{u}(t)=(\tilde{\bos{Q}}_b+\bos{\varepsilon})_{\frac{1}{\lambda}}$. We aim at estimating the contribution of the error $\bos{\varepsilon}$ on the parameters dynamics, and at estimating the size of $\bos{\varepsilon}$ in adapted norms.\\
\\
The special approximate solutions $(\tilde{\bos{Q}}_{b^e})_{\frac{1}{\lambda}}$ for $\lambda\sim \lambda^e$,  generate a reasonable error term, because as $|b_i^e||\lesssim s^{-i}\approx (b_1^e)^i$ the estimates on the error term $\bos{\psi}_b$ in Proposition \ref{pr:profilapprochecoupe} apply. But they are not stable along the unstable directions $(v_2,...,v_{\ell})$ (defined in Lemma \ref{lem:linearisationsystemdyn}), and if the parameters $b_i$'s move too much, the error term in the approximate dynamics grows too big, consequently making a control over $\bos{\varepsilon}$ impossible. Therefore we cannot work close to the full approximate manifold $\left(\tilde{\bos{Q}}_{b,\lambda} \right)_{b,\lambda}$: we are restricted to work close to the subset of these approximate trajectories $\left( \tilde{\bos{Q}}_{b^e(s),\lambda} \right)_{s>0,\lambda>0}$. We work in a neighborhood of these approximate trajectories, study all the real trajectories starting from that neighborhood, and show that at least one must stay in that neighborhood for all time. We make a proof based on a bootstrap technique. We in particular argue "forward" in time what allows us to measure precisely the stabilities and instabilities.\\
\\
The fact that staying in an appropriate neighborhood of a special approximate solution leads to a blow-up, whose blow-up rate and asymptotic behavior can be computed, will be shown in the next section.


\subsection{Setting up the bootstrap}

We are now going to define in which neighborhood of the family of approximate solutions $\left(\tilde{\bos{Q}}_{b^e(s),\frac{1}{\lambda}}\right)_{s,\lambda}$ we want to work. We start by defining how we decompose our solution into the sum $\bos{u}=(\tilde{\bos{Q}}_{b}+\bos{\varepsilon})_{\frac{1}{\lambda}} $. After that we describe the neighborhood and state the main Proposition of the paper claiming the existence of an orbit staying inside.


\subsubsection{Projection onto the approximate solutions manifold}

Close to $\bos{Q}$, the manifold $\left(\bos{Q}_{b,\lambda} \right)_{b,\lambda}$ is tangent to the vector space $Span(\bos{T}_i)$. It is consequently appealing to ask $\langle \bos{T}_i,\bos{\varepsilon}\rangle=0$ for all $i$. However, the $\bos{T}_i$'s are not in appropriate functional spaces, and in particular cannot be used to generate orthogonality conditions. Instead, we will create a sequence of profiles with compact support that approximate such orthogonality conditions. We let the adjoint of $\bos{H}$ be the operator:
\be
\bos{H}^*= \begin{pmatrix} 0 & \mathcal{L} \\
-1 & 0
\end{pmatrix}.
\ee
We have the following relations: $\langle \bos{H} \bos{u},\bos{v}\rangle=\langle \bos{u},\bos{H}^*\bos{v} \rangle $, and
\be
\bos{H}^{*2i}=  \begin{pmatrix} (-1)^i\mathcal{L}^i & 0 \\
0 & (-1)^i\mathcal{L}^i
\end{pmatrix}, \ \bos{H}^{*(2i+1)}=  \begin{pmatrix} 0 & (-1)^{i}\mathcal{L}^{i+1}  \\
 (-1)^{i+1}\mathcal{L}^{i} &0
\end{pmatrix}.
\ee
We recall that $L$ is an odd, large integer. We let $M$ be a large constant, and define:
\begin{equation}\label{thetrapped:eq:def:phi M}
\bos{\Phi}_M=\sum_{p=0}^Lc_{p,M} \bos{H}^{*p}(\chi_M \bos{\Lambda} \bos{Q}) ,
\end{equation}
with the constants $c_{p,M}$ defined by:
\begin{equation}
c_{0,M}=1 \ \ \text{and} \ \ c_{k,M}=(-1)^{k+1} \frac{\sum_{p=0}^{k-1} c_{p,M} \langle \bos{H}^{*p}(\chi_M \bos{\Lambda} \bos{Q}),\bos{T}_k \rangle}{\langle \chi_M \bos{\Lambda} \bos{Q}, \bos{\Lambda} \bos{Q} \rangle} .
\end{equation}

\begin{lemma}(Generation of orthogonality conditions:)\label{lem:conditiondortho}
The profile $\bos{\Phi}_M$ is located on the first coordinate:
\begin{equation} \label{thetrapped:eq:localisation PhiM}
\bos{\Phi}_M=\begin{pmatrix}
\Phi_M \\ 0
\end{pmatrix} ,
\end{equation}
because for $1\leq k=2i+1\leq L$ an odd integer one has $c_{k,M}=0$. Moreover the following bounds hold:
\begin{equation} \label{thetrapped:eq:proprietes PhiM}
\left\{ \begin{array}{l l}
 | \langle \Phi_M , \Lambda Q \rangle |\sim cM^{2k_0+2\delta_0} ,\\
|c_{p,M}| \leq CM^p ,\\
\int \Phi_M^2 \leq CM^{2k_0+2\delta_0}.
\end{array} \right.
\end{equation}
for two positive constants $c,C>0$. In addition, the following orthogonality conditions are met for $1\leq j \leq L$ and $i\in \mathbb{N}$:
\begin{equation} \label{thetrapped:eq:orthogonalite PhiM}
\langle \bos{\Phi}_M, \bos{H}^i \bos{T}_j \rangle = \langle \chi_M \bos{\Lambda} \bos{Q}, \bos{\Lambda} \bos{Q} \rangle \delta_{i,j} .
\end{equation}
\end{lemma}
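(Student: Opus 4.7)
My plan is to first handle the localization \fref{thetrapped:eq:localisation PhiM} by a parity argument. Since $\bos{H}^{*}$ swaps the two coordinates while $\chi_M \bos{\Lambda}\bos{Q}$ sits on the first coordinate, $\bos{H}^{*p}(\chi_M\bos{\Lambda}\bos{Q})$ has its non-zero entry on coordinate $p \bmod 2$. By Lemma \ref{lem:profilsTi}, $\bos{T}_k$ also sits on coordinate $k \bmod 2$. Hence $\langle \bos{H}^{*p}(\chi_M\bos{\Lambda}\bos{Q}), \bos{T}_k\rangle$ vanishes whenever $p$ and $k$ have opposite parity. I would then show $c_{k,M} = 0$ for every odd $k \in [1,L]$ by induction: for $k=1$ the only term $p=0$ in the recursion defining $c_{1,M}$ vanishes by parity; for odd $k \geq 3$, the inductive hypothesis kills the odd-$p$ contributions while the even-$p$ ones vanish by parity. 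This yields $c_{k,M}=0$ for every odd $k$, hence the stated form of $\bos{\Phi}_M$.

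For the orthogonality identities \fref{thetrapped:eq:orthogonalite PhiM}, the key algebraic fact is that $\bos{H}$ acts as a downward shift on the Jordan chain: $\bos{H}\bos{T}_{j+1} = -\bos{T}_j$ and $\bos{H}\bos{T}_0 = \bos{H}\bos{\Lambda}\bos{Q} = 0$, so $\bos{H}^m \bos{T}_j = (-1)^m \bos{T}_{j-m}$ for $m \leq j$ and vanishes for $m > j$. Using the adjunction $\langle \bos{H}^{*p}\bos{u},\bos{v}\rangle = \langle \bos{u},\bos{H}^p\bos{v}\rangle$, I then rewrite
$$\langle \bos{\Phi}_M, \bos{H}^i \bos{T}_j\rangle = \sum_{p=0}^{j-i} c_{p,M}\,(-1)^{p+i}\,\langle \chi_M\bos{\Lambda}\bos{Q}, \bos{T}_{j-i-p}\rangle$$
when $i \leq j$, and $0$ otherwise (the upper summation bound is automatic since $\bos{H}^m\bos{T}_j = 0$ for $m > j$, and $j \leq L$ ensures no tail is lost). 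The case $i > j$ is then immediate. For $i=j$ only the $p=0$ term survives and produces (up to an overall sign absorbed in the convention) $\langle \chi_M\bos{\Lambda}\bos{Q},\bos{\Lambda}\bos{Q}\rangle$. For $i<j$, setting $n = j-i \geq 1$ and isolating the $p=n$ term, the remaining finite sum is precisely the one that enters the recursion defining $c_{n,M}$ in the statement of the lemma; thus the defining formula is exactly tailored to produce cancellation, which is the crux of the construction.

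The quantitative bounds \fref{thetrapped:eq:proprietes PhiM} then follow by explicit asymptotics. Using $\Lambda^{(1)} Q \sim c y^{-\gamma}$ from Lemma \ref{lem:Q} and the numerology $d = 2\gamma + 2k_0 + 2\delta_0$, I obtain $\langle \chi_M\bos{\Lambda}\bos{Q}, \bos{\Lambda}\bos{Q}\rangle \sim \int_0^{2M} y^{d-1-2\gamma}\,dy \sim M^{2k_0+2\delta_0}$, giving the first estimate (together with the vanishing of the higher $c_{p,M}$ contributions, since $\bos{H}^p \bos{\Lambda}\bos{Q} = 0$ for $p \geq 1$). For $|c_{p,M}| \leq C M^p$, I argue inductively on $p$: each inner product $\langle \chi_M\bos{\Lambda}\bos{Q}, \bos{T}_{k-p}\rangle$ appearing in the recursion is a weighted integral up to radius $2M$ of admissible profiles whose asymptotics are controlled by Lemma \ref{lem:profilsTi}, producing a polynomial in $M$ that, divided by the denominator $\sim M^{2k_0+2\delta_0}$, closes the induction. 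For $\int \Phi_M^2 \leq C M^{2k_0+2\delta_0}$, I estimate each summand $c_{p,M}\bos{H}^{*p}(\chi_M\bos{\Lambda}\bos{Q})$ in $L^2$: at the scale $y \sim M$, every derivative falling on either $\chi_M$ or the profile $\Lambda^{(1)} Q \sim y^{-\gamma}$ gains a factor $M^{-1}$, so that $\|\bos{H}^{*p}(\chi_M\bos{\Lambda}\bos{Q})\|_{L^2} \lesssim M^{k_0+\delta_0-p}$; coupling this with $|c_{p,M}| \lesssim M^p$ and summing over $p$ gives the total $L^2$ bound.

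The main obstacle is the algebraic verification in the second step: confirming that the recursive choice of $c_{k,M}$ produces exact cancellation of all off-diagonal terms for $i<j$. This is a matter of careful bookkeeping with signs and parities, but the structural reason is that the $\bos{T}_j$'s form a Jordan chain of length $L+1$ for $\bos{H}$ and the definition of $c_{k,M}$ is exactly what dualizes this chain against $\chi_M\bos{\Lambda}\bos{Q}$; everything else reduces to the known asymptotics of $\Lambda^{(1)} Q$ and of the admissible profiles $\bos{T}_k$.
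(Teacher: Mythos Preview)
Your proposal is correct and follows essentially the same route as the paper: parity for $c_{k,M}=0$ when $k$ is odd, the Jordan chain identity $\bos H^m\bos T_j=(-1)^m\bos T_{j-m}$, and the asymptotics of $\Lambda^{(1)}Q$ for the quantitative bounds. One minor streamlining in the paper's argument: rather than expanding $\langle\bos\Phi_M,\bos H^i\bos T_j\rangle$ as a sum and matching it term-by-term to the recursion, the paper first checks directly from the definition of $c_{k,M}$ that $\langle\bos\Phi_M,\bos T_k\rangle=0$ for $1\le k\le L$, and then the full orthogonality \fref{thetrapped:eq:orthogonalite PhiM} is immediate since $\bos H^i\bos T_j$ is either $0$ or $\pm\bos T_{j-i}$ --- this avoids the ``careful bookkeeping with signs and parities'' you flagged as the main obstacle.
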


\begin{proof}[Proof of Lemma \ref{lem:conditiondortho}]
\underline{Proof of the orthogonality conditions:}
$$
\begin{array}{r c l}
\langle \bos{\Phi}_M , \bos{\Lambda} \bos{Q}\rangle &=& c_{0,M} \langle \chi_M \bos{\Lambda} \bos{Q}, \bos{\Lambda} \bos{Q} \rangle + \sum_{p=1}^L c_{p,M} \langle \chi_M \bos{\Lambda} \bos{Q} , \bos{H}^{p}(\bos{\Lambda} \bos{Q})\rangle \\
&=& \langle \chi_M \bos{\Lambda} \bos{Q}, \bos{\Lambda} \bos{Q} \rangle \\
&\sim & cM^{d-2\gamma} ,
\end{array}
$$
$c>0$, from the asymptotic $\Lambda^{(1)} Q \sim \frac{c'}{y^{\gamma}}$, $c'\neq 0$. This proves the first property of \fref{thetrapped:eq:proprietes PhiM}. The orthogonality with respect to the $\bos{T}_i$'s is created on purpose by the definition of the constants $c_{p,M}$:
$$
\langle \bos{\Phi}_M, \bos{T}_k \rangle = \sum_{p=0}^{k-1} c_{p,M}\langle \bos{H}^{*p}(\chi_M \bos{\Lambda} \bos{Q}),\bos{T}_k \rangle + c_{k,M} \langle \chi_M \bos{\Lambda} \bos{Q}, \bos{H}^{k}\bos{T}_k \rangle =0 .
$$
Hence by duality:
$$
\langle \bos{\Phi}_M, \bos{H}^{i}\bos{T}_j \rangle = \langle \chi_M \bos{\Lambda} \bos{Q}, \bos{\Lambda} \bos{Q} \rangle \delta_{i=j} .
$$
This proves \fref{thetrapped:eq:orthogonalite PhiM}. \\

\underline{Bounds on the constants:}
We notice by induction that $c_{p,M}=0$ for $p$ odd. This implies that $\Phi_M^{(2)}=0$. We prove the estimate on the constants $c_{p,M}$ by induction. Since $c_0=1$, the estimation is true for $k=0$. We assume now $k$ to be even. By definition we have:
$$
\begin{array}{ r c l}
|c_{k,M}|&=&\frac{|\sum_{p=0}^{k-1}\langle \bos{H}^{*p}(\chi_M \bos{\Lambda} \bos{Q}),\bos{T}_k \rangle|}{|\langle \chi_M \bos{\Lambda} \bos{Q},\bos{\Lambda} \bos{Q} \rangle|} \\
&\leq & CM^{-d+2\gamma} \sum_{p=0}^{k-1} |c_{p,M}||\langle \bos{H}^{*p}(\chi_M \bos{\Lambda} \bos{Q}),\bos{T}_k \rangle | \\
&=&CM^{-d+2\gamma} \sum_{p=0}^{k-1} |c_{p,M}||\langle \chi_M \bos{\Lambda} \bos{Q},\bos{T}_{k-p} \rangle | .
\end{array}
$$
In the sum, for $k-p$ odd this term equals $0$. So we have $k-p\geq 2$. Using the asymptotics $\Lambda^{(1)} Q \sim cy^{-\gamma}$ and $T_{k-p} \sim cy^{-\gamma+k-p}$ the integral in the scalar product is divergent and we estimate:
$$
|\langle \chi_M \bos{\Lambda} \bos{Q},\bos{T}_{k-p} \rangle| \sim cM^{d-2\gamma+k-p} .
$$
Using the induction hypothesis we get:
$$
M^{-d+2\gamma}|c_{p,M}||\langle \bos{H}^{*p} (\chi_M \bos{\Lambda} \bos{Q} , \bos{T}_k \rangle \leq CM^k ,
$$
and so the estimate is true for $c_{k,M}$. We have proven the second assertion of \fref{thetrapped:eq:proprietes PhiM}.\\

\underline{$L^2 estimate$:} $\int |\Phi_M|^2$ is a finite sum of terms of the following form enjoying the bound (from the asymptotic \fref{eq:degenerescencesoliton}):
$$
\begin{array}{r c l}
&|\langle c_{p_1,M}\bos{H}^{*p_1}(\chi_M \bos{\Lambda} \bos{Q}),c_{p_2,M}\bos{H}^{*p_2}(\chi_M \bos{\Lambda} \bos{Q}) \rangle| \\
\leq& CM^{p_1+p_2}|\langle \mathcal{L}^{\frac{p_1+p_2}{2}}(\chi_M \Lambda Q),\chi_M \Lambda Q \rangle| \leq&  CM^{-2\gamma+d} ,
\end{array}
$$
because we assumed $\frac{d}{2}-\gamma$ not to be an integer. It implies the last bound in \fref{thetrapped:eq:proprietes PhiM}
\end{proof}


\subsubsection{Modulation:}
\label{thetrapped:subsubsection:modulation}
We want to decompose a function $\bos{u}$ close to $\bos{Q}_{\lambda}$ as a unique sum $\bos{u}=(\bos{Q}_{b}+\bos{\varepsilon})$, with $\bos{\varepsilon}$ "orthogonal" to the manifold $(\bos{Q}_{b,\lambda})_{b,\lambda}$. We make the following change of variable for the parameter $b$: $\tilde{b}_1:=(b_1,0,...,0)$ and $\tilde{b}_i=(b_1,0...,0,b_i,0,...,0)$ and introduce the application $\phi:(\lambda,b)\mapsto (\langle \tilde{\bos{Q}}_b,\bos{H}^{*i}\bos{\Phi}_M\rangle)_{0\leq i \leq L}$. We denote by $D\phi$ the jacobian matrix of $\phi$ at the point $(1,(0,...,0))$ in the $(\lambda,\tilde{b})$ basis. From the properties \fref{thetrapped:eq:proprietes PhiM} and \fref{thetrapped:eq:orthogonalite PhiM} of the profile $\bos{\Phi}_M$ that we previously established, one has:
$$
D\phi= \langle \bos{\Lambda Q},\chi_M \bos{\Lambda Q}\rangle
\begin{pmatrix} 1 & 0 & & & (0) \\
&1&1&& \\
&& 1 &.& \\
&&&.&1 \\
(0)&&&&1 
\end{pmatrix}.
$$
This proves that $\phi$ is a local diffeomorphism around $(1,(0,...,0))$. The implicit function theorem gives for $\bos{u}$ close enough\footnote{the closeness assumption is described in the next subsection and is compatible with what we are saying here.} to $\bos{Q}$ the existence of a unique decomposition:
\begin{equation}
\bos{u}=(\tilde{\bos{Q}}_b)_{\frac{1}{\lambda}}+\bos{w}=(\tilde{\bos{Q}}_b)+\bos{\varepsilon})_{\frac{1}{\lambda}} ,
\end{equation}
with $\bos{\varepsilon}$ verifying the $L+1$ orthogonality conditions:
\begin{equation}\label{eq:condition d'ortho pour epsilon}
\langle \bos{\varepsilon},\bos{H}^{*i}\bos{\Phi}_M\rangle=0, \ for \ 0\leq i \leq M .
\end{equation}
Hence for a real solution to (NLW) starting close enough to $\bos{Q}$, and by scaling argument, we have as long as $\bos{u}$ is close enough to $\bos{Q}_{\lambda}$ a decomposition:
\begin{equation}
\bos{u}=(\tilde{\bos{Q}}_{b(t)}+\bos{\varepsilon})_{\frac{1}{\lambda(t)}} ,
\end{equation}
with $b$ and $\lambda$ being $C^1$ in time\footnote{As the dynamic will be smooth enough.}, and $\bos{\varepsilon}$ satisfying \fref{eq:condition d'ortho pour epsilon}.


\subsubsection{Adapted norms:}

We quantify the smallness of $\bos{\varepsilon}$ through the following norms:
\begin{itemize}
\item[(i)] \emph{High order Sobolev norm adapted to the linearized operator:} Remember that $s_L=L+k_0+1$ and that the $k$-th adapted derivative of a function $f$, $f_k$, is defined in \fref{linearized:eq:def derivees adaptees}. We define:
\begin{equation} \label{thetrapped:eq:def mathcalEsL}
\begin{array}{r c l}
\mathcal{E}_{s_L}&:=&\int |\varepsilon_{k_0+L+1}^{(1)}|^2+\int |\varepsilon_{k_0+L}^{(2)}|^2 \\
&=& \int \varepsilon^{(1)}\mathcal{L}^{k_0+L+1}\varepsilon^{(1)} + \int \varepsilon^{(2)}\mathcal{L}^{k_0+L}\varepsilon^{(2)} ,
\end{array}
\end{equation}which is coercive thanks to the result of Lemma \ref{annexe:lem:coercivite des normes adaptees}. In particular:
$$
\mathcal{E}_{s_L}\gtrsim \parallel \bos{\varepsilon} \parallel_{\dot{H}^{s_L}\times \dot{H}^{s_L-1}}^2 .
$$
As we will see later on in this paper, a local part of this norm will have to be treated separately. Let $N>0$, we define:
\begin{equation} \label{thetrapped:eq:def mathcalEsLloc}
\mathcal{E}_{s_L,\text{loc}}:=\int_{y\leq N} |\varepsilon_{k_0+L+1}^{(1)}|^2+\int_{y\leq N} |\varepsilon_{k_0+L}^{(2)}|^2 .
\end{equation}

\item[(ii)] \emph{Low order slightly supercritical Sobolev norm:} We choose a real number $\sigma$ such that:
\begin{equation} \label{thetrapped:eq:def sigma}
0< \sigma -s_c\ll 1 ,
\end{equation}
and we define:
\begin{equation} \label{thetrapped:eq:def mathcalEsigma}
\mathcal{E}_{\sigma}:=\int |\nabla^{\sigma} \varepsilon^{(1)}|^2+\int |\nabla^{\sigma-1} \varepsilon^{(2)}|^2 .
\end{equation}
\end{itemize}


\subsubsection*{Estimates we want to bootstrap and main Proposition:}

Let $s_0$ denote a large enough real number $s_0\gg 1$. We recall the definition of the renormalized variables:
\begin{equation} \label{thetrapped:eq:def s}
y=\frac{r}{\lambda(t)}, \ s(t)=s_0+\int_0^t \frac{d \tau}{\lambda(\tau)} .
\end{equation}
We introduce notations for the decomposition of the solution in both real and renormalized time:
\begin{equation}
\bos{u}=\tilde{\bos{Q}}_{b(t),\frac{1}{\lambda(t)}}+\bos{w}=(\tilde{\bos{Q}}_{b(s)}+\bos{\varepsilon}(s))_{\frac{1}{\lambda(s)}} .
\end{equation}
The parameters $b_i$ are chosen as a perturbation of the solution $b^e$:
\begin{equation}
b_i(s)=b^e_i(s)+\frac{U_i(s)}{s^i} .
\end{equation}
To treat the stable and unstable modes separately, we employ the change of variables coded by the matrix $P_{\ell}$ defined by \fref{linearized:eq:def P}. Instead of $U_1,...,U_{\ell}$ we consider:
\begin{equation} \label{thetrapped:eq:def Vi}
V_i:=(P_{\ell} U)_i \ \ \text{for} \ 1\leq i \leq \ell .
\end{equation}

We assume initially\footnote{the choice of the constants is done in the next proposition.}:
\begin{itemize}
\item[(i)] Smallness of the unstable modes: Let $0<\tilde{\eta}$ be a constant to be defined later.
\begin{equation}\label{eq:estimation modes instables}
(V_2(s_0),...,V_{\ell}(s_0)) \in \mathcal{B}^{l-1}\left(\frac{1}{s_0^{\tilde{\eta}}}\right) .
\end{equation}
\item[(ii)] Smallness of the stable modes\footnote{the $\frac{1}{10}$ is arbitrary: we just want the initial condition to be smaller than the information we want to bootstrap, see next proposition.}:
\begin{equation}\label{eq:estimation initiale des modes stables}
V_1(s_0)\leq\frac{1}{10s_0} , \ \text{and} \ |b_i(s_0)|\leq \frac{\epsilon_i}{10s_0^{(i-\alpha)c_1}} \ \text{for} \ \ell+1 \leq i \leq L .
\end{equation}
\item[(iii)] Smallness of the initial perturbation in high and low Sobolev norms:
\begin{equation}\label{eq:estimation initiale de epsilon0}
\mathcal{E}_{s_L}(s_0)+\mathcal{E}_{\sigma}(s_0)< \frac{1}{s_0^{2L+2+2(1-\delta_0)(1+\eta)}} .
\end{equation}
\item[(iv)] Normalization: up to a fix rescaling, we may always assume:
\begin{equation}
\lambda(s_0)=1 .
\end{equation}
\end{itemize}

\begin{proposition}\emph{(Existence of an initial datum for which the solution stays in yhe trapped regime:)}\label{prop:bootstrap}
There exists universal constants for the analysis:
\begin{equation}
\begin{array}{l l}
0<\eta=\eta(d,p,L)\ll 1,
M=M(d,p,L)\gg 1, \ N=N(d,p,L,M)\gg 1 ,\\
K_i=K_i(d,p,L,M)\gg 1, \ \text{for} \ i=1,2, \ s_0=s_0(l,d,p,L,M,K)\gg 1,
\end{array}
\end{equation}
and constants for smallness:
\begin{equation}
0<\epsilon_i \ for \ \ell+1\leq i \leq L, \ 0<\epsilon_1, \ \text{and} \ 0<\tilde{\eta} \ \ \ (all \ \ll 1),
\end{equation}
such that the following fact holds. Given $\bos{\varepsilon}(s_0)$ satisfying \fref{eq:condition d'ortho pour epsilon}, \fref{eq:estimation initiale de epsilon0}, and stable parameters $V_1(s_0)$, $(b_{\ell+1}(s_0),...,b_L(s_0))$ satisfying \fref{eq:estimation initiale des modes stables}, there exists initial conditions for the unstable parameters $(V_2(s_0),...V_{\ell}(s_0))$ satisfying \fref{eq:estimation modes instables} for which the solution to (NLW) with initial data $\tilde{\bos{Q}}_{b(s_0)}+\bos{\varepsilon}(s_0)$ with:
$$
\begin{array}{r c l}
b(s_0)&=&b^e(s_0)+(0,...,0,b_{\ell+1}(s_0),...,b_L(s_0))\\
&&+\left(\frac{(P_{\ell}^{-1}(V_1(s_0),...,V_{\ell}(s_0),0,...,0))_1}{s_0},...,\frac{(P_{\ell}^{-1}(V_1(s_0),...,V_{\ell}(s_0),0,...,0))_{\ell}}{s_0^{\ell}},0,...,0 \right) ,
\end{array}
$$ admits the following bounds for all $s\geq s_0$:
\begin{itemize}
\item[-]\emph{control of the part on the approximate profiles manifold:} for the unstable modes:
\begin{equation}\label{eq:bootstrap modes instables}
(V_2(s),...V_{\ell}(s))\in \mathcal{B}^{\ell-1}\left(\frac{1}{s^{\tilde{\eta}}}\right) .
\end{equation}
for the stable modes:
\begin{equation}\label{eq:bootstrap modes stables}
|V_1(s)|\leq \frac{1}{s^{\tilde{\eta}}}, \ |b_k(s)|\leq \frac{\epsilon_k}{s^{k+\tilde{\eta}}}, \ \text{for} \ \ell+1 \leq k \leq L.
\end{equation}
\item[-]\emph{control of the error term:}
\begin{equation}\label{eq:bootstrap estimations sur epsilon}
\begin{array}{l l}
\mathcal{E}_{s_L}(s) \leq K_1 b_1^{2L+2(1-\delta_0)(1+\eta)} ,\\
\mathcal{E}_{\sigma}(s) \leq K_2 b_1^{2(\sigma-s_c)\frac{\ell}{\ell-\alpha}} .
\end{array}
\end{equation}
\end{itemize}
\end{proposition}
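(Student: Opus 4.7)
My plan is a bootstrap plus topological shooting argument, in the spirit of \cite{MRRod2} and \cite{RSc2}. The hierarchy of constants must be fixed in a specific order: first $L$ is given, then I choose $\eta$ small depending on $(d,p,L)$, then $M$ large (for the orthogonality profile $\bos{\Phi}_M$), then $N$ large (for the local portion of the highest Sobolev norm needed for the Morawetz-type argument), then $K_1,K_2$ large enough to absorb all implicit constants in the energy estimates, then $\epsilon_1,\ldots,\epsilon_L,\tilde\eta$ small depending on the previous choices, and finally $s_0$ large. For each choice of initial unstable modes $(V_2(s_0),\ldots,V_\ell(s_0))\in \mathcal{B}^{\ell-1}(1/s_0^{\tilde\eta})$, all other initial data being fixed, I decompose via the modulation procedure of subsection \ref{thetrapped:subsubsection:modulation} and evolve. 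Call the open set defined by the conjunction of \fref{eq:bootstrap modes instables}, \fref{eq:bootstrap modes stables} and \fref{eq:bootstrap estimations sur epsilon} the \emph{trapped regime}, and let $s^*\in (s_0,+\infty]$ denote the first exit time from it.

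The first and analytically hardest task is to show that, as long as $s<s^*$, every bootstrap bound \emph{except} the one on $(V_2,\ldots,V_\ell)$ is in fact strictly improved. For the error, this reduces to integrating the Lyapunov-type monotonicity formulas of Propositions \ref{trappedregime:pr:low sobo} and \ref{trappedregime:pr:high sobo}, whose schematic form
$$
\frac{d}{dt}\!\left\{\frac{\mathcal{E}_{s_L}}{\lambda^{2(s_L-s_c)}}\right\}\lesssim \frac{b_1^{2L+1+\delta}}{\lambda^{2(s_L-s_c)+1}},\qquad \frac{d}{dt}\!\left\{\frac{\mathcal{E}_{\sigma}}{\lambda^{2(\sigma-s_c)}}\right\}\lesssim \frac{b_1^{1+\delta'}}{\lambda^{2(\sigma-s_c)+1}},
$$
combined with $\lambda_t\sim -b_1$ and $b_1\sim c_1/s$, yields $\mathcal{E}_{s_L}\le \frac{K_1}{2}b_1^{2L+2(1-\delta_0)(1+\eta)}$ and $\mathcal{E}_\sigma\le \frac{K_2}{2}b_1^{2(\sigma-s_c)\ell/(\ell-\alpha)}$ once $K_1,K_2$ are large and $s_0$ large; the local term that cannot be closed by $\mathcal{E}_{s_L}$ alone is absorbed through the Morawetz identity of Proposition \ref{thetrapped:pr:morawetz}. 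For the parameters, the modulation equations (Lemmas \ref{lem:modulation} and \ref{trappedregime:improvedmodulation:lem:improvedmodulation}) give
$$
\left|b_{i,s}+(i-\alpha)b_1 b_i-b_{i+1}\right|\lesssim \sqrt{\mathcal{E}_{s_L,\mathrm{loc}}}+b_1^{L+\frac{3}{2}},
$$
and injecting the improved bound on $\mathcal{E}_{s_L}$ shows that, after the diagonalizing change of variables $V=P_\ell U$ of Lemma \ref{lem:linearisationsystemdyn}, the $(V_i)$'s satisfy
$$
\partial_s V_1 = -\tfrac{1}{s}V_1+O(s^{-1-\kappa}),\qquad \partial_s V_i=\tfrac{i\alpha}{s(\ell-\alpha)}V_i+O(s^{-1-\kappa}) \ \text{for } 2\le i\le\ell,
$$
with an analogous stable equation for $b_{\ell+1},\ldots,b_L$. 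The $-1/s$ eigenvalue and the strictly positive $i\alpha/(\ell-\alpha)$ eigenvalues then yield, by direct integration, the strict improvements $|V_1(s)|<\tfrac{1}{2s^{\tilde\eta}}$ and $|b_k(s)|<\tfrac{\epsilon_k}{2s^{k+\tilde\eta}}$ for $k>\ell$, provided $\tilde\eta$ is small enough compared to $\min_{2\le i\le\ell}(i\alpha/(\ell-\alpha)-\tilde\eta)$ and to the previously fixed constants.

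Consequently an exit from the trapped regime can occur only through the unstable modes hitting the sphere $\sum_{i=2}^{\ell}V_i^2=1/s^{2\tilde\eta}$; moreover on that sphere the equation above shows that
$$
\tfrac12\tfrac{d}{ds}\!\left(s^{2\tilde\eta}\sum_{i=2}^{\ell}V_i^2\right)\ge \tfrac{1}{s}\!\left(\tfrac{2\alpha}{\ell-\alpha}-\tilde\eta\right)s^{2\tilde\eta}\sum_{i=2}^\ell V_i^2 +O(s^{-1-\kappa})>0,
$$
so the exit is strictly transverse. This transversality makes $s^*$ continuous with respect to $(V_2(s_0),\ldots,V_\ell(s_0))$ on the set $\{s^*<+\infty\}$, and guarantees that the exit map
$$
\Psi:(V_2(s_0),\ldots,V_\ell(s_0))\longmapsto s_0^{\tilde\eta}\bigl(s^*\bigr)^{\tilde\eta}\,\bigl(V_2(s^*),\ldots,V_\ell(s^*)\bigr)
$$
is continuous from the closed ball $\overline{\mathcal{B}^{\ell-1}(1/s_0^{\tilde\eta})}$ to its boundary $\mathcal{S}^{\ell-1}(1/s_0^{\tilde\eta})$, and is the identity on that boundary (since transversality forces $s^*=s_0$ there). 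If $s^*=+\infty$ for no initial datum, $\Psi$ would be a continuous retraction of a closed ball onto its boundary, contradicting Brouwer's no-retraction theorem; hence at least one initial choice of $(V_2(s_0),\ldots,V_\ell(s_0))\in \mathcal{B}^{\ell-1}(1/s_0^{\tilde\eta})$ yields $s^*=+\infty$, and the corresponding solution satisfies all the bootstrap bounds forever.

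The main obstacle is the first task: proving the Lyapunov monotonicity for $\mathcal{E}_{s_L}$ with the adapted derivatives $\mathcal{A}^k$ (which do not commute with $\partial_t$ after rescaling) and closing the local error terms via the Morawetz identity; the parameter and topological pieces are, by comparison, relatively routine once the energy estimates are in place.
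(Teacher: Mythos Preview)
Your proposal is correct and follows essentially the same architecture as the paper's proof: bootstrap regime, strict improvement of the error norms via the Lyapunov/Morawetz estimates (Propositions \ref{trappedregime:pr:low sobo}, \ref{trappedregime:pr:high sobo}, \ref{thetrapped:pr:morawetz}), strict improvement of the stable modes via the modulation equations and the spectral decomposition of Lemma \ref{lem:linearisationsystemdyn}, outgoing transversality at the unstable sphere, and Brouwer. Two minor slips to fix: in your modulation bound the right-hand side should involve $\sqrt{\mathcal{E}_{s_L}}$ rather than $\sqrt{\mathcal{E}_{s_L,\mathrm{loc}}}$ (the local norm appears only inside the high-regularity monotonicity, not in the modulation equations), and your exit map normalization should be $(s^*)^{\tilde\eta}/s_0^{\tilde\eta}$ rather than $s_0^{\tilde\eta}(s^*)^{\tilde\eta}$ so that the image actually lies on $\mathcal{S}^{\ell-1}(1/s_0^{\tilde\eta})$.
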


To prove Proposition \ref{prop:bootstrap} we argue by contradiction and suppose that for all initial data of the unstable modes $(V_2,...,V_{\ell})\in \mathcal{B}^{\ell-1}(s_0^{-\tilde{\eta}})$, the conditions are not met for all time:
\begin{equation}
\begin{array}{r c l}
s^*&=& s^*(\varepsilon(s_0),s_0,V_1(s_0),...,V_{\ell}(s_0),b_{\ell+1}(s_0),...,b_L(s_0)) \\
&=& sup\{ s\geq s_0 \ \text{such} \ \text{that} \ \fref{eq:bootstrap estimations sur epsilon}, \  \fref{eq:bootstrap modes instables} \ \text{and} \ \fref{eq:bootstrap modes stables} \ \text{hold} \ \text{on} [s_0,s] \} \\
&<& +\infty .
\end{array}
\end{equation}

By continuity of the flow and the smallness of the initial perturbation, we know that $s^*>0$. We perform a three steps reasoning to prove the contradiction:

\begin{itemize}
\item[(i)] First we show that as long as $\bos{\varepsilon}$ is controlled by the estimates \fref{eq:bootstrap estimations sur epsilon}, it does not perturb too much the dynamical system \fref{eq:systemdyndesb}. That is to say we have a sufficient control over the evolution of the $b_i$'s to show that the perturbation $U$ of the trajectory $b^e$ evolves according to the linearisation at the leading order.

\item[(ii)] (i) has given us control over the part of the solution on the approximate manifold, this allows us to compute the evolution of the scale $\lambda$. Under the bootstrap conditions we know the size of the error term $\tilde{\bos{\psi}}_b$ generated by the approximate dynamics. Once we know the behavior of $\tilde{\bos{\psi}}_b$ and $\lambda$, we can look for better informations about $\bos{\varepsilon}$. Indeed we apply an energy method and find out that we control the time evolution of $\mathcal{E}_{s_L}$ and $\mathcal{E}_{\sigma}$. As $\bos{\varepsilon}$ is a stable perturbation, we find that we have in fact a better estimate for this term: $\bos{\varepsilon}$ is smaller than the estimate given by \fref{eq:bootstrap estimations sur epsilon}. Hence at time $s^*$ we have:
\begin{equation}
\begin{array}{l l}
\mathcal{E}_{s_L}(s^*) < K_1 b_1^{2L+2(1-\delta_0)(1+\eta)} ,\\
\mathcal{E}_{\sigma}(s^*) < K_2 b_1^{2(\sigma-s_c)\frac{\ell}{\ell-\alpha}}.
\end{array}
\end{equation}
This implies that the exit of the trapped regime is only when the parameters do not satisfy the estimates \fref{eq:bootstrap modes stables} and \fref{eq:bootstrap modes stables} anymore.

\item[(iii)] With the estimates we have found regarding the parameters dynamics in (i) we are able to say that this is impossible. Indeed, the stable parameters cannot go away because their dynamics is stable. It is possible for some unstable parameters to go away, but they cannot all leave the ball $\mathcal{B}^{\ell-1}\left( \frac{1}{(s^*)^{\tilde{\eta}}} \right) $ in finite time. We have seen in Lemma \ref{lem:linearisationsystemdyn} that the $V_i$'s for $2\leq i\leq \ell $ evolve as a linearized system around a repulsive equilibrium. The true dynamics, adding a small error term to their time evolution, preserves this structure. The dynamics in our case cannot expulse all the orbits away from the equilibrium point: we will show how in that case it would be a contradiction to Brouwer's fixed point theorem.
\end{itemize}


\subsection{Evolution equations for $\bos{\varepsilon}$ and $\bos{w}$:}

We recall that we are studying a solution under the form: 
$$
\bos{u}=\tilde{\bos{Q}}_{b(t),\frac{1}{\lambda (t)}}+\bos{w}=(\tilde{\bos{Q}}_{b(s)}+\bos{\varepsilon} (s))_{\frac{1}{\lambda(s)}} ,
$$
where $\tilde{\bos{Q}}_b$ is defined by \fref{eq:def:Qbtilde} and $\bos{\varepsilon}$ satisfies the orthogonality conditions \fref{eq:condition d'ortho pour epsilon}, this decomposition being explained in Subsubsection \ref{thetrapped:subsubsection:modulation}.
The evolution of $\bos{\varepsilon}$ and $\bos{w}$ is given by:
\begin{equation}\label{eq:evolution epsilon}
\begin{array}{l l l l}
\partial_s \bos{\varepsilon} - \frac{\lambda_s}{\lambda} \bos{\Lambda} \bos{\varepsilon} +\bos{H}(\bos{\varepsilon})&=& -\bos{Mod}(t)+(\frac{\lambda_s}{\lambda}+b_1)\bos{\Lambda} \tilde{\bos{Q}}_b  - \tilde{\bos{\psi}}_b & \\
&& + \bos{F}(\tilde{\bos{Q}}_b+\bos{\varepsilon})-\bos{F}(\tilde{\bos{Q}}_b)+\bos{H}_b(\bos{\varepsilon}) &\left\} \mathbf{:=  NL(\varepsilon)}\right. \\
&& +\bos{H}(\bos{\varepsilon})-\bos{H}_b(\bos{\varepsilon})& \left\} \mathbf{:= L(\varepsilon)}\right. ,
\end{array}
\end{equation}
where $\bos{H}_b$ denotes the linearization close to $\tilde{\bos{Q}}_b$:
\begin{equation}
\bos{H}_b:=\begin{pmatrix}
0 & -1 \\
-\Delta -p\tilde{Q}_b^{p-1} & 0
\end{pmatrix} ,
\end{equation}
and:
\begin{equation}\label{eq:evolution W}
\begin{array}{l l l l}
\partial_t \bos{w}+\bos{H}_{\frac{1}{\lambda}} \bos{w}&=&\frac{1}{\lambda}(-\bos{Mod}(t)+(\frac{\lambda_s}{\lambda}+b_1)\bos{\Lambda} \tilde{\bos{Q}}_b)_{\frac{1}{\lambda}} -\frac{1}{\lambda}\tilde{\bos{\psi}}_{b,\frac{1}{\lambda}} &\\
&& + \bos{F}(\tilde{\bos{Q}}_{b,\frac{1}{\lambda}}+\bos{w})-\bos{F}(\tilde{\bos{Q}}_{b,\frac{1}{\lambda}})+\bos{H}_{b,\frac{1}{\lambda}} \bos{w} & \left\} \mathbf{:= NL(w)} \right. \\
&& + \bos{H}_{\frac{1}{\lambda}}\bos{w}-\bos{H}_{b,\frac{1}{\lambda}}\bos{w} & \left\} \mathbf{:= L(w)} \right. ,
\end{array}
\end{equation}
where:
\begin{equation}
\bos{H}_{\frac{1}{\lambda}}:=\begin{pmatrix}
0 & -1 \\
-\Delta -p (Q_{\frac{1}{\lambda}})^{p-1} & 0
\end{pmatrix}, \ \text{and} \ \bos{H}_{b,\frac{1}{\lambda}}:= \begin{pmatrix}
0 & -1 \\
-\Delta -p(\tilde{Q}_{b,\frac{1}{\lambda}})^{p-1} & 0
\end{pmatrix} .
\end{equation}
We notice that the $\bos{NL}$ and $\bos{L}$ terms are situated on the second coordinate:
\be \label{thetrapped:eq:localisation NL et L}
\bos{NL}(\bos{\varepsilon})=\begin{pmatrix}0 \\ NL(\varepsilon) \end{pmatrix}, \ \bos{NL}(\bos{w})=\begin{pmatrix}0 \\ NL(w) \end{pmatrix}, \ \bos{L}(\bos{\varepsilon})=\begin{pmatrix}0 \\ L(\varepsilon) \end{pmatrix}, \ \bos{L}(\bos{w})=\begin{pmatrix}0 \\ L(w) \end{pmatrix}.
\ee
We let the new modulation term that now includes the scale change be:
\begin{equation}
\tilde{\bos{Mod}(t)}:= \bos{Mod}(t)-\left(\frac{\lambda_s}{\lambda}+b_1\right)\bos{\Lambda} \tilde{\bos{Q}}_b .
\end{equation}


\subsection{Modulation equations} In this section we compute the influence of $\bos{\varepsilon}$ on the equations governing the evolution of the parameters $\lambda$ and $b$.

\begin{lemma}[Modulation estimates] \label{lem:modulation}
Assume that all the constants involved in Proposition \ref{prop:bootstrap} are fixed in their range\footnote{It means that, for example, if we wrote $0<C\ll 1$ that $C$ is fixed very small}, except $s_0$. Then for $s_0$ large enough there holds the bounds for $s_0\leq s<s^*$:
\begin{equation} \label{thetrapped:eq:modulation leq L-1}
\begin{array}{r c l}
&&\left|\frac{\lambda_s}{\lambda}+b_1\right| +\sum_{i=1}^{L-1}|b_{i,s}+(i-\alpha)b_1b_i+b_{i+1} |  \\ 
&\leq& C(M)b_1^{L+3}+C(L,M)b_1\sqrt{\mathcal{E}_{s_L}} ,
\end{array}
\end{equation}
\begin{equation} \label{thetrapped:eq:premiere modulation L}
|b_{L,s}+(L-\alpha)b_1 b_L|\leq C(M)\sqrt{\mathcal{E}_{s_L}}+C(M)b_1^{L+3} .
\end{equation}
\end{lemma}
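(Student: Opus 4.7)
The strategy is to differentiate each of the $L+1$ orthogonality conditions \fref{eq:condition d'ortho pour epsilon} in the renormalized time $s$ (using that $\bos{\Phi}_M$ is time-independent) and substitute the evolution equation \fref{eq:evolution epsilon}. This yields, for $0\le k\le L$, the identity
$$\langle\tilde{\bos{Mod}},\bos{H}^{*k}\bos{\Phi}_M\rangle=\langle \tfrac{\lambda_s}{\lambda}\bos{\Lambda}\bos{\varepsilon}-\bos{H}\bos{\varepsilon}-\tilde{\bos{\psi}}_b+\bos{NL}(\bos{\varepsilon})+\bos{L}(\bos{\varepsilon}),\bos{H}^{*k}\bos{\Phi}_M\rangle,$$
viewed as a linear system for the $L+1$ modulation parameters
$$m_0:=\tfrac{\lambda_s}{\lambda}+b_1,\qquad m_i:=b_{i,s}+(i-\alpha)b_1 b_i-b_{i+1},\ \ 1\le i\le L.$$

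First I analyse the matrix of this system by pairing $\bos{H}^{*k}\bos{\Phi}_M$ with the building blocks $\bos{T}_i+\sum_{j>i}\partial\bos{S}_j/\partial b_i$ of $\bos{Mod}$ and with $\bos{\Lambda}\tilde{\bos{Q}}_b$. The orthogonality relation \fref{thetrapped:eq:orthogonalite PhiM} gives $\langle\bos{H}^k\bos{T}_i,\bos{\Phi}_M\rangle=\langle\chi_M\bos{\Lambda Q},\bos{\Lambda Q}\rangle\delta_{k,i}$ for $1\le i\le L$, while the kernel identity $\bos{H}\bos{\Lambda Q}=0$ gives $\langle\bos{\Lambda}\tilde{\bos{Q}}_b,\bos{H}^{*k}\bos{\Phi}_M\rangle=\delta_{k,0}\langle\chi_M\bos{\Lambda Q},\bos{\Lambda Q}\rangle+O(b_1)$. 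The off-diagonal contributions coming from $\chi_{B_1}\bos{\alpha}_b$ inside $\bos{\Lambda}\tilde{\bos{Q}}_b$ and from the $\partial\bos{S}_j/\partial b_i$ (homogeneous of degree $\ge 1$ in $b$ by \fref{linearized:eq:degre Si}) carry a prefactor $O(b_1)$. Hence the system matrix is $\langle\chi_M\bos{\Lambda Q},\bos{\Lambda Q}\rangle(\mathrm{Id}+O(b_1))$ and is invertible for $b_1$ small.

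Next I estimate the right-hand side. The crucial observation is that, by duality,
$$\langle\bos{H}\bos{\varepsilon},\bos{H}^{*k}\bos{\Phi}_M\rangle=\langle\bos{\varepsilon},\bos{H}^{*(k+1)}\bos{\Phi}_M\rangle=0\qquad \text{for every}\ 0\le k\le L-1,$$
by the orthogonality conditions \fref{eq:condition d'ortho pour epsilon}, and only for $k=L$ does this term survive, contributing $C(M)\sqrt{\mathcal E_{s_L}}$ via Cauchy--Schwarz, the compact support of $\bos{H}^{*(L+1)}\bos{\Phi}_M$, and the coercivity of $\mathcal E_{s_L}$; this is exactly the origin of the weaker bound \fref{thetrapped:eq:premiere modulation L}. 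For $\tilde{\bos{\psi}}_b$, since $\bos{\Phi}_M$ is supported in $\{y\le 2M\}$, which for $s_0$ large is contained in $\{y\le B_1/2\}$, the local improved bound of Proposition \ref{pr:profilapprochecoupe} yields $|\langle\tilde{\bos{\psi}}_b,\bos{H}^{*k}\bos{\Phi}_M\rangle|\le C(M)b_1^{L+3}$. The linear correction $\bos{L}(\bos{\varepsilon})$, which involves the potential difference $p(\tilde Q_b^{p-1}-Q^{p-1})\bos\varepsilon$, carries an $O(b_1)$ prefactor on the support of $\bos{\Phi}_M$ and yields $C(L,M)b_1\sqrt{\mathcal E_{s_L}}$. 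The nonlinearity $\bos{NL}(\bos{\varepsilon})$ is quadratic in $\bos\varepsilon$ and is absorbed in the same estimate; the scaling contribution $\tfrac{\lambda_s}{\lambda}\bos{\Lambda}\bos{\varepsilon}$ is handled similarly using the rough bootstrap bound $|\tfrac{\lambda_s}{\lambda}|\lesssim b_1$ obtained by a preliminary run of the same argument.

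Finally, inverting the system by a Neumann series absorbs the off-diagonal $O(b_1)$ entries through a bootstrap treating all coordinates $m_j$ simultaneously; dividing by $\langle\chi_M\bos{\Lambda Q},\bos{\Lambda Q}\rangle\sim M^{2k_0+2\delta_0}$ (swallowed into the $M$-dependent constants) yields \fref{thetrapped:eq:modulation leq L-1} for $k\le L-1$ and \fref{thetrapped:eq:premiere modulation L} for $k=L$. The main subtlety, and the sole source of the improved $b_1$ prefactor in front of $\sqrt{\mathcal E_{s_L}}$ in \fref{thetrapped:eq:modulation leq L-1}, is that this factor originates from the potential difference $\tilde Q_b^{p-1}-Q^{p-1}$ inside $\bos{L}(\bos{\varepsilon})$: once the orthogonality of $\bos{\varepsilon}$ kills the $\bos{H}\bos{\varepsilon}$ term, no direct $\sqrt{\mathcal E_{s_L}}$ contribution survives without a $b_1$ weight. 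This cancellation fails at $k=L$, explaining the loss in \fref{thetrapped:eq:premiere modulation L}.
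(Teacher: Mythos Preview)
Your proposal is correct and follows essentially the same approach as the paper: project the evolution equation \fref{eq:evolution epsilon} onto $\bos{H}^{*k}\bos{\Phi}_M$, exploit the orthogonality \fref{eq:condition d'ortho pour epsilon} to kill $\langle\bos{H}\bos{\varepsilon},\bos{H}^{*k}\bos{\Phi}_M\rangle$ for $k\le L-1$ (with the surviving $k=L$ contribution producing the loss in \fref{thetrapped:eq:premiere modulation L}), estimate each remaining term using the compact support of $\bos{\Phi}_M$ together with the local bound of Proposition \ref{pr:profilapprochecoupe} and the coercivity of $\mathcal E_{s_L}$, and close by inverting the near-identity system. The paper phrases the final inversion as an explicit reinjection via the auxiliary quantity $D(t)=|m_0|+\sum_i|m_i|$ rather than a Neumann series, and for $k=0$ it additionally notes that the $\tilde{\bos{\psi}}_b$, $\bos{NL}$, $\bos{L}$ terms actually vanish (being on the second coordinate while $\bos{\Phi}_M$ is on the first), but these are cosmetic differences.
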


\begin{remark}\label{thetrappedregime:modulationequation:re:premiere estimation sur bl insuffisante}
Under the assumption on the smallness of $\bos{\varepsilon}$ \fref{eq:bootstrap estimations sur epsilon} This implies in particular that:
$$
\frac{\lambda_s}{\lambda}=-b_1+O(b_1^2) 
$$
and
$$
b_{i,s}=-(i-\alpha)b_1b_i+b_{i+1}+O(b_1^{i+2}) 
$$
for $1\leq i \leq L-1$. If we had also $b_{L,s}=-(L-\alpha)b_1b_L+O(b_1^{L+1+c})$ for a small constant $c>0$, this would be enough to conclude that the dynamics of the parameters is given at the first order by \fref{eq:systemdyndesb}. Unfortunately this last condition is not met. We will see how to skirt this problem in the next Lemma \ref{trappedregime:improvedmodulation:lem:improvedmodulation}.
\end{remark}

\begin{proof}[Proof of Lemma \ref{lem:modulation}] We let:
\begin{equation}
D(t)=\left| \frac{\lambda_s}{\lambda}+b_1 \right|+\sum_{i=1}^L |b_{i,s}+(i-\alpha)b_1b_i-b_{i+1}| .
\end{equation}

For $0\leq i \leq L$ we take the sclar product of \fref{eq:evolution epsilon} with $\bos{H}^{*i}\bos{\Phi}_M$:
\be \label{thetrapped:eq:modulation}
\begin{array}{r c l}
\langle \tilde{\bos{Mod}}(t), \bos{H}^{*i}\bos{\Phi}_M \rangle&=& \langle -\bos{H}(\bos{\varepsilon}), \bos{H}^{*i}\bos{\Phi}_M \rangle  + \langle \frac{\lambda_s}{\lambda}\bos{\Lambda} \bos{\varepsilon},\bos{H}^{*i}\bos{\Phi}_M\rangle  - \langle \tilde{\bos{\psi}}_b,\bos{H}^{*i}\bos{\Phi}_M\rangle \\
&& + \langle \bos{NL}(\bos{\varepsilon}),\bos{H}^{*i}\bos{\Phi}_M\rangle  + \langle \bos{L}( \bos{\varepsilon}),\bos{H}^{*i}\bos{\Phi}_M\rangle .
\end{array}
\ee

\underline{Step 1:} \emph{law for $\lambda$.} We take $i=0$ in the preceding equation \fref{thetrapped:eq:modulation} and compute all the terms. As $\bos{\Phi}_M$ is located on the first coordinate, see \fref{thetrapped:eq:proprietes PhiM}, it gives:
\be \label{thetrapped:modulation:lambda NL L}
\langle \bos{NL}(\bos{\varepsilon}),\bos{\Phi}_M\rangle  = \langle \bos{L}( \bos{\varepsilon}),\bos{\Phi}_M\rangle =0.
\ee
$\bos{\Phi}_M$ is of compact support in $|y|\leq 2M$ and situated on the first coordinate. For $b_1$ small enough one has $\tilde{\bos{\psi}}_b(y)=\bos{\psi}_b(y)$, and $\bos{\psi}_b $ is situated on the second coordinate from \fref{linearized:eq:position psib}. Hence:
\be \label{thetrapped:modulation:lambda psib}
\langle \tilde{\bos{\psi}}_b, \bos{\Phi}_M \rangle =0.
\ee
The linear term is equal to $0$ because of the orthogonality conditions \fref{eq:condition d'ortho pour epsilon}:
\be \label{thetrapped:modulation:lambda lineaire}
\langle -\bos{H}(\bos{\varepsilon}),\bos{\Phi}_M \rangle=0 .
\ee
The left hand side, the modulation term, is the one catching the evolution of $\lambda_s$:
\be \label{thetrapped:modulation:lambda mod}
\begin{array}{ r c l}
\langle \tilde{\bos{Mod}}(t), \bos{\Phi}_M \rangle&=& (\frac{\lambda_s}{\lambda}+b)\langle \bos{\Lambda} \tilde{\bos{Q}_b}, \bos{\Phi}_M \rangle \\
&& + \sum_{i=1}^{L} (b_{i,s}+(i-\alpha)b_1b_i-b_{i+1})\langle \bos{T}_i+\sum_{j=i+1}^L \frac{\partial \bos{S}_j}{\partial b_i},\bos{\Phi}_M \rangle \\
&=& (\frac{\lambda_s}{\lambda}+b_1)\langle \bos{\Lambda} \bos{Q}, \bos{\Lambda} \bos{Q} \rangle +O(b_1D(t)).
\end{array}
\ee
We now estimate the scaling term:
$$
\begin{array}{r c l}
|\langle \frac{\lambda_s}{\lambda}\bos{\Lambda} \bos{\varepsilon}, \bos{\Phi}_M\rangle|&\leq& |\frac{\lambda_s}{\lambda}+b_1||\langle \Lambda^{(1)} \varepsilon^{(1)} , \Phi_M\rangle| + b_1|\langle \Lambda^{(1)} \varepsilon^{(1)} , \Phi_M\rangle| \\
&\leq& (b_1+D(t))|\parallel \Lambda \varepsilon^{(1)} \parallel_{L^2(\leq M)}\parallel \Phi_M\parallel_{L^2} .
\end{array}
$$
We use the coercivity estimate from Corollary \ref{annexe:cor:coercivite mathcalEsL} to relate the $L^2$ norm on the compact set $y\leq M$ to $\mathcal{E}_{s_L}$:
$$
\int_{y\leq M} |\varepsilon^{(1)}|^2 = \int_{y\leq M} (1+y)^{2k_0+2L+2}\frac{|\varepsilon^{(1)}|^2}{1+y^{2k_0+2L+2}} \leq  C(M)\mathcal{E}_{s_L} ,
$$
$$
\int_{y\leq M} |y\partial_y\varepsilon^{(1)}|^2 \leq \int_{y\leq M} (1+y)^{2k_0+2L+2}\frac{|\partial_y \varepsilon^{(1)}|^2}{1+y^{2k_0+2L}} \leq C(M)^{2(k_0+L+1)}\mathcal{E}_{s_L} .
$$
This gives:
\be \label{thetrapped:modulation:lambda lambdavarepsilon}
|\langle \frac{\lambda_s}{\lambda}\Lambda^{(1)} \varepsilon^{(1)}, \Phi_M\rangle|\leq C(M)(b_1+D(t))\sqrt{\mathcal{E}_{s_L}}.
\ee
Now that we have computed all the terms in \fref{thetrapped:eq:modulation} for $i=0$, in \fref{thetrapped:modulation:lambda NL L}, \fref{thetrapped:modulation:lambda psib}, \fref{thetrapped:modulation:lambda lineaire}, \fref{thetrapped:modulation:lambda mod} and \fref{thetrapped:modulation:lambda lambdavarepsilon}, we end up with:
\begin{equation}\label{eq:estimation modulation lambda}
\left| \frac{\lambda_s}{\lambda}+b_1 \right|=O(b_1 D(t))+O((b_1+D(t))C(M)\sqrt{\mathcal{E}_{s_L}}) .
\end{equation}

\underline{Step 2:} \emph{law of $b_i$ for $1\leq i \leq L-1$}.
We take again equation \fref{thetrapped:eq:modulation} and do the same computations. The $\tilde{\bos{Mod}}$ term represents the approximate dynamics:
\be \label{thetrapped:modulation:i mod}
\langle \tilde{\bos{Mod}}(t), \bos{H}^{*i}\bos{\Phi}_M \rangle=\langle \bos{\Lambda} \bos{Q},\bos{\Phi}_M\rangle (b_{i,s}+(i-\alpha)b_1b_i-b_{i+1})+O(b_1 D(t)) .
\ee
The linear term still disappears because of the orthogonality conditions:
\be \label{thetrapped:modulation:i lineaire}
\langle -\bos{H}(\bos{\varepsilon}), \bos{H}^{*i}\bos{\Phi}_M \rangle=0 .
\ee
For the scale changing term, as before, thanks to the coercivity of $\mathcal{E}_{s_L}$ and to \fref{eq:estimation modulation lambda}:
\be \label{thetrapped:modulation:i lambdavarepsilon}
|\langle \frac{\lambda_s}{\lambda}\bos{\Lambda} \bos{\varepsilon},\bos{H}^{*i}\bos{\Phi}_M\rangle| \leq (b_1+D(t)) C(M) \sqrt{\mathcal{E}_{s_L}} .
\ee
The error contribution, as $\tilde{\bos{\psi}}_b=\bos{\psi}_b$ for $y\leq 2M$ (for $s_0$ small enough) is estimated thanks to Proposition \ref{pr:constructionprofilnoncoupe}:
\be \label{thetrapped:modulation:i psib}
|\langle \tilde{\bos{\psi}}_b,\bos{H}^{*i}\bos{\Phi}_M \rangle| \leq C(M)b_1^{L+3} .
\ee
We now want to estimate the nonlinear contribution. Since $\bos{NL}$ is a linear sum of terms of the form $\tilde{Q}_b^{p-k}\varepsilon^{(1)k}$ for $k\geq 2$ we estimate using Cauchy-Schwarz, the $L^{\infty}$ estimate given in Lemma \ref{annexe:lem:interpolation varepsilon}, and again the coercivity estimate:
\be \label{thetrapped:modulation:i NL}
\begin{array}{r c l}
\langle \tilde{\bos{Q}}_b^{p-k}\bos{\varepsilon}^{(1)k},\bos{H}^{*i}\bos{\Phi}_M\rangle &\leq & C(M)\parallel \epsilon^{(1)} \parallel_{L^{\infty}}^{k-2}\mathcal{E}_{s_L} \\
&=& o(b_1 \sqrt{\mathcal{E}_{s_L}}) ,
\end{array}
\ee
in the regime \fref{eq:bootstrap estimations sur epsilon}. Because $(\tilde{Q}_b^{(1)})^{p-1}-Q^{p-1}=O(b_1)$ there holds for the small linear term:
\be \label{thetrapped:modulation:i L}
|\langle L(\varepsilon),H^{*i}\Phi_M \rangle|\leq b_1C(M) \sqrt{\mathcal{E}_{s_L}} .
\ee
We have estimated all the terms in \fref{thetrapped:eq:modulation} for $1\leq i\leq L-1$, in \fref{thetrapped:modulation:i mod}, \fref{thetrapped:modulation:i lineaire}, \fref{thetrapped:modulation:i lambdavarepsilon}, \fref{thetrapped:modulation:i psib}, \fref{thetrapped:modulation:i NL} and \fref{thetrapped:modulation:i L}, it yields:
\begin{equation}\label{eq:modulation 1<i<L-1}
|b_{i,s}-(i-\alpha)b_1b_i| \leq O(b_1D(t))+C(M)b_1^{L+3}+C(M)b_1\sqrt{\mathcal{E}_{s_L}} .
\end{equation}

\underline{Step 3:} \emph{the law of $b_L$}. We compute:
$$
\langle \tilde{\bos{Mod}(t)},\bos{H}^{*L}\bos{\Phi}_M\rangle = O(b_1D(t))+(b_{L,s}+(L-\alpha)b_1b_L)\langle \bos{\Lambda} \bos{Q}, \bos{\Phi}_M \rangle .
$$
The terms that we previously estimated still admits the same bounds. But the linear term does not disappear in this case. We recall that we have chosen $L$ odd. From the identity \fref{thetrapped:eq:puissances de H} relating $\bos{H}^k$ to $\mathcal{L}$:
$$
|\langle \bos{H}(\bos{\varepsilon}),\bos{H}^{*L}\bos{\Phi}_M\rangle|=|\langle \bos{H}^{L+1}\bos{\varepsilon},\bos{\Phi}_M \rangle| = |\int \mathcal{L}^{\frac{L+1}{2}}\varepsilon^{(1)}\Phi_M | \leq C(M)\sqrt{\mathcal{E}_{s_L}} .
$$
This gives:
\begin{equation}
\left| \frac{\langle \bos{H}(\bos{\varepsilon}),\bos{H}^{*L}\bos{\Phi}_M \rangle}{\langle \bos{\Phi}_M, \bos{\Lambda} \bos{Q} \rangle}\right|\lesssim M^{-\delta_0}\sqrt{\mathcal{E}_{s_L}} .
\end{equation}
We then conclude that:
\begin{equation}\label{eq:estimation modulation bL}
|b_{L,s}-(L-\alpha)b_1b_L|\leq C(M)(b_1D(t)+b_1^{L+3})+C(M)\sqrt{\mathcal{E}_{s_L}}.
\end{equation}

\underline{Step 4:} \emph{reinjection of the bounds}. Summing \fref{eq:estimation modulation bL}, \fref{eq:modulation 1<i<L-1} and \fref{eq:estimation modulation lambda} we find that:
\begin{equation}
D(t)=O(\sqrt{\mathcal{E}_{s_L}}+b_1^{L+3}).
\end{equation}
This allows us to go back to the previous estimate of the law of $\lambda $ \fref{eq:estimation modulation lambda}, of the $b_i$'s \fref{eq:modulation 1<i<L-1}, and of $b_L$ \fref{eq:estimation modulation bL} to obtain the desired estimates \fref{thetrapped:eq:modulation leq L-1} and \fref{thetrapped:eq:premiere modulation L}
\end{proof}


\subsection{Improved modulation equation for $b_L$}
We have seen in remark \ref{thetrappedregime:modulationequation:re:premiere estimation sur bl insuffisante} that the control over the evolution of $b_L$ we found in the last Lemma \ref{lem:modulation} is not sufficient. In fact, this is because our orthogonality conditions approximate a true orthogonal decomposition (which would have been to ask $\langle \bos{\varepsilon}, \bos{T}_i \rangle=0$ and would have implied the vanishing of the bad term $\langle \bos{H} \bos{\varepsilon}, \bos{T}_L\rangle=\langle  \bos{\varepsilon}, -\bos{T}_{L-1}\rangle=0$). Nevertheless, we are able to determine which part of $\bos{\varepsilon}$ contributes in the worst way to the evolution of $b_L$ and to control it. This is the subject of the following lemma:

\begin{lemma}[Improved modulation equation for $b_L$:]
\label{trappedregime:improvedmodulation:lem:improvedmodulation}
We recall that $B_0$ is given by \fref{eq:def:B1etB0}. Assume all the constants involved in Proposition \ref{prop:bootstrap} are fixed in their range except $s_0$. Then for $s_0$ large enough there holds\footnote{The denominator being non null from \fref{thetrapped:eq:improved modulation taille denominateur}.} for $s_0\leq s<s^*$:
\begin{equation} \label{thetrapped:eq:improved modulation}
\begin{array}{r c l}
&& \left| b_{L,s}+(L-\alpha)b_1 b_L -\frac{d}{ds}\left[ \frac{\langle \bos{H}^{L}\bos{\varepsilon} , \chi_{B_0} \bos{\Lambda} \bos{Q}\rangle}{\Bigl\langle  \chi_{B_0} \Lambda^{(1)} Q, \Lambda^{(1)} Q +(-1)^{\frac{L-1}{2}}\left(\frac{\partial S_{L+2}}{\partial b_L}\right)_{L-1}\Bigr\rangle}\right]\right| \\
&\leq& \frac{1}{B_0^{\delta_0}}C(L,M)\left[  \sqrt{\mathcal{E}_{s_L}} +b_1^{L+1-\delta_0+g'}\right] ,
\end{array}
\end{equation}
where $g'$ is the gain in the asymptotic of the profiles $S_i$ defined by \fref{eq:def:gain2}.
\end{lemma}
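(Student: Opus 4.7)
\emph{Proof plan.} The shortcoming of Lemma \ref{lem:modulation} for $b_L$ traces back to the non-vanishing of $\langle \bos{H}(\bos{\varepsilon}),\bos{H}^{*L}\bos{\Phi}_M\rangle$, which only produced a $C(M)\sqrt{\mathcal{E}_{s_L}}$ bound with no smallness in $b_1$. I would replace the fixed-scale test function $\bos{\Phi}_M$ by the self-similar test function $\bos{\Psi}:=\bos{H}^{*L}(\chi_{B_0}\bos{\Lambda}\bos{Q})$, with $B_0=1/b_1$, and pair the evolution equation \fref{eq:evolution epsilon} with $\bos{\Psi}$. The crucial algebraic input is $\mathcal{L}\Lambda^{(1)}Q=0$: the linear contribution then reduces, after integration by parts, to a commutator with $\chi_{B_0}$, supported in the annulus $\{B_0\leq|y|\leq 2B_0\}$, where a decisive $B_0^{-\delta_0}$-gain is available.

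\emph{Main identity and denominator.} Writing
\[
\langle\partial_s\bos{\varepsilon},\bos{\Psi}\rangle=\frac{d}{ds}\langle \bos{H}^L\bos{\varepsilon},\chi_{B_0}\bos{\Lambda}\bos{Q}\rangle-\langle \bos{H}^L\bos{\varepsilon},\partial_s(\chi_{B_0}\bos{\Lambda}\bos{Q})\rangle
\]
furnishes the $\frac{d}{ds}$ bracket appearing in \fref{thetrapped:eq:improved modulation}. From $\bos{H}\bos{\Lambda}\bos{Q}=0$ and $\bos{T}_i=(-1)^i\bos{H}^{-i}\bos{\Lambda}\bos{Q}$ one reads off $\bos{H}^L\bos{T}_i=(-1)^i\bos{H}^{L-i}\bos{\Lambda}\bos{Q}=0$ for $i<L$ and $\bos{H}^L\bos{T}_L=(-1)^L\bos{\Lambda}\bos{Q}$, so only the $i=L$ piece of $\bos{Mod}(t)$ contributes to leading order. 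Within that piece, $\partial\bos{S}_{L+1}/\partial b_L$ sits on the first coordinate (since $L+1$ is even) while $\bos{\Psi}$ sits on the second (since $L$ is odd), so these pair to zero; while $\partial\bos{S}_{L+2}/\partial b_L$ sits on the second coordinate, and applying $\bos{H}^L$ produces precisely the correction $(-1)^{(L-1)/2}(\partial S_{L+2}/\partial b_L)_{L-1}$ defining the denominator $D$. Collecting, the modulation contribution equals $-D\cdot(b_{L,s}+(L-\alpha)b_1 b_L)$ plus remainders.

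\emph{Error estimates.} The delicate linear term becomes, by self-adjointness of $\mathcal{L}$ and $\mathcal{L}\Lambda^{(1)}Q=0$,
\[
\langle\bos{H}(\bos{\varepsilon}),\bos{\Psi}\rangle=(-1)^{(L+1)/2}\langle\varepsilon^{(1)},[\mathcal{L}^{(L+1)/2},\chi_{B_0}]\Lambda^{(1)}Q\rangle,
\]
supported in the annulus $|y|\sim B_0$ where $\Lambda^{(1)}Q\sim cy^{-\gamma}$ and each derivative of $\chi_{B_0}$ costs $B_0^{-1}$. Combining the commutator size with Cauchy--Schwarz, the weighted coercivity of $\mathcal{E}_{s_L}$ from Corollary \ref{annexe:cor:coercivite mathcalEsL} (controlling $\int_{|y|\leq B}|\varepsilon^{(1)}|^2$ by $B^{2(k_0+L+1)}\mathcal{E}_{s_L}$), and dividing by $D\sim cB_0^{2k_0+2\delta_0}$, one recovers the first piece $B_0^{-\delta_0}\sqrt{\mathcal{E}_{s_L}}$. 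The forcing term $\langle -\tilde{\bos{\psi}}_b,\bos{\Psi}\rangle$ is treated identically, using the global weighted bounds from Proposition \ref{pr:profilapprochecoupe} together with the $g'$-gain built into the asymptotics of $\partial S_{L+2}/\partial b_L$, yielding the second piece $B_0^{-\delta_0}b_1^{L+1-\delta_0+g'}$. The scaling term $(\lambda_s/\lambda)\bos{\Lambda}\bos{\varepsilon}$, the test-function time derivative $\partial_s(\chi_{B_0}\bos{\Lambda}\bos{Q})$ (supported again in the annulus and of size $\lesssim b_1$ using $|b_{1,s}|\lesssim b_1^2$), the nonlinearity $\bos{NL}(\bos{\varepsilon})$, the small linear term $\bos{L}(\bos{\varepsilon})$, and the subleading $i<L$ modulation pieces (involving $\partial\bos{S}_j/\partial b_i$, whose coefficients are already controlled via \fref{thetrapped:eq:modulation leq L-1}) are all of equal or smaller size and are handled exactly as in the proof of Lemma \ref{lem:modulation}.

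\emph{Main obstacle.} The hard part is the commutator analysis of $[\mathcal{L}^{(L+1)/2},\chi_{B_0}]\Lambda^{(1)}Q$: one must simultaneously track the number of derivatives landing on $\chi_{B_0}$ versus on $\Lambda^{(1)}Q$, the annulus-localization, and the weighted Hardy bound on $\varepsilon^{(1)}$, so as to extract exactly the $B_0^{-\delta_0}$ gain that makes the estimate sharp (and that would be missed by the cruder test function $\bos{\Phi}_M$).
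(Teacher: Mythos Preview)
Your proposal is correct and follows essentially the same approach as the paper. Both arguments pair the evolution equation \fref{eq:evolution epsilon} against $\bos{H}^{*L}(\chi_{B_0}\bos{\Lambda}\bos{Q})$ and exploit $\mathcal{L}\Lambda^{(1)}Q=0$ to localize the dangerous linear contribution to the annulus $|y|\sim B_0$, where weighted coercivity extracts the $B_0^{-\delta_0}$ gain; your commutator formulation $\langle\varepsilon^{(1)},[\mathcal{L}^{(L+1)/2},\chi_{B_0}]\Lambda^{(1)}Q\rangle$ is just a repackaging of the paper's one-step integration by parts $\int A(\chi_{B_0}\Lambda^{(1)}Q)\,\varepsilon^{(1)}_L$ via $A\Lambda^{(1)}Q=0$.

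Two small omissions worth noting: (i) you do not explicitly treat the time derivative of the denominator $D$ in the quotient rule $\frac{d}{ds}[N/D]=N'/D-ND'/D^2$, but this is routine since $|D'/D|\lesssim b_1$ (the denominator depends on $b$ only through $b_1,\dots,b_{L-1}$, whose modulation is controlled by \fref{thetrapped:eq:modulation leq L-1}) and $|N/D|\lesssim B_0^{1-\delta_0}\sqrt{\mathcal{E}_{s_L}}$; (ii) for the modulation pieces $\chi_{B_1}\bos{T}_i$ with $i<L$ you need the observation that $\chi_{B_1}\equiv 1$ on $\mathrm{supp}\,\chi_{B_0}$ (since $2B_0\ll B_1$), so the commutators with $\chi_{B_1}$ do not intersect the test function and your cancellation $\bos{H}^L\bos{T}_i=0$ is indeed effective. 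The attribution of the $g'$-gain in the $\tilde{\bos{\psi}}_b$ term to ``$\partial S_{L+2}/\partial b_L$'' is slightly off---the gain comes from the structure of $\bos{\psi}_b$ itself (Proposition \ref{pr:constructionprofilnoncoupe}), restricted to $y\leq 2B_0$ where $\tilde{\bos{\psi}}_b=\bos{\psi}_b$---but your conclusion is correct.
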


\begin{proof}[Proof of Lemma \ref{trappedregime:improvedmodulation:lem:improvedmodulation}] \underline{Step 1:} Expression of the time derivative of the numerator. We first compute the time evolution of the numerator of the new term we introduced in \fref{trappedregime:improvedmodulation:lem:improvedmodulation}: $\langle \bos{H}^{L}\bos{\epsilon} , \chi_{B_0} \bos{\Lambda} \bos{Q}\rangle$. From the evolution equation for $\bos{\varepsilon}$ given by \fref{eq:evolution epsilon}:
\be \label{thetrapped:eq:expression numerateur}
\frac{d}{ds} \left( \langle \bos{H}^L \bos{\varepsilon}, \chi_{B_0}\bos{\Lambda} \bos{Q} \rangle \right)= \langle \bos{H}^L \bos{\varepsilon}_s, \chi_{B_0}\bos{\Lambda} \bos{Q} \rangle + \langle \bos{H}^L \bos{\varepsilon}, b_{1,s} y\partial_y \chi (\frac{y}{B_0})\bos{\Lambda} \bos{Q} \rangle .
\ee
We will now compute each term in the right hand side. We first estimate the second term. From the modulation equation \fref{thetrapped:eq:modulation leq L-1}, and under the bootstrap assumptions \fref{eq:bootstrap estimations sur epsilon} one has $|b_{1,s}|\leq Cb_1^2$. We use the expression of $\bos{H}^L$ given by \fref{thetrapped:eq:puissances de H}, $L$ being odd, and the coercivity of $\mathcal{E}_{s_L}$, see Corollary \ref{annexe:cor:coercivite mathcalEsL}:
\be \label{thetrapped:eq:expression deuxieme terme improved}
\begin{array}{r c l}
\left| \langle \bos{H}^L \bos{\varepsilon}, b_{1,s} \partial_y \chi (\frac{y}{B_0})\bos{\Lambda} \bos{Q} \rangle \right|&=& \left| \int (-1)^{\frac{L+1}{2}}\mathcal{L}^{\frac{L-1}{2}}\varepsilon^{(2)}b_{1,s} y \partial_y \chi (\frac{y}{B_0})\Lambda^{(1)}Q  \right| \\
&\leq& Cb_1^2 \int_{B_0}^{2B_0} |\varepsilon^{(2)}_{L-1}| \frac{y}{y^{\gamma}} = Cb_1^2 \int_{B_0}^{2B_0} \frac{|\varepsilon^{(2)}_{L-1}|}{y^{k_0+1}} y^{k_0-\gamma+2} \\
&\leq & C(M) b_1^2 \sqrt{\mathcal{E}_{S_L}} \left( \int_{B_0}^{2B_0} y^{2k_0-2\gamma+4} \right)^{\frac{1}{2}} \\
&\leq&  C(M)b_1^2 \sqrt{\mathcal{E}_{S_L}} b_1^{-(2k_0+\delta_0+2)} \\
&\leq& C(M)\sqrt{\mathcal{E}_{S_L}} b_1^{-(2k_0+\delta_0)} 
\end{array}
\ee
where we used the asymptotic \fref{eq:degenerescencesoliton} of $\Lambda^{(1)}Q$ (and we recall that $f_k$ stands for the $k$-th adapted derivative of $f$ given by \fref{linearized:eq:def derivees adaptees}). We now aim at estimating the other term in the right hand side of \fref{thetrapped:eq:expression numerateur}. We compute using again the expression of $\bos{H}^L$ given by \fref{thetrapped:eq:puissances de H} and the fact that $L$ is odd:
\be \label{thetrapped:eq:expression premier terme improved}
\begin{array}{r c l}
&(-1)^{\frac{L+1}{2}} \langle \bos{H}^{L} \bos{\varepsilon}_s, \chi_{B_0}\bos{\Lambda} \bos{Q}\rangle= \int \mathcal{L}^{\frac{L-1}{2}} \varepsilon^{(2)}_s \Lambda^{(1)}Q \\
=& \int \chi_{B_0} \Lambda^{(1)}Q \left(-\mathcal{L}\varepsilon^{(1)}+\frac{\lambda_s}{\lambda}\Lambda^{(2)}\varepsilon^{(2)}-\tilde{Mod}(t)^{(2)}-\tilde{\psi}_b^{(2)}+NL(\varepsilon)+L(\varepsilon)\right)_{L-1} ,
\end{array}
\ee
and we now estimate all the terms in the right hand side. \\
$\bullet$ \emph{$\mathcal{L}\varepsilon^{(1)}$ term:} There holds using coercivity and the fact that $A(\Lambda^{(1)}Q)=0$:
\be \label{thetrapped:eq:estimation premier terme improved intermediaire lineaire}
\begin{array}{r c l}
\left| \int \chi_{B_0}\Lambda^{(1)}Q (\mathcal{L}\varepsilon^{(1)})_{L-1}  \right| &\leq& C \int_{B_0}^{2B_0} \frac{1}{y^{\gamma+1}} |\varepsilon^{(1)}_{L}|  \leq C \int_{B_0}^{2B_0} \frac{\varepsilon^{(1)}_{L}}{y^{k_0+1}}y^{k_0-\gamma} \\
&\leq& C(M) \sqrt{\mathcal{E}_{s_L}} b_1^{-(2k_0+\delta_0)} .
\end{array}
\ee
$\bullet$\emph{$\Lambda^{(2)}\varepsilon^{(2)}$ term:} Again, using the same arguments, as $\frac{|\lambda_s|}{\lambda}\leq Cb_1$ from \fref{thetrapped:eq:modulation leq L-1}:
\be  \label{thetrapped:eq:estimation premier terme improved intermediaire lambdavarepsilon}
\begin{array}{r c l}
\left| \int\chi_{B_0}\Lambda^{(1)}Q \frac{\lambda_s}{\lambda} (\Lambda^{(2)}\varepsilon^{(2)})_{L-1} \right| &\leq& Cb_1 \int_{B_0}^{2B_0} \frac{1}{y^{\gamma+1}} |\epsilon^{(2)}_{L-1}| \leq C(M)b_1 \sqrt{\mathcal{E}_{s_L}} b_1^{-(2k_0+1+\delta_0)} \\
&\leq& C(M) \sqrt{\mathcal{E}_{s_L}} b_1^{-(2k_0+\delta_0)} .
\end{array}
\ee
$\bullet$ \emph{$\tilde{\psi}_b$ term:} Because we are in the zone $\sim B_0$ we do not see the bad tail. We can then use the improved bound of Proposition \ref{pr:constructionprofilnoncoupe}:
\be \label{thetrapped:eq:estimation premier terme improved intermediaire psib}
\begin{array}{r c l}
\left| \int \chi_{B_0}\Lambda^{(1)}Q (\tilde{\psi_b}^{(2)})_{L-1} \right| &=& \left| \int \chi_{B_0}\Lambda^{(1)}Q (\psi_b^{(2)})_{L-1} \right|\\
& \leq& \parallel \Lambda^{(1)} Q\parallel_{L^2(\leq 2B_0)} \parallel \psi_{b,L-1}^{(2)}\parallel_{L^2(\leq 2B_0)}\\
&\leq & C b_1^{L+1-2k_0-2\delta_0+g'} .
\end{array}
\ee
$\bullet$ \emph{$NL(\varepsilon)$ term:} By duality we put all the derivatives on $\Lambda^{(1)}Q$:
$$
\begin{array}{r c l}
\left| \int \chi_{B_0} \Lambda^{(1)}Q (NL(\varepsilon))_{L-1}  \right|&=&\left| \int (\chi_{B_0} \Lambda^{(1)}Q)_{L-1} NL(\varepsilon)  \right| \leq C  \int_{B_0}^{2B_0} \frac{1}{y^{\gamma+L-1}} |NL(\varepsilon)| .
\end{array}
$$
We know that $NL(\varepsilon)$ is a sum of terms of the form: $Q^{p-k}\varepsilon^{(1)k}$ for $k>2$. So from the asymptotic \fref{linearized:eq:asymptotique soliton} of $Q$ and using coercivity:
$$
\begin{array}{r c l}
\left|\int_{B_0}^{2B_0} \frac{1}{y^{\gamma+L-1}}Q^{p-k}\varepsilon^{(1)k}\right| &\leq& C\parallel \varepsilon^{(1)} \parallel_{L^{\infty}}^{k-1} \int_{B_0}^{2B_0} \frac{|\varepsilon^{(1)}|}{y^{\gamma+L-1+\frac{2}{p-1}(p-k)}} \\
&\leq & C(M)\parallel \varepsilon^{(1)} \parallel_{L^{\infty}}^{k-1} \sqrt{\mathcal{E}_{s_L}} b_1^{-(2k_0+\delta_0)}b_1^{-2+\frac{2}{p-1}(p-k)} .
\end{array}
$$
We now use the estimate provided by Lemma \ref{annexe:lem:interpolation varepsilon}:
$$
\begin{array}{r c l}
\parallel \varepsilon^{(1)} \parallel_{L^{\infty}}& \leq &C(M,K_1,K_2) \sqrt{\mathcal{E}_{\sigma}}b_1^{\frac{d}{2}-\sigma+\frac{\frac{2}{p-1}\alpha}{L}+O\left(\frac{\sigma-s_c}{L} \right)} \\
&\leq& C(M,K_1,K_2)  \left(\frac{\mathcal{E}_{\sigma}}{b_1^{\sigma-s_c}} \right)b_1^{\frac{2}{p-1}+\frac{\frac{2}{p-1}\alpha}{L}+O\left(\frac{\sigma-s_c}{L} \right)} .
\end{array}
$$
Therefore:
$$
\left| \int_{B_0}^{2B_0} \frac{Q^{p-k}\varepsilon^{(1)k}}{y^{\gamma+L-1}}\right|  \leq C(M,K_1,K_2) \left(\frac{\mathcal{E}_{\sigma}}{b_1^{\sigma-s_c}} \right)^{k-1} \sqrt{\mathcal{E}_{s_L}} b_1^{-(2k_0+\delta_0)+\frac{2(k-1)\alpha}{(p-1)L} +O(\frac{\sigma-s_c}{L})} .
$$
Under the bootstrap estimate, for $s_0$ small enough this gives:
\be \label{thetrapped:eq:estimation premier terme improved intermediaire NL}
\left| \int \chi_{B_0} \Lambda^{(1)} Q NL(\varepsilon^{(1)})_{L-1} \right|\leq\sqrt{\mathcal{E}_{s_L}}b_1^{-(2k_0+\delta_0)} .
\ee
Indeed, the constant $s_0$ being chosen after all the other constants, we can increase $s_0$ to erase the dependence on the other constant in the preceding equation.\\
$\bullet$ \emph{$L(\varepsilon)$ term:}
$$
\begin{array}{r c l}
\left| \int \chi_{B_0} \Lambda^{(1)}Q (L(\varepsilon))_{L-1} \right| \leq C \int_{B_0}^{2B_0} \frac{1}{y^{\gamma+L-1}}|Q_b^{p-1}-Q^{p-1}||\varepsilon^{(1)}| .
\end{array}
$$
We use the degeneracy of the potential: $Q^{p-1}_b-Q^{p-1}\leq \frac{C}{1+y^{2+\alpha}}$ to estimate:
\be \label{thetrapped:eq:estimation premier terme improved intermediaire L}
\begin{array}{r c l}
\left| \int \chi_{B_0} \Lambda^{(1)}Q (L(\varepsilon))_{L-1} \right| &\leq & C \int_{B_0}^{2B_0} \frac{|\varepsilon^{(1)}|}{y^{\gamma+L-1+2+\alpha}} \\
&\leq & C(M) \sqrt{\mathcal{E}_{s_L}} b_1^{-(2k_0+\delta_0)}b_1^{\alpha} .
\end{array}
\ee
$\bullet$ \emph{$\tilde{Mod(t)}^{(2)}$ term:} From the localization of the $T_i$' and $S_i$'s (\fref{linearized:eq:localisation Ti} and \fref{linearized:eq:degre Si}):
$$
\begin{array}{r c l}
&\int \tilde{Mod(t)}^{(2)}_{L-1}\chi_{B_0}\Lambda^{(1)} Q \\
=& \int (\sum_{i=1}^L(b_{i,s}+(i-\alpha)b_1b_i-b_{i+1})(T_i\delta_{i \text{mod}2,1}+\underset{j= i+1 \ \text{odd}}{\overset{L+2}{\sum}}\frac{\partial S_j}{\partial b_i}) )_{L-1}\chi_{B_0}\Lambda^{(1)} Q \\
&-\int (\frac{\lambda_s}{\lambda}+b_1)\Lambda^{(2)}\tilde{\alpha}_b^{(2)})_{L-1}\chi_{B_0}\Lambda^{(1)}Q\\
=& (b_{L,s}+(L-\alpha)b_1b_L)\int (T_L+\frac{\partial S_{L+2}}{\partial b_L})_{L-1}\chi_{B_0}\Lambda^{(1)}Q \\
& + \int (\sum_{i=1}^{L-1}(b_{i,s}+(i-\alpha)b_1b_i-b_{i+1})(T_i\delta_{i \text{mod}2,1}+\underset{j= i+1 \ \text{odd}}{\overset{L+2}{\sum}}\frac{\partial S_j}{\partial b_i}) )_{L-1}\chi_{B_0}\Lambda^{(1)} Q\\
&-\int (\frac{\lambda_s}{\lambda}+b_1)(\Lambda^{(2)}\tilde{\alpha}_b^{(2)})_{L-1}\chi_{B_0}\Lambda^{(1)}Q .
\end{array}
$$
We compute from the fact that $\bos{H}(\bos{T_L})=(-1)^{L}\bos{\Lambda Q}$:
$$
\int (T_L)_{L-1}\chi_{B_0}\Lambda^{(1)} Q=(-1)^{\frac{L-1}{2}}\int |\Lambda^{(1)}Q|^2\chi_{B_0} .
$$
For $i<L$, as $(T_i)_{L-1}=0$ we have:
$$
\begin{array}{r c l}
&\left| \int (T_i\delta_{i\text{mod}2,1}+\sum_{j\geq i+1 \  \text{odd}}^{L+2}\frac{\partial S_j^{(2)}}{\partial b_i})_{L-1}\chi_{B_0}\Lambda^{(1)}Q\right| \\
=&\left| \int \sum_{j\geq i+1}^{L+2}\left(\frac{\partial S_j^{(2)}}{\partial b_i}\right)_{L-1}\chi_{B_0}\Lambda^{(1)}Q\right| \leq  Cb_1^{L-i+g'}b_1^{-(2k_0+2\delta_0)} .
\end{array}
$$
And for the last term there holds the bound:
$$
\left| \int (\Lambda^{(2)}\tilde{\alpha}_{b}^{(2)})_{L-1}\chi_{B_0}\Lambda^{(1)}Q \right|\leq C b_1^{L-(2k_0+2\delta_0)}
$$
We then conclude, using the majoration obtained in the previous Lemma \ref{lem:modulation} for the evolution of the $b_i$'s and $\lambda$, that for the $\tilde{Mod(t)}$ term:
\be \label{thetrapped:eq:estimation premier terme improved intermediaire mod}
\begin{array}{r c l}
&\int \tilde{Mod(t)}^{(2)}_{L-1}\chi_{B_0}\Lambda^{(1)} Q \\
=& (b_{L,s}+(L-\alpha)b_1b_L)\left( (-1)^{\frac{L-1}{2}} \int (\Lambda^{(1)}Q)^2\chi_{B_0}+\left(\frac{\partial S_{L+2}^{(2)}}{\partial b_L}\right)_{L-1}\chi_{B_0} \Lambda^{(1)}Q \right) \\
&+O(\sqrt{\mathcal{E}_{s_L}}b_1^{-(2k_0+\delta_0)}+b_1^{L+3-(2k_0+\delta_0)})
\end{array}
\ee
(From now on we use the $O()$ notation, the constants hidden depending only on M). We now collect all the estimates \fref{thetrapped:eq:estimation premier terme improved intermediaire lineaire}, \fref{thetrapped:eq:estimation premier terme improved intermediaire lambdavarepsilon}, \fref{thetrapped:eq:estimation premier terme improved intermediaire psib}, \fref{thetrapped:eq:estimation premier terme improved intermediaire NL}, \fref{thetrapped:eq:estimation premier terme improved intermediaire L} and \fref{thetrapped:eq:estimation premier terme improved intermediaire mod}, inject them in \fref{thetrapped:eq:expression premier terme improved} to find that the first term in the right hand side of \fref{thetrapped:eq:expression numerateur} is:
\be \label{thetrapped:eq:estimation premier term improved modulation}
\begin{array}{r c l}
&\langle \bos{H}^L \bos{\varepsilon}_s, \chi_{B_0}\bos{\Lambda} \bos{Q} \rangle +O(\sqrt{\mathcal{E}_{s_L}}b_1^{-(2k_0+\delta_0)})+O(b_1^{L+1-2k_0-2\delta_0+g'}).\\
=& (b_{L,s}+(L-\alpha)b_1b_L)\Bigl\langle  \chi_{B_0}\Lambda^{(1)}Q,\Lambda^{(1)}Q+(-1)^{\frac{L-1}{2}}\left(\frac{\partial S_{L+2}^{(2)}}{\partial b_L}\right)_{L-1} \Bigr\rangle\\
\end{array}
\ee
With the two computations \fref{thetrapped:eq:estimation premier term improved modulation} and \fref{thetrapped:eq:expression deuxieme terme improved}, the time evolution of the numerator given by \fref{thetrapped:eq:expression numerateur} is now:
\be\label{thetrapped:eq:evolution temporelle numerateur}
\begin{array}{r c l}
&\frac{d}{ds}\langle \bos{H}^L \bos{\varepsilon}, \chi_{B_0}\bos{\Lambda} \bos{Q} \rangle +O(\sqrt{\mathcal{E}_{s_L}}b_1^{-(2k_0+\delta_0)})+O(b_1^{L+1-2k_0-2\delta_0+g'}).\\
=& (b_{L,s}+(L-\alpha)b_1b_L)\Bigl\langle  \chi_{B_0}\Lambda^{(1)}Q,\Lambda^{(1)}Q+(-1)^{\frac{L-1}{2}}\left(\frac{\partial S_{L+2}^{(2)}}{\partial b_L}\right)_{L-1} \Bigr\rangle\\
\end{array}
\ee

\underline{Step 2:} end of the computation. We have thanks to our previous estimate \fref{thetrapped:eq:evolution temporelle numerateur}:
\be \label{thetrapped:eq:improved modulation expression intermediare}
\begin{array}{r c l}
&&\frac{d}{ds}\left[ \frac{\langle \bos{H}^{L}\bos{\varepsilon} , \chi_{B_0} \bos{\Lambda} \bos{Q}\rangle}{\Bigl\langle  \chi_{B_0}\Lambda^{(1)}Q,\Lambda^{(1)}Q+(-1)^{\frac{L-1}{2}}\left(\frac{\partial S_{L+2}^{(2)}}{\partial b_L}\right)_{L-1} \Bigr\rangle}\right] \\
&=& (b_{L,s}+(L-\alpha)b_1b_L)  + \frac{O(\sqrt{\mathcal{E}_{s_L}}b_1^{-(2k_0+\delta_0)}+b_1^{L+1-(2k_0+2\delta_0)+g'})}{\Bigl\langle  \chi_{B_0}\Lambda^{(1)}Q,\Lambda^{(1)}Q+(-1)^{\frac{L-1}{2}}\left(\frac{\partial S_{L+2}^{(2)}}{\partial b_L}\right)_{L-1} \Bigr\rangle} \\
&&-  \frac{\langle \bos{H}^{L}\bos{\varepsilon} , \chi_{B_0} \bos{\Lambda} \bos{Q}\rangle \times \frac{d}{ds}\left[ \Bigl\langle  \chi_{B_0}\Lambda^{(1)}Q,\Lambda^{(1)}Q+(-1)^{\frac{L-1}{2}}\left(\frac{\partial S_{L+2}^{(2)}}{\partial b_L}\right)_{L-1} \Bigr\rangle \right]}{\Bigl\langle  \chi_{B_0}\Lambda^{(1)}Q,\Lambda^{(1)}Q+(-1)^{\frac{L-1}{2}}\left(\frac{\partial S_{L+2}^{(2)}}{\partial b_L}\right)_{L-1} \Bigr\rangle^2} .
\end{array}
\ee
From the asymptotic of $\Lambda^{(1)}Q$ and $S_{L+2}$, the denominator has the following size:
\begin{equation} \label{thetrapped:eq:improved modulation taille denominateur}
\Bigl\langle  \chi_{B_0}\Lambda^{(1)}Q,\Lambda^{(1)}Q+(-1)^{\frac{L-1}{2}}\left(\frac{\partial S_{L+2}^{(2)}}{\partial b_L}\right)_{L-1} \Bigr\rangle\sim C b_1^{-2k_0-2\delta_0},
\end{equation}
for some constant $C>0$. So the second term in the right hand side of \fref{thetrapped:eq:improved modulation expression intermediare} is:
\be \label{thetrapped:eq:improved modulation second terme expression intermediaire}
\left| \frac{O(\sqrt{\mathcal{E}_{s_L}}b_1^{-(2k_0+\delta_0)}+b_1^{L+1-(2k_0+2\delta_0)+g'})}{\Bigl\langle  \chi_{B_0}\Lambda^{(1)}Q,\Lambda^{(1)}Q+(-1)^{\frac{L-1}{2}}\left(\frac{\partial S_{L+2}^{(2)}}{\partial b_L}\right)_{L-1} \Bigr\rangle}\right| \leq C(M)\left( b_1^{-\delta_0}\sqrt{\mathcal{E}_{s_L}} +b_1^{L+1+g'}\right)
\ee
We now estimate the third term in the right hand side of \fref{thetrapped:eq:improved modulation expression intermediare}. We have by coercivity of the adapted norm:
\begin{equation} \label{thetrapped:eq:improved modulation estimation numerateur}
|\langle \bos{H}^L \bos{\varepsilon} ,\chi_{B_0}\bos{\Lambda} \bos{Q}\rangle |\leq C  \int_{B_0}^{2B_0}\frac{\varepsilon{(2)}}{y^{\gamma+L-1}} \leq C(M)  \sqrt{\mathcal{E}_{s_L}} b_1^{-(2k_0+\delta_0)-1} .
\end{equation}
As $\frac{\partial S_{L+2}^{(2)}}{\partial b_L}$ does not depend on $b_L$, we obtain using the modulation bound \fref{thetrapped:eq:modulation leq L-1} for $b_1$,...,$b_{L-1}$:
$$
\left| \frac{\frac{d}{ds}\left[ \Bigl\langle  \chi_{B_0}\Lambda^{(1)}Q,\Lambda^{(1)}Q+(-1)^{\frac{L-1}{2}}\left(\frac{\partial S_{L+2}^{(2)}}{\partial b_L}\right)_{L-1} \Bigr\rangle\right]}{\Bigl\langle  \chi_{B_0}\Lambda^{(1)}Q,\Lambda^{(1)}Q+(-1)^{\frac{L-1}{2}}\left(\frac{\partial S_{L+2}^{(2)}}{\partial b_L}\right)_{L-1} \Bigr\rangle}\right| \leq C(M) b_1 .
$$
The third term in the right hand side of \fref{thetrapped:eq:improved modulation expression intermediare} then admits the bound:
\be \label{thetrapped:eq:improved modulation troisieme terme expression intermediaire}
\begin{array}{r c l}
&\left|   \frac{\langle \bos{H}^{L}\bos{\varepsilon} , \chi_{B_0} \bos{\Lambda} \bos{Q}\rangle \times \frac{d}{ds}\left[ \Bigl\langle  \chi_{B_0}\Lambda^{(1)}Q,\Lambda^{(1)}Q+(-1)^{\frac{L-1}{2}}\left(\frac{\partial S_{L+2}^{(2)}}{\partial b_L}\right)_{L-1} \Bigr\rangle \right]}{ \Bigl\langle  \chi_{B_0}\Lambda^{(1)}Q,\Lambda^{(1)}Q+(-1)^{\frac{L-1}{2}}\left(\frac{\partial S_{L+2}^{(2)}}{\partial b_L}\right)_{L-1} \Bigr\rangle^2} \right| \\
\leq& C(M)b_1^{-\delta_0}\sqrt{\mathcal{E}_{s_L}}.
\end{array}
\ee
The identity \fref{thetrapped:eq:improved modulation expression intermediare}, with the bounds on the terms \fref{thetrapped:eq:improved modulation second terme expression intermediaire} and \fref{thetrapped:eq:improved modulation troisieme terme expression intermediaire}, gives:
$$
\begin{array}{r c l}
\frac{d}{ds}\left[ \frac{\langle \bos{H}^{L}\bos{\epsilon} , \chi_{B_0} \bos{\Lambda} \bos{Q}\rangle}{\Bigl\langle  \chi_{B_0}\Lambda^{(1)}Q,\Lambda^{(1)}Q+(-1)^{\frac{L-1}{2}}\left(\frac{\partial S_{L+2}^{(2)}}{\partial b_L}\right)_{L-1} \Bigr\rangle}\right]
&=& (b_{L,s}+(L-\alpha)b_1b_L) \\
&& + O(\sqrt{\mathcal{E}_{s_L}}b_1^{\delta_0}+b_1^{L+1+g'}) ,
\end{array}
$$
the constant hidden in the $O()$ depending on $M$ (and $L$ of course but we do not track the dependence on this constant anymore).
\end{proof}


\subsection{Lyapunov monotonicity for the low Sobolev norm:} As it appeared in the previous subsections, the key estimate in our analysis is the one concerning the high Sobolev norm. Nonetheless, to have an idea on how the lower derivatives behave, and to close an estimate for the nonlinear term in the next section, we start by computing an energy estimate on the low Sobolev norm. We define:
\be
\nu:=\frac{\alpha}{\ell-\alpha},
\ee
so that $1+\nu=\frac{\ell}{\ell-\alpha}$ and that the condition \fref{eq:bootstrap estimations sur epsilon} for $\mathcal{E}_{\sigma}$ can be rewriten as:
\be
\mathcal{E}_{\sigma}\leq K_2 b_1^{2(\sigma-s_c)(1+\nu)}
\ee

\begin{proposition}\label{trappedregime:pr:low sobo}
\emph{(Lyapunov monotonicity for the low Sobolev norm:)} Assume all the constants involved in Proposition \ref{prop:bootstrap} are fixed in their range, except $s_0$ and $\eta$. Then for $s_0$ large enough and $\eta$ small enough there holds for $s_0\leq s <s^*$:
\begin{equation}
\frac{d}{dt}\left\{ \frac{\mathcal{E}_{\sigma}}{\lambda^{2(\sigma-s_c)}} \right\} \leq \frac{b_1\sqrt{\mathcal{E}_{\sigma}}b_1^{(\sigma-s_c)(1+\nu)}}{\lambda^{2(\sigma-s_c)+1}}  \left[ b_1^{\frac{\alpha}{2L}+O\left(\frac{\sigma-s_c}{L}\right)}+b_1^{\frac{\alpha}{2L}+O\left(\frac{\sigma-s_c}{L}\right)}\sum_{k=2}^p\left( \frac{\sqrt{\mathcal{E}_{\sigma}}}{b_1^{\sigma-s_c}}\right)^{k-1}\right] 
\end{equation}
(the norm $\mathcal{E}_{\sigma}$ was defined in \fref{thetrapped:eq:def mathcalEsigma}).
\end{proposition}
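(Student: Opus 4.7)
The plan is to work with the equivalent unscaled quantity $\mathcal{E}_\sigma/\lambda^{2(\sigma-s_c)}=\|w^{(1)}\|_{\dot{H}^\sigma}^2+\|w^{(2)}\|_{\dot{H}^{\sigma-1}}^2$, which by the scaling identity $\bos{\varepsilon}=\bos{w}_\lambda$ coincides with the homogeneous Sobolev norm of the physical error $\bos{w}$ governed by \fref{eq:evolution W}. Differentiating in $t$ and substituting the right-hand side of \fref{eq:evolution W}, the standard integration by parts $\langle w^{(2)},w^{(1)}\rangle_{\dot{H}^\sigma}=-\langle w^{(2)},\Delta w^{(1)}\rangle_{\dot{H}^{\sigma-1}}$ produces an exact cancellation between the $w^{(2)}$-piece coming from $\partial_tw^{(1)}$ and the $\Delta w^{(1)}$-piece coming from $\partial_tw^{(2)}$, so that only the potential term and the source terms of \fref{eq:evolution W} remain.

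The potential contribution $-2p\,\langle w^{(2)},(Q_{1/\lambda})^{p-1}w^{(1)}\rangle_{\dot{H}^{\sigma-1}}$ is the main linear enemy. I control it by a fractional Leibniz/commutator estimate combined with Hardy's inequality in $\dot{H}^{\sigma-1}$, using the pointwise decay $(Q_{1/\lambda})^{p-1}\lesssim \lambda^2/|x|^2$; this yields a bound of order $\lambda^{-1}\mathcal{E}_\sigma/\lambda^{2(\sigma-s_c)}$ which is absorbed by the stated right-hand side once the small gain $b_1^{\alpha/(2L)}$ is factored out of the prefactor. The remaining source terms are treated one by one after pairing with $\bos{w}$ in $\dot{H}^\sigma\times\dot{H}^{\sigma-1}$: the modulation term $\bos{Mod}(t)_{1/\lambda}/\lambda$ is controlled by combining \fref{thetrapped:eq:modulation leq L-1}--\fref{thetrapped:eq:premiere modulation L} from Lemma \ref{lem:modulation} with homogeneous Sobolev norms of the profiles $\bos{T}_i$ and $\partial\bos{S}_j/\partial b_i$ read off their admissibility degree; the profile error $\tilde{\bos{\psi}}_{b,1/\lambda}/\lambda$ is estimated by interpolating, at the regularity level $\sigma-1$, between the adapted-norm global bounds $\|\tilde{\bos{\psi}}_b\|_j^2\lesssim b_1^{2j+2+2(1-\delta_0)(1+\eta)}$ of Proposition \ref{pr:profilapprochecoupe}; and the small linear term $\bos{L}(\bos{w})=(pQ^{p-1}-p\tilde{Q}_b^{p-1})w^{(1)}$ gains an extra factor $b_1$ from $\tilde{Q}_b-Q=O(b_1)$ together with the degeneracy \fref{linearized:eq:degenerescence scaling} of the difference of potentials.

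The decisive step is the nonlinear term $\bos{NL}(\bos{w})$, which expands as a finite sum of monomials $\tilde{Q}_b^{p-k}(w^{(1)})^k$ for $2\le k\le p$. I test each monomial against $w^{(2)}$ in $\dot{H}^{\sigma-1}$ and distribute the fractional derivatives by Leibniz, extracting $k-2$ copies of $\|w^{(1)}\|_{L^\infty}$ and two factors controlled in $\dot{H}^\sigma$. The pointwise estimate is supplied by Lemma \ref{annexe:lem:interpolation varepsilon}, which interpolates between $\mathcal{E}_\sigma$ and $\mathcal{E}_{s_L}$ and yields
$$\|\varepsilon^{(1)}\|_{L^\infty}\lesssim \sqrt{\mathcal{E}_\sigma}\;b_1^{\frac{d}{2}-\sigma+\frac{2\alpha}{(p-1)L}+O((\sigma-s_c)/L)}.$$
Combined with the bootstrap bound $\mathcal{E}_{s_L}\lesssim b_1^{2L+2(1-\delta_0)(1+\eta)}$ from \fref{eq:bootstrap estimations sur epsilon}, this interpolation produces exactly the small positive power $b_1^{\alpha/(2L)+O((\sigma-s_c)/L)}$ appearing in the statement and converts the product $\|w^{(1)}\|_{L^\infty}^{k-2}\|w^{(1)}\|_{\dot H^\sigma}^2$ into the factor $(\sqrt{\mathcal{E}_\sigma}/b_1^{\sigma-s_c})^{k-1}$.

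The main obstacle lies precisely here: because $\sigma$ is only slightly above the critical exponent $s_c$, the Sobolev embedding $\dot{H}^\sigma\hookrightarrow L^\infty$ fails and one is forced to route the $L^\infty$ bound through the high-regularity energy $\mathcal{E}_{s_L}$, which is why the final estimate displays an explicit $L$-dependent gain $b_1^{\alpha/(2L)+O((\sigma-s_c)/L)}$. Collecting the potential, modulation, profile error, linear and nonlinear contributions, using $\lambda_s/\lambda=-b_1(1+o(1))$ from Lemma \ref{lem:modulation} to absorb the derivative of $\lambda^{-2(\sigma-s_c)}$, and multiplying by $\lambda^{-1}$ to pass from $\partial_s$ back to $\partial_t$, delivers the announced Lyapunov inequality.
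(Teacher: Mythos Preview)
Your treatment of the linear potential term contains a genuine gap. You claim the contribution $\langle w^{(2)},(Q_{1/\lambda})^{p-1}w^{(1)}\rangle_{\dot H^{\sigma-1}}$ is bounded by $\lambda^{-1}\mathcal{E}_\sigma/\lambda^{2(\sigma-s_c)}$ and then ``absorbed'' into the right-hand side. But the stated right-hand side carries an explicit factor $b_1$ (and then $b_1^{\alpha/(2L)}$ on top), while your bound carries none: under the bootstrap $\sqrt{\mathcal{E}_\sigma}\lesssim b_1^{(\sigma-s_c)(1+\nu)}$ the prefactor $b_1\sqrt{\mathcal{E}_\sigma}\,b_1^{(\sigma-s_c)(1+\nu)}$ is of size $b_1\mathcal{E}_\sigma$, so your linear bound exceeds the target by a full factor $b_1^{-1}$. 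A bound of the form $d(\mathcal{E}_\sigma/\lambda^{2(\sigma-s_c)})/dt\lesssim \mathcal{E}_\sigma/\lambda^{2(\sigma-s_c)+1}$ integrates to uncontrolled growth and closes nothing. The paper's remedy is precisely what your write-up omits: one integrates by parts once more to write the pairing as $\|\nabla^{\sigma}w^{(2)}\|_{L^2}\,\|\nabla^{\sigma-2}(Q_{1/\lambda}^{p-1}w^{(1)})\|_{L^2}$, uses Hardy on the second factor to recover $\sqrt{\mathcal{E}_\sigma}/\lambda^{\sigma-s_c}$, and then---crucially---interpolates $\|\nabla^{\sigma}w^{(2)}\|_{L^2}$ (which is one derivative \emph{above} $\mathcal{E}_\sigma$) between $\mathcal{E}_\sigma$ and $\mathcal{E}_{s_L}$. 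That interpolation, together with the bootstrap bound on $\mathcal{E}_{s_L}$, is what manufactures the missing $b_1^{1+\alpha/L+O((\sigma-s_c)/L)}$.

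Your nonlinear scheme (pull out $k-2$ factors in $L^\infty$, leave two in $\dot H^\sigma$) is also not the paper's route and, as written, does not obviously produce the factor $b_1^{(\sigma-s_c)(1+\nu)}$ with the extra $b_1^{\alpha/(2L)}$ gain in the correct place. The paper instead shifts $(k-1)(\sigma-s_c)$ derivatives off $w^{(2)}$, interpolates $\|\nabla^{\sigma-(k-1)(\sigma-s_c)}\varepsilon^{(2)}\|_{L^2}$ between $\mathcal{E}_\sigma$ and $\mathcal{E}_{s_L}$ (again the source of the $b_1^{1+\alpha/L}$), and controls $\|\nabla^{\sigma-2+(k-1)(\sigma-s_c)+2(p-k)/(p-1)}(\varepsilon^{(1)})^k\|_{L^2}$ by a commutator/Sobolev argument with H\"older exponents tuned so that they sum to $1/2$, yielding exactly $\mathcal{E}_\sigma^{k/2}$. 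Finally, your last sentence about ``absorbing the derivative of $\lambda^{-2(\sigma-s_c)}$'' is superfluous: the identity $\mathcal{E}_\sigma/\lambda^{2(\sigma-s_c)}=\|w^{(1)}\|_{\dot H^\sigma}^2+\|w^{(2)}\|_{\dot H^{\sigma-1}}^2$ is exact, so no such term appears.
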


\begin{proof}[Proof of Proposition \ref{trappedregime:pr:low sobo}]
To prove this proposition we will compute the derivative with respect to time of $\frac{\mathcal{E}_{\sigma}}{\lambda^{2(\sigma-s_c)}}$ and estimate it in the trapped regime using \fref{eq:bootstrap estimations sur epsilon} and the size of the error given by Proposition \ref{pr:profilapprochecoupe}. From the evolution of $\bos{w}$ given by \fref{eq:evolution W} we first compute the following identity:
\be \label{thetrapped:lowsobo:eq:expression derivee}
\begin{array}{r c l}
&\frac{d}{dt}\left\{ \frac{\mathcal{E}_{\sigma}}{\lambda^{2(\sigma-s_c)}} \right\} = \frac{d}{dt} \left\{ \int |\nabla^{\sigma}w^{(1)} |^2+\int |\nabla^{\sigma-1}w^{(2)} |^2    \right\} \\
=& \int \nabla^{\sigma}w^{(1)}.\nabla^{\sigma}(w^{(2)}+\frac{1}{\lambda}(-\tilde{Mod}(t)_{\frac{1}{\lambda}}^{(1)}-\tilde{\psi_b}_{\frac{1}{\lambda}}^{(1)})) \\
& + \int \nabla^{\sigma-1}w^{(2)}.\nabla^{\sigma-1}(-\mathcal{L}w^{(1)}+\frac{1}{\lambda}(-\tilde{Mod}(t)_{\frac{1}{\lambda}}^{(2)}-\tilde{\psi_b}_{\frac{1}{\lambda}}^{(2)})+NL(w)+L(w)) .
\end{array}
\ee

\underline{Step 1:} estimate on each term. We will now estimate everything in the right hand side of \fref{thetrapped:lowsobo:eq:expression derivee}.\\
$\bullet$ \emph{Linear terms:} Because the norm we are using is adapted to a wave equation we have:
$$
\begin{array}{r c l}
\int \nabla^{\sigma}w^{(1)}.\nabla^{\sigma}w^{(2)}- \nabla^{\sigma-1}w^{(2)}.\nabla^{\sigma-1}\mathcal{L} w^{(1)}&=&\int \nabla^{\sigma-1}w^{(2)}. \nabla^{\sigma-1}(pQ_{\frac{1}{\lambda}}^{p-1}w^{(1)}) \\
&\leq & \parallel \nabla^{\sigma}w^{(2)} \parallel_{L^2} \parallel \nabla^{\sigma-2}(Q_{\frac{1}{\lambda}}^{p-1}w^{(1)}) \parallel_{L^2} .
\end{array}
$$
We now use the asymptotic behavior $Q^{p-1}\sim\frac{c}{x^{2}}$ ($c>0$) and the weighted Hardy estimate from Lemma \ref{annexe:lem: hardy frac a poids}:
$$
\parallel \nabla^{\sigma-2}(Q_{\frac{1}{\lambda}}^{p-1}w^{(1)}) \parallel_{L^2} \leq C \parallel \nabla^{\sigma} w^{(1)}\parallel_{L^2} = C \frac{\sqrt{\mathcal{E}_{\sigma}}}{\lambda^{\sigma-s_c}} .
$$
The other term is estimated by interpolation. Indeed as $\parallel \nabla^{s_L-1}\varepsilon^{(2)} \parallel_{L^2}^2\leq c\mathcal{E}_{s_L}$ from Corollary \ref{annexe:cor:coercivite mathcalEsL}:
$$
\parallel \nabla^{\sigma} w^{(2)} \parallel_{L^2}\leq \frac{C(M)}{\lambda^{\sigma-s_c+1}} \sqrt{\mathcal{E}_{\sigma}}^{1-\frac{1}{s_L-\sigma}}\sqrt{\mathcal{E}_{s_L}}^{\frac{1}{s_L-\sigma}} 
$$
We have the following estimate under the bootstrap conditions \fref{eq:bootstrap estimations sur epsilon} :
$$
\sqrt{\mathcal{E}_{\sigma}}^{1-\frac{1}{s_L-\sigma}}\sqrt{\mathcal{E}_{s_L}}^{\frac{1}{s_L-\sigma}} \leq C(K_1,K_2,M) b_1^{(\sigma-s_c)(1+\nu)}  b_1^{(\frac{1}{s_L-\sigma})(L+(1-\delta_0)(1+\eta)-(\sigma-s_c)(1+\nu))}
$$
and from: $\frac{L+(1-\delta_0)(1+\eta)-(\sigma-s_c)(1+\nu)}{s_L-\sigma}=1+\frac{(1-\delta_0)\eta+\alpha}{L}+O(\frac{\sigma-s_c}{L}) $ we conclude that:
\be \label{thetrapped:lowsobo:eq:estimation lineaire}
\begin{array}{r c l}
&\left| \int \nabla^{\sigma}w^{(1)}.\nabla^{\sigma}w^{(2)}- \nabla^{\sigma-1}w^{(2)}.\nabla^{\sigma-1}\mathcal{L} w^{(1)}\right|  \\
 \leq& C(K_1,K_2,M) \frac{\sqrt{\mathcal{E}_{\sigma}}b_1^{(\sigma-s_c)(1+\nu)}}{\lambda^{2(\sigma-s_c)}}\frac{b_1}{\lambda}b_1^{\frac{\alpha}{L}+O\left(\frac{\sigma-s_c}{L} \right)} .
\end{array}
\ee
$\bullet$ \emph{$\tilde{\bos{Mod}}(t)$ terms:} We only treat the $\tilde{Mod(t)}^{(2)}$ terms, the computation being the same for the first coordinate.
$$
\left| \frac{1}{\lambda}\int \nabla^{\sigma-1}w^{(2)}.\nabla^{\sigma-1} \tilde{Mod(t)}^{(2)}_{\frac{1}{\lambda}}\right|\leq \frac{1}{\lambda^{2(\sigma-s_c)}}\frac{1}{\lambda}\sqrt{\mathcal{E}_{\sigma}} \parallel \nabla^{\sigma-1}\tilde{Mod(t)}\parallel_{L^2} .
$$
We compute thanks to the previous estimate on the modulation, see Lemma \ref{lem:modulation}:
$$
\begin{array}{r c l}
&\parallel \nabla^{\sigma-1}\tilde{Mod}^{(2)} \parallel_{L^2}\\
\lesssim& (\sqrt{\mathcal{E}_{s_L}}+b^{L+3})\left( \underset{i<j\leq L+2}{\sum}\parallel \nabla^{\sigma-1}\left(\chi_{B_1}\frac{\partial S_j^{(2)}}{\partial b_i}\right)\parallel_{L^2} +\sum_0^L \parallel \chi_{B_1}\nabla^{\sigma-1}T_i^{(2)} \parallel_{L^2} \right) \\
=& (\sqrt{\mathcal{E}_{s_L}}+b^{L+3})b_1^{(1+\eta)(-k_0-\delta_0-L+\sigma)} \\
\leq & C(M) b_1^{\alpha+(1-\delta_0)+(\sigma-s_c)+\eta(1-\delta_0+\alpha+(\sigma-s_c)-L)} .
\end{array}
$$
Hence, treating similarly the other coordinate:
\be \label{thetrapped:lowsobo:eq:estimation mod}
\left| \frac{1}{\lambda}\int \nabla^{\sigma-1}w^{(2)}.\nabla^{\sigma-1} \tilde{Mod(t)}^{(2)}_{\frac{1}{\lambda}}+\nabla^{\sigma}w^{(1)}.\nabla^{\sigma} \tilde{Mod(t)}^{(1)}_{\frac{1}{\lambda}} \right| \leq C(M) \frac{b_1\sqrt{\mathcal{E}_{\sigma}}b_1^{(\sigma-s_c)+\alpha}}{\lambda^{2(\sigma-s_c)+1}} .
\ee
$\bullet$ \emph{$\tilde{\bos{\psi}}_b$ term:} Again we just compute for the first coordinate $\tilde{\psi}_b^{(1)}$, because we can treat the second one exactly the same way.
$$
\left|\frac{1}{\lambda}\int \nabla^{\sigma}w^{(1)}.\nabla^{\sigma}\tilde{\psi}_{b,\frac{1}{\lambda}}^{(1)}\right| \leq \frac{1}{\lambda^{2(\sigma-s_c)}}\frac{1}{\lambda}\sqrt{\mathcal{E}_{\sigma}}\parallel \nabla^{\sigma} \tilde{\psi}_b^{(1)} \parallel_{L^2} .
$$
We can estimate using proposition \ref{pr:profilapprochecoupe}:
$$
\parallel \nabla^{\sigma} \tilde{\psi}_b^{(1)} \parallel_{L^2}\leq C b_1^{(1-\delta_0)+\sigma-k_0-C\eta}=Cb_1^{(\sigma-s_c)+\alpha-C\eta+1} .
$$
Hence for $\eta$ small enough:
$$
\left| \frac{1}{\lambda}\int \nabla^{\sigma}w^{(1)}.\nabla^{\sigma}\tilde{\psi}_{b,\frac{1}{\lambda}}^{(1)} \right| \leq \frac{C}{\lambda^{(\sigma-s_c)}}\frac{b_1}{\lambda}\sqrt{\mathcal{E}_{\sigma}}b_1^{\sigma-s_c}b_1^{\frac{3\alpha}{4}} .
$$
The same computation for the second coordinate gives the same result, hence the error's contribution is:
\be \label{thetrapped:lowsobo:eq:estimation psib}
\left| \frac{1}{\lambda}\int \nabla^{\sigma}w^{(1)}.\nabla^{\sigma}\tilde{\psi}_{b,\frac{1}{\lambda}}^{(1)} +\frac{1}{\lambda}\int \nabla^{\sigma-1}w^{(2)}.\nabla^{\sigma-1}\tilde{\psi}_{b,\frac{1}{\lambda}}^{(2)}\right| \leq \frac{C}{\lambda^{(\sigma-s_c)}}\frac{b_1}{\lambda}\sqrt{\mathcal{E}_{\sigma}}b_1^{\sigma-s_c}b_1^{\frac{3\alpha}{4}} .
\ee
$\bullet$ \emph{$L(w)$ term:} First using Cauchy-Schwarz:
$$
\left| \int \nabla^{\sigma-1}w^{(2)}.\nabla^{\sigma-1}(L(w))\right|\leq \frac{\sqrt{\mathcal{E}_{\sigma}}}{\lambda^{2(\sigma-s_c)+1}}\parallel \nabla^{\sigma-1}L(w) \parallel_{L^2}.
$$
Now we have that $L(w)=(pQ^{p-1}-p\tilde{Q}_b^{p-1})w^{(1)}$. From the asymptotics of the profiles $T_i$ and $S_i$, the potential here enjoys the following bounds:
\be \label{thetrapped=lowsobo:eq:asymptotique L}
\left| \partial_y^k (Q^{p-1}-\tilde{Q}_b^{p-1}) \right| \leq Cb_1 \frac{1}{y^{1+\alpha-C(L)\eta}}
\ee
It allows us to use the fractionnal Hardy estimate from Lemma \ref{annexe:lem: hardy frac a poids}:
$$
\parallel \nabla^{\sigma-1}L(w) \parallel_{L^2}\leq C b_1 \parallel \nabla^{\sigma +\frac{1}{p-1}} w^{(1)}\parallel_{L^2},
$$
because $\sigma +\frac{1}{p-1}<\frac{d}{2}$, and because for $\eta$ small enough one has: $\alpha- C(L)\eta\geq \frac{1}{p-1}$ (as $\alpha>2$). In the trapped regime one has by interpolation:
$$
\begin{array}{r c l}
\parallel \nabla^{\sigma +\frac{1}{p-1}}w^{(1)}\parallel_{L^2}&\leq& C(M)\sqrt{\mathcal{E}_{\sigma}}^{1-\frac{1}{(p-1)(s_L-\sigma)}}\sqrt{\mathcal{E}_{s_L}}^{\frac{1}{(p-1)(s_L-\sigma)}} \\
&\leq C(K_1,K_2,M) & b_1^{(\sigma-s_c)(1+\nu)}b_1^{\frac{1}{p-1}+O\left( \frac{1}{L}\right)}.
\end{array}
$$
Therefore we end up with the following bound on the small linear term:
\be \label{thetrapped:lowsobo:eq:estimation L}
\left| \int \nabla^{\sigma-1}w^{(2)}.\nabla^{\sigma-1}(L(w))\right|\leq C(K_1,K_2,M) \frac{b_1\sqrt{\mathcal{E}_{\sigma}}}{\lambda^{2(\sigma-s_c)+1}}b_1^{(\sigma-s_c)(1+\nu)+\frac{1}{p-1}+O\left(\frac{1}{L} \right)} 
\ee
$\bullet$ \emph{$NL$ term:} We start by integrating by parts and using Cauchy-Schwarz:
\be \label{thetrapped:lowsobo:eq:expression NL}
\begin{array}{r c l}
&\left| \int \nabla^{\sigma-1} w^{(2)}\nabla^{\sigma -1}NL(w)\right| \\
\leq&  \frac{1}{\lambda^{2(\sigma-s_c)+1}} \parallel \nabla^{\sigma-(k-1)(\sigma-s_c)}\varepsilon^{(2)} \parallel_{L^2} \parallel \nabla^{\sigma-2+(k-1)(\sigma-s_c)} NL(\varepsilon) \parallel_{L^2} .
\end{array}
\ee
The first term is estimated via interpolation, and gives under the bootstrap conditions:
\be \label{thetrapped:lowsobo:eq:estimation NL 1}
\begin{array} {r c l}
\parallel \nabla^{\sigma-(k-1)(\sigma-s_c)} \varepsilon^{(2)}\parallel_{L^2} &\leq& C(M)\sqrt{\mathcal{E}_{\sigma}}^{1-\frac{1-(k-1)(\sigma-s_c)}{s_L-\sigma}} \sqrt{\mathcal{E}_{s_L}}^{\frac{1-(k-1)(\sigma-s_c)}{s_L-\sigma}}\\
&\leq& C(M,K_1,K_2) b_1^{(\sigma-s_c)(1+\nu)+1-(k-1)(\sigma-s_c)+\frac{\alpha}{L}+O\left( \frac{\sigma-s_c}{L}\right)} .
\end{array}
\ee
We now estimate the second one. We know that $NL(\varepsilon)$ is a linear combination of terms of the form: $\tilde{Q}_b^{(1)(p-k)}\varepsilon^{(1)k}$ for $2\leq k\leq p$. We know also that here we have: $\partial_y^j\tilde{Q}_b^{(1)(p-k)}\leq\frac{c}{y^{\frac{2}{p-1}(p-k)+j}} $. So using the weighted and fractional hardy estimate of Lemma \ref{annexe:lem: hardy frac a poids}:
$$
\parallel \nabla^{\sigma-2+(k-1)(\sigma-s_c)}(Q^{p-k}\varepsilon^{(1)k)} \parallel \leq C \parallel \nabla^{\sigma-2+\frac{2}{p-1}(p-k)+(k-1)(\sigma-s_c)}(\varepsilon^{(1)k}) \parallel_{L^2} .
$$
We let $\tilde{\sigma}=E[\sigma-2+\frac{2}{p-1}(p-k)+(k-1)(\sigma-s_c)]$ so that:
$$
\sigma-2+\frac{2}{p-1}(p-k)+(k-1)(\sigma-s_c)=\tilde{\sigma}+\delta_{\sigma} ,
$$
with $0\leq \delta_{\sigma}<1$. Developing the entire part of the derivative yields:
$$
\parallel \nabla^{\sigma-2+\frac{2}{p-1}(p-k)+(k-1)(\sigma-s_c)}(\varepsilon^{(1)k}) \parallel_{L^2}=\parallel \nabla^{\delta_{\sigma}}(\nabla^{\tilde{\sigma}}(\varepsilon^{(1)k})) \parallel_{L^2} .
$$
We develop the $\nabla^{\tilde{\sigma}}(v^{(1)k})$ term: it is a linear combination of terms of the form:
$$
\prod_{j=1}^k \nabla^{l_j}\varepsilon^{(1)} ,
$$
for $\sum_{j=1}^k l_j=\tilde{\sigma}$. We recall the standard commutator estimate:
$$
\parallel \nabla^{\delta_{\sigma}}(uv)\parallel_{L^q}\leq C \parallel \nabla^{\delta_{\sigma}}u\parallel_{L^{p_1}}\parallel v\parallel_{L^{p_2}}+C\parallel \nabla^{\delta_{\sigma}}v\parallel_{L^{p_1'}}\parallel u\parallel_{L^{p_2'}} ,
$$
for $\frac{1}{p_1}+\frac{1}{p_2}=\frac{1}{p_1'}+\frac{1}{p_2'}=\frac{1}{q}$, provided $1<q,p_1,p_1'<+\infty$ and $1\leq p_2,p_2'\leq +\infty$. So by iteration we have that:
$$
\parallel \nabla^{\sigma-2}(Q^{p-k}\varepsilon^{(1)k} )\parallel_{L^2}\leq C \sum_{j=1}^k \prod_{i=1}^k \parallel \nabla^{l(j)_i}\varepsilon^{(1)} \parallel_{L^{p(j)_i}} ,
$$
with $l(j)_i=l_i+\delta_{\sigma}\delta_{i=j}$ and with $\sum \frac{1}{p(j)_i}=\frac{1}{2}$. We have for any $i$ and $j$: $l(j)_i<\sigma$. Hence we can use Sobolev injection to find:
$$
\nabla^{l(j)_i}\varepsilon^{(1)} \in L^{p(j)^*_i} ,
$$
for $p(j)^*_i=\frac{2d}{d-2\sigma+2l(j)_i}$. We compute (the strategy was designed to obtain this):
$$
\frac{1}{p(j)^*}:=\sum_{i=1}^k \frac{1}{p(j)^*_i}=\frac{1}{2} .
$$
So we take $p(j)_i=p(j)^*i$. We then have:
\be \label{thetrapped:lowsobo:eq:estimation NL 2}
\parallel \nabla^{\sigma-2+\frac{2}{p-1}(p-k)+(k-1)(\sigma-s_c)}(NL(\varepsilon)) \parallel_{L^2} \leq C \sqrt{\mathcal{E}_{\sigma}}^k .
\ee
The Cauchy-Schwarz inequality \fref{thetrapped:lowsobo:eq:expression NL}, with the estimates for the two terms \fref{thetrapped:lowsobo:eq:estimation NL 1} and \fref{thetrapped:lowsobo:eq:estimation NL 2} give eventually:
\be \label{thetrapped:lowsobo:eq:estimation NL}
\begin{array}{r c l}
&\left| \int \nabla^{\sigma-1}w^{(2)}.\nabla^{\sigma-1}(NL(w))\right| \\
\leq& \frac{C(K_1,K_2,M)b_1\sqrt{\mathcal{E}_{\sigma}}}{\lambda^{2(\sigma-s_c)+1}}\left(\frac{\sqrt{\mathcal{E}_{\sigma}}}{b_1^{\sigma-s_c}} \right)^{k-1}b_1^{(\sigma-s_c)(1+\nu)+\frac{\alpha}{L}+O\left( \frac{\sigma-s_c}{L}\right)} .
\end{array}
\ee

\underline{Step 2:} Gathering the bounds. We have made the decomposition \fref{thetrapped:lowsobo:eq:expression derivee} and have found an upper bound for all terms in the right hand side in \fref{thetrapped:lowsobo:eq:estimation lineaire}, \fref{thetrapped:lowsobo:eq:estimation mod}, \fref{thetrapped:lowsobo:eq:estimation psib}, \fref{thetrapped:lowsobo:eq:estimation L} and \fref{thetrapped:lowsobo:eq:estimation NL}. So we get:
\be \label{thetrapped:lowsobo:eq:final 1 }
\begin{array}{r c l}
\frac{d}{dt}\left\{ \frac{\mathcal{E}_{\sigma}}{\lambda^{2(\sigma-s_c)}} \right\} &\leq& \frac{C(K_1,K_2,M)}{\lambda^{2(\sigma-s_c)}}\frac{b_1}{\lambda}\sqrt{\mathcal{E}_{\sigma}} b_1^{(\sigma-s_c)(1+\nu)} \times \Bigl( b_1^{\frac{\alpha}{L}+O\left(\frac{\sigma-s_c}{L} \right)} +b_1^{\alpha-\nu(\sigma-s_c)}\\
&&+b_1^{\frac{3}{4}\alpha-\nu(\sigma-s_c)}+b_1^{\frac{1}{p-1}+O\left(\frac{1}{L} \right)}  +b_1^{\frac{\alpha}{L}+O\left(\frac{\sigma-s_c}{L} \right)} \sum_{k=2}^p \left( \frac{\sqrt{\mathcal{E}_{\sigma}}}{b_1^{\sigma-s_c}}\right)^{k-1} \Bigr),
\end{array}
\ee
We see that if one choose $\sigma-s_c$ small enough there holds:
\be \label{thetrapped:lowsobo:eq:final 2}
\frac{\alpha}{2L} < \text{min}\left(\frac{\alpha}{L}+O\left(\frac{\sigma-s_c}{L} \right),\alpha-\nu(\sigma-s_c),\frac{3}{4}\alpha-\nu(\sigma-s_c),\frac{1}{p-1}+O\left(\frac{1}{L} \right)\right).
\ee
In the trapped regime we recall that $b_1\sim \frac{c_1}{s}$ is small, so that $b_1^a\ll b_1^b$ if $b< a$. Consequently by taking $s_0$ big enough to "erase" the constants, \fref{thetrapped:lowsobo:eq:final 1 } combined with \fref{thetrapped:lowsobo:eq:final 2} give the result of the proposition.
\end{proof}


\subsection{Lyapunov monotonicity for the high Sobolev norm:}
We have seen that in order to control the evolution of the parameters, we need to control the high Sobolev norm $\mathcal{E}_{s_L}$. Indeed, the law of $b_L$ is computed when projecting the dynamics onto $\bos{H}^{*L}\bos{\Phi}_M$, which involves at least to control $L$ derivative. Why do we look at the $k_0+1+L$-th derivative? Because it is only when deriving at least $k_0+1$ more times that we gain something on the error term $\tilde{\bos{\psi}}_b$: the $\eta$ gain (see proposition \ref{pr:profilapprochecoupe})\footnote{this is the reason why we need or approximate profile to expand till the zone $y\sim B_1$.}. However, if we look at a higher order derivative ($>k_0+L+1$) we loose the control of the solution by lack of Hardy inequalities (Corollary \ref{annexe:cor:coercivite mathcalEsL} does not work at a higher level of regularity). For these reasons, the choice $L+k_0+1$ is sharp.\\
\\
We state here a control on the evolution of $\mathcal{E}_{s_L}$, and prove it. We will not be able to estimate it directly, a local part will require the study of a Morawetz type quantity. This is the subject of the following subsection.

\begin{proposition}\label{trappedregime:pr:high sobo}
\emph{(Lyapunov monotonicity for the high Sobolev norm:)} Suppose all the constants of Proposition \ref{prop:bootstrap} are fixed, except $s_0$ and $\eta$. Then for $s_0$ large enough and $\eta$ small enough there holds for $s_0\leq s<s^*$:
\begin{equation} \label{thetrapped:eq:high sobo}
\begin{array}{r c l}
\frac{d}{dt}\left\{ \frac{\mathcal{E}_{s_L}}{\lambda^{2(s_L-s_c)}}+O\left( \frac{\mathcal{E}_{s_L}b_1^{\eta(1-\delta_0)}}{\lambda^{2(s_L-s_c)}}\right) \right\}  &\leq& \frac{C(M)}{\lambda^{2(s_L-s_c)}}\frac{b_1}{\lambda}  \Bigl[ \mathcal{E}_{s_L}b_1^{\frac{\alpha}{2L}+O\left( \frac{\sigma-s_c}{L} \right)}\underset{k=2}{\overset{p}{\sum}} \left[\frac{\sqrt{\mathcal{E}_{\sigma}}}{b_1^{\sigma-s_c}} \right]^{k-1} \\
&&+C(N)\mathcal{E}_{s_L,\text{loc}}+\frac{\mathcal{E}_{s_L}}{N^{\frac{\delta_0}{2}}} + \sqrt{\mathcal{E}_{s_L}}b_1^{L+(1-\delta_0)(1+\eta)} \Bigr]
\end{array}
\end{equation}
the constant hidden in the $O()$ in the left hand side depending on $M$ (the norms $\mathcal{E}_{s_L}$ and $\mathcal{E}_{s_L,\text{loc}}$ are defined by \fref{thetrapped:eq:def mathcalEsL} and \fref{thetrapped:eq:def mathcalEsLloc}).
\end{proposition}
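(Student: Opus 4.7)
The plan is to compute the time derivative of the rescaled energy $\mathcal{E}_{s_L}/\lambda^{2(s_L-s_c)}$ directly from the evolution equations \eqref{eq:evolution epsilon} for $\bos{\varepsilon}$. Passing through renormalized time via $ds/dt=1/\lambda$, one has $\frac{d}{dt}\{\mathcal{E}_{s_L}/\lambda^{2(s_L-s_c)}\}=\lambda^{-2(s_L-s_c)-1}\{\partial_s\mathcal{E}_{s_L}-2(s_L-s_c)(\lambda_s/\lambda)\mathcal{E}_{s_L}\}$, so the whole task reduces to computing $\partial_s\mathcal{E}_{s_L}$ and showing that the scaling contribution cancels up to a correction of size $O(b_1^{\eta(1-\delta_0)}\mathcal{E}_{s_L})$, which is exactly the modified-energy term on the left-hand side of \eqref{thetrapped:eq:high sobo}. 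Using $\mathcal{E}_{s_L}=\|\varepsilon^{(1)}_{k_0+L+1}\|_{L^2}^2+\|\varepsilon^{(2)}_{k_0+L}\|_{L^2}^2$ together with the factorization $\mathcal{L}=A^*A$ of Lemma \ref{lem:factorisation} and the Hamiltonian structure $\bos{H}\bos{\varepsilon}=(-\varepsilon^{(2)},\mathcal{L}\varepsilon^{(1)})$, the principal linear contribution in $\partial_s\mathcal{E}_{s_L}$ telescopes after integration by parts: $\langle\mathcal{L}^{k_0+L+1}\varepsilon^{(1)},\varepsilon^{(2)}\rangle-\langle\mathcal{L}^{k_0+L}\varepsilon^{(2)},\mathcal{L}\varepsilon^{(1)}\rangle=0$. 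This is the precise algebraic reason for which the adapted norm is conserved by the free linearized flow.

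\smallskip

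The scale-change term $(\lambda_s/\lambda)\bos{\Lambda}\bos{\varepsilon}$ is handled by commutator computations $[\mathcal{L}^{k_0+L+1},\Lambda^{(1)}]$ and $[\mathcal{L}^{k_0+L},\Lambda^{(2)}]$. The leading contribution generates precisely $2(s_L-s_c)(\lambda_s/\lambda)\mathcal{E}_{s_L}$ which cancels the rescaling factor, while the remainders are controlled using the asymptotic \eqref{linearized:eq:asymptotique W} of the potential $W$; after one further integration by parts, a residual of size $\lesssim b_1^{1+\eta(1-\delta_0)}\mathcal{E}_{s_L}/\lambda$ is absorbed into the left-hand side, producing the $O(b_1^{\eta(1-\delta_0)})$ correction in the statement. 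The modulation contribution is then treated by expanding $\mathcal{A}^{k_0+L+1}\bos{T}_i$ and $\mathcal{A}^{k_0+L+1}(\partial\bos{S}_j/\partial b_i)$ via Lemma \ref{lem:profilsTi} and \eqref{linearized:eq:degre Si}, and combined with the sharp modulation bounds \eqref{thetrapped:eq:modulation leq L-1}--\eqref{thetrapped:eq:premiere modulation L}. The error $\tilde{\bos{\psi}}_b$ contributes the source term $\sqrt{\mathcal{E}_{s_L}}\,b_1^{L+(1-\delta_0)(1+\eta)}$ thanks to the sharp global bound \eqref{linearized:eq:global bound L} of Proposition \ref{pr:profilapprochecoupe}, which dictated the choice of $k_0+L+1$ as the regularity level.

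\smallskip

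The two remaining contributions on the right of \eqref{eq:evolution epsilon} are handled as in Proposition \ref{trappedregime:pr:low sobo}. The small linear term $L(\varepsilon)$ is controlled through the sharp potential bound \eqref{thetrapped=lowsobo:eq:asymptotique L} combined with weighted Hardy inequalities, producing a contribution of size $b_1\mathcal{E}_{s_L}$. The nonlinear term $NL(\varepsilon)$, a sum of $Q^{p-k}\varepsilon^{(1)k}$ with $k\geq 2$, is bounded via commutator identities with the adapted derivatives followed by interpolation between $\mathcal{E}_\sigma$ and $\mathcal{E}_{s_L}$ together with the $L^\infty$ bootstrap estimate on $\varepsilon^{(1)}$; this reproduces the $b_1\mathcal{E}_{s_L}b_1^{\alpha/(2L)+O((\sigma-s_c)/L)}\sum_{k=2}^p(\sqrt{\mathcal{E}_\sigma}/b_1^{\sigma-s_c})^{k-1}$ contribution.

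\smallskip

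The main obstacle is that the full procedure does not close purely at the level of the energy $\mathcal{E}_{s_L}$: when moving the adapted derivatives $\mathcal{A}^k$ through $\bos{\Lambda}$ and through $\mathcal{L}$, an irreducible local term of the form $\int_{y\leq N}|\varepsilon^{(i)}_{k_0+L+\cdot}|^2$ is generated by the commutators with $y\partial_y$ that cannot be absorbed coercively into $\mathcal{E}_{s_L}$. Splitting at radius $N\gg 1$, the exterior portion is controlled by $\mathcal{E}_{s_L}/N^{\delta_0/2}$ using Hardy and the gap $\delta_0>0$ from \eqref{intro:eq:def k0}, while the interior portion is left as $C(N)\mathcal{E}_{s_L,\text{loc}}$. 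This local quantity is precisely the reason for introducing the Morawetz-type identity of Proposition \ref{thetrapped:pr:morawetz}, whose role will be to close the estimate by providing the missing bound on $\mathcal{E}_{s_L,\text{loc}}$.
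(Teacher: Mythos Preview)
Your overall architecture is right, but you have swapped the roles of two of the key terms, and this hides the single genuinely nontrivial step in the proof.

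The correction $O(b_1^{\eta(1-\delta_0)}\mathcal{E}_{s_L}/\lambda^{2(s_L-s_c)})$ on the left-hand side does \emph{not} come from the scale-change commutator; it comes from the modulation term at level $i=L$. The primary bound \eqref{thetrapped:eq:premiere modulation L} gives only $|b_{L,s}+(L-\alpha)b_1b_L|\lesssim\sqrt{\mathcal{E}_{s_L}}+b_1^{L+3}$, and since $\|(\chi_{B_1}T_L)_{s_L-1}\|_{L^2}\sim b_1^{(1-\delta_0)(1+\eta)}$ (the support is only in the cut zone because $(T_L)_{s_L-1}=0$), the resulting contribution is $\sqrt{\mathcal{E}_{s_L}}\cdot\sqrt{\mathcal{E}_{s_L}}\cdot b_1^{(1-\delta_0)(1+\eta)}=\mathcal{E}_{s_L}\,b_1^{(1-\delta_0)(1+\eta)}$, with \emph{no} $b_1$ prefactor --- this does not fit inside the right-hand side bracket and would destroy the bootstrap. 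The remedy is to invoke the improved modulation equation of Lemma~\ref{trappedregime:improvedmodulation:lem:improvedmodulation}, which identifies the bad part of $b_{L,s}+(L-\alpha)b_1b_L$ as an explicit time derivative. One then introduces the radiation $\bos{\xi}=C(\xi)[\chi_{B_1}(\bos{T}_L+\partial\bos{S}_{L+1}/\partial b_L+\partial\bos{S}_{L+2}/\partial b_L)]_{1/\lambda}$ and integrates by parts in time, so that this bad contribution becomes $\frac{d}{dt}$ of a quantity of size $O(b_1^{\eta(1-\delta_0)}\mathcal{E}_{s_L}/\lambda^{2(s_L-s_c)})$, which is moved to the left-hand side. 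This is where the modified energy in the statement actually originates; the citation of \eqref{thetrapped:eq:modulation leq L-1}--\eqref{thetrapped:eq:premiere modulation L} alone is insufficient.

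Conversely, the scale-change term and the small linear term $L(\varepsilon)$ are precisely the sources of the local/nonlocal splitting $C(N)\mathcal{E}_{s_L,\text{loc}}+\mathcal{E}_{s_L}/N^{\delta_0/2}$. Both involve potentials with improved decay $\sim y^{-2-\alpha}$ at infinity but only $O(1)$ near the origin; the weighted coercivity (Lemma~\ref{annexe:lem:coercivite des normes adaptees}) therefore yields at best $\|(\,\cdot\,)_{s_L-1}\|_{L^2}\lesssim b_1\|\varepsilon^{(1)}_{s_L}/(1+y^{\delta_0/2})\|_{L^2}$, which after splitting at radius $N$ gives the two displayed pieces. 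Your claim that $L(\varepsilon)$ produces a clean $b_1\mathcal{E}_{s_L}$ is wrong: that would place a bare $\mathcal{E}_{s_L}$ inside the bracket, which the statement does not contain and which would not close. Likewise, the scale-change residual is not $b_1^{1+\eta(1-\delta_0)}\mathcal{E}_{s_L}$; it is another instance of the same split.
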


\begin{proof}[Proof of Proposition \ref{trappedregime:pr:high sobo}]: First we compute the time evolution of $\mathcal{E}_{s_L}$:
\be \label{thetrapped:highsobo:eq:expression derivee}
\begin{array}{r c l}
&\frac{d}{dt}\left( \frac{\mathcal{E}_{s_L}}{\lambda^{2(s_L-s_c)}} \right) =
\frac{d}{dt}\left( \int |w^{(1)}_{k_0+1+L}|^2+|w^{(2)}_{k_0+L}|^2 \right) \\
=& \frac{d}{dt}\left( \int w^{(1)}\mathcal{L}_{\frac{1}{\lambda}}^{k_0+L+1}w^{(1)}+w^{(2)}\mathcal{L}_{\frac{1}{\lambda}}^{k_0+L}w^{(2)} \right) \\
=& 2\int w^{(1)}\mathcal{L}_{\frac{1}{\lambda}}^{k_0+L+1}w_t^{(1)}+w^{(2)}\mathcal{L}_{\frac{1}{\lambda}}^{k_0+L}w_t^{(2)} \\
& +\sum_{i=1}^{k_0+L+1} \int w^{(1)} \mathcal{L}_{\frac{1}{\lambda}}^{i-1}\frac{d}{dt}\left( \mathcal{L}_{\frac{1}{\lambda}}\right)\mathcal{L}_{\frac{1}{\lambda}}^{k_0+L+1-i} w^{(1)} \\
&+\sum_{i=1}^{k_0+L} \int w^{(2)} \mathcal{L}_{\frac{1}{\lambda}}^{i-1}\frac{d}{dt}\left( \mathcal{L}_{\frac{1}{\lambda}}\right) \mathcal{L}_{\frac{1}{\lambda}}^{k_0+L-i} w^{(2)} \\
=& 2\int w^{(1)}\mathcal{L}_{\frac{1}{\lambda}}^{k_0+L+1} (w^{(2)}-\frac{1}{\lambda}\tilde{\psi}_{b,\frac{1}{\lambda}}^{(1)} -\frac{1}{\lambda} \tilde{Mod}_{\frac{1}{\lambda}}(t)^{(1)})\\
&+2\int w^{(2)}\mathcal{L}_{\frac{1}{\lambda}}^{k_0+L} (-\mathcal{L}_{\frac{1}{\lambda}}w^{(1)}-\frac{1}{\lambda}\tilde{\psi}_{b,\frac{1}{\lambda}}^{(2)} -\frac{1}{\lambda} \tilde{Mod}(t)_{\frac{1}{\lambda}}^{(2)}+L(w)+NL(w))\\
&+\sum_{i=1}^{k_0+L+1} \int w^{(1)} \mathcal{L}_{\frac{1}{\lambda}}^{i-1}\frac{d}{dt}\left( \mathcal{L}_{\frac{1}{\lambda}}\right)\mathcal{L}_{\frac{1}{\lambda}}^{k_0+L+1-i} w^{(1)} \\
&+\sum_{i=1}^{k_0+L} \int w^{(2)} \mathcal{L}_{\frac{1}{\lambda}}^{i-1}\frac{d}{dt}\left( \mathcal{L}_{\frac{1}{\lambda}}\right) \mathcal{L}_{\frac{1}{\lambda}}^{k_0+L-i} w^{(2)} .
\end{array}
\ee
We aim at computing the effect of everything in the right hand side.\\

\underline{Step 1:} Terms that can be estimated directly. We claim that the quadratic term, the error term and the non-linear term can be estimated directly, transforming \fref{thetrapped:highsobo:eq:expression derivee} into:
\be \label{thetrapped:highsobo:eq:expression derivee step1}
\begin{array}{r c l}
&\frac{d}{dt}\left( \frac{\mathcal{E}_{s_L}}{\lambda^{2(s_L-s_c)}} \right)\\
&= 2\int w^{(1)}\mathcal{L}_{\frac{1}{\lambda}}^{k_0+L+1} ( -\frac{1}{\lambda} \tilde{Mod}_{\frac{1}{\lambda}}(t)^{(1)})+w^{(2)}\mathcal{L}_{\frac{1}{\lambda}}^{k_0+L} ( -\frac{1}{\lambda} \tilde{Mod}(t)_{\frac{1}{\lambda}}^{(2)}+L(w))\\
&+\sum_{i=1}^{k_0+L+1} \int w^{(1)} \mathcal{L}_{\frac{1}{\lambda}}^{i-1}\frac{d}{dt}\left( \mathcal{L}_{\frac{1}{\lambda}}\right)\mathcal{L}_{\frac{1}{\lambda}}^{k_0+L+1-i} w^{(1)} \\
&+\sum_{i=1}^{k_0+L} \int w^{(2)} \mathcal{L}_{\frac{1}{\lambda}}^{i-1}\frac{d}{dt}\left( \mathcal{L}_{\frac{1}{\lambda}}\right) \mathcal{L}_{\frac{1}{\lambda}}^{k_0+L-i} w^{(2)} \\
&+\frac{b_1}{\lambda^{2(s_L-s_c)+1}} \left[O(\sqrt{\mathcal{E}_{s_L}}b_1^{L+(1-\delta_0)(1+\eta)})+O\left(\mathcal{E}_{s_L}b_1^{\frac{\alpha}{L}+O\left( \frac{\sigma-s_c}{L} \right)}\underset{k=2}{\overset{p}{\sum}} \left[\frac{\sqrt{\mathcal{E}_{\sigma}}}{b_1^{\sigma-s_c}} \right]^{k-1}  \right)\right] .
\end{array}
\ee
where the constant hidden in the first $O()$ does not depend on $K_1$ and $K_2$. We now prove this intermediate estimate.\\
$\bullet$ \emph{The linear term:} Because this norm is adapted to the flow of the wave equation we have the fundamental cancellation:
\be \label{thetrapped:highsobo:eq:estimation lineaire}
\int w^{(1)} \mathcal{L}_{\frac{1}{\lambda}}^{k_0+L+1}w^{(2)}+w^{(2)} \mathcal{L}_{\frac{1}{\lambda}}^{k_0+L}(-\mathcal{L}_{\frac{1}{\lambda}}w^{(1)})=0 .
\ee
$\bullet$ \emph{the $\tilde{\bos{\psi}}_b$ term:} It is this term that gives the eventual estimate for $\mathcal{E}_{s_L}$ we want to prove. We recall that $f_j$, the $j$-th adapted derivative of a function $f$, is defined in \fref{linearized:eq:def derivees adaptees}. We just use Cauchy-Schwarz and the estimate provided in Proposition \ref{pr:profilapprochecoupe}:
\be \label{thetrapped:highsobo:eq:estimation psib}
\begin{array}{r c l}
& \left|\frac{1}{\lambda}\int w^{(1)} \mathcal{L}_{\frac{1}{\lambda}}^{k_0+L+1}\tilde{\psi}_{b,\frac{1}{\lambda}}^{(1)}+w^{(2)} \mathcal{L}_{\frac{1}{\lambda}}^{k_0+L}\tilde{\psi}_{b,\frac{1}{\lambda}}^{(2)}\right| \\
=& \left|\frac{1}{\lambda}\int w^{(1)}_{k_0+1+L} \left(\tilde{\psi}_{b,\frac{1}{\lambda}}^{(1)}\right)_{k_0+1+L}+w^{(2)}_{k_0+L} \left( \tilde{\psi}_{b,\frac{1}{\lambda}}^{(2)}\right)_{k_0+L}\right| \\
\leq & C\frac{1}{\lambda} \frac{1}{\lambda^{2(s_L-s_c)}} \sqrt{\mathcal{E}_{s_L}} b_1^{1+L+(1-\delta_0)(1+\eta)} .
\end{array}
\ee
for a constant C depending on $L$ only.\\
$\bullet$ \emph{The non linear term:} We begin by Cauchy-Schwarz inequality and by doing a change on the scaling:
$$
\left|\int w^{(2)}\mathcal{L}_{\frac{1}{\lambda}}^{k_0+L}NL(w)\right| \leq \frac{1}{\lambda^{2(s_L-s_c)+1}}\sqrt{\mathcal{E}_{s_L}}\parallel NL(\varepsilon)_{k_0+L}\parallel_{L^2} .
$$
We aim at estimating the last term in the right hand side. We know that $NL(\varepsilon)$ is a sum of terms of the form $\tilde{Q}_b^{(1)(p-k)}\varepsilon^{(1)k}$ for $2\leq k \leq p$. So by now we have to study quantities of the form: $(\tilde{Q}_b^{(1)(p-k)}\varepsilon^{(1)k})_{k_0+L}$. For $l=(l_0,...,l_k)$ we recall the notation: $|l|_1=\sum_{i=0}^k l_i$. Close to the origin, we have from the equivalence between Sobolev norms and adapted norms (Lemma \ref{equivalence of norms}), and because $H^{s_L}(y\leq 1)$ is an algebra:
$$
\int_{y\leq 1} \left(NL(\varepsilon)_{k_0+L} \right)^2\leq C \sum_{k=2}^p\parallel \varepsilon^{(1)} \parallel^{2k}_{H^{s_L}(y\leq 1)} \leq C(M) \mathcal{E}_{s_L}\leq C(M) \sqrt{\mathcal{E}_{s_L}}b_1^2 .
$$
For $y\geq 1$ we notice that when applying $A$ and $A^*$:
$$
(\tilde{Q}_b^{(1)(p-k)}\varepsilon^{(1)k})_{k_0+L}=\sum_{|l|_1=k_0+L} f_{\tilde{l_0}}\partial_y^{l_0}(\tilde{Q}_b^{(1)(p-k)})\prod_{i=1}^k \partial_y^{l_i} \varepsilon^{(1)} .
$$
with $f_{\tilde{l_0}}\sim \frac{1}{1+y^{\tilde{l_0}}}$. We have the following asymptotic for the potential: 
$$
\partial_y^{l_0}(\tilde{Q}_b^{(1)(p-k)})\leq \frac{C}{1+y^{\frac{2}{p-1}(p-k)+l_0}} .
$$
So, putting together the decay given by $\partial_y^{l_0}\tilde{Q}_b^{(1)(p-k)}$ and $f_{\tilde{l_0}}$ and renaming $l_0:=l_0+\tilde{l_0}$ we need to study integrals of the following form:
\be \label{thetrapped:highsobo:eq:estimation inter NL}
\int_{y\geq 1} |NL(\varepsilon)_{s_L-1}|^2 \lesssim \sum_{k=2}^p \sum_{|l|_1=k_0+L} \int_{y\geq 1} \frac{\prod_{i=1}^k |\partial_y^{l_i} \varepsilon^{(1)}|^2}{1+y^{\frac{4}{p-1}(p-k)+2l_0}} ,
\ee
for $\sum_{i=0}^k l_i=s_l-1$. We order the coefficient $l_i$ for $1\leq i \leq k$ by increasing order: $l_1\leq l_2\leq...\leq l_k$. \\
$\circ$ \emph{Case 1:} we suppose that: $\frac{2}{p-1}(p-k)+l_0+l_k\leq s_L$. It implies the integrability $\frac{\partial_y^{l_k}\varepsilon^{(1)}}{1+y^{l_0+\frac{2}{p-1}(p-k)}}\in L^2(y\geq 1)$ by the improved Hardy inequality from Lemma \ref{annexe:lem:interpolation varepsilon}. There also holds in that case for all $1\leq i \leq k-1$ that $l_i< s_L-\frac{d}{2} $ which implies $\partial_y^{l_i}\varepsilon^{(1)}\in L^{\infty}(y\geq 1)$. We then estimate:
$$
\left\Vert \frac{\prod_{i=1}^k |\partial_y^{l_i} \varepsilon^{(1)}|^2}{1+y^{\frac{4}{p-1}(p-k)+2l_0}} \right\Vert_{L^2(y\geq 1)} \leq C \left\Vert \frac{\partial_y^{l_k}\varepsilon^{(1)}}{1+y^{\frac{2}{p-1}(p-k)+l_0}}\right\Vert_{L^2(\geq 1)} \prod_{i=1}^{k-1} \left\Vert \partial_y^{l_i}\varepsilon^{(1)}\right\Vert \parallel_{L^{\infty}(y\geq 1)} .
$$
For $1\leq i \leq k-1$, from the equivalence between Laplace and $\partial_y$ derivatives for $y\geq 1$:
$$
\partial_y^{l_i} \varepsilon^{(1)} = \sum_{j=0}^{l_i}f_jD^{j}\varepsilon^{(1)} ,
$$
with $\partial_y^n f_j=O\left( \frac{1}{1+x^{l_i-j+n}}\right)$ for $y \geq 1$, we deduce:
$$
\begin{array}{r c l}
\parallel \partial_y^{l_i}\varepsilon^{(1)}\parallel_{L^{\infty}(y\geq 1)} &\leq& C \sum_{j=0}^{l_i} \parallel \frac{D^{j}\varepsilon^{(1)}}{1+x^{l_i-j}} \parallel_{L^{\infty}}\\
&\leq & C \sqrt{\mathcal{E}_{\sigma}}^{\frac{s_L-l_i-\frac{d}{2}}{s_L-\sigma}} \sqrt{\mathcal{E}_{s_L}}^{\frac{l_i+\frac{d}{2}-\sigma}{s_L-\sigma}} .
\end{array}
$$
We used Sobolev injection, interpolation and coercivity. For $i=k$ from Lemma \ref{annexe:lem:interpolation varepsilon}:
$$
\left\Vert \frac{\partial_y^{l_k}\varepsilon^{(1)}}{1+y^{\frac{2}{p-1}(p-k)+2l_0}}\right\Vert_{L^2(y\geq 1)} \leq C(M) \sqrt{\mathcal{E}_{\sigma}}^{\frac{s_L-l_k-l_0-\frac{2}{p-1}(p-k)}{s_L-\sigma}} \sqrt{\mathcal{E}_{s_L}}^{\frac{l_k+l_0+\frac{2}{p-1}(p-k)-\sigma}{s_L-\sigma}} .
$$
So that when combining the last two estimates we find:
$$
\begin{array}{r c l}
&\left\Vert \frac{\prod_{i=1}^k |\partial_y^{l_i} \varepsilon^{(1)}|^2}{1+y^{\frac{4}{p-1}(p-k)+2l_0}} \right\Vert_{L^2(y\geq 1)} \\
\leq& C \sqrt{\mathcal{E}_{\sigma}}^{\sum_{i=1}^{k-1} \left(\frac{s_L-l_i-\frac{d}{2}}{s_L-\sigma}\right)+\frac{s_L-l_k-l_0-\frac{2}{p-1}(p-k)}{s_L-\sigma}} \sqrt{\mathcal{E}_{s_L}}^{\sum_{i=1}^{k-1}\left( \frac{l_i+\frac{d}{2}-\sigma}{s_L-\sigma}\right)+\frac{l_k+l_0+\frac{2}{p-1}(p-k)-\sigma}{s_L-\sigma}} 
\end{array}
$$
$C$ depending on $M$. We can calculate the coefficients:
$$
\sum_{i=2}^k \left(\frac{s_L-l_i-\frac{d}{2}}{s_L-\sigma}\right)+\frac{s_L-l_k-l_0-\frac{2}{p-1}(p-k)}{s_L-\sigma}=\frac{(k-1)(s_L-\frac{d}{2}+1-\frac{2}{p-1}(p-k)}{s_L-\sigma} ,
$$
$$
\sum_{i=1}^{k-1}\left( \frac{l_i+\frac{d}{2}-\sigma}{s_L-\sigma}\right)+\frac{l_k+l_0+\frac{2}{p-1}(p-k)-\sigma}{s_L-\sigma}=1+\frac{1-(k-1)(\sigma-s_c)}{s_L-\sigma} .
$$
Under the bootstrap assumptions \fref{eq:bootstrap estimations sur epsilon} it gives:
\be \label{thetrapped:highsobo:eq:NL cas 1}
\left\Vert \frac{\prod_{i=1}^k |\partial_y^{l_i} \varepsilon^{(1)}|^2}{1+y^{\frac{4}{p-1}(p-k)+2l_0}} \right\Vert_{L^2}\leq C(K_1,K_2,M) b_1\sqrt{\mathcal{E}_{s_L}}\left( \frac{\sqrt{\mathcal{E}_{\sigma}}}{b_1^{\sigma-s_c}}\right)^{k-1}b_1^{\frac{\alpha}{L}+O\left( \frac{\sigma-s_c}{L}\right)} .
\ee
$\circ$ \emph{Case 2:} if the last condition does not hold, it implies that $l_k+l_0=s_L-1$ with $\frac{2}{p-1}(p-k)-1>0$, and that consequently for $1\leq i \leq k-1$, $l_i=1$. It means that we have to estimate an integral of the following form:
$$
\int_{y\geq 1} |\varepsilon^{(1)}|^{2(k-1)}  \frac{|\partial_y^{l_k}\varepsilon^{(1)}|^2}{1+y^{\frac{4}{p-1}(p-k)+2l_0}} .
$$
We rewrite it as:
$$
\int_{y\geq 1} |\varepsilon^{(1)}|^{2(k-2)}\frac{|\varepsilon^{(1)}|^2}{1+y^{\frac{4}{p-1}(p-k)-2}}  \frac{|\partial_y^{l_k}\varepsilon^{(1)}|^2}{1+y^{2+2l_0}} .
$$
The $L^{\infty}$ norm of $\varepsilon^{(1)}$ is estimated in Lemma \ref{annexe:lem:interpolation varepsilon}:
$$
\parallel \varepsilon^{(1)} \parallel_{L^{\infty}} \leq C(M,K_1,K_2) \sqrt{\mathcal{E}_{\sigma}}b_1^{\left( \frac{d}{2}-\sigma\right)+\frac{2\alpha}{(p-1)L} +O\left( \frac{\sigma-s_c}{L} \right) } 
$$
We use the improved Hardy estimate from Lemma \ref{annexe:lem:interpolation varepsilon} to estimate:
$$
\left\Vert \frac{\partial_y^{l_k}\varepsilon^{(1)}}{1+y^{1+l_0}} \right\Vert_{L^2(y\geq 1)}\leq C(M) \sqrt{\mathcal{E}_{s_L}} .
$$
And finally we use the weighted $L^{\infty}$ estimate (still from Lemma \ref{annexe:lem:interpolation varepsilon}):
$$
\left\Vert \frac{|\varepsilon^{(1)}|}{1+y^{\frac{2}{p-1}(p-k)-1}} \right\Vert_{L^{\infty}}\leq C(M,K_1,K_2) \sqrt{\mathcal{E}_{\sigma}}b_1^{\frac{2(p-k)}{p-1}-1+(\frac{d}{2}-\sigma)+\frac{2\alpha}{(p-1)L}+ O(\frac{(\sigma-s_c)}{L})} .
$$
With these last three estimates we have:
\be \label{thetrapped:highsobo:eq:NL cas 2}
\begin{array}{r c l}
&\left\Vert|\varepsilon^{(1)}|^{(k-2)}\frac{|\varepsilon^{(1)}|}{1+y^{\frac{2}{p-1}(p-k)-1}}  \frac{|\partial_y^{l_k}\varepsilon^{(1)}|}{1+y^{1+l_0}} \right\Vert_{L^2(y\geq 1)} \\
\lesssim & \sqrt{\mathcal{E}_{s_L}} \sqrt{\mathcal{E}_{\sigma}}^{(k-2)+1}b_1^{(k-2)(\frac{d}{2}-\sigma)+\frac{2(k-2)\alpha}{(p-1)L}} b_1^{\frac{2(p-k)}{p-1}-1+(\frac{d}{2}-\sigma)+(\frac{2}{p-1}+\frac{2(p-k)}{p-1}-1)\frac{\alpha}{L} +O(\frac{(\sigma-s_c)}{L})} \\
\leq& C(M,K_1,K_2) \sqrt{\mathcal{E}_{s_L}}b_1 \left( \frac{\sqrt{\mathcal{E}}_{\sigma}}{b_1^{\sigma-s_c}}\right)^{k-1}b_1^{\frac{\alpha}{L}+O\left(\frac{\sigma-s_c}{L}\right)} .
\end{array}
\ee
We now come back to \fref{thetrapped:highsobo:eq:estimation inter NL} and inject the bounds we have found. Putting together the result obtained in case 1, \fref{thetrapped:highsobo:eq:NL cas 1} and the result obtained in the second case, \fref{thetrapped:highsobo:eq:NL cas 2}, gives for the non linear term: 
\be \label{thetrapped:highsobo:eq:estimation NL}
\left| \int w^{(2)}\mathcal{L}_{\frac{1}{\lambda}}^{k_0+L}(NL(w))\right|\leq \frac{C(K_1,K_2,M)}{\lambda^{2(s_L-s_c)}}\frac{b_1}{\lambda}\mathcal{E}_{s_L}\left[ \sum_{k=2}^{p} \left( \frac{\sqrt{\mathcal{E}_{\sigma}}}{b_1^{\sigma-s_c}}\right)^{k-1}\right] b_1^{\frac{\alpha}{L}+O(\frac{(\sigma-s_c)}{L})} .
\ee
We now recapitulate: we have found directs bounds for the quadratic term \fref{thetrapped:highsobo:eq:estimation lineaire}, for the error term \fref{thetrapped:highsobo:eq:estimation psib}, and for the non linear term \fref{thetrapped:highsobo:eq:estimation NL}. We inject them in \fref{thetrapped:highsobo:eq:expression derivee} to obtain the intermediate identity \fref{thetrapped:highsobo:eq:expression derivee step1}, wich we claimed in this step 1.\\

\underline{Step 2:} Terms for which only a local part is problematic. The small linear term and the scale changing term involve a potential that, in both cases, has a better decay than $\frac{1}{y^2}$ far away of the origin. So away from the origin we can control them directly. Unfortunately, close to the origin we cannot. This is why we will have to use an additional tool, the study of a Morawetz type quantity, which will be done in the next subsection. We claim that \fref{thetrapped:highsobo:eq:expression derivee step1} yields:
\be \label{thetrapped:highsobo:eq:expression derivee step2}
\begin{array}{r c l}
&\frac{d}{dt}\left( \frac{\mathcal{E}_{s_L}}{\lambda^{2(s_L-s_c)}} \right) = 2\int w^{(1)}\mathcal{L}_{\frac{1}{\lambda}}^{k_0+L+1} ( -\frac{1}{\lambda} \tilde{Mod}_{\frac{1}{\lambda}}(t)^{(1)})+w^{(2)}\mathcal{L}_{\frac{1}{\lambda}}^{k_0+L} ( -\frac{1}{\lambda} \tilde{Mod}(t)_{\frac{1}{\lambda}}^{(2)})\\
&+\frac{b_1}{\lambda^{2(s_L-s_c)+1}} \left[O(\sqrt{\mathcal{E}_{s_L}}b_1^{L+(1-\delta_0)(1+\eta)})+O\left(\mathcal{E}_{s_L}b_1^{\frac{\alpha}{L}+O\left( \frac{\sigma-s_c}{L} \right)}\underset{k=2}{\overset{p}{\sum}} \left[\frac{\sqrt{\mathcal{E}_{\sigma}}}{b_1^{\sigma-s_c}} \right]^{k-1}\right)\right]\\
&+\frac{b_1}{\lambda^{2(s_L-s_c)+1}}O\left( \frac{\mathcal{E}_{s_L}}{N^{\delta}}+C(N)\mathcal{E}_{s_L,\text{loc}} \right) .
\end{array}
\ee
We are now going to prove this identity \fref{thetrapped:highsobo:eq:expression derivee step2} by establishing bounds on the small linear term and the scale changing term in \fref{thetrapped:highsobo:eq:expression derivee step1}.\\
$\bullet$ \emph{The $L(w)$ term:} We start by rescaling and using Cauchy-Schwarz:
$$
\left| \int w^{(2)} \mathcal{L}_{\frac{1}{\lambda}}^{k_0+L}(L(w))\right| \leq \frac{1}{\lambda^{2(s_L-s_c)+1}} \sqrt{\mathcal{E}_{s_L}} \parallel (L(\varepsilon))_{k_0+L}\parallel_{L^2} .
$$
We have: $L(\varepsilon)=p(Q^{p-1}-\tilde{Q}_b^{(1)(p-1)})\varepsilon^{(1)}$. From the asymptotic of the the profiles $T_i$ and $S_i$ there holds the degeneracy:
$$
|\partial_y^j (Q^{p-1}-\tilde{Q}_b^{(1)(p-1)})| \leq C(L) \frac{b_1}{1+y^{1+\alpha+j-C(L)\eta}} ,
$$
Let\footnote{We cannot expect to gain the weight $y^{-\alpha}$ because if $\alpha$ is too big the weighted coercivity does not apply. The limiting case is $\delta_0$ hence our choice for $\delta$.} $\delta=\frac{\delta_0}{2}$. We first estimate the integral close to the origin. $H^{s_L-1}(y\leq 1)$ is an algebra, from the equivalence between Laplace based derivatives and adapted ones (see Lemma \ref{equivalence of norms}), and from the weighted coercivity (Lemma \ref{annexe:lem:coercivite des normes adaptees}):
$$
\int_{y\leq 1}(L(\varepsilon))_{s_L-1}^2 \leq Cb_1^2 \int_{y\leq 1} \sum_{i=0}^{s_L} |D^{i}\varepsilon^{(1)}|^2 \leq C(M) b_1^2 \int \frac{|\varepsilon^{(1)}_{s_L}|}{1+y^{2\delta}} .
$$
Away from the origin we estimate using the weighted coercivity and the equivalence between $\partial_y$ derivatives and adapted derivatives (Lemma \ref{annexe:lem:equivalence degre}).
$$
\begin{array}{r c l}
\parallel (L(\varepsilon^{(1)})_{k_0+1}\parallel_{L^2(y\geq 1)}^2 &\leq & C\sum_{i=0}^{s_L-1} \parallel  \frac{b_1|\varepsilon^{(1)}_{i}|}{1+y^{1+\alpha+s_L-1-i-C\eta}} \parallel_{L^2(y\geq 1)}^2 \\
&\leq& C(M)b_1^2 \parallel \frac{\varepsilon^{(1)}_{s_L}}{1+y^{\delta}}\parallel_{L^2}^2 .
\end{array}
$$
With the two estimates, close and away from the origin, we have shown:
\be \label{thetrapped:highsobo:eq:estimation intermediaire L}
\parallel (L(\varepsilon))_{s_L-1} \parallel_{L^2}^2 \lesssim b_1^2 \left\Vert \frac{\varepsilon^{(1)}_{s_L}}{1+y^{\delta}} \right\Vert_{L^2}^2 .
\ee
We now split the term of the right hand side in two parts, one before $N$ and the other after, where $N>0$ is the large constant used in the definition of the local adapted norm (see \fref{thetrapped:eq:def mathcalEsLloc}):
$$
\left\Vert b_1\frac{\varepsilon^{(1)}_{s_L}}{1+y^{\delta}}\right\Vert_{L^2}\leq b_1 \parallel \varepsilon^{(1)}_{s_L}\parallel_{L^2(\leq N)}+b_1\frac{1}{N^{\delta}} \parallel \varepsilon^{(1)}_{s_L}\parallel_{L^2(\geq N)} .
$$
Finally:
$$
\left| \int w^{(2)}\mathcal{L}_{\frac{1}{\lambda}}^{s_L-1}(L(w))\right| \lesssim \frac{C(M)}{\lambda^{2(s_L-s_c)}}\frac{b_1\sqrt{\mathcal{E}_{s_L}}}{\lambda} \left(\frac{\sqrt{\mathcal{E}_{s_L}}}{N^{\delta}} +C(N)\sqrt{\mathcal{E}_{s_L,\text{loc}}} \right) .
$$
We now use Youngs inequality to reformulate it as:
\be \label{thetrapped:highsobo:eq:estimation L}
\left| \int w^{(2)}\mathcal{L}_{\frac{1}{\lambda}}^{s_L-1}(L(w))\right| \leq \frac{C(M)}{\lambda^{2(s_L-s_c)}}\frac{b_1}{\lambda} \left(\frac{\mathcal{E}_{s_L}}{N^{\delta}} +C(N) \mathcal{E}_{s_L,\text{loc}} \right) .
\ee
$\bullet$ \emph{The scale changing term:} The same reasoning applies to the scale changing term. Indeed one has:
$$
\frac{d}{dt}(\mathcal{L}_{\frac{1}{\lambda}})=-\frac{\lambda_s}{\lambda^2} pQ^{p-2}_{\frac{1}{\lambda}}(\Lambda^{(1)}Q)_{\frac{1}{\lambda}}=-\frac{\lambda_s}{\lambda^{4}}\tilde{V}\left(\frac{y}{\lambda}  \right)
$$
where the potential $\tilde{V}$ satisfies an improved decay property:
$$
\left|\partial_y^jV \right| \leq \frac{C}{1+\lambda y^{2+\alpha+j}}.
$$
Consequently, as $-\frac{\lambda_s}{\lambda}\approx b_1$ from the modulation equations, we have the same gain of a weight $y^{-\alpha}$ we had for the small linear term. Using verbatim the same techniques one obtains:
\be \label{thetrapped:highsobo:eq:estimation lambdaL}
\begin{array}{r c l}
& \left| \int \sum_{i=1}^{s_L} w^{(1)} \mathcal{L}_{\frac{1}{\lambda}}^{i-1}\frac{d}{dt}(\mathcal{L}_{\frac{1}{\lambda}})\mathcal{L}_{\frac{1}{\lambda}}^{s_L-i}w^{(1)}+\int \sum_{i=1}^{s_L-1} w^{(2)} \mathcal{L}_{\frac{1}{\lambda}}^{i-1}\frac{d}{dt}(\mathcal{L}_{\frac{1}{\lambda}})\mathcal{L}_{\frac{1}{\lambda}}^{s_L-1-i}w^{(2)}\right| \\
\leq &\frac{C(M)}{\lambda^{2(s_l-s_c)}}\frac{b_1}{\lambda} \left(\frac{\mathcal{E}_{s_L}}{N^{\delta}} +C(N) \mathcal{E}_{s_L,\text{loc}} \right) ,
\end{array}
\ee
We now come back to the identity \fref{thetrapped:highsobo:eq:expression derivee step1} established in step 1, and inject the bounds on the small linear term \fref{thetrapped:highsobo:eq:estimation L} and on the scale changing term \fref{thetrapped:highsobo:eq:estimation lambdaL}. This gives the identity \fref{thetrapped:highsobo:eq:expression derivee step2}  we claimed in this step 2.\\

\underline{Step 3:} Managing the modulation term. Eventually, we have to estimate the influence of the modulation term on \fref{thetrapped:highsobo:eq:expression derivee step2}. We claim that:
\be \label{thetrapped:highsobo:eq:estimation mod}
\begin{array}{r c l}
&\int w^{(1)}\mathcal{L}_{\frac{1}{\lambda}}^{s_L} \frac{1}{\lambda}\tilde{Mod}(t)_{\frac{1}{\lambda}}^{(1)} + \int w^{(2)}\mathcal{L}_{\frac{1}{\lambda}}^{s_L-1} \frac{1}{\lambda}\tilde{Mod}(t)_{\frac{1}{\lambda}}^{(2)} \\
=& \frac{d}{dt} O\left[ \frac{\mathcal{E}_{s_L}}{\lambda^{2(s_L-s_c)}}b_1^{\eta(1-\delta_0)} \right] +O\left( \frac{b_1\mathcal{E}_{s_L}}{\lambda^{2(s_L-s_c)+1}}b_1^{\eta(1-\delta_0)} + \frac{b_1\sqrt{\mathcal{E}_{s_L}}}{\lambda^{2(s_L-s_c)+1}}b_1^{L+(1-\delta_0)(1+2\eta)} \right) .
\end{array}
\ee
Once this bound is proven, we can finish the proof of the proposition by injecting it in \fref{thetrapped:highsobo:eq:expression derivee step2}. So to finish to proof, we will now prove \fref{thetrapped:highsobo:eq:estimation mod}. For $1\leq i \leq L-1$, the bound \fref{thetrapped:eq:modulation leq L-1} we found for the modulation equations provides a sufficient estimate for the terms $(b_{i,s}+(i-\alpha)b_1b_i-b_{i+1})(\bos{T}_i+\sum \frac{\partial \bos{S}_j}{\partial b_i})$. Indeed, pick an indice $1\leq i \leq L-1$ and suppose it is even (the odd case being exactly the same). We calculate:
$$
\begin{array}{r c l}
&\left| \frac{1}{\lambda} \int w^{(1)} \mathcal{L}_{\frac{1}{\lambda}}^{s_L}((b_{i,s}+(i-\alpha)b_1b_i-b_{i+1})\chi_{B_1}(T_i+\underset{{j=i+1, \ j \ \text{even}}}{\overset{L+2}{\sum}} \frac{\partial S_j}{\partial b_i}))_{\frac{1}{\lambda}} \right| \\
&+\left| \frac{1}{\lambda}\int w^{(2)} \mathcal{L}_{\frac{1}{\lambda}}^{s_L-1}((b_{i,s}+(i-\alpha)b_1b_i-b_{i+1})\chi_{B_1}(\underset{{j=i+1, \ j \ \text{odd}}}{\overset{L+2}{\sum}} \frac{\partial S_j}{\partial b_i}))_{\frac{1}{\lambda}} \right| \\
\leq &\frac{C(M)\sqrt{\mathcal{E}_{s_L}}}{\lambda^{2(s_L-s_c)}}(b_1\sqrt{\mathcal{E}_{s_L}}+b_1^{L+3})\left\Vert \left( \chi_{B_1}\left( T_i+\underset{{j=i+1, \ j \ \text{even}}}{\overset{L+2}{\sum}} \frac{\partial S_j}{\partial b_i}\right)\right)_{s_L} \right\Vert_{L^2} \\
& + \frac{C(M)\sqrt{\mathcal{E}_{s_L}}}{\lambda^{2(s_L-s_c)}}(b_1\sqrt{\mathcal{E}_{s_L}}+b_1^{L+3})\left\Vert \left( \chi_{B_1}\left( \underset{{j=i+1, \ j \ \text{odd}}}{\overset{L+2}{\sum}} \frac{\partial S_j}{\partial b_i}\right)\right)_{s_L-1} \right\Vert_{L^2} .
\end{array}
$$
Since:
$$
\left\Vert \left( \chi_{B_1}\Bigl{(} T_i+\underset{{j=i+1, \ \text{even}}}{\overset{L+2}{\sum}} \frac{\partial S_j}{\partial b_i}\Bigr{)}\right)_{s_L} \right\Vert_{L^2}+\left\Vert \left( \chi_{B_1}\Bigl{(} \underset{{j=i+1, \ \text{odd}}}{\overset{L+2}{\sum}} \frac{\partial S_j}{\partial b_i}\Bigr{)}\right)_{s_L-1} \right\Vert_{L^2}  \leq Cb_1^{(L-i)}  
$$
and that we assumed $i<L$, this bound implies the following identity for the modulation term:
\be \label{thetrapped:highsobo:eq:estimation mod 1}
\begin{array}{r c l}
&\int w^{(1)}\mathcal{L}_{\frac{1}{\lambda}}^{s_L} \frac{1}{\lambda}\tilde{Mod}(t)_{\frac{1}{\lambda}}^{(1)} + \int w^{(2)}\mathcal{L}_{\frac{1}{\lambda}}^{s_L-1} \frac{1}{\lambda}\tilde{Mod}(t)_{\frac{1}{\lambda}}^{(2)} \\
=& \frac{1}{\lambda} \int w^{(1)}\mathcal{L}_{\frac{1}{\lambda}}^{s_L}((b_{L,s}+(L-\alpha)b_1b_L)\chi_{B_1}( \frac{\partial S_{L+1}}{\partial b_L}))_{\frac{1}{\lambda}} +\frac{b_1O\left(b_1 \mathcal{E}_{s_L}+\sqrt{\mathcal{E}_{s_L}}b_1^{L+3}  \right)}{\lambda^{2(s_L-s_c)+1}} \\
&+ \frac{1}{\lambda} \int w^{(2)}\mathcal{L}_{\frac{1}{\lambda}}^{s_L-1}((b_{L,s}+(L-\alpha)b_1b_L)\chi_{B_1}(T_L+\frac{\partial S_{L+2}}{\partial b_L}))_{\frac{1}{\lambda}}
\end{array}
\ee
The bad term is the last one for $i=L$. But we know by the improved bound for the evolution of $b_L$, see Lemma \ref{trappedregime:improvedmodulation:lem:improvedmodulation} that $b_{L,s}+(L-\alpha)b_1b_L$ is small enough up to the derivative in time of the projection of $\varepsilon$ onto $\bos{H}^{*L}\chi_{B_1}\bos{\Lambda Q}$. Let\footnote{$\bos{\xi}$ can be seen as the coordinate of $\bos{\varepsilon}$ along the vector $\chi_{B_0}T_L$.}:
\be \label{thetrapped:highsobo:eq:definition xi}
\begin{array}{r c l}
\bos{\xi} &:=&  \frac{\langle \bos{H}^{L}\bos{\varepsilon} , \chi_{B_0} \bos{\Lambda} \bos{Q}\rangle}{\Bigl\langle  \chi_{B_0} \Lambda^{(1)} Q, \Lambda^{(1)} Q +(-1)^{\frac{L-1}{2}}\left(\frac{\partial S_{L+2}}{\partial b_L}\right)_{L-1}\Bigr\rangle} \left[\chi_{B_1}\left(\bos{T}_L+\frac{\partial \bos{S}_{L+1}}{\partial b_L}+\frac{\partial \bos{S}_{L+2}}{\partial b_L} \right)\right]_{\frac{1}{\lambda}} \\
&:=& C(\xi) \left[\chi_{B_1}\left( \bos{T}_L+\frac{\partial \bos{S}_{L+1}}{\partial b_L}+\frac{\partial \bos{S}_{L+2}}{\partial b_L} \right) \right]_{\frac{1}{\lambda}}
\end{array}
\ee
We claim that the bad part of the $L$-th modulation term can be integrated in time the following way:
\be \label{thetrapped:highsobo:eq:integration mod}
\begin{array}{r c l}
&\frac{d}{dt} \left( \int w^{(1)}\mathcal{L}_{\frac{1}{\lambda}}^{s_L} \xi^{(1)} +\int w^{(2)}\mathcal{L}_{\frac{1}{\lambda}}^{s_L-1} \xi^{(2)} +\frac{1}{2}\int \xi^{(1)}\mathcal{L}_{\frac{1}{\lambda}}^{s_L} \xi^{(1)} +\frac{1}{2} \int \xi^{(2)} \mathcal{L}_{\frac{1}{\lambda}}^{s_L-1} \xi^{(2)}  \right) \\
=& \frac{1}{\lambda} \int w^{(1)}\mathcal{L}_{\frac{1}{\lambda}}^{s_L}((b_{L,s}+(L-\alpha)b_1b_L)\chi_{B_1}( \frac{\partial S_{L+1}}{\partial b_L}))_{\frac{1}{\lambda}}   \\
&+   \frac{1}{\lambda} \int w^{(2)}\mathcal{L}_{\frac{1}{\lambda}}^{s_L-1}((b_{L,s}+(L-\alpha)b_1b_L)\chi_{B_1}(T_L+\frac{\partial S_{L+2}}{\partial b_L}))_{\frac{1}{\lambda}}  \\
&+\frac{b_1}{\lambda^{2(s_L-s_c)+1}}O(\mathcal{E}_{s_L}b_1^{\eta(1-\delta_0)})+\frac{b_1}{\lambda^{2(s_L-s_c)+1}}O(\sqrt{\mathcal{E}_{s_L}}b_1^{L+(1+2\eta)(1-\delta_0)})
\end{array}
\ee   
We will prove this identity at the end of this step 3. Once it is established, it allows us to prove the identity \fref{thetrapped:highsobo:eq:estimation mod}. Indeed, \fref{thetrapped:highsobo:eq:estimation mod 1} can be rewritten as:
\be \label{thetrapped:highsobo:eq:estimation mod 2}
\begin{array}{r c l}
&\int w^{(1)}\mathcal{L}_{\frac{1}{\lambda}}^{s_L} \frac{1}{\lambda}\tilde{Mod}(t)_{\frac{1}{\lambda}}^{(1)} + \int w^{(2)}\mathcal{L}_{\frac{1}{\lambda}}^{s_L-1} \frac{1}{\lambda}\tilde{Mod}(t)_{\frac{1}{\lambda}}^{(2)} \\
=&  \frac{d}{dt} \left( \int w^{(1)}\mathcal{L}_{\frac{1}{\lambda}}^{s_L} \xi^{(1)} +\int w^{(2)}\mathcal{L}_{\frac{1}{\lambda}}^{s_L-1} \xi^{(2)}-\frac{1}{2}\int \xi^{(1)}\mathcal{L}_{\frac{1}{\lambda}}^{s_L} \xi^{(1)} -\frac{1}{2} \int \xi^{(2)} \mathcal{L}_{\frac{1}{\lambda}}^{s_L-1} \xi^{(2)}  \right)  \\
&+\frac{b_1\sqrt{\mathcal{E}_{s_L}}}{\lambda^{2(s_L-s_c)+1}}O\left(b_1^{\eta(1-\delta_0)} \sqrt{\mathcal{E}_{s_L}}+b_1^{L+(1-\delta_0)(1+2\eta)}  \right)
\end{array}
\ee
We just have to check the gain obtained by the time integration. From the two estimates \fref{thetrapped:eq:improved modulation taille denominateur} and \fref{thetrapped:eq:improved modulation estimation numerateur} we used in the proof of the improved modulation equation, one has the following size for the coefficient $C(\xi)$:
\be \label{thetrapped:highsobo:estimation C(xi)}
|C(\xi)|\lesssim \sqrt{\mathcal{E}_{s_L}}b_1^{\delta_0-1}.
\ee
From the construction of the profiles $S_i$ in Proposition \ref{pr:constructionprofilnoncoupe}, one has the following asymptotics:
\be \label{thetrapped:highsobo:asymptotic SL+1 SL+2}
\left| \partial_y^j \left( \frac{\partial S_{L+1}}{\partial b_L}\right) \right|\leq  \frac{C(L)b_1}{1+y^{\gamma-L-1+g'+j}}, \ \ \text{and} \ \ \left| \partial_y^j \left( \frac{\partial S_{L+2}}{\partial b_L}\right) \right|\leq \frac{C(L)b_1^2}{1+y^{\gamma-L-1+g'+j}}.
\ee
The cancellation $\mathcal{L}^{\frac{L+1}{2}}T_L=0$ implies that the support of $(\chi_{B_1}T_L)_{s_L-1}$ is in the zone $B_1\leq y \leq 2B_1$, hence $\parallel (\chi_{B_1}T_L)_{s_L-1} \parallel_{L^2}\lesssim b_1^{(1-\delta_0)(1+\eta)}$. The two last estimates then imply:
\be \label{thetrapped:eq:gain inte dernier term mod}
\begin{array}{r c l}
&\left|  \int w^{(1)}\mathcal{L}_{\frac{1}{\lambda}}^{s_L} \xi^{(1)}  +  w^{(2)}\mathcal{L}_{\frac{1}{\lambda}}^{s_L-1} \xi^{(2)} \right| \\
\leq& \frac{\sqrt{\mathcal{E}_{s_L}}|C(\xi)| (\parallel (\chi_{B_1}\frac{\partial S_{L+1}}{\partial b_L})_{s_L} \parallel_{L^2} +\parallel (\chi_{B_1}(T_L+\frac{\partial S_{L+2}}{\partial b_L}))_{s_L-1} \parallel_{L^2} )}{\lambda^{2(s_L-s_c)}}  \leq C(M)\frac{\mathcal{E}_{s_L}}{\lambda^{2(s_l-s_c)}} b_1^{\eta(1-\delta_0)} ,
\end{array}
\ee
For the same reasons:
\be \label{thetrapped:eq:gain inte dernier term mod 2}
\left| \frac{1}{2}\int \xi^{(1)}\mathcal{L}_{\frac{1}{\lambda}}^{s_L} \xi^{(1)} +\frac{1}{2} \int \xi^{(2)} \mathcal{L}_{\frac{1}{\lambda}}^{s_L-1} \xi^{(2)}   \right| \lesssim \frac{1}{\lambda^{2(s_l-s_c)}} \mathcal{E}_{s_L}b_1^{2\eta(1-\delta_0)} ,
\ee
The injection of these last bounds \fref{thetrapped:eq:gain inte dernier term mod} and \fref{thetrapped:eq:gain inte dernier term mod 2} in the previous identity \fref{thetrapped:highsobo:eq:estimation mod 2} yields the identity \fref{thetrapped:highsobo:eq:estimation mod} we claimed in this step 3. To end the proof of the proposition, it just remains to prove \fref{thetrapped:highsobo:eq:integration mod}, what we are now going to do. Using the improved modulation bound \fref{thetrapped:eq:improved modulation} for $b_{L,s}$ one calculates:
\be \label{thetrapped:highsobo:eq:expression radiation}
\begin{array}{r c l}
& \frac{d}{dt} \left( \int w^{(2)}\mathcal{L}_{\frac{1}{\lambda}}^{s_L-1} \xi^{(2)} + \int w^{(1)}\mathcal{L}_{\frac{1}{\lambda}}^{s_L} \xi^{(1)} \right) \\
=& \frac{1}{\lambda} \int w^{(1)}\mathcal{L}_{\frac{1}{\lambda}}^{s_L}((b_{L,s}+(L-\alpha)b_1b_L)\chi_{B_1} \frac{\partial S_{L+1}}{\partial b_L})_{\frac{1}{\lambda}} \\
&+ \frac{1}{\lambda} \int w^{(2)}\mathcal{L}_{\frac{1}{\lambda}}^{s_L-1}((b_{L,s}+(L-\alpha)b_1b_L)\chi_{B_1}(T_L+\ \frac{\partial S_{L+2}}{\partial b_L}))_{\frac{1}{\lambda}} \\
&+ \frac{O(b_1^{\delta_0}\sqrt{\mathcal{E}_{s_L}}+b_1^{L+1+g'})}{\lambda}  [ \int w^{(1)} \mathcal{L}_{\frac{1}{\lambda}}^{s_L}(\chi_{B_1}\frac{\partial S_{L+1}}{\partial b_L})_{\frac{1}{\lambda}} +w^{(2)}\mathcal{L}_{\frac{1}{\lambda}}^{s_L-1}(\chi_{B_1}(T_L+\frac{\partial S_{L+2}}{\partial b_L}))_{\frac{1}{\lambda}}  ]\\
&+  \int w^{(1)} \mathcal{L}_{\frac{1}{\lambda}}^{s_L} C(\xi) \partial_t \left( \mathcal{L}_{\frac{1}{\lambda}}^{s_L} \left[\chi_{B_1}\left(  \frac{\partial S_{L+1}}{\partial b_L}\right)\right]_{\frac{1}{\lambda}} \right)  \\
&+ \int w^{(2)} \mathcal{L}_{\frac{1}{\lambda}}^{s_L-1} C(\xi)  \partial_t \left( \mathcal{L}_{\frac{1}{\lambda}}^{s_L-1}  \left[\chi_{B_1}\left(T_L+ \frac{\partial S_{L+2}}{\partial b_L}\right)\right]_{\frac{1}{\lambda}} \right)\\
&+  \int w^{(2)}_t\mathcal{L}_{\frac{1}{\lambda}}^{s_L-1} \xi^{(2)} + \int w^{(1)}_t\mathcal{L}_{\frac{1}{\lambda}}^{s_L} \xi^{(1)}    .
\end{array}
\ee
We show that all the other terms are small enough. From the modulation equations \fref{thetrapped:eq:modulation leq L-1} for $b_i$ for $i<L$ one has: $|\lambda_s\lambda^{-1}|\lesssim b_1$, $|b_{i,s}|\lesssim b_1^{i+1}$. As $\bos{\xi}$ does not depend on $b_L$, this gives us the following bounds when the time derivative applies to $\bos{\xi}$ or $\bos{\mathcal{L}}$:
\be \label{thetrapped:highsobo:eq:radiation lambdaL}
\begin{array}{r c l}
&\Bigl| \frac{O(b_1^{\delta_0}\sqrt{\mathcal{E}_{s_L}}+b_1^{L+1+g'})}{\lambda}  [ \int w^{(1)} \mathcal{L}_{\frac{1}{\lambda}}^{s_L}(\chi_{B_1}\frac{\partial S_{L+1}}{\partial b_L})_{\frac{1}{\lambda}} + \int w^{(2)}\mathcal{L}_{\frac{1}{\lambda}}^{s_L-1}(\chi_{B_1}(T_L+\frac{\partial S_{L+2}}{\partial b_L}))_{\frac{1}{\lambda}}  ]\\
&+  \int w^{(1)} \mathcal{L}_{\frac{1}{\lambda}}^{s_L} C(\xi) \partial_t \left( \mathcal{L}_{\frac{1}{\lambda}}^{s_L} \left[\chi_{B_1}\left(  \frac{\partial S_{L+1}}{\partial b_L}\right)\right]_{\frac{1}{\lambda}} \right)  \\
&+ \int w^{(2)} \mathcal{L}_{\frac{1}{\lambda}}^{s_L-1} C(\xi)  \partial_t \left( \mathcal{L}_{\frac{1}{\lambda}}^{s_L-1}  \left[\chi_{B_1}\left(T_L+ \frac{\partial S_{L+2}}{\partial b_L}\right)\right]_{\frac{1}{\lambda}} \right) \Bigr| \\
\leq& C(L,M)\frac{b_1\sqrt{\mathcal{E}_{s_L}}}{\lambda^{2(s_L-s_c)+1}}(\sqrt{\mathcal{E}_{s_L}}b_1^{\eta(1-\delta_0)}+b_1^{L+(1-\delta_0)(1+\eta)+g'}),
\end{array}
\ee
where we used coercivity, \fref{thetrapped:highsobo:estimation C(xi)} and \fref{thetrapped:highsobo:asymptotic SL+1 SL+2} and the fact that $\partial_t (\mathcal{L}^{s_L-1}\chi_{B_1}T_L) $ has its support in $B_1\leq y\leq 2B_1$. We have now to estimate the terms involving $\bos{w}_t$ in \fref{thetrapped:highsobo:eq:expression radiation}. We do exactly the same things we did to the proof of Lemma \ref{trappedregime:improvedmodulation:lem:improvedmodulation}. For the sake of simplicity we will only do it for the second coordinate, the first one being the same. We first compute the expression:
\be \label{thetrapped:highsobo:eq:radiation wt intermediaire}
\begin{array}{r c l}
\int w^{(2)}_t \mathcal{L}_{\frac{1}{\lambda}}^{s_L-1} \xi^{(2)}&=& \int -\mathcal{L}_{\frac{1}{\lambda}}w^{(1)} \mathcal{L}_{\frac{1}{\lambda}}^{s_L-1} \xi^{(2)} + \int -\frac{1}{\lambda}(\tilde{\psi}_b^{(2)}+\tilde{Mod}(t)^{(2)})_{\frac{1}{\lambda}} \mathcal{L}_{\frac{1}{\lambda}}^{s_L-1} \xi^{(2)} \\
&& +\int (L(w)+NL(w)) \mathcal{L}_{\frac{1}{\lambda}}^{s_L-1} \xi^{(2)} .
\end{array}
\ee
We use the bootstrap assumptions to put an upper bound on everything except the $b_{L,s}$ term. For the linear term one has the bound:
\be \label{thetrapped:highsobo:eq:radiation lineaire}
\left|\int -\mathcal{L}_{\frac{1}{\lambda}}w^{(1)} \mathcal{L}_{\frac{1}{\lambda}}^{s_L-1} \xi^{(2)}\right| \leq \frac{\sqrt{\mathcal{E}_{s_L}}}{\lambda^{(s_l-s_c)}}\parallel (\xi^{(2)})_{s_L} \parallel_{L^2}\leq C(M) \frac{b_1}{\lambda^{2(s_l-s_c)+1}}\mathcal{E}_{s_L}b_1^{\eta(1-\delta_0)} .
\ee
Using the bounds on the error $\tilde{\bos{\psi}}_b$ from Proposition \ref{pr:profilapprochecoupe}:
\be \label{thetrapped:highsobo:eq:radiation psib mod}
\begin{array}{r c l}
\left| \int -\frac{1}{\lambda}(\tilde{\psi}_b^{(2)})_{\frac{1}{\lambda}} \mathcal{L}_{\frac{1}{\lambda}}^{s_L-1} \xi^{(2)} \right| &\leq & \frac{1}{\lambda^{s_L-s_c+1}}\parallel (\tilde{\psi}_b^{(2)})_{s_L-1} \parallel_{L^2} \parallel \xi^{(2)}_{s_L-1} \parallel_{L^2} \\
&\leq & \frac{C(M)b_1}{\lambda^{2(s_L-s_c)+1}}\sqrt{\mathcal{E}_{s_L}}b_1^{L+(1-\delta_0)(1+2\eta)}.
\end{array}
\ee
The small linear term gives the same estimate as the linear one:
\be \label{thetrapped:highsobo:eq:radiation L}
\left| \int L(w) \mathcal{L}_{\frac{1}{\lambda}}^{s_L-1} \xi^{(2)} \right| \leq \frac{C(M)b_1}{\lambda^{2(s_l-s_c)+1}}\mathcal{E}_{s_L}b_1^{\eta(1-\delta_0)} .
\ee
Finally, we start by decomposing the nonlinear term as a sum of term of the form: $\tilde{Q}_{b,\frac{1}{\lambda}}^{(1)(p-k)}w^{(1)k} $ for $2\leq k \leq p$. For each term we let all the derivatives on $\xi^{(2)}$:
$$
\begin{array}{r c l}
\left| \int NL(w) \mathcal{L}_{\frac{1}{\lambda}}^{s_L-1} \xi^{(2)} \right| &\lesssim& \frac{ \sqrt{\mathcal{E}_{s_L}}b_1^{\delta_0-1} }{\lambda^{2(s_L-s_c)+1}}\int \frac{|\varepsilon|^{(1)}|^k}{1+y^{\frac{2}{p-1}(p-k}} (\chi_{B_1}(T_L+\frac{\partial S_{L+2}}{\partial_{b_L}}))_{2s_L-2} .
\end{array}
$$
We know from their construction that $(T_L+\frac{\partial S_{L+2}}{\partial_{b_L}}))_{2s_L-2}=O\left( \frac{1}{1+y^{\gamma+L+1+2k_0}}\right)$, and by using the coercivity of the adapted norm and the $L^{\infty}$ estimate for $w^{(1)}$:
$$
\begin{array}{r c l}
\left| \int \frac{|\varepsilon|^{(1)}|^k(\chi_{B_1}(T_L+\frac{\partial S_{L+2}}{\partial_{b_L}}))_{2s_L-2}}{1+y^{\frac{2}{p-1}(p-k}} \right|&\leq& C\int \frac{|\varepsilon^{(1)}|^k}{1+y^{\frac{2}{p-1}(p-k)+\gamma+L+2k_0+1}} \\
&\leq& C(M)b_1^{-L+\gamma-1+\frac{2}{p-1}(p-k)}\mathcal{E}_{s_L}\parallel \varepsilon^{(1)} \parallel_{L^{\infty}}^{k-2} \\
&\leq & C(M,K_1,K_2)\mathcal{E}_{s_L}b_1^{-L+1+\alpha+O(\frac{1}{L})}\left(\frac{\sqrt{\mathcal{E}_{\sigma}}}{b_1^{\sigma-s_c}}\right)^{k-2}
\end{array}
$$
where the integral in $y$ we used with the Cauchy-Schwarz inequality was indeed divergent.
Under the bootstrap assumptions it leads to:
$$
\frac{\sqrt{\mathcal{E}_{s_L}}b_1^{1-\delta_0}}{\lambda^{2(s_L-s_c)+1}}\int \frac{|\varepsilon|^{(1)}|^k}{1+y^{\frac{2}{p-1}(p-k)}} (\chi_{B_1}(T_L+\frac{\partial S_{L+2}}{\partial_{b_L}}))_{2s_L-2}
\leq \frac{b_1\mathcal{E}_{s_L}}{\lambda^{2(s_L-s_c)+1}}b_1^{\eta(1-\delta_0)+\frac{\alpha}{2}} 
$$
(as $C(M,K_1,K_2)b_1^{\alpha}\leq b_1^{\frac{\alpha}{2}}$ for $s_0$ large). Therefore for the non linear term we have:
\be \label{thetrapped:highsobo:eq:radiation NL}
\left| \int NL(w) \mathcal{L}_{\frac{1}{\lambda}}^{s_L-1} \xi \right| \leq \frac{b_1\mathcal{E}_{s_L}}{\lambda^{2(s_L-s_c)+1}}b_1^{\eta(1-\delta_0)+\frac{\alpha}{2}}.
\ee
We now treat the modulation terms, preserving the $L$-th one. With the bound \fref{thetrapped:eq:modulation leq L-1} on the modulation for $1\leq i \leq L-1$, one has:
\be \label{thetrapped:highsobo:eq:radiation mod}
\begin{array}{r c l}
&\left| \int \frac{1}{\lambda} \tilde{Mod}^{(2)}_{\frac{1}{\lambda}}\mathcal{L}^{s_L-1} \xi ^{(2)} - \int \frac{1}{\lambda} (b_{L,s}+(L-\alpha)b_1b_L)\bigl{(}\chi_{B_1}(T_L+\frac{\partial S_{L+2}}{\partial b_L}) \bigr{)}_{\frac{1}{\lambda}}\mathcal{L}^{s_L-1} \xi ^{(2)} \right| \\
\leq & C(M)\frac{b_1\sqrt{\mathcal{E}_{s_L}}}{\lambda^{2(s_L-s_c)}}(\sqrt{\mathcal{E}_{s_L}}b_1^{\eta(1-\delta_0)}+b_1^{L+3}).
\end{array}
\ee
We come back to the expression \fref{thetrapped:highsobo:eq:radiation wt intermediaire} of the term involving $w_t^{(2)}$, inject the bounds we have found for each term \fref{thetrapped:highsobo:eq:radiation lineaire}, \fref{thetrapped:highsobo:eq:radiation psib mod}, \fref{thetrapped:highsobo:eq:radiation L} and \fref{thetrapped:highsobo:eq:radiation NL}, yielding:
\be \label{thetrapped:highsobo:eq:radiation wt bas}
\begin{array}{r c l}
\int w^{(2)}_t \mathcal{L}_{\frac{1}{\lambda}}^{s_L-1} \xi^{(2)} &=&  \int \frac{1}{\lambda} (b_{L,s}+(L-\alpha)b_1b_L)\left(\chi_{B_1}(T_L+\frac{\partial S_{L+2}}{\partial b_L}) \right)_{\frac{1}{\lambda}}\mathcal{L}^{s_L-1} \xi ^{(2)} \\
&&+\frac{b_1}{\lambda^{2(s_L-s_c)+1}} O\left(\mathcal{E}_{s_L}b_1^{\eta(1-\delta_0)}+\sqrt{\mathcal{E}_{s_L}}b_1^{L+(1-\delta_0)(1+2\eta)}\right) .
\end{array}
\ee
As we said, the same computation can be done using verbatim the same techniques for the first coordinate, yielding:
\be \label{thetrapped:highsobo:eq:radiation wt haut}
\begin{array}{r c l}
\int w^{(1)}_t \mathcal{L}_{\frac{1}{\lambda}}^{s_L} \xi^{(1)} &=&  \int \frac{1}{\lambda} (b_{L,s}+(L-\alpha)b_1b_L)\left(\chi_{B_1}\frac{\partial S_{L+1}}{\partial b_L} \right)_{\frac{1}{\lambda}}\mathcal{L}^{s_L} \xi ^{(1)} \\
&&+\frac{b_1}{\lambda^{2(s_L-s_c)+1}} O\left(\mathcal{E}_{s_L}b_1^{\eta(1-\delta_0)}+\sqrt{\mathcal{E}_{s_L}}b_1^{L+(1-\delta_0)(1+2\eta)}\right) .
\end{array}
\ee
Now we look back at the identity \fref{thetrapped:highsobo:eq:expression radiation}. We estimated all terms in the right hand side in \fref{thetrapped:highsobo:eq:radiation lambdaL}, \fref{thetrapped:highsobo:eq:radiation wt bas} and \fref{thetrapped:highsobo:eq:radiation wt haut}. Therefore it gives the intermediate identity:
\be \label{thetrapped:highsobo:radiation intermediaire}
\begin{array}{r c l}
& \frac{d}{dt} \left( \int w^{(2)}\mathcal{L}_{\frac{1}{\lambda}}^{s_L-1} \xi^{(2)} + \int w^{(1)}\mathcal{L}_{\frac{1}{\lambda}}^{s_L} \xi^{(1)} \right) \\
=& \frac{1}{\lambda} \int w^{(1)}\mathcal{L}_{\frac{1}{\lambda}}^{s_L}((b_{L,s}+(L-\alpha)b_1b_L)\chi_{B_1} \frac{\partial S_{L+1}}{\partial b_L})_{\frac{1}{\lambda}} \\
&+ \frac{1}{\lambda} \int w^{(2)}\mathcal{L}_{\frac{1}{\lambda}}^{s_L-1}((b_{L,s}+(L-\alpha)b_1b_L)\chi_{B_1}(T_L+\ \frac{\partial S_{L+2}}{\partial b_L}))_{\frac{1}{\lambda}} \\
&-(b_{L,s}+(L-\alpha)b_1b_L)\left[ \int \bigl{(}\chi_{B_1}\frac{\partial S_{L+1}}{\partial b_L}\bigr{)}_{\frac{1}{\lambda}} \mathcal{L}^{s_L} \xi^{(1)} + \bigl{(}\chi_{B_1}(T_L+\frac{\partial S_{L+2}}{\partial b_L})\bigr{)}_{\frac{1}{\lambda}} \mathcal{L}^{s_L-1} \xi^{(2)}\right]\\
& +O\left(\frac{b_1\sqrt{\mathcal{E}_{s_L}}}{\lambda^{2(s_L-s_c)+1}}\left(\sqrt{\mathcal{E}_{s_L}}b_1^{\eta(1-\delta_0)}+b_1^{L+(1-\delta_0)(1+2\eta)} \right) \right).
\end{array}
\ee
We will now integrate in time the remaining term involving $b_{L,s}+(L-\alpha)b_1b_L$. From the improved modulation equation \fref{thetrapped:eq:improved modulation} for $b_{L}$, one compute using \fref{thetrapped:highsobo:radiation intermediaire}:
$$
\begin{array}{l l l l}
&\frac{d}{dt} \left( \frac{1}{2}\int \xi^{(1)}\mathcal{L}_{\frac{1}{\lambda}}^{s_L} \xi^{(1)} +\frac{1}{2} \int \xi^{(2)} \mathcal{L}_{\frac{1}{\lambda}}^{s_L-1} \xi^{(2)}  \right) = \int \xi^{(1)}_{s_L}\partial_t (\xi^{(1)}_{s_L}) +\int \xi^{(2)}_{s_L-1}\partial_t (\xi^{(2)}_{s_L-1})\\
=&O(b_1^{\delta_0}\sqrt{\mathcal{E}_{s_L}}+b_1^{L+1+g'})[ \int \xi^{(1)} \mathcal{L}^{s_L}_{\frac{1}{\lambda}}(\chi_{B_1}\frac{\partial S_{L+1}}{\partial b_L})_{\frac{1}{\lambda}} +  \xi^{(2)} \mathcal{L}^{s_L-1}_{\frac{1}{\lambda}}(\chi_{B_1}(T_L+\frac{\partial S_{L+2}}{\partial b_L})_{\frac{1}{\lambda}} ]\\
&+ (b_{L,s}+(L-\alpha)b_1b_L)\bigl{(} \int \xi^{(1)} \mathcal{L}^{s_L}_{\frac{1}{\lambda}}(\chi_{B_1}\frac{\partial S_{L+1}}{\partial b_L})_{\frac{1}{\lambda}} + \xi^{(2)} \mathcal{L}^{s_L-1}_{\frac{1}{\lambda}}(\chi_{B_1}(T_L+\frac{\partial S_{L+2}}{\partial b_L})_{\frac{1}{\lambda}} \bigr{)} \\
&+ \frac{C(\xi)}{2}\int \xi^{(1)} \partial_t \Bigl{(} \mathcal{L}^{s_L}_{\frac{1}{\lambda}}(\chi_{B_1}\frac{\partial S_{L+1}}{\partial b_L})_{\frac{1}{\lambda}}  \Bigr{)}+\frac{C(\xi)}{2}\int \xi^{(2)} \partial_t \Bigl{(} \mathcal{L}^{s_L-1}_{\frac{1}{\lambda}}(\chi_{B_1}(T_L+\frac{\partial S_{L+2}}{\partial b_L}))_{\frac{1}{\lambda}}  \Bigr{)}
\end{array}
$$
Using verbatim the same techniques employed throughout this step 3 we estimate the remaining terms in this identity and end up with:
\be \label{thetrapped:highsobo:radiation intermediaire 2}
\begin{array}{l l l l}
&\frac{d}{dt} \left( \frac{1}{2}\int \xi^{(1)}\mathcal{L}_{\frac{1}{\lambda}}^{s_L} \xi^{(1)} +\frac{1}{2} \int \xi^{(2)} \mathcal{L}_{\frac{1}{\lambda}}^{s_L-1} \xi^{(2)}  \right) \\
=& (b_{L,s}+(L-\alpha)b_1b_L)\Bigl{(} \int \xi^{(1)} \mathcal{L}^{s_L}_{\frac{1}{\lambda}}(\chi_{B_1}\frac{\partial S_{L+1}}{\partial b_L})_{\frac{1}{\lambda}} + \xi^{(2)} \mathcal{L}^{s_L-1}_{\frac{1}{\lambda}}(\chi_{B_1}(T_L+\frac{\partial S_{L+2}}{\partial b_L})_{\frac{1}{\lambda}} \Bigr{)} \\
&+\frac{b_1\sqrt{\mathcal{E}_{s_L}}}{\lambda^{2(s_L-s_c)+1}}O(\sqrt{\mathcal{E}_{s_L}}b_1^{2\eta(1-\delta_0)}+b_1^{L+(1-\delta_0)(1+2\eta)+g'}).
\end{array}
\ee 
We can now end the proof: combing the intermediate estimates \fref{thetrapped:highsobo:radiation intermediaire 2} and \fref{thetrapped:highsobo:radiation intermediaire} yields the identity \fref{thetrapped:highsobo:eq:integration mod}
\end{proof}


\subsection{Control from a Morawetz type quantity:}

As will be clear when we reintegrate the bootstrap equation in the next section, the term we still do no control in the monotonicity formula for the high regularity norm is the local one. We control it here via the study of a Morawetz type quantity. This term contributes to the time evolution of a bounded quantity (compared with $\mathcal{E}_{s_L}$), so when we integrate it with respect to time it should remain small. For $A>0$ and $\delta>0$ let:
\be \label{thetrapped:eq:def phiA}
\phi_A(x):=\int_0^x \chi_A (x')x^{'(1-\delta)}dx' 
\ee
be the primitive of the function $\chi_A (x)x^{1-\delta}$ and we still denote by $\phi_A$ its radial extension. The quantity we will now study is (we recall that the adapted derivative $f_k$ of a function is defined in \fref{linearized:eq:def derivees adaptees}):
\begin{equation}\label{thetrapped:eq:definition M}
\mathcal{M}= -\int [\nabla \phi_A .\nabla \varepsilon^{(1)}_{s_L-1}+(1-\delta) \frac{\Delta \phi_A}{2}\varepsilon^{(1)}_{s_L-1}] \varepsilon^{(2)}_{s_L-1} .
\end{equation}
From coercivity (Corollary \ref{annexe:cor:coercivite mathcalEsL}), it is controlled by the high Sobolev norm:
\begin{equation}\label{thetrapped:eq:morawetz controle par high sobo}
|\mathcal{M} |\leq  C(A,M) \mathcal{E}_{s_L}
\end{equation}
We start by a lemma describing how this quantity controls the local norm $\mathcal{E}_{s_L,\text{loc}}$ thanks to the fact that $\mathcal{L}>0$ on $\dot{H}^1$.

\begin{lemma}\emph{(control from the Morawetz identity at the linear level)} \label{thetrapped:lem:control du morawetz}
For $A$ big enough, $\delta$ small enough, there holds the following control:
\begin{equation} \label{thetrapped:morawetz:eq:controle lineaire}
\begin{array}{r c l}
&  \int [\nabla \phi_A .\nabla \varepsilon^{(1)}_{s_L-1}+\frac{(1-\delta)\Delta \phi_A}{2}\varepsilon^{(1)}_{s_L-1}] \mathcal{L}\varepsilon^{(1)}_{s_L-1} \\
&-\int [\nabla \phi_A .\nabla \varepsilon^{(2)}_{s_L-1}+\frac{(1-\delta)\Delta \phi_A}{2}\varepsilon^{(2)}_{s_L-1}] \varepsilon^{(2)}_{s_L-1}\\
\geq & C \frac{\delta}{N^{\delta}} \mathcal{E}_{s_L,loc} -\frac{C(M)}{A^{\delta}}\mathcal{E}_{s_L} ,
\end{array}
\end{equation}
for some constant $C>0$ that does not depend on the other constants.

\end{lemma}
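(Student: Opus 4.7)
The plan is to run a classical Morawetz/Pohozaev bilinear computation for the positive self-adjoint operator $\mathcal{L}=-\Delta-V$, $V=pQ^{p-1}$, at the level of the $(s_L-1)$-th adapted derivative. Setting $u:=\varepsilon^{(1)}_{s_L-1}$ and $v:=\varepsilon^{(2)}_{s_L-1}$, and splitting $\mathcal{L}=-\Delta-V$, I would first observe that one integration by parts immediately gives
\[
-\int\!\bigl[\nabla\phi_A\cdot\nabla v+\tfrac{1-\delta}{2}\Delta\phi_A\,v\bigr]v=\tfrac{\delta}{2}\int\Delta\phi_A\,v^2.
\]
For the first line, the $-\Delta$ contribution yields the standard Morawetz identity
\[
\int\!\bigl[\nabla\phi_A\cdot\nabla u+\tfrac{1-\delta}{2}\Delta\phi_A\,u\bigr](-\Delta u)=\int D^2\phi_A(\nabla u,\nabla u)-\tfrac{\delta}{2}\int\Delta\phi_A|\nabla u|^2-\tfrac{1-\delta}{4}\int\Delta^2\phi_A\,u^2,
\]
while the $-V$ contribution produces $\tfrac{1}{2}\int(\nabla V\cdot\nabla\phi_A)u^2+\tfrac{\delta}{2}\int V\Delta\phi_A\,u^2$ after one integration by parts.

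Next I would plug in the explicit weight. Since $\phi_A'(r)=\chi_A(r)\,r^{1-\delta}$, the radial identity $D^2\phi_A(\nabla u,\nabla u)=\phi_A''(r)(\partial_r u)^2$ together with $\phi_A''=(1-\delta)\chi_A r^{-\delta}+O(\mathbf{1}_{A\le r\le 2A})$, $\Delta\phi_A=(d-\delta)\chi_A r^{-\delta}+O(\mathbf{1}_{A\le r\le 2A})$, and $\Delta^2\phi_A=-\delta(d-\delta)(d-2-\delta)\chi_A r^{-\delta-2}+O(\mathbf{1}_{A\le r\le 2A})$, isolate a positive bulk of the form
\[
c_0(1-\delta)\int\chi_A r^{-\delta}(\partial_r u)^2+c_1\delta\int\chi_A r^{-\delta-2}u^2+c_2\delta\int\chi_A r^{-\delta}v^2,
\]
with structural constants $c_i(d)>0$, up to two groups of corrections. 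The first are cutoff errors supported in $A\le r\le 2A$ with prefactor $\lesssim A^{-\delta}$, which are absorbed into $C(M)A^{-\delta}\mathcal{E}_{s_L}$ using the coercivity of $\mathcal{E}_{s_L}$ from Corollary \ref{annexe:cor:coercivite mathcalEsL}. The second is a leading negative contribution from the potential, $V\sim c_\infty^{p-1}r^{-2}$ (cf.~Lemma \ref{lem:asymptoticdupotentiel}), of the form $-c_\infty^{p-1}\int\chi_A r^{-\delta-2}u^2$, plus $O(\delta)$ corrections.

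The crux of the proof is then to combine the $(\partial_r u)^2$ bulk with the $u^2/r^2$ potential loss through the weighted Hardy inequality $\int\chi_A r^{-\delta-2}u^2\le \tfrac{4}{(d-2-\delta)^2}\int\chi_A r^{-\delta}(\partial_r u)^2+(\text{cutoff error})$. Energy supercriticality $s_c>1$ gives the strict bound $c_\infty^{p-1}=\tfrac{2}{p-1}(d-2-\tfrac{2}{p-1})<\tfrac{(d-2)^2}{4}$, so for $\delta$ sufficiently small the net coefficient of $\int\chi_A r^{-\delta}(\partial_r u)^2$ remains bounded below by a fixed positive constant. This balance is the step I expect to be the main obstacle: it is a sharp interplay between the Hardy weight and the $r^{-2}$ tail of $V$, and it closes only because we are strictly above the energy-critical exponent—any marginal saturation at $p=p_c$ would destroy the coercivity and hence the whole scheme.

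The final step is to convert the coercive bulk into the local norm $\mathcal{E}_{s_L,\text{loc}}$. For the $v$-block, $\int\chi_A r^{-\delta}v^2\ge N^{-\delta}\int_{r\le N}v^2$ immediately captures the $\varepsilon^{(2)}_{s_L-1}$ part with prefactor $\delta N^{-\delta}$. For the $u$-block, the factorisation $\mathcal{L}=A^*A$ from Lemma \ref{lem:factorisation} gives $\varepsilon^{(1)}_{s_L}=A\varepsilon^{(1)}_{s_L-1}=-\partial_r u+Wu$, hence $|\varepsilon^{(1)}_{s_L}|^2\lesssim(\partial_r u)^2+r^{-2}u^2$ on $r\le N$, and both pieces are dominated by the bulk after a further $N^{-\delta}$ loss. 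The weakest of the two coercivity constants dictates the final prefactor $\delta N^{-\delta}$, producing the claimed lower bound.
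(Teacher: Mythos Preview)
Your overall architecture is correct and matches the paper's: Morawetz/Pohozaev identity for $-\Delta$, integration by parts for the potential, the weighted Hardy inequality to balance the gradient bulk against the potential loss, and the final conversion to $\mathcal{E}_{s_L,\text{loc}}$ via $\varepsilon^{(1)}_{s_L}=A\varepsilon^{(1)}_{s_L-1}$. However, the crux step contains a factor-of-$p$ error that changes the conclusion. You write $V\sim c_\infty^{p-1}r^{-2}$, but $V=pQ^{p-1}$, so the correct asymptotic is $V\sim p\,c_\infty^{p-1}r^{-2}$ and the leading potential loss is $-p\,c_\infty^{p-1}\int\chi_A r^{-\delta-2}u^2$. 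With this factor the inequality you must verify becomes $p\,c_\infty^{p-1}<\tfrac{(d-2)^2}{4}$, which is \emph{exactly} the Joseph--Lundgren condition $\triangle=(d-2)^2-4pc_\infty^{p-1}>0$, i.e.\ $p>p_{JL}$---not mere energy supercriticality $s_c>1$. (Your displayed inequality $c_\infty^{p-1}<\tfrac{(d-2)^2}{4}$ is equivalent to $p\neq 2^*-1$ and is not the relevant one.) Since $p>p_{JL}$ is assumed throughout, the argument does close once corrected, but the justification you give is wrong.

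There is a second, related gap: you describe the deviation of the potential term from its asymptotic value as ``$O(\delta)$ corrections'', but $y^2V(y)$ ranges over $[0,\,pc_\infty^{p-1})$ and this deviation is $O(1)$ on compact sets, not $O(\delta)$. The paper closes this via a \emph{global} pointwise bound rather than asymptotics: from $\Lambda^{(1)}Q>0$ (Lemma~\ref{lem:Q}(iv)) one gets that $y^{2/(p-1)}Q$ is increasing to $c_\infty$, hence $y^2pQ^{p-1}(y)\le pc_\infty^{p-1}<\tfrac{(d-2)^2}{4}$ for all $y>0$; the same positivity is used once more to drop a nonnegative term and pass from $\tfrac12 y\partial_yV$ to the cleaner pointwise lower bound $-pQ^{p-1}$. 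With these two pointwise facts the Hardy balance holds uniformly in $y$, not just at infinity.
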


\begin{proof}[Proof of Lemma \ref{thetrapped:lem:control du morawetz}]
We calculate each term in the left hand side of \fref{thetrapped:morawetz:eq:controle lineaire}. For the second one we have:
$$
-\int [\nabla \phi_A .\nabla \varepsilon^{(2)}_{s_L-1}+\frac{(1-\delta)\Delta \phi_A}{2}\varepsilon^{(2)}_{s_L-1}] \varepsilon^{(2)}_{s_L-1}=\delta \int \frac{\Delta \phi_A}{2}|\varepsilon^{(2)}_{s_L-1}|^2.
$$
As $\Delta \phi_A= \frac{(d-\delta)\chi_A}{y^{\delta}}+\frac{\partial_y \chi \left( \frac{y}{A}\right)}{Ay^{\delta-1}}$ we get a control over the second coordinate:
\be \label{thetrapped:morawetz:eq:controle lineaire 2}
-\int [\nabla \phi_A .\nabla \varepsilon^{(2)}_{s_L-1}+\frac{(1-\delta)\Delta \phi_A}{2}\varepsilon^{(2)}_{s_L-1}] \varepsilon^{(2)}_{s_L-1}\geq \delta \int \frac{\chi_A |\varepsilon^{(2)}_{s_L-1}|^2}{y^{\delta}}+O\left(\frac{\mathcal{E}_{s_L}}{A^{\delta}} \right)
\ee
We now turn to the first term in \fref{thetrapped:morawetz:eq:controle lineaire}. We start by calculating:
\be \label{thetrapped:morawetz:eq:terme 1 intermediaire}
\begin{array}{r c l}
&-\int [\nabla \phi_A .\nabla \varepsilon^{(1)}_{s_L-1}+\frac{(1-\delta)\Delta \phi_A}{2}\varepsilon^{(1)}_{s_L-1}] (-\mathcal{L}\varepsilon^{(1)}_{s_L-1})\\
=&\int (\partial_y^2 \phi_A-\frac{\delta \Delta \phi_A}{2} |\nabla \varepsilon^{(1)}_{s_L-1}|^2-\frac{1-\delta}{4}\int \Delta^2 \phi_A |\varepsilon^{(1)}_{s_L-1}|^2+\int \frac{\nabla V.\nabla \phi_A+\delta \Delta \phi_A V}{2} |\varepsilon^{(1)}_{s_L-1}|^2 .
\end{array}
\ee
We are now going to show that locally, the first term of the right hand side is bigger than the two others and control the first coordinate. We have $\partial_y^2 (\psi_A)=\frac{(1-\delta)\chi_A}{y^{-\delta}}+\frac{y^{1-\delta}}{A}\partial_y \chi (\frac{y}{A})$ which leads to:
\be \label{thetrapped:morawetz:terme gradient inte}
\int (\partial_y^2 \phi_A-\frac{\delta \Delta \phi_A}{2} |\nabla \varepsilon^{(1)}_{s_L-1}|^2=(1-O(\delta))\int \chi_A \frac{|\nabla \varepsilon^{(1)}_{s_L-1}|^2}{y^{\delta}} + O\left( \frac{1}{A^{\delta}}\mathcal{E}_{s_L} \right).
\ee
We claim the following weighted Hardy inequality for radial functions:
\be \label{thetrapped:morawetz:eq:hardy}
\int \frac{\chi_A}{y^{-\delta}}|\nabla u|^2 \geq \frac{(d-2-\delta)^2}{4}\int \chi_A\frac{u^2}{y^{2+\delta}}-C\int \frac{|y\partial \chi (\frac{y}{A})|}{y^{2+\delta}}u^2 .
\ee
We prove this general inequality now. For smooth radial functions we compute, performing integration by parts:
\be \label{thetrapped:morawetz:eq:preuve hardy 1}
\int \frac{\chi_A}{y^{1+\delta}}u\partial_y u = -\frac{d-2-\delta}{2} \int \frac{u^2}{y^{2+\delta}}\chi_A -\frac{1}{2}\int \frac{u^2}{y^{2+\delta}}\frac{y\partial_y\chi (\frac{y}{A})}{A} .
\ee
We can control the left hand side by using Cauchy-Schwarz and Young's inequality:
\be \label{thetrapped:morawetz:eq:preuve hardy 2}
\left| \int \frac{\chi_A}{y^{1+\delta}}u\partial_y u \right|\leq \frac{\epsilon}{2} \int \frac{\chi_A}{y^{2+\delta}}u^2 +\frac{1}{2\epsilon}\int \frac{\chi_A}{y^{\delta}}|\nabla u|^2 .
\ee
Combining the two equations \fref{thetrapped:morawetz:eq:preuve hardy 1} and \fref{thetrapped:morawetz:eq:preuve hardy 2} with the choice $\epsilon=\frac{d-2-\delta}{2}$ gives the analysis bound \fref{thetrapped:morawetz:eq:hardy} we claimed. We now come back to the identity \fref{thetrapped:morawetz:terme gradient inte}, which gives the following control thanks to the Hardy inequality \fref{thetrapped:morawetz:eq:hardy} we just proved:
\be \label{thetrapped:morawetz:terme gradient}
\begin{array}{r c l}
\int (\partial_y^2 \phi_A -\frac{\delta \Delta \phi_A}{2} )|\nabla \varepsilon^{(1)}_{s_L-1}|^2&\geq& \delta \int \chi_A \frac{|\nabla \varepsilon^{(1)}_{s_L-1}|^2}{y^{\delta}}+(1-O(\delta))^2\frac{(d-2-\delta)^2}{4} \int \chi_A \frac{|\varepsilon^{(1)}_{s_L-1}|^2}{y^{2+\delta}} \\
&&+O\left( \frac{1}{A^{\delta}}\mathcal{E}_{s_L} \right).
\end{array}
\ee
With this control coming from the "gradient" part, the equation \fref{thetrapped:morawetz:eq:terme 1 intermediaire} can be rewriten as:
\be \label{thetrapped:morawetz:eq:terme 1 intermediaire 2}
\begin{array}{r c l}
&-\int [\nabla \phi_A .\nabla \varepsilon^{(1)}_{s_L-1}+\frac{\Delta \phi_A}{2}\varepsilon^{(1)}_{s_L-1}] (-\mathcal{L}\varepsilon^{(1)}_{s_L-1})\\
\geq&   \delta \int \chi_A \frac{|\nabla \varepsilon^{(1)}_{s_L-1}|^2}{y^{\delta}}+(1-O(\delta))\frac{(d-2-\delta)^2}{4} \int \chi_A \frac{|\varepsilon^{(1)}_{s_L-1}|^2}{y^{2+\delta}} +O\left( \frac{1}{A^{\delta}}\mathcal{E}_{s_L} \right)     \\
&-\frac{1-\delta}{4}\int \Delta^2 \phi_A |\varepsilon^{(1)}_{s_L-1}|^2+\frac{1}{2} \int (\nabla V.\nabla \phi_A+\delta\Delta \phi_A V) |\varepsilon^{(1)}_{s_L-1}|^2.
\end{array}
\ee
We now prove that the last two terms are controled by the two first ones. We calculate:
\be  \label{thetrapped:morawetz:eq:expression Delta phi}
-\Delta^2 (\phi_A)= \frac{\delta(d-\delta)(d-2-\delta)}{2}\frac{\chi_A}{y^{2+\delta}} +O\left( \frac{1}{A^{\delta}}1_{A\leq y\leq 2A} \right) .
\ee
For the term involving the potential we have that because $\Lambda^{(1)}Q,Q>0$:
\begin{equation} \label{thetrapped:morawetz:eq:expression potentiel}
\begin{array}{r c l}
\frac{1}{2}y\partial_y V&=&\frac{y}{2}p(p-1)Q^{p-2}\partial_y Q=\frac{p}{2}(p-1)Q^{p-2}\Lambda^{(1)}Q -pQ^{p-1} \\
&\geq & -pQ^{p-1} \\
&\geq & -\frac{\frac{(d-2)^2}{4}-\delta_p}{y^2} ,
\end{array}
\end{equation}
for some $\delta_p>0$, because the potential is strictly smaller than the Hardy potential from Lemma \ref{lem:Q}. The expressions \fref{thetrapped:morawetz:eq:expression Delta phi} and \fref{thetrapped:morawetz:eq:expression potentiel} imply that \fref{thetrapped:morawetz:eq:terme 1 intermediaire 2} can be rewriten as:
$$
\begin{array}{r c l}
&\int \partial_y^2 \phi_A |\nabla \varepsilon^{(1)}_{s_L-1}|^2-\frac{1-\delta}{4}\int \Delta^2 \phi_A \varepsilon^{(1)2}_{s_L-1}+\frac{1}{2} \int (\nabla V.\nabla \phi+\delta \Delta \phi_A V) \varepsilon^{(1)2}_{s_L-1} \\
\geq& \delta \int \frac{\chi_A|\nabla \varepsilon^{(1)}_{s_L-1}|}{y^{\delta}}+ (\delta_p - O(\delta))\int \frac{|\varepsilon^{(1)}_{s_L-1}|^2}{y^{2+\delta}}+O\left( \frac{\mathcal{E}_{s_L}}{A^{\delta}}\right) .
\end{array}
$$
Hence the identity \fref{thetrapped:morawetz:eq:terme 1 intermediaire} becomes:
\be \label{thetrapped:morawetz:eq:controle lineaire 1}
\begin{array}{r c l}
&-\int [\nabla \phi_A .\nabla \varepsilon^{(1)}_{s_L-1}+\frac{\Delta \phi_A}{2}\varepsilon^{(1)}_{s_L-1}] (-\mathcal{L}\varepsilon^{(1)}_{s_L-1})\\
\geq &\delta \int \frac{\chi_A|\nabla \varepsilon^{(1)}_{s_L-1}|}{y^{\delta}}+ (\delta_p - O(\delta))\int \frac{\chi_A |\varepsilon^{(1)}_{s_L-1}|^2}{y^{2+\delta}}+O\left( \frac{\mathcal{E}_{s_L}}{A^{\delta}}\right)  .
\end{array}
\ee
We now come back to the left hand side of \fref{thetrapped:morawetz:eq:controle lineaire}. We have estimated the two terms in \fref{thetrapped:morawetz:eq:controle lineaire 2} and \fref{thetrapped:morawetz:eq:controle lineaire 1}. For $\delta \ll \delta_p$ this gives the identity \fref{thetrapped:morawetz:eq:controle lineaire} we had to prove.
\end{proof}

We can now state the control in the full nonlinear wave equation:
\begin{proposition}\label{thetrapped:pr:morawetz}\emph{(Control of the local term by the Morawetz identity)}
We suppose all the parameters of Proposition \ref{prop:bootstrap} are fixed in their range, except $s_0$. For $s_0$ and $A$ large enough, there holds for $s_0\leq s <s^*$:
\begin{equation}
\frac{d}{ds}\mathcal{M}\geq \frac{\delta}{2N^{\delta}} \mathcal{E}_{s_L,\text{loc}}-\frac{C(M)}{A^{\delta}}\mathcal{E}_{s_L}-C(A)\sqrt{\mathcal{E}_{s_L}}b_1^{L+3}  ,
\end{equation}
($\mathcal{E}_{s_L}$ and $\mathcal{E}_{s_L,\text{loc}}$ were defined in \fref{thetrapped:eq:def mathcalEsL} and \fref{thetrapped:eq:def mathcalEsLloc}).
\end{proposition}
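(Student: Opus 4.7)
The strategy is to differentiate $\mathcal{M}$ in renormalized time and use the evolution equation \fref{eq:evolution epsilon}. Writing the Morawetz multiplier as $M_{\phi_A} u := \nabla \phi_A \cdot \nabla u + \frac{1-\delta}{2}\Delta \phi_A\, u$, one has
$$\frac{d\mathcal{M}}{ds} = -\int M_{\phi_A}(\partial_s \varepsilon^{(1)}_{s_L-1})\, \varepsilon^{(2)}_{s_L-1} - \int M_{\phi_A}(\varepsilon^{(1)}_{s_L-1})\, \partial_s \varepsilon^{(2)}_{s_L-1}.$$
Since the adapted derivatives $\mathcal{A}^{s_L-1}$ do not depend on $s$, they commute with $\partial_s$. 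Substituting \fref{eq:evolution epsilon} and using the wave structure (i.e. $(-\bos{H}\bos{\varepsilon})^{(1)} = \varepsilon^{(2)}$, $(-\bos{H}\bos{\varepsilon})^{(2)} = -\mathcal{L}\varepsilon^{(1)}$), the leading linear contribution is precisely
$$-\int M_{\phi_A}(\varepsilon^{(2)}_{s_L-1})\,\varepsilon^{(2)}_{s_L-1} + \int M_{\phi_A}(\varepsilon^{(1)}_{s_L-1})\,\mathcal{L}\varepsilon^{(1)}_{s_L-1},$$
which, by Lemma \ref{thetrapped:lem:control du morawetz}, is bounded below by $C\frac{\delta}{N^{\delta}}\mathcal{E}_{s_L,\text{loc}} - \frac{C(M)}{A^{\delta}}\mathcal{E}_{s_L}$ for $A$ large and $\delta$ small enough. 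All other contributions need to be shown to be absorbable into the right-hand side.

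The contribution of the error $\tilde{\bos{\psi}}_b$ and the modulation $\tilde{\bos{Mod}}(t)$ are the sources of the $C(A)\sqrt{\mathcal{E}_{s_L}} b_1^{L+3}$ term. Since $\phi_A$ is supported in $|y|\le 2A$, both integrals see these quantities only in a compact set. For $\tilde{\bos{\psi}}_b$, the local improved bound from Proposition \ref{pr:profilapprochecoupe} (ii) gives $\|\nabla^j \tilde{\psi}_b\|_{L^2(|y|\leq 2A)}\lesssim A^{C(L,j)} b_1^{L+3}$, which, combined with Cauchy-Schwarz and the coercivity of $\mathcal{E}_{s_L}$ from Corollary \ref{annexe:cor:coercivite mathcalEsL}, yields a contribution of size $C(A)\sqrt{\mathcal{E}_{s_L}} b_1^{L+3}$. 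The modulation piece is handled analogously using the bounds from Lemma \ref{lem:modulation}, observing that $|b_{i,s}+(i-\alpha)b_1 b_i - b_{i+1}| \lesssim \sqrt{\mathcal{E}_{s_L}} + b_1^{L+3}$ and that the profiles $\bos{T}_i$, $\partial_{b_i}\bos{S}_j$ are smooth on the compact set $|y|\leq 2A$.

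The small linear term $\bos{L}(\bos{\varepsilon})=(pQ^{p-1}- p\tilde{Q}_b^{p-1})\varepsilon^{(1)}$ and the scaling term $\frac{\lambda_s}{\lambda}\bos{\Lambda}\bos{\varepsilon}$ and nonlinear term $\bos{NL}(\bos{\varepsilon})$ all carry either a smallness factor $b_1$ (for the first two, via the modulation bound $|\lambda_s/\lambda|\lesssim b_1$ and the degeneracy $|Q^{p-1}-\tilde{Q}_b^{p-1}|\lesssim b_1 y^{-1-\alpha+C\eta}$ from Lemma \ref{lem:asymptoticdupotentiel}) or a quadratic smallness through $\bos{\varepsilon}$ itself (for the nonlinear term, handled via the weighted Hardy and $L^\infty$ interpolation of Lemmas \ref{annexe:lem: hardy frac a poids}, \ref{annexe:lem:interpolation varepsilon}, as in the proof of Proposition \ref{trappedregime:pr:high sobo}). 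Each of these produces an upper bound of the form $C(A,M) b_1^c \mathcal{E}_{s_L}$ for some $c>0$, which is absorbed into $\frac{C(M)}{A^{\delta}}\mathcal{E}_{s_L}$ for $s_0$ large (making $b_1$ small).

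The main technical obstacle will be the scaling term $\frac{\lambda_s}{\lambda}\bos{\Lambda}\bos{\varepsilon}$: the generator $\bos{\Lambda}$ brings in a factor of $y$ that interacts with the weight $|\nabla\phi_A| \sim y^{1-\delta}$, so naive integration by parts will not produce a usable decay. This will be resolved by commuting $\bos{\Lambda}$ through $\mathcal{A}^{s_L-1}$ (producing bounded error terms thanks to $(\Lambda, A)$ and $(\Lambda, A^*)$ commutator identities that are linear in the potential $V$, $W$) and then exploiting the coercivity of $\mathcal{E}_{s_L}$ together with $|\lambda_s/\lambda|\lesssim b_1$ to obtain a $b_1 \mathcal{E}_{s_L}$ contribution, again absorbed into $\frac{C(M)}{A^\delta}\mathcal{E}_{s_L}$ up to taking $s_0$ large enough. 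Collecting all the estimates yields the claim.
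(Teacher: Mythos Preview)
Your overall strategy is the same as the paper's, but there is a genuine gap in the treatment of the modulation term. You write that the modulation piece contributes to $C(A)\sqrt{\mathcal{E}_{s_L}}b_1^{L+3}$ by using $|b_{i,s}+(i-\alpha)b_1 b_i - b_{i+1}| \lesssim \sqrt{\mathcal{E}_{s_L}} + b_1^{L+3}$ together with smoothness of the profiles on $|y|\leq 2A$. But this does not close: the $\sqrt{\mathcal{E}_{s_L}}$ part of the bound for $i=L$ (recall from Lemma~\ref{lem:modulation} that the $b_L$ equation only has the rough estimate $\lesssim \sqrt{\mathcal{E}_{s_L}}+b_1^{L+3}$, with no extra $b_1$ factor) multiplied by $C(A)\sqrt{\mathcal{E}_{s_L}}$ from Cauchy--Schwarz gives a contribution $C(A)\mathcal{E}_{s_L}$. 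Since $C(A)$ grows with $A$, this cannot be absorbed into $\frac{C(M)}{A^{\delta}}\mathcal{E}_{s_L}$, and it has no smallness in $b_1$ either.

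The missing ingredient is an exact algebraic cancellation: for every $1\leq i\leq L$, the profile $T_i$ satisfies $(T_i)_{s_L-1}=0$. Indeed, $\bos{H}^i\bos{T}_i$ is a nonzero multiple of $\bos{\Lambda}\bos{Q}$, and then one more application of $A$ annihilates it since $A\Lambda^{(1)}Q=0$; because $s_L-1=L+k_0\geq i$ for all $i\leq L$ (recall $k_0>1$), the $(s_L-1)$-th adapted derivative kills every $T_i$. Hence inside the compactly supported Morawetz integrand the $T_i$ contributions to $\tilde{\bos{Mod}}$ vanish identically, and only the $\partial_{b_i}\bos{S}_j$ terms survive. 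Those carry at least one factor of $b_i\lesssim b_1$, which combined with the modulation bounds produces $C(A,M)b_1\mathcal{E}_{s_L}+C(A,M)\sqrt{\mathcal{E}_{s_L}}b_1^{L+3}$, and the first piece is now absorbable for $s_0$ large. This is precisely how the paper proceeds.

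A minor remark: your description of the scaling term $\frac{\lambda_s}{\lambda}\bos{\Lambda}\bos{\varepsilon}$ as the ``main technical obstacle'' requiring commutator identities is overcomplicated. Because $\phi_A$ has compact support in $|y|\leq 2A$, the factor $y$ in $\Lambda$ is bounded by $2A$, and a direct Cauchy--Schwarz plus coercivity gives $|\frac{\lambda_s}{\lambda}|\cdot C(A)\mathcal{E}_{s_L}\lesssim b_1 C(A)\mathcal{E}_{s_L}$, which is immediately negligible for $s_0$ large.
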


\begin{remark}
As:
$$
\frac{d}{dt}\frac{\mathcal{M}}{\lambda^{2(s_L-s_c)}}=2(s_L-s_c)\frac{b_1\mathcal{M}}{\lambda^{2(s_L-s_c)+1}}+\frac{1}{\lambda^{2(s_L-s_c)+1}}\frac{d}{ds}\mathcal{M} ,
$$
from the control \ref{thetrapped:eq:morawetz controle par high sobo}, the result of the lemma implies (remember $b_1\leq \frac{1}{s_0}$ in the bootstrap regime, and that $s_0$ is chosen in last so than $b_1$ can be arbitrarily small compared to the other constants) :
$$
\frac{d}{dt}\left(\frac{\mathcal{M}}{\lambda^{2(s_L-s_c)}}\right) \geq \frac{1}{\lambda^{2(s_L-s_c)+1}}\left(\frac{\delta}{2N^{\delta}} \mathcal{E}_{s_L,\text{loc}}-\frac{C(M)}{A^{\delta}}\mathcal{E}_{s_L}-C(A,M)\sqrt{\mathcal{E}_{s_L}}b_1^{L+3}\right) .
$$
This is because the impact of the scale changing in the estimate we want to prove is of lower order, so we can work both at level $\bos{\varepsilon}$ or $\bos{w}$.
\end{remark}

\begin{proof}[Proof of Proposition \ref{thetrapped:pr:morawetz}]
The control comes from the previous lemma, and the new terms in the full (NLW) will be showed to be negligible. The time evolution of $\mathcal{M}$ is ($f_k$ being the $k$-th adapted derivative of $f$ defined in \fref{linearized:eq:def derivees adaptees}):
\be \label{thetrapped:morawetz:eq:expression dtM}
\begin{array}{r c l}
&\frac{d}{ds}\mathcal{M}= -\int \nabla \phi_A .\nabla [ (-\frac{\lambda_s}{\lambda}\Lambda^{(1)}\varepsilon^{(1)}+\varepsilon^{(2)}-\tilde{\psi}_b^{(1)}-\tilde{Mod}(t)^{(1)})]_{s_L-1}\varepsilon^{(2)}_{s_L-1} \\
&- \int \frac{(1-\delta)\Delta \phi_A}{2}(-\frac{\lambda_s}{\lambda}\Lambda^{(1)}\varepsilon^{(1)}+\varepsilon^{(2)}-\tilde{\psi}_b^{(1)}-\tilde{Mod}(t)^{(1)})_{s_L-1} \varepsilon^{(2)}_{s_L-1} \\
&+\int \nabla \phi_A .\nabla \varepsilon^{(1)}_{s_L-1}[\mathcal{L}\varepsilon^{(1)}+\frac{\lambda_s}{\lambda}\Lambda^{(2)}\varepsilon^{(2)}+\tilde{\psi}_b^{(2)}+\tilde{Mod}(t)^{(2)}-L(\varepsilon)-NL(\varepsilon)]_{s_L-1} \\
&+ \int \frac{(1-\delta)\Delta \phi_A}{2}\varepsilon^{(1)}_{s_L-1} [\mathcal{L}\varepsilon^{(1)}+\frac{\lambda_s}{\lambda}\Lambda^{(2)}\varepsilon^{(2)}+\tilde{\psi}_b^{(2)}+\tilde{Mod}(t)^{(2)}-L(\varepsilon)+NL(\varepsilon)]_{s_L-1} .
\end{array}
\ee
And we aim at computing the effect of everything in the right hand side. The linear part produces exactly the control we want thanks to the previous Lemma \ref{thetrapped:lem:control du morawetz}:
\be \label{thetrapped:morawetz:eq:estimation lineaire}
\begin{array}{r c l}
& \int [\nabla \phi_A .\nabla \varepsilon^{(1)}_{s_L-1}+\frac{(1-\delta)\Delta \phi_A}{2}\varepsilon^{(1)}_{s_L-1}] \mathcal{L}\varepsilon^{(1)}_{s_L-1} \\
&-\int [\nabla \phi_A .\nabla \varepsilon^{(2)}_{s_L-1}+\frac{(1-\delta)\Delta \phi_A}{2}\varepsilon^{(2)}_{s_L-1}] \varepsilon^{(2)}_{s_L-1}\\
\geq &\frac{\delta}{2N^{\delta}} \mathcal{E}_{s_L,\text{loc}} -\frac{C}{A^{\delta}}\mathcal{E}_{s_L} .
\end{array}
\ee
We are now going to show that all the other terms are of smaller order. As we work on a compact support, from the coercivity \fref{annexe:coercivitemathcalE:eq:coercivitemathcalE} and the fact that $\frac{\lambda_s}{\lambda}\sim -b_1$ from \fref{thetrapped:eq:modulation leq L-1}:
\be \label{thetrapped:morawetz:eq:estimation lambdavarepsilon}
\begin{array}{r c l}
&\left|\int [\nabla \phi_A .\nabla(\frac{\lambda_s}{\lambda}\Lambda^{(1)}\varepsilon^{(1)}_{s_L-1})+\frac{(1-\delta)\Delta \phi_A}{2}\frac{\lambda_s}{\lambda}\Lambda^{(1)}\varepsilon^{(1)}_{s_L-1}]\varepsilon^{(2)}_{s_L-1}\right| \\
+&\left|\int [\nabla \phi_A. \nabla(\varepsilon^{(1)}_{s_L-1})+\frac{(1-\delta)\Delta \phi_A}{2}\varepsilon^{(1)}_{s_L-1}]\frac{\lambda_s}{\lambda}\Lambda^{(2)}\varepsilon^{(2)}_{s_L-1}\right| \\
\leq & b_1 C(A) \mathcal{E}_{s_L} ,
\end{array}
\ee
so with $b_1$ small enough it is negligeable. Still from the compactness of the support of $\phi_A$, for $b_1$ small enough we do not see the bad tail of $\tilde{\bos{\psi}}_b$ (remember that for $y\leq B_1$, $\tilde{\bos{\psi}}_b=\bos{\psi}_b$). Hence:
\be \label{thetrapped:morawetz:eq:estimation psib}
\begin{array}{r c l}
&\left|\int [\nabla \phi_A .\nabla(\tilde{\psi}^{(1)}_{b,s_L-1})+\frac{(1-\delta)\Delta \phi_A}{2}\tilde{\psi}^{(1)}_{b,s_L-1}]\varepsilon^{(2)}_{s_L-1}\right| \\
&+\left|\int [\nabla \phi_A .\nabla(\varepsilon^{(1)}_{s_L-1})+\frac{(1-\delta)\Delta \phi_A}{2}\varepsilon^{(1)}_{s_L-1}]\tilde{\psi}^{(2)}_{b,s_L-1}\right|\\
\leq & C(A) \sqrt{\mathcal{E}_{s_L}} (\parallel \tilde{\psi}_{b,s_L}^{(1)}\parallel_{L^2(\leq A)} +\parallel \tilde{\psi}_{b,s_L-1}^{(2)}\parallel_{L^2(\leq A}) \leq C(A)\sqrt{\mathcal{E}}_{\sigma}b_1^{L+3} .
\end{array}
\ee
The small linear term is also estimated easily. Indeed, we have that:
$$
L(\varepsilon)=p(Q^{p-1}-\tilde{Q}_b^{p-1})\varepsilon^{(1)}= b_1\varepsilon^{(1)}O(1)
$$
for $y\leq A$ for $b_1$ small enough. This gives using Cauchy-Schwarz:
\be \label{thetrapped:morawetz:eq:estimation L}
\left|\int [\nabla \phi_A .\nabla(\varepsilon^{(1)}_{s_L-1})+\frac{(1-\delta)\Delta \phi_A}{2}\varepsilon^{(1)}_{s_L-1}]L(\varepsilon)_{s_L-1}\right|\leq C(A)b_1\mathcal{E}_{s_L} .
\ee
For the nonlinear term we use what we already showed during the proof of the monotonicity formula for the high Sobolev norm, see \fref{thetrapped:highsobo:eq:estimation NL}:
\be \label{thetrapped:morawetz:eq:estimation NL}
\begin{array}{r c l}
\left|\int [\nabla \phi_A .\nabla(\varepsilon^{(1)}_{s_L-1})+\frac{(1-\delta)\Delta \phi_A}{2}\varepsilon^{(1)}_{s_L-1}]NL(\varepsilon)_{s_L-1}\right|&\leq& C(A)\sqrt{\mathcal{E}_{s_L}}\parallel NL(\varepsilon)_{s_L-1}\parallel_{L^2} \\
&\leq & C(A)b_1\mathcal{E}_{s_L} ,
\end{array}
\ee
which is negligeable for $b_1$ small enough as we said before. Finally it just remains to control the modulation terms. We just compute for the second coordinate, a similar estimate holding for the first one. Let $i$ be odd, $1\leq i \leq L$. As $A\ll B_1$ for $s_0$ large enough, we do not see the the cut $\chi_{B_1}$ in the integral: $\chi_{B_1}\equiv 1$ for $y\leq 2A$. Because $\bos{H}^{i}\bos{T}_i=(\bos{T}_i)_{i-1}(-1)^{\frac{i+1}{2}}=(-1)^i\bos{\Lambda} \bos{Q}$ this term cancels in the integral because $(\bos{T}_i)_{s_L-1}=((\bos{T}_i)_{i-1})_{s_L-i}=0$ as $s_L-i=L+k_0-i\geq 1$.
$$
\int [\nabla \phi_A \nabla(\varepsilon^{(1)}_{s_L-1})+\frac{(1-\delta)\Delta \phi_A}{2}\varepsilon^{(1)}_{s_L-1}](b_{i,s}+(i-\alpha)b_1b_i-b_{i+1})(\chi_{B_1}T_i)^{(2)}_{s_L-1}=0 .
$$
For the terms of the form $\frac{\partial S_j}{\partial b_i}$ we always have at least one parameter $b_i$ involved in this expression, which gives that for $y\leq A$ there holds: $\left|\frac{\partial S_j}{\partial b_i}(y)\right| \leq C(A) b_1$. We then use the modulation equation proven in Lemma \ref{lem:modulation} to estimate:
$$
\begin{array}{r c l}
&\left| \int [\nabla \phi_A. \nabla(\varepsilon^{(1)}_{s_L-1})+\frac{(1-\delta)\Delta \phi_A}{2}\varepsilon^{(1)}_{s_L-1}](b_{i,s}+(i-\alpha)b_1b_i-b_{i+1})(\chi_{B_1}\frac{\partial S_j}{\partial b_i})^{(2)}_{s_L-1} \right| \\
\leq& C(A,M)\mathcal{E}_{s_L} b_1+C(A,M)\sqrt{\mathcal{E}_{s_L}}b_1^{L+3} .
\end{array}
$$
As we said, the same reasoning applies to treat the first coordinate. Consequently we have the following bound for the modulation terms:
\be \label{thetrapped:morawetz:eq:estimation mod}
\begin{array}{r c l}
&\Bigl| \int \nabla \phi_A .\nabla [ \tilde{Mod}(t)^{(1)})]_{s_L-1}\varepsilon^{(2)}_{s_L-1} + \int \frac{(1-\delta)\Delta \phi_A}{2}(\tilde{Mod}(t)^{(1)})_{s_L-1} \varepsilon^{(2)}_{s_L-1} \\
&\int \nabla \phi_A .\nabla \varepsilon^{(1)}_{s_L-1}[\tilde{Mod}(t)^{(2)}]_{s_L-1} +\int \frac{(1-\delta)\Delta \phi_A}{2}(\varepsilon^{(1)})_{s_L-1} [\tilde{Mod}(t)^{(2)}]_{s_L-1} \Bigr| \\
\leq& C(A,M)\mathcal{E}_{s_L} b_1+C(A,M)\sqrt{\mathcal{E}_{s_L}}b_1^{L+3}.
\end{array}
\ee
We now come back to our initial decomposition \fref{thetrapped:morawetz:eq:expression dtM}. We have the expected control from the linear term in \fref{thetrapped:morawetz:eq:estimation lineaire}, and have estimated all the other terms in \fref{thetrapped:morawetz:eq:estimation lambdavarepsilon}, \fref{thetrapped:morawetz:eq:estimation psib}, \fref{thetrapped:morawetz:eq:estimation L}, \fref{thetrapped:morawetz:eq:estimation NL} and \fref{thetrapped:morawetz:eq:estimation mod}. It gives the desired result.
\end{proof}


\section{End of the proof:}
\label{end:sec:end of the proof}
\subsection{End of the Proof of Proposition \ref{prop:bootstrap}}
We now end the proof of the proposition \ref{prop:bootstrap}. We will reintagrate in time the equations giving the time evolution of the parameters and the norms for the error term to obtain improved bounds. The definition of the minimal time $s^*$ for which the bootstrap assumptions are violated implies that at time $s^*$ at least one of the following three facts is true:
\begin{itemize}
\item[(i)] \emph{The error term has grown too big:}
$$
\mathcal{E}_{s_L}(s^*)=K_1b_1(s^*)^{2L+2(1-\delta_0)(1+\eta)} \ \text{or} \ \mathcal{E}_{\sigma}= K_2b_1(s^*)^{2(\sigma-s_c)\frac{\ell}{\ell-\alpha}} ,
$$
\item[(ii)] \emph{Exit of the stable modes}
$$
V_1(s^*)=\frac{1}{(s^*)^{\tilde{\eta}}} \ \text{or} \ |b_k(s^*)|=\frac{\epsilon_k}{(s^{*})^{k+\tilde{\eta}}} ,
$$
\item[(iii)] \emph{Exit of the instable modes:}

$$
(V_2(s^*),...,V_{\ell}(s^*))\in \mathcal{S}^{l-1}\left(\frac{1}{(s^*)^{\tilde{\eta}}}\right) .
$$

\end{itemize}
We will show in this section that the cases (i) and (ii) never happen for any initial solution. Indeed, the estimates of the error term can be improved using all the preceding monotonicity formulas, and are in fact smaller than what we asked for. The exit of the stable modes is impossible because their evolution is governed by a linear equation for which $0$ is an attractor, plus a force term whose size is under control.\\
\\
There are initial data leading to the exit of the unstable modes because they are driven by unstable dynamics. Indeed from the study of the linearized equation for the parameters we have seen that $0$ is a repulsive equilibrium for these modes. However this equilibrium must persist\footnote{this is a way of speaking, there is no fixed point but one trajectory staying bounded.} when we add the perturbative term to the equation, because the contrary would go against Brouwer fixed point theorem. This part will be made clearer in our precise case later on.\\

We begin with integrating the scaling equations.

\begin{lemma}[law for the scaling in the trapped regime]\label{endoftheproof:lem:integration des equations pour le scaling}
Up to time $s^*$ there holds the following estimations for the scaling:
\begin{equation} \label{end:eq:asymptotique lambda}
\lambda(s)=\left( \frac{s_0}{s}\right) ^{\frac{\ell}{\ell-\alpha}}\left[1+O\left( \frac{1}{s_0^{\tilde{\eta}}} \right) \right] .
\end{equation}
\end{lemma}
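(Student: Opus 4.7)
The plan is to integrate the modulation equation for $\lambda_s/\lambda$ from $s_0$ to $s$, using the bootstrap control on $b_1$ and on $\mathcal{E}_{s_L}$ to treat all remainders as harmless.

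First I would rewrite the first modulation bound \fref{thetrapped:eq:modulation leq L-1} as
\[
\frac{\lambda_s}{\lambda} = -b_1 + R(s), \qquad |R(s)| \leq C(M)b_1^{L+3} + C(L,M)b_1\sqrt{\mathcal{E}_{s_L}}.
\]
Next I would unpack $b_1(s)$ in terms of the special trajectory. By the change of variables \fref{thetrapped:eq:def Vi}, the bootstrap bounds \fref{eq:bootstrap modes instables}--\fref{eq:bootstrap modes stables} on $V_1,\dots,V_\ell$ translate (through the invertible matrix $P_\ell'^{-1}$) into $|U_1(s)|\leq C s^{-\tilde\eta}$, and since $b_1^e(s)=c_1/s$ with $c_1=\ell/(\ell-\alpha)$ by Lemma \ref{lem:soldusystemd}, we get
\[
b_1(s) = \frac{c_1}{s} + \frac{U_1(s)}{s}, \qquad \left|b_1(s) - \frac{c_1}{s}\right|\leq \frac{C}{s^{1+\tilde\eta}}.
\]

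Then I would integrate. Using $\lambda(s_0)=1$,
\[
\log\lambda(s) = -\int_{s_0}^{s} b_1(\tau)\,d\tau + \int_{s_0}^{s} R(\tau)\,d\tau.
\]
The main term is $-c_1\int_{s_0}^{s} d\tau/\tau = -\frac{\ell}{\ell-\alpha}\log(s/s_0)$. The perturbative contribution to $b_1$ is integrable: $\int_{s_0}^s O(\tau^{-1-\tilde\eta})\,d\tau = O(s_0^{-\tilde\eta})$. For the remainder $R$, plugging the bootstrap estimate \fref{eq:bootstrap estimations sur epsilon} on $\mathcal{E}_{s_L}$ and $b_1 \lesssim 1/\tau$ gives $|R(\tau)| \lesssim \tau^{-L-3} + \tau^{-1}\cdot \tau^{-L-(1-\delta_0)(1+\eta)}$, both of which integrate to something of size $O(s_0^{-\tilde\eta})$ (in fact much smaller) after taking $s_0$ large enough. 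Therefore
\[
\log\lambda(s) = -\frac{\ell}{\ell-\alpha}\log\frac{s}{s_0} + O(s_0^{-\tilde\eta}),
\]
and exponentiating gives the claim.

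No serious obstacle is expected here: the lemma is a direct consequence of Lemma \ref{lem:modulation} combined with the already established bootstrap control on $V_1$ and on $\mathcal{E}_{s_L}$. The only point that deserves a little care is the passage from the bound on $V_1$ (the diagonalized variable) back to $U_1$, which is purely linear algebra through the matrix $P_\ell$ introduced in Lemma \ref{lem:linearisationsystemdyn}.
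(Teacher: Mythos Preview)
Your proof is correct and follows essentially the same approach as the paper: both use the modulation equation from Lemma \ref{lem:modulation} together with the bootstrap control on the parameters to write $-\lambda_s/\lambda = c_1/s + O(s^{-1-\tilde\eta})$, then integrate and exponentiate. Your treatment is in fact slightly more careful than the paper's in noting explicitly that the passage from the bootstrap bounds on $V_1,\dots,V_\ell$ to a bound on $U_1$ goes through the matrix $P_\ell^{-1}$.
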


\begin{proof}[Proof of Lemma \ref{endoftheproof:lem:integration des equations pour le scaling}]
Untill $s^*$, we have under the bootstrap assumptions \fref{eq:bootstrap modes stables} and \fref{eq:bootstrap modes instables} for the parameters that $b_i(s)=b_i^e+\frac{U_i}{s^{i+1}}$ with $U_i\leq \frac{1}{s^{\tilde{\eta}}}$. So we use the modulation equation proved in Lemma \ref{lem:modulation}:
$$
-\frac{\lambda_s}{\lambda}=b_1+O\left( b_1 \mathcal{E}_{s_L}+b_1^{L+3})\right)=\frac{\ell}{(\ell-\alpha)s}+O\left(\frac{1}{s^{1+\tilde{\eta}}}\right) .
$$
We rewrite this equation as:
$$
\left|\frac{d}{ds}(\text{log}(s^{\frac{\ell}{\ell-\alpha}}\lambda)) \right|\lesssim \frac{1}{s^{1+\tilde{\eta}}} .
$$
After integration gives:
$$
\lambda(s)=\left( \frac{s_0}{s}\right) ^{\frac{\ell}{\ell-\alpha}}\left[1+O\left( \frac{1}{s_0^{\tilde{\eta}}} \right) \right] .
$$
\end{proof}

We now rule out the case $(i)$. We recall that $K_1$ and $K_2$ are used to quantify the control of the error term $\bos{\varepsilon}$ in the trapped regime of proposition \ref{prop:bootstrap}.

\begin{lemma}[Integrating the evolution equations for the norms]\label{endoftheproof:lem:integration des equations pour les normes}
Assume all the other constants of Proposition \ref{prop:bootstrap} are fixed in their range. There exist $K_1,K_2>0$, $N>0$, $\nu>0$ and $\epsilon$ such that for $s_0$ big enough, $\eta$ small enough, under the bootstrap assumptions untill time $s^*$ the norms enjoy a better estimation. There holds in fact:
\begin{equation}
\mathcal{E}_{s_L}\leq \frac{K_1}{2}b_1^{2L+2(1-\delta_0)(1+\eta)} ,
\end{equation}
and:
\begin{equation}
\mathcal{E}_{\sigma}\leq \frac{K_2}{2}b_1^{2(\sigma-s_c)\frac{\ell}{\ell-\alpha}} .
\end{equation}
\end{lemma}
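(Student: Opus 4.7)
The plan is to combine the monotonicity formulas of Propositions \ref{trappedregime:pr:low sobo}, \ref{trappedregime:pr:high sobo} and \ref{thetrapped:pr:morawetz} with the bootstrap assumptions, then integrate the resulting differential inequalities using the scaling law of Lemma \ref{endoftheproof:lem:integration des equations pour le scaling}. The underlying mechanism is that in the trapped regime the asymptotics $b_1 \sim c_1/s$ and $\lambda \sim b_1^{1+\nu}$ (with $1+\nu = \ell/(\ell-\alpha)$) translate any algebraic gain $b_1^\kappa$ with $\kappa > 0$ on the right-hand side into an $s^{-1-\kappa}$ term, which after reintegration gives a strictly smaller prefactor than the bootstrap bound.

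For the low Sobolev norm the argument will be direct. First I would observe that the bootstrap bound $\mathcal{E}_\sigma \leq K_2 b_1^{2(\sigma-s_c)(1+\nu)}$ implies $\sqrt{\mathcal{E}_\sigma}/b_1^{\sigma-s_c} \leq \sqrt{K_2}\, b_1^{(\sigma-s_c)\nu}$, arbitrarily small for $s_0$ large, so that the nonlinear sum in Proposition \ref{trappedregime:pr:low sobo} is $O(1)$. This reduces the monotonicity formula to $\frac{d}{dt}\{\mathcal{E}_\sigma/\lambda^{2(\sigma-s_c)}\} \lesssim \sqrt{K_2}\, b_1^{1+2(\sigma-s_c)(1+\nu)+\kappa}/\lambda^{2(\sigma-s_c)+1}$ for some $\kappa = \alpha/(2L)+O((\sigma-s_c)/L) > 0$. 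Converting $dt = \lambda\,ds$ and integrating from $s_0$ to $s$, together with the initial bound \fref{eq:estimation initiale de epsilon0} and $\lambda(s_0)=1$, will give $\mathcal{E}_\sigma(s) \leq (C\sqrt{K_2}/s_0^\kappa)\, b_1^{2(\sigma-s_c)(1+\nu)}$. Choosing $K_2$ large first so that $1/\sqrt{K_2}\ll 1$, then $s_0$ large depending on $K_2$, yields the improved bound $\mathcal{E}_\sigma \leq (K_2/2)\, b_1^{2(\sigma-s_c)(1+\nu)}$.

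For the high Sobolev norm the obstruction will be the local term $C(N)\mathcal{E}_{s_L,\text{loc}}$ in Proposition \ref{trappedregime:pr:high sobo}, which is not directly controlled by $\mathcal{E}_{s_L}$. Following \cite{RSc1,MRRod2}, I would introduce the Lyapunov functional
\[
\mathcal{N}_{s_L} := \frac{\mathcal{E}_{s_L}}{\lambda^{2(s_L-s_c)}} + O\!\left(\frac{\mathcal{E}_{s_L}\, b_1^{\eta(1-\delta_0)}}{\lambda^{2(s_L-s_c)}}\right) - \frac{C_M\, \mathcal{M}}{\lambda^{2(s_L-s_c)}},
\]
where $C_M > 0$ is chosen small enough --- via the coercivity $|\mathcal{M}| \leq C(A,M)\mathcal{E}_{s_L}$ of \fref{thetrapped:eq:morawetz controle par high sobo} --- that $\mathcal{N}_{s_L}$ stays equivalent to $\mathcal{E}_{s_L}/\lambda^{2(s_L-s_c)}$, and calibrated so that $b_1 C(M)C(N) \leq \tfrac{1}{2} C_M \delta/N^\delta$ holds throughout the bootstrap interval. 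Then the sum of the two monotonicity formulas will make the coefficient of $\mathcal{E}_{s_L,\text{loc}}$ negative, and the improved bound on $\mathcal{E}_\sigma$ just obtained will handle the nonlinear sum. All remaining terms will carry either a $\sqrt{K_1}$-type gain or a $K_1/N^{\delta/2}$ or $K_1 C(M)/A^\delta$ prefactor with the correct $b_1^{2L+2(1-\delta_0)(1+\eta)}$ scaling. Converting to $s$-time and integrating will yield
\[
\mathcal{N}_{s_L}(s) \leq \left[\mathcal{N}_{s_L}(s_0) + C\bigl(\tfrac{1}{\sqrt{K_1}} + \tfrac{1}{N^{\delta/2}} + \tfrac{1}{A^\delta}\bigr)K_1\right]\, \frac{b_1^{2L+2(1-\delta_0)(1+\eta)}}{\lambda^{2(s_L-s_c)}},
\]
and choosing $N$ and $A$ large after $M$, then $K_1$ large so that the bracket is at most $K_1/4$, and finally $s_0$ large so that the initial contribution is negligible, will close the bootstrap at $\mathcal{E}_{s_L} \leq (K_1/2)\, b_1^{2L+2(1-\delta_0)(1+\eta)}$.

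The hard part will be the hierarchy of constants: $M$ must be fixed first, then $N$ and $A$ large with respect to $M$, then $C_M$ simultaneously small enough for coercivity and large enough for the cancellation of the local term, then $K_1, K_2$ large enough to absorb accumulated constants and the initial data, and finally $s_0$ large enough that $b_1(s_0)$ is smaller than the inverse of every previously fixed constant. The positivity $L + 1 - \delta_0 - \ell > 0$ coming from the hypothesis $L \gg \ell$ is what guarantees that the integration exponent is compatible with the scaling, so that the improved bootstrap closes.
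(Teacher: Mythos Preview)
Your low Sobolev argument is correct and matches the paper's approach.

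For the high Sobolev norm there is a genuine gap in your treatment of the Morawetz correction. You subtract $C_M\,\mathcal{M}/\lambda^{2(s_L-s_c)}$ from the Lyapunov functional with a \emph{constant} $C_M$. When you then apply the lower bound from the Remark after Proposition~\ref{thetrapped:pr:morawetz}, the error term $\tfrac{C(M)}{A^\delta}\mathcal{E}_{s_L}$ enters your differential inequality as $\tfrac{C_M C(M)}{A^\delta}\,\tfrac{\mathcal{E}_{s_L}}{\lambda^{2(s_L-s_c)}}$, \emph{without} any $b_1$ prefactor. After inserting the bootstrap bound this is $\sim g(s):=b_1^{2L+2(1-\delta_0)(1+\eta)}/\lambda^{2(s_L-s_c)}$, and since $g(s)\sim s^\beta$ with $\beta=2(s_L-s_c)(1+\nu)-2L-2(1-\delta_0)(1+\eta)>0$ (because $L\gg\ell$), one has $\int_{s_0}^s g(\tau)\,d\tau\sim \tfrac{s}{\beta+1}g(s)$, not $\lesssim g(s)$. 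Your claimed integrated bound with a uniform $1/A^\delta$ coefficient therefore secretly carries an extra factor of $s$, and the bootstrap does not close for large $s$; a Gr\"onwall argument fails for the same reason, since the exponential rate $C_M C(M)/A^\delta$ is a fixed positive constant.

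The paper avoids this by substituting the pointwise Morawetz bound $\mathcal{E}_{s_L,\mathrm{loc}}\le \tfrac{2N^\delta}{\delta}\bigl(\tfrac{d}{ds}\mathcal{M}+\tfrac{C(M)}{A^\delta}\mathcal{E}_{s_L}+\ldots\bigr)$ directly into the already $b_1$-weighted term $\tfrac{C(M)C(N)b_1}{\lambda^{2(s_L-s_c)}}\mathcal{E}_{s_L,\mathrm{loc}}$ of Proposition~\ref{trappedregime:pr:high sobo}. All Morawetz error contributions then inherit the crucial extra $b_1$ factor, which makes the $s$-integral close at the right order. The residual term $Pb_1\,\tfrac{d}{ds}\mathcal{M}$ (with $P$ depending only on $M,N$) is handled after integration via the endpoint bound $|Pb_1\mathcal{M}|\le Pb_1\,C(A,M)\,\mathcal{E}_{s_L}$, which is absorbable into the left-hand side since $Pb_1\le C/s_0$, together with $|(b_1)_s|\lesssim b_1^2$ for the commutator. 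An equivalent way to repair your approach is to replace the constant multiplier $C_M$ by a time-dependent one proportional to $b_1$, i.e.\ to work with $\mathcal{N}_{s_L}=\mathcal{E}_{s_L}/\lambda^{2(s_L-s_c)}+O(\ldots)-C'\,b_1\,\mathcal{M}/\lambda^{2(s_L-s_c)}$.
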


\begin{remark}
The constant $\frac{1}{2}$ is not really important, we could have stated it for any constant.
\end{remark}

\begin{proof}[ Proof of Lemma \ref{endoftheproof:lem:integration des equations pour les normes}] \underline{The low Sobolev norm:} We recall the bound on the time evolution of the low Sobolev norm from Proposition \ref{trappedregime:pr:low sobo}:
$$
\frac{d}{dt}\left\{ \frac{\mathcal{E}_{\sigma}}{\lambda^{2(\sigma-s_c)}} \right\} \leq \frac{b_1 \sqrt{\mathcal{E}_{\sigma}}b_1^{(\sigma-s_c)(1+\nu)}}{\lambda^{2(\sigma-s_c)+1}} \left[ b_1^{\frac{\alpha}{2L}+O\left(\frac{\sigma-s_c}{L} \right)} + b_1^{\frac{\alpha}{2L}+O\left(\frac{\sigma-s_c}{L} \right)}\sum_{k=2}^p\left( \frac{\sqrt{\mathcal{E}_{\sigma}}}{b_1^{\sigma-s_c}}\right)^{k-1}\right] 
$$
with $\nu=\frac{\alpha}{\ell-\alpha}$. One has $\sum_{k=2}^p\left( \frac{\sqrt{\mathcal{E}_{\sigma}}}{b_1^{\sigma-s_c}}\right)^{k-1}\ll 1$ under the bootstrap conditions \fref{eq:bootstrap estimations sur epsilon}. Therefore, we see that there exists a small constant $0<\delta\ll 1$, such that if one chooses $s_0$ large enough, this equation can be rewriten as:
$$
\frac{d}{ds} \left\{ \frac{\mathcal{E}_{\sigma}}{\lambda^{2(s_L-s_c)}} \right\}\leq \frac{1}{\lambda^{2(\sigma-s_c)}}\frac{b_1}{\lambda}\sqrt{\mathcal{E}_{\sigma}}b_1^{(\sigma-s_c)\frac{\ell}{\ell-\alpha}+\delta}.
$$
Still under the bootstrap assumption we can integrate this equation:
\be \label{end:eq:estimation low intermediaire}
\mathcal{E}_{\sigma}(s)\leq \mathcal{E}_{\sigma}(0)\lambda^{2(\sigma-s_c)}+\lambda^{2(\sigma-s_c)}\int_{s_0}^s \frac{b_1}{\lambda^{2(\sigma-s_c)}}\sqrt{K_2} b_1^{2(\sigma-s_c)\frac{\ell}{\ell-\alpha}+\delta} .
\ee
We recall that $\lambda(0)=1$ and from \fref{end:eq:asymptotique lambda} and the bootstrap assumptions \fref{eq:bootstrap modes stables} and \fref{eq:bootstrap modes instables} on $b_1$:
$$
\left| \lambda(s)-\left(\frac{s_0}{s}\right)^{\frac{\ell}{\ell-\alpha}} \right|\leq  \frac{1}{s_0^{c\tilde{\eta}}}\left(\frac{s_0}{s} \right)^{\frac{\ell}{\ell-\alpha}} \ \text{and} \ \left|b_1-\frac{c_1}{s} \right|\leq \frac{1}{s^{1+\tilde{\eta}}} .
$$
It implies: $\lambda(s)\leq \frac{C}{s^{\frac{\ell}{\ell-\alpha}}}$ and $b_1\sim\frac{c_1}{s}$. Consequently:
$$
\mathcal{E}_{\sigma}(0)\lambda^{2(\sigma-s_c)}\leq C\mathcal{E}_{\sigma}(0)b_1^{2(\sigma-s_c)\frac{\ell}{\ell-\alpha}} .
$$
Given the initial condition \fref{eq:estimation initiale de epsilon0} on $\mathcal{E}_{\sigma}(0)$ it yields:
\be \label{end:eq:estimation low echelle}
\mathcal{E}_{\sigma}(0)\lambda^{2(\sigma-s_c)}\leq b_1^{2(\sigma-s_c)\frac{\ell}{\ell-\alpha}}.
\ee
For the integral term one has:
$$
\lambda^{2(\sigma-s_c)}\int_{s_0}^s \frac{b_1}{\lambda^{(2(\sigma-s_c)}}b_1^{2\frac{\ell}{\ell-\alpha}(\sigma-s_c)+\delta} \leq C\lambda^{2(\sigma-s_c)}\leq C b_1^{2(\sigma-s_c)\frac{\ell}{\ell-\alpha}}
$$
because the integral is convergent ($\frac{b_1}{\lambda^{(2(\sigma-s_c)}}b_1^{2\frac{\ell}{\ell-\alpha}(\sigma-s_c)}\leq s^{-1-\delta}$). Therefore:
\be \label{end:eq:estimation low force}
\lambda^{2(\sigma-s_c)}\int_{s_0}^s \frac{b_1}{\lambda^{(2(\sigma-s_c)}}b_1^{2\frac{\ell}{\ell-\alpha}(\sigma-s_c)}\sqrt{K_2}\leq C \sqrt{K_2}b_1^{\frac{\ell}{\ell-\alpha}(\sigma-s_c)} .
\ee
Injecting the two estimates \fref{end:eq:estimation low echelle} and \fref{end:eq:estimation low force} we found in \fref{end:eq:estimation low intermediaire} gives:
$$
\mathcal{E}_{\sigma}(s)\leq b_1^{2(\sigma-s_c)\frac{\ell}{\ell-\alpha}}\left( 1+C\sqrt{K_2}\right) ,
$$
and $\left( 1+C\sqrt{K_2}\right)\leq \frac{K_2}{2}$ for $K_2$ large enough.\\

\underline{The high Sobolev norm:} We recall the estimate of Proposition \ref{trappedregime:pr:high sobo}:
$$
\begin{array}{r c l}
\frac{d}{dt}\left\{ \frac{\mathcal{E}_{s_L}}{\lambda^{2(s_L-s_c)}}+O\left( \frac{\mathcal{E}_{s_L}b_1^{\eta(1-\delta_0)}}{\lambda^{2(s_L-s_c)}}\right) \right\}  &\leq& \frac{C(M)}{\lambda^{2(s_L-s_c)}}\frac{b_1}{\lambda}  \Bigl[ \mathcal{E}_{s_L}b_1^{\frac{\alpha}{2L}+O\left( \frac{\sigma-s_c}{L} \right)}\underset{k=2}{\overset{p}{\sum}} \left[\frac{\sqrt{\mathcal{E}_{\sigma}}}{b_1^{\sigma-s_c}} \right]^{k-1} \\
&&+C(N)\mathcal{E}_{s_L,\text{loc}}+\frac{\mathcal{E}_{s_L}}{N^{\frac{\delta_0}{2}}} + \sqrt{\mathcal{E}_{s_L}}b_1^{L+(1-\delta_0)(1+\eta)} \Bigr]
\end{array}
$$
with $C(M)$ independent of $N$. In the trapped regime \fref{eq:bootstrap estimations sur epsilon}, by taking $s_0$ large enough one has:
$$
\mathcal{E}_{s_L}b_1^{\frac{\alpha}{2L}+O\left(\frac{\sigma-s_c}{L}\right)} \underset{k=2}{\overset{p}{\sum}}\left(\frac{\sqrt{\mathcal{E}_{\sigma}}}{b_1^{\sigma-s_c}} \right)^{k-1}\leq \frac{C\mathcal{E}_{s_L}}{N^{\frac{\delta_0}{2}}}.
$$
So the previous equation becomes:
$$
\begin{array}{r c l}
&\frac{d}{ds}\left\{ \frac{\mathcal{E}_{s_L}}{\lambda^{2(s_L-s_c)}}+O\left( \frac{\mathcal{E}_{s_L}}{\lambda^{2(s_L-s_c)}}b_1^{\eta(1-\delta_0)}\right) \right\} \\ \leq& \frac{Cb_1}{\lambda^{2(s_L-s_c)}}
\times \left[ \frac{\mathcal{E}_{s_L}}{N^{\frac{\delta_0}{2}}} + \sqrt{\mathcal{E}_{s_L}}b_1^{L+(1-\delta_0)(1+\eta)}+C(N)\mathcal{E}_{s_L,loc}\right] , 
\end{array}
$$
(by multipliying the constant $C$ by 2). We also have by the Proposition \ref{thetrapped:pr:morawetz}:
$$
\frac{d}{ds} \left(\frac{\mathcal{M}}{\lambda^{2(s_L-s_c}}\right)\geq \frac{\delta}{2N^{\delta}\lambda^{2(s_L-s_c)}} \mathcal{E}_{s_L,loc}-\frac{C}{A^{\delta}\lambda^{2(s_L-s_c)}}\mathcal{E}_{s_L}-\frac{C(A,N)\sqrt{\mathcal{E}_{s_L}}}{\lambda^{2(s_L-s_c)}}b_1^{L+3} .
$$
Let $a>0$. Once $N$, $K_1$ and $A$ are chosen, for $s_0$ small enough we have:
$$
\frac{C C(N)b_1}{\lambda^{2(s_L-s_c)}}\mathcal{E}_{s_L,\text{loc}} \leq \frac{C(N)}{a}\left( \frac{d}{ds}\left( \frac{\mathcal{M}}{\lambda^{2(s_L-s_c)}} \right) \right)+\frac{C(N,M)b_1}{A^{\delta}\lambda^{2(s_L-s_c)}}\mathcal{E}_{s_L}+\frac{C(A,N)}{\lambda^{2(s_L-s_c)}}\sqrt{\mathcal{E}_{s_L}} ,
$$
which gives for the evolution of the high Sobolev norm the following monotonicity formula:
$$
\begin{array}{r c l}
&\frac{d}{ds}\left\{ \frac{\mathcal{E}_{s_L}}{\lambda^{2(s_L-s_c)}}+O\left( \frac{\mathcal{E}_{\sigma}}{\lambda^{2(s_L-s_c)}}b_1^{\eta(1-\delta_0)}\right) \right\}\\
 \leq & \frac{Cb_1}{\lambda^{2(s_L-s_c)}}   \big{[} \frac{\mathcal{E}_{\sigma}}{N^{\frac{\delta_0}{2}}} + C(A,N)\sqrt{\mathcal{E}_{s_L}}b_1^{L+(1-\delta_0)(1+\eta)} +\frac{C(N)}{A^{\delta}}\mathcal{E}_{s_L}\big{]}+\frac{C(N)}{a} \frac{d}{ds}\left( \frac{\mathcal{M}}{\lambda^{2(s_L-s_c)}} \right) .
\end{array}
$$
Let $a'>0$ be a large constant. By letting $N$ be large enough, then by letting $A$ and $a$ be large enough we can reformulate it as:
$$
\begin{array}{r c l}
\frac{d}{ds}\left\{ \frac{\mathcal{E}_{s_L}}{\lambda^{2(s_L-s_c)}}+O\left( \frac{\mathcal{E}_{\sigma}}{\lambda^{2(s_L-s_c)}}b_1^{\eta(1-\delta_0)}\right) \right\} &\leq& \frac{b_1}{\lambda^{2(s_L-s_c)}} \left[ \frac{\mathcal{E}_{\sigma}}{a'} + C\sqrt{\mathcal{E}_{s_L}}b_1^{L+(1-\delta_0)(1+\eta)}\right]\\
&&+\frac{1}{a'} \frac{d}{ds}\left( \frac{\mathcal{M}}{\lambda^{2(s_L-s_c)}} \right) ,
\end{array}
$$
with $C$ independent of $a'$. We will now integrate it in time as we did for the low Sobolev norm, using the bootstrap assumption \fref{eq:bootstrap estimations sur epsilon}:
$$
\begin{array}{r c l}
\mathcal{E}_{s_L}(s)&\leq& C(s_0)(\mathcal{E}_{s_L}(s_0)+|\mathcal{M}(s_0|)\lambda^{2(s_L-s_c)}+\frac{1}{a'}|\mathcal{M}(s)| \\
&+&\lambda^{2(s_L-s_c)}\int_{s_0}^s \frac{b_1}{\lambda^{(2(s_L-s_c)}}\left( \frac{K_1}{a'}+C\sqrt{K_1}\right)b_1^{2(L+(1-\delta_0)(1+\eta))} .
\end{array}
$$
We recall that: $|\mathcal{M}|\leq C(A)\mathcal{E}_{s_L}$, so:
$$
\left| \frac{\mathcal{M}}{a'} \right|\leq \frac{C(M)}{a'}\mathcal{E}_{s_L}  .
$$
We then compare using the equivalents for $b_1$ and $\lambda$: 
$$
b_1^{2L+2(1-\delta_0)(1+\eta)}\approx \frac{1}{s^{2L+2(1-\delta_0)(1+\eta)}} .
$$
$$
\lambda^{2(s_L-s_c)}\sim \frac{1}{s^{2\frac{\ell}{\ell-\alpha}(L+k_0-\frac{d}{2}+\frac{2}{p-1})}}\lesssim \frac{1}{s^{2L+\frac{\alpha}{\ell}L+O\left(\frac{1}{L^2} \right)}}  .
$$
This implies $\lambda^{2(s_L-s_c)}=o(b_1^{2(L+(1-\delta_0)(1+\eta)})$ (remember that $\ell \ll L$). Because of the initial bound \fref{eq:estimation initiale de epsilon0} on $\mathcal{E}_{s_L}(0)$ there holds for all $s_0\leq s\leq s^*$:
$$
C(s_0)(\mathcal{E}_{s_L}(0)+|\mathcal{M}(s_0)|)\lambda^{2(s_L-s_c)}\leq b_1^{2L+2(1-\delta_0)(1+\eta)} .
$$
We now treat the integral term using the equivalents for $\lambda(s)$ and $b_1(s)$:
$$
\begin{array}{r c l}
&\lambda^{2(s_L-s_c)}\int_{s_0}^s \frac{b_1}{\lambda^{(2(s_L-s_c)}}b_1^{2(L+(1-\delta_0)(1+\eta))}\\
\leq & C s^{-2(s_L-s_c)\frac{\ell}{\ell-\alpha}}\int_{s_0}^s s^{-1-2(L+(1-\delta_0)(1+\eta)+2(s_L-s_c)(\frac{\ell}{\ell-\alpha})} \\
\leq & C s^{-2L-2(1-\delta_0)(1+\eta)} \leq  Cb_1^{2L+2(1-\delta_0)(1+\eta)} ,
\end{array}
$$
with the constant $C$ just depending on $c_1$ and $s_0$. The integral is indeed divergent from $-2(L+(1-\delta_0)(1+\eta))+2(s_L-s_c)\frac{\ell}{\ell-\alpha}>0 $ (as $\ell \ll L$). Eventually the three estimations we have shown allow us to conclude:
$$
\left(1-\frac{C(N)}{a'}\right)\mathcal{E}_{s_L}(s)\leq b_1^{2L+2(1-\delta_0)(1+\eta)}\left(\frac{C}{a'}K_1+C\sqrt{K_1}+C \right) .
$$
For $a'$ and $K_1$ big enough one has:
$$
\frac{\frac{C}{a'}K_1+C\sqrt{K_1}+C}{1-\frac{C(N)}{a'}}\leq \frac{K_1}{2} 
$$
(remember that here $\frac{C(N)}{a'}=\frac{C(N)}{a}$ and since we choose $a$ after $M$ this term can be arbitrarily small.
\end{proof}

We now rule out case (ii) in the possible exit scenarios. We recall that the small coefficients $(\epsilon_i)_{\ell+1\leq i \leq L}$ are used to quantify the control over the stable modes in the trapped regime of Proposition \ref{prop:bootstrap}.

\begin{lemma}[control of the stable modes]\label{endoftheproof:lem: controle des modes stables}
After having choosen the other constants correctly, there exists small enough constants $\tilde{\eta}$, and $(\epsilon_i)_{\ell+1\leq i \leq L}$ such that for $s_0$ big enough, untill time $s^*$ there holds:
\begin{equation}
|V_1|\leq \frac{1}{2s^{\tilde{\eta}}}, \ \text{and} \ |b_k(s)|\leq \frac{\epsilon_i}{2s^{k+\tilde{\eta}}} \ \text{for} \ \ell+1\leq k \leq L.
\end{equation}
\end{lemma}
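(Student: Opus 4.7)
The plan is to integrate the modulation equations in the stable directions and exploit the fact that the linearized dynamics of Lemma \ref{lem:linearisationsystemdyn} has a strictly negative eigenvalue on $V_1$ and eigenvalues $\frac{(\ell-k)\alpha}{\ell-\alpha}<0$ on the $U_k$'s for $\ell+1\le k\le L$. The upgraded control on the error term provided by Lemma \ref{endoftheproof:lem:integration des equations pour les normes} makes all source contributions to these equations lower order, so a simple Gronwall/variation-of-parameters argument will produce the desired halving of the bootstrap bounds.

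First I will translate the modulation equations of Lemma \ref{lem:modulation} into equations for the stable variables. Writing $b_i=b_i^e+U_i/s^i$ and applying the change of basis $P_\ell$ of \fref{linearized:eq:diagonalisation}, I obtain, for $1\le i\le \ell$ (where only the stable component $V_1$ matters):
\begin{equation*}
\dot V_1 = -\frac{V_1}{s}+\frac{q_1}{s}U_{\ell+1}+O\!\left(\frac{|U|^2}{s}\right)+O(\sqrt{\mathcal{E}_{s_L}}+b_1^{L+3}),
\end{equation*}
and for $\ell+1\le k\le L-1$ (the block $A_\ell''$):
\begin{equation*}
b_{k,s}+(k-\alpha)b_1b_k-b_{k+1}=O(\sqrt{\mathcal{E}_{s_L}}+b_1^{L+3}).
\end{equation*}
For $k=L$ the raw modulation bound \fref{thetrapped:eq:premiere modulation L} is not sharp enough; I will use instead the improved equation \fref{thetrapped:eq:improved modulation} of Lemma \ref{trappedregime:improvedmodulation:lem:improvedmodulation}, which, after setting $\tilde b_L:=b_L-\frac{\langle \bos H^L\bos\varepsilon,\chi_{B_0}\bos\Lambda\bos Q\rangle}{\langle \chi_{B_0}\Lambda^{(1)}Q,\Lambda^{(1)}Q+(-1)^{(L-1)/2}(\partial S_{L+2}/\partial b_L)_{L-1}\rangle}$, becomes a clean ODE
\begin{equation*}
\tilde b_{L,s}+(L-\alpha)b_1\tilde b_L=O\!\left(b_1^{\delta_0}\sqrt{\mathcal{E}_{s_L}}+b_1^{L+1+g'}\right),
\end{equation*}
up to an error $(L-\alpha)b_1(b_L-\tilde b_L)$ which by the coercivity of Corollary \ref{annexe:cor:coercivite mathcalEsL} and the estimate \fref{thetrapped:eq:improved modulation taille denominateur} is of the same size. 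From Lemma \ref{endoftheproof:lem:integration des equations pour les normes}, $\sqrt{\mathcal E_{s_L}}\lesssim b_1^{L+(1-\delta_0)(1+\eta)}$, so every source above is $O(s^{-(L+1+c)})$ with $c>0$ independent of the other constants.

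I will now integrate by backwards induction on $k$ from $L$ down to $\ell+1$. Using the integrating factor $\psi_k(s)=\exp(\int_{s_0}^s(k-\alpha)b_1(s')ds')\sim (s/s_0)^{(k-\alpha)c_1}$, where $(k-\alpha)c_1=k+\frac{\alpha(k-\ell)}{\ell-\alpha}>k+\tilde\eta$ for $\tilde\eta$ small enough and $k>\ell$, variation of parameters gives
\begin{equation*}
b_k(s)=b_k(s_0)\left(\frac{s_0}{s}\right)^{(k-\alpha)c_1}\!\!(1+o(1))+\int_{s_0}^s\!\!\left(\frac{s'}{s}\right)^{(k-\alpha)c_1}\!\!\big(b_{k+1}(s')+\text{error}\big)\frac{ds'}{s'^0}.
\end{equation*}
The homogeneous piece is bounded by $\frac{\epsilon_k}{10\,s^{(k-\alpha)c_1}}\le \frac{\epsilon_k}{10\,s^{k+\tilde\eta}}$ thanks to the initial assumption \fref{eq:estimation initiale des modes stables}. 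The source $b_{k+1}$ is controlled by the (already established) bound $|b_{k+1}|\le\frac{\epsilon_{k+1}}{2s^{k+1+\tilde\eta}}$, and the integral produces a contribution $\le \frac{(\ell-\alpha)\epsilon_{k+1}}{\alpha(k-\ell)-(\ell-\alpha)\tilde\eta}\cdot\frac{1}{s^{k+\tilde\eta}}$. The error contribution is $o(s^{-(k+\tilde\eta)})$ since the exponent gap $(k-\alpha)c_1-k>0$. Choosing the $\epsilon_k$'s so that $\epsilon_{k+1}\le\frac{2}{5}\cdot\frac{\alpha(k-\ell)}{\ell-\alpha}\epsilon_k$ (which is a legitimate recursive choice starting from $\epsilon_L$) delivers $|b_k(s)|\le\frac{\epsilon_k}{2s^{k+\tilde\eta}}$. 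For $k=L$ the induction starts with $b_{L+1}\equiv0$, so only the improved error term matters, and the smallness from Lemma \ref{endoftheproof:lem:integration des equations pour les normes} suffices directly.

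Finally, for $V_1$, multiplication of the ODE by $s$ yields $\frac{d}{ds}(sV_1)=q_1U_{\ell+1}+s\cdot O(|U|^2/s+\sqrt{\mathcal E_{s_L}}+b_1^{L+3})$. The first term, in view of the just obtained bound on $b_{\ell+1}$, integrates to $O(\epsilon_{\ell+1}s^{1-\tilde\eta})$; the quadratic term is controlled by $\int s'^{-2\tilde\eta}ds'=O(s^{1-2\tilde\eta})$ from the bootstrap; and the remaining terms are $O(s^{-L})$. Combined with $|V_1(s_0)|\le 1/(10 s_0)$, this gives $|V_1(s)|\le \frac{1}{10s}+C(\epsilon_{\ell+1})s^{-\tilde\eta}+Cs^{-2\tilde\eta}$, and by choosing $\epsilon_{\ell+1}$ small enough (hence all $\epsilon_k$'s, by the cascade above) and $s_0$ large, this is $\le \frac{1}{2s^{\tilde\eta}}$. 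The expected main obstacle is the bookkeeping around $b_L$: one must integrate the $\frac{d}{ds}[\cdot]$ term in \fref{thetrapped:eq:improved modulation} by parts, show that the boundary term at time $s$ is absorbed into the modified variable $\tilde b_L$ (whose difference with $b_L$ is controlled by \fref{thetrapped:eq:improved modulation estimation numerateur} and \fref{thetrapped:eq:improved modulation taille denominateur}), and verify that the residual $(L-\alpha)b_1(b_L-\tilde b_L)$ is small enough not to degrade the exponent $(L-\alpha)c_1$ below $L+\tilde\eta$.
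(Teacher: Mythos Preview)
Your proof is correct and follows essentially the same route as the paper's own argument: for $\ell+1\le k\le L-1$ you integrate the modulation equation with the integrating factor $s^{(k-\alpha)c_1}$ and use the exponent gap $(k-\alpha)c_1-k=\frac{\alpha(k-\ell)}{\ell-\alpha}>\tilde\eta$ together with a recursive choice $\epsilon_{k+1}\ll\epsilon_k$; for $k=L$ you invoke the improved modulation equation \fref{thetrapped:eq:improved modulation} and absorb the correction term into a modified variable $\tilde b_L$ (the paper does the equivalent integration by parts on the $\frac{d}{ds}[\cdot]$ term); and for $V_1$ you multiply by $s$ and integrate, with the $q_1U_{\ell+1}/s$ source handled by taking $\epsilon_{\ell+1}$ small. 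The only point to make explicit is that the choice $\tilde\eta<\eta(1-\delta_0)$ is what allows the $b_L$ error contribution $O(s^{-L-\eta(1-\delta_0)})$ to be absorbed into $\frac{\epsilon_L}{2}s^{-L-\tilde\eta}$ for $s_0$ large.
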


\begin{proof}[Proof of Lemma \ref{endoftheproof:lem: controle des modes stables}]
\underline{The stable modes for $\ell+1\leq i \leq L-1$:} Let $i$ be an integer, $\ell+1\leq i \leq L-1$. We recall that the evolution of $b_i$ is given by:
$$
\begin{array}{r c l}
b_{i,s}&=&-(i-\alpha)b_1b_i+b_{i+1}+O(b_1\sqrt{\mathcal{E}_{s_L}}+b_1^{L+3}) \\
&=&-\frac{c_1(i-\alpha)}{s}b_i-(i-\alpha)\frac{U_1b_i}{s}+b_{i+1}+O(s^{-L-1-(1-\delta_0)})\\
&=&-\frac{c_1(i-\alpha)}{s}b_i+b_{i+1}+O(s^{-1-i-2\tilde{\eta}}),
\end{array}
$$
for $\tilde{\eta}$ small enough, because $U_1b_i=O(s^{-2\tilde{\eta}})$ under the bootstrap assumptions. Hence for $s_0$ large enough it gives:
$$
|b_{i,s}+(i-\alpha)c_1\frac{b_i}{s}|\leq \frac{2\epsilon_{i+1}}{s^{i+1+\tilde{\eta}}} ,
$$
which we rewrite as:
\be \label{end:eq:edo bi leq L-1}
|\frac{d}{ds}(s^{(i-\alpha)c_1}b_i) |\leq 2\epsilon_{i+1}s^{(i-\alpha)c_1-(i+1+\tilde{\eta})} .
\ee
We notice that $(i-\alpha)c_1=\frac{l(i-\alpha)}{l-\alpha}>i$. So for $\tilde{\eta}$ small enough one has $(i-\alpha)c_1\geq i+\tilde{\eta}$. With these two facts in mind we integrate the last equation and estimate using the initial condition \fref{eq:estimation initiale des modes stables}:
$$
\begin{array}{r c l}
|b_i(s)|&\leq& b_i(0)\frac{s^{(i-\alpha)c_1}}{s^{(i-\alpha)c_1}}+\frac{2\epsilon_{i+1}}{s^{(i-\alpha)c_1}}\int_{s_0}^s \tau^{(i-\alpha)c_1-(i+1+\tilde{\eta})}d\tau \\
&\leq & \frac{\epsilon_i}{10s^{i+\tilde{\eta}}}+\frac{2\epsilon_{i+1}}{((i-\alpha)c_1-i)s^{i+\tilde{\eta}}} ,
\end{array}
$$
the integral that appeared being divergent. We therefore see here that we can choose the constants of initial smallness $(\epsilon_i)_{\ell+1\leq i \leq L}$ one after each other: once $\epsilon_i$ is choosen we can take $\epsilon_{i+1}$ small enough to produce $\frac{\epsilon_i}{10}+\frac{2\epsilon_{i+1}}{(i-\alpha)c_1}<\frac{\epsilon_i}{2} $. This, of course, makes only sense if one is able to bootstrap the estimate on the last parameter $b_L$.\\

\underline{The stable mode $i=L$:} We recall the improved modulation equation for $b_L$:
\be \label{end:eq:edo bL}
\begin{array}{r c l}
&& \left| b_{L,s}+(L-\alpha)b_1 b_L -\frac{d}{ds}\left[ \frac{\langle \bos{H}^{L}\bos{\varepsilon} , \chi_{B_0} \bos{\Lambda} \bos{Q}\rangle}{\Bigl\langle  \chi_{B_0} \Lambda^{(1)} Q, \Lambda^{(1)} Q +(-1)^{\frac{L-1}{2}}\left(\frac{\partial S_{L+2}}{\partial b_L}\right)_{L-1}\Bigr\rangle }\right] \right| \\
&\leq& \frac{1}{B_0^{\delta_0}}C(M)\left[  \sqrt{\mathcal{E}_{s_L}} +b_1^{L+(1-\delta_0)(1+\eta)}\right] .
\end{array}
\ee
We have seen in \fref{thetrapped:highsobo:estimation C(xi)} that:
$$
\left| \frac{\langle \bos{H}^{L}\bos{\varepsilon } , \chi_{B_0} \bos{\Lambda} \bos{Q}\rangle}{\Bigl\langle  \chi_{B_0} \Lambda^{(1)} Q, \Lambda^{(1)} Q +(-1)^{\frac{L-1}{2}}\left(\frac{\partial S_{L+2}}{\partial b_L}\right)_{L-1}\Bigr\rangle}\right|\leq C\sqrt{\mathcal{E}_{s_L}}b_1^{\delta_0-1}\lesssim s^{-L-1-\eta(1-\delta_0)},
$$
We integrate the time evolution of $b_L$ the same way we did for the other stable modes. This time, however, the force term comes from the error $\bos{\varepsilon}$. We reformulate \fref{end:eq:edo bL}
\be
\frac{d}{ds}(s^{(L-\alpha)c_1}b_L)=s^{(L-\alpha)c_1}\frac{d}{ds}O\left( \frac{1}{s^{L+\eta(1-\delta_0)}} \right)+s^{(L-\alpha)c_1}O\left(\frac{1}{s^{L+1+\eta(1-\delta_0)}} \right).
\ee
Then as for the $b_i$'s for $\ell+1\leq i \leq L-1$, we integrate and use integration by parts to find, under the initial smallness assumption on $b_L$ and for $\tilde{\eta}$ small enough:
$$
|b_L(s)|\leq \frac{\epsilon_{L}}{10s^{L+\tilde{\eta}}}+\frac{C}{s^{L+\eta(1-\delta_0)}},
$$
where $C$ is just some integration constant. Hence by choosing $s_0$ large enough and $\tilde{\eta}<\eta(1-\delta_0)$ we have: $|b_L(s)|\leq \frac{\epsilon_L}{2s^{L+\tilde{\eta}}}$.\\

\underline{control of $V_1$.} We recall that $V_1$ is the eigenvector associated to the eigenvalue $-1$ of the linearized operator $A_{\ell}$, defined by \fref{thetrapped:eq:def Vi}: $V_1=(P_{\ell}U)_1=\sum_1^{\ell}p_{1,i}U_i$. We first calculate the time evolution of the $U_i$'s for for $1\leq i \leq \ell$ thanks to the modulation equation \fref{lem:modulation}:
$$
\begin{array}{r c l}  \label{endoftheproof:eq:eq devolution des Ui}
U_{i,s}&=&\frac{(AU)_i}{s}+\frac{O(|U^2|)}{s}+s^i O(b_1C(M)\sqrt{\mathcal{E}_{s_L}}+C(M)b_1^{L+3})\\
&:=&\frac{(AU)_i}{s}+\frac{O(|U^2|)}{s}+s^i g_i(s),
\end{array}
$$
where $g_i(s)$ stands for the terms added in the full equation. It leads to the following expression for the time evolution of $V_1$:
\begin{equation} 
V_{1,s}=-\frac{1}{s}V_1 +\frac{O(|V|^2)}{s}+\sum_{j=1}^L p_{1,j} s^j g_j(s)+q_1s^{\ell}b_{\ell+1},
\end{equation}
where $q_1$ is a constant defined by \fref{linearized:eq:diagonalisation}. We reformulate it under the bootstrap assumptions as:
$$
\frac{d}{ds} (sV_1)=sO\left( \frac{1}{s^{1+2\tilde{\eta}}}+\frac{1}{s^{L-l}}\right)+sq_1s^{\ell}b_{\ell+1} .
$$
As $|b_{\ell+1} |\leq \epsilon_{\ell+1}s^{-\tilde{\eta}}$ under the bootstrap assumptions, for $s_0$ large enough the time integration gives:
$$
|V_1(s)|\leq \frac{s_0|V_1(s_0)|}{s}+O\left( \frac{\epsilon_{\ell+1}}{s^{\tilde{\eta}}} \right) .
$$
We recall the initial assumption $V_1(s_0)\leq \frac{1}{10s_0}$. For $\epsilon_{\ell+1}$ small enough the last equation becomes:
$$
|V_1(s)|\leq \frac{1}{2s^{\tilde{\eta}}} .
$$
\end{proof}

We now fix all the constants of the analysis, and the constants of smallness, so that the last two lemmas hold. We just allow us to increase the initial time $s_0$ if necessary, as it still make these two lemmas hold.

\begin{remark}
we now know that $s^*$ is characterized by:
$$
(V_2(s^*),...,V_{\ell}(s^*))\in \mathcal{S}^{\ell-1}\left(\frac{1}{s^{\tilde{\eta}}} \right) .
$$
\end{remark} \label{end:rem:condition sortie}
We fix $\varepsilon(s_0)$, $V_1(s_0)$ and $b_i(s_0)$ satisfying the smallness assumptions \fref{eq:estimation initiale de epsilon0} and \fref{eq:estimation initiale des modes stables}. we define the following application:
\begin{equation}
\begin{array}{l l l l}
f: &\mathcal{D}(f)\subset \mathcal{B}^{\ell-1}\left(\frac{1}{s_0^{\tilde{\eta}}} \right) &\rightarrow & \mathcal{S}^{\ell-1}\left(\frac{1}{s_0^{\tilde{\eta}}} \right) \\
& (V_2(s_0),...V_{\ell}(s_0)) & \mapsto & \frac{(s^*)^{\tilde{\eta}}}{s_0^{\tilde{\eta}}} (V_2(s^*),...,V_{\ell}(s^*)) ,
\end{array}
\end{equation}
With domain:
\begin{equation}
\mathcal{D}(f)=\left\{(V_2(s_0),...,V_{\ell}(s_0))\in \mathcal{B}^{\ell-1}\left(\frac{1}{s_0^{\tilde{\eta}}} \right), \ \text{such} \ \text{that} \ s^* <+\infty  \right\} .
\end{equation}
We prove in the following lemma that $\mathcal{D}$ is non empty, open in $\mathcal{B}^{\ell-1}\left(\frac{1}{s_0^{\tilde{\eta}}} \right)$, that $f$ is continuous and equivalent to the identity on the sphere $ \mathcal{S}^{\ell-1}\left(\frac{1}{s_0^{\tilde{\eta}}} \right) $.

\begin{lemma}\label{endoftheproof:lem:proprietes de f}
\emph{(Topological properties of $f$)} The following properties hold:
\begin{itemize}
\item[(i)] $\mathcal{D}(f)$ is non empty and open, satisfying $\mathcal{S}^{\ell-1}\left(\frac{1}{s_0^{\tilde{\eta}}} \right)\subset \mathcal{D}(f)$ .
\item[(ii)] $f$ is continuous and is the identity on the sphere $\mathcal{S}^{\ell-1}\left(\frac{1}{s_0^{\tilde{\eta}}} \right)$.
\end{itemize}
\end{lemma}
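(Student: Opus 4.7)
The plan is to reduce both (i) and (ii) to a single transversality property: on the sphere $\mathcal{S}^{\ell-1}(s^{-\tilde\eta})$ the vector $(V_2,\dots,V_\ell)$ crosses strictly outward. Once this is established, both claims follow from continuous dependence of the (NLW) flow on the initial datum and from the fact, already proved in Lemmas \ref{endoftheproof:lem:integration des equations pour les normes} and \ref{endoftheproof:lem: controle des modes stables}, that the only bootstrap bound that can saturate at $s=s^*$ is the one on the unstable modes, since all the others are strictly improved.

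For the transversality, I would combine Lemma \ref{lem:linearisationsystemdyn} (giving the linearised system and the diagonalisation $P_\ell A_\ell P_\ell^{-1}$) with the modulation estimate \fref{thetrapped:eq:modulation leq L-1} and the bootstrap bounds \fref{eq:bootstrap estimations sur epsilon}--\fref{eq:bootstrap modes stables} to write, for $2\le i\le \ell$,
\begin{equation*}
V_{i,s}=\frac{\mu_i}{s}V_i+\frac{q_i}{s}U_{\ell+1}+\frac{O(|V|^2)}{s}+\text{(forcing from $\bos{\varepsilon}$, $\tilde{\bos{\psi}}_b$)},\qquad \mu_i=\frac{i\alpha}{\ell-\alpha}\ge \frac{2\alpha}{\ell-\alpha}>0.
\end{equation*}
The size bounds $|U_{\ell+1}|\le \epsilon_{\ell+1}s^{-\tilde\eta}$, $|V|\le C s^{-\tilde\eta}$, and $\sqrt{\mathcal{E}_{s_L}}\le C s^{-L-(1-\delta_0)(1+\eta)}$ show that every contribution to $V_{i,s}$ except the linear diagonal term is $o(s^{-1-\tilde\eta})$ once $\epsilon_{\ell+1}$ and $\tilde\eta$ are taken small enough. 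Multiplying by $2V_i$, summing over $2\le i\le \ell$, and evaluating on the sphere $|V_u|^2:=V_2^2+\dots+V_\ell^2=s^{-2\tilde\eta}$ therefore gives
\begin{equation*}
\frac{d}{ds}\bigl(s^{2\tilde\eta}|V_u|^2\bigr)_{|_{s=s^*}}\;\ge\;\frac{2\tilde\eta+4\alpha/(\ell-\alpha)}{s^*}\bigl(1+o(1)\bigr)\;>\;0,
\end{equation*}
which is the outward flux condition.

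From here, (i) and (ii) are standard. Picking any $V_u(s_0)\in \mathcal{S}^{\ell-1}(s_0^{-\tilde\eta})$ one gets $s^*=s_0$, so $\mathcal{S}^{\ell-1}(s_0^{-\tilde\eta})\subset \mathcal D(f)$ (in particular $\mathcal D(f)\ne\varnothing$) and $f=\mathrm{Id}$ there; this settles half of (ii). For openness of $\mathcal D(f)$ and continuity of $f$, pick any $V_u(s_0)\in\mathcal D(f)$; the transversality shows that the $C^1$ function $s\mapsto s^{2\tilde\eta}|V_u(s)|^2-1$ has a strictly positive derivative at the simple zero $s^*$, so the implicit function theorem produces a continuous exit-time functional on a neighborhood of $V_u(s_0)$ in $\mathcal{B}^{\ell-1}(s_0^{-\tilde\eta})$, that neighborhood is contained in $\mathcal D(f)$, and $f$ is continuous there by continuous dependence of the flow on the initial datum (which is granted by the bootstrap: the whole trajectory lives in a compact region where the equations are Lipschitz in the finite-dimensional variables and linear-plus-bounded-perturbation in $\bos{\varepsilon}$).

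The main obstacle is the first step: checking that all the forcing terms in the equation for $V_i$ are indeed quantitatively subdominant to the diagonal $\mu_i V_i/s$ on the sphere. This is where one must keep track of the precise powers $s^{-\tilde\eta}$, $s^{-2\tilde\eta}$, and $s^{-L-(1-\delta_0)(1+\eta)}$ provided by the preceding lemmas, and choose first $\tilde\eta$ small with respect to $\alpha/(\ell-\alpha)$, then $\epsilon_{\ell+1}$ small with respect to $\tilde\eta$, and finally $s_0$ large to absorb all remaining constants. Once this accounting is done, the rest of the argument is purely topological: transversal crossing of a sphere by a continuous-in-initial-data flow.
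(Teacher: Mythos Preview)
Your proposal is correct and follows essentially the same approach as the paper: derive the evolution $V_{i,s}=\tfrac{\mu_i}{s}V_i+\text{(small)}$ for $2\le i\le\ell$, check the outward flux condition on the sphere $|V_u|=s^{-\tilde\eta}$, and conclude by continuous dependence of the flow. The only stylistic difference is that you invoke the implicit function theorem for the continuity of $s^*$, whereas the paper gives a more hands-on bounded-exit-time argument (if $|V_u(s)|\ge\tfrac{1}{2}s^{-\tilde\eta}$ then exit within a uniform time window); the latter only needs continuous dependence of the trajectory rather than $C^1$ dependence, which is slightly more robust in this infinite-dimensional setting.
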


\begin{proof}[Proof of Lemma \ref{endoftheproof:lem:proprietes de f}]
We recall that $V_i$ is the projection of $U$ on the unstable direction $v_i$ associated to the eigenvalue $\frac{i\alpha}{\ell-\alpha}$ of the matrix $A_{\ell}$, see Lemma \fref{lem:linearisationsystemdyn}. To ease notation we will write $\mu_i:=\frac{i\alpha}{\ell-\alpha}$ the eigenvalues. From the time evolution of $U_i$ for $1\leq i \leq \ell$ computed in \fref{endoftheproof:eq:eq devolution des Ui} we get that the time evolution of $V_i$ is:
$$
\begin{array}{r c l}
V_{i,s}&=&\frac{\mu_i}{s}V_i +O(s^{-1-2\tilde{\eta}})+O(s^{L-\ell})+O(\epsilon_{\ell +1}s^{-1-\tilde{\eta}})\\
&=& \frac{\mu_i}{s}V_i +O(\epsilon_{\ell +1}s^{-1-\tilde{\eta}}) .
\end{array}
$$
Let $(V_2(s_0),...,V_{\ell}(s_0))\in \mathcal{S}^{\ell-1}\left(\frac{1}{s_0^{\tilde{\eta}}} \right)$ be an initial data on the sphere. We claim that $s^*=0$ which implies of course:
$$
f((V_2(s_0)),...V_l(s_0)))=(V_2(s_0)),...V_l(s_0)) .
$$
This will prove that $\mathcal{D}(f)$ is non empty and that $f$ is equivalent to the identity on $\mathcal{S}^{\ell-1}\left(\frac{1}{s_0^{\tilde{\eta}}} \right)$. To prove that, we just compute the scalar product between the time derivative of $(V_2(s),...V_{\ell}(s))$ and an outgoing normal vector to the sphere at the point $(V_2(s_0),...V_{\ell}(s_0))$:
$$
(V_2(s_0),...V_{\ell}(s_0)).(V_{2,s}(s_0),...V_{\ell,s}(s_0)) =\sum_{i=2}^{\ell} \frac{\mu_i}{s_0}|V_i|^2 +O(\epsilon_{\ell+1}s_0^{-1-2\tilde{\eta}})>0
$$
for $\epsilon_{\ell+1}$ small enough. In addition, this inequality uniformly holds on the sphere. For any small time $s'$, we have that $(V_2(s_0+s'),...V_{\ell}(s_0+s'))$ is outside the ball, which implies $s^*=s_0$.\\
\\
At $s=s_0$, this says that close to the border of the ball $\mathcal{B}^{\ell-1}(\frac{1}{s_0^{\tilde{\eta}}})$ the force term whose size is $O(\epsilon_{\ell+1}s_0^{-1-\tilde{\eta}})$ is overtaken by the linear repulsive dynamics. We are going to show that this is also true for $s_0\leq s \leq s^* $.\\
\\
We now prove that $f$ is continuous. Let $s$ be such that $s_0\leq s\leq s^*$ and let $(V_2(s_0),...,V_{\ell}(s_0))$ be an initial data such that at time $s$, $\frac{1}{2s^{\tilde{\eta}}}\leq(V_2(s),...,V_{\ell}(s))$. The same computation gives:
$$
\begin{array}{r c l}
\frac{d}{ds}|V|^2&=&(V_2(s),...V_{\ell}(s)).(V_{2,s}(s),...V_{\ell,s}(s)) \\
&\geq& \text{min}((\mu_i)_{2\leq i \leq \ell})\frac{1}{4s^{1+2\tilde{\eta}}} +O(\frac{\epsilon_{\ell+1}}{s^{1+2\tilde{\eta}}} ) \\
&>&0 ,
\end{array}
$$
once again provided one has taken $\epsilon_{\ell+1}$ small enough. It implies that at time $s$ fixed, there exists a small enough time $s^+>0$ and a small enough distance $r>0$ such that:
$$
\frac{1}{s^{\tilde{\eta}}}-r\leq |V(s)| \leq \frac{1}{\tilde{\eta}} \ \text{implies} \ s\leq s^* \leq s^+ ,
$$
ie the orbit leaves the ball $\mathcal{B}^{\ell-1}(\frac{1}{s^{\tilde{\eta}}})$ in finite time. Let now $(V_2(s_0),...,V_{\ell}(s_0))$ be an initial data such that $s^*<+\infty$. Since the time evolution of $V$ is a lipischitz continuous function of our problem, there is local continuity of the trajectories. Take $s^-<s^*$ close enough to $s^*$ so that $1/s^{\tilde{\eta}}-\frac{r}{2}\leq|V(s^-)|$, there exists a small enough distance $r_0>0$ such that if $|V'(s_0)-V(s_0)|<r_0$ then $|V'(s)-V(s)|<\frac{r}{4}$ for $s_0\leq s \leq s^-$. The exit result we just stated implies that $s^-<s^*(V')$ and that $1/s^{\tilde{\eta}}-\frac{3r}{4}\leq V'(s^-)$. So that $s^-\leq s^*(V') \leq s^-+s^+$. We have proven that $\mathcal{D}(f)$ is open.\\
\\
From direct inspection, with the use of continuity properties, it is easy to prove in the same spirit that the function $s^*$ is continuous on $\mathcal{D}$, and that $f$ is continuous too on $\mathcal{D}(f)$.
\end{proof}

We have reached the end of the proof. Indeed, if for all choices of initial data $(V_2(s_0),...,V_{\ell}(s_0))$ we had $s^*<+\infty$, ie that no solution stayed in the trapped regime for all time, then $f$ would be a continuous function from the ball $\mathcal{B}^{\ell-1}(\frac{1}{s_0^{\tilde{\eta}}})$ onto the sphere $\mathcal{S}^{\ell-1}(\frac{1}{s_0^{\tilde{\eta}}})$ being equal to the identity at the border. This would be a contradiction to Brouwer's fixed point theorem. It implies the existence of at least one initial data $(V_2(s_0),...,V_{\ell}(s_0))\in \mathcal{B}^{\ell-1}\left(\frac{1}{s_0^{\tilde{\eta}}} \right)$ such that the solution of (NLW) stays in the trapped regime described by Proposition \ref{prop:bootstrap}.\\
\\
We now end the proof of the main theorem. We know from Proposition \ref{prop:bootstrap} that there exists an orbit satisfying the assumptions of the trapped regime. We have computed that in that case there exists a constant $c>0$ such that:
$$
\frac{1}{c}s^{-\frac{\ell}{\ell-\alpha}}\leq \lambda \leq c s^{-\frac{\ell}{\ell-\alpha}} .
$$
Since $\frac{ds}{dt}=\frac{1}{\lambda}$ it gives:
$$
\frac{1}{c'}s^{\frac{\ell}{\ell-\alpha}}\leq \frac{ds}{dt}\leq c's^{\frac{\ell}{\ell-\alpha}}.
$$
This is an explosive ODE, we have that there exists a maximal time $T$ with:
$$
s \sim C(\bos{u}(0)) (T-t)^{-\frac{\ell-\alpha}{\alpha}} \ \text{as} \ t\rightarrow T .
$$
This implies:
$$
\frac{1}{c} (T-t)^{\frac{\ell}{\alpha}}\leq \lambda(t)\leq c (T-t)^{\frac{\ell}{\alpha}} \ \text{as} \ t \rightarrow T .
$$


\subsection{Behavior of Sobolev norms near blow-up time}
\label{end:subsection:normes}
We now prove the convergence of the norms \fref{intro:eq:comportement norme critique}, \fref{intro:eq:comportement norme critique 2}, \fref{intro:eq:convergence surcritique} and \fref{intro:eq:bornitude sous critique}. First note that our analysis relies only on the study of supercritical Sobolev norms $(\dot{H}^{\sigma}\cap \dot{H}^{s_L})\times(\dot{H}^{\sigma-1}\cap \dot{H}^{s_L-1})$ for the perturbative term $\tilde{\bos{\alpha}}_{b,\frac{1}{\lambda}}+\bos{w}$. For this reason, the finiteness of the $\dot{H}^1\times L^2$ norm of the initial data is not a requirement. Still, it is worth studying the behavior of lower order Sobolev norms because it applies when taking "nice" initial data, say smooth and with compact support, and because their asymptotic really corresponds to the concentration of a critical object. We still consider a solution described by Proposition \ref{prop:bootstrap} but now under the following decompositions:
\be
\bos{u}=\bos{Q}_{\frac{1}{\lambda}}+\tilde{\bos{w}}=(\bos{Q}+\tilde{\bos{\varepsilon}})_{\frac{1}{\lambda}}, \ \ \text{ie} \ \tilde{\bos{w}}=\bos{w}+\tilde{\bos{\alpha}}_{b,\frac{1}{\lambda}}, \ \ \text{and} \ \tilde{\bos{\varepsilon}}=\bos{\varepsilon}+\tilde{\bos{\alpha}}_{b},
\ee
\be \label{end:eq:decomposition soliton coupe}
\bos{u}=\chi \bos{Q}_{\frac{1}{\lambda}}+\bos{w}'=(\chi_{\frac{1}{\lambda}}\bos{Q}+\bos{\varepsilon}')_{\frac{1}{\lambda}}, \ \ \text{ie} \ \bos{w}'=\tilde{\bos{w}}+((1-\chi_{\frac{1}{\lambda}})\bos{Q})_{\frac{1}{\lambda}}, \ \ \text{and} \ \bos{\varepsilon}'=\tilde{\bos{\varepsilon}}+(1-\chi_{\frac{1}{\lambda}})\bos{Q}.
\ee
We recall that the subscript $\frac{1}{\lambda}$ has a different meaning when it applies to $\chi$, see \fref{intro:eq:definition scaling chi}. First note that because of \fref{eq:bootstrap estimations sur epsilon} and because $\mathcal{E}_{s_L}$ controls the usual Sobolev norms, see \fref{annexe:coercivitenormes:eq:mathcalEsL sim HsL}, one has by interpolation:
\be
\parallel \bos{\varepsilon} \parallel_{\dot{H}^{s}\times \dot{H}^{s-1}} \underset{t\rightarrow T}{\rightarrow} 0 \ \text{for} \ \text{all} \ \sigma\leq s \leq s_L.
\ee
Moreover, this convergence is also true for the perturbation on the manifold of approximate blow-up solutions:
$$
\parallel \tilde{\bos{\alpha}}_{b} \parallel_{\dot{H}^{s}\times \dot{H}^{s-1}} \underset{t\rightarrow T}{\rightarrow} 0 \ \text{for} \ \text{all} \ \sigma\leq s \leq s_L.
$$
so we get for the perturbation:
\be \label{end:eq:convergence erreur vers 0 geq sigma}
\parallel \tilde{\bos{\varepsilon}} \parallel_{\dot{H}^{s}\times \dot{H}^{s-1}} \underset{t\rightarrow T}{\rightarrow} 0 \ \text{for} \ \text{all} \ \sigma\leq s \leq s_L.
\ee
We suppose from now on that $\parallel \bos{u}(0) \parallel_{\dot{H}^{1}\times L^2}$ is finite. This implies the boundedness of the perturbation at initial time: $\parallel \bos{\varepsilon}'(0) \parallel_{\dot{H}^{1}\times L^2}= \parallel \bos{w}'(0)\parallel_{\dot{H}^{1}\times L^2}\leq C(\bos{u}(0)) $. We show first that this last quantity stays bounded.

\begin{lemma}[Boundedness in $\dot{H}^1\times L^2$] \label{end:lem:borne L2}
Suppose $\bos{u}$ is a solution described by Proposition \ref{prop:bootstrap}, such that $\bos{u}(0)\in \dot{H}^1\times L^2$. Then there exists a constant $C(\bos{u}(0))$ such that for all $0\leq t<T$:
\be \label{end:eq:borne L2}
\parallel \bos{u}\parallel_{\dot{H}^1\times L^2} \leq C(\bos{u}(0))
\ee
\end{lemma}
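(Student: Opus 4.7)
\emph{Plan of proof.} The basic tool is conservation of energy. Since the framework of Proposition \ref{prop:bootstrap} endows the solution with high Sobolev regularity (in particular $\bos{u}(0)\in \dot{H}^{s_L}\times\dot{H}^{s_L-1}$, which combined with the assumption $\bos{u}(0)\in\dot{H}^1\times L^2$ and interpolation yields $u^{(1)}(0)\in L^{p+1}$ via Sobolev embedding), and since $p=2q+1$ is analytic, the energy
\begin{equation*}
E(\bos{u}) \;=\; \tfrac12\|\bos{u}\|_{\dot{H}^1\times L^2}^2 \,-\, \tfrac{1}{p+1}\|u^{(1)}\|_{L^{p+1}}^{p+1}
\end{equation*}
is well-defined and conserved on $[0,T)$. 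The identity
\begin{equation*}
\|\bos{u}(t)\|_{\dot{H}^1\times L^2}^2 \;=\; 2E(\bos{u}(0)) \,+\, \tfrac{2}{p+1}\|u^{(1)}(t)\|_{L^{p+1}}^{p+1}
\end{equation*}
then reduces the lemma to a uniform-in-$t$ bound on $\|u^{(1)}(t)\|_{L^{p+1}}$.

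For that bound I would use the decomposition $\bos{u}=\chi\bos{Q}_{1/\lambda}+\bos{w}'$ of \fref{end:eq:decomposition soliton coupe} and split the integration $\int|u^{(1)}|^{p+1}$ into the ball $|x|\leq 2$ and its complement. For the soliton piece, the change of variable $y=x/\lambda$ together with the asymptotic $Q(y)\sim c_\infty y^{-2/(p-1)}$ yields
\begin{equation*}
\|\chi Q_{1/\lambda}\|_{L^{p+1}}^{p+1} \;=\; \lambda^{d-2(p+1)/(p-1)}\!\!\int|\chi(\lambda y)|^{p+1}Q(y)^{p+1}y^{d-1}\,dy \;\lesssim\; 1
\end{equation*}
uniformly in $\lambda$, the two powers of $\lambda$ cancelling exactly because $s_c>1$ in the supercritical regime. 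On the ball, $w^{'(1)}=\tilde{w}^{(1)}+(1-\chi)Q_{1/\lambda}$: the second term is uniformly bounded on $1\leq|x|\leq 2$ since the singular concentration at the origin is excluded there, while the first is handled via the scale-change identity $\|\tilde{\bos{w}}\|_{L^{p+1}}=\lambda^{d/(p+1)-2/(p-1)}\|\tilde{\bos{\varepsilon}}\|_{L^{p+1}}$, where the exponent is strictly positive in the supercritical regime and $\|\tilde{\bos{\varepsilon}}\|_{L^{p+1}}$ is controlled through Sobolev embedding and the high-regularity bootstrap bound \fref{eq:bootstrap estimations sur epsilon}.

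The main obstacle is the tail contribution $\int_{|x|\geq 2}|u^{(1)}|^{p+1}$, which, because $\chi\equiv 0$ there, equals $\int_{|x|\geq 2}|w^{'(1)}|^{p+1}$, and for which the two pieces of the decomposition $\bos{w}'=\tilde{\bos{w}}+(1-\chi)\bos{Q}_{1/\lambda}$ separately carry slowly-decaying tails $\sim|x|^{-2/(p-1)}$ which are not individually in $L^{p+1}$ and must cancel in the sum. To close this I would exploit the fact that the blow-up described by Proposition \ref{prop:bootstrap} is a single-point concentration at the origin with $\lambda(t)\to 0$, so that away from the origin the solution remains regular; invoking finite speed of propagation together with the boundedness of the maximal time $T$, the tail $\|u^{(1)}(t)\|_{L^{p+1}(|x|\geq 2)}$ is uniformly controlled by the $L^{p+1}$-regularity of the initial datum on a suitably enlarged ball. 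Combining these three controls with the energy identity yields the desired uniform bound on $\|\bos{u}(t)\|_{\dot{H}^1\times L^2}$.
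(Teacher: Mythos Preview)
Your route through energy conservation is fundamentally different from the paper's, and it has a real gap in the tail control. The paper never invokes the conserved energy; instead it proves a direct Gr\"onwall estimate on $\|\bos{w}'\|_{\dot H^1\times L^2}^2$. Writing $\partial_t\bos{w}'$ as a linear free-wave part (which cancels in the energy identity for $\bos w'$), a force term coming from the localization of $Q$ (shown to have uniformly bounded $\dot H^1\times L^2$ norm), and an interaction term $\sim(\chi Q_{1/\lambda})^{p-k}(w'^{(1)})^k$, the latter is controlled via Hardy inequalities and interpolation between $\|\bos w'\|_{\dot H^1\times L^2}$ and the already-bounded $\|\bos w'\|_{\dot H^\sigma\times\dot H^{\sigma-1}}$. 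The outcome is an inequality of the shape $\tfrac{d}{dt}\|\bos w'\|_{\dot H^1\times L^2}^2\lesssim 1+\|\bos w'\|_{\dot H^1\times L^2}^{2-c}$ with $c>0$, which integrates on the finite interval $[0,T)$.

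The gap in your argument is the exterior piece $\int_{|x|>2}|u^{(1)}|^{p+1}$. Finite speed of propagation only tells you that $u(t,\cdot)|_{|x|>R}$ coincides with some other solution $v$ launched from truncated data; it does not furnish a norm bound on $v$. That truncated datum still carries the full soliton-like tail $\sim|x|^{-2/(p-1)}$, which in the supercritical range $s_c>1$ is \emph{not} in $L^{p+1}$ (the integrability condition $2(p+1)/(p-1)>d$ is exactly $s_c<1$). Hence bounding $\|v^{(1)}(t)\|_{L^{p+1}}$ on $[0,T)$ is the same large-data supercritical problem you started from, and neither small-data global theory nor local well-posedness produces a lifespan $\geq T$ without extra input. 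There is also a secondary issue in your ball estimate: the bootstrap \fref{eq:bootstrap estimations sur epsilon} places $\tilde{\bos\varepsilon}$ only in $\dot H^\sigma\cap\dot H^{s_L}$ with $\sigma>s_c$, and Sobolev embedding then gives $L^q$ only for $q\geq q_\sigma=\tfrac{2d}{d-2\sigma}>p+1$ (strict, since $p$ is energy-supercritical), so $\|\tilde{\bos\varepsilon}\|_{L^{p+1}}$ is not globally controlled by those bounds. That piece can actually be rescued by H\"older on the growing ball $|y|\leq 2/\lambda$ combined with the precise smallness $\mathcal E_\sigma\lesssim b_1^{2(\sigma-s_c)(1+\nu)}$ and $\lambda\sim b_1^{1+\nu}$ (the powers cancel exactly), but that is a different and more delicate argument than the one you sketched.
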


\begin{proof}[Proof of Lemma \ref{end:lem:borne L2}]
We first compute that under the decomposition \fref{end:eq:decomposition soliton coupe}, the soliton's contribution to the $\dot{H}^1$ norm is finite:
\be \label{end:eq:contribution finie du soliton}
\parallel \chi Q_{\frac{1}{\lambda}}\parallel_{\dot{H}^1} = \frac{1}{\lambda^{1-s_c}} \parallel \chi_{\frac{1}{\lambda}}Q \parallel_{\dot{H}^1} \leq  \frac{1}{\lambda^{1-s_c}} C \left( \int_1^{\frac{1}{\lambda}} y^{d-\frac{4}{p-1}-2} \right)^{\frac{1}{2}}\leq C.
\ee
Therefore, the lemma is proven once we show that the $\dot{H}^1\times L^2$ norm of $\bos{w}'$ stays finite. We are going to prove this by computing its time evolution under the bootstrap regime. We claim that:
\be \label{end:eq:borne evolution temporelle w'L2}
\frac{d}{dt}\parallel \bos{w}' \parallel_{\dot{H}^{1}\times L^2}^2\leq C\parallel \bos{w}' \parallel_{\dot{H}^{1}\times L^2}+C \sum_{k=1}^{p} \parallel \bos{w} \parallel_{\dot{H}^1\times L^2}^{2-c_k} \parallel \bos{w} \parallel_{\dot{H}^{\sigma}\times \dot{H}^{\sigma-1}}^{c_k'},
\ee
where for each $k$, $0<c_k\leq 2$. We start by proving this bound. The time evolution of $\bos{w}'$ is:
\be
\partial_t \bos{w}'=\bos{L}+\frac{1}{\lambda}\bos{\mathcal{F}}_{\frac{1}{\lambda}}+\frac{1}{\lambda}\bos{I}_{\frac{1}{\lambda}}
\ee
where $\bos{L}$ is the linear part, $\bos{L}:= \begin{pmatrix} w^{(2)} \\ \Delta w^{(1)}  \end{pmatrix}$, $\bos{\mathcal{F}}$ is the force term:
$$
\bos{\mathcal{F}}=\begin{pmatrix}  \lambda_t \chi_{\frac{1}{\lambda}}\Lambda^{(1)}Q \\ \chi_{\frac{1}{\lambda}}Q^p(\chi_{\frac{1}{\lambda}}^{p-1}-1)+(\lambda^2(\partial_{rr} \chi)_{\frac{1}{\lambda}}+\frac{d-1}{r}\lambda (\partial_r \chi)_{\frac{1}{\lambda}})  Q+2\lambda(\partial_r \chi)_{\frac{1}{\lambda}}\partial_r Q \end{pmatrix} ,
$$
and $\bos{I}$ is the interaction term: $\bos{I}=\begin{pmatrix} 0 \\ \sum_{k=1}^p C_k (\chi_{\frac{1}{\lambda}}Q)^{p-k}(\varepsilon^{'(1)})^k \end{pmatrix}$. It leads to the following expression for the time derivative of the norm:
\be \label{end:eq:expression derivee temporelle w'L2}
\frac{d}{dt} \parallel \bos{w}' \parallel_{\dot{H}^{1}\times L^2}^2 = 2\int \nabla w^{'(1)}.\nabla(L^{(1)}+\frac{1}{\lambda}\mathcal{F}^{(1)} )+  2\int w^{'(2)} (L^{(2)}+\frac{1}{\lambda}\mathcal{F}^{(2)}+\frac{1}{\lambda}I^{(2)} ).
\ee
We now want to estimate everything in the right hand side of \fref{end:eq:expression derivee temporelle w'L2}. The linear term's contribution is null:
\be \label{end:eq:contribution L}
\int \nabla w^{'(1)}.\nabla w^{'(2)}+w^{'(2)}\Delta w^{'(1)}=0.
\ee
We then compute the size of the force term. For the first coordinate:
\be \label{end:eq:norme H1 terme force haut}
\begin{array}{r c l}
\int \frac{1}{\lambda^2} |\nabla \mathcal{F}^{(1)}|^2  &=&   \frac{1}{\lambda^2} \frac{1}{\lambda^{2(1-s_c)}} \int \lambda_t^2 |\nabla(\chi_{\frac{1}{\lambda}}\Lambda^{(1)}Q)|^2 \leq  C\frac{1}{\lambda^{2(2-s_c)}}\int_1^{\frac{1}{\lambda}} y^{d-2\gamma-2-1}dy \\
&\leq& C\frac{\lambda_t^2}{\lambda^{2(2-s_c)+d-2\gamma-2}}\leq C\lambda_t^2 \lambda^{2\alpha-2}\leq C,
\end{array}
\ee
because $\alpha>2$ and $\lambda_t =b_1\rightarrow 0$ as $t\rightarrow T$. For the second coordinate:
\be \label{end:eq:norme L2 terme force bas}
\begin{array}{r c l}
\int \frac{1}{\lambda^2} |\mathcal{F}^{(2)}|^2&=&\frac{1}{\lambda^2} \frac{1}{\lambda^{2(1-s_c)}} \Bigl( \int | \chi_{\frac{1}{\lambda}}Q^p(\chi_{\frac{1}{\lambda}}^{p-1}-1)\\
&&+(\lambda^2(\partial_{rr} \chi)_{\frac{1}{\lambda}}+\frac{d-1}{r}\lambda (\partial_r \chi)_{\frac{1}{\lambda}})  Q+2\lambda(\partial_r \chi)_{\frac{1}{\lambda}}\partial_r Q   |^2\Bigr) \\
&\leq & C \frac{1}{\lambda^{2(2-s_c)}} \int_1^{\frac{1}{\lambda}} y^{d-4-\frac{4}{p-1}-1}dy\leq C \frac{1}{\lambda^{2(2-s_c)}}\frac{1}{\lambda^{d-\frac{4}{p-1}-4}}=C.
\end{array}
\ee
The bounds \fref{end:eq:norme H1 terme force haut} and \fref{end:eq:norme L2 terme force bas} imply the bound for the force term's contribution:
\be \label{end:eq:contribution F}
\left| \int \frac{1}{\lambda}\nabla w^{'(1)}.\nabla \mathcal{F}^{(1)}+\frac{1}{\lambda} w^{'(2)}\mathcal{F}^{(2)} \right| \leq C \parallel \bos{w}'  \parallel_{\dot{H}^1\times L^2}.
\ee 
We now turn to the $L^2$ norm of the interaction term. First we rescale:
\be \label{end:eq:norme L2 I intermediaire}
\left| \frac{1}{\lambda}\int w^{'(2)} I^{(2)}\right| \leq \frac{C}{\lambda^{1+2(1-s_c)}} \sum_{k=1}^p \int |\varepsilon^{'(2)}|(\chi_{\frac{1}{\lambda}}Q)^{(p-k)} |\varepsilon^{'(1)}|^{k}.
\ee
We first take $k=1$. Because of the asymptotic $Q^{p-1}\sim \frac{c}{y^2}$ we use Hardy inequality and interpolation:
$$
\begin{array}{r c l}
\int |\varepsilon^{'(2)}|(\chi_{\frac{1}{\lambda}}Q)^{(p-1)} |\varepsilon^{'(1)}|&\leq & C\parallel \varepsilon^{'(2)}  \parallel_{L^2} \parallel \nabla^{2}\varepsilon^{'(1)} \parallel_{L^2}\\
&\leq& C \parallel \varepsilon^{'(2)}  \parallel_{L^2} \parallel \varepsilon^{'(1)} \parallel_{\dot{H}^1}^{\frac{\sigma-2}{\sigma-1}} \parallel \varepsilon^{'(1)} \parallel_{\dot{H}^{\sigma}}^{\frac{1}{\sigma-1}}.
\end{array}
$$
As $\frac{\sigma-2}{\sigma-1}(1-s_c)+\frac{\sigma-s_c}{\sigma-1}=2-s_c$ this gives the the estimate when applying the scale change:
\be \label{end:eq:norme L2 I k=1}
\frac{1}{\lambda^{1+2(1-s_c)}} \int |\varepsilon^{'(2)}|(\chi_{\frac{1}{\lambda}}Q)^{(p-1)} |\varepsilon^{'(1)}| \leq C \parallel w^{'(2)}  \parallel_{L^2} \parallel w^{'(1)} \parallel_{\dot{H}^1}^{\frac{\sigma-2}{\sigma-1}} \parallel w^{'(1)} \parallel_{\dot{H}^{\sigma}}^{\frac{1}{\sigma-1}}.
\ee
Now let $k$ be an integer, $2\leq k \leq p$. We have the asymptotic: $Q^{p-k}\sim \frac{c}{y^{2\frac{p-k}{p-1}}}$. We put this weighted decay on $\varepsilon^{'(2)}$, use Hardy inequality and interpolation:
\be \label{end:eq:norme L2 I hardy}
\parallel (\chi_{\frac{1}{\lambda}}Q)^{p-k}\varepsilon^{'(2)}  \parallel_{L^2}\leq C\parallel \nabla^{\frac{2(p-k)}{p-1}}\varepsilon^{'(2)}   \parallel_{L^2} \leq  C\parallel \varepsilon^{'(2)} \parallel_{L^2}^{1-\theta} \parallel \nabla^{\sigma-1} \varepsilon^{'(2)} \parallel_{L^2}^{\theta}
\ee
for $\theta=\frac{2(p-k)}{(p-1)(\sigma-1)}$. From Sobolev injection $|\varepsilon^{'(1)}|^k\in L^q$ for $q\in [\frac{2d}{k(d-2)},\frac{2d}{k(d-2\sigma)}]$. Because we work in a high dimension $d\geq 11$ and $p$ is an integer $\geq 2$ one has:
$$
\frac{2d}{k(d-2)}\leq 2 \leq \frac{2d}{k(d-2\sigma)}=\frac{(p-1)d}{2k}+O(\sigma-s_c).
$$
This implies that $\varepsilon^{'(1)k}\in L^2 $ with the estimate:
\be \label{end:eq:norme L2 I sobolev}
\begin{array}{r c l}
\parallel \varepsilon^{'(1)k} \parallel_{L^2} = \parallel \varepsilon^{'(1)}  \parallel_{L^{2k}}^k \leq C \parallel \varepsilon^{'(1)}\parallel_{\dot{H}^1}^{k(1-\theta')} \parallel \varepsilon^{'(1)}\parallel_{\dot{H}^{\sigma}}^{k\theta'},
\end{array}
\ee
for $\frac{(1-\theta')(d-2)}{2d}+\frac{\theta'(d-2\sigma)}{2d}=\frac{1}{2k}$. The estimates \fref{end:eq:norme L2 I hardy} and \fref{end:eq:norme L2 I sobolev} allow us to apply Cauchy Schwarz and find:
$$ 
\int |\varepsilon^{'(2)}|(\chi_{\frac{1}{\lambda}}Q)^{(p-k)} |\varepsilon^{'(1)}|^k \leq C \parallel \varepsilon^{'(2)}  \parallel_{L^2}^{1-\theta} \parallel \varepsilon^{'(2)}  \parallel_{\dot{H}^{\sigma-1}}^{\theta}  \parallel \varepsilon^{'(1)}\parallel_{\dot{H}^1}^{k(1-\theta')} \parallel \varepsilon^{'(1)}\parallel_{\dot{H}^{\sigma}}^{k\theta'}.
$$
We now compute: 
$$(1-\theta)(1-s_c)+\theta(\sigma-s_c)+k(1-\theta')(1-s_c)+k\theta'(\sigma-s_c=1+2(1-s_c)).
$$
 Hence when applying the scale change the last estimate gives:
\be \label{end:eq:norme L2 I k geq 2}
\frac{1}{\lambda^{1+2(1-s_c)}} \int |\varepsilon^{'(2)}|(\chi_{\frac{1}{\lambda}}Q)^{(p-k)} |\varepsilon^{'(1)}|^k \leq C \parallel \bos{w}  \parallel_{\dot{H}^1\times L^2}^{1-\theta+k(1-\theta')} \parallel \bos{w} \parallel_{\dot{H}^{\sigma}\times\dot{H}^{\sigma-1}}^{\theta+k\theta'} 
\ee
we compute the power involved for the $\parallel \bos{w}  \parallel_{\dot{H}^1\times L^2} $ term:
$$
1-\theta+k(1-\theta')=2-\frac{1-(k-1)(\sigma-s_c)}{\sigma-1}=2-c_k.
$$
We now go back to the expression \fref{end:eq:norme L2 I intermediaire}. We have computed the right hand side for the linear case in \fref{end:eq:norme L2 I k=1}, and in the non linear case in \fref{end:eq:norme L2 I k geq 2}. We have computed the coefficient condition for the non linear case in the last equation (it is straightforward in the linear case). Therefore we have the following estimate for the interaction term:
\be \label{end:eq:contribution I}
\left| \frac{1}{\lambda} \int w^{'(2)}I^{(2)}   \right| \leq C \sum_{k=1}^{p} \parallel \bos{w} \parallel_{\dot{H}^1\times L^2}^{2-c_k} \parallel \bos{w} \parallel_{\dot{H}^{\sigma}\times \dot{H}^{\sigma-1}}^{c_k'} .
\ee
We now come back to the identity \fref{end:eq:expression derivee temporelle w'L2}. We estimated the right hand side in \fref{end:eq:contribution L}, \fref{end:eq:contribution F} and \fref{end:eq:contribution I}, proving the bound \fref{end:eq:borne evolution temporelle w'L2} we claimed. We now integrate this equation in time. We recall that $\bos{w}'=\bos{w}+\tilde{\bos{\alpha}}_{b,\frac{1}{\lambda}}+(1-\chi_{\frac{1}{\lambda}}\bos{Q})_{\frac{1}{\lambda}} $. We take $s$ slightly supercritical: $s_c<s\leq \sigma$. The profile $\tilde{\bos{\alpha}}_{b,\frac{1}{\lambda}}$ has finite supercritical norm:
\be \label{end:eq:finitude normes surcritiques alphab}
\parallel \tilde{\bos{\alpha}}_{b,\frac{1}{\lambda}} \parallel_{\dot{H}^{s}\times \dot{H}^{s-1}}\underset{t\rightarrow T}{\rightarrow} 0.
\ee
The tail of the soliton has also a bounded size:
\be \label{end:eq:finitude normes surcritiques queue soliton}
\parallel ((1-\chi_{\frac{1}{\lambda})\bos{Q}})_{\frac{1}{\lambda}} \parallel_{\dot{H}^{s}\times \dot{H}^{s-1}} \leq C.
\ee
From the bound \fref{eq:bootstrap estimations sur epsilon}, the same property holds for $\bos{w}$ for $s=\sigma$: $\parallel \bos{w} \parallel_{\dot{H}^{\sigma}\times \dot{H}^{\sigma-1}}\leq C$.  Consequently, we have the boundedness of the $\sigma$ Sobolev norm for $\bos{w}'$: 
$$
\parallel \bos{w}' \parallel_{\dot{H}^{\sigma}\times \dot{H}^{\sigma-1}}\leq C.
$$ 
Coming back to the identity \fref{end:eq:borne evolution temporelle w'L2} it gives:
$$
\frac{d}{dt}\parallel \bos{w}' \parallel_{\dot{H}^{1}\times L^2}^2\leq C\parallel \bos{w}' \parallel_{\dot{H}^{1}\times L^2}+C \sum_{k=1}^{p} \parallel \bos{w} \parallel_{\dot{H}^1\times L^2}^{2-c_k}.
$$
The growth of this quantity is sub linear: it stays bounded until time $T$.
\end{proof}

We now know from the previous Lemma \ref{end:lem:borne L2} that our solution stays bounded in $L^2$ until blow-up time. Using \fref{end:eq:contribution finie du soliton} we have that:
$$
\parallel \bos{w}' \parallel_{\dot{H}^1\times L^2}\leq C.
$$
This implies for the renormalized error:
$$
\parallel \bos{\varepsilon}' \parallel_{\dot{H}^1\times L^2}\leq \lambda^{1-s_c} C.
$$
On the other hand, the bootstrap bound \fref{eq:bootstrap estimations sur epsilon} gives:
$$
\parallel \bos{\varepsilon}' \parallel_{\dot{H}^{\sigma}\times \dot{H}^{\sigma-1}}\leq \lambda^{\sigma-s_c} C.
$$
By interpolation, we get that for any $1\leq s \leq \sigma$:
\be \label{end:eq:interpolation sobolev}
\parallel \bos{\varepsilon}' \parallel_{\dot{H}^{s}\times \dot{H}^{s-1}}\leq \lambda^{s-s_c} C.
\ee
We now come back to the decomposition: $\bos{\varepsilon}'=\bos{\varepsilon}+\tilde{\bos{\alpha}}_{b}+(1-\chi_{\frac{1}{\lambda}}\bos{Q})$. From \fref{end:eq:finitude normes surcritiques alphab} and \fref{end:eq:finitude normes surcritiques queue soliton} the perturbation $\tilde{\bos{\alpha}}_{b}$ and the tail of the solitary waves enjoy the bound:
$$
\parallel \tilde{\bos{\alpha}}_b+(1-\chi_{\frac{1}{\lambda}})\bos{Q} \parallel_{\dot{H}^{s}\times \dot{H}^{s-1}}\leq \lambda^{s-s_c} C.
$$
Combined with the previous bound \fref{end:eq:interpolation sobolev}, it gives for the original error term:
$$
\parallel \bos{\varepsilon} \parallel_{\dot{H}^{s}\times \dot{H}^{s-1}}\leq \lambda^{s-s_c} C\rightarrow 0 \ \text{as} \ t\rightarrow T.
$$
This proves the convergence to $0$ of the renormalized perturbation in slightly supercritical norms:
\be \label{end:eq:convergence erreur vers 0 leq sigma}
\parallel \tilde{\bos{\varepsilon}} \parallel_{\dot{H}^{s}\times \dot{H}^{s-1}}\rightarrow 0 \ \ \text{as}  \ \ t\rightarrow T, \ \ \text{for} \  \ s_c<s\leq \sigma.
\ee
We now put \fref{end:eq:convergence erreur vers 0 geq sigma} and \fref{end:eq:convergence erreur vers 0 leq sigma} together: for any $s_c<s\leq s_L$,
\be \label{end:eq:convergence erreur vers 0}
\parallel \tilde{\bos{\varepsilon}} \parallel_{\dot{H}^{s}\times \dot{H}^{s-1}}\rightarrow 0 \ \ \text{as} \ \ t\rightarrow T.
\ee
Now we turn to subcritical Sobolev norms. Let $s$ be such that $1\leq s <s_c$. From \fref{end:eq:interpolation sobolev}, the perturbation has finite subcritical norms:
$$
\parallel \bos{w}' \parallel_{\dot{H}^{s}\times \dot{H}^{s-1}}\leq  C.
$$
As the localized soliton also has finite subcritical norms:
$$
\parallel (\chi_{\frac{1}{\lambda}}\bos{Q})_{\frac{1}{\lambda}} \parallel_{\dot{H}^{s}\times \dot{H}^{s-1}}\leq  C,
$$
this means that the full solution stays bounded in subcritical norms:
\be
\parallel \bos{u} \parallel_{\dot{H}^{s}\times \dot{H}^{s-1}}\leq  C(\bos{u}(0)).
\ee
We now turn the the critical norm. From \fref{end:eq:interpolation sobolev}, the perturbation has finite critical and slightly supercritical norms:
$$
\parallel \bos{w}' \parallel_{\dot{H}^{s}\times \dot{H}^{s-1}}\leq  C(\bos{u}(0)) \ \text{for} \ s_c\leq s \leq \sigma
$$
As the soliton is located on the first coordinate, this implies the boundedness of the time derivative in the critical and slightly critical spaces:
\be
\parallel \partial_t u^{(1)} \parallel_{\dot{H}^{s-1}}=\parallel u^{(2)} \parallel_{\dot{H}^{s-1}}\leq  C(\bos{u}(0)) \ \text{for} \ s_c\leq s \leq \sigma
\ee
The critical norm for the first coordinate comes then from the soliton cut in a fixed zone:
\be
\parallel  u^{(1)} \parallel_{\dot{H}^{s_c}}\sim \parallel  \chi Q_{\frac{1}{\lambda}} \parallel_{\dot{H}^{s_c}} = C(d,p)\sqrt{\ell} \sqrt{|\text{log}(T-t)|} (1+o(1)) \ \ \text{as} \ t\rightarrow T.
\ee


\section{Lipschitz aspect and codimension of the set of solutions described by Proposition \ref{prop:bootstrap}}

In Proposition \ref{pr:profilapprochecoupe}, we have constructed an approximate blow up profile described by a set of $L+1$ parameters $\lambda,b_1,...,b_L$: $\tilde{\bos{Q}}_{b,\frac{1}{\lambda}}$. We studied the approximate dynamics of (NLW) for such profiles, and found in Lemma \ref{lem:soldusystemd} that for each integer $\ell>\alpha$, the time dependent profile $\tilde{\bos{Q}}_{b^e,\frac{1}{\lambda^e}}$ was a good approximate blow up solution. In Proposition \ref{prop:bootstrap}, we showed the existence of a real solution of (NLW), under the form $\tilde{\bos{Q}}_{b^e+(\frac{U_1}{s},...,\frac{U_L}{s^L}),\frac{1}{\lambda}}+\bos{w} $,
that stayed close to this approximate blow up solution.\\
\\
To prove it, we studied the parameters $V_1,...,V_{\ell},U_{\ell+1},..,U_L$ (obtained from the $U_i$'s by a linear change of variables). We showed that at leading order, $V_1,U_{\ell+1},...,U_L$ were evolving according to a stable linear dynamics, whereas $V_2,...,V_{\ell}$ were evolving via a unstable linear one. The error $\bos{w}$ was showed to be a stable perturbation. For each initial values of the stable parameters $V_1(s_0),U_{\ell+1}(s_0),...,U_L(s_0)$ and error $\bos{w}(s_0)$, we proved in Lemma \ref{endoftheproof:lem:proprietes de f} that we could apply Brouwer's continuity argument to find the existence of at least one initial perturbation $V_2(s_0),...,V_{\ell}(s_0)$ such that the orbit $V_2,...,V_{\ell}$ stayed small., giving the existence of the real blow up solution.\\
\\
Now one could wonder: is the choice $V_2(s_0),...V_{\ell}(s_0)$ unique? If yes, how does it depend on the initial perturbation along the stable directions $V_1(s_0),U_{\ell+1}(s_0)$,..., $U_L(s_0)$ and $\bos{w}(s_0)$? We show in this section the uniqueness and the Lipschitz dependence. It will imply that the set of type II blow up solutions described by Proposition \ref{prop:bootstrap} is a Lipschitz manifold of codimension $\ell-1$.

\begin{theorem} \label{variete:thm;variete}
We keep the assumptions and notations of Proposition \ref{prop:bootstrap}, and recall that $\sigma$ and $s_L$ are defined in \fref{thetrapped:eq:def sigma} and \fref{intro:eq:def sL}. There exists a choice of constants implied in this proposition such that its result still holds, and that moreover the set of initial data leading to such solutions is a locally Lipschitz manifold of codimension $\ell-1$ in the space $\left(\dot{H}^{\sigma}\cap \dot{H}^{s_L} \right)\times \left(\dot{H}^{\sigma-1}\cap \dot{H}^{s_L-1} \right)$.
\end{theorem}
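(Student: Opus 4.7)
The plan is to prove Theorem~\ref{variete:thm;variete} by a difference analysis on pairs of trapped solutions produced by Proposition~\ref{prop:bootstrap}: the core statement will be that the initial data of the unstable modulation parameters $V_2(s_0),\ldots,V_\ell(s_0)$ is a Lipschitz function of the stable modulation data $(V_1(s_0), U_{\ell+1}(s_0),\ldots, U_L(s_0), \bos{\varepsilon}(s_0))$ and of the initial scale. Once this is known, the manifold is realised as the graph of this Lipschitz map in the coordinates given by the implicit decomposition of Subsection~\ref{thetrapped:subsubsection:modulation}, which identifies it as a locally Lipschitz submanifold of codimension $\ell-1$ in $(\dot{H}^{\sigma}\cap \dot{H}^{s_L})\times (\dot{H}^{\sigma-1}\cap \dot{H}^{s_L-1})$. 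Following the organisation suggested in the excerpt, I would first establish the Lipschitz dependence under the extra normalisation $\lambda(s_0)=1$ and some additional regularity (Proposition~\ref{variete:prop:parametres lipschitz}), and then remove these restrictions by scaling invariance and an approximation procedure (Proposition~\ref{variete:prop:parametres lipschitz 2}).

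For two solutions $\bos{u}=(\tilde{\bos{Q}}_b+\bos{\varepsilon})_{1/\lambda}$ and $\bos{u}'=(\tilde{\bos{Q}}_{b'}+\bos{\varepsilon}')_{1/\lambda'}$ satisfying $\lambda(s_0)=\lambda'(s_0)=1$ and trapped in the sense of Proposition~\ref{prop:bootstrap}, I would subtract \fref{eq:evolution epsilon} to obtain a coupled system for $\delta b := b-b'$, $\delta\lambda := \lambda-\lambda'$ and $\delta \bos{\varepsilon} := \bos{\varepsilon}-\bos{\varepsilon}'$. Because $\tilde{\bos{Q}}_b$ is smooth in $b$ and the nonlinearity is polynomial in $\varepsilon^{(1)}$, the resulting equation for $\delta\bos{\varepsilon}$ has exactly the structure of \fref{eq:evolution epsilon}: a wave-like linearised part around $\bos{Q}$, a modulation remainder $\delta \bos{Mod}$, an approximate-profile remainder $\delta\tilde{\bos{\psi}}$ and nonlinear terms that are linear in $\delta b, \delta \bos{\varepsilon}$ with coefficients controlled by the trapped bounds for both solutions. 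The three Lyapunov computations of Propositions~\ref{trappedregime:pr:low sobo}, \ref{trappedregime:pr:high sobo} and~\ref{thetrapped:pr:morawetz} then apply essentially verbatim to the functionals $\mathcal{E}_\sigma(\delta\bos{\varepsilon})$, $\mathcal{E}_{s_L}(\delta\bos{\varepsilon})$ and the corresponding Morawetz quantity, since all their ingredients are weighted Hardy-Sobolev interpolation inequalities and the asymptotics of $Q$, which are insensitive to passing from $\bos{\varepsilon}$ to $\delta\bos{\varepsilon}$. This yields a priori bounds of the form
\[ \mathcal{E}_\sigma(\delta\bos{\varepsilon})(s) \lesssim \mathcal{E}_\sigma(\delta\bos{\varepsilon})(s_0) + \sup_{s_0\le\tau\le s}|\delta b(\tau)|^2\, b_1(s)^{2(\sigma-s_c)(1+\nu)}, \]
together with its analogue at the $\mathcal{E}_{s_L}$ level. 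In parallel, applying Lemmas~\ref{lem:modulation} and~\ref{trappedregime:improvedmodulation:lem:improvedmodulation} to the difference and diagonalising through the matrix $P_\ell$ of Lemma~\ref{lem:linearisationsystemdyn} produces the finite dimensional dynamics
\[ \partial_s \delta V_i = \tfrac{\mu_i}{s}\,\delta V_i + R_i(s),\qquad 2\le i\le \ell, \quad \mu_i=\tfrac{i\alpha}{\ell-\alpha}>0, \]
along with a stable system for $\delta V_1,\delta U_{\ell+1},\ldots,\delta U_L$, where $R_i(s)$ is bounded by $s^{-1-\tilde{\eta}}$ times the stable differences and $\sqrt{\mathcal{E}_{s_L}(\delta\bos{\varepsilon})}/b_1^{L+(1-\delta_0)(1+\eta)}$.

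The hard part, which I expect to be the main obstacle, is the Brouwer-type exit argument turning the previous bounds into a quantitative Lipschitz estimate. Fix the stable initial data and initial error, and consider two choices of unstable initial data producing trapped solutions; set $r:=|\delta V(s_0)|$ with $\delta V:=(\delta V_2,\ldots,\delta V_\ell)$. The a priori estimates of the previous paragraph propagate as $\sqrt{\mathcal{E}_{s_L}(\delta\bos{\varepsilon})}(s)\lesssim r\,b_1(s)^{L+(1-\delta_0)(1+\eta)}$ plus analogous control on the stable differences, hence the remainders satisfy $|R_i(s)|\le C r\,s^{-1-\tilde{\eta}/2}$. Suppose for contradiction that the map sending stable data and error to unstable data is not Lipschitz, i.e.\ there exist, for every constant $C_0$, two trapped solutions with $r>C_0$ times the distance between the stable data and errors. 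Computing the radial flux $\delta V(s)\cdot\partial_s\delta V(s)$ exactly as in the proof of Lemma~\ref{endoftheproof:lem:proprietes de f}, the linear repulsive dynamics $\partial_s\delta V_i=\tfrac{\mu_i}{s}\delta V_i$ dominates the remainder and forces $|\delta V(s)|\gtrsim r(s/s_0)^{\mu_2}$, so that at least one of the two unstable orbits must exit $\mathcal{B}^{\ell-1}(s^{-\tilde{\eta}})$ in finite renormalised time, contradicting \fref{eq:bootstrap modes instables}. Therefore the map $(V_1(s_0), U_{\ell+1}(s_0),\ldots, U_L(s_0),\bos{\varepsilon}(s_0))\mapsto(V_2(s_0),\ldots,V_\ell(s_0))$ is well defined and locally Lipschitz in the inherited product norm. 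Reintroducing the scale via Proposition~\ref{variete:prop:parametres lipschitz 2} and translating back through the diffeomorphism $\phi$ of Subsection~\ref{thetrapped:subsubsection:modulation} then exhibits the set of initial data as the graph of a Lipschitz map over a subspace of codimension $\ell-1$ in $(\dot{H}^{\sigma}\cap \dot{H}^{s_L})\times (\dot{H}^{\sigma-1}\cap \dot{H}^{s_L-1})$, which is the content of Theorem~\ref{variete:thm;variete}.
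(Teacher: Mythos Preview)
Your overall strategy---difference analysis on two trapped solutions, the same three Lyapunov/Morawetz functionals applied to $\delta\bos{\varepsilon}$, and the repulsive dynamics of $(\delta V_2,\ldots,\delta V_\ell)$ to force the Lipschitz bound by contradiction---is the paper's route, and the graph construction at the end is correct. However there are two genuine technical points that are not minor and that your sketch glosses over.

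\textbf{(1) Time/scale synchronisation.} You write that you would simply subtract \fref{eq:evolution epsilon} for the two solutions. The difficulty is that even with $\lambda(s_0)=\lambda'(s_0)=1$, for $s>s_0$ the scales $\lambda(s)$ and $\lambda'(s)$ drift apart, so $\bos{\varepsilon}(s)$ and $\bos{\varepsilon}'(s)$ live at different scales and neither the adapted norms $\mathcal{E}_{s_L}$ nor the linearised operators $\bos{H}_{1/\lambda}$ compare cleanly. The paper's device (Definition~\ref{variete:def:changement de variable}) is to introduce an adapted time $\hat{s}'(s)=(\lambda')^{-1}(\lambda(s))$ so that one compares $\bos{\varepsilon}(s)$ with $\hat{\bos{\varepsilon}}'(s):=\bos{\varepsilon}'(\hat{s}'(s))$ at the \emph{same} scale. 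This produces an extra factor $(\tfrac{d\hat{s}'}{ds}-1)\bos{H}\hat{\bos{\varepsilon}}'$ in the evolution of the difference \fref{variete:eq:evolution difference epsilon}, and the modulation for $\hat{s}'$ (Lemma~\ref{variete:lem:modulation diff}) replaces the modulation for $\delta\lambda$. Without this synchronisation your ``the three Lyapunov computations apply essentially verbatim'' claim does not go through: the scale-changing terms in the subtracted equation are of size $O(b_1|\delta\lambda/\lambda|)$ times a full derivative of $\bos{\varepsilon}'$, which is the wrong order.

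\textbf{(2) The extra regularity hypothesis and its removal.} The reason Proposition~\ref{variete:prop:parametres lipschitz} needs the bound $\mathcal{E}_{s_L+1}'\lesssim (b_1')^{2L+2+2(1-\delta_0)(1+\eta)}$ is precisely the term $(\tfrac{d\hat{s}'}{ds}-1)\bos{H}\hat{\bos{\varepsilon}}'$ just mentioned: it costs one more derivative than $\mathcal{E}_{s_L}$ controls (see \fref{variete:highsobo:eq:estimation dhats'-1 1}). Your proposal to remove this by ``an approximation procedure'' is not what works. The paper instead performs a \emph{lower order decomposition} (Definition~\ref{variete:def:decomposition adaptee varepsilon}): one redecomposes both solutions with $\bar{L}=L-1$ parameters, so that the existing $\mathcal{E}_{s_L}$ bound from the trapped regime becomes the required $\mathcal{E}_{s_{\bar{L}}+1}$ extra regularity. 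Lemma~\ref{variete:lem:borne sur changement epsilon epsilonbar} shows this redecomposition only perturbs the parameters and error by lower order, and Lemma~\ref{variete:lem: estimation difference ini} translates the Lipschitz bound back to the original variables. This step also handles the initial scale mismatch $\lambda(s_0)\neq\lambda'(s_0)$ by letting the second solution flow until its scale matches.
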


Roughly speaking, the proof of Theorem \ref{variete:thm;variete} is the adaptation of everything we did in the proof of Proposition \ref{prop:bootstrap}, this time to study the difference of two solutions and to see what informations we can get. For this reason, some technical points in the proofs to come will be treated in a faster way whenever we already treated them in Section $3$.\\
\\
Our strategy of the proof is the following:
\begin{itemize}
\item[(i)] \emph{Lipschitz aspect of the unstable modes under extra assumptions}. We first prove that for initial data starting at the same scale and having extra regularity assumptions, the coefficients of the unstable modes $V_2(s_0),...,V_{\ell}(s_0)$ have Lipschitz dependence on the stable modes $V_1(s_0)$, $b_{\ell+1}(s_0),...,b_{L}(s_0)$ and $\bos{w}(s_0)$.
\item[(ii)] \emph{removal of the extra assumptions}. We then show how to remove the extra assumptions we needed in the first step: it just consists in performing a lower order decomposition at initial time. Instead of studying the decomposition $\bos{U}=(\tilde{\bos{Q}}_{b}+\bos{\varepsilon})_{\frac{1}{\lambda}}$ for $b$ a $L$-tuple $b=(b_1,...,b_L)$, we study the decomposition $\bos{U}=(\tilde{\bos{Q}}_{\bar{b}}+\overline{\bos{\varepsilon}} )_{\frac{1}{\bar{\lambda}}}$ for $\bar{b}$ a $L-1$-tuple. We apply the result of the first step to this new decomposition. As $b_L$ is small because it represents a small perturbation along the last stable mode, it imply the result for the original decomposition.
\end{itemize}


\subsection{Lipschitz dependence of the unstable modes under extra assumptions}
We now perform the first part of the analysis. Let $\bos{U}$ and $\bos{U}'$ be two solutions described by Proposition \fref{prop:bootstrap}. For $\bos{U}$ we keep the notations introduced in the analysis throughout the previous section. For $\bos{U}'$ we adapt them:
$$
\boldsymbol{U'}:=(\boldsymbol{Q}_{b'}+\boldsymbol{\varepsilon'})_{\frac{1}{\lambda'}}=\boldsymbol{Q}_{b',\frac{1}{\lambda'}}+\boldsymbol{w}',
$$
with $\boldsymbol{\varepsilon}'$ satisfying the orthogonality conditions \fref{eq:condition d'ortho pour epsilon}. Its renormalized time is $s'$ (defined by \fref{thetrapped:eq:def s}), its associated scale $\lambda'$, and associated parameters $U'$ and $V'$. We use the same notation for the norms of the error we already used and introduce a higher derivative adapted norm:
$$
\mathcal{E}_{\sigma}':=\int |\nabla^{\sigma}\varepsilon^{'(1)}|^2 + |\nabla^{\sigma-1}\varepsilon^{'(2)}|^2, \ \mathcal{E}_{s_L}:=\int |\varepsilon^{'(1)}_{s_L}|^2 + |\varepsilon^{'(2)}_{s_L-1}|^2,
$$
$$
\mathcal{E}_{s_L+1}':=\int |\varepsilon^{'(1)}_{s_L+1}|^2 + |\varepsilon^{'(2)}_{s_L}|^2.
$$
We introduce the following notations for the differences:
\be
\triangle U_i':=U_i-U_i', \ \triangle V_i:= V_i-V'_i, \ \triangle V_{uns}:=(\triangle V_{2},...,\triangle V_{l}),
\ee
\be
\triangle \mathcal{E}_{s_L}:= \int |(\varepsilon^{(1)} - \varepsilon '^{(1)})_{s_{L}}|^2 + |(\varepsilon^{(2)} - \varepsilon '^{(2)})_{s_{L}-1}|^2,
\ee
\be \label{variete:eq:def triangle mathcalEsigma}
\triangle \mathcal{E}_{\sigma}:= \int |\nabla^{\sigma}(\varepsilon^{(1)} - \varepsilon '^{(1)})|^2 + |\nabla^{\sigma-1}(\varepsilon^{(2)} - \varepsilon '^{(2)})|^2.
\ee
In the analysis, it will be easier to use the following renormalized differences:
\be
\triangle_r \mathcal{E}_{s_L}:= b_1^{-2L-2 (1-\delta_0)(1+\frac{\eta}{2})} \triangle \mathcal{E}_{s_L}, \ \triangle_r \mathcal{E}_{\sigma}:= b_1^{-2(\sigma-s_c)(1+\nu)} \triangle \mathcal{E}_{\sigma}.
\ee
The presence of $\frac{\eta}{2}$ instead of the usual $\eta$ is just technical. Here is the main proposition of this subsection, the Lipschitz dependence of the unstable coefficients under some extra assumptions: the two solutions start at the same scale and have some additional regularity.

\begin{proposition}
\label{variete:prop:parametres lipschitz}
Suppose that $\boldsymbol{U}_0=\boldsymbol{Q}_{b_0}+\boldsymbol{\varepsilon}_0$ and $\boldsymbol{U'}_0=\boldsymbol{Q}_{b'_0}+\boldsymbol{\varepsilon'}_0$ are two initial data of solutions described by Proposition \fref{prop:bootstrap}, starting at the same scale. Suppose that they are close initially:
\be
b_0=b^e(s_0)+\left(\frac{U_1(s_0)}{s_0},...,\frac{U_L(s_0)}{s_0^L}\right), b'_0=b^e(s_0)+\left(\frac{U'_1(s_0)}{s_0},...,\frac{U'_L(s_0)}{s_0^L}\right),
\ee
which means $s_0=s_0'$. Suppose moreover that we have the following additional regularity for $\boldsymbol{\varepsilon'}$:
\be \label{variete:eq:estimation supplementaire varepsilon'}
\mathcal{E}_{s_L+1}'(s')\leq K_3 (b_1')^{(2L+2+2(1-\delta_0)(1+\eta)}, \ for \ all \ s_0\leq s',
\ee
for some constant $K_3=K_3(K_1,K_2)$. Then there exist a constant $C>0$ such that for $s_0$ small enough the following bound at initial time holds:
\be \label{variete:eq:lipschitz}
|\triangle V_{\text{uns}}(s_0)|\leq C\left( |\triangle V_1(s_0)|+\sum_{\ell+1}^L|\triangle U_i(s_0)|+\sqrt{\triangle_r \mathcal{E}_{\sigma}(s_0)}+\sqrt{\triangle_r \mathcal{E}_{s_L}(s_0)} \right).
\ee
\end{proposition}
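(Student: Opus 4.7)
The plan is to adapt the bootstrap and topological arguments of Section 3 to the difference of two trapped solutions. Since $\bos{u}$ and $\bos{u}'$ start at the same scale $\lambda(s_0)=\lambda'(s_0)=1$, we have $s=s'$ throughout, so the decompositions $\bos{u}=(\tilde{\bos{Q}}_b+\bos{\varepsilon})_{1/\lambda}$ and $\bos{u}'=(\tilde{\bos{Q}}_{b'}+\bos{\varepsilon}')_{1/\lambda'}$ can be compared at the renormalized level. First I would derive the evolution equation for $\triangle\bos{\varepsilon}:=\bos{\varepsilon}-\bos{\varepsilon}'$, which reads schematically
\be
\partial_s(\triangle\bos{\varepsilon})-\tfrac{\lambda_s}{\lambda}\bos{\Lambda}(\triangle\bos{\varepsilon})+\bos{H}(\triangle\bos{\varepsilon})=\bos{Mod}(\triangle)+\bos{NL}(\triangle)+\bos{L}(\triangle),
\ee
where $\bos{Mod}(\triangle)$ packages the differences of modulation parameters, and $\bos{NL}(\triangle)$, $\bos{L}(\triangle)$ are linear in $\triangle\bos{\varepsilon}$ with coefficients depending on the two profiles. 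Crucially, the remainder $\bos{F}(\tilde{\bos{Q}}_b+\bos{\varepsilon})-\bos{F}(\tilde{\bos{Q}}_{b'}+\bos{\varepsilon}')$ linearises into a term proportional to $\triangle\bos{\varepsilon}$ plus a term proportional to $\tilde{\bos{Q}}_b-\tilde{\bos{Q}}_{b'}$ acting on $\bos{\varepsilon}'$; the second contribution is controlled by $|\triangle b|$ times Sobolev norms of $\bos{\varepsilon}'$, which is precisely where the extra regularity hypothesis \fref{variete:eq:estimation supplementaire varepsilon'} is invoked.

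Next I would mimic Subsections 3.3--3.7 to obtain, in order: modulation bounds for the $\triangle b_i$'s analogous to Lemmas \ref{lem:modulation} and \ref{trappedregime:improvedmodulation:lem:improvedmodulation}, a low-regularity Lyapunov functional for $\triangle_r\mathcal{E}_\sigma$ analogous to Proposition \ref{trappedregime:pr:low sobo}, a high-regularity Lyapunov functional for $\triangle_r\mathcal{E}_{s_L}$ analogous to Proposition \ref{trappedregime:pr:high sobo}, and a Morawetz-type quantity to control the remaining local term. The structural identity of the estimates with those of Section 3 is not an accident: the operators $\bos{H}$, $\bos{\Lambda}$ and the weights are the same, and the forcing is linear in the differences. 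Integrating these monotonicity formulas from $s_0$, one obtains bootstrap-type bounds of the form
\be
\triangle_r\mathcal{E}_{s_L}(s)+\triangle_r\mathcal{E}_\sigma(s)\lesssim \triangle_r\mathcal{E}_{s_L}(s_0)+\triangle_r\mathcal{E}_\sigma(s_0)+\sum_{i=1}^L|\triangle U_i(s_0)|^2+\sup_{s_0\leq s'\leq s}|\triangle V_{\text{uns}}(s')|^2,
\ee
and similarly $|\triangle V_1(s)|,|\triangle U_i(s)|$ (for $i>\ell$) are controlled by their initial values plus a small feedback from the unstable modes and from $\triangle_r\mathcal{E}$.

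Finally I would run the topological argument in reverse. From Lemma \ref{lem:linearisationsystemdyn} the unstable differences satisfy
\be
\triangle V_{i,s}=\frac{i\alpha}{(\ell-\alpha)\,s}\triangle V_i+\frac{1}{s}\,\bigl[\text{feedback of size }\mathfrak{D}(s_0)\bigr],\qquad 2\leq i\leq\ell,
\ee
where $\mathfrak{D}(s_0)$ denotes the right-hand side of \fref{variete:eq:lipschitz}. Argue by contradiction: if $|\triangle V_{\text{uns}}(s_0)|\geq C\,\mathfrak{D}(s_0)$ for $C\gg 1$, then the repulsive linear flow with strictly positive eigenvalues $i\alpha/(\ell-\alpha)$ forces $|\triangle V_{\text{uns}}(s)|$ to grow like $(s/s_0)^{2\alpha/(\ell-\alpha)}|\triangle V_{\text{uns}}(s_0)|$ until it reaches $s^{-\tilde\eta}$. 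At that moment at least one of $V_{\text{uns}}(s)$, $V'_{\text{uns}}(s)$ must leave $\mathcal{B}^{\ell-1}(s^{-\tilde\eta})$, violating the trapped-regime bound \fref{eq:bootstrap modes instables} for that solution. Hence \fref{variete:eq:lipschitz} holds.

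The main obstacle will be closing the high-order difference estimate for $\triangle_r\mathcal{E}_{s_L}$. In Proposition \ref{trappedregime:pr:high sobo} the monotonicity already barely closes, and here a new quadratic term appears of the form $(\tilde Q_b^{p-1}-\tilde Q_{b'}^{p-1})\,\varepsilon'^{(1)}$ whose $s_L$-th adapted derivative naively costs one extra derivative on $\bos{\varepsilon}'$. This forces the regularity assumption \fref{variete:eq:estimation supplementaire varepsilon'}, and is also the reason the improved modulation for $\triangle b_L$ (analogue of Lemma \ref{trappedregime:improvedmodulation:lem:improvedmodulation}) must be derived anew, tracking the bilinear dependence on $\bos{\varepsilon}'$ in the bracket defining the radiation correction $\bos{\xi}$ of \fref{thetrapped:highsobo:eq:definition xi}.
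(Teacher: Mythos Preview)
Your overall strategy (difference modulation equations, difference energy/Morawetz estimates, bootstrap, then a repulsivity contradiction for $\triangle V_{\text{uns}}$) is exactly the paper's strategy. But there is a genuine gap at the very start: the claim ``since $\lambda(s_0)=\lambda'(s_0)=1$, we have $s=s'$ throughout'' is false. The renormalized times are $s(t)=s_0+\int_0^t d\tau/\lambda(\tau)$ and $s'(t)=s_0+\int_0^t d\tau/\lambda'(\tau)$; since $\lambda_s/\lambda=-b_1+O(b_1^2)$ and $b_1\neq b_1'$ in general, the two scales (hence the two clocks) drift apart immediately after $s_0$. As a consequence $\bos{\varepsilon}(s)$ and $\bos{\varepsilon}'(s)$ live at different scales and the orthogonality conditions, the operator $\mathcal{L}$, and the adapted norms $\mathcal{E}_{s_L}$ do not match. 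The paper handles this by introducing the reparametrized time $\hat{s}'(s)=(\lambda')^{-1}(\lambda(s))$ so that $\lambda'(\hat{s}')=\lambda(s)$, and then compares $\bos{\varepsilon}(s)$ with $\hat{\bos{\varepsilon}}'(s):=\bos{\varepsilon}'(\hat{s}'(s))$. This generates new ``time-difference'' terms proportional to $(d\hat{s}'/ds-1)$ in the evolution of the difference, and one must close a modulation-type equation for $d\hat{s}'/ds-1$ (it equals $\triangle\hat{b}_1/\hat{b}_1'$ up to lower order, see the paper's Lemma on modulation for the difference).

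This also affects your identification of where the extra regularity \fref{variete:eq:estimation supplementaire varepsilon'} is used. In the paper it is \emph{not} primarily the potential-difference term $(\tilde Q_b^{p-1}-\tilde Q_{b'}^{p-1})\varepsilon'^{(1)}$ that costs a derivative (that term is controlled by $|\triangle b|$ times the usual $\mathcal{E}'_{s_L}$ via the asymptotic of the potential difference). The term that genuinely forces $\mathcal{E}'_{s_L+1}$ is the time-difference contribution $(d\hat{s}'/ds-1)\,\bos{H}(\hat{\bos{\varepsilon}}')$ in the high-Sobolev energy identity and in the Morawetz estimate: at level $s_L-1$ the piece $\mathcal{L}\hat{\varepsilon}'^{(1)}$ needs $s_L+1$ adapted derivatives of $\hat{\varepsilon}'^{(1)}$. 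Since you assumed $d\hat{s}'/ds\equiv 1$, you never see this term, which is why your account of the hypothesis is off. Once you insert the scale-matching time $\hat{s}'$ and carry the factor $(d\hat{s}'/ds-1)$ through all the estimates, the rest of your outline goes through essentially as in Section~3.
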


The next subsubsections are devoted to the proof of this Proposition. We first introduce an adapted time for comparison $\hat{s}'$, and associate to $\boldsymbol{U'}$ the adapted variables for the analysis $\hat{\boldsymbol{\varepsilon '}}$, $\hat{U}'$ and $\hat{V}'$. We then write the time evolution equation for the differences of the parameters and error, yielding a system of coupled equations. We study this system and we show that if the initial size of the difference of the unstable parameters is too big compared to the initial size of the differences of the stable parameters and error, then repellency wins and it cannot converge to zero, preventing one of the two solutions to stay forever in the trapped regime.


\subsubsection{Adapted time for comparison, notations and identities}

The two solutions $\boldsymbol{U}$ and $\boldsymbol{U'}$ might blow up at different times. In addition, we have seen that the values of $\lambda$, $s$ and the parameters $b$ are correlated, see the equivalences in the trapped regime \fref{linearized:eq:def ci}, \fref{eq:bootstrap modes instables}, \fref{eq:bootstrap modes stables} and \fref{end:eq:asymptotique lambda}. Thus, we do not compare $\boldsymbol{U'}$ to $\boldsymbol{U}$ at the same time $t$, but at the times for which their scale coincide: $\lambda=\lambda'$.

\begin{definition}[adapted time and variables for comparison] \label{variete:def:changement de variable}
We define
\be \label{variete:eq:def hats'}
\hat{s}'(s)=(\lambda')^{-1}(\lambda(s)),
\ee
as the adapted time for comparison, where $\lambda':[s_0;+\infty[\rightarrow \ ]0;1]$ is seen as $C^1$ diffeomorphism (remember that $\lambda'_{s'}\sim -\lambda' \frac{c_1}{s'}<0$ from \fref{thetrapped:eq:modulation leq L-1}). We associate to $\boldsymbol{U'}$ the variables $\hat{\boldsymbol{\varepsilon}}'$, $\hat{b}'$, $\hat{U}'$, $\hat{V}'$ defined by ($P_{\ell}$ being defined in \fref{linearized:eq:def P}):
\be
\hat{\bos{w}}'(t)=\bos{w}'(t(\hat{s}'(s))), \ \ \hat{\boldsymbol{\varepsilon}}'(s)=\boldsymbol{\varepsilon}'(\hat{s}'(s)), \ \ \hat{b}'(s)=b'(\hat{s}'(s)),
\ee
\be
\hat{U}'_i(s)=\left( \frac{s}{\hat{s}'(s)} \right)^i   U'_i(\hat{s}'(s)), \ \text{for} \ 1 \leq i\leq L, \ \text{and} \ \hat{V}'=P_{\ell}(\hat{U}').
\ee
\end{definition}

We use the following notations for the norms of $\hat{\boldsymbol{\varepsilon}}'$:
\be
\hat{\mathcal{E}}_{\sigma}':=\int |\nabla^{\sigma}\hat{\varepsilon}^{'(1)}|^2 + |\nabla^{\sigma-1}\hat{\varepsilon}^{'(2)}|^2, \ \hat{\mathcal{E}}_{s_L+i}:=\int |\hat{\varepsilon}^{'(1)}_{s_L+i}|^2 + |\hat{\varepsilon}^{'(2)}_{s_L-1+i}|^2, \ i=0,1 .
\ee
We now prove that the times $s$ and $\hat{s}'$ are close. All the analysis bounds of the trapped regime for $\boldsymbol{U}'$, expressed in function of $\hat{b}'_1$, then still hold interchanging $\hat{b}_1'$ with $b_1$.

\begin{lemma}[Bounds on the change of variables]
\label{variete:lem:borne varepsilon' vers hatvarepsilon'}

The following bound holds:
\be \label{variete:eq: equivalence s et hats'}
\hat{s}'=s(1+O(s_0^{-\tilde{\eta}} )).
\ee
The bounds of the trapped regime \fref{eq:bootstrap estimations sur epsilon} and the bound \fref{variete:eq:estimation supplementaire varepsilon'} can be written as:
\be  \label{variete:eq:equivalence normes varepsilon'}
\hat{\mathcal{E}}'_{s_L+i}\leq 2K_2 b_1^{2L+2i+2(1-\delta_0)(1+\eta)}, \ i=0,1\ \text{and} \ \hat{\mathcal{E}}_{\sigma}'\leq 2 K_1 b_1^{2(\sigma-s_L)(1+\nu)}.
\ee
The parameters also enjoy the same estimates:
\be \label{variete:eq:equivalence parametres}
|\hat{V}_1'|\ \leq \frac{C}{s^{\tilde{\eta}}}, \ |\hat{V}_i'|\ \leq \frac{C}{s^{\tilde{\eta}}} \ \text{for} \ 2\leq i \leq \ell, \ |\hat{b}_i'|\ \leq \frac{C}{s^{i+\tilde{\eta}}} \ \text{for} \ \ell+1 \leq i \leq L,
\ee
the constant $C$ being independent of the other parameters. Moreover, $\hat{\bos{\varepsilon}}'$ still enjoys the orthogonality conditions:
\be \label{variete:eq:conditions d'orthogonalite pour hatvarepsilon'}
\langle \hat{\bos{\varepsilon}}', \bos{H}^*\bos{\Phi}_M\rangle=0, \ \text{for} \ 1\leq i \leq L.
\ee
\end{lemma}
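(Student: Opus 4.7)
The whole lemma rests on the observation that $\hat{s}'$ and $s$ are comparable up to a factor $1+O(s_0^{-\tilde\eta})$, after which every bound for $\bos{\varepsilon}'$ at time $\hat{s}'$ transfers verbatim to one for $\hat{\bos{\varepsilon}}'$ at time $s$. The plan is therefore: (i) prove the asymptotic \fref{variete:eq: equivalence s et hats'}; (ii) use it as a dictionary to transfer the bootstrap bounds; (iii) observe that the orthogonality conditions are preserved since the diffeomorphism acts only on time.

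First, applying Lemma \ref{endoftheproof:lem:integration des equations pour le scaling} to both $\bos{U}$ and $\bos{U}'$, which start at the same renormalized time $s_0$ and same scale, yields
$$
\lambda(s)=\left(\frac{s_0}{s}\right)^{\frac{\ell}{\ell-\alpha}}\bigl(1+O(s_0^{-\tilde\eta})\bigr),\qquad \lambda'(\hat{s}')=\left(\frac{s_0}{\hat{s}'}\right)^{\frac{\ell}{\ell-\alpha}}\bigl(1+O(s_0^{-\tilde\eta})\bigr).
$$
Since by definition $\lambda(s)=\lambda'(\hat{s}'(s))$, dividing these two expressions and raising to the power $\frac{\ell-\alpha}{\ell}$ gives $\hat{s}'/s=1+O(s_0^{-\tilde\eta})$, which is \fref{variete:eq: equivalence s et hats'}. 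Taking $s_0$ large enough, we deduce in particular the two-sided comparison $\frac{1}{2}s\leq \hat{s}'\leq 2s$ and $b_1(s)=\frac{c_1}{s}(1+o(1))$, $\hat{b}_1'(s)=b_1'(\hat{s}')=\frac{c_1}{\hat{s}'}(1+o(1))$, so that $\hat{b}_1'(s)=b_1(s)(1+O(s_0^{-\tilde\eta}))$.

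Second, I transfer the bounds. Since $\hat{\bos{\varepsilon}}'(s)=\bos{\varepsilon}'(\hat{s}'(s))$ and the norms $\mathcal{E}_{\sigma}'$, $\mathcal{E}_{s_L}'$, $\mathcal{E}_{s_L+1}'$ only involve spatial derivatives, one has $\hat{\mathcal{E}}'_\sigma(s)=\mathcal{E}'_\sigma(\hat{s}')$ and similarly for the higher regularity norms. The bootstrap bounds \fref{eq:bootstrap estimations sur epsilon} and the extra assumption \fref{variete:eq:estimation supplementaire varepsilon'} then read
$$
\hat{\mathcal{E}}'_{s_L+i}(s)\leq K_{\mathrm{cst}}(\hat{b}_1'(s))^{2L+2i+2(1-\delta_0)(1+\eta)},\quad \hat{\mathcal{E}}'_\sigma(s)\leq K_2(\hat{b}_1'(s))^{2(\sigma-s_c)(1+\nu)},
$$
and the factor $(\hat{b}_1'/b_1)^{2L+\dots}=(1+O(s_0^{-\tilde\eta}))^{2L+\dots}$ is bounded by $2$ for $s_0$ large, yielding \fref{variete:eq:equivalence normes varepsilon'}. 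For the parameters, writing $\hat{U}'_i(s)=(s/\hat{s}'(s))^i U'_i(\hat{s}'(s))$, the bootstrap bounds \fref{eq:bootstrap modes instables}, \fref{eq:bootstrap modes stables} for $\bos{U}'$ at time $\hat{s}'$ translate directly into \fref{variete:eq:equivalence parametres}, since $(s/\hat{s}')^i=1+O(s_0^{-\tilde\eta})$ and $\hat{s}'\sim s$ both in the prefactors and in the $s^{\tilde\eta}$ denominators. The change of variables for $V'$ is linear with constant coefficients given by $P_\ell$, hence preserved.

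Finally, for the orthogonality conditions \fref{variete:eq:conditions d'orthogonalite pour hatvarepsilon'}: the profiles $\bos{H}^{*i}\bos{\Phi}_M$ are fixed functions of $y$ independent of time, and $\bos{\varepsilon}'(\hat{s}')$ satisfies $\langle \bos{\varepsilon}'(\hat{s}'),\bos{H}^{*i}\bos{\Phi}_M\rangle=0$ at every time $\hat{s}'\geq s_0$ by construction, see \fref{eq:condition d'ortho pour epsilon}. Evaluating this identity at $\hat{s}'=\hat{s}'(s)$ gives the claim for $\hat{\bos{\varepsilon}}'$. There is no genuine obstacle in this lemma; the only mild care needed is to choose $s_0$ large enough so that the multiplicative $(1+O(s_0^{-\tilde\eta}))^{2L}$ factors are absorbed into the constants of \fref{variete:eq:equivalence normes varepsilon'}–\fref{variete:eq:equivalence parametres}.
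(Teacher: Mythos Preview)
Your proof is correct and follows essentially the same approach as the paper: both use the asymptotic $\lambda(s)=(s_0/s)^{\ell/(\ell-\alpha)}(1+O(s_0^{-\tilde\eta}))$ from Lemma \ref{endoftheproof:lem:integration des equations pour le scaling} applied to each solution, together with the defining relation $\lambda(s)=\lambda'(\hat s'(s))$, to obtain $\hat s'/s=1+O(s_0^{-\tilde\eta})$, and then transfer all bootstrap bounds through the equivalence $b_1\sim\hat b_1'$. The orthogonality argument is identical.
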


\begin{proof}[Proof of Lemma \ref{variete:lem:borne varepsilon' vers hatvarepsilon'}]. The orthogonality conditions are a straightforward consequence of those for $\bos{\varepsilon}'$, see \fref{eq:condition d'ortho pour epsilon}. We use the formula \fref{end:eq:asymptotique lambda} relating $\lambda$ and $s$:
$$
\lambda(t)=\left(\frac{s_0}{s(t)}\right)^{\frac{l}{l-\alpha}}(1+O( s_0^{-\tilde{\eta}})).
$$
This implies:
$$
s(t)= \frac{s_0}{\lambda(t)^{\frac{l-\alpha}{l}}}(1+O(s_0^{-\tilde{\eta}})), \  \ \text{and} \ \ \hat{s}'(s)= \frac{s_0}{\lambda(t)^{\frac{l-\alpha}{l}}}(1+O(s_0^{-\tilde{\eta}})).
$$
From that we get the first bound of the lemma: $\frac{s(t)}{s'(t')}=1+O(s_0^{-\tilde{\eta}})$. Now we recall that in the trapped regime: $b_1(s)=\frac{c_1}{s}+\frac{U_1}{s}=\frac{c_1}{s}+O(s^{-1-\tilde{\eta}})$ and $\hat{b}'_1(s)=\frac{c_1}{\hat{s}'}+\frac{U_1'(\hat{s}')}{\hat{s}'}=\frac{c_1}{\hat{s}'}+O((\hat{s}')^{-1-\tilde{\eta}})$. Hence, \fref{variete:eq: equivalence s et hats'} implies $\frac{b_1(s)}{\hat{b}'_1(s)}=1+O(s_0^{-\tilde{\eta}}) $. The bounds  \fref{variete:eq:equivalence normes varepsilon'} and  \fref{variete:eq:equivalence parametres} are just a rewriting of the bootstrap bounds \fref{eq:bootstrap estimations sur epsilon} and \fref{variete:eq:estimation supplementaire varepsilon'} knowing this equivalence.

\end{proof}

We use the following notation for the differences (all terms taken at time $s$):
\be \label{variete:eq:def Vuns}
\triangle \hat{b}_i:= b_i-\hat{b}_i', \ \triangle \hat{U}_i:=U_i-\hat{U}'_i, \ \triangle \hat{V}_i:= V_i-\hat{V}'_i, \ \triangle \hat{V}_{\text{uns}}:=(\triangle \hat{V}_2,...,\triangle \hat{V}_{\ell})
\ee
\be
\triangle \hat{\mathcal{E}}_{s_L}:= \int |(\varepsilon^{(1)} - \hat{\varepsilon} '^{(1)})_{s_{L}}|^2 + |(\varepsilon^{(2)} - \hat{\varepsilon} '^{(2)})_{s_{L}-1}|^2,
\ee
\be  \label{variete:eq:def norme low}
\triangle \hat{\mathcal{E}}_{\sigma}:= \int |\nabla^{\sigma}(\varepsilon^{(1)} - \hat{\varepsilon} '^{(1)})|^2 + |\nabla^{\sigma-1}(\varepsilon^{(2)} - \hat{\varepsilon} '^{(2)})|^2,
\ee
In the analysis, it will be easier to use the following renormalized differences:
\be \label{variete:eq:def normes renormalisees}
\triangle_r \hat{\mathcal{E}}_{s_L}:= b_1^{-2L-2 (1-\delta_0)(1+\frac{\eta}{2})} \triangle \hat{\mathcal{E}}_{s_L}, \ \ \triangle_r \hat{\mathcal{E}}_{\sigma}:= b_1^{-2(\sigma-s_c)(1+\nu)} \triangle \hat{\mathcal{E}}_{\sigma},
\ee
The coefficient $\frac{\eta}{2}$ instead of the $\eta$ we had previously is because we will loose a bit in the analysis later on. 
We adapt the notation for the terms involved the analysis\footnote{We do not mention the dependance of $\bos{L}$ and $\bos{NL}$ in $\bos{\varepsilon}$ and $\bos{w}$ anymore to ease notations, as it will be clear to which variable we are refering to in future computations.}:
\be
\boldsymbol{\psi}_{\hat{b}'}(s):=\tilde{\boldsymbol{\psi}}_{b'}(\hat{s}'), \ \hat{\boldsymbol{L}}'(s):= \boldsymbol{L'}(\boldsymbol{\varepsilon}')(\hat{s}'(s)), \ \hat{\boldsymbol{NL}}'(s):=\boldsymbol{NL}'(\bos{\varepsilon}'(\hat{s}'(s))).
\ee
\be
\hat{\boldsymbol{Mod}}'(s):= \frac{d\hat{s}'}{ds} \tilde{\boldsymbol{Mod}}'(\hat{s}'(s)), \ \hat{B}_1':=(\hat{b}_1')^{-(1+\eta)}, \ \text{and} \ \hat{\bos{S}}'_i:=\bos{S}_i(\hat{b}').
\ee
The change of variables of Definition \fref{variete:def:changement de variable} produces the following identities:
\be \label{variete:eq:identity hatb'}
\hat{b}'_i(s)=\frac{c_i}{\hat{s}^{'i}}+\frac{\hat{U}'_i}{s^i},
\ee
\be
\begin{array}{r c l}
\hat{\boldsymbol{Mod}}'(s)&=& \chi_{\hat{B}_1'}  \sum_{i=1}^L (\hat{b}'_{i,s}+\frac{d\hat{s}'}{ds}((i-\alpha)\hat{b}'_1\hat{b}'_i-\hat{b}'_{i+1}))\left( \boldsymbol{T}_i +\sum_{j=i+1}^{L+2} \frac{\partial \boldsymbol{S}_j}{\partial b_i}(\hat{b}')  \right) \\
&&-(\frac{\lambda_s}{\lambda}+\frac{d\hat{s}'}{ds}\hat{b}'_1)\boldsymbol{\Lambda} \tilde{\boldsymbol{Q}}_{\hat{b}'}.
\end{array}
\ee
We introduce the following notation for $1\leq i \leq L$:
\be \label{variete:eq:def triangle hatmodi}
\begin{array}{r c l}
\triangle \hat{\bos{Mod}}_i&:=&(b_{i,s}+(i-\alpha)b_1b_i-b_{i+1})\chi_{B_1}(\bos{T}_i+\underset{j=i+1}{\overset{L+2}{\sum}}\frac{\partial \bos{S}_{j}}{\partial b_i})\\
&&-(\hat{b}'_{i,s}+\frac{d\hat{s}'}{ds}((i-\alpha)\hat{b}'_1\hat{b}'_i-\hat{b}'_{i+1}))\chi_{\hat{B}'_1}(\bos{T}_i+\underset{j=i+1}{\overset{L+2}{\sum}}\frac{\partial \hat{\bos{S}}'_{j}}{\partial b_i}),
\end{array}
\ee
and define:
\be \label{variete:eq:def triangle hatmod0}
\triangle \hat{\bos{Mod}}_0 := -(\frac{\lambda_s}{\lambda}+b_1)\boldsymbol{\Lambda} \tilde{\boldsymbol{Q}}_{b}+(\frac{\lambda_s}{\lambda}+\frac{d\hat{s}'}{ds}\hat{b}'_1)\boldsymbol{\Lambda} \tilde{\boldsymbol{Q}}_{\hat{b}'}
\ee
So that $\tilde{\bos{Mod}}-\hat{\bos{Mod}}'=\underset{i=0}{\overset{L}{\sum}}\triangle \hat{\bos{Mod}}_i $. With these new notations the time evolution of the difference of errors in renormalized variables is given by:
\be \label{variete:eq:evolution difference epsilon}
\begin{array}{r c l}
&\frac{d}{ds}(\boldsymbol{\varepsilon}-\hat{\boldsymbol{\varepsilon}}' )-\frac{\lambda_s}{\lambda}\bos{\Lambda}(\boldsymbol{\varepsilon}-\hat{\boldsymbol{\varepsilon}}' )+\bos{H}(\boldsymbol{\varepsilon}-\hat{\boldsymbol{\varepsilon}}' )+(1-\frac{d\hat{s}'}{ds})\bos{H}(\hat{\boldsymbol{\varepsilon}}') \\
=& -\tilde{\boldsymbol{Mod}}+\hat{\boldsymbol{Mod}}'-\tilde{\boldsymbol{\psi}}_b+\frac{d\hat{s}'}{ds}\hat{\boldsymbol{\psi}}_{\hat{b}'} +\boldsymbol{NL}-\frac{d\hat{s}'}{ds}\hat{\boldsymbol{NL}}'+\boldsymbol{L}-\frac{d\hat{s}'}{ds}\hat{\boldsymbol{L}}'.
\end{array}
\ee
The time evolution of the original variables $\bos{w}-\hat{\bos{w}}'$ is:
\be \label{variete:eq:evolution difference w}
\begin{array}{r c l}
&\frac{d}{dt}(\boldsymbol{w}-\hat{\boldsymbol{w}}' )+\bos{H}_{\frac{1}{\lambda}}(\boldsymbol{w}-\hat{\boldsymbol{w}}' )+(1-\frac{d\hat{s}'}{ds})\bos{H}_{\frac{1}{\lambda}}(\hat{\boldsymbol{w}}')\\
=&  -\frac{1}{\lambda}\tilde{\boldsymbol{Mod}}_{\frac{1}{\lambda}}+\frac{1}{\lambda}\hat{\boldsymbol{Mod}}'_{\frac{1}{\lambda}}-\frac{1}{\lambda}\tilde{\boldsymbol{\psi}}_{b,\frac{1}{\lambda}}+\frac{d\hat{s}'}{ds}\frac{1}{\lambda}\hat{\boldsymbol{\psi}}_{\hat{b}'} +\boldsymbol{NL}-\frac{d\hat{s}'}{ds}\hat{\boldsymbol{NL}}'+\boldsymbol{L}-\frac{d\hat{s}'}{ds}\hat{\boldsymbol{L}}'.
\end{array}
\ee


\subsubsection{Modulation equations for the difference}

In this subsection we compute the time evolution of the difference of parameters between the first solution and the modified second solution defined in Definition \fref{variete:def:changement de variable}. We relate it to the difference $\boldsymbol{\varepsilon}-\hat{\boldsymbol{\varepsilon}}' $ and itself. We start with a technical lemma linking the differences of some profiles to the differences of the parameters.

\begin{lemma}[Asymptotic for some differences of profiles for $y\leq 2B_0$:]
\label{variete:lem:asymptotique differences de profils yleqB0}
The following bounds hold,  $k$ denoting an integer $k\in \mathbb{N}$ .
\begin{itemize}
\item[(i)] \emph{Differences of potentials:} For $1\leq j\leq p-1$:
\be \label{variete:eq:asymptotique diff Qp-j}
|\partial_y^k((\tilde{Q}_{\hat{b}'}^{(1)})^{p-j}-(\tilde{Q}_b^{(1)})^{p-j})|\leq \frac{Cb_1}{1+y^{\frac{2(p-j)}{p-1}-1+\alpha+k-C_k\eta}}\underset{1\leq i \leq L}{\text{sup}}(b_1^{-i}|\triangle \hat{b}_i|).
\ee
\item[(ii)] \emph{Difference of the errors in the central zone:} For $y\leq 2B_0$, one has that $\tilde{\bos{\psi}}_b-\tilde{\bos{\psi}}_{\hat{b}'}=\begin{pmatrix}0\\ \psi_b-\psi_{\hat{b}'} \end{pmatrix}$ is on the second coordinate and there holds: 
\be \label{variete:eq:asymptotique diff psib leqB0}
| \partial_y^k(\psi_b-\psi_{\hat{b}'} )|\leq  \frac{Cb_1^{L+3}}{1+y^{\gamma+g'-L-1+k}}   \underset{1\leq i \leq L}{\text{sup}}(b_1^{-i}|\triangle \hat{b}_i|).
\ee
\end{itemize}
\end{lemma}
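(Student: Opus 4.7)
The plan is to exploit the fact that for $y\le 2B_0$ both cutoffs $\chi_{B_1}$ and $\chi_{\hat B_1'}$ identically equal $1$ (since $B_0\ll B_1,\hat B_1'$), so in this inner zone the profiles $\tilde{\bos{Q}}_b$ and $\tilde{\bos{Q}}_{\hat b'}$ coincide with the uncut profiles $\bos{Q}+\bos{\alpha}_b,\,\bos{Q}+\bos{\alpha}_{\hat b'}$. All the estimates are then obtained by a fundamental-theorem-of-calculus argument in the parameter $b$, combined with the pointwise asymptotics for the building blocks $\bos{T}_i$ and the homogeneous profiles $\bos{S}_j$ collected in Lemma \ref{lem:profilsTi} and Definition \ref{def:homogeneousfunctions}.

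For (i), I write $b^\tau:=\tau b+(1-\tau)\hat b'$ and
\begin{equation*}
(\tilde{Q}_b^{(1)})^{p-j}-(\tilde{Q}_{\hat b'}^{(1)})^{p-j}
=(p-j)\sum_{i=1}^L(b_i-\hat b_i')\int_0^1(\tilde Q_{b^\tau}^{(1)})^{p-j-1}\Bigl[T_i+\sum_{k=i+1}^{L+2}\tfrac{\partial S_k}{\partial b_i}(b^\tau)\Bigr]d\tau.
\end{equation*}
In the inner zone the bound $|\tilde Q_{b^\tau}^{(1)}|\lesssim 1+y^{-2/(p-1)}$ from Lemma \ref{lem:Q} survives the perturbation (since $|\bos\alpha_{b^\tau}|\ll Q$ there), and Lemma \ref{lem:profilsTi} together with Lemma \ref{lem:calculsurfonctionsadmissibles} gives $|\partial_y^k T_i|\lesssim (1+y)^{-\gamma+i-k}$ and $|\partial_y^k\partial_{b_i}S_k|\lesssim |b^\tau|_{k-i}\,(1+y)^{-\gamma+k-g'-k}$. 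Factoring $\sup_i b_1^{-i}|\triangle \hat b_i|$, the proof reduces to verifying that for $y\le 2B_0$ and $1\le i\le L$ one has $(b_1 y)^i\le C(L)\,b_1 y$, which follows from $b_1 y\le 2$. Matching the exponent uses $\gamma-\tfrac{2}{p-1}=\alpha$; the $-C_k\eta$ tolerance absorbs the lower-order contributions from $\partial S_k/\partial b_i$ and the Leibniz distribution of the $k$ derivatives.

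For (ii), I first observe that the vectorial position follows directly from the expression \fref{linearized:eq:expression psib 2} of $\bos\psi_b$ and the position \fref{linearized:eq:position Si} of $\bos{S}_{L+2}$, both of which single out the second coordinate; since in $y\le 2B_0$ we have $\tilde{\bos\psi}_b=\bos\psi_b$ and $\tilde{\bos\psi}_{\hat b'}=\bos\psi_{\hat b'}$ (Proposition \ref{pr:profilapprochecoupe}(ii)), the difference is again on the second coordinate. The bound is then obtained by applying the same mean-value strategy to the explicit formula \fref{linearized:eq:expression psib 2}: each of the finitely many terms is of the form $b^J f_J(y)$ with $|J|_2\ge L+3$ and $f_J$ admissible; differentiating in $b$ produces a gain $b_1^{-i}$ on $|\triangle\hat b_i|$ and the homogeneity degree decreases by $i$, leaving a prefactor $b_1^{L+3-i}$ and a spatial weight $(1+y)^{-\gamma-L-1+g'-k}$ after taking $k$ $y$-derivatives. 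Summing and taking the supremum in $i$ yields \fref{variete:eq:asymptotique diff psib leqB0}.

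The main technical nuisance will be the bookkeeping of the homogeneity indices in part (ii): the profiles $\bos{S}_{L+2}$ and the polynomial remainder $\bos R$ in \fref{linearized:eq: expression psib} are sums of many monomials $b^J$, and one must check that after the mean-value differentiation every monomial still carries at least one factor of $b_1$ and a spatial decay consistent with the stated exponent. Everything else is a routine repetition of the arguments already used in the proofs of Lemma \ref{lem:profilsTi} and Propositions \ref{pr:constructionprofilnoncoupe}--\ref{pr:profilapprochecoupe}, restricted to the favourable zone $y\le 2B_0$ where the $B_1$-localization plays no role.
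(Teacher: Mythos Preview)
Your approach via the mean-value theorem in the parameter $b$ is a legitimate alternative to the paper's. The paper instead isolates as a preliminary ``Step 1'' the algebraic bound
\[
|b^{J}-\hat b'^{J}|\le C\,b_1^{|J|_2}\sup_{1\le i\le L}b_1^{-i}|\triangle\hat b_i|
\]
(proved by induction on $|J|_1$), and then plugs it into the explicit polynomial expansions of $\tilde Q_b^{(1)},\,\psi_b$. Your integral-in-$\tau$ argument is essentially the differential version of this same inequality; both routes close. The paper's formulation has the advantage that the polynomial bound is reused verbatim several more times in the section (in Lemmas~\ref{variete:lem:modulation diff}, \ref{variete:lem:improved modulation}, \ref{variete:lem:estimations differences de profils}), so isolating it as a standalone estimate is economical.

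There is, however, a genuine gap in your treatment of (i). Despite the lemma's title, item (i) carries no restriction $y\le 2B_0$: the paper proves it on all of $\mathbb R^d$ and later \emph{uses} it globally (for instance in the $L^2$-Hardy bounds leading to \eqref{variete:lowsobo:eq:estimation L} and \eqref{variete:highsobo:eq:estimation L 1}, which integrate over the whole space). The paper's proof explicitly handles the transition zone where $\chi_{B_1}\neq\chi_{\hat B_1'}$ via the integral formula
\[
\chi_{B_1}(y)-\chi_{\hat B_1'}(y)=y\,(b_1^{1+\eta}-\hat b_1'^{\,1+\eta})\int_0^1\partial_y\chi\bigl(y[(1-\theta)\hat b_1'^{\,1+\eta}+\theta b_1^{1+\eta}]\bigr)\,d\theta,
\]
which yields an extra contribution $\lesssim b_1^{-1}|\triangle\hat b_1|$ localised near $y\sim B_1$. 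Your argument, relying on $\chi_{B_1}\equiv\chi_{\hat B_1'}\equiv 1$, only covers $y\le 2B_0$ and misses this piece. Your mean-value scheme can absorb it---one must also differentiate the cutoff with respect to $b_1$ along the path $b^\tau$---but as written the proof is incomplete for the range $B_0\lesssim y\lesssim B_1$ where (i) is still needed.

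Two minor remarks. In your formula for $\partial_{b_i}(\tilde Q_b^{(1)})^{p-j}$ the factor $T_i$ should appear only for \emph{even} $i$, since $\tilde Q_b^{(1)}$ contains only the even-indexed $T_i$; for odd $i$ only the $\partial S_k/\partial b_i$ terms survive. And your reduction ``$(b_1y)^i\le C(L)\,b_1y$'' is correct on $y\le 2B_0$, but on the larger zone $y\le 2B_1$ one has $b_1y\lesssim b_1^{-\eta}$ and this is precisely what generates the $-C_k\eta$ loss in the exponent of~\eqref{variete:eq:asymptotique diff Qp-j}; you should track that explicitly once you extend the argument.
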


\begin{proof}[Proof of Lemma \ref{variete:lem:asymptotique differences de profils yleqB0}]
\underline{Step 1:} \emph{Differences of polynomials of parameters}. We claim that for any $L$-tuple $J$ there holds:
\be \label{variete:eq:difference polynomes parametres}
|b^J-\hat{b}^{'J}|\leq C b_1^{|J|_2}\underset{1\leq i \leq L}{\text{sup}}(b_1^{-i}|\triangle \hat{b}_i|).
\ee
We recall the notations $|J|_1=\underset{i=1}{\overset{L}{\sum}}J_i$ and $|J|_2=\underset{i=1}{\overset{L}{\sum}}iJ_i$. We show this bound by iteration. It is true for the trivial case $|J|_1=0$. Take now $i\geq 1$ and suppose that it is true for all $J'$ satisfying $|J'|_1\leq i-1$. Take $J$ satisfying $|J|_1=i$. Let $j$ be the first coordinate for which $J$ is non null. We have then:
$$
b^J-\hat{b}^{'J}=b_jb^{J'}-\hat{b}'_j\hat{b}^{'J'}=(b_j-\hat{b}'_j)b^{J'}+\hat{b}_j'(b^{J'}-\hat{b}^{'J'})
$$
for some $L$ tuple $J'$ satisfying $|J'|_1=i-1$ and $|J'|_2=|J|_2-j$. The bound \fref{variete:eq:equivalence parametres} imply that the parameters of the two solutions have the same size: $|b_j|,|\hat{b}_j'|\lesssim b_1^j$. For the first term of the previous identity one then has:
$$
|(b_j-\hat{b}'_j)b^{J'}|\leq C|\triangle \hat{b}_j|b_1^{|J'|_2}\leq C b_1^{|J|_2}(b_1^{-j}|\triangle \hat{b}_j|).
$$
For the second term, from the induction hypothesis for $J'$ one has:
$$
|\hat{b}_j'(b^{J'}-\hat{b}^{'J'})|\leq C b_1^jb_1^{|J'|_2}\underset{1\leq i \leq L}{\text{sup}}(b_1^{-i}|\triangle \hat{b}_i|)= C b_1^{|J|_2}\underset{1\leq i \leq L}{\text{sup}}(b_1^{-i}|\triangle \hat{b}_i|).
$$
This implies that the property is true for $i$.\\

\underline{Proof of (i):} The difference of the two potentials is:
\be \label{variete:asymptotique:Qp-j expression}
(\tilde{Q}_{\hat{b}'}^{(1)})^{p-j}-(\tilde{Q}_b^{(1)})^{p-j}=\underset{i=1}{\overset{p-1}{\sum}}C_i Q^{p-j-i}\left(\chi_{\hat{B}'_1}^i((\alpha_{\hat{b}'}^{(1)})^i-(\alpha_b^{(1)})^i)+(\alpha_b^{(1)})^i(\chi_{\hat{B}'_1}^i-\chi_{B_1}^i)\right)
\ee
for some constants $(C_i)_{1\leq i \leq p-j}$. Let $i$ be fixed, with $1\leq i \leq p-1$. We first study the first term in the right hand side of \fref{variete:asymptotique:Qp-j expression}. There holds:
$$
(\alpha_{\hat{b}'}^{(1)})^i-(\alpha_b^{(1)})^i=\underset{|J|_1=i}{\sum}C_J\Bigl{(} \underset{n=2 \ \text{even}}{\overset{L-1}{\prod}}b_n^{J_n}T_n^{J_n} \underset{n=2 \ \text{even}}{\overset{L+1}{\prod}}S_n^{\tilde{J}_n} -\underset{n=2 \ \text{even}}{\overset{L-1}{\prod}}\hat{b}_n^{'J_n}T_n^{J_n} \underset{n=2\ \text{even}}{\overset{L+1}{\prod}}\hat{S}_n^{'\tilde{J}_n} \Bigr{)}
$$
and the profiles $\bos{S}_n$ are homogeneous of degree $(n,n-g',n \ \text{mod}2,n)$ in the sense of Definition \ref{def:homogeneousfunctions}. This means that for $n$ even: 
$$
S_n(b)=\underset{J' \in\mathcal{J}, \ |J'|_2=n }{\sum} b^{J'}f_{J'},
$$
the sum being finite $\# \mathcal{J}<+\infty$, and the profiles satisfying $\partial_y^k f_{J'}=O\left(\frac{1}{1+y^{\gamma-n+g'+k}} \right)$. Therefore one has the identity:
$$
\begin{array}{r c l}
&\underset{n=2, \ \text{even}}{\overset{L-1}{\prod}}b_n^{J_n}T_n^{J_n} \underset{n=2, \ \text{even}}{\overset{L+1}{\prod}}S_n^{\tilde{J}_n} -\underset{n=2, \ \text{even}}{\overset{L-1}{\prod}}\hat{b}_n^{'J_n}T_n^{J_n} \underset{n=2,\ \text{even}}{\overset{L+1}{\prod}}\hat{S}_n^{'\tilde{J}_n}= \underset{G\in \mathcal{G}}{\sum} [b^G -(\hat{b}')^G]g_G,
\end{array}
$$
the sum being finite $\# \mathcal{G}<+\infty$, for some determined profiles $g_G$ having the asymptotic: $\partial_y^k g_G=O\left( \frac{1}{1+y^{i\gamma+g'\sum_{2}^{L+2}\tilde{J}_n-|G|_2+k}}\right)$. Using the bound \fref{variete:eq:difference polynomes parametres} on $b^G-\hat{b}^{'G} $, one has for $y\leq 2\hat{B}_1'$:
$$
\begin{array}{r c l}
|\partial_y^k  [b^G -(\hat{b}')^G]g_G|  &\leq& C(\underset{1\leq k \leq L}{\text{sup}}b_1^{-k}|\triangle \hat{b}_k|) \frac{1}{1+y^{i\gamma+k}}\frac{b_1^{|G|_2}}{1+y^{-|G|_2}}\\
&\leq& C(\underset{1\leq k \leq L}{\text{sup}}b_1^{-k}|\triangle \hat{b}_k|) \frac{b_1}{1+y^{i\gamma-1+k+O(\eta)}}.
\end{array}
$$
With \fref{linearized:eq:asymptotique soliton} one obtains the desired bound (i) for the first term in \fref{variete:asymptotique:Qp-j expression}:
\be \label{variete:asymptotique:Qp-j 1}
\partial_y^k \left( \underset{i=1}{\overset{p-1}{\sum}} C_i Q^{p-j-i}\chi_{\hat{B}'_1}^i(\alpha_{\hat{b}'}^i-\alpha_b^i)\right)=O\left(\frac{b_1\underset{1\leq k \leq L}{\text{sup}}(b_1^{-k}|\triangle \hat{b}_k|) }{1+y^{\frac{2(p-j)}{p-1}-1+\alpha+k+O(\eta)}}    \right).
\ee
We now turn to the second term in \fref{variete:asymptotique:Qp-j expression}. First we factorize:
$$
\chi_{\hat{B}'_1}^i-\chi_{B_1}^i=(\chi_{\hat{B}'_1}-\chi_{B_1})\sum_{n=0}^{i-1}C_n \chi_{\hat{B}_1'}^{n}\chi_{B_1}^{i-1-n},
$$
for some constants $(C_n)_{0\leq n \leq i-1}$ and then we use the integral formulation:
\be \label{variete:eq:formulation integrale diff chi}
\chi_{B_1}(y)-\chi_{\hat{B}_1'}(y)=y(b_1^{1+\eta}-\hat{b}_1^{'(1+\eta)} )\int_0^1 \partial_y \chi (y((1-\theta)\hat{b}_1^{'(1+\eta)}+\theta b_1^{1+\eta})d\theta,
\ee
to find that: $\partial_y^k (\chi_{\hat{B}'_1}^i-\chi_{B_1}^i)=O\left( \frac{1}{1+y^{k}}b_1^{-1}|\triangle \hat{b}_1|  \right) $. We know from the asymptotic of the $T_i$'s and $S_i$'s that for $y\leq 2\text{max}(B_1,\hat{B}_1')$:
$$
\partial_y^k (Q^{p-j-i}\alpha_b^i)=O\left(\frac{b_1}{1+y^{\frac{2(p-j)}{p-1}-1+i\alpha+O(\eta)}} \right).
$$
The two last asymptotics give the desired bound for the second term in \fref{variete:asymptotique:Qp-j expression}:
\be \label{variete:asymptotique:Qp-j 2}
\partial_y^k \left( \underset{i=1}{\overset{p-1}{\sum}} C_i Q^{p-j-i}(\alpha_b^i(\chi_{\hat{B}'_1}^i-\chi_{B_1}^i))\right)=O\left(\frac{b_1b_1^{-1}|\triangle \hat{b}_1|}{1+y^{\frac{2(p-j)}{p-1}-1+\alpha+k+O(\eta)}} \right).
\ee
Injecting \fref{variete:asymptotique:Qp-j 1} and \fref{variete:asymptotique:Qp-j 2} in \fref{variete:asymptotique:Qp-j expression} gives the desired result \fref{variete:eq:asymptotique diff Qp-j}.\\
\\

\underline{Proof of (ii):} As we are in the zone $y\leq 2B_0$, from the localization property of Proposition \ref{pr:profilapprochecoupe} the error is given by:
$$
\psi_b=\underset{J\in \mathcal{J}, \ |J|_2\geq L+3}{\sum}b^Jf_J,
$$
the sum being finite $\#\mathcal{J}<+\infty$ and the profiles satisfying $\partial_y^k f_J=O\left(\frac{1}{1+y^{\gamma+g'+1-|J|_2}} \right)$. The difference of the primary errors then writes: $\psi_b-\psi_{\hat{b}'}=\underset{J\in \mathcal{J}, \ |J|_2\geq L+3}{\sum}(b^J-\hat{b}^{'J})f_J$. Therefore, the bound \fref{variete:eq:asymptotique diff psib leqB0} of the lemma is a consequence of the asymptotic of the $f_J$'s and of the bound \fref{variete:eq:difference polynomes parametres} on $b^J-\hat{b}^{'J}$.
\end{proof}

We can now relate the time evolution of the difference of the parameters to the difference of the errors $\bos{\varepsilon}-\hat{\bos{\varepsilon}}'$ and to itself.

\begin{lemma}[Modulation estimates for the difference] 
\label{variete:lem:modulation diff}

There holds the following identities. The difference of the two times obeys to:
\be \label{variete:modulation:eq:dhats (triangle b)}
\frac{d\hat{s}'}{ds}-1= \frac{\triangle \hat{b}_1}{\hat{b}'_1}+O  \left(b_1^{L+(1-\delta_0)(1+\frac{\eta}{2})}(b_1^{\frac{\eta}{2}(1-\delta_0)}\underset{1\leq i\leq L}{\text{sup}}b_1^{-i} |\triangle \hat{b}_i|+\sqrt{\triangle_r \hat{\mathcal{E}}_{s_L}})\right).
\ee
For the parameters, for $1\leq i \leq L-1$ one has:
\be \label{variete:modulation:eq:leqL-1 (triangle b)}
\begin{array}{r c l}
&\left| b_{i,s}+(i-\alpha)b_1b_i-b_{i+1}-[\hat{b}'_{i,s}+\frac{d\hat{s}'}{ds}((i-\alpha)\hat{b}'_1\hat{b}'_i-\hat{b}'_{i+1})] \right|\\
\leq& C b_1^{L+1+(1-\delta_0)(1+\frac{\eta}{2})}\left(b_1^{\frac{\eta}{2}(1-\delta_0)}\underset{1\leq i\leq L}{\text{sup}}b_1^{-i}|\triangle \hat{b}_i|+\sqrt{\triangle_r \hat{\mathcal{E}}_{s_L}}\right),
\end{array}
\ee
and for the last one we have the primary bound:
\be \label{variete:modulation:eq:L (triangle b)}
\begin{array}{r c l}
&\left| b_{L,s}+(L-\alpha)b_1b_L-[\hat{b}'_{L,s}+\frac{d\hat{s}'}{ds}(L-\alpha)\hat{b}'_1\hat{b}'_L] \right| \\
\leq& C b_1^{L+(1-\delta_0)(1+\frac{\eta}{2})}\left(b_1^{\frac{\eta}{2}(1-\delta_0)}\underset{1\leq i\leq L}{\text{sup}}b_1^{-i}|\triangle \hat{b}_i|+\sqrt{\triangle_r \hat{\mathcal{E}}_{s_L}}\right).
\end{array}
\ee

\end{lemma}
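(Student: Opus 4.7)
The plan is to mimic the strategy of Lemma \ref{lem:modulation}, projecting the equation \fref{variete:eq:evolution difference epsilon} for the difference $\bos{\varepsilon}-\hat{\bos{\varepsilon}}'$ onto the family $(\bos{H}^{*i}\bos{\Phi}_M)_{0\leq i\leq L}$ and tracking carefully how each scalar product is affected by passing from a single solution to a pair. The starting observation is that the orthogonality conditions \fref{eq:condition d'ortho pour epsilon} together with their counterpart \fref{variete:eq:conditions d'orthogonalite pour hatvarepsilon'} imply $\langle \bos{\varepsilon}-\hat{\bos{\varepsilon}}',\bos{H}^{*i}\bos{\Phi}_M\rangle\equiv 0$ for $0\leq i\leq L$, so the time derivative term and the linear term $\bos{H}(\bos{\varepsilon}-\hat{\bos{\varepsilon}}')$ drop out, while the scaling term is reduced to a coercivity-controlled local scalar product, exactly as in Steps~1--4 of Lemma \ref{lem:modulation}. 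Introducing $\tilde D(s):=|\frac{d\hat{s}'}{ds}-1-\triangle\hat b_1/\hat b_1'|+\sum_{i=1}^L|(b_{i,s}+(i-\alpha)b_1b_i-b_{i+1})-(\hat b'_{i,s}+\tfrac{d\hat s'}{ds}((i-\alpha)\hat b'_1\hat b'_i-\hat b'_{i+1}))|$, the goal is to bound $\tilde D$ in terms of itself, $|\triangle\hat b|$ and $\sqrt{\triangle_r\hat{\mathcal E}_{s_L}}$, then reinject as in Step~4 of Lemma \ref{lem:modulation}.

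For $i=0$, the identity $\langle \triangle\hat{\bos{Mod}}_0,\bos{\Phi}_M\rangle$ yields at leading order $(\tfrac{d\hat s'}{ds}\hat b_1'-b_1)\langle\bos{\Lambda Q},\bos{\Phi}_M\rangle+O(b_1\tilde D)$ by Taylor-expanding $\bos{\Lambda}\tilde{\bos{Q}}_b-\bos{\Lambda}\tilde{\bos{Q}}_{\hat b'}$ via Lemma \ref{variete:lem:asymptotique differences de profils yleqB0}; since the $\bos{\psi}$, $\bos{L}$, $\bos{NL}$ contributions vanish on the first coordinate (they are supported on the second, as recalled in \fref{thetrapped:eq:localisation NL et L} and \fref{linearized:eq:position psib}) and $\bos{\Phi}_M$ is compactly supported so $\tilde{\bos{\psi}}_b=\bos{\psi}_b$ there, one recovers \fref{variete:modulation:eq:dhats (triangle b)}. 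For $1\leq i\leq L-1$, projecting onto $\bos{H}^{*i}\bos{\Phi}_M$ extracts the corresponding modulation term; the novelty compared to Lemma \ref{lem:modulation} is to bound the genuinely new contributions. The differenced error term $\tilde{\bos{\psi}}_b-\tfrac{d\hat s'}{ds}\hat{\bos{\psi}}_{\hat b'}$ is split into $(1-\tfrac{d\hat s'}{ds})\hat{\bos{\psi}}_{\hat b'}$ (giving $O(\tilde D)\times b_1^{L+3}$) and $\tilde{\bos{\psi}}_b-\hat{\bos{\psi}}_{\hat b'}$ which, on the support $y\leq 2M$ of $\bos{\Phi}_M$, equals $\bos{\psi}_b-\bos{\psi}_{\hat b'}$ and is controlled pointwise by Lemma \ref{variete:lem:asymptotique differences de profils yleqB0}(ii). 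The differenced nonlinear and small-linear terms are handled similarly: write $\bos{NL}-\tfrac{d\hat s'}{ds}\hat{\bos{NL}}'=(\bos{NL}-\hat{\bos{NL}}')+(1-\tfrac{d\hat s'}{ds})\hat{\bos{NL}}'$, and expand $\bos{NL}-\hat{\bos{NL}}'$ as a sum of terms of the form $(\tilde Q_b^{p-k}-\tilde Q_{\hat b'}^{p-k})\varepsilon^{(1)k}$ and $\tilde Q_{\hat b'}^{p-k}(\varepsilon^{(1)k}-\hat\varepsilon'^{(1)k})$, controlling the first by Lemma \ref{variete:lem:asymptotique differences de profils yleqB0}(i) together with the $L^\infty$ bound of Lemma \ref{annexe:lem:interpolation varepsilon}, and the second by the local coercivity bound $\int_{y\leq M}|\varepsilon^{(1)}-\hat\varepsilon'^{(1)}|^2\lesssim C(M)\triangle\hat{\mathcal E}_{s_L}$. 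The weight $b_1^{\eta/2(1-\delta_0)}$ gain (rather than the full $b_1^{\eta(1-\delta_0)}$) reflects the loss incurred in comparing the two cut-offs $\chi_{B_1}$ and $\chi_{\hat B_1'}$ via the integral formula \fref{variete:eq:formulation integrale diff chi}, which localizes differences in the annulus $B_1\leq y\lesssim \hat B_1'$.

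The genuinely new obstacle arises at $i=L$, due to the term $(1-\tfrac{d\hat s'}{ds})\langle \bos{H}(\hat{\bos{\varepsilon}}'),\bos{H}^{*L}\bos{\Phi}_M\rangle=(1-\tfrac{d\hat s'}{ds})\langle\hat{\bos{\varepsilon}}',\bos{H}^{*(L+1)}\bos{\Phi}_M\rangle$, which does not vanish because the orthogonality conditions only go up to order $L$. This scalar product is morally of size $\sqrt{\hat{\mathcal E}_{s_L+1}'}$, and it is precisely here that the extra regularity assumption \fref{variete:eq:estimation supplementaire varepsilon'} (encoded into the bound \fref{variete:eq:equivalence normes varepsilon'} on $\hat{\mathcal E}'_{s_L+1}$) is used: combined with the already-established bound on $\tfrac{d\hat s'}{ds}-1$, this produces a contribution of acceptable size $b_1^{L+(1-\delta_0)(1+\eta)}\times(|\triangle\hat b_1|/\hat b_1'+\tilde D)$. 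The linear term $\langle\bos{H}(\bos{\varepsilon}-\hat{\bos{\varepsilon}}'),\bos{H}^{*L}\bos{\Phi}_M\rangle$ is handled exactly as in Step~3 of Lemma \ref{lem:modulation}, producing a $C(M)\sqrt{\triangle\hat{\mathcal E}_{s_L}}$ contribution, which after renormalization gives the $\sqrt{\triangle_r\hat{\mathcal E}_{s_L}}$ term in \fref{variete:modulation:eq:L (triangle b)}.

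Collecting the bounds obtained for $0\leq i\leq L$ yields $\tilde D\lesssim b_1 \tilde D+b_1^{L+1+(1-\delta_0)(1+\eta/2)}(b_1^{\eta/2(1-\delta_0)}\sup_i b_1^{-i}|\triangle\hat b_i|+\sqrt{\triangle_r\hat{\mathcal E}_{s_L}})$ for the first two identities and the weaker bound with an extra $b_1^{-1}$ factor coming from the $i=L$ projection. The standard reinjection step (Step~4 of Lemma \ref{lem:modulation}) then absorbs $\tilde D$ from the left and produces \fref{variete:modulation:eq:dhats (triangle b)}, \fref{variete:modulation:eq:leqL-1 (triangle b)} and \fref{variete:modulation:eq:L (triangle b)}. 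The main anticipated difficulties are the careful bookkeeping of the nonlinear differences and the loss from the annular comparison of the two cut-offs; both are controlled by Lemma \ref{variete:lem:asymptotique differences de profils yleqB0}, which is the key technical input specific to this difference estimate.
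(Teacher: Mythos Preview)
Your overall plan is correct and matches the paper's proof closely: project \fref{variete:eq:evolution difference epsilon} onto $\bos{H}^{*i}\bos{\Phi}_M$ for $0\leq i\leq L$, introduce a cumulative quantity $\triangle D$ (the paper uses $\triangle D(t)=|b_1-\tfrac{d\hat s'}{ds}\hat b_1'|+\sum_i|\cdots|$, equivalent to your $\tilde D$ up to a factor $\hat b_1'$), estimate each scalar product, and close by reinjection. The decomposition of the differenced $\bos{NL}$, $\bos{L}$ and $\tilde{\bos{\psi}}$ terms via Lemma \ref{variete:lem:asymptotique differences de profils yleqB0} is exactly right.

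Two points where your explanation diverges from what actually happens. First, the extra regularity assumption \fref{variete:eq:estimation supplementaire varepsilon'} is \emph{not} used in this lemma. The term $(1-\tfrac{d\hat s'}{ds})\langle\bos{H}\hat{\bos{\varepsilon}}',\bos{H}^{*L}\bos{\Phi}_M\rangle=(1-\tfrac{d\hat s'}{ds})\langle\hat{\bos{\varepsilon}}',\bos{H}^{*(L+1)}\bos{\Phi}_M\rangle$ does not require $s_L+1$ derivatives on $\hat{\bos{\varepsilon}}'$: since $\bos{\Phi}_M$ is smooth and compactly supported in $y\leq 2M$, all the $\mathcal L$'s fall on $\Phi_M$, and the scalar product is bounded by $C(M)\|\hat{\bos{\varepsilon}}'\|_{L^2(y\leq 2M)}\leq C(M)\sqrt{\hat{\mathcal E}'_{s_L}}$ via coercivity. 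The paper controls this by the trapped-regime bound \fref{variete:eq:equivalence normes varepsilon'} on $\hat{\mathcal E}'_{s_L}$ alone. The extra derivative $\hat{\mathcal E}'_{s_L+1}$ is only needed later, in the energy estimates (Lemmas \ref{variete:lem:low sobo} and \ref{variete:lem:high sobo}), where genuine $(s_L+1)$-th derivatives of $\hat{\bos{\varepsilon}}'$ appear in the linear term $(\tfrac{d\hat s'}{ds}-1)\mathcal L\hat w'^{(1)}$.

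Second, your explanation of the $\eta/2$ is off. On the support of $\bos{\Phi}_M$ (namely $y\leq 2M\ll B_0$) the cut-offs $\chi_{B_1}$ and $\chi_{\hat B_1'}$ are both identically $1$, so no annular comparison enters this lemma at all. The factor $b_1^{\frac{\eta}{2}(1-\delta_0)}$ is purely an artefact of the definition \fref{variete:eq:def normes renormalisees} of the renormalized norm $\triangle_r\hat{\mathcal E}_{s_L}=b_1^{-2L-2(1-\delta_0)(1+\eta/2)}\triangle\hat{\mathcal E}_{s_L}$: when the trapped-regime bounds give a factor $b_1^{L+(1-\delta_0)(1+\eta)}$ and one unpacks $\sqrt{\triangle\hat{\mathcal E}_{s_L}}=b_1^{L+(1-\delta_0)(1+\eta/2)}\sqrt{\triangle_r\hat{\mathcal E}_{s_L}}$, the mismatch $\eta$ versus $\eta/2$ produces the extra $b_1^{\frac{\eta}{2}(1-\delta_0)}$ in front of $\sup_i b_1^{-i}|\triangle\hat b_i|$.
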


\begin{proof}[Proof of Lemma \ref{variete:lem:modulation diff}]
We take the scalar product of \fref{variete:eq:evolution difference epsilon} with the profile $\bos{H}^{*i}\bos{\Phi_M}$ for $i=0,...,L$. It gives, because of the orthogonality conditions \fref{eq:condition d'ortho pour epsilon} and \fref{variete:eq:conditions d'orthogonalite pour hatvarepsilon'}:
\be \label{variete:modulation:eq:expression generale}
\begin{array}{r c l}
&\langle \tilde{\boldsymbol{Mod}}-\hat{\boldsymbol{Mod}}',\bos{H}^{*i} \bos{\Phi}_M\rangle-\langle \frac{\lambda_s}{\lambda}\bos{\Lambda}(\boldsymbol{\varepsilon}- \hat{\boldsymbol{\varepsilon}}' ),\bos{H}^{*i} \bos{\Phi}_M\rangle+\langle \bos{H}(\boldsymbol{\varepsilon}-\hat{\boldsymbol{\varepsilon}}' ),\bos{H}^{*i} \bos{\Phi}_M\rangle \\
=& \langle \frac{d\hat{s}'}{ds}\hat{\boldsymbol{\psi}}_{\hat{b}'} -\tilde{\boldsymbol{\psi}}_b+(\frac{d\hat{s}'}{ds}-1)\bos{H}(\hat{\boldsymbol{\varepsilon}}') +\boldsymbol{NL}-\frac{d\hat{s}'}{ds}\hat{\boldsymbol{NL}}'+\boldsymbol{L}(\boldsymbol{\varepsilon})-\frac{d\hat{s}'}{ds}\hat{\boldsymbol{L}}(\hat{\boldsymbol{\varepsilon}}'), \bos{H}^{*i} \bos{\Phi}_M\rangle.
\end{array}
\ee
To simplify the analysis we introduce the following intermediate quantity:
$$
\begin{array}{r c l}
\triangle D(t)&=&\left|b_1-\frac{d\hat{s}'}{ds}\hat{b}_1'\right|\\
&&+ \sum_{i=1}^{L}\left|b_{i,s}+(i-\alpha)b_1b_i-b_{i+1}-[\hat{b}'_{i,s}+\frac{d\hat{s}'}{ds}((i-\alpha)\hat{b}'_1\hat{b}'_i-\hat{b}'_{i+1})]\right|.
\end{array}
$$
We notice that as $\bos{\varepsilon}-\hat{\bos{\varepsilon}}'$ still satisfy the orthogonality conditions \fref{eq:condition d'ortho pour epsilon} we can still use the coercivity of $\triangle \hat{\mathcal{E}}_{s_L}$ given by Corollary \fref{annexe:lem:coercivite des normes adaptees}.\\

\underline{Step 1:} Law for $\frac{d\hat{s}'}{ds}$. We take $i=0$ in the previous equation \fref{variete:modulation:eq:expression generale}. The linear terms disappear because of the orthogonality conditions \fref{eq:condition d'ortho pour epsilon} and \fref{variete:eq:conditions d'orthogonalite pour hatvarepsilon'}:
\be \label{variete:modulation:eq:dhats lineaire}
\langle \bos{H}(\boldsymbol{\varepsilon}-\hat{\boldsymbol{\varepsilon}}' ),\bos{\Phi}_M\rangle-\langle(\frac{d\hat{s}'}{ds}-1)\bos{H}(\hat{\boldsymbol{\varepsilon}}') ,\bos{\Phi}_M\rangle=0
\ee
The non linear, small linear and error terms are not on the first coordinate, so:
\be \label{variete:modulation:eq:dhats NL L psib}
\left\langle -\tilde{\boldsymbol{\psi}}_b+\frac{d\hat{s}'}{ds}\hat{\boldsymbol{\psi}}_{\hat{b}'}+\boldsymbol{NL}-\frac{d\hat{s}'}{ds}\hat{\boldsymbol{NL}}'+\boldsymbol{L}(\boldsymbol{\varepsilon})-\frac{d\hat{s}'}{ds}\hat{\boldsymbol{L}}(\hat{\boldsymbol{\varepsilon}}'), \bos{H}^{*i} \bos{\Phi}_M\right\rangle=0.
\ee
For the the scale changing term, the coercivity and the fact that $\frac{\lambda_s}{\lambda}\sim b_1$ give:
\be \label{variete:modulation:eq:dhats Lambda}
\left| \langle \frac{\lambda_s}{\lambda}\bos{\Lambda}(\boldsymbol{\varepsilon}- \hat{\boldsymbol{\varepsilon}}' ),\bos{H}^{*i} \bos{\Phi}_M\rangle \right| \leq C(M)b_1^{L+1+(1-\delta_0)(1+\frac{\eta}{2})}\sqrt{\triangle_r \hat{\mathcal{E}}_{s_L}}.
\ee
The $\tilde{\bos{Mod}}$ term catches the dynamics on the manifold $(\tilde{\bos{Q}}_{b,\lambda})_{\lambda,b}$. Taking $i=0$ in \fref{variete:modulation:eq:expression generale} means that we are computing the law for the scaling. But by the very definition \fref{variete:eq:def hats'} of the time $\hat{s}'$, the two solutions have the same scale. This property induces the law for $\hat{s}'$ as we are going to see. Using the notations \fref{variete:eq:def triangle hatmodi} and \fref{variete:eq:def triangle hatmod0} one writes:
\be \label{variete:modulation:eq:dhats mod intermediaire}
\langle \tilde{\boldsymbol{Mod}}-\hat{\boldsymbol{Mod}}', \bos{\Phi}_M\rangle=\sum_0^L \langle \triangle \hat{\bos{Mod}}_i, \bos{\Phi}_M\rangle.
\ee
Using the orthogonality conditions \fref{thetrapped:eq:orthogonalite PhiM} and the fact that $M\ll B_1,B_1'$ one decomposes for $1\leq i \leq L$:
\be \label{variete:modulation:eq:dhats mod intermediaire 1}
\begin{array}{r c l}
&\langle \triangle \hat{\bos{Mod}}_i, \bos{\Phi}_M\rangle  \\
=& \langle \sum_{i=1}^L (b_{i,s}+(i-\alpha)b_1b_i-b_{i+1})(\sum_{j=i+1}^{L+2}\frac{\partial \bos{S}_{j}}{\partial b_i}-\frac{\partial \hat{\bos{S}}'_{j}}{\partial b_i})  ,\bos{\Phi}_M \rangle \\
&+  \underset{i=1}{\overset{L}{\sum}} (b_{i,s}+(i-\alpha)b_1b_i-b_{i+1}-(\hat{b}'_{i,s}+\frac{d\hat{s}'}{ds}((i-\alpha)\hat{b}'_1\hat{b}'_i-\hat{b}'_{i+1})))\langle \underset{j=i+1}{\overset{L+2}{\sum}}\frac{\partial \hat{\bos{S}}'_{j}}{\partial b_i}  ,\bos{\Phi}_M \rangle 
\end{array}
\ee
Now we recall that $\bos{S}_j$ is an homogeneous profile of degree $(j,j-g',j \ \text{mod}2,j) $. It implies that for $1\leq i <j\leq L+2$, one has the bound: $\left|\frac{\partial \bos{S}_j}{\partial b_i}\right|\leq C(L,M)b_1$ on $y\leq 2M$ (and similarly for $\hat{\bos{S}}'$). Hence the bound for the second term in \fref{variete:modulation:eq:dhats mod intermediaire 1}:
\be \label{variete:modulation:eq:dhats mod intermediaire 2}
\begin{array}{r c l}
&\left| \underset{i=1}{\overset{L}{\sum}} (b_{i,s}+(i-\alpha)b_1b_i-b_{i+1}-(\hat{b}'_{i,s}+\frac{d\hat{s}'}{ds}((i-\alpha)\hat{b}'_1\hat{b}'_i-\hat{b}'_{i+1})))\langle \underset{j=i+1}{\overset{L+2}{\sum}}\frac{\partial \hat{\bos{S}}'_{j}}{\partial b_i}  ,\bos{\Phi}_M \rangle \right| \\
\leq& C(L,M)b_1 \triangle D(t).
\end{array}
\ee
The homogeneity of the $\bos{S}_j$'s means that: $ \frac{\partial \bos{S}_j}{\partial b_i}=\sum_{J\in \mathcal{J}} b^J f_J$ and $ \frac{\partial \hat{\bos{S}}'_j}{\partial b_i}=\sum_{J\in \mathcal{J}} \hat{b}^{'J} f_J$ where the $J$'s are non null: $J\neq (0,...,0)$. Using the bound \fref{variete:eq:difference polynomes parametres} on $b^J-\hat{b}^{'J}$ we obtain that for $y\leq 2M$, $\left|\frac{\partial \bos{S}_j}{\partial b_i}- \frac{\partial \hat{\bos{S}}'_j}{\partial b_i}\right| \leq b_1C(L,M)\underset{1\leq i\leq \text{min}(|J|_2-1,L)}{\text{sup}}|b_1^{-i}\triangle \hat{b}_i|$. Moreover, we know from the modulation equations \fref{thetrapped:eq:modulation leq L-1} and \fref{thetrapped:eq:premiere modulation L} that $|b_{i,s}+(i-\alpha)b_1b_i-b_{i+1}|\leq C(L,M)b_1^{L+(1-\delta_0)(1+\eta)} $. Hence we get the following bound for the second term in the right hand side of \fref{variete:modulation:eq:dhats mod intermediaire 1}:
\be \label{variete:modulation:eq:dhats mod intermediaire 3}
\begin{array}{r c l}
&\left| \left\langle \sum_{i=1}^L (b_{i,s}+(i-\alpha)b_1b_i-b_{i+1})(\sum_{j=i+1}^{L+2}\frac{\partial \bos{S}_{j}}{\partial b_i}-\frac{\partial \hat{\bos{S}}'_{j}}{\partial b_i})  ,\bos{\Phi}_M \right\rangle\right|\\
\leq& C b_1^{L+1+(1-\delta_0)(1+\eta)}\underset{1\leq i\leq L}{\text{sup}}|b_1^{-i}\triangle \hat{b}_i|.
\end{array}
\ee
The identity \fref{variete:modulation:eq:dhats mod intermediaire 1} and the bounds \fref{variete:modulation:eq:dhats mod intermediaire 2} and \fref{variete:modulation:eq:dhats mod intermediaire 3} give for $1\leq i \leq L$:
\be \label{variete:modulation:eq:dhats mod intermediaire 4}
\left| \langle \triangle \hat{\bos{Mod}}_i,\bos{\Phi}_M\rangle \right| \leq C(L,M)[b_1^{L+(1-\delta_0)(1+\eta)+1}\underset{1\leq i\leq L}{\text{sup}}|\triangle \hat{b}_i|+b_1 \triangle D(t)].
\ee
We now look at the first term in the sum in the right hand side of \fref{variete:modulation:eq:dhats mod intermediaire}. Using the same ideas we just used for the others, one gets:
\be \label{variete:modulation:eq:dhats mod intermediaire 5}
\begin{array}{r c l}
&\langle \triangle \hat{\bos{Mod}}_0, \bos{\Phi}_M\rangle = \langle (\frac{\lambda_s}{\lambda}+b_1)(\bos{\Lambda}\tilde{\bos{Q}}_b-\bos{\Lambda}\tilde{\bos{Q}}_{\hat{b}'})+(b_1-\frac{d \hat{s}'}{ds}\hat{b}'_1))\bos{\Lambda}\tilde{\bos{Q}}_{\hat{b}'}, \bos{\Phi}_M\rangle \\
=& O(b_1^{L+1+(1-\delta_0)(1+\eta)}\underset{1\leq i\leq L}{\text{sup}}|b_1^{-i}\triangle \hat{b}_i|)+O(b_1\triangle D(t)) +(b_1-\frac{d\hat{s}'}{ds}\hat{b}'_1)\langle \bos{\Lambda} \bos{Q},\bos{\Phi}_M \rangle.
\end{array}
\ee
We have estimated all terms involved in the identity \fref{variete:modulation:eq:dhats mod intermediaire} for the modulation term in \fref{variete:modulation:eq:dhats mod intermediaire 4} and \fref{variete:modulation:eq:dhats mod intermediaire 5}, giving:
\be \label{variete:modulation:eq:dhats mod}
\begin{array}{r c l}
\langle \tilde{\boldsymbol{Mod}}-\hat{\boldsymbol{Mod}}', \bos{\Phi}_M\rangle&=&(b_1-\frac{d\hat{s}'}{ds}\hat{b}'_1)\langle \bos{\Lambda} \bos{Q},\bos{\Phi}_M \rangle\\
&&+O(b_1^{L+1+(1-\delta_0)(1+\eta)}\underset{1\leq i\leq L}{\text{sup}}|\triangle \hat{b}_i|+b_1\triangle D(t)).
\end{array}
\ee
We can now come back to the modulation equation \fref{variete:modulation:eq:expression generale} for $i=0$. We have calculated all terms in the right hand side in \fref{variete:modulation:eq:dhats lineaire}, \fref{variete:modulation:eq:dhats NL L psib}, \fref{variete:modulation:eq:dhats Lambda} and \fref{variete:modulation:eq:dhats mod}, so it now writes (because $\langle \bos{\Lambda}\bos{Q},\bos{\Phi}_M\rangle\sim cM^{2k_0+2\delta_0}>0$ for $c>0$):
\be \label{variete:modulation:eq:dhats}
\left| b_1-\frac{d\hat{s}'}{ds}\hat{b}'_1\right|\leq C [b_1\triangle D(t)+b_1^{L+1+(1-\delta_0)(1+\frac{\eta}{2})}(b_1^{\frac{\eta}{2}(1-\delta_0)}\underset{1\leq i\leq L}{\text{sup}}b_1^{-i}|\triangle \hat{b}_i|+\sqrt{\triangle_r \hat{\mathcal{E}}_{s_L}})].
\ee
This identity gives a first bound for the law of $\hat{s}'$:
\be \label{variete:modulation:eq:dhats-1}
1-\frac{d\hat{s}'}{ds}=\frac{-\triangle \hat{b}_1}{\hat{b}'_1} +O[ \triangle D(t)+b_1^{L+(1-\delta_0)(1+\frac{\eta}{2})}(b_1^{\frac{\eta}{2}(1-\delta_0)}\underset{1\leq i\leq L}{\text{sup}}b_1^{-i}|\triangle \hat{b}_i|+\sqrt{\triangle_r \hat{\mathcal{E}}_{s_L}})].
\ee

\underline{Step 2:} Law for $\triangle \hat{b}_i$ for $1\leq i \leq L-1$. We look at \fref{variete:modulation:eq:expression generale} for $1\leq i\leq L-1$. The linear term disappear because of orthogonality conditions:
\be \label{variete:modulation:eq:leqL-1 lineaire}
\left\langle \bos{H}(\boldsymbol{\varepsilon}-\hat{\boldsymbol{\varepsilon}}' )-(\frac{d\hat{s}'}{ds}-1)\bos{H}(\hat{\boldsymbol{\varepsilon}}'),\bos{H}^{*i} \bos{\Phi}_M\right\rangle=0.
\ee
The scale changing term is estimated as before:
\be  \label{variete:modulation:eq:leqL-1 Lambda}
\left| \langle \frac{\lambda_s}{\lambda}\bos{\Lambda}(\boldsymbol{\varepsilon}- \hat{\boldsymbol{\varepsilon}}' ),\bos{H}^{*i} \bos{\Phi}_M\rangle \right|\leq C(L,M) b_1^{L+1+(1-\delta_0)(1+\frac{\eta}{2})}\sqrt{\triangle_r \hat{\mathcal{E}}_{s_L}}.
\ee
The bounds \fref{variete:eq:asymptotique diff psib leqB0}, \fref{variete:modulation:eq:dhats-1} and \fref{linearized:eq:estimations locales non coupe} on on $\tilde{\bos{\psi}}_b-\tilde{\bos{\psi}}_{\hat{b}'}$, $|\frac{d\hat{s}'}{ds}-1|$ and $\hat{\bos{\psi}}_{\hat{b}'}$ imply: 
\be \label{variete:modulation:eq:leqL-1 psib}
\begin{array}{r c l}
\left| \langle \tilde{\boldsymbol{\psi}}_b-\frac{d\hat{s}'}{ds}\hat{\boldsymbol{\psi}}_{\hat{b}'},\bos{H}^{*i}\bos{\Phi}_M\rangle \right| &=& \left| \langle \tilde{\boldsymbol{\psi}}_b-\hat{\boldsymbol{\psi}}_{\hat{b}'}+(1-\frac{d\hat{s}'}{ds})\hat{\boldsymbol{\psi}}_{\hat{b}'},\bos{H}^{*i}\bos{\Phi}_M\rangle \right| \\
&\leq& Cb_1^{L+3}(\underset{1\leq i \leq L}{\text{sup}} b_1^{-i}|\triangle b_i|+\triangle D(t)+\sqrt{\triangle_r \hat{\mathcal{E}}_{s_L}}).
\end{array}
\ee
For the nonlinear terms, we have that $NL=\sum_{j=0}^{p-2} C_j\tilde{Q}_b^{(1)j}\varepsilon^{(1)(p-j)}$ and similarly for $\hat{NL}'$. Fix $j$, $1\leq j \leq p-2$. We estimate, using the bound \fref{variete:eq:asymptotique diff Qp-j} on $\tilde{Q}_b^{(1)j}-\tilde{Q}_{\hat{b}'}^{(1)j}$:
$$
\begin{array}{r c l}
&\parallel \tilde{Q}_b^{(1)j}\varepsilon^{(1)(p-j)}-\tilde{Q}_{\hat{b}'}^{(1)j}(\hat{\varepsilon}^{'(1)})^{(p-j)} \parallel_{L^1,y\leq 2M}\\
\leq & \parallel \tilde{Q}_b^{(1)j}-\tilde{Q}_{\hat{b}'}^{(1)j} \parallel_{L^{\infty},y\leq 2M}\parallel \parallel \varepsilon^{(1)(p-j)} \parallel_{L^1,y\leq 2M}\\
&+C\parallel \varepsilon^{(1)(p-j)}-(\hat{\varepsilon}^{'(1)})^{(p-j)} \parallel_{L^1,y\leq 2M} \\
\leq & Cb_1^{2L+(1-\delta_0)(2+\eta)}\underset{1\leq i \leq L}{\text{sup}} |\triangle \hat{b}_i |+Cb_1^{2L+(1-\delta_0)(2+\eta)}\sqrt{\triangle_r \hat{\mathcal{E}}_{s_L}}.
\end{array}
$$
For $j=0$ one has: $\parallel  \varepsilon^{(1)p}-(\hat{\varepsilon}^{'(1)})^{p}  \parallel_{L^1,y\leq 2M}\leq Cb_1^{2L+(1-\delta_0)(2+\eta)}\sqrt{\triangle_r \hat{\mathcal{E}}_{s_L}}$. The previous bounds and the bound \fref{variete:modulation:eq:dhats-1} on $\frac{d\hat{s}'}{ds}-1$ finally imply:
\be \label{variete:modulation:eq:leqL-1 NL}
\begin{array}{l l l l l}
&\left|\langle \boldsymbol{NL}-\frac{d\hat{s}'}{ds}\hat{\boldsymbol{NL}}', \bos{H}^{*i}\bos{\Phi}_M \rangle \right|= \left| \langle \boldsymbol{NL}-\hat{\boldsymbol{NL}}'+(1-\frac{d\hat{s}'}{ds})\hat{\boldsymbol{NL}}', \bos{H}^{*i}\bos{\Phi}_M\rangle \right|  \\
\leq & C\parallel  NL-\hat{NL}' \parallel_{L^1,y\leq 2M}+C|1-\frac{d\hat{s}'}{ds}|\parallel \hat{NL}' \parallel_{L^1,y\leq 2M} \\
\leq& Cb_1^{2L+(1-\delta_0)(2+\eta)-1}(\underset{1\leq i \leq L}{\text{sup}} |\triangle \hat{b}_i |+\sqrt{\triangle_r \hat{\mathcal{E}}_{s_L}}+\triangle D(t)).
\end{array}
\ee
We treat the same way the small linear term:
\be \label{variete:modulation:eq:leqL-1 L}
\begin{array}{r c l}
&\left| \langle \boldsymbol{L}(\boldsymbol{\varepsilon})-\frac{d\hat{s}'}{ds}\hat{\boldsymbol{L}}(\hat{\boldsymbol{\varepsilon}}'), \bos{H}^{*i} \bos{\Phi}_M\rangle \right| \\
\leq & C\parallel (\tilde{Q}_b^{(1)(p-1)}-Q^{p-1})\varepsilon^{(1)}-(\tilde{Q}_{\hat{b}'}^{(1)(p-1)}-Q^{p-1})\hat{\varepsilon}^{'(1)}  \parallel_{L^1,y\leq 2M}\\
&+C|1-\frac{d\hat{s}'}{ds}|\parallel \hat{L}(\hat{\varepsilon}') \parallel_{L^1} \\
\leq & Cb_1^{L+1+(1-\delta_0)(1+\frac{\eta}{2})}[b_1^{\frac{\eta}{2}(1-\delta_0)}\underset{1\leq i \leq L}{\text{sup}} |\triangle \hat{b}_i|+\sqrt{\triangle_r \hat{\mathcal{E}}_{s_L}}+b_1^{\frac{\eta}{2}(1-\delta_0)}\triangle D(t)].
\end{array}
\ee
Finally, for the modulation term, using the same tools employed for $i=0$ we obtain:
\be \label{variete:modulation:eq:leqL-1 mod}
\begin{array}{l l}
\langle \tilde{\boldsymbol{Mod}}-\hat{\boldsymbol{Mod}}',\bos{H}^{*i} \bos{\Phi}_M\rangle = O(b_1\triangle D(t)+b_1^{L+1+(1-\delta_0)(1+\eta)}\underset{1\leq i \leq L}{\text{sup}}|\triangle \hat{b}_i|).\\
+(b_{i,s}+(i-\alpha)b_1b_i-b_{i+1}-(\hat{b}'_{i,s}+\frac{d\hat{s}'}{ds}((i-\alpha)\hat{b}'_1\hat{b}'_i-\hat{b}'_{i+1})))\langle \bos{\Lambda}\bos{Q},\bos{\Phi}_M\rangle.

\end{array}
\ee
We now collect all the estimates we have showed, \fref{variete:modulation:eq:leqL-1 lineaire}, \fref{variete:modulation:eq:leqL-1 Lambda}, \fref{variete:modulation:eq:leqL-1 psib}, \fref{variete:modulation:eq:leqL-1 NL}, \fref{variete:modulation:eq:leqL-1 L} and \fref{variete:modulation:eq:leqL-1 mod} and inject them in \fref{variete:modulation:eq:expression generale}. This gives:
\be \label{variete:modulation:eq:leqL-1}
\begin{array}{r c l}
&\left| b_{i,s}+(i-\alpha)b_1b_i-b_{i+1}-(\hat{b}'_{i,s}+\frac{d\hat{s}'}{ds}((i-\alpha)\hat{b}'_1\hat{b}'_i-\hat{b}'_{i+1})) \right|\\
\leq& C [b_1\triangle D(t)+b_1^{L+1+(1-\delta_0)(1+\frac{\eta}{2})}(b_1^{\frac{\eta}{2}(1-\delta_0)}\underset{1\leq i\leq L}{\text{sup}}b_1^{-i}|\triangle \hat{b}_i|+\sqrt{\triangle_r \hat{\mathcal{E}}_{s_L}})].
\end{array}
\ee

\underline{Step 3:} Law for $\triangle \hat{b}_L$. The computations we made in the previous step, to find the estimates \fref{variete:modulation:eq:leqL-1 psib}, \fref{variete:modulation:eq:leqL-1 Lambda}, \fref{variete:modulation:eq:leqL-1 NL}, \fref{variete:modulation:eq:leqL-1 L} and \fref{variete:modulation:eq:leqL-1 mod} still work when taking $i=L$. The difference is that the linear term does not cancel anymore. Namely, using the bound \fref{variete:modulation:eq:dhats-1} on $\frac{d\hat{s}'}{ds}-1$:
$$
\begin{array}{r c l}
&\left| \langle \bos{H}(\boldsymbol{\varepsilon}-\hat{\boldsymbol{\varepsilon}}' )-(\frac{d\hat{s}'}{ds}-1)\bos{H}(\hat{\boldsymbol{\varepsilon}}'),\bos{H}^{*i} \bos{\Phi}_M\rangle \right| \\
\leq & C\parallel \boldsymbol{\varepsilon}-\hat{\boldsymbol{\varepsilon}}'  \parallel_{L^2,y\leq 2M}+|\frac{d\hat{s}'}{ds}-1|\parallel \hat{\boldsymbol{\varepsilon}}' \parallel_{L^2,y\leq 2M} \\
\leq &C b_1^{L+(1-\delta_0)(1+\frac{\eta}{2})}\sqrt{\triangle_r \hat{\mathcal{E}}_{s_L}}+Cb_1^{L+(1-\delta_0)(1+\eta)}(\triangle D(t)+\underset{1\leq i \leq L}{\text{sup}}b_1^{-i}|\triangle \hat{b}_i|).
\end{array}
$$
So for $i=L$ in \fref{variete:modulation:eq:expression generale} one obtains:
\be \label{variete:modulation:eq:L}
\begin{array}{r c l}
&\left| b_{L,s}+(L-\alpha)b_1b_L-(\hat{b}'_{L,s}+\frac{d\hat{s}'}{ds}(L-\alpha)\hat{b}'_1\hat{b}'_L) \right|\\
\leq& C \left( b_1\triangle D(t)+b_1^{L+(1-\delta_0)(1+\frac{\eta}{2})}(b_1^{\frac{\eta}{2}(1-\delta_0)}\underset{1\leq i\leq L}{\text{sup}}b_1^{-i}|\triangle \hat{b}_i|+\sqrt{\triangle_r \hat{\mathcal{E}}_{s_L}}) \right).
\end{array}
\ee

\underline{Step 4:} Gathering the bounds. We now put together the primary bounds we found so far for the scaling \fref{variete:modulation:eq:dhats}, for the parameters \fref{variete:modulation:eq:leqL-1} and \fref{variete:modulation:eq:L} to find that:
$$
|D(t)|\leq  C b_1^{L+(1-\delta_0)(1+\frac{\eta}{2})}[b_1^{\frac{\eta}{2}(1-\delta_0)}\underset{1\leq i\leq L}{\text{sup}}b_1^{-i}|\triangle \hat{b}_i|+\sqrt{\triangle_r \hat{\mathcal{E}}_{s_L}})].
$$ 
We reinject it in the previous primary bounds \fref{variete:modulation:eq:dhats}, \fref{variete:modulation:eq:leqL-1} and \fref{variete:modulation:eq:L} to obtain the bounds \fref{variete:modulation:eq:dhats (triangle b)}, \fref{variete:modulation:eq:leqL-1 (triangle b)} and \fref{variete:modulation:eq:L (triangle b)} claimed in the lemma.

\end{proof}

We are now going to improve our control over $\triangle b_L$ by the same technique we used in Lemma \fref{trappedregime:improvedmodulation:lem:improvedmodulation}. After an integration by parts in time, the time evolution of $\triangle \hat{b}_L$ enjoys a sufficiently good estimate for our purpose, as the ones we just proved for $\triangle \hat{b}_i$ for $1\leq i \leq L-1$ in Lemma \ref{variete:lem:modulation diff}.

\begin{lemma}[Improved modulation equation for $\triangle \hat{b}_L$]
\label{variete:lem:improved modulation}

There holds\footnote{the denominator being strictly positive from \fref{thetrapped:eq:improved modulation taille denominateur}.} :
\begin{equation} \label{variete:eq:improved modulation}
\begin{array}{r c l}
& (b_{L,s}+(L-\alpha)b_1b_L-(\hat{b}'_{L,s}+\frac{d\hat{s}'}{ds}(L-\alpha)\hat{b}'_1\hat{b}'_L)) \\
=&\frac{d}{ds}\left[ \frac{\langle \bos{H}^{L}(\bos{\varepsilon}-\hat{\bos{\varepsilon}}') , \chi_{B_0} \bos{\Lambda} \bos{Q}\rangle-  \hat{b}_L'\int \chi_{B_0}\Lambda^{(1)} Q\left( \frac{\partial S_{L+2}}{\partial b_L}-\frac{\partial \hat{S}'_{L+2}}{\partial b_L} \right)_{L-1}}{\Bigl\langle  \chi_{B_0}\Lambda^{(1)}Q,\Lambda^{(1)}Q+(-1)^{\frac{L-1}{2}}\left(\frac{\partial S_{L+2}^{(2)}}{\partial b_L}\right)_{L-1} \Bigr\rangle} \right] \\
&+O[b_1^{L+1+\frac{\eta}{2}(1-\delta_0)}(\sqrt{\triangle_r \hat{\mathcal{E}}_{s_L}} + b_1^{\frac{\eta}{2}(1-\delta_0)}\text{sup} \ b_1^{-i}|\triangle \hat{b}_i|)].
\end{array}
\end{equation}
The quantity appearing via its derivative in time has the following size:
\be \label{variete:eq:gain improved modulation}
\begin{array}{r c l}
&\left| \frac{\langle \bos{H}^{L}(\bos{\varepsilon}-\hat{\bos{\varepsilon}}') , \chi_{B_0} \bos{\Lambda} \bos{Q}\rangle-  \hat{b}_L'\int \chi_{B_0}\Lambda^{(1)} Q\left( \frac{\partial S_{L+2}}{\partial b_L}-\frac{\partial \hat{S}'_{L+2}}{\partial b_L} \right)_{L-1}}{\Bigl\langle  \chi_{B_0}\Lambda^{(1)}Q,\Lambda^{(1)}Q+(-1)^{\frac{L-1}{2}}\left(\frac{\partial S_{L+2}^{(2)}}{\partial b_L}\right)_{L-1} \Bigr\rangle}\right| \\
 \leq& C b_1^{L+\frac{\eta}{2} (1-\delta_0)}(\sqrt{\triangle_r \hat{\mathcal{E}}_{s_L}}+b_1^{g'+O(\eta)}\underset{1\leq i \leq 2}{\text{sup}}b_1^{-i}|\triangle \hat{b}_i|).
\end{array}
\ee
\end{lemma}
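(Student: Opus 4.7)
The plan is to adapt the argument of Lemma \ref{trappedregime:improvedmodulation:lem:improvedmodulation} to the level of differences, using the evolution equation \fref{variete:eq:evolution difference epsilon} for $\bos{\varepsilon}-\hat{\bos{\varepsilon}}'$ and the bounds on differences of profiles from Lemma \ref{variete:lem:asymptotique differences de profils yleqB0}. The starting point is to compute the time derivative of $\langle \bos{H}^L(\bos{\varepsilon}-\hat{\bos{\varepsilon}}'),\chi_{B_0}\bos{\Lambda}\bos{Q}\rangle$. Because $L$ is odd and $\bos{H}^L$ is given by \fref{thetrapped:eq:puissances de H}, this will reduce to an integral of the form $\int(\partial_s \varepsilon^{(2)}-\partial_s \hat{\varepsilon}^{'(2)})_{L-1}\chi_{B_0}\Lambda^{(1)}Q$ plus a cut-off error $b_{1,s}\cdot$(supported in $B_0\le y\le 2B_0$), the latter being controlled by $C(M)b_1^{-\delta_0}\sqrt{\triangle_r \hat{\mathcal E}_{s_L}}$ exactly as in \fref{thetrapped:eq:expression deuxieme terme improved}.

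Next, I would replace $\partial_s(\bos{\varepsilon}-\hat{\bos{\varepsilon}}')$ by its expression from \fref{variete:eq:evolution difference epsilon} and treat each resulting contribution on the compactly supported weight $\chi_{B_0}\Lambda^{(1)}Q$. The bulk linear piece $\mathcal{L}(\varepsilon^{(1)}-\hat{\varepsilon}^{'(1)})$ and the scaling piece $\frac{\lambda_s}{\lambda}\bos{\Lambda}(\bos{\varepsilon}-\hat{\bos{\varepsilon}}')$ are handled by Cauchy--Schwarz and the coercivity of $\triangle\hat{\mathcal E}_{s_L}$ (note $\bos{\varepsilon}-\hat{\bos{\varepsilon}}'$ still satisfies \fref{eq:condition d'ortho pour epsilon}, so Corollary \ref{annexe:cor:coercivite mathcalEsL} applies); the correction $(1-\frac{d\hat{s}'}{ds})\bos{H}(\hat{\bos{\varepsilon}}')$ is harmless because the factor $1-\frac{d\hat{s}'}{ds}$ is controlled by Lemma \ref{variete:lem:modulation diff}. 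For the error piece $\tilde{\bos{\psi}}_b-\frac{d\hat{s}'}{ds}\hat{\bos{\psi}}_{\hat{b}'}$ I would use the local improved bound \fref{variete:eq:asymptotique diff psib leqB0} on $y\le 2B_0$ (where both $\tilde{\psi}_b$ and $\hat{\psi}_{\hat b'}$ coincide with their primary versions) together with \fref{variete:modulation:eq:dhats (triangle b)}; for $\bos{NL}-\frac{d\hat{s}'}{ds}\hat{\bos{NL}}'$ and $\bos{L}-\frac{d\hat{s}'}{ds}\hat{\bos{L}}'$ I would mimic the scheme of \fref{thetrapped:eq:estimation premier terme improved intermediaire NL} and \fref{thetrapped:eq:estimation premier terme improved intermediaire L}, now using \fref{variete:eq:asymptotique diff Qp-j} to bound $\tilde Q_b^{p-k}-\tilde Q_{\hat b'}^{p-k}$ and the $L^\infty$ estimate of Lemma \ref{annexe:lem:interpolation varepsilon} applied to $\varepsilon-\hat\varepsilon'$.

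The heart of the computation is the modulation contribution $-\tilde{\bos{Mod}}+\hat{\bos{Mod}}'=-\sum_{i=0}^{L}\triangle\hat{\bos{Mod}}_i$. Paired with $(\chi_{B_0}\Lambda^{(1)}Q)_{L-1}$, each term with $i<L$ yields $O(b_1\cdot\text{(modulation}))$, which is absorbed into the RHS of \fref{variete:eq:improved modulation}, while the $i=L$ term produces
$$\bigl(b_{L,s}+(L-\alpha)b_1 b_L-[\hat b'_{L,s}+\tfrac{d\hat{s}'}{ds}(L-\alpha)\hat b'_1\hat b'_L]\bigr)\bigl\langle\chi_{B_0}\Lambda^{(1)}Q,\,\Lambda^{(1)}Q+(-1)^{\frac{L-1}{2}}(\partial_{b_L}S_{L+2})_{L-1}\bigr\rangle,$$
together with a cross term involving $\hat b'_L\bigl(\partial_{b_L}S_{L+2}-\partial_{b_L}\hat S'_{L+2}\bigr)$, which is exactly the correction that appears in the numerator of \fref{variete:eq:improved modulation}. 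Dividing by the denominator (which has size $\sim b_1^{-2k_0-2\delta_0}$ by \fref{thetrapped:eq:improved modulation taille denominateur}) and accounting for the time derivative of this denominator as in the last step of the proof of Lemma \ref{trappedregime:improvedmodulation:lem:improvedmodulation} produces \fref{variete:eq:improved modulation}. The size bound \fref{variete:eq:gain improved modulation} follows from the coercivity of $\triangle\hat{\mathcal E}_{s_L}$, the asymptotics $\partial^k\Lambda^{(1)}Q=O(y^{-\gamma-k})$, and the application of \fref{variete:eq:difference polynomes parametres} (which is essentially inside the proof of Lemma \ref{variete:lem:asymptotique differences de profils yleqB0}) to the homogeneous profile $\partial_{b_L}S_{L+2}$, giving the extra gain $b_1^{g'+O(\eta)}\,\text{sup}\,b_1^{-i}|\triangle\hat b_i|$.

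The main obstacle is bookkeeping. Unlike in Lemma \ref{trappedregime:improvedmodulation:lem:improvedmodulation}, several quantities now depend on $\hat{s}'$ rather than $s$, so one has to carry the factor $\frac{d\hat{s}'}{ds}$ through every estimate and trade it for $1+O(|\triangle\hat b_1|/\hat b_1')$ via \fref{variete:modulation:eq:dhats (triangle b)}; in particular the clean cancellation that kills $L(\varepsilon)$--type terms in Lemma \ref{trappedregime:improvedmodulation:lem:improvedmodulation} is replaced here by a difference cancellation that only closes once one combines \fref{variete:eq:asymptotique diff Qp-j}, \fref{variete:eq:asymptotique diff psib leqB0}, \fref{variete:modulation:eq:dhats (triangle b)} and the bound \fref{variete:modulation:eq:leqL-1 (triangle b)} on the lower $\triangle\hat b_i$'s to generate the announced factor $b_1^{\frac{\eta}{2}(1-\delta_0)}\text{sup}\,b_1^{-i}|\triangle\hat b_i|+\sqrt{\triangle_r\hat{\mathcal E}_{s_L}}$. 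The second delicate point is identifying exactly which portion of the modulation contribution must be moved inside the numerator on the left of \fref{variete:eq:improved modulation}: one has to keep only the $\hat b_L'(\partial_{b_L}S_{L+2}-\partial_{b_L}\hat S'_{L+2})$ piece (which is not small pointwise but is small in $L^2$ on $y\le 2B_0$ thanks to Lemma \ref{variete:lem:asymptotique differences de profils yleqB0}), while all other $\partial_{b_L}S_{L+2}$ contributions must be either pushed into a time derivative or absorbed in the error.
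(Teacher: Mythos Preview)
Your proposal is correct and follows essentially the same route as the paper: compute $\frac{d}{ds}\langle \bos{H}^L(\bos{\varepsilon}-\hat{\bos{\varepsilon}}'),\chi_{B_0}\bos{\Lambda}\bos{Q}\rangle$ using \fref{variete:eq:evolution difference epsilon}, estimate each contribution on the support of $\chi_{B_0}$ via coercivity, \fref{variete:eq:asymptotique diff psib leqB0}, \fref{variete:eq:asymptotique diff Qp-j} and \fref{variete:modulation:eq:dhats (triangle b)}, isolate the $i=L$ modulation piece, and then divide by the denominator.

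One point deserves a sharper statement. The cross term that actually emerges from the $i=L$ modulation computation carries the factor $\hat{b}'_{L,s}$ (more precisely $\hat{b}'_{L,s}+\tfrac{d\hat s'}{ds}(L-\alpha)\hat b'_1\hat b'_L$, the second summand being absorbed as $O(b_1^{L+1+g'}\sup b_1^{-i}|\triangle\hat b_i|)$), not $\hat{b}'_L$ as you write. The passage from $\hat{b}'_{L,s}\int\chi_{B_0}\Lambda^{(1)}Q\bigl(\partial_{b_L}S_{L+2}-\partial_{b_L}\hat S'_{L+2}\bigr)_{L-1}$ to the $\hat{b}'_L$ expression inside the time derivative of \fref{variete:eq:improved modulation} is a genuine integration by parts in $s$, and one must check that the commutator $\hat{b}'_L\,\frac{d}{ds}\bigl[\int\chi_{B_0}\Lambda^{(1)}Q(\partial_{b_L}S_{L+2}-\partial_{b_L}\hat S'_{L+2})_{L-1}/\text{denom}\bigr]$ is $O(b_1^{L+1+g'}(\sup b_1^{-i}|\triangle\hat b_i|+\sqrt{\triangle_r\hat{\mathcal E}_{s_L}}))$; this uses that $\partial_{b_L}S_{L+2}$ does not depend on $b_L$ together with \fref{variete:modulation:eq:leqL-1 (triangle b)}. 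Your last paragraph hints at this (``pushed into a time derivative''), but the mechanism should be stated explicitly rather than conflated with the modulation algebra.
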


\begin{proof}[Proof of Lemma \ref{variete:lem:improved modulation}]
We will do the same computations we did to prove Lemma \fref{trappedregime:improvedmodulation:lem:improvedmodulation}, this time expressing everything in function of the differences $\triangle \hat{b}_i$ and $\bos{\varepsilon}-\hat{\bos{\varepsilon}}'$.\\
 \underline{Step 1:} Time derivative of the numerator in \fref{variete:eq:improved modulation}. We compute for the first term:
\be \label{variete:eq:expression numerateur}
\frac{d}{ds}  \langle \bos{H}^L (\bos{\varepsilon}-\hat{\bos{\varepsilon}}'), \chi_{B_0}\bos{\Lambda} \bos{Q} \rangle = \langle \bos{H}^L (\bos{\varepsilon}_s-\hat{\bos{\varepsilon}}'_s), \chi_{B_0}\bos{\Lambda} \bos{Q} \rangle + \langle \bos{H}^L (\bos{\varepsilon}-\hat{\bos{\varepsilon}}'), b_{1,s} y\partial_y \chi (\frac{y}{B_0})\bos{\Lambda} \bos{Q} \rangle .
\ee
We now calculate everything in the right hand side. For the second term:
\be \label{variete:eq:expression deuxieme terme improved}
\left| \langle \bos{H}^L (\bos{\varepsilon}-\hat{\bos{\varepsilon}}'), b_{1,s} \partial_y \chi (\frac{y}{B_0})\bos{\Lambda} \bos{Q} \rangle \right| \leq C(M)\sqrt{\triangle \hat{\mathcal{E}}_{s_L}} b_1^{-(2k_0+\delta_0)} .
\ee
We will now estimate the first term in the right hand side of \fref{variete:eq:expression numerateur}. From the time evolution of the difference \fref{variete:eq:evolution difference epsilon}, one gets:
\be \label{variete:eq:expression premier terme improved}
\begin{array}{r c l}
& (-1)^{\frac{L+1}{2}}\langle \bos{H}^{L} (\bos{\varepsilon}_s-\hat{\bos{\varepsilon}}_s), \chi_{B_0}\bos{\Lambda} \bos{Q}\rangle \\
=&  \int \chi_{B_0} \Lambda^{(1)}Q \times \bigl{(}\mathcal{L}(\hat{\varepsilon}^{'(1)}-\varepsilon^{(1)})+\frac{\lambda_s}{\lambda}\Lambda^{(2)}(\varepsilon^{(2)}-\hat{\varepsilon}^{'(2)}) -(\tilde{Mod}(t)^{(2)}-\hat{Mod}{'(2)})\\
&-\tilde{\psi}_b^{(2)}+\tilde{\psi}_{\hat{b}'}^{(2)}+NL-\hat{NL}'+L-\hat{L}'+(\frac{d\hat{s}'}{ds}-1)(\mathcal{L}\hat{\varepsilon}^{'(1)}+\tilde{\psi}_{\hat{b}'}^{(2)}-\hat{NL}'-\hat{L}' )  \bigr{)}_{L-1} 
\end{array}
\ee
and we now consider each term in the right hand side. \\
$\bullet$ \emph{Linear term:} One has the bound from coercivity:
\be \label{variete:eq:estimation premier terme improved intermediaire lineaire}
\left| \int \chi_{B_0}\Lambda^{(1)}Q (\mathcal{L}(\varepsilon^{(1)}-\hat{\varepsilon}^{'(1)}))_{L-1}  \right| \leq C(M) \sqrt{\triangle_r \hat{\mathcal{E}}_{s_L}} b_1^{-2(k_0+\delta_0)+L+1+\frac{\eta}{2}(1-\delta_0)} .
\ee
$\bullet$\emph{Scale changing term:} One has the same bound:
\be  \label{variete:eq:estimation premier terme improved intermediaire lambdavarepsilon}
\left| \int\chi_{B_0}\Lambda^{(1)}Q \frac{\lambda_s}{\lambda} (\Lambda^{(2)}(\varepsilon^{(2)}-\hat{\varepsilon}^{'(2)}))_{L-1} \right| \leq C(M) \sqrt{\triangle_r \hat{\mathcal{E}}_{s_L}} b_1^{-2(k_0+\delta_0)+L+1+\frac{\eta}{2}(1-\delta_0)} .
\ee
$\bullet$ \emph{Error term:} As we are in the zone $y\leq 2B_0$ we can use the asymptotic \fref{variete:eq:asymptotique diff psib leqB0}:

\be \label{variete:eq:estimation premier terme improved intermediaire psib}
\left| \int \chi_{B_0}\Lambda^{(1)}Q (\tilde{\psi_b}^{(2)}-\tilde{\psi}_{\hat{b}'}^{(2)})_{L-1} \right| \leq  C b_1^{-2(k_0+\delta_0)+L+1+g'}\underset{1\leq i \leq L}{\text{sup}}b_1^{-i}|\triangle \hat{b}_i| .
\ee
$\bullet$ \emph{$NL$ term:} We start by puting all the adapted derivatives on $\chi_{B_0}\Lambda^{(1)}Q$, localizing the integral in $B_0\leq y\leq2B_0$ as $A\Lambda^{(1)}Q=0$:
$$
\left| \int \chi_{B_0} \Lambda^{(1)}Q (NL-\hat{NL}')_{L-1}  \right| \leq C  \int_{B_0}^{2B_0} \frac{1}{y^{\gamma+L-1}} |NL-\hat{NL}'| .
$$
We know that $NL$ is a sum of terms of the form\footnote{we write $\tilde{Q}_b$ instead of $\tilde{Q}_b^{(1)}$ to ease notations.}: $\tilde{Q}_b^{p-k}\varepsilon^{(1)k}$ for $k>2$, and similarly for $\hat{NL}'$. Suppose that $k=p$, then:
\be \label{variete:improved:eq:NL 1}
\begin{array}{r c l}
\int_{B_0}^{2B_0} \frac{|\varepsilon^{(1)p}-\hat{\varepsilon}^{'(1)p}|}{y^{\gamma+L-1}} &\leq& C\text{max}(\parallel \varepsilon^{(1)}\parallel_{L^{\infty}}^{p-1},\parallel \varepsilon^{'(1)}\parallel_{L^{\infty}}^{p-1})\int_{B_0}^{2B_0} \frac{1}{y^{\gamma+L-1}} |\varepsilon^{(1)}-\hat{\varepsilon}^{'(1)}| \\
&\leq & C  \left( \frac{\sqrt{\mathcal{E}_{\sigma}}+\sqrt{\hat{\mathcal{E}}'_{\sigma}}}{b_1^{\sigma-s_c}} \right)^{p-1} b_1^{2+\frac{2\alpha}{L}+O\left(\frac{\sigma-s_c}{L} \right)}  \sqrt{\triangle \hat{\mathcal{E}}_{s_L}}b_1^{-2(k_0+\delta_0)-2} \\
&\leq & C b_1^{-2(k_0+\delta_0)+L+1+\frac{\alpha}{2}+O\left(\eta,\frac{\sigma-s_c}{L} \right)} \sqrt{\triangle _r \hat{\mathcal{E}}_{s_L}},
\end{array}
\ee
where we used the estimates \textbf{mettre ref} of the trapped regime (we recall that they hold for both $\bos{\varepsilon}$ and $\bos{\varepsilon}'$ as $b_1\sim \hat{b}'_1$ from \fref{variete:eq: equivalence s et hats'}). Suppose now $2\leq k\leq p-1$. We start by splitting in two parts:
$$
\begin{array}{r c l}
\left|\int_{B_0}^{2B_0} \frac{\tilde{Q}_b^{p-k}\varepsilon^{(1)k}-\tilde{Q}_{\hat{b}'}^{p-k}\hat{\varepsilon}^{'(1)k}}{y^{\gamma+L-1}}\right| &=& \left|\int_{B_0}^{2B_0} \frac{(\tilde{Q}_b^{p-k}-\tilde{Q}_{\hat{b}'}^{p-k})\varepsilon^{(1)k}}{y^{\gamma+L-1}}+\frac{\tilde{Q}_{\hat{b}'}^{p-k}(\varepsilon^{(1)k}-\hat{\varepsilon}^{'(1)k})}{y^{\gamma+L-1}}\right|.
\end{array}
$$
For the first part, from the bound \fref{variete:eq:asymptotique diff Qp-j} for $\tilde{Q}_b^{p-k}-\tilde{Q}_{\hat{b}'}^{p-k}$, one gets:
$$
\begin{array}{r c l}
\left|\int_{B_0}^{2B_0} \frac{(\tilde{Q}_b^{p-k}-\tilde{Q}_{\hat{b}'}^{p-k})\varepsilon^{(1)k}}{y^{\gamma+L-1}}\right|&\leq& Cb_1^{\frac{2(p-k)}{p-1}+\alpha+O(\eta)}\underset{1\leq i \leq L}{\text{sup}}b_1^{-i}|\triangle \hat{b}_i| \parallel \varepsilon^{(1)} \parallel_{L^{\infty}}^{k-1}\int_{B_0}^{2B_0} \frac{|\varepsilon^{(1)}|}{y^{\gamma+L-1}} \\
&\leq & Cb_1^{-2(k_0+\delta_0)+L+1+\alpha+O\left( \eta,\sigma-s_c,\frac{1}{L} \right)}\underset{1\leq i \leq L}{\text{sup}}b_1^{-i}|\triangle \hat{b}_i|.
\end{array}
$$
For the second part, as $|\tilde{Q}_{\hat{b}'}^{(p-k)}|\leq Cb_1^{\frac{2(p-k)}{p-1}} $ for $B_0\leq y \leq 2B_0$ one gets using again the $L^{\infty}$ estimate and coercivity:
$$
\begin{array}{r c l}
\left| \underset{B_0}{\overset{2B_0}{\int}} \frac{(\tilde{Q}_{\hat{b}'}^{p-k}(\varepsilon^{(1)k}-\hat{\varepsilon}^{'(1)k}))}{y^{\gamma+L-1}}\right|&\leq &\text{max}(\parallel \varepsilon^{(1)}\parallel_{L^{\infty}}^{k-1},\parallel \hat{\varepsilon}^{'(1)}\parallel_{L^{\infty}}^{k-1}) b_1^{-\frac{2(p-k)}{p-1}-(2k_0+\delta_0)}\sqrt{\triangle \hat{\mathcal{E}}_{s_L}}\\
&\leq& b_1^{-2(k_0+\delta_0)+L+1+\frac{2(k-1)\alpha}{(p-1)L}+O\left(\eta,\frac{\sigma-s_c}{L} \right)}\sqrt{\triangle_r \hat{\mathcal{E}_{s_L}}}.
\end{array}
$$
As $\eta \ll 1$ the last bounds give the following estimate for the non linear term:
\be \label{variete:eq:estimation premier terme improved intermediaire NL}
\begin{array}{r c l}
&\left| \int \chi_{B_0} \Lambda^{(1)} Q (NL-\hat{NL}')_{L-1} \right|\\
\leq& Cb_1^{-2(k_0+\delta_0)+L+1+\frac{\eta}{2}(1-\delta_0)}(\sqrt{\triangle_r \hat{\mathcal{E}_{s_L}}} +b_1^{\frac{\eta}{2}(1-\delta_0)}\underset{1\leq i \leq L}{\text{sup}}b_1^{-i}|\triangle \hat{b}_i|) .
\end{array}
\ee
$\bullet$ \emph{Small linear term:} One has: $L=(\tilde{Q}_b^{p-1}-Q^{p-1})\varepsilon^{(1)} $ and similarly for $\hat{L}'$. As for the non-linear we start by decomposing:
$$
\left| \int \chi_{B_0} \Lambda^{(1)}Q (L-\hat{L}')_{L-1} \right| \leq C \int_{B_0}^{2B_0} \frac{|\tilde{Q}_b^{p-1}-\tilde{Q}_{\hat{b}'}^{p-1}||\varepsilon^{(1)}|+|\tilde{Q}_{\hat{b}'}^{p-1}||\varepsilon^{(1)}-\hat{\varepsilon}^{'(1)})|}{y^{\gamma+L-1}} .
$$
For the first term we use the asymptotic \fref{variete:eq:asymptotique diff Qp-j} for $\tilde{Q}_b^{p-1}-\tilde{Q}_{\hat{b}'}^{p-1}$, yielding:
$$
\int_{B_0}^{2B_0} \frac{1}{y^{\gamma+L-1}}|\tilde{Q}_b^{p-1}-\tilde{Q}_{\hat{b}'}^{p-1}||\varepsilon^{(1)}|\leq b_1^{-2(k_0+\delta_0)+L+1+\alpha+O(\eta)}(\underset{1\leq i \leq L}{\text{sup}}b_1^{-i}|\triangle \hat{b}_i|).
$$
For the second term, from $|\tilde{Q}_{\hat{b}'}^{p-1}|\leq C b_1^{2} $ for $B_0\leq y\leq 2B_0$ one gets:
$$
\int_{B_0}^{2B_0} \frac{1}{y^{\gamma+L-1}}(|\tilde{Q}_{\hat{b}'}^{p-1}||\varepsilon^{(1)}-\hat{\varepsilon}^{'(1)})|)\leq Cb_1^{-2(k_0+\delta_0)+L+\frac{\eta}{2}(1-\delta_0)}\sqrt{\triangle_r \hat{\mathcal{E}}_{s_L}} .
$$
The last two bounds show that for the small linear term:
\be \label{variete:eq:estimation premier terme improved intermediaire L}
\bigl{|} \int \chi_{B_0} \Lambda^{(1)}Q (L-\hat{L}')_{L-1} \bigr{|} \leq Cb_1^{-2(k_0+\delta_0)+L+\frac{\eta (1-\delta_0)}{2}}(\sqrt{\triangle_r \hat{\mathcal{E}}_{s_L}}+b_1^{\frac{\eta (1-\delta_0)}{2}}\underset{1\leq i \leq L}{\text{sup}}b_1^{-i}|\triangle \hat{b}_i|)
\ee
$\bullet$ \emph{The modulation term:} From the localization of the $T_i$' and $S_i$'s (\fref{linearized:eq:localisation Ti} and \fref{linearized:eq:degre Si}), and because $(T_i)_{L-1}=0$ for $i<L-1$:
\be \label{variete:eq:estimation premier terme improved intermediaire mod}
\begin{array}{r c l}
&\int (\tilde{Mod}^{(2)}-\hat{Mod}^{'(2)})_{L-1}\chi_{B_0}\Lambda^{(1)} Q \\
=& \int \chi_{B_0}\Lambda^{(1)}Q (b_{L,s}+(L-\alpha)b_1b_L) (T_L+\frac{\partial S_{L+2}}{\partial b_L})_{L-1}\\
&-\int\chi_{B_0}\Lambda^{(1)}Q(\hat{b}'_{L,s}+\frac{d\hat{s}'}{ds}(L-\alpha)\hat{b}'_1\hat{b}'_L) (T_L+\frac{\partial \hat{S}'_{L+2}}{\partial b_L}))_{L-1} \\
& +\underset{i=1}{\overset{L-1}{\sum}}\int \chi_{B_0}\Lambda^{(1)}Q ((b_{i,s}+(i-\alpha)b_1b_i-b_{i+1}) \left( \underset{j=i+1, \ j \ \text{odd}}{\overset{L+2}{\sum}}\frac{\partial S_{j}}{\partial b_i} \right)_{L-1} \\
&-\underset{i=1}{\overset{L-1}{\sum}}\int \chi_{B_0}\Lambda^{(1)}Q ((\hat{b}'_{i,s}+\frac{d\hat{s}'}{ds}((i-\alpha)\hat{b}'_1\hat{b}'_i-\hat{b}'_{i+1})) \left( \underset{j=i+1, \ j \ \text{odd}}{\overset{L+2}{\sum}}\frac{\partial \hat{S}'_{j}}{\partial b_i}\right)_{L-1} \\
&-\int \chi_{B_0}\Lambda^{(1)}Q((\frac{\lambda_s}{\lambda}+b_1)\Lambda^{(2)}\tilde{Q}_b^{(2)}-(\frac{\lambda_s}{\lambda}+\frac{d\hat{s}'}{ds}\hat{b}'_1)\Lambda^{(2)}\tilde{Q}_{\hat{b}'}^{(2)})_{L-1} .
\end{array}
\ee
We start by studying the first term in \fref{variete:eq:estimation premier terme improved intermediaire mod}. Since $\bos{H}(\bos{T}_L)=(-1)^{L}\bos{\Lambda Q}$:
$$
\begin{array}{r c l}
&\int \chi_{B_0}\Lambda^{(1)}Q (b_{L,s}+(L-\alpha)b_1b_L) (T_L+\frac{\partial S_{L+2}}{\partial b_L})_{L-1}\\
&-\int\chi_{B_0}\Lambda^{(1)}Q(\hat{b}'_{L,s}+\frac{d\hat{s}'}{ds}(L-\alpha)\hat{b}'_1\hat{b}'_L) (T_L+\frac{\partial \hat{S}'_{L+2}}{\partial b_L}))_{L-1}\\
=& (-1)^{\frac{L-1}{2}}(b_{L,s}+(L-\alpha)b_1b_L-(\hat{b}'_{L,s}+\frac{d\hat{s}'}{ds}(L-\alpha)\hat{b}'_1\hat{b}'_L) )\\
&\times \int \chi_{B_0}\Lambda^{(1)}Q \left(\Lambda^{(1)}Q+\left(\frac{\partial S_{L+2}}{\partial b_L}\right)_{L-1}\right) \\
&+(\hat{b}'_{L,s}+\frac{d\hat{s}'}{ds}(L-\alpha)\hat{b}'_1\hat{b}'_L)\int \chi_{B_0}\Lambda^{(1)}Q\left( \frac{\partial S_{L+2}}{\partial b_L}-\frac{\partial \hat{S}'_{L+2}}{\partial b_L} \right)_{L-1}\\
=& (-1)^{\frac{L-1}{2}}(b_{L,s}+(L-\alpha)b_1b_L-(\hat{b}'_{L,s}+\frac{d\hat{s}'}{ds}(L-\alpha)\hat{b}'_1\hat{b}'_L) )\\
&\times  \int \chi_{B_0}\Lambda^{(1)}Q \left(\Lambda^{(1)}Q+\left(\frac{\partial S_{L+2}}{\partial b_L}\right)_{L-1}\right) \\
&+\hat{b}'_{L,s}\int \chi_{B_0}\Lambda^{(1)}Q\left( \frac{\partial S_{L+2}}{\partial b_L}-\frac{\partial \hat{S}'_{L+2}}{\partial b_L} \right)_{L-1}+O(b_1^{-2(k_0+\delta_0)+L+1+g'}\underset{1\leq i \leq L}{\text{sup}}b_1^{-i}|\triangle \hat{b}_i|).
\end{array}
$$
For the second, third and fourth terms in \fref{variete:eq:estimation premier terme improved intermediaire mod}, using the modulation bounds \fref{variete:modulation:eq:dhats (triangle b)} and \fref{variete:modulation:eq:leqL-1 (triangle b)} from the proof of the last Lemma and splitting as we did before:
$$
\begin{array}{r c l}
& \bigl{|}\underset{i=1}{\overset{L-1}{\sum}}\int \chi_{B_0}\Lambda^{(1)}Q ((b_{i,s}+(i-\alpha)b_1b_i-b_{i+1}) \left( \underset{j=i+1, \ j \ \text{odd}}{\overset{L+2}{\sum}}\frac{\partial S_{j}}{\partial b_i} \right)_{L-1} \\
&-\underset{i=1}{\overset{L-1}{\sum}}\int \chi_{B_0}\Lambda^{(1)}Q ((\hat{b}'_{i,s}+\frac{d\hat{s}'}{ds}((i-\alpha)\hat{b}'_1\hat{b}'_i-\hat{b}'_{i+1})) \left( \underset{j=i+1, \ j \ \text{odd}}{\overset{L+2}{\sum}}\frac{\partial \hat{S}'_{j}}{\partial b_i}\right)_{L-1} \\
& -\int \chi_{B_0}\Lambda^{(1)}Q((\frac{\lambda_s}{\lambda}+b_1)\Lambda^{(2)}\tilde{Q}_b^{(2)}-(\frac{\lambda_s}{\lambda}+\frac{d\hat{s}'}{ds}\hat{b}'_1)\Lambda^{(2)}\tilde{Q}_{\hat{b}'}^{(2)})_{L-1} \bigr{|}\\
\leq & b_1^{-2(k_0+\delta_0)+L+1+g'+(1-\delta_0)+O(\eta)}(\sqrt{\triangle_r \hat{\mathcal{E}}_{s_L}} + \text{sup} \ b_1^{-i}|\triangle \hat{b}_i|)).
\end{array}
$$
With the previous computations, \fref{variete:eq:estimation premier terme improved intermediaire mod} becomes eventually:
\be \label{variete:eq:estimation premier terme improved mod}
\begin{array}{r c l}
& (-1)^{\frac{L-1}{2}} \int (\tilde{Mod(t)}^{(2)}-\hat{Mod}^{'(2)})_{L-1}\chi_{B_0}\Lambda^{(1)} Q \\
=& (b_{L,s}+(L-\alpha)b_1b_L-(\hat{b}'_{L,s}+\frac{d\hat{s}'}{ds}(L-\alpha)\hat{b}'_1\hat{b}'_L) )\\
&\times  \langle \chi_{B_0}\Lambda^{(1)}Q, \Lambda^{(1)}Q+(-1)^{\frac{L-1}{2}}(\frac{\partial S_{L+2}}{\partial b_L})_{L-1}\rangle \\
&+O(b_1^{-2(k_0+\delta_0)+g'+L+1}\sqrt{\triangle_r \hat{\mathcal{E}}_{s_L}})+\hat{b}'_{L,s}\int \chi_{B_0}\Lambda^{(1)}Q\left( \frac{\partial S_{L+2}}{\partial b_L}-\frac{\partial \hat{S}'_{L+2}}{\partial b_L} \right)_{L-1}\\
& +O[\text{sup} \ b_1^{-i}|\triangle \hat{b}_i|(b_1^{-2(k_0+\delta_0)+g'+L+1})].
\end{array}
\ee
$\bullet$ \emph{The time error term:} Using the upper bound \fref{variete:modulation:eq:dhats (triangle b)} for $\left|\frac{d\hat{s}'}{ds}-1 \right|$ and the previous bounds\fref{thetrapped:eq:estimation premier terme improved intermediaire lineaire}, \fref{thetrapped:eq:estimation premier terme improved intermediaire psib}, \fref{thetrapped:eq:estimation premier terme improved intermediaire L} and \fref{thetrapped:eq:estimation premier terme improved intermediaire NL} from the original Lemma about the improved modulation:
\be
\label{variete:eq:estimation premier terme improved intermediaire dhats'-1}
\begin{array}{r c l}
&\left| \int \chi_{B_0}\Lambda^{(1)}Q (\frac{d\hat{s}'}{ds}-1)(\mathcal{L}\hat{\varepsilon}^{'(1)}+\tilde{\psi}_{\hat{b}'}^{(2)}-\hat{NL}'-\hat{L}' )_{L-1} \right| \\
\leq & b_1^{-2(k_0+\delta_0)+L+1}(b_1^{L+1-\delta_0+O(\eta)}\sqrt{\triangle_r \hat{\mathcal{E}}_{s_L}} + b_1^{\eta(1-\delta_0)}\text{sup} \ b_1^{-i}|\triangle \hat{b}_i|).
\end{array}
\ee

We can now gather all the bounds \fref{variete:eq:estimation premier terme improved intermediaire lineaire}, \fref{variete:eq:estimation premier terme improved intermediaire lambdavarepsilon}, \fref{variete:eq:estimation premier terme improved intermediaire psib}, \fref{variete:eq:estimation premier terme improved intermediaire NL}, \fref{variete:eq:estimation premier terme improved intermediaire L}, \fref{variete:eq:estimation premier terme improved intermediaire mod} and \fref{variete:eq:estimation premier terme improved intermediaire dhats'-1}, inject them in \fref{variete:eq:expression premier terme improved} to find that the first term in the rhs of \fref{variete:eq:expression numerateur} is:
\be \label{variete:eq:estimation premier term improved modulation}
\begin{array}{r c l}
&\langle \bos{H}^L (\bos{\varepsilon}_s-\hat{\bos{\varepsilon}}'_s), \chi_{B_0}\bos{\Lambda} \bos{Q} \rangle-\hat{b}'_{L,s}\int \chi_{B_0}\Lambda^{(1)}Q\left( \frac{\partial S_{L+2}}{\partial b_L}-\frac{\partial \hat{S}'_{L+2}}{\partial b_L} \right)_{L-1}\\
=& (b_{L,s}+(L-\alpha)b_1b_L-(\hat{b}'_{L,s}+\frac{d\hat{s}'}{ds}((L-\alpha)\hat{b}'_1\hat{b}'_L)) \\
&\times \Bigl\langle  \chi_{B_0}\Lambda^{(1)}Q,\Lambda^{(1)}Q+(-1)^{\frac{L-1}{2}}\left(\frac{\partial S_{L+2}^{(2)}}{\partial b_L}\right)_{L-1} \Bigr\rangle\\
&+O(b_1^{-2(k_0+\delta_0)+L+1+\frac{\eta}{2}(1-\delta_0)}(\sqrt{\triangle_r \hat{\mathcal{E}}_{s_L}} + b_1^{\frac{\eta}{2}(1-\delta_0)}\text{sup} \ b_1^{-i}|\triangle \hat{b}_i|)).
\end{array}
\ee
Combining the two computations we made, \fref{variete:eq:estimation premier term improved modulation} and \fref{variete:eq:expression deuxieme terme improved}, the time evolution of the first term of the numerator in \fref{variete:eq:expression numerateur} is now:
\be\label{variete:eq:evolution temporelle numerateur}
\begin{array}{r c l}
&\frac{d}{ds}\langle \bos{H}^L (\bos{\varepsilon}-\hat{\bos{\varepsilon}}'), \chi_{B_0}\bos{\Lambda} \bos{Q} \rangle-\hat{b}'_{L,s}\int \chi_{B_0}\Lambda^{(1)}Q\left( \frac{\partial S_{L+2}}{\partial b_L}-\frac{\partial \hat{S}'_{L+2}}{\partial b_L} \right)_{L-1}\\
=& (b_{L,s}+(L-\alpha)b_1b_L-(\hat{b}'_{L,s}+\frac{d\hat{s}'}{ds}((L-\alpha)\hat{b}'_1\hat{b}'_L)) \\
&\times \Bigl\langle  \chi_{B_0}\Lambda^{(1)}Q,\Lambda^{(1)}Q+(-1)^{\frac{L-1}{2}}\left(\frac{\partial S_{L+2}^{(2)}}{\partial b_L}\right)_{L-1} \Bigr\rangle\\
&+O(b_1^{-2(k_0+\delta_0)+L+1+\frac{\eta}{2}(1-\delta_0)}(\sqrt{\triangle_r \hat{\mathcal{E}}_{s_L}} +b_1^{\frac{\eta}{2}(1-\delta_0)} \text{sup} \ b_1^{-i}|\triangle \hat{b}_i|)).
\end{array}
\ee

\underline{Step 2:} End of the computation. We can now end the proof of the Lemma. We recall that the denominator in \fref{variete:eq:improved modulation} and its time derivative have the following size:
$$
\begin{array}{l l}
\left\langle  \chi_{B_0}\Lambda^{(1)}Q,\Lambda^{(1)}Q+(-1)^{\frac{L-1}{2}}\left(\frac{\partial S_{L+2}^{(2)}}{\partial b_L}\right)_{L-1} \right\rangle\sim c b_1^{-2k_0-2\delta_0}, \ \ (c \ \text{a constant}, \ c>0)\\
\left| \frac{d}{ds}\Bigl\langle  \chi_{B_0}\Lambda^{(1)}Q,\Lambda^{(1)}Q+(-1)^{\frac{L-1}{2}}\left(\frac{\partial S_{L+2}^{(2)}}{\partial b_L}\right)_{L-1} \Bigr\rangle \right| \leq C b_1^{-2(k_0+\delta_0)+1}.
\end{array}
$$
We get by coercivity of the adapted norm:
\be \label{variete:improvedmodulation:gain inte}
|\langle \bos{H}^L (\bos{\varepsilon}-\hat{\bos{\varepsilon}}') ,\chi_{B_0}\bos{\Lambda} \bos{Q}\rangle |\leq C  b_1^{-2(k_0+\delta_0)-1+L+\frac{\eta}{2}(1-\delta_0)}\sqrt{\triangle_r \hat{\mathcal{E}}_{s_L}}.
\ee
The last three bounds, together with the identity \fref{variete:eq:evolution temporelle numerateur} we derived in Step 1, give:
\be \label{presque}
\begin{array}{r c l}
&\frac{d}{ds}\left[ \frac{\langle \bos{H}^{L}(\bos{\varepsilon}-\hat{\bos{\varepsilon}}') , \chi_{B_0} \bos{\Lambda} \bos{Q}\rangle}{\Bigl\langle  \chi_{B_0}\Lambda^{(1)}Q,\Lambda^{(1)}Q+(-1)^{\frac{L-1}{2}}\left(\frac{\partial S_{L+2}^{(2)}}{\partial b_L}\right)_{L-1} \Bigr\rangle}\right]\\
&-\hat{b}'_{L,s}\frac{\int \chi_{B_0}\Lambda^{(1)}Q\left( \frac{\partial S_{L+2}}{\partial b_L}-\frac{\partial \hat{S}'_{L+2}}{\partial b_L} \right)_{L-1}}{\Bigl\langle  \chi_{B_0}\Lambda^{(1)}Q,\Lambda^{(1)}Q+(-1)^{\frac{L-1}{2}}\left(\frac{\partial S_{L+2}^{(2)}}{\partial b_L}\right)_{L-1} \Bigr\rangle}\\
=& (b_{L,s}+(L-\alpha)b_1b_L-(\hat{b}'_{L,s}+\frac{d\hat{s}'}{ds}(L-\alpha)\hat{b}'_1\hat{b}'_L)) \\
&+O(b_1^{L+1+\frac{\eta}{2}(1-\delta_0)}(\sqrt{\triangle_r \hat{\mathcal{E}}_{s_L}} + b_1^{\frac{\eta}{2}(1-\delta_0)}\text{sup} \ b_1^{-i}|\triangle \hat{b}_i|)).
\end{array}
\ee
As $\frac{\partial \bos{S}_{L+2}}{\partial b_L}$ is homogeneous of degree $(L+2,L+2,1,2)$ and does not depend on $b_L$, we have using the modulation bounds \fref{thetrapped:eq:modulation leq L-1} and \fref{variete:modulation:eq:leqL-1 (triangle b)}:
$$
\begin{array}{r c l}
&\left| \frac{d}{ds}\left[ \frac{\int \chi_{B_0}\Lambda^{(1)}Q\left( \frac{\partial S_{L+2}}{\partial b_L}-\frac{\partial \hat{S}'_{L+2}}{\partial b_L} \right)_{L-1}}{\Bigl\langle  \chi_{B_0}\Lambda^{(1)}Q,\Lambda^{(1)}Q+(-1)^{\frac{L-1}{2}}\left(\frac{\partial S_{L+2}^{(2)}}{\partial b_L}\right)_{L-1} \Bigr\rangle}\right]\right| \\
&\leq Cb_1^{g'+1}\left(\underset{1\leq i \leq L}{\text{sup}}b_1^{-i}|\triangle \hat{b}_i|+\sqrt{\triangle_r \hat{\mathcal{E}}_{s_L}}\right).
\end{array}
$$
Integrating by parts then yields:
$$
\begin{array}{r c l}
&\frac{\hat{b}'_{L,s}\int \chi_{B_0}\Lambda^{(1)}Q\left( \frac{\partial S_{L+2}}{\partial b_L}-\frac{\partial \hat{S}'_{L+2}}{\partial b_L} \right)_{L-1}}{\Bigl\langle  \chi_{B_0}\Lambda^{(1)}Q,\Lambda^{(1)}Q+(-1)^{\frac{L-1}{2}}\left(\frac{\partial S_{L+2}^{(2)}}{\partial b_L}\right)_{L-1} \Bigr\rangle} +O[b_1^{L+g'+1 }(\underset{1\leq i \leq L}{\text{sup}}b_1^{-i}|\triangle \hat{b}_i|+\sqrt{\triangle_r \hat{\mathcal{E}}_{s_L}})]\\
=&\frac{d}{ds}\left( \hat{b}'_{L}\frac{\int \chi_{B_0}\Lambda^{(1)}Q\left( \frac{\partial S_{L+2}}{\partial b_L}-\frac{\partial \hat{S}'_{L+2}}{\partial b_L} \right)_{L-1}}{\Bigl\langle  \chi_{B_0}\Lambda^{(1)}Q,\Lambda^{(1)}Q+(-1)^{\frac{L-1}{2}}\left(\frac{\partial S_{L+2}^{(2)}}{\partial b_L}\right)_{L-1} \Bigr\rangle} \right).
\end{array}
$$
Injecting this last identity in \fref{presque} give the identity \fref{variete:eq:improved modulation} we had to prove. To finish, the gain when integrating is a consequence of \fref{variete:improvedmodulation:gain inte}, of the size of the denominator \fref{thetrapped:eq:improved modulation taille denominateur}, and of the asymptotic:
$$
\left( \frac{\partial S_{L+2}}{\partial b_L}-\frac{\partial \hat{S}'_{L+2}}{\partial b_L} \right)_{L-1}=O(y^{-\gamma-g'+2}b_1^2 \underset{1\leq i \leq 2}{\text{sup}}b_1^{-i}|\triangle \hat{b}|_i )
$$

\end{proof}


\subsubsection{Energy identities for the difference of errors}

In the previous section, the key norm of $\bos{\varepsilon}$ we had to control was the adapted high Sobolev norm $\mathcal{E}_{s_L}$. We recall the non linear tools we used to find a sufficient estimate: we control $\bos{\varepsilon}$ at another level of regularity to close the non linear term, integrate in time the modulation part that is not controlled directly, and derive a Morawetz type identity to manage a local term. Here we want to know how the time evolution of the adapted high Sobolev norm of the difference of the errors, $\bos{\varepsilon}-\hat{\bos{\varepsilon}}'$ depends on the differences of the parameters and itself, and will do it using the same non linear tools.\\
\\
We start with a technical lemma linking the difference of the profiles to the difference of the parameters.

\begin{lemma}[Bounds on the differences of profiles:]
\label{variete:lem:estimations differences de profils}
The following bounds hold:
\be \label{variete:eq:estimation low difference psib}
\parallel \tilde{\bos{\psi}}_b-\tilde{\bos{\psi}}_{\hat{b}'}  \parallel_{\dot{H}^{\sigma}\times \dot{H}^{\sigma-1}} \leq Cb_1^{\alpha+1+O(\sigma-s_c,\eta)}\left( \underset{1\leq i \leq L}{\text{sup}}b_1^{-i}|\triangle \hat{b}_i|+\sqrt{\triangle_r \hat{\mathcal{E}}_{s_L}  }\right),
\ee
\be \label{variete:eq:estimation high difference psib}
\begin{array}{r c l}
& \parallel (\tilde{\psi}_b^{(1)}-\tilde{\psi}_{\hat{b}'}^{(1)})_{s_L}  \parallel_{L^2}+\parallel (\tilde{\psi}_b^{(2)}-\tilde{\psi}_{\hat{b}'}^{(2)})_{s_L-1}  \parallel_{L^2}\\
\leq & Cb_1^{L+1+(1-\delta_0)(1+\eta)}\underset{1\leq i \leq L}{\text{sup}}(b_1^{-i}|\triangle \hat{b}_i|)+Cb_1^{2L+2-2\delta_0+O(\eta)}\sqrt{\triangle_r \hat{\mathcal{E}}_{s_L}}.
\end{array}
\ee
\end{lemma}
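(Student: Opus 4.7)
The proof plan is to mimic the structure of Proposition~\ref{pr:profilapprochecoupe} and the computations of Lemma~\ref{variete:lem:asymptotique differences de profils yleqB0}. First, I would use the decomposition \fref{eq:def:psibtilde} and split
$$
\tilde{\bos{\psi}}_b-\tilde{\bos{\psi}}_{\hat b'} = (\chi_{B_1}\bos{\psi}_b-\chi_{\hat B_1'}\bos{\psi}_{\hat b'}) + (\partial_s\chi_{B_1}\bos{\alpha}_b-\partial_s\chi_{\hat B_1'}\bos{\alpha}_{\hat b'}) + \text{(scaling)} + \text{(nonlinear)},
$$
then bound each of the four pieces. The backbone for all pointwise estimates is the differences-of-polynomials bound \fref{variete:eq:difference polynomes parametres}: combined with the homogeneity of the profiles $\bos{S}_i$ (Definition~\ref{def:homogeneousfunctions}), it gives pointwise bounds on $\bos{S}_i(b)-\bos{S}_i(\hat b')$ and on $\tilde{\bos{Q}}_b^{p-k}-\tilde{\bos{Q}}_{\hat b'}^{p-k}$ (already written as \fref{variete:eq:asymptotique diff Qp-j}) which differ from the bounds on the single profile only by an extra factor $\sup_i(b_1^{-i}|\triangle\hat b_i|)$, preserving the weighted decay in $y$.

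Second, I would handle the cutoff mismatch by systematically splitting every term of the form $\chi_{B_1}f(b)-\chi_{\hat B_1'}f(\hat b')$ as $(\chi_{B_1}-\chi_{\hat B_1'})f(b)+\chi_{\hat B_1'}(f(b)-f(\hat b'))$. The first piece is controlled via the integral formula \fref{variete:eq:formulation integrale diff chi}, which localises it to the annulus $B_1\le y\le 2\max(B_1,\hat B_1')$ and produces the scalar factor $b_1^{-1}|\triangle\hat b_1|$; the second piece is controlled by the preceding paragraph. For the error contribution in $y\le 2B_0$ (where $\tilde{\bos{\psi}}_b=\bos{\psi}_b$), the bound \fref{variete:eq:asymptotique diff psib leqB0} from Lemma~\ref{variete:lem:asymptotique differences de profils yleqB0} applies directly.

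Third, for the $\partial_s\chi_{B_1}\bos{\alpha}_b$ contribution one must differentiate $b_{1,s}\,y\partial_y\chi(y/B_1)\bos{\alpha}_b$: the difference brings in $b_{1,s}-\hat b_{1,s}'$ and $(d\hat s'/ds-1)\hat b_{1,s}'$, both of which are controlled by the modulation bounds \fref{variete:modulation:eq:dhats (triangle b)}--\fref{variete:modulation:eq:leqL-1 (triangle b)} of Lemma~\ref{variete:lem:modulation diff}. This is the unique mechanism through which $\sqrt{\triangle_r\hat{\mathcal{E}}_{s_L}}$ enters the high-regularity bound \fref{variete:eq:estimation high difference psib}, and it explains the gain $b_1^{2L+2-2\delta_0+O(\eta)}$ attached to it (one recovers $b_1^{L+1-\delta_0+O(\eta)}\sqrt{\triangle\hat{\mathcal{E}}_{s_L}}$ after undoing the renormalisation \fref{variete:eq:def normes renormalisees}). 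The integration in $L^2$ of adapted derivatives then proceeds verbatim as in Step~4 of the proof of Proposition~\ref{pr:profilapprochecoupe}, because deriving the cutoffs still amounts to dividing by $B_1$ and localising in the dyadic annulus.

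Fourth, the low-regularity estimate \fref{variete:eq:estimation low difference psib} follows the same decomposition, but the spatial integrations are carried out via the weighted fractional Hardy inequalities of Lemma~\ref{annexe:lem: hardy frac a poids}, exactly as in the proof of Proposition~\ref{trappedregime:pr:low sobo}; the gain $b_1^{\alpha+1}$ replaces $b_1^{L+1+(1-\delta_0)(1+\eta)}$ because at low regularity the relevant weight is $y^{-\sigma}$ rather than $y^{-(k_0+L+1)}$, so one only sees the leading decay $y^{-\gamma}$ of the $\bos{T}_i$'s and the universal gain $y^{-\alpha}$ in $\tilde{\bos{Q}}_b^{p-1}-Q^{p-1}$. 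I expect the main obstacle to be purely bookkeeping: keeping track of all homogeneity degrees through four distinct pieces, each split into a ``difference of cutoffs'' and a ``difference of profiles'' subpiece, while verifying that the numerology of $g'$, $\delta_0$ and $\eta$ leads to the claimed exponents; the inputs, however, are all in place from Lemmas~\ref{variete:lem:asymptotique differences de profils yleqB0} and \ref{variete:lem:modulation diff} and from the integration techniques already developed in Propositions~\ref{pr:profilapprochecoupe} and \ref{trappedregime:pr:low sobo}.
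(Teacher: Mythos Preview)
Your proposal is correct and follows essentially the same approach as the paper: the paper also starts from the decomposition \fref{eq:def:psibtilde}, splits each piece via $(\chi_{B_1}-\chi_{\hat B_1'})f(b)+\chi_{\hat B_1'}(f(b)-f(\hat b'))$, controls the first subpiece through the integral formula \fref{variete:eq:formulation integrale diff chi} and the second through \fref{variete:eq:difference polynomes parametres} and the homogeneity of the $\bos{S}_i$, and handles the $\partial_s\chi_{B_1}\bos{\alpha}_b$ contribution exactly as you describe---splitting it into four pieces $\bos{A}_1,\dots,\bos{A}_4$ and invoking the modulation bounds \fref{variete:modulation:eq:dhats (triangle b)}--\fref{variete:modulation:eq:leqL-1 (triangle b)} for $b_{1,s}-\hat b'_{1,s}$, which is indeed the sole source of the $\sqrt{\triangle_r\hat{\mathcal{E}}_{s_L}}$ term. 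The paper in fact only writes out the first two of the four pieces in detail and explicitly declares that ``the same methodology giving the same results for the others'', so your bookkeeping concern is shared by the authors and left to the reader there as well.
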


\begin{proof}[Proof of Lemma \fref{variete:lem:estimations differences de profils}]
We recall from  \fref{eq:def:psibtilde} the expression of the differences of the errors:
\be \label{variete:eq:low psi expression}
\begin{array}{ r c l}
\tilde{\bos{\psi}}_b-\tilde{\bos{\psi}}_{\hat{b}'}&=& \chi_{B_1}\bos{\psi}_b- \chi_{\hat{B}'_1}\bos{\psi}_{\hat{b}'} + \partial_s(\chi_{B_1})\bos{\alpha}_b-\partial_{\hat{s}'}(\chi_{B_1})\bos{\alpha}_{\hat{b}'}\\
&&+b_1(\bos{\Lambda} \tilde{\bos{Q}_b}-\chi_{B_1}\bos{\Lambda} \bos{Q}_b)-\hat{b}_1'(\bos{\Lambda} \tilde{\bos{Q}_{\hat{b}'}}-\chi_{B_1}\bos{\Lambda} \bos{Q}_{\hat{b}'})  \\
&& -(\bos{F}(\tilde{\bos{Q}}_b)-\bos{F}(\bos{Q})-\chi_{B_1}(\bos{F}(\bos{Q}_b)-\bos{F}(\bos{Q}))\\
&& +(\bos{F}(\tilde{\bos{Q}}_{\hat{b}'})-\bos{F}(\bos{Q})-\chi_{B_1}(\bos{F}(\bos{Q}_{\hat{b}'})-\bos{F}(\bos{Q})) ,
\end{array} 
\ee
We have to estimate everything in the right hand side. It always rely on finding the asymptotic of the profiles and relating it to the difference of the parameters. We will just do it for the first two terms: the same methodology giving the same results for the others. The first one is on the second coordinate and we decompose:
\be \label{variete:eq:low psi premier terme expression}
\chi_{B_1}\bos{\psi}_b- \chi_{\hat{B}'_1}\bos{\psi}_{\hat{b}'}=\begin{pmatrix}0\\ \chi_{B_1}(\psi_b-\psi_{\hat{b}'})+\psi_{\hat{b}'}(\chi_{B_1}-\chi_{\hat{B}'_1})\end{pmatrix}.
\ee
For the first term in \fref{variete:eq:low psi premier terme expression}, from the asymptotic  \fref{variete:eq:asymptotique diff psib leqB0} of $\psi_b-\psi_{\hat{b}'}$ we obtain:
\be \label{variete:eq:lowsobo psi premier terme 1}
\parallel \chi_{B_1}(\psi_b-\psi_{\hat{b}'}) \parallel_{\dot{H}^{\sigma-1}}\leq Cb_1^{\alpha+1+g'+O(\eta,\sigma-s_c)} \underset{1\leq i \leq L}{\text{sup}}(b_1^{-i}|\triangle \hat{b}_i|),
\ee
\be \label{variete:eq:highsobo psi premier terme 1}
\parallel (\chi_{B_1}(\psi_b-\psi_{\hat{b}'}))_{s_L-1} \parallel_{L^2}\leq Cb_1^{L+1+(1-\delta_0)+g'+O(\eta)} \underset{1\leq i \leq L}{\text{sup}}(b_1^{-i}|\triangle \hat{b}_i|),
\ee
We now turn to the second term in \fref{variete:eq:low psi premier terme expression}. The integral formula \fref{variete:eq:formulation integrale diff chi} for $\chi_{B_1}-\chi_{\hat{B}'_1}$ implies that $ \chi_{B_1}-\chi_{\hat{B}'_1}=(b_1^{1+\eta}-\hat{b}_1^{'(1+\eta)})f(y) $ with the function $f$ having its support in $[\text{min}(B_1,\hat{B}'_1),2\text{max}(B_1,\hat{B}'_1)]$, and satisfying: $\partial_y^k f=O(y^{1-k})$. As one has $|b_1^{1+\eta}-\hat{b}^{'(1+\eta)}|\leq C|b_1-\hat{b}_1'|$, using the previous result \fref{linearized:eq:global bound jleqL} we get:
\be \label{variete:eq:lowsobo psi premier terme 2}
\parallel \psi_{\hat{b}'}(\chi_{B_1}-\chi_{\hat{B}'_1}) \parallel_{\dot{H}^{\sigma-1}}\leq Cb_1^{\alpha+1+g'+O(\eta,\sigma-s_c)} b_1^{-1}|\triangle \hat{b}_1|.
\ee
\be \label{variete:eq:highsobo psi premier terme 2}
\parallel ( \psi_{\hat{b}'}(\chi_{B_1}-\chi_{\hat{B}'_1}) )_{s_L-1}\parallel_{L^2}\leq Cb_1^{L+1+(1-\delta_0)+g'+O(\eta,\sigma-s_c)} b_1^{-1}|\triangle \hat{b}_1|.
\ee
The decomposition \fref{variete:eq:low psi premier terme expression} and the bounds \fref{variete:eq:lowsobo psi premier terme 1}, \fref{variete:eq:lowsobo psi premier terme 2}, \fref{variete:eq:highsobo psi premier terme 1} and \fref{variete:eq:highsobo psi premier terme 2} imply for the following bounds for the first term in \fref{variete:eq:low psi expression}:
\be \label{variete:eq:low psi premier terme}
\parallel \chi_{B_1}\bos{\psi}_b- \chi_{\hat{B}'_1}\bos{\psi}_{\hat{b}'} \parallel_{\dot{H}^{\sigma}\times \dot{H}^{\sigma-1}} \leq Cb_1^{\alpha+1+g'+O(\eta,\sigma-s_c)} \underset{1\leq i \leq L}{\text{sup}}(b_1^{-i}|\triangle \hat{b}_i|).
\ee
\be \label{variete:eq:high psi premier terme}
\parallel (\chi_{B_1}\psi_b- \chi_{\hat{B}'_1}\psi_{\hat{b}'})_{s_L-1} \parallel_{L^2} \leq Cb_1^{L+1+(1-\delta_0)+g'+O(\eta,\sigma-s_c)} \underset{1\leq i \leq L}{\text{sup}}(b_1^{-i}|\triangle \hat{b}_i|).
\ee
We now turn to the second difference of terms in \fref{variete:eq:low psi expression}. We compute:
\be \label{variete:eq:low psi second terme expression}
\begin{array}{r c l}
&\partial_s(\chi_{B_1})\bos{\alpha}_b-\partial_{\hat{s}'}(\chi_{B_1})\bos{\alpha}_{\hat{b}'}:=y(1+\eta)(\bos{A}_1+\bos{A}_2+\bos{A}_3+\bos{A}_4)\\
=& y(1+\eta)\Bigl{[}(b_{1,s}-\hat{b}'_{1,s})b_1^{\eta}\partial_y\chi(yb_1^{1+\eta})\bos{\alpha}_b+\hat{b}'_{1,s}(b_1^{\eta}-\hat{b}_1^{'\eta})\partial_y\chi(yb_1^{1+\eta})\bos{\alpha}_b\\
&+\hat{b}'_{1,s}\hat{b}_1^{'\eta}(\partial_y\chi(yb_1^{1+\eta})-\partial_y \chi (y\hat{b}_1^{'(1+\eta)}))\bos{\alpha}_b+\hat{b}'_{1,s}\hat{b}_1^{'\eta}\partial_y\chi(yb_1^{'(1+\eta)})(\bos{\alpha}_b-\bos{\alpha}_{\hat{b}'})\Bigr{]}.
\end{array}
\ee
and will estimate everything in the right hand side. From the expressions \fref{variete:modulation:eq:leqL-1 (triangle b)} and \fref{variete:modulation:eq:dhats (triangle b)} for $b_{1,s}-\hat{b}_{1,s}'$ and $\frac{d\hat{s}'}{ds}-1$ we deduce that for the first term:
$$
|b_{1,s}-\hat{b}'_{1,s}|\leq Cb_1^2\underset{1\leq i \leq L}{\text{sup}}b_1^{-i}|\triangle \hat{b}_i|+b_1^{L+1+(1-\delta_0)(1+\frac{\eta}{2})}\sqrt{\triangle_r \hat{\mathcal{E}}_{s_L}}.
$$
For the second term one has $|b_{1,s}(b_1^{\eta}-\hat{b}_1^{'\eta})|\leq b_1^{2+\eta}|b_1-\hat{b}'_1|$. For the third term an integral formula similar to \fref{variete:eq:formulation integrale diff chi} holds, giving: 
$$
\partial_y^k (\partial_y\chi(yb_1^{1+\eta})-\partial_y \chi (y\hat{b}_1^{'(1+\eta)})=O\left( \frac{b_1^{\eta}|\triangle \hat{b}_1|}{1+y^{-1+k}}\right).
$$
Therefore we get for the first three terms in \fref{variete:eq:low psi second terme expression}:
\be \label{variete:eq:low psi second terme 1}
\begin{array}{r c l}
\parallel y(1+\eta)(\bos{A}_1+\bos{A}_2+\bos{A}_3) \parallel_{\dot{H}^{\sigma}\times \dot{H}^{\sigma-1}} &\leq& Cb_1^{\alpha+1+O(\eta,\sigma-s_c)} \underset{1\leq i \leq L}{\text{sup}}(b_1^{-i}|\triangle \hat{b}_i|)\\
&&+Cb_1^{L+1+\alpha+O(\eta,\sigma-s_c)}\sqrt{\triangle _r \hat{\mathcal{E}}_{s_L}},
\end{array}
\ee
\be \label{variete:eq:high psi second terme 1}
\begin{array}{r c l}
&\parallel (y(A_1^{(1)}+A_2^{(1)}+A_3^{(1)}))_{s_L} \parallel_{L^2}+ \parallel (y(A_1^{(2)}+A_2^{(2)}+A_3^{(2)}))_{s_L-1} \parallel_{L^2} \\
\leq& Cb_1^{L+1+(1-\delta_0)(1+\eta)} \underset{1\leq i \leq L}{\text{sup}}(b_1^{-i}|\triangle \hat{b}_i|)+Cb_1^{2L+2-2\delta_0+O(\eta)}\sqrt{\triangle _r \hat{\mathcal{E}}_{s_L}}.
\end{array}
\ee
We turn to the fourth term in \fref{variete:eq:low psi second terme expression}. One has: 
$$
\bos{\alpha}_b-\bos{\alpha}_{\hat{b}'}=\sum_{1}^L(b_i-\hat{b}'_i)\bos{T}_i+\sum_2^{L+2}\bos{S}_i-\hat{\bos{S}}_i'.
$$
The bound \fref{variete:eq:difference polynomes parametres}, the fact that the $\bos{S}_i$'s are homogeneous, using their asymptotic and the one of the $\bos{T}_i$'s yield:
\be \label{variete:eq:low psi second terme 2}
\begin{array}{r c l}
&\parallel y\hat{b}'_{1,s}\hat{b}_1^{'\eta}\partial_y\chi(\frac{y}{\hat{B}'_1})(\bos{\alpha}_b-\bos{\alpha}_{\hat{b}'}) \parallel_{\dot{H}^{\sigma}\times \dot{H}^{\sigma-1}} \leq& Cb_1^{\alpha+1+O(\eta,\sigma-s_c)} \underset{1\leq i \leq L}{\text{sup}}(b_1^{-i}|\triangle \hat{b}_i|)\\
\end{array}
\ee
\be \label{variete:eq:high psi second terme 2}
\begin{array}{r c l}
&\parallel (y\hat{b}'_{1,s}\hat{b}_1^{'\eta}\partial_y\chi(\frac{y}{\hat{B}'_1})(\alpha^{(1)}_b-\alpha^{(1)}_{\hat{b}'})_{s_L} \parallel_{L^2}+ \parallel (y\hat{b}'_{1,s}\hat{b}_1^{'\eta}\partial_y\chi(\frac{y}{\hat{B}'_1})(\alpha^{(2)}_b-\alpha^{(2)}_{\hat{b}'})_{s_L-1} \parallel_{L^2} \\
\leq& Cb_1^{L+1+(1-\delta_0)(1+\eta)} \underset{1\leq i \leq L}{\text{sup}}(b_1^{-i}|\triangle \hat{b}_i|).
\end{array}
\ee
because $|\hat{b}_{1,s}'\hat{b}_1^{'\eta}|\leq Cb_1^{2+\eta}$. We collect the bounds \fref{variete:eq:low psi second terme 1}, \fref{variete:eq:low psi second terme 2}, \fref{variete:eq:high psi second terme 1} and \fref{variete:eq:high psi second terme 2} to find that for the second term in \fref{variete:eq:low psi expression}:
\be \label{variete:eq:low psi second terme}
\begin{array}{r c l}
\parallel \partial_s(\chi_{B_1})\bos{\alpha}_b-\partial_{\hat{s}'}(\chi_{B_1})\bos{\alpha}_{\hat{b}'} \parallel_{\dot{H}^{\sigma}\times \dot{H}^{\sigma-1}} &\leq &Cb_1^{\alpha+1+O(\eta,\sigma-s_c)} \underset{1\leq i \leq L}{\text{sup}}(b_1^{-i}|\triangle \hat{b}_i|)\\
&&+b_1^{L+1+\alpha+O(\eta,\sigma-s_c)}\sqrt{\triangle_r\hat{\mathcal{E}}_{s_L}},
\end{array}
\ee
\be \label{variete:eq:high psi second terme}
\begin{array}{r c l}
&\parallel (\partial_s(\chi_{B_1})\bos{\alpha}_b^{(1)}-\partial_{\hat{s}'}(\chi_{B_1})\bos{\alpha}_{\hat{b}'}^{(1)})_{s_L} \parallel_{L^2}+\parallel \partial_s(\chi_{B_1})\bos{\alpha}_b^{(2)}-\partial_{\hat{s}'}(\chi_{B_1})\bos{\alpha}_{\hat{b}'}^{(2)})_{s_L-1} \parallel_{L^2}\\
\leq& Cb_1^{L+1+(1-\delta_0)(1+\eta)} \underset{1\leq i \leq L}{\text{sup}}(b_1^{-i}|\triangle \hat{b}_i|)+Cb_1^{2L+2-2\delta_0+O(\eta)}\sqrt{\triangle_r \hat{\mathcal{E}}_{s_L}}.
\end{array}
\ee
We claim that the bounds \fref{variete:eq:low psi second terme} and \fref{variete:eq:high psi second terme} also holds for the last two differences of profiles in \fref{variete:eq:low psi expression} and that they can be proven using verbatim the same tools we employed so far. This fact give us the bounds for the remaining terms in \fref{variete:eq:low psi expression}, which combined with the previous estimates for the first two terms \fref{variete:eq:low psi premier terme}, \fref{variete:eq:high psi premier terme}, \fref{variete:eq:low psi second terme} and \fref{variete:eq:high psi second terme} proves the two estimates \fref{variete:eq:estimation low difference psib} and \fref{variete:eq:estimation high difference psib} of the lemma.

\end{proof}


We state now how the time evolution of the low Sobolev norm of the difference of the errors $\bos{\varepsilon}-\hat{\bos{\varepsilon}}'$ is influenced by itself and the difference between the parameters and the renormalized times. It is the analogue of Proposition \fref{trappedregime:pr:low sobo}.

\begin{lemma}\label{variete:lem:low sobo}
\emph{(Time evolution of the low Sobolev norm of $\bos{\varepsilon}-\hat{\bos{\varepsilon}}'$)}. We keep the assumptions and notations of Proposition \ref{variete:prop:parametres lipschitz}. There holds:
\be \label{variete:eq:lowsobo}
\begin{array}{r c l}
\frac{d}{dt}\left\{ \frac{\triangle \mathcal{E}_{\sigma}}{\lambda^{2(\sigma-s_c)}} \right\} \leq \frac{Cb_1^{1+2(\sigma-s_c)(1+\nu)+\frac{\alpha}{2L}}}{\lambda^{2(\sigma-s_c)+1}} \left( \triangle_r \hat{\mathcal{E}}_{\sigma}+\triangle _r \hat{\mathcal{E}}_{s_L}+ (\underset{1\leq i \leq L}{\text{sup}}b_1^{-i}|\triangle \hat{b}_i|)^2\right)
\end{array}
\ee
(the norm $\triangle \mathcal{E}_{\sigma}$ is defined in  \fref{variete:eq:def norme low}, the renormalized norms $\triangle_r \hat{\mathcal{E}}_{\sigma}$ and $\triangle _r \hat{\mathcal{E}}_{s_L}$ are defined in  \fref{variete:eq:def normes renormalisees}).
\end{lemma}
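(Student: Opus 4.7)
The strategy is to mimic the proof of Proposition \ref{trappedregime:pr:low sobo} verbatim, but at the level of the difference $\bos{w}-\hat{\bos{w}}'$, using the linearized evolution equation \fref{variete:eq:evolution difference w} in place of \fref{eq:evolution W}. Concretely, I would first compute
$$
\frac{d}{dt}\left\{\frac{\triangle \mathcal{E}_{\sigma}}{\lambda^{2(\sigma-s_c)}}\right\}
= 2\int \nabla^{\sigma}(w^{(1)}-\hat{w}^{'(1)})\cdot\nabla^{\sigma}\partial_t(w^{(1)}-\hat{w}^{'(1)})
+2\int \nabla^{\sigma-1}(w^{(2)}-\hat{w}^{'(2)})\cdot\nabla^{\sigma-1}\partial_t(w^{(2)}-\hat{w}^{'(2)}),
$$
and substitute $\partial_t(\bos{w}-\hat{\bos{w}}')$ from \fref{variete:eq:evolution difference w}. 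The linear wave part cancels as in the original proof. Each remaining source term then has to be controlled in $\dot{H}^\sigma\times\dot{H}^{\sigma-1}$ in terms of the right-hand side of \fref{variete:eq:lowsobo}.

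The plan for the individual terms is as follows. For the modulation contribution $\tilde{\bos{Mod}}-\hat{\bos{Mod}}'=\sum_i\triangle \hat{\bos{Mod}}_i$ I split, for each $i$, the difference as $(\text{difference of coefficients})\times(\text{profile})+(\text{coefficient})\times(\text{difference of profiles})$; the coefficients obey the sharp modulation bounds \fref{variete:modulation:eq:leqL-1 (triangle b)}, \fref{variete:eq:improved modulation}, and the profile differences are controlled in $\dot{H}^\sigma\times\dot{H}^{\sigma-1}$ using the homogeneity of $\bos{S}_i$ together with \fref{variete:eq:difference polynomes parametres} and the integral representation \fref{variete:eq:formulation integrale diff chi} of $\chi_{B_1}-\chi_{\hat B_1'}$. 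The additional term in the improved law for $\triangle\hat b_L$ is absorbed via the gain \fref{variete:eq:gain improved modulation}. The error contribution $\tilde{\bos{\psi}}_b-\frac{d\hat s'}{ds}\hat{\bos{\psi}}_{\hat b'}$ is written as $(\tilde{\bos{\psi}}_b-\hat{\bos{\psi}}_{\hat b'})+(1-\tfrac{d\hat s'}{ds})\hat{\bos{\psi}}_{\hat b'}$, and each piece is handled by the slightly supercritical bound \fref{variete:eq:estimation low difference psib} and by \fref{variete:modulation:eq:dhats (triangle b)} respectively. The small linear correction $\bos{L}-\frac{d\hat s'}{ds}\hat{\bos L}'$ is decomposed in the same spirit: the factor $(Q^{p-1}-\tilde{Q}_b^{p-1})\varepsilon^{(1)}-(Q^{p-1}-\tilde{Q}_{\hat b'}^{p-1})\hat\varepsilon^{'(1)}$ is written as $\bigl[Q^{p-1}-\tilde Q_b^{p-1}\bigr](\varepsilon^{(1)}-\hat\varepsilon^{'(1)})+\bigl[\tilde Q_{\hat b'}^{p-1}-\tilde Q_b^{p-1}\bigr]\hat\varepsilon^{'(1)}$, which respectively use the decay $\leq Cb_1/y^{1+\alpha-C\eta}$ of the potential together with the weighted fractional Hardy bound from Lemma \ref{annexe:lem: hardy frac a poids} (exactly as for $L(w)$ in Proposition \ref{trappedregime:pr:low sobo}) and the pointwise bound \fref{variete:eq:asymptotique diff Qp-j} for the difference of potentials; the factor $(1-\tfrac{d\hat s'}{ds})$ in front of $\hat{\bos L}'$ is again handled by \fref{variete:modulation:eq:dhats (triangle b)}.

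The main obstacle will be the nonlinear contribution $\bos{NL}-\frac{d\hat s'}{ds}\hat{\bos{NL}}'$. I expand $f(\tilde Q_b+\varepsilon^{(1)})-f(\tilde Q_b)-pQ^{p-1}\varepsilon^{(1)}$ and the analogous expression for the primed solution, and write the difference as a sum of terms of the type $\tilde Q_b^{p-k}\varepsilon^{(1)k}-\tilde Q_{\hat b'}^{p-k}\hat\varepsilon^{'(1)k}$ for $2\le k\le p$. Each such term is split into $(\tilde Q_b^{p-k}-\tilde Q_{\hat b'}^{p-k})\varepsilon^{(1)k}$ and $\tilde Q_{\hat b'}^{p-k}(\varepsilon^{(1)k}-\hat\varepsilon^{'(1)k})$; the first is controlled by \fref{variete:eq:asymptotique diff Qp-j} together with the very same weighted/fractional Hardy plus Sobolev commutator machinery used in the nonlinear step of Proposition \ref{trappedregime:pr:low sobo}, and the second is dealt with by factoring $\varepsilon^{(1)k}-\hat\varepsilon^{'(1)k}=(\varepsilon^{(1)}-\hat\varepsilon^{'(1)})\sum_{j=0}^{k-1}\varepsilon^{(1)j}\hat\varepsilon^{'(1)(k-1-j)}$, applying the Leibniz/fractional-commutator estimate, and using Sobolev embedding on the $k-1$ factors in $L^{p_i^\ast}$ and the sharp weighted Hardy on $\varepsilon^{(1)}-\hat\varepsilon^{'(1)}$ together with the interpolation between $\triangle\mathcal{E}_\sigma$ and $\triangle\hat{\mathcal{E}}_{s_L}$ (the latter well-defined because the orthogonality conditions \fref{variete:eq:conditions d'orthogonalite pour hatvarepsilon'} transfer to $\bos{\varepsilon}-\hat{\bos{\varepsilon}}'$, so Corollary \ref{annexe:cor:coercivite mathcalEsL} applies). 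Combinatorially this gives, under the trapped assumptions \fref{variete:eq:equivalence normes varepsilon'} and the analogue for $\bos{\varepsilon}$, a bound
$$
\Bigl\|\nabla^{\sigma-1}\bigl(NL-\tfrac{d\hat s'}{ds}\hat{NL}'\bigr)\Bigr\|_{L^2}
\lesssim b_1^{(\sigma-s_c)(1+\nu)+1+\frac{\alpha}{L}+O(\frac{\sigma-s_c}{L})}\Bigl[\sqrt{\triangle_r\hat{\mathcal{E}}_\sigma}+\sqrt{\triangle_r\hat{\mathcal{E}}_{s_L}}+\sup_i b_1^{-i}|\triangle\hat b_i|\Bigr],
$$
which is precisely what is needed.

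Finally, collecting the contributions and using Cauchy--Schwarz against $\sqrt{\triangle\mathcal{E}_\sigma}/\lambda^{\sigma-s_c}$, choosing $s_0$ large enough to absorb the (harmless) constants $C(L,M,K_1,K_2,K_3)$ by the small factor $b_1^{\alpha/(2L)}$, and applying Young's inequality on each cross term $\sqrt{\triangle_r\hat{\mathcal{E}}_\sigma}\cdot(\cdot)$, $\sqrt{\triangle_r\hat{\mathcal{E}}_{s_L}}\cdot(\cdot)$, $\sup b_1^{-i}|\triangle\hat b_i|\cdot(\cdot)$ to convert them into the squared quantities appearing on the right-hand side of \fref{variete:eq:lowsobo}, yields the announced monotonicity formula. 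The delicate point throughout is to check that the extra loss $b_1^{\eta/2(1-\delta_0)}$ we had to pay in the modulation lemma \ref{variete:lem:modulation diff} (compared with Lemma \ref{lem:modulation}) is still compatible with the gain $b_1^{\alpha/(2L)}$ required on the right-hand side; this works because $\eta$ can be taken small and $\alpha/L$ independent of $\eta$.
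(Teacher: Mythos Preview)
Your plan is essentially the same as the paper's proof, and the treatment of the modulation, error, small-linear and nonlinear differences is organized exactly as in the paper. Two remarks.

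First, a genuine omission: the evolution \fref{variete:eq:evolution difference w} contains the extra source $(1-\tfrac{d\hat s'}{ds})\bos{H}_{\frac{1}{\lambda}}\hat{\bos w}'$, which has no analogue in \fref{eq:evolution W}. You do not list it. In the paper this term is treated separately (it does \emph{not} fall under ``the linear wave part cancels''): after the usual cancellation of $\int\nabla^\sigma(w^{(1)}-\hat w'^{(1)})\nabla^\sigma(w^{(2)}-\hat w'^{(2)})-\int\nabla^{\sigma-1}(w^{(2)}-\hat w'^{(2)})\nabla^{\sigma-1}\Delta(w^{(1)}-\hat w'^{(1)})$ one must still bound
\[
(1-\tfrac{d\hat s'}{ds})\Bigl(\int\nabla^\sigma(w^{(1)}-\hat w'^{(1)})\nabla^\sigma\hat w'^{(2)}+\int\nabla^{\sigma-1}(w^{(2)}-\hat w'^{(2)})\nabla^{\sigma-1}\mathcal L_{\frac1\lambda}\hat w'^{(1)}\Bigr),
\]
which forces you to control $\|\hat{\bos\varepsilon}'\|_{\dot H^{\sigma+1}\times\dot H^\sigma}$. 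This is obtained by interpolating between $\hat{\mathcal E}'_\sigma$ and $\hat{\mathcal E}'_{s_L}$ (no need for \fref{variete:eq:estimation supplementaire varepsilon'} here), together with \fref{variete:modulation:eq:dhats (triangle b)} for the prefactor. Without this step the proof is incomplete.

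Second, you invoke the improved modulation \fref{variete:eq:improved modulation} and \fref{variete:eq:gain improved modulation} for the $i=L$ modulation term. The paper does not: at the slightly supercritical level $\sigma$, the crude bound \fref{variete:modulation:eq:L (triangle b)} already yields $\|\nabla^{\sigma-1}\triangle\hat{Mod}_L^{(2)}\|_{L^2}\lesssim b_1^{\alpha+1-\delta_0+O(\eta,\sigma-s_c)}(\cdots)$, which is comfortably better than the target $b_1^{\alpha/(2L)}$ gain. The improved modulation is only needed for the high adapted norm $\triangle\hat{\mathcal E}_{s_L}$.
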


\begin{proof}[Proof of Lemma \ref{variete:lem:low sobo}]
We start by computing the following identity:
\be \label{variete:lowsobo:eq:expression derivee}
\begin{array}{r c l}
&\frac{d}{dt}\left\{ \frac{\triangle \hat{\mathcal{E}}_{\sigma}}{\lambda^{2(\sigma-s_c)}} \right\} \\
=& \int \nabla^{\sigma}(w^{(1)}-\hat{w}^{'(1)}).\nabla^{\sigma}(w^{(2)}-\hat{w}^{'(2)}+\frac{(\hat{Mod}(t)^{'(1)}-\tilde{Mod}(t)^{(1)}+\tilde{\psi}_{\hat{b}'}^{(1)}-\tilde{\psi}_b^{(1)})_{\frac{1}{\lambda}}}{\lambda} \\
&+ (1-\frac{d\hat{s}'}{ds})\int \nabla^{\sigma}(w^{(1)}-\hat{w}^{'(1)}).\nabla^{\sigma}(-\hat{w}^{'(1)}-\frac{1}{\lambda}\tilde{\psi}_{\hat{b}',\frac{1}{\lambda}}^{(2)}) \\
& + \int \nabla^{\sigma-1}(w^{(2)}-\hat{w}^{'(2)}).\nabla^{\sigma-1}\bigl{(}\mathcal{L}(\hat{w}^{'(1)}-w^{(1)})+\frac{(\hat{Mod}'(t)^{(2)}-\tilde{Mod}(t)^{(2)}+\tilde{\psi}_{\hat{b}'}^{(2)}-\tilde{\psi_b}^{(2)})_{\frac{1}{\lambda}}}{\lambda}\\
&+NL-\hat{NL}'+L-\hat{L}'+(1-\frac{d\hat{s}'}{ds})(-\mathcal{L}\hat{w}^{'(1)}-\frac{1}{\lambda}\tilde{\psi}_{\hat{b}',\frac{1}{\lambda}}^{(2)}+\hat{NL}'+\hat{L}') \bigr{)}.
\end{array}
\ee
We now compute the size of every term in the right hand side of equation \fref{variete:lowsobo:eq:expression derivee}.\\
$\bullet$ \emph{Linear terms:} The norm studied here being adapted to a wave equation:
$$
\begin{array}{r c l}
& \int \nabla^{\sigma}(w^{(1)}-\hat{w}^{'(1)}).\nabla^{\sigma}(w^{(2)}-\hat{w}^{'(2)})+ \nabla^{\sigma-1}(w^{(2)}-\hat{w}^{'(2)}).\nabla^{\sigma-1}\mathcal{L} (w^{(1)}-\hat{w}^{'(1)})  \\
=& \int \nabla^{\sigma-1}(w^{(2)}-\hat{w}^{'(2)}). \nabla^{\sigma-1}(pQ_{\frac{1}{\lambda}}^{p-1}(w^{(1)}-\hat{w}^{'(1)})) \\
\leq & O( \parallel \nabla^{\sigma}(w^{(2)}-\hat{w}^{'(2)}) \parallel_{L^2} \parallel \nabla^{\sigma-2}(Q_{\frac{1}{\lambda}}^{p-1}(w^{(1)}-\hat{w}^{'(1)})) \parallel_{L^2}) .
\end{array}
$$
We recall the asymptotic $Q^{p-1}\sim\frac{c}{x^{2}}$ ($c>0$). Using the weighted Hardy estimate from Lemma \ref{annexe:lem: hardy frac a poids} one has for the second term:
$$
\parallel \nabla^{\sigma-2}(Q_{\frac{1}{\lambda}}^{p-1}(w^{(1)}-\hat{w}^{'(1)})) \parallel_{L^2} \leq \frac{C}{\lambda^{\sigma-s_c}} \parallel \nabla^{\sigma} (\varepsilon^{(1)}-\hat{\varepsilon}^{'(1)})\parallel_{L^2} = C \frac{\sqrt{\triangle \mathcal{E}_{\sigma}}}{\lambda^{\sigma-s_c}} .
$$
By interpolation, we get for the other term:
$$
\parallel \nabla^{\sigma} (w^{(2)}-\hat{w}^{'(2)}) \parallel_{L^2}\leq \frac{C}{\lambda^{\sigma-s_c+1}} \sqrt{\triangle \mathcal{E}_{\sigma}}^{1-\frac{1}{s_L-\sigma}}\sqrt{ \triangle \mathcal{E}_{s_L}}^{\frac{1}{s_L-\sigma}} .
$$
Using the definition of the renormalized norms of the difference \fref{variete:eq:def normes renormalisees} and the fact that $\frac{L+(1-\delta_0)(1+\eta)-(\sigma-s_c)(1+\nu)}{s_L-\sigma}=1+\frac{\alpha}{L}+O((\sigma-s_c)L^{-1},\eta L^{-1},L^{-2}) $ we conclude:
\be \label{variete:lowsobo:eq:estimation lineaire}
\begin{array}{r c l}
&\left| \int \nabla^{\sigma}(w^{(1)}-\hat{w}^{'(1)}.\nabla^{\sigma}(w^{(2)}-\hat{w}^{'(2)})- \nabla^{\sigma-1}w^{(2)}.\nabla^{\sigma-1}\mathcal{L} (w^{(1)}-\hat{w}^{'(1)})\right|\\
\leq & \frac{Cb_1^{2(\sigma-s_c)(1+\nu)+1+\frac{\alpha}{L}+O\left(\frac{\sigma-s_c}{L},\frac{\eta}{L},\frac{1}{L^2} \right)}}{\lambda^{2(\sigma-s_c)}+1}\sqrt{\triangle_r \mathcal{E}_{\sigma}}^{2-\frac{1}{s_L-\sigma}}\sqrt{\triangle_r \mathcal{E}_{s_L}}^{\frac{1}{s_L-\sigma}} .
\end{array}
\ee
$\bullet$ \emph{$\tilde{\bos{Mod}}(t)$ terms:} We only compute for the $\tilde{Mod}^{(2)}$ terms, the calculation being the same for the first coordinate. Rescaling, using Cauchy-Schwarz and the notations \fref{variete:eq:def triangle hatmodi} and \fref{variete:eq:def triangle hatmod0}:
\be \label{variete:lowsobo:eq:mod cs}
\begin{array}{r c l}
&\left| \frac{1}{\lambda}\int \nabla^{\sigma-1}(w^{(2)}-\hat{w}^{'(2)}).\nabla^{\sigma-1} (\tilde{Mod}^{(2)}-\hat{Mod}^{'(2)})_{\frac{1}{\lambda}}\right| \\
\leq& \frac{1}{\lambda^{2(\sigma-s_c)+1}}\sqrt{\triangle \mathcal{E}_{\sigma}} \left( \sum_{i=0}^L \parallel \nabla^{\sigma-1}\triangle \hat{Mod}_i^{(2)}\parallel_{L^2} \right).
\end{array}
\ee
We will just compute a bound for the last term: $\triangle \hat{Mod}_L^{(2)}$. Indeed it is for this one that we have the worst bound, see Lemma \ref{variete:lem:modulation diff}. We first split:
\be \label{variete:lowsobo:eq:estimation mod intermediaire}
\begin{array}{r c l}
\triangle \hat{Mod}_L^{(2)}&=& (b_{L,s}+(L-\alpha)b_1b_i-(\hat{b}'_{L,s}+\frac{d\hat{s}'}{ds}(L-\alpha)\hat{b}'_1\hat{b}'_i))\chi_{B_1}(T_L+\frac{\partial S_{L+2}}{\partial b_L})\\
&&+(\hat{b}'_{L,s}+\frac{d\hat{s}'}{ds}(L-\alpha)\hat{b}'_1\hat{b}'_i)(\chi_{B_1}(T_L+\frac{\partial S_{L+2}}{\partial b_L})-\chi_{\hat{B}'_1}(T_L+\frac{\partial \hat{S}'_{L+2}}{\partial b_L})).
\end{array}
\ee
For the first term, the bound \fref{variete:modulation:eq:L (triangle b)} derived in the previous Lemma \ref{variete:lem:modulation diff} implies:
\be \label{variete:lowsobo:eq:estimation mod intermediaire 2}
\begin{array}{r c l}
&\parallel \nabla^{\sigma-1}(b_{L,s}+(L-\alpha)b_1b_i-(\hat{b}'_{L,s}+\frac{d\hat{s}'}{ds}(L-\alpha)\hat{b}'_1\hat{b}'_i))\chi_{B_1}(T_L+\frac{\partial S_{L+2}}{\partial b_L}) \parallel_{L^2}\\
\leq& Cb_1^{\alpha+1-\delta_0+O(\eta,\sigma-s_c)}(\sqrt{\triangle_r \hat{\mathcal{E}}_{s_L}}+\underset{1\leq i \leq L}{\text{sup}}(b_1^{-i}|\triangle \hat{b}_i|)).
\end{array}
\ee
We now want to estimate the second term in \fref{variete:lowsobo:eq:estimation mod intermediaire}. We decompose:
$$
\begin{array}{r c l}
\chi_{B_1}(T_L+\frac{\partial S_{L+2}}{\partial b_L})-\chi_{\hat{B}'_1}(T_L+\frac{\partial \hat{S}'_{L+2}}{\partial b_L})&=&(\chi_{B_1}-\chi_{\hat{B}_1'})(T_L+\frac{\partial S_{L+2}}{\partial b_L})\\
&&+\chi_{B_1}(\frac{\partial S_{L+2}}{\partial b_L}-\frac{\partial S_{L+2}}{\partial b_L}).
\end{array}
$$
The identity \fref{variete:eq:formulation integrale diff chi} gives that $\chi_{B_1}(y)-\chi_{\hat{B}_1'}(y)=(b_1^{1+\eta}-\hat{b}_1^{'(1+\eta)} )f_{b_1,\hat{b}_1'}(y)$ with $f_{b_1,\hat{b}_1'}(y)$ being a $C^{\infty}$ function with support in $[\text{min}(B_1,\hat{B}'_1),2\text{max}(B_1,\hat{B}'_1)]$ satisfying: $|\partial_y^k f_{b_1,\hat{b}'_1}|\leq Cb_1^{k-1+C_k\eta}$. We recall the meaning of our notation:
$$
\hat{b}'_{L,s}+\frac{d\hat{s}'}{ds}(L-\alpha)\hat{b}'_1\hat{b}'_i=\frac{d\hat{s}'}{ds}\left (\frac{d}{d\hat{s}'}b'_L(\hat{s}')+(L-\alpha)b_1'(\hat{s}')b_L'(\hat{s}') \right).
$$
From the bound on the modulation \fref{thetrapped:eq:premiere modulation L}, and from $|b_1^{1+\eta}-\hat{b}_1^{'(1+\eta)}|\leq C|b_1-\hat{b}'_1|$ one gets using the asymptotic of $T_L$ and $\frac{\partial S_{L+2}}{\partial b_L}$:
$$
\begin{array}{r c l}
&\parallel \nabla^{\sigma-1}(\hat{b}'_{L,s}+\frac{d\hat{s}'}{ds}(L-\alpha)\hat{b}'_1\hat{b}'_i)(\chi_{B_1}-\chi_{\hat{B}_1'})(T_L+\frac{\partial S_{L+2}}{\partial b_L}) \parallel_{L^2}  \\
\leq& Cb_1^{\alpha+1-\delta_0+O(\eta,\sigma-s_c)-1}|b_1-\hat{b}'_1|.
\end{array}
$$
For the second part, using again the bound \fref{thetrapped:eq:premiere modulation L}, the fact that $\frac{\bos{S}_{L+2}}{\partial b_L}$ is homogeneous of degree $(L+2,L+2-g',1,2)$ and the bound \fref{variete:eq:difference polynomes parametres}:
$$
\begin{array}{r c l}
&\parallel \nabla^{\sigma-1}(\hat{b}'_{L,s}+\frac{d\hat{s}'}{ds}(L-\alpha)\hat{b}'_1\hat{b}'_i)\chi_{B_1}(\frac{\partial S_{L+2}}{\partial b_L}-\frac{\partial S_{L+2}}{\partial b_L}) \parallel_{L^2}\\
\leq& Cb_1^{\alpha+1-\delta_0+O(\eta,\sigma-s_c)}\underset{1\leq i \leq L}{\text{sup}}(b_1^{-i}|b_i-\hat{b}'_i)|.
\end{array}
$$
Eventually we have found, gathering the two previous bounds:
\be \label{variete:lowsobo:eq:estimation mod intermediaire 3}
\begin{array}{r c l}
&\parallel \nabla^{\sigma-1}(\hat{b}'_{L,s}+\frac{d\hat{s}'}{ds}(L-\alpha)\hat{b}'_1\hat{b}'_i)(\chi_{B_1}(T_L+\frac{\partial S_{L+2}}{\partial b_L})-\chi_{\hat{B}'_1}(T_L+\frac{\partial \hat{S}'_{L+2}}{\partial b_L})) \parallel_{L^2}\\
\leq& Cb_1^{\alpha+1-\delta_0+O(\eta,\sigma-s_c)}\underset{1\leq i \leq L}{\text{sup}}(b_1^{-i}|\triangle \hat{b}_i|)).
\end{array}
\ee
We can now go back to \fref{variete:lowsobo:eq:estimation mod intermediaire} and inject the bounds \fref{variete:lowsobo:eq:estimation mod intermediaire 2} and \fref{variete:lowsobo:eq:estimation mod intermediaire 3} for the terms in the right hand side. This gives for the $L$-th modulation term:
\be \label{variete:lowsobo:eq:estimation mod L}
\begin{array}{r c l}
\parallel \nabla^{\sigma-1} \triangle \hat{Mod}^{(2)}_L \parallel_{L^2} \leq Cb_1^{\alpha+1-\delta_0+O(\eta,\sigma-s_c)}(\underset{1\leq i \leq L}{\text{sup}}(b_1^{-i}|\triangle \hat{b}_i|)+\sqrt{\triangle_r \hat{\mathcal{E}}_{s_L}}).
\end{array}
\ee
The primary modulation bounds for the evolution of $b_L$ and $\triangle \hat{b}_L$ being worst than the ones for $b_i$ and $\triangle \hat{b}_i$ (compare \fref{thetrapped:eq:modulation leq L-1} and \fref{thetrapped:eq:premiere modulation L}, \fref{variete:modulation:eq:leqL-1} and \fref{variete:modulation:eq:L}) we claim that a better estimate than \fref{variete:lowsobo:eq:estimation mod L} also holds for the other terms in \fref{variete:lowsobo:eq:mod cs} and that it also work for the first coordinate, yielding when injected in \fref{variete:lowsobo:eq:mod cs}:
\be \label{variete:lowsobo:eq:estimation mod}
\begin{array}{r c l}
&\bigl{|} \frac{1}{\lambda}\int \nabla^{\sigma-1}(w^{(2)}-\hat{w}^{'(2)}).\nabla^{\sigma-1} (\tilde{Mod}^{(2)}-\hat{Mod}^{'(2)})_{\frac{1}{\lambda}}\\
&+\frac{1}{\lambda}\int\nabla^{\sigma}(w^{(1)}-\hat{w}^{'(1)}).\nabla^{\sigma} (\tilde{Mod}^{(1)}-\hat{Mod}^{'(1)})_{\frac{1}{\lambda}} \bigr{|} \\
\leq& C\frac{b_1^{1+\alpha-\delta_0+O(\eta,\sigma-s_c)}}{\lambda^{2(\sigma-s_c)+1}}\sqrt{\triangle_r \mathcal{E}_{\sigma}} (\underset{1\leq i \leq L}{\text{sup}}(b_1^{-i}|\triangle \hat{b}_i|)+\sqrt{\triangle_r \hat{\mathcal{E}}_{s_L}}),\\
\end{array}
\ee
and we recall that $\alpha-\delta_0>2-\delta_0>1$.\\
$\bullet$ \emph{$\tilde{\bos{\psi}}_b$ term:} We use the bound \fref{variete:eq:estimation low difference psib} on $\tilde{\bos{\psi}}_b-\tilde{\bos{\psi}}_{\hat{b}'}$:
\be \label{variete:lowsobo:eq:estimation psib}
\begin{array}{r c l}
& \left| \frac{1}{\lambda}\int \nabla^{\sigma}(w^{(1)}-\hat{w}^{'(1)}).\nabla^{\sigma}(\tilde{\psi}_{b}-\tilde{\psi}_{\hat{b}'})_{\frac{1}{\lambda}}^{(1)} +\nabla^{\sigma-1}(w^{(2)}-\hat{w}^{'(2)}).\nabla^{\sigma-1}(\tilde{\psi}_{b}-\tilde{\psi}_{\hat{b}'})_{\frac{1}{\lambda}}^{(2)} \right| \\
\leq &C \frac{b_1^{1+\alpha+O(\eta,\sigma-s_c)}}{\lambda^{2(\sigma-s_c)+1}}\sqrt{\triangle_r \mathcal{E}_{\sigma}} (\underset{1\leq i \leq L}{\text{sup}}(b_1^{-i}|\triangle \hat{b}_i|+\sqrt{\triangle_r \hat{\mathcal{E}}_{s_L}}), \\
\end{array}
\ee
and we recall that $\alpha>2$.\\
$\bullet$ \emph{$L(w)$ term:} We compute the following identity:
$$
\begin{array}{r c l}
L-\hat{L}'&=&(p-1)(Q^{p-1}-\tilde{Q}_b^{p-1})\varepsilon^{(1)}-(p-1)(Q^{p-1}-\tilde{Q}_{\hat{b}'}^{p-1})\hat{\varepsilon}^{'(1)} \\
&=&(p-1)(Q-\tilde{Q}_b^{p-1})(\varepsilon^{(1)}-\hat{\varepsilon}^{'(1)})+(p-1)(\tilde{Q}_{\hat{b}'}^{p-1}-\tilde{Q}_b^{p-1})\hat{\varepsilon}^{'(1)}.
\end{array}
$$
We recall that thanks to the asymptotic \fref{thetrapped=lowsobo:eq:asymptotique L} and to the fractional Hardy inequality one has for the first term:
$$
\begin{array}{l l l l}
&\parallel \nabla^{\sigma-1}(Q-\tilde{Q}_b^{p-1})(\varepsilon^{(1)}-\hat{\varepsilon}^{'(1)})\parallel_{L^2}\leq Cb_1\parallel \nabla^{\sigma+\frac{1}{p-1}}(\varepsilon^{(1)}-\hat{\varepsilon}^{'(1)})  \parallel_{L^2}\\
\leq&Cb_1^{1+\frac{1}{p-1}+O(L^{-1},\eta,\sigma-s_c)}\sqrt{\triangle _r \hat{\mathcal{E}}_{\sigma}}^{1-\frac{1}{(p-1)(s_L-\sigma)}}\sqrt{\triangle _r \hat{\mathcal{E}}_{s_L}}^{\frac{1}{(p-1)(s_L-\sigma)}}.
\end{array}
$$
For the second one, the bound \fref{variete:eq:asymptotique diff Qp-j} on the asymptotic of $\tilde{Q}_{\hat{b}'}^{p-1}-\tilde{Q}_b^{p-1}$ and the Hardy inequality yield:
$$
\parallel \nabla^{\sigma-1}(\tilde{Q}_{\hat{b}'}^{p-1}-\tilde{Q}_b^{p-1})\hat{\varepsilon}^{'(1)} \parallel_{L^2}\leq Cb_1^{1+\frac{1}{p-1}+O(L^{-1},\eta,\sigma-s_c)}\underset{1\leq i \leq L}{\text{sup}}(b_1^{-i}|\triangle \hat{b}_i|).
$$
Therefore we end up with the following bound on the small linear term:
\be \label{variete:lowsobo:eq:estimation L}
\begin{array}{r c l}
&\left| \int \nabla^{\sigma-1}(w^{(2)}-\hat{w}^{'(2)}).\nabla^{\sigma-1}(L-\hat{L}')\right|\leq \frac{C\parallel w^{(2)}-\hat{w}^{'(2)} \parallel_{\dot{H}^{\sigma-1}} \parallel L-\hat{L}' \parallel_{\dot{H}^{\sigma-1}}}{\lambda^{2(\sigma-s_c)+1}}  \\
\leq &   C\frac{b_1^{1+\frac{1}{p-1}+O(L^{-1},\sigma-s_c,\eta)}\sqrt{\triangle_r \hat{\mathcal{E}}_{\sigma}}(\underset{1\leq i \leq L}{\text{sup}}(b_1^{-i}|\triangle \hat{b}_i|+\sqrt{\triangle _r \hat{\mathcal{E}}_{\sigma}}^{1-\frac{1}{(p-1)(s_L-\sigma)}}\sqrt{\triangle _r \hat{\mathcal{E}}_{s_L}}^{\frac{1}{(p-1)(s_L-\sigma)}})}{\lambda^{2(\sigma-s_c)+1}} .
\end{array}
\ee
$\bullet$ \emph{$NL$ term:} The difference of the non linear terms is\footnote{we make here the abuse of notation $\tilde{Q}_b^{p-j}=\tilde{Q}_b^{(1)(p-j)}$ to ease notations.}:
\be \label{variete:lowsobo:eq:estimation NL expression}
\begin{array}{r c l}
NL-\hat{NL}'&=&\sum_{j=2}^{p}C_j\tilde{Q}_{b}^{p-j}\varepsilon^{(1)j}-\sum_{j=2}^{p}C_j\tilde{Q}_{\hat{b}'}^{p-j}\hat{\varepsilon}^{'(1)j} \\
&=& \sum_{j=2}^{p}C_j\tilde{Q}_{b}^{p-j}(\varepsilon^{(1)j}-\hat{\varepsilon}^{'(1)j})+\sum_{j=2}^{p}C_j(\tilde{Q}_{b}^{p-j}-\tilde{Q}_{\hat{b}'}^{p-j})\hat{\varepsilon}^{'(1)j}.
\end{array}
\ee
for some coefficients $C_j$ appearing when developing the polynomial $(X+Y)^p$. We start with the second term of this identity, assuming $j\neq p$. We now recall the bound \fref{thetrapped:lowsobo:eq:estimation NL 2} we found for the non-linear term in the proof of Proposition \fref{trappedregime:pr:low sobo}:
$$
\left\Vert \nabla^{\sigma-2+(j-1)(\sigma-s_c)}(v(y)\varepsilon^{'(1)j}) \right\Vert_{L^2} \leq C \sqrt{\hat{\mathcal{E}}'_{\sigma}}^j,
$$
for potentials $v$ satisfying $\partial_y^k v=O\left(\frac{1}{1+y^{2\frac{p-j}{p-1}+k}}  \right)$. Here, thanks to the asymptotic \fref{variete:eq:asymptotique diff Qp-j}, the potential is even better because of an extra gain $y^{-\alpha}$, therefore:
$$
\left\Vert \nabla^{\sigma-2+(j-1)(\sigma-s_c)}((\tilde{Q}_{\hat{b}'}^{p-j}-\tilde{Q}_b^{p-j})\hat{\varepsilon}^{'(1)j}) \right\Vert_{L^2} \leq  C\sqrt{\hat{\mathcal{E}}'_{\sigma}}^j \underset{1\leq i \leq L}{\text{sup}}(b_1^{-i}|\triangle \hat{b}_i|).
$$
This last bound imply that, integrating by part, for the second term in  \fref{variete:lowsobo:eq:estimation NL expression}:
\be \label{variete:lowsobo:eq:estimation NL 1}
\begin{array}{r c l}
&\left| \int \nabla^{\sigma-1}(w^{(1)}-\hat{w}^{'(1)}).\nabla^{\sigma-1}((\tilde{Q}_{b}^{p-j}-\tilde{Q}_{\hat{b}'}^{p-j})\hat{\varepsilon}^{'(1)j} ) \right| \\
\leq & \frac{C}{\lambda^{2(\sigma-s_c)+1}}\parallel \varepsilon^{(1)}-\hat{\varepsilon}^{'(1)}\parallel_{\dot{H}^{\sigma-(j-1)(\sigma-s_c)}}\left\Vert (\tilde{Q}_{\hat{b}'}^{p-j}-\tilde{Q}_b^{p-j})\hat{\varepsilon}^{'(1)j} \right\Vert_{\dot{H}^{\sigma-2+(j-1)(\sigma-s_c)}}\\
\leq & \frac{Cb_1^{2(\sigma-s_c)(1+\nu)+1+\frac{\alpha}{L}+O\left(\frac{\sigma-s_c}{L} \right)}\underset{1\leq i \leq L}{\text{sup}}(b_1^{-i}|\triangle \hat{b}_i|)\sqrt{\triangle _r \hat{\mathcal{E}}_{\sigma}}^{1-\frac{1-(j-1)(\sigma-s_c)}{s_L-\sigma}}\sqrt{\triangle _r \hat{\mathcal{E}}_{s_L}}^{\frac{1-(j-1)(\sigma-s_c)}{s_L-\sigma}}}{\lambda^{2(\sigma-s_c)+1}} .\\
\end{array}
\ee
We now turn to the first term in \fref{variete:lowsobo:eq:estimation NL expression}. We factorize the non linear term:
$$
(\varepsilon^{(1)j}-\hat{\varepsilon}^{'(1)j})=(\varepsilon^{(1)}-\hat{\varepsilon}^{'(1)})\sum_{i=0}^{j-1}C_i \varepsilon^{(1)i} (\hat{\varepsilon}^{'(1)})^{j-1-i},
$$
for some coefficients $(C_i)_{0\leq i \leq j-1}$. We can then apply the same reasoning we used in the proof of the bound \fref{thetrapped:lowsobo:eq:estimation NL 2}, giving this time:
$$
\left\Vert \nabla^{\sigma-2+(j-1)(\sigma-s_c)}(\tilde{Q}_{b}^{p-j}(\varepsilon^{(1)j}-\hat{\varepsilon}^{'(1)j})) \right\Vert_{L^2} \leq  C \sqrt{\triangle \hat{\mathcal{E}}_{\sigma}} \sum_{i=0}^{j-1} \sqrt{\mathcal{E}_{\sigma}}^{i} \sqrt{\hat{\mathcal{E}}'_{\sigma}}^{j-1-i} .
$$
As we did previously for the second term in \fref{variete:lowsobo:eq:estimation NL expression}, we now use interpolation and inject the bootstrap bounds \fref{eq:bootstrap estimations sur epsilon} to find:
\be \label{variete:lowsobo:eq:estimation NL 2}
\begin{array}{r c l}
&\left| \int \nabla^{\sigma-1}(w^{(1)}-\hat{w}^{'(1)}).\nabla^{\sigma-1}(\tilde{Q}_{b}^{p-j}(\varepsilon^{(1)j}- \hat{\varepsilon}^{'(1)j} )) \right| \\
\leq & \frac{C}{\lambda^{2(\sigma-s_c)+1}}\parallel \varepsilon^{(1)}-\hat{\varepsilon}^{'(1)} \parallel_{\dot{H}^{\sigma-(j-1)(\sigma-s_c)}}\left\Vert \tilde{Q}_b^{p-j}(\varepsilon^{(1)j}-\hat{\varepsilon}^{'(1)j}) \right\Vert_{\dot{H}^{\sigma-2+(j-1)(\sigma-s_c)}}\\
\leq & \frac{Cb_1^{2(\sigma-s_c)(1+\nu)+1+\frac{\alpha}{L}+O\left(\frac{\sigma-s_c}{L} \right)}}{\lambda^{2(\sigma-s_c)+1}} \sqrt{\triangle _r \hat{\mathcal{E}}_{\sigma}}^{2-\frac{1-(j-1)(\sigma-s_c)}{s_L-\sigma}}\sqrt{\triangle _r \hat{\mathcal{E}}_{s_L}}^{\frac{1-(j-1)(\sigma-s_c)}{s_L-\sigma}}.\\
\end{array}
\ee
In \fref{variete:lowsobo:eq:estimation NL 1} and \fref{variete:lowsobo:eq:estimation NL 2} we have found an estimate for the two terms in the right hand side of \fref{variete:lowsobo:eq:estimation NL expression}, giving the following bound for the non linear terms contribution:
\be \label{variete:lowsobo:eq:estimation NL}
\begin{array}{r c l}
&\left| \int \nabla^{\sigma-1}(w^{(1)}-\hat{w}^{'(1)}).\nabla^{\sigma-1}(NL-\hat{NL}') \right| \\
\leq & \frac{Cb_1^{2(\sigma-s_c)(1+\nu)+1+\frac{\alpha}{L}+O\left(\frac{\sigma-s_c}{L} \right)}}{\lambda^{2(\sigma-s_c)+1}} \sqrt{\triangle _r \hat{\mathcal{E}}_{\sigma}}^{1-\frac{1-(j-1)(\sigma-s_c)}{s_L-\sigma}}\sqrt{\triangle _r \hat{\mathcal{E}}_{s_L}}^{\frac{1-(j-1)(\sigma-s_c)}{s_L-\sigma}} \\
&\times (\sqrt{\triangle _r \hat{\mathcal{E}}_{\sigma}}+\underset{1\leq i \leq L}{\text{sup}}(b_1^{-i}|\triangle \hat{b}_i|)).
\end{array}
\ee
$\bullet$ \emph{The time difference terms:} We now look for a bound for the terms involving $\frac{d\hat{s}'}{ds}-1$ in \fref{variete:lowsobo:eq:expression derivee}. We have already computed the size of most of the terms in \fref{thetrapped:lowsobo:eq:estimation psib}, \fref{thetrapped:lowsobo:eq:estimation L} and \fref{thetrapped:lowsobo:eq:estimation NL}, yielding:
$$
\begin{array}{r c l}
&\bigl{|} \int \nabla^{\sigma}(w^{(1)}-\hat{w}^{'(1)}).\nabla^{\sigma}(-\frac{1}{\lambda}\tilde{\psi}_{\hat{b}',\frac{1}{\lambda}}^{(2)}) \\
&+\int \nabla^{\sigma-1}(w^{(2)}-\hat{w}^{'(2)}).\nabla^{\sigma-1}(-\frac{1}{\lambda}\tilde{\psi}_{\hat{b}',\frac{1}{\lambda}}^{(2)}+\hat{NL}'+\hat{L}') \bigr{|}\\
\leq& \frac{Cb_1^{2(\sigma-s_c)(1+\nu)+\frac{\alpha}{L}+O\left(\frac{\sigma-s_c}{L}\right)}(\sqrt{\triangle _r \hat{\mathcal{E}}_{\sigma}}+\underset{k=2}{\overset{p}{\sum}}\sqrt{\triangle _r \hat{\mathcal{E}}_{\sigma}}^{1-\frac{1-(k-1)(\sigma-s_c)}{s_L-\sigma}}\sqrt{\triangle _r \hat{\mathcal{E}}_{s_L}}^{\frac{1-(k-1)(\sigma-s_c)}{s_L-\sigma}})}{\lambda^{2(\sigma-s_c)+1}}.
\end{array}
$$
With the bound \fref{variete:modulation:eq:dhats (triangle b)} on $|\frac{d\hat{s}'}{ds}-1|$ we obtain:
\be \label{variete:lowsobo:eq:dhats'-1 psib NL L}
\begin{array}{r c l}
&\bigl{|} (1-\frac{d\hat{s}'}{ds})\int \nabla^{\sigma}(w^{(1)}-\hat{w}^{'(1)}).\nabla^{\sigma}(-\frac{1}{\lambda}\tilde{\psi}_{\hat{b}',\frac{1}{\lambda}}^{(2)}) \\
&+(1-\frac{d\hat{s}'}{ds}) \int \nabla^{\sigma-1}(w^{(2)}-\hat{w}^{'(2)}).\nabla^{\sigma-1}(-\frac{1}{\lambda}\tilde{\psi}_{\hat{b}',\frac{1}{\lambda}}^{(2)}+\hat{NL}'+\hat{L}')\bigr{|}\\
\leq& \frac{Cb_1^{2(\sigma-s_c)(1+\nu)+\frac{\alpha}{L}+O\left(\frac{\sigma-s_c}{L}\right)}}{\lambda^{2(\sigma-s_c)+1}}( \sqrt{\triangle _r \hat{\mathcal{E}}_{s_L}}+\underset{1\leq i \leq L}{\text{sup}}(b_1^{-i}|\triangle \hat{b}_i|)  )  \\
&\times  \left( \sqrt{\triangle _r \hat{\mathcal{E}}_{\sigma}}+\underset{k=2}{\overset{p}{\sum}}\sqrt{\triangle _r \hat{\mathcal{E}}_{\sigma}}^{1-\frac{1-(k-1)(\sigma-s_c)}{s_L-\sigma}}\sqrt{\triangle _r \hat{\mathcal{E}}_{s_L}}^{\frac{1-(k-1)(\sigma-s_c)}{s_L-\sigma}}\right).
\end{array}
\ee
The only term we did not really estimate in the proof of Proposition \ref{trappedregime:pr:low sobo} is the linear one, because we had a natural cancellation, the norm being adapted to a wave equation. We start with the terms involving derivatives:
\be \label{variete:lowsobo:eq:dhats'-1 1}
\begin{array}{r c l}
& \left| \int \nabla^{\sigma}(w^{(1)}-\hat{w}^{'(1)}).\nabla^{\sigma}(-\hat{w}^{'(2)})  + \nabla^{\sigma-1}(w^{(2)}-\hat{w}^{'(2)}).\nabla^{\sigma-1}(\Delta \hat{w}^{'(1)})\right|\\
\leq & \frac{C}{\lambda^{2(\sigma-s_c)+1}}\sqrt{\triangle \hat{\mathcal{E}_{\sigma}}}\parallel \hat{\bos{\varepsilon}}'\parallel_{\dot{H}^{\sigma+1}\times \dot{H}^{\sigma}}\leq \frac{Cb_1^{1+2(\sigma-s_c)(1+\nu)+\frac{\alpha}{L}+O\left(\frac{\sigma-s_c}{L} \right)}}{\lambda^{2(\sigma-s_c)+1}}\sqrt{\triangle_r \hat{\mathcal{E}_{\sigma}}}.
\end{array}
\ee
For the term involving the potential, integrating by parts, using Hardy inequality (as $Q^{p-1}=O(y^{-2}))$ and interpolation yields:
\be \label{variete:lowsobo:eq:dhats'-1 2}
\begin{array}{r c l}
& \left| \int \nabla^{\sigma-1}(w^{(2)}-\hat{w}^{'(2)}).\nabla^{\sigma-1}((p-1)Q_{\frac{1}{\lambda}}^{p-1} \hat{w}^{'(1)})\right|\\
\leq & \frac{C}{\lambda^{2(\sigma-s_c)+1}}\parallel \nabla^{\sigma}(w^{(2)}-\hat{w}^{'(2)}) \parallel_{L^2}   \parallel \nabla^{\sigma-2}(Q_{\frac{1}{\lambda}}^{p-1} \hat{w}^{'(1)}) \parallel_{L^2}\\
\leq & \frac{Cb_1^{1+2(\sigma-s_c)(1+\nu)+\frac{\alpha}{L}+O\left(\frac{\sigma-s_c}{L} \right)}}{\lambda^{2(\sigma-s_c)+1}}\sqrt{\triangle _r \hat{\mathcal{E}}_{\sigma}}^{1-\frac{1}{s_L-\sigma}}  \sqrt{\triangle _r \hat{\mathcal{E}}_{\sigma}}^{\frac{1}{s_L-\sigma}}.
\end{array}
\ee
The two previous bounds \fref{variete:lowsobo:eq:dhats'-1 1} and \fref{variete:lowsobo:eq:dhats'-1 2}, combined with the bound \fref{variete:modulation:eq:dhats (triangle b)} on $|\frac{d\hat{s}'}{ds}-1|$ give for the linear term:
\be \label{variete:lowsobo:eq:dhats'-1 lineaire}
\begin{array}{r c l}
& \left| (\frac{d\hat{s}'}{ds}-1)\int \nabla^{\sigma}(w^{(1)}-\hat{w}^{'(1)}).\nabla^{\sigma}\hat{w}^{'(2)}  + \nabla^{\sigma-1}(w^{(2)}-\hat{w}^{'(2)}).\nabla^{\sigma-1}\mathcal{L} \hat{w}^{'(1)}\right|\\
\leq & \frac{Cb_1^{1+2(\sigma-s_c)(1+\nu)+\frac{\alpha}{L}+O\left(\frac{\sigma-s_c}{L} \right)}\sqrt{\triangle _r \hat{\mathcal{E}}_{\sigma}}^{1-\frac{1}{s_L-\sigma}}  \sqrt{\triangle _r \hat{\mathcal{E}}_{\sigma}}^{\frac{1}{s_L-\sigma}}( \sqrt{\triangle _r \hat{\mathcal{E}}_{\sigma}}+\underset{1\leq i \leq L}{\text{sup}}(b_1^{-i}|\triangle \hat{b}_i|))}{\lambda^{2(\sigma-s_c)+1}}.
\end{array}
\ee
The bounds \fref{variete:lowsobo:eq:dhats'-1 psib NL L} and \fref{variete:lowsobo:eq:dhats'-1 lineaire} imply that for the terms in \fref{variete:lowsobo:eq:expression derivee} involving $\frac{d\hat{s}'}{ds}-1$:
\be \label{variete:lowsobo:eq:estimation dhats'-1}
\begin{array}{r c l}
&\bigl{|} (1-\frac{d\hat{s}'}{ds})\int \nabla^{\sigma}(w^{(1)}-\hat{w}^{'(1)}).\nabla^{\sigma}(-\hat{w}^{'(1)}-\tilde{\psi}_{\hat{b}',\frac{1}{\lambda}}^{(2)}) \\
& + (1-\frac{d\hat{s}'}{ds})\int \nabla^{\sigma-1}(w^{(2)}-\hat{w}^{'(2)}).\nabla^{\sigma-1}(-\mathcal{L}\hat{w}^{'(1)}-\tilde{\psi}_{\hat{b}',\frac{1}{\lambda}}^{(2)}+\hat{NL}'+\hat{L}' )\bigr{|}\\
\leq &\frac{Cb_1^{1+2(\sigma-s_c)(1+\nu)+\frac{\alpha}{L}+O\left(\frac{\sigma-s_c}{L} \right)}\sqrt{\triangle _r \hat{\mathcal{E}}_{\sigma}}^{1-\frac{1}{s_L-\sigma}}  \sqrt{\triangle _r \hat{\mathcal{E}}_{\sigma}}^{\frac{1}{s_L-\sigma}}( \sqrt{\triangle _r \hat{\mathcal{E}}_{\sigma}}+\underset{1\leq i \leq L}{\text{sup}}(b_1^{-i}|\triangle \hat{b}_i|))}{\lambda^{2(\sigma-s_c)+1}}.
\end{array}
\ee

\underline{Step 2:} Gathering the bounds. We have made the decomposition \fref{variete:lowsobo:eq:expression derivee} and have computed an upper bound for all terms in the right hand side in \fref{variete:lowsobo:eq:estimation lineaire}, \fref{variete:lowsobo:eq:estimation mod}, \fref{variete:lowsobo:eq:estimation psib}, \fref{variete:lowsobo:eq:estimation L}, \fref{variete:lowsobo:eq:estimation NL} and \fref{variete:lowsobo:eq:estimation dhats'-1}. Consequently:
$$
\begin{array}{r c l}
\frac{d}{dt}\left\{ \frac{\triangle \mathcal{E}_{\sigma}}{\lambda^{2(\sigma-s_c)}} \right\} \leq \frac{Cb_1^{1+2(\sigma-s_c)(1+\nu)+\frac{\alpha}{2L}}}{\lambda^{2(\sigma-s_c)+1}} \left( \triangle_r \hat{\mathcal{E}}_{\sigma}+\triangle _r \hat{\mathcal{E}}_{s_L}+(\underset{1\leq i \leq L}{\text{sup}}b_1^{-i}|\triangle \hat{b}_i|)^2 \right),
\end{array}
$$
which is the bound we had to prove.
\end{proof}


We now turn to the control of the most important of the two norms of the difference of errors $\bos{\varepsilon}-\hat{\bos{\varepsilon}}'$: the adapted one at a high level of regularity. We state a similar result as the one in Proposition \ref{trappedregime:pr:high sobo}, this time relating the time evolution to the differences of the parameters and errors. Again, we will not be able to control directly a local norm, relegating it to the next lemma.

\begin{lemma}\label{variete:lem:high sobo}
\emph{(Lyapunov monotonicity for the high Sobolev norm:)} We recall that $\triangle \mathcal{E}_{s_L}$ and $\triangle \mathcal{E}_{s_L,\text{loc}}$ are defined in \fref{thetrapped:eq:def mathcalEsL} and \fref{thetrapped:eq:def mathcalEsLloc}). There holds for $s_0\leq s$:
\begin{equation} \label{variete:eq:high sobo}
\begin{array}{r c l}
&\frac{d}{dt}\left\{ \frac{\triangle \hat{\mathcal{E}}_{s_L}}{\lambda^{2(s_L-s_c)}}+O \left( \frac{b_1^{2L+2(1-\delta_0)(1+\eta)}}{\lambda^{2(s_L-s_c)}}(\triangle_r \hat{\mathcal{E}}_{s_L}+|\underset{1\leq i \leq L}{\text{sup}}b_1^{-i}|\triangle \hat{b}_i||^2) \right) \right\} \\
\leq& \frac{Cb_1^{2L+2(1-\delta_0)(1+\frac{\eta}{2})+1}}{\lambda^{2(s_L-s_c)+1}}(b_1^{\frac{\eta}{2}(1-\delta_0)}\sqrt{\triangle_r \hat{\mathcal{E}}_{s_L}}\underset{1\leq i \leq L}{\text{sup}}b_1^{-i}|\triangle \hat{b}_i|+C(N)\triangle_r \hat{\mathcal{E}}_{s_L,\text{loc}} \\
&+\frac{C}{N^{\frac{\delta_0}{2}}}(\triangle_r \hat{\mathcal{E}}_{s_L}+\triangle_r \hat{\mathcal{E}}_{\sigma})+b_1^{\frac{\eta}{2}(1-\delta_0)}\underset{1\leq i \leq L}{\text{sup}}b_1^{-i}|\triangle \hat{b}_i|)^2) ,\\
\end{array}
\end{equation}
for some universal constant $C$ that does not depend on $N$.
\end{lemma}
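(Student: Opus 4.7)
The plan is to mimic step by step the proof of Proposition \ref{trappedregime:pr:high sobo}, working this time on the differenced equation \fref{variete:eq:evolution difference w} and using the tools developed in Lemma \ref{variete:lem:modulation diff}, Lemma \ref{variete:lem:improved modulation} and Lemma \ref{variete:lem:estimations differences de profils}. First I would write, using $\bos{H}^{2k}$ and $\bos{H}^{2k+1}$ as in \fref{thetrapped:eq:puissances de H},
$$
\frac{d}{dt}\frac{\triangle \hat{\mathcal{E}}_{s_L}}{\lambda^{2(s_L-s_c)}}=\frac{2}{\lambda^{2(s_L-s_c)}}\int (\varepsilon^{(1)}-\hat{\varepsilon}^{'(1)})\mathcal{L}^{s_L}\partial_s(\varepsilon^{(1)}-\hat{\varepsilon}^{'(1)}) + (\text{analogue for 2nd coord}) + (\text{scaling terms}),
$$
and then expand $\partial_s(\bos{\varepsilon}-\hat{\bos{\varepsilon}}')$ using \fref{variete:eq:evolution difference epsilon}.

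The linear parts cancel exactly by the wave structure of $\mathcal{E}_{s_L}$, as in \fref{thetrapped:highsobo:eq:estimation lineaire}. The error contribution $\tilde{\bos{\psi}}_b-\tilde{\bos{\psi}}_{\hat{b}'}$ is treated by Cauchy--Schwarz together with the high-order bound \fref{variete:eq:estimation high difference psib}, yielding a term of the form $b_1^{L+1+(1-\delta_0)(1+\eta)}\sqrt{\triangle \hat{\mathcal{E}}_{s_L}}(\sup b_1^{-i}|\triangle \hat b_i|+\sqrt{\triangle_r\hat{\mathcal{E}}_{s_L}})/\lambda^{2(s_L-s_c)+1}$, which after Young's inequality fits in the right-hand side of \fref{variete:eq:high sobo}. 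For the nonlinear term I would factor $NL-\hat{NL}'$ as in \fref{variete:lowsobo:eq:estimation NL expression}, then reproduce the two-case case analysis of the proof of Proposition \ref{trappedregime:pr:high sobo} on $\int (\varepsilon^{(2)}-\hat\varepsilon^{'(2)})\mathcal{L}^{s_L-1}(NL-\hat{NL}')$, using the pointwise bound \fref{variete:eq:asymptotique diff Qp-j} for $\tilde Q_b^{p-k}-\tilde Q_{\hat b'}^{p-k}$ and interpolation between $\triangle_r\hat{\mathcal{E}}_{\sigma}$, $\triangle_r\hat{\mathcal{E}}_{s_L}$ to close things up with an $\frac{\alpha}{2L}$ gain. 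The terms carrying the factor $(1-\tfrac{d\hat s'}{ds})$ are absorbed using the $O(|\triangle \hat b_1|/\hat b'_1)$ bound \fref{variete:modulation:eq:dhats (triangle b)} and the bootstrap bounds \fref{variete:eq:equivalence normes varepsilon'}; note that it is here I will use the extra regularity hypothesis \fref{variete:eq:estimation supplementaire varepsilon'}, since $\mathcal{L}^{s_L}\hat{\boldsymbol{\varepsilon}}'$ is one derivative too rough for $\mathcal{E}'_{s_L}$ alone, but $\hat{\mathcal{E}}'_{s_L+1}$ supplies exactly what is needed.

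The first genuine obstacle is the small linear term $L-\hat L'$ and the scale-changing term. As in the original proof one uses the degeneracy of $Q^{p-1}-\tilde Q_b^{p-1}$, which gives in the compact zone a $b_1$ gain with an extra weight $y^{-\alpha+C\eta}$, then split using $\chi_N$: the outer part gives $b_1\,\triangle\hat{\mathcal{E}}_{s_L}/N^{\delta_0/2}$, the inner part can only be controlled by $C(N)\triangle \hat{\mathcal{E}}_{s_L,\text{loc}}$, which is responsible for the local term in \fref{variete:eq:high sobo}. A completely analogous treatment handles $\frac{d}{dt}\mathcal{L}_{1/\lambda}$.

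The main obstacle will be, as in Step 3 of the proof of Proposition \ref{trappedregime:pr:high sobo}, the last modulation term, corresponding to $\triangle\hat{\boldsymbol{Mod}}_L$ in the notation \fref{variete:eq:def triangle hatmodi}. The primary modulation bound \fref{variete:modulation:eq:L (triangle b)} on $b_{L,s}+(L-\alpha)b_1b_L-[\hat b'_{L,s}+\tfrac{d\hat s'}{ds}(L-\alpha)\hat b'_1\hat b'_L]$ is off by a factor $b_1^{-(1-\delta_0)(1+\eta/2)}$, and is cured only after time integration via the improved modulation equation \fref{variete:eq:improved modulation}. Imitating the definition \fref{thetrapped:highsobo:eq:definition xi}, I would introduce
$$
\triangle\boldsymbol{\xi}:=\frac{\langle \bos{H}^L(\bos{\varepsilon}-\hat{\bos{\varepsilon}}'),\chi_{B_0}\bos{\Lambda Q}\rangle-\hat b'_L\int \chi_{B_0}\Lambda^{(1)}Q\bigl(\tfrac{\partial S_{L+2}}{\partial b_L}-\tfrac{\partial \hat S'_{L+2}}{\partial b_L}\bigr)_{L-1}}{\langle \chi_{B_0}\Lambda^{(1)}Q,\Lambda^{(1)}Q+(-1)^{(L-1)/2}(\tfrac{\partial S_{L+2}}{\partial b_L})_{L-1}\rangle}\Bigl[\chi_{B_1}\bigl(\bos{T}_L+\tfrac{\partial \bos{S}_{L+1}}{\partial b_L}+\tfrac{\partial \bos{S}_{L+2}}{\partial b_L}\bigr)\Bigr]_{1/\lambda},
$$
compute $\frac{d}{dt}(\int(\varepsilon^{(1)}-\hat\varepsilon^{'(1)})\mathcal{L}^{s_L}\triangle\xi^{(1)}+\int(\varepsilon^{(2)}-\hat\varepsilon^{'(2)})\mathcal{L}^{s_L-1}\triangle\xi^{(2)}+\tfrac12\int \triangle\xi^{(1)}\mathcal{L}^{s_L}\triangle\xi^{(1)}+\tfrac12\int \triangle\xi^{(2)}\mathcal{L}^{s_L-1}\triangle\xi^{(2)})$ and show, exactly as in Step 3 of Proposition \ref{trappedregime:pr:high sobo}, that this time derivative reproduces the bad part of $\triangle\hat{\boldsymbol{Mod}}_L$ plus controlled remainders. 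The gain \fref{variete:eq:gain improved modulation} will provide the crucial $b_1^{\eta(1-\delta_0)/2}$ factor needed to absorb the resulting correction terms into the $O(\cdot)$ quantity on the left-hand side of \fref{variete:eq:high sobo}, and the $(\underset{1\leq i\leq L}{\sup}b_1^{-i}|\triangle \hat b_i|)^2$ contribution that appears there comes precisely from the quadratic correction $\int \triangle\xi\cdot \mathcal{L}^{\cdot}\triangle\xi$, in view of the size estimate on $\triangle\boldsymbol{\xi}$ inherited from \fref{variete:eq:gain improved modulation}. Summing all pieces yields \fref{variete:eq:high sobo}.
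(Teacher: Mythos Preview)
Your overall architecture is correct and mirrors the paper's proof: direct bounds on the linear, error, nonlinear and time-difference terms; local/nonlocal splitting for the small linear and scale-changing terms; and an integration by parts in time for the $L$-th modulation term. In particular, your identification of the extra regularity hypothesis \fref{variete:eq:estimation supplementaire varepsilon'} as the tool to handle the $(1-\tfrac{d\hat s'}{ds})\bos{H}_{1/\lambda}\hat{\bos w}'$ contribution is exactly right.

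There is, however, a genuine gap in your Step 3. The $L$-th modulation difference $\triangle\hat{\bos{Mod}}_L$ (see \fref{variete:eq:def triangle hatmodi}) decomposes as
\[
\bigl[b_{L,s}+\cdots-(\hat b'_{L,s}+\cdots)\bigr]\,\chi_{B_1}\Bigl(\bos{T}_L+\tfrac{\partial\bos{S}_{L+1}}{\partial b_L}+\tfrac{\partial\bos{S}_{L+2}}{\partial b_L}\Bigr)
\ +\ (\hat b'_{L,s}+\cdots)\Bigl[\chi_{B_1}(\cdots)-\chi_{\hat B'_1}(\cdots)\Bigr]\ +\ \text{(lower order)}.
\]
Your single radiation $\triangle\bos{\xi}$, via the improved \emph{difference} modulation \fref{variete:eq:improved modulation}, reproduces only the \emph{first} piece. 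The \emph{second} piece --- original modulation coefficient for $\hat b'$ times the profile difference --- cannot be bounded directly: the primary estimate $|\hat b'_{L,s}+\cdots|\lesssim b_1^{L+(1-\delta_0)(1+\eta)}$ combined with $\bigl\|\bigl((\chi_{B_1}-\chi_{\hat B'_1})T_L\bigr)_{s_L-1}\bigr\|_{L^2}\lesssim b_1^{(1-\delta_0)(1+\eta)}\,b_1^{-1}|\triangle\hat b_1|$ falls short of the right-hand side of \fref{variete:eq:high sobo} by a factor $b_1^{-\delta_0+O(\eta)}$. The paper cures this by introducing a \emph{second} radiation
\[
\bos{\xi}':=\frac{\langle\bos{H}^L\hat{\bos{\varepsilon}}',\chi_{\hat B'_0}\bos{\Lambda Q}\rangle}{\langle\chi_{\hat B'_0}\Lambda^{(1)}Q,\Lambda^{(1)}Q+(-1)^{(L-1)/2}(\tfrac{\partial\hat S'_{L+2}}{\partial b_L})_{L-1}\rangle}\,\Bigl[\chi_{B_1}(\cdots)-\chi_{\hat B'_1}(\cdots)\Bigr]_{1/\lambda},
\]
whose time derivative, via the \emph{original} improved modulation \fref{thetrapped:eq:improved modulation} applied to $\hat b'$, reproduces precisely this second piece. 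Only the sum satisfies $\partial_t(\bos{\xi}+\bos{\xi}')=\tfrac{1}{\lambda}(\triangle\hat{\bos{Mod}}_L)_{1/\lambda}+\bos{R}$ with $\bos{R}$ of acceptable size; the $O(\cdot)$ correction on the left of \fref{variete:eq:high sobo} then comes from the combined size estimate \fref{variete:highsobo:modulation:estimation radiations} for $\bos{\xi}+\bos{\xi}'$. With this second correction added, the rest of your argument goes through unchanged.
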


\begin{proof}[Proof of Lemma \ref{variete:lem:high sobo}] The strategy of the proof of Lemma \ref{variete:lem:high sobo} is similar to the one of the proof of Proposition \fref{trappedregime:pr:high sobo}. We start by computing the following identity:
\be \label{variete:highsobo:eq:expression generale 1}
\begin{array}{r c l}
&\frac{d}{dt}\left( \frac{\triangle \mathcal{E}_{s_L}}{2\lambda^{2(s_L-s_c)}} \right) \\
=& \int (w^{(1)}-\hat{w}^{'(1)})\mathcal{L}_{\frac{1}{\lambda}}^{k_0+L+1} \bigl{[}w^{(2)}-\hat{w}^{'(2)}-\frac{1}{\lambda}\tilde{\psi}_{b,\frac{1}{\lambda}}^{(1)}+\frac{1}{\lambda}\tilde{\psi}_{\hat{b}',\frac{1}{\lambda}}^{(1)} -\frac{1}{\lambda} \tilde{Mod}_{\frac{1}{\lambda}}^{(1)}\\
&+\frac{1}{\lambda}\hat{Mod}^{'(1)}_{\frac{1}{\lambda}} +(\frac{d\hat{s}'}{ds}-1)(\hat{w}^{'(2)}-\frac{1}{\lambda}\tilde{\psi}^{(1)}_{\hat{b}',\frac{1}{\lambda}}) \Bigr{]}\\
&+\int (w^{(2)}-\hat{w}^{'(2)})\mathcal{L}_{\frac{1}{\lambda}}^{k_0+L} \Bigl{[}-\mathcal{L}_{\frac{1}{\lambda}}(w^{(1)}-\hat{w}^{'(1)})-\frac{1}{\lambda}(\tilde{\psi}_{b,\frac{1}{\lambda}}^{(2)}-\tilde{\psi}_{\hat{b}',\frac{1}{\lambda}}^{(2)}) -\frac{1}{\lambda} \tilde{Mod}_{\frac{1}{\lambda}}^{(2)}\\
&+\frac{1}{\lambda}\hat{Mod}_{\frac{1}{\lambda}}^{'(2)}+L-\hat{L}'+NL-\hat{NL}'+(1-\frac{d\hat{s}'}{ds})(-\mathcal{L}\hat{w}^{'(1)}-\frac{1}{\lambda}\tilde{\psi}_{\hat{b}',\frac{1}{\lambda}}^{(2)}+\hat{NL}'+\hat{L}')  \Bigr{]}\\
&+\frac{1}{2}\sum_{i=1}^{k_0+L+1} \int (w^{(1)}-\hat{w}^{'(1)}) \mathcal{L}_{\frac{1}{\lambda}}^{i-1}\frac{d}{dt}\left( \mathcal{L}_{\frac{1}{\lambda}}\right)\mathcal{L}_{\frac{1}{\lambda}}^{k_0+L+1-i} (w^{(1)}-\hat{w}^{'(1)}) \\
&+\frac{1}{2}\sum_{i=1}^{k_0+L} \int (w^{(2)}-\hat{w}^{'(2)}) \mathcal{L}_{\frac{1}{\lambda}}^{i-1}\frac{d}{dt}\left( \mathcal{L}_{\frac{1}{\lambda}}\right) \mathcal{L}_{\frac{1}{\lambda}}^{k_0+L-i} (w^{(2)}-\hat{w}^{'(2)}) .
\end{array}
\ee
We now manage all terms in the right hand side.\\

\underline{Step 1:} Direct bounds. The linear, non linear, error, and time error terms can be estimated via a direct bound. We claim the following identity:
\be \label{variete:highsobo:eq:expression generale 2}
\begin{array}{r c l}
&\frac{d}{dt}\left( \frac{\triangle \mathcal{E}_{s_L}}{2\lambda^{2(s_L-s_c)}} \right) \\
=& \int (w^{(1)}-\hat{w}^{'(1)})\mathcal{L}_{\frac{1}{\lambda}}^{k_0+L+1} \bigl{[}-\frac{1}{\lambda} \tilde{Mod}_{\frac{1}{\lambda}}^{(1)}+\frac{1}{\lambda}\hat{Mod}^{'(1)}_{\frac{1}{\lambda}}  \Bigr{]}\\
&+\int (w^{(2)}-\hat{w}^{'(2)})\mathcal{L}_{\frac{1}{\lambda}}^{k_0+L} \Bigl{[}-\frac{1}{\lambda} \tilde{Mod}_{\frac{1}{\lambda}}^{(2)}+\frac{1}{\lambda}\hat{Mod}_{\frac{1}{\lambda}}^{'(2)}+L-\hat{L}'\Bigr{]}\\
&+\frac{1}{2}\sum_{i=1}^{k_0+L+1} \int (w^{(1)}-\hat{w}^{'(1)}) \mathcal{L}_{\frac{1}{\lambda}}^{i-1}\frac{d}{dt}\left( \mathcal{L}_{\frac{1}{\lambda}}\right)\mathcal{L}_{\frac{1}{\lambda}}^{k_0+L+1-i} (w^{(1)}-\hat{w}^{'(1)}) \\
&+\frac{1}{2}\sum_{i=1}^{k_0+L} \int (w^{(2)}-\hat{w}^{'(2)}) \mathcal{L}_{\frac{1}{\lambda}}^{i-1}\frac{d}{dt}\left( \mathcal{L}_{\frac{1}{\lambda}}\right) \mathcal{L}_{\frac{1}{\lambda}}^{k_0+L-i} (w^{(2)}-\hat{w}^{'(2)}) \\
&+O\Bigl{(}\frac{Cb_1^{2L+(1-\delta_0)(2+\frac{3}{2}\eta)+1}(\sqrt{\triangle_r \hat{\mathcal{E}}_{s_L}}\underset{1\leq i \leq L}{\text{sup}}b_1^{-i}|\triangle \hat{b}_i|+b_1^{\frac{\alpha+O(\sigma-s_c,\eta)}{L}}(\triangle_r \hat{\mathcal{E}}_{s_L}+\triangle_r \hat{\mathcal{E}}_{\sigma}))}{\lambda^{2(s_L-s_c)+1}} \Bigr{)},
\end{array}
\ee
which we are now going to prove by finding upper bounds for each term in the right hand side of \fref{variete:highsobo:eq:expression generale 1}.\\
$\bullet$ \emph{Linear terms:} The fact that the form of the norm is adapted to the linear wave equation with operator $\mathcal{L}$ induces:
\be \label{variete:highsobo:eq:estimation lineaire} 
\int (w^{(1)}-\hat{w}^{'(1)}) \mathcal{L}_{\frac{1}{\lambda}}^{k_0+L+1}(w^{(2)}-\hat{w}^{'(2)})+(w^{(2)}-\hat{w}^{'(2)}) \mathcal{L}_{\frac{1}{\lambda}}^{k_0+L}(-\mathcal{L}_{\frac{1}{\lambda}}(w^{(1)}-\hat{w}^{'(1)}))=0 .
\ee
$\bullet$ \emph{Error terms:} Using the bound \fref{variete:eq:estimation high difference psib} on $\tilde{\bos{\psi}}_b-\tilde{\bos{\psi}}_{\hat{b}'}$:
\be \label{variete:highsobo:eq:estimation psib} 
\begin{array}{r c l}
&\left|\frac{1}{\lambda}\int (w^{(1)}-\hat{w}^{'(1)}) \mathcal{L}_{\frac{1}{\lambda}}^{k_0+L+1}(\tilde{\psi}_{b,\frac{1}{\lambda}}^{(1)}-\tilde{\psi}_{\hat{b}',\frac{1}{\lambda}}^{(1)})+(w^{(2)}-\hat{w}^{'(2)}) \mathcal{L}_{\frac{1}{\lambda}}^{k_0+L}(\tilde{\psi}_{b,\frac{1}{\lambda}}^{(2)}-\tilde{\psi}_{\hat{b}',\frac{1}{\lambda}}^{(2)})\right| \\
\leq& \frac{C\sqrt{\triangle_r \hat{\mathcal{E}}_{s_L}}}{\lambda^{2(s_L-s_c)+1}}( b_1^{2L+(1-\delta_0)(2+\frac{3}{2}\eta)+1}\underset{1\leq i \leq L}{\text{sup}}(b_1^{-i}|\triangle \hat{b}_i|)+b_1^{3L+3-3\delta_0+O(\eta)}\sqrt{\triangle_r\hat{\mathcal{E}}_{s_L}}).
\end{array}
\ee
$\bullet$ \emph{Non linear terms:} We know that $NL$ is a sum of terms of the form $\tilde{Q}^{p-k}_b\varepsilon^{(1)k}$ for $2\leq k \leq p$. Therefore we start by decomposing:
\be \label{variete:highsobo:eq:estimation NL expression}
\begin{array}{r c l}
\parallel (NL-\hat{NL}')_{k_0+L}\parallel_{L^2} & \leq & C\sum_{2}^p\parallel (\tilde{Q}_b^{p-k}(\varepsilon^{(1)k}-\hat{\varepsilon}^{'(1)k}))_{k_0+L}\parallel_{L^2}\\
&&+\parallel (\hat{\varepsilon}^{'(1)k}(\tilde{Q}_b^{(1)(p-k)}-\tilde{Q}_{\hat{b}'}^{(1)(p-k)}))_{k_0+L}\parallel_{L^2}
\end{array}
\ee
For the first term of this identity, we can do the same reasoning we used in the proof of the direct bound \fref{thetrapped:highsobo:eq:estimation NL} in the proof of Proposition \ref{trappedregime:pr:high sobo}. What changes here is that we do not have to treat $\varepsilon^{(1)k}$, but $(\varepsilon^{(1)}-\hat{\varepsilon}^{'(1)})\varepsilon^{(1)i}\hat{\varepsilon}^{'(1)(k-1-i)}$ because of the factorization:
$$
\varepsilon^{(1)k}-\hat{\varepsilon}^{'(1)k}=(\varepsilon^{(1)}-\hat{\varepsilon}^{'(1)})\sum_{i=0}^{k-1} C_i\varepsilon^{(1)i}\hat{\varepsilon}^{'(1)(k-1-i)}
$$
for some constants $(C_0)_{1\leq i \leq k-1}$. We recall that using various decompositions, Hardy inequalities and Sobolev injections, in \fref{thetrapped:highsobo:eq:estimation NL} we proved:
\be \label{variete:highsobo:eq:estimation NL 1}
\parallel (NL)_{s_L-1}\parallel_{L^2}^2\leq C(K_1,K_2) b_1^{2L+2+2(1-\delta_0)(1+\eta)+\frac{2\alpha}{L}+O\left(\frac{\sigma-s_c}{L} \right)}.
\ee
Whenever interpolating between $\triangle \hat{\mathcal{E}}_{\sigma} $ and $\triangle \hat{\mathcal{E}}_{s_L}$ one has for $0\leq \theta\leq 1$:
$$
\begin{array}{r c l}
\triangle \hat{\mathcal{E}}_{\sigma} ^{\theta} \triangle \hat{\mathcal{E}}_{s_L}^{1-\theta}&\leq &b_1^{2\theta(\sigma-s_c)(1+\nu)+2(1-\theta)(L+(1-\delta_0)(1+\eta))}\triangle _r\hat{\mathcal{E}}_{\sigma} ^{\theta} \triangle_r \hat{\mathcal{E}}_{s_L}^{1-\theta}\\
&\leq& b_1^{2\theta(\sigma-s_c)(1+\nu)+2(1-\theta)(L+(1-\delta_0)(1+\eta))}(\triangle _r\hat{\mathcal{E}}_{\sigma}+\triangle_r \hat{\mathcal{E}}_{s_L}).
\end{array}
$$
This is why in this case, \fref{variete:highsobo:eq:estimation NL 1} transforms into:
\be \label{variete:highsobo:eq:estimation NL 2}
\parallel (\tilde{Q}_b^{p-k}(\varepsilon^{(1)k}-\hat{\varepsilon}^{'(1)k}))_{s_L-1}\parallel_{L^2}^2\leq Cb_1^{2L+2+2(1-\delta_0)(1+\eta)+\frac{2\alpha}{L}+O\left(\frac{\sigma-s_c}{L} \right)}(\triangle _r\hat{\mathcal{E}}_{\sigma}+\triangle_r \hat{\mathcal{E}}_{s_L}).
\ee
We now turn to the second term in \fref{variete:highsobo:eq:estimation NL expression}. Using the bound \fref{variete:eq:asymptotique diff Qp-j} and again the same reasoning that proved  \fref{thetrapped:highsobo:eq:estimation NL} one gets:
\be \label{variete:highsobo:eq:estimation NL 3}
\begin{array}{r c l}
&\parallel ((\tilde{Q}_b^{(1)(p-k)}-\tilde{Q}_{\hat{b}'}^{(1)(p-k)})\hat{\varepsilon}^{'(1)k})_{s_L-1}\parallel_{L^2}^2\\
\leq &Cb_1^{2L+2+2(1-\delta_0)(1+\eta)+\frac{2\alpha}{L}+O\left(\frac{\sigma-s_c}{L} \right)}\underset{1\leq i \leq L}{\text{sup}}(b_1^{-i}|\triangle \hat{b}_i|).
\end{array}
\ee
We can now come back to the identity \fref{variete:highsobo:eq:estimation NL expression}, inject the bounds \fref{variete:highsobo:eq:estimation NL 2} and \fref{variete:highsobo:eq:estimation NL 3} to find that the size of the nonlinear term is:
\be \label{variete:highsobo:eq:estimation NL 4}
\begin{array}{r c l}
&\parallel (NL-\hat{NL}')_{k_0+L} \parallel_{L^2}\\
\leq & Cb_1^{L+1+(1-\delta_0)(1+\eta)+\frac{\alpha}{L}+O\left(\frac{\sigma-s_c}{L}  \right)}(\sqrt{\triangle_r \hat{\mathcal{E}}_{\sigma}}+\sqrt{\triangle_r \hat{\mathcal{E}}_{s_L}}+\underset{1\leq i \leq L}{\text{sup}}b_1^{-i}|\triangle \hat{b}_i|).
\end{array}
\ee
After rescaling and applying Cauchy-Schwarz, this gives the following bound on the nonlinear term's contribution:
\be \label{variete:highsobo:eq:estimation NL}
\begin{array}{r c l}
& \left| \int (w^{(2)}-\hat{w}^{'(2)})\mathcal{L}^{s_L-1}_{\frac{1}{\lambda}}(NL-\hat{NL}')\right|\\
\leq & \frac{Cb_1^{2L+2(1-\delta_0)(1+\eta)+1+\frac{\alpha}{L}+O\left(\frac{\sigma-s_c}{L} \right)}\sqrt{\triangle_r \hat{\mathcal{E}}_{s_L}}}{\lambda^{2(s_L-s_c)+1}}(\underset{1\leq i \leq L}{\text{sup}}b_1^{-i}|\triangle \hat{b}_i|+\sqrt{\triangle_r \hat{\mathcal{E}}_{\sigma}}+\sqrt{\triangle \hat{\mathcal{E}}_{s_L}}).
\end{array}
\ee
$\bullet$ \emph{Time difference terms:} For the small linear term involving $\hat{w}'$ we recall \fref{thetrapped:highsobo:eq:estimation intermediaire L}:
$$
\parallel \hat{L}'_{s_L-1} \parallel_{L^2} \leq Cb_1 \left\Vert \frac{\hat{\varepsilon}^{'(1)}_{s_L}}{1+y^{\delta}}\right\Vert_{L^2}\leq Cb_1 \parallel \varepsilon^{'(1)}_{s_L}\parallel_{L^2}\leq Cb_1^{L+(1-\delta_0)(1+\eta)+1} .
$$
For the linear term, we need the extra assumption \fref{variete:eq:estimation supplementaire varepsilon'} on the higher derivative of $\hat{\varepsilon}'$, it produces:
$$
\parallel \hat{\varepsilon}^{'(2)}_{s_L} \parallel_{L^2}+\parallel \hat{\varepsilon}^{'(1)}_{s_L+1} \parallel_{L^2} \leq Cb_1^{L+(1-\delta_0)(1+\eta)+1}.
$$
The two previous inequalities, with the estimates \fref{thetrapped:highsobo:eq:estimation psib} and \fref{thetrapped:highsobo:eq:estimation NL} we already derived for the non linear and error terms, plus the bound \fref{variete:modulation:eq:dhats (triangle b)} on $|\frac{d\hat{s}'}{ds}-1|$ yield:
\be \label{variete:highsobo:eq:estimation dhats'-1 1}
\begin{array}{r c l}
&|\frac{d\hat{s}'}{ds}-1|\left[ \parallel (\hat{w}^{'(2)} -\frac{1}{\lambda}\tilde{\psi}^{(1)}_{\hat{b}',\frac{1}{\lambda}})_{s_L}\parallel_{L^2} +\parallel (\mathcal{L}\hat{w}^{'(1)}+\frac{1}{\lambda}\tilde{\psi}_{\hat{b}',\frac{1}{\lambda}}^{(2)}-\hat{NL}'-\hat{L}')_{s_L-1}  \parallel_{L^2} \right]\\
\leq & \frac{Cb_1^{L+(1-\delta_0)(1+\eta)+1}}{\lambda^{(s_L-s_c)+1}}(\underset{1\leq i \leq L}{\text{sup}}b_1^{-i}|\triangle \hat{b}_i|+b_1^{L+(1-\delta_0)} \sqrt{\triangle_r \hat{\mathcal{E}}_{s_L}} ).
\end{array}
\ee
This implies that the contribution of the terms involving the difference of the evolution of the renormalized times $\frac{d\hat{s}'}{ds}-1$ in \fref{variete:highsobo:eq:expression generale 1} is:
\be \label{variete:highsobo:eq:estimation dhats'-1}
\begin{array}{r c l}
&\Bigl{|} (\frac{d\hat{s}'}{ds}-1)\int (w^{(1)}-\hat{w}^{'(1)})\mathcal{L}^{s_L} (\hat{w}^{'(2)} -\frac{1}{\lambda}\tilde{\psi}^{(1)}_{\hat{b}',\frac{1}{\lambda}}\\
&+\int (w^{(2)}-\hat{w}^{'(2)})\mathcal{L}^{s_L-1}(-\mathcal{L}\hat{w}^{'(1)}-\frac{1}{\lambda}\tilde{\psi}_{\hat{b}',\frac{1}{\lambda}}^{(2)}+\hat{NL}'+\hat{L}')  \Bigr{|} \\
\leq & \frac{Cb_1^{2L+(1-\delta_0)(2+\frac{3}{2}\eta)+1}}{\lambda^{2(s_L-s_c)+1}}\sqrt{\triangle_r \hat{\mathcal{E}}_{s_L}}(\underset{1\leq i \leq L}{\text{sup}}b_1^{-i}|\triangle \hat{b}_i|+b_1^{L+1-\delta_0} \sqrt{\triangle_r \hat{\mathcal{E}}_{s_L}} ).
\end{array}
\ee
We reach the end of the proof of the first step. We now inject the bounds \fref{variete:highsobo:eq:estimation lineaire} for the linear terms, \fref{variete:highsobo:eq:estimation psib} for the error terms, \fref{variete:highsobo:eq:estimation NL} for the non linear terms and \fref{variete:highsobo:eq:estimation dhats'-1} for the time error term in \fref{variete:highsobo:eq:expression generale 1}, yielding the intermediate equation \fref{variete:highsobo:eq:expression generale 2} claimed in this step 1.\\

\underline{Step 2:} Terms making appear a local part that cannot be estimated directly. The small linear terms and the scale changing terms cannot be estimated directly. The aim of this step is to decompose their contribution into two parts: one that can be bounded directly and the other that requires the study of a Morawetz type quantity, see next Lemma \ref{variete:lem:morawetz}. We claim that \fref{variete:highsobo:eq:expression generale 2} can be transformed into:
\be \label{variete:highsobo:eq:expression derivee step2}
\begin{array}{r c l}
\frac{d}{dt}\left( \frac{\triangle \mathcal{E}_{s_L}}{2\lambda^{2(s_L-s_c)}} \right) &=& \int (w^{(1)}-\hat{w}^{'(1)})\mathcal{L}_{\frac{1}{\lambda}}^{k_0+L+1} \bigl{[}-\frac{1}{\lambda} \tilde{Mod}_{\frac{1}{\lambda}}^{(1)}+\frac{1}{\lambda}\hat{Mod}^{'(1)}_{\frac{1}{\lambda}}  \Bigr{]}\\
&&+\int (w^{(2)}-\hat{w}^{'(2)})\mathcal{L}_{\frac{1}{\lambda}}^{k_0+L} \Bigl{[}-\frac{1}{\lambda} \tilde{Mod}_{\frac{1}{\lambda}}^{(2)}+\frac{1}{\lambda}\hat{Mod}_{\frac{1}{\lambda}}^{'(2)}\Bigr{]}\\
&&+O\Bigl{(}\frac{Cb_1^{2L+2(1-\delta_0)(1+\frac{\eta}{2})+1}}{\lambda^{2(s_L-s_c)+1}}(b_1^{\frac{\eta}{2}(1-\delta_0)}\sqrt{\triangle_r \hat{\mathcal{E}}_{s_L}}\underset{1\leq i \leq L}{\text{sup}}b_1^{-i}|\triangle \hat{b}_i|\\
&&+C(N)\triangle_r \hat{\mathcal{E}}_{s_L,\text{loc}} +\frac{C}{N^{\frac{\delta_0}{2}}}(\triangle_r \hat{\mathcal{E}}_{s_L}+\triangle_r \hat{\mathcal{E}}_{\sigma}) \Bigr{)},\\
\end{array}
\ee
the constant $C$ being independent of $N$. We now prove this identity by establishing bounds on the small linear terms and the scale changing terms in \fref{variete:highsobo:eq:expression generale 2}. \\
$\bullet$ \emph{The small linear terms:} We start by decomposing:
\be \label{variete:highsobo:eq:estimation L expression}
\begin{array}{r c l}
L-\hat{L}'&=&p(\tilde{Q}_{b}^{(1)(p-1)}-Q^{p-1})\varepsilon^{(1)}-p(\tilde{Q}_{\hat{b}}^{(1)(p-1)}-Q^{p-1})\hat{\varepsilon}^{'(1)}\\
&=&p(\tilde{Q}_{b}^{(1)(p-1)}-\tilde{Q}_{\hat{b}'}^{(1)(p-1)})\varepsilon^{(1)}+p(\tilde{Q}_{\hat{b}}^{(1)(p-1)}-Q^{p-1})(\varepsilon^{(1)}-\hat{\varepsilon}^{'(1)})
\end{array}
\ee
and we now estimate each term. For the first term in \fref{variete:highsobo:eq:estimation L expression}, from the bound \fref{variete:eq:asymptotique diff Qp-j} on $\tilde{Q}_{b}^{(1)(p-1)}-\tilde{Q}_{\hat{b}'}^{(1)(p-1)}$ one gets:
\be \label{variete:highsobo:eq:estimation L 1}
\parallel ((\tilde{Q}_{b}^{(1)(p-1)}-\tilde{Q}_{\hat{b}'}^{(1)(p-1)})\varepsilon^{(1)})_{s_L-1} \parallel_{L^2}\leq b_1^{L+1+(1-\delta_0)(1+\eta)}\underset{1\leq i \leq L}{\text{sup}}b_1^{-i}|\triangle \hat{b}_i|.
\ee
Now for the second term in \fref{variete:highsobo:eq:estimation L expression}, using the same reasoning we used to prove \fref{thetrapped:highsobo:eq:estimation intermediaire L} we obtain: 
$$
\parallel ((\tilde{Q}_{\hat{b}}^{(1)(p-1)}-Q^{p-1})(\varepsilon^{(1)}-\hat{\varepsilon}^{'(1)}))_{s_L-1}  \parallel_{L^2}^2 \leq C b_1^2 \left\Vert \frac{(\varepsilon^{(1)}-\hat{\varepsilon}^{'(1)})_{s_L}}{1+y^{\frac{\delta_0}{2}}} \right\Vert_{L^2}^2 .
$$
By cutting at a distance $N$ from the origin one gets:
\be \label{variete:highsobo:eq:estimation L 2}
\left\Vert ((\tilde{Q}_{\hat{b}}^{(1)(p-1)}-Q^{p-1})(\varepsilon^{(1)}-\hat{\varepsilon}^{'(1)}))_{s_L-1} \right\Vert_{L^2}^2 \leq  \frac{b_1^2C}{N^{\delta_0}}\triangle \hat{\mathcal{E}}_{s_L}+C(N)b_1^2\triangle \hat{\mathcal{E}}_{s_L,\text{loc}}.
\ee
We now come back to the expression \fref{variete:highsobo:eq:estimation L expression} for which we have found bounds in \fref{variete:highsobo:eq:estimation L 1} and \fref{variete:highsobo:eq:estimation L 2}, yielding the following size for the small linear terms:
\be \label{variete:highsobo:eq:estimation L 1}
\begin{array}{r c l}
&\parallel (L-\hat{L}')_{s_L-1}\parallel_{L^2}\\
\leq & Cb_1^{L+(1-\delta_0)(1+\frac{\eta}{2}\eta)+1}\left (\frac{\sqrt{\triangle_r \hat{\mathcal{E}}_{s_L}}}{N^{\frac{\delta_0}{2}}}+C(N)\sqrt{\triangle_r \hat{\mathcal{E}}_{s_L,\text{loc}}}+b_1^{\frac{\eta}{2}(1-\delta_0)}\underset{1\leq i \leq L}{\text{sup}}b_1^{-i}|\triangle \hat{b}_i| \right).
\end{array}
\ee
After rescaling, applying Cauchy-Schwarz inequality the contribution of the small linear terms can be split into:
\be \label{variete:highsobo:eq:estimation L}
\begin{array}{r c l}
&\left| \int (w^{(2)}-\hat{w}^{'(2)})\mathcal{L}_{\frac{1}{\lambda}}^{k_0+L} (L-\hat{L}')\right|\\
\leq& \frac{Cb_1^{2L+2(1-\delta_0)(1+\frac{1}{2}\eta)+1}\left(b_1^{\frac{\eta}{2}(1-\delta_0)}\sqrt{\triangle_r \hat{\mathcal{E}}_{s_L}} \underset{1\leq i \leq L}{\text{sup}}b_1^{-i}|\triangle \hat{b}_i|+\frac{1}{N^{\frac{\delta_0}{2}}}\triangle_r \hat{\mathcal{E}}_{s_L}+C(N)\triangle_r \hat{\mathcal{E}}_{s_L,\text{loc}}\right)}{\lambda^{2(s_L-s_c)+1}}.
\end{array}
\ee
$\bullet$ \emph{The scale changing term:} Using verbatim the same methodology we used to prove \fref{thetrapped:highsobo:eq:estimation lambdaL} we get:
\be \label{variete:highsobo:eq:estimation lambdaL}
\begin{array}{r c l}
&\Bigl{|} \sum_{i=1}^{k_0+L+1} \int (w^{(1)}-\hat{w}^{'(1)}) \mathcal{L}_{\frac{1}{\lambda}}^{i-1}\frac{d}{dt}\left( \mathcal{L}_{\frac{1}{\lambda}}\right)\mathcal{L}_{\frac{1}{\lambda}}^{k_0+L+1-i} (w^{(1)}-\hat{w}^{'(1)}) \\
&+\sum_{i=1}^{k_0+L} \int (w^{(2)}-\hat{w}^{'(2)}) \mathcal{L}_{\frac{1}{\lambda}}^{i-1}\frac{d}{dt}\left( \mathcal{L}_{\frac{1}{\lambda}}\right) \mathcal{L}_{\frac{1}{\lambda}}^{k_0+L-i} (w^{(2)}-\hat{w}^{'(2)}) \Bigr{|} \\
\leq &\frac{C(M)b_1^{2L+2(1-\delta_0)(1+\frac{\eta}{2})+1}}{\lambda^{2(s_L-s_c)+1}} \left(\frac{\triangle_r \hat{\mathcal{E}}_{s_L}}{N^{\frac{\delta_0}{2}}} +C(N) \triangle_r \hat{\mathcal{E}}_{s_L,\text{loc}} \right) ,
\end{array}
\ee
Coming back to the identity \fref{variete:highsobo:eq:expression generale 2} we derived in step 1, and injecting the bounds \fref{variete:highsobo:eq:estimation L}  on the small linear terms and \fref{variete:highsobo:eq:estimation lambdaL} on the scale changing terms gives the identity \fref{variete:highsobo:eq:expression derivee step2} that we had to prove in this step 2.\\

\underline{Step 3:} The modulation term. We need to find a proper integration by parts in time to deal with the modulation terms. We claim that:
\be \label{variete:highsobo:eq:estimation mod}
\begin{array}{r c l}
&\int (w^{(1)}-\hat{w}^{'(1)})\mathcal{L}_{\frac{1}{\lambda}}^{s_L} \frac{(\tilde{Mod}^{(1)}-\hat{Mod}^{'(1)})_{\frac{1}{\lambda}}}{\lambda} + (w^{(2)}-\hat{w}^{'(2)})\mathcal{L}_{\frac{1}{\lambda}}^{s_L-1}\frac{(\tilde{Mod}^{(2)}-\hat{Mod}^{'(2)})_{\frac{1}{\lambda}}}{\lambda} \\
=& \partial_t \left[O \left( \frac{b_1^{2L+2(1-\delta_0)(1+\eta)}}{\lambda^{2(s_L-s_c)}}(\triangle_r \hat{\mathcal{E}}_{s_L}+|\underset{1\leq i \leq L}{\text{sup}}b_1^{-i}|\triangle \hat{b}_i||^2) \right)\right]\\
&+O\left( \frac{b_1^{2L+1+2(1-\delta_0)(1+\eta)}}{\lambda^{2(s_L-s_c)+1}}(\triangle_r \hat{\mathcal{E}}_{s_L}+|\underset{1\leq i \leq L}{\text{sup}}b_1^{-i}|\triangle \hat{b}_i||^2) \right).
\end{array}
\ee
Once this bound is proven, we finish the proof of the proposition by injecting it in \fref{variete:highsobo:eq:expression derivee step2}. Therefore to finish to proof we now prove \fref{variete:highsobo:eq:estimation mod}. We recall that $\triangle \hat{\bos{Mod}}_i$ is defined by \fref{variete:eq:def triangle hatmodi} and \fref{variete:eq:def triangle hatmod0}, and that $ \tilde{\bos{Mod}}-\hat{\bos{Mod}}'=\sum_{i=0}^{L} \triangle \hat{\bos{Mod}}_i $. First we find a direct bound for the all the modulation terms other than the $L$-th. Let $i$ denote an even integer, $1\leq i \leq L-1$. The fact that we assume $i$ even is just to have a precise location for the profiles. In that case one decompose:
\be \label{variete:highsobo:modulation:eq:leqL-1 expression}
\begin{array}{l l l l}
&\triangle \hat{\bos{Mod}}_i:=\bos{A}_1+\bos{A}_2\\
=&(b_{i,s}+(i-\alpha)b_1b_i-b_{i+1}-(\hat{b}'_{i,s}+\frac{d\hat{s}'}{ds}((i-\alpha)\hat{b}'_1\hat{b}'_i-\hat{b}'_{i+1})))\chi_{B_1}(\bos{T}_i+\underset{j=i+1}{\overset{L+2}{\sum}}\frac{\partial \bos{S}_{j}}{\partial b_i})\\
&+(\hat{b}'_{i,s}+\frac{d\hat{s}'}{ds}((i-\alpha)\hat{b}'_1\hat{b}'_i-\hat{b}'_{i+1}))(\chi_{B_1}(\bos{T}_i+\underset{j=i+1}{\overset{L+2}{\sum}}\frac{\partial \bos{S}_{j}}{\partial b_i})-\chi_{\hat{B}'_1}(\bos{T}_i+\underset{j=i+1}{\overset{L+2}{\sum}}\frac{\partial \hat{\bos{S}}'_{j}}{\partial b_i})).
\end{array}
\ee
For the first term of the previous equation, we employ the bound \fref{variete:modulation:eq:leqL-1 (triangle b)} on the modulation of the parameters $b_i$ for $1\leq i \leq L-1$, yielding:
\be \label{variete:highsobo:modulation:eq:leqL-1 1}
\parallel (A_1^{(1)})_{s_L} \parallel_{L^2}+\parallel (A_1^{(2)})_{s_L-1} \parallel_{L^2}\leq Cb_1^{L+3-\delta_0+O(\eta)}(\underset{1\leq i \leq L}{\text{sup}}b_1^{-i}|\triangle \hat{b}_i|+\sqrt{\triangle_r \hat{\mathcal{E}}_{s_L}}).
\ee
For the second term \fref{variete:eq:difference polynomes parametres} and \fref{variete:eq:formulation integrale diff chi} imply that:
$$
\begin{array}{r c l}
&\left\Vert \left(\chi_{B_1}(\bos{T}_i+\underset{j=i+1 \ \text{even}}{\overset{L+2}{\sum}}\frac{\partial \bos{S}_{j}}{\partial b_i})-\chi_{\hat{B}'_1}(\bos{T}_i+\underset{j=i+1 \ \text{even} }{\overset{L+2}{\sum}}\frac{\partial \hat{\bos{S}}'_{j}}{\partial b_i})\right)_{s_L} \right\Vert_{L^2}\\
&+\left\Vert \left(\chi_{B_1}\underset{j=i+1 \ \text{odd}}{\overset{L+2}{\sum}}\frac{\partial \bos{S}_{j}}{\partial b_i}-\chi_{\hat{B}'_1}\underset{j=i+1 \ \text{odd}}{\overset{L+2}{\sum}}\frac{\partial \hat{\bos{S}}'_{j}}{\partial b_i}\right)_{s_L-1} \right\Vert \\
\leq& Cb_1 \underset{1\leq i \leq L}{\text{sup}}b_1^{-i}|\triangle \hat{b}_i|.
\end{array}
$$
We then use the primary bound \fref{thetrapped:eq:modulation leq L-1} on the modulation to find that:
\be \label{variete:highsobo:modulation:eq:leqL-1 2}
\parallel (A_2^{(1)})_{s_L} \parallel_{L^2}+\parallel (A_2^{(2)})_{s_L-1} \parallel_{L^2}\leq Cb_1^{L+3-\delta_0+O(\eta)}\underset{1\leq i \leq L}{\text{sup}}b_1^{-i}|\triangle \hat{b}_i|.
\ee
We come back to the decomposition  \fref{variete:highsobo:modulation:eq:leqL-1 expression} for which we have found bounds for the terms in the right hand side in \fref{variete:highsobo:modulation:eq:leqL-1 1} and \fref{variete:highsobo:modulation:eq:leqL-1 2}, in the case where $i$ is even. Now if $i$ is odd or $i=0$ the very same computations show that they still hold, yielding:
\be \label{variete:highsobo:modulation:eq:leqL-1 1}
\begin{array}{r c l}
&\parallel \sum_{i=0}^{L-1} (\triangle \hat{Mod}_i^{(1)})_{s_L}\parallel_{L^2}+ \parallel \sum_{i=0}^{L-1} (\triangle \hat{Mod}_i^{(2)})_{s_L-1}\parallel_{L^2}\\
\leq & Cb_1^{L+3-\delta_0+O(\eta)}(\underset{1\leq i \leq L}{\text{sup}}b_1^{-i}|\triangle \hat{b}_i|+\sqrt{\triangle_r \hat{\mathcal{E}}_{s_L}}).
\end{array}
\ee
The previous bound \fref{variete:highsobo:modulation:eq:leqL-1 1} then imply the intermediate identity:
\be \label{variete:highsobo:modulation:expression}
\begin{array}{r c l}
&\int (w^{(1)}-\hat{w}^{'(1)})\mathcal{L}_{\frac{1}{\lambda}}^{s_L} \frac{(\tilde{Mod}^{(1)}-\hat{Mod}^{'(1)})_{\frac{1}{\lambda}}^{(1)}}{\lambda} + (w^{(2)}-\hat{w}^{'(2)})\mathcal{L}_{\frac{1}{\lambda}}^{s_L-1}\frac{(\tilde{Mod}^{(2)}-\hat{Mod}^{'(2)})_{\frac{1}{\lambda}}}{\lambda}\\
=& \frac{1}{\lambda} \int  (w^{(1)}-\hat{w}^{'(1)})\mathcal{L}_{\frac{1}{\lambda}}^{s_L}\triangle \hat{Mod}_{L,\frac{1}{\lambda}}^{(1)}+ (w^{(2)}-\hat{w}^{'(2)})\mathcal{L}_{\frac{1}{\lambda}}^{s_L-1}\triangle \hat{Mod}_{L,\frac{1}{\lambda}}^{(2)}  \\
&+O\left( \frac{b_1^{2L+2(1-\delta_0)+2+O(\eta)}}{\lambda^{2(s_L-s_c)+1}}(\sqrt{\triangle_r \hat{\mathcal{E}}_{s_L}}\underset{1\leq i \leq L}{\text{sup}}b_1^{-i}|\triangle \hat{b}_i|+\triangle_r \hat{\mathcal{E}}_{s_L}) \right).
\end{array}
\ee
We now have to deal with the last modulation term. We know by the improved bound for the evolution of $\triangle \hat{b}_L$, see Lemma \ref{variete:lem:improved modulation} that 
$$
b_{L,s}+(L-\alpha)b_1b_L-(\hat{b}'_{L,s}+(L-\alpha)\hat{b}'_1\hat{b}'_L)$$
is small enough up to the derivative in time of the projection of $\bos{\varepsilon}-\hat{\bos{\varepsilon}}'$ onto $\bos{H}^{*L}\chi_{B_1}\bos{\Lambda Q}$. We claim the following identity:
\be \label{variete:highsobo:modulation:L}
\begin{array}{r c l}
&\frac{1}{\lambda}\int (w^{(1)}-\hat{w}^{'(1)})\mathcal{L}_{\frac{1}{\lambda}}^{s_L}\triangle \hat{Mod}^{(1)}_{L,\frac{1}{\lambda}}+\frac{1}{\lambda}\int (w^{(2)}-\hat{w}^{'(2)})\mathcal{L}_{\frac{1}{\lambda}}^{s_L-1}\triangle \hat{Mod}^{(2)}_{L,\frac{1}{\lambda}}\\
=&\partial_t \left[O \left( \frac{b_1^{2L+2(1-\delta_0)(1+\eta)}}{\lambda^{2(s_L-s_c)}}(\triangle_r \hat{\mathcal{E}}_{s_L}+|\underset{1\leq i \leq L}{\text{sup}}b_1^{-i}|\triangle \hat{b}_i||^2 \right)\right]\\
&+O\left( \frac{b_1^{2L+1+2(1-\delta_0)(1+\eta)}}{\lambda^{2(s_L-s_c)+1}}(\triangle_r \hat{\mathcal{E}}_{s_L}+|\underset{1\leq i \leq L}{\text{sup}}b_1^{-i}|\triangle \hat{b}_i||^2 \right).
\end{array}
\ee
Once this identity is proven, we can combine it with \fref{variete:highsobo:modulation:expression} to obtain the identity \fref{variete:highsobo:eq:estimation mod} we claimed in this step 3. The rest of the proof is now devoted to the proof of \fref{variete:highsobo:modulation:L}. We define two radiations:
$$ 
\begin{array}{r c l}
\bos{\xi} &:=&  \frac{\langle \bos{H}^{L}(\bos{\varepsilon}-\hat{\bos{\varepsilon}}') , \chi_{B_0} \bos{\Lambda} \bos{Q}\rangle-  \hat{b}_L'\int \chi_{B_0}\Lambda^{(1)} Q\left( \frac{\partial S_{L+2}}{\partial b_L}-\frac{\partial \hat{S}'_{L+2}}{\partial b_L} \right)_{L-1}}{\Bigl\langle  \chi_{B_0}\Lambda^{(1)}Q,\Lambda^{(1)}Q+(-1)^{\frac{L-1}{2}}\left(\frac{\partial S_{L+2}^{(2)}}{\partial b_L}\right)_{L-1} \Bigr\rangle} \\
&&\times \left[\chi_{B_1}\left(\bos{T}_L+\frac{\partial \bos{S}_{L+1}}{\partial b_L}+\frac{\partial \bos{S}_{L+2}}{\partial b_L} \right)\right]_{\frac{1}{\lambda}}, \\
\end{array}
$$
$$
\begin{array}{r c l}
\bos{\xi}' &:=&  \frac{\langle \bos{H}^{L}(\hat{\bos{\varepsilon}}' , \chi_{\hat{B}'_0} \bos{\Lambda} \bos{Q}\rangle}{\Bigl\langle  \chi_{\hat{B}'_0} \Lambda^{(1)} Q, \Lambda^{(1)} Q +(-1)^{\frac{L-1}{2}}\left(\frac{\partial \hat{S}'_{L+2}}{\partial b_L}\right)_{L-1}\Bigr\rangle} \Bigl{[}\chi_{B_1}\left( \bos{T}_L+\frac{\partial \bos{S}_{L+1}}{\partial b_L}+\frac{\partial \bos{S}_{L+2}}{\partial b_L} \right)\\
&&-\chi_{\hat{B}'_1}\left( \bos{T}_L+\frac{\partial \hat{\bos{S}}'_{L+1}}{\partial b_L}+\frac{\partial \hat{\bos{S}}'_{L+2}}{\partial b_L} \right)\Bigr{]}_{\frac{1}{\lambda}} .
\end{array}
$$
They enjoy the bound for $i=0,1$:
\be \label{variete:highsobo:modulation:estimation radiations}
\begin{array}{r c l}
&\parallel (\xi^{(1)}+\xi^{'(1)})_{s_L+i}\parallel_{L^2}+\parallel (\xi^{(2)}+\xi^{'(2)})_{s_L-1+i}\parallel_{L^2}\\ 
\leq &C\frac{b_1^{L+(1-\delta_0)(1+\frac{3}{2}\eta)+i}}{\lambda^{s_L-s_c+i}}(\sqrt{\triangle_r \hat{\mathcal{E}}_{s_L}}+\underset{1\leq i \leq L}{\text{sup}}b_1^{-i}|\triangle \hat{b}_i|).
\end{array}
\ee
From \fref{variete:eq:improved modulation} and \fref{thetrapped:eq:improved modulation} one has:
\be \label{variete:highsobo:modulation:derivee temporelle radiations}
\partial_t (\bos{\xi}+\bos{\xi}')= \frac{1}{\lambda}\triangle \hat{\bos{Mod}}_{L,\frac{1}{\lambda}}+\bos{R},
\ee
where $\bos{R}$ is a remainder satisfying:
\be \label{variete:highsobo:modulation:estimation reste}
\parallel R^{(1)}_{s_L}\parallel_{L^2}+\parallel R^{(2)}_{s_L-1}\parallel_{L^2}\leq \frac{Cb_1^{L+(1+\frac{3}{2}\eta)(1-\delta_0)+1}}{\lambda^{s_L-s_c+1}}(\sqrt{\triangle_r \hat{\mathcal{E}_{s_L}}}+\underset{1\leq i \leq L}{\text{sup}}b_1^{-i}|\triangle \hat{b}_i|)
\ee
In the time evolution of $\bos{w}-\hat{\bos{w}}'$, \fref{variete:eq:evolution difference w}, we found a bound for almost all the terms in the right hand side in \fref{variete:eq:estimation high difference psib}, \fref{variete:highsobo:eq:estimation NL 1}, \fref{variete:highsobo:eq:estimation dhats'-1 1}, \fref{variete:highsobo:eq:estimation L 1} and \fref{variete:highsobo:modulation:eq:leqL-1 1}. With the identity \fref{variete:highsobo:modulation:derivee temporelle radiations} and the bound \fref{variete:highsobo:modulation:estimation reste} it gives the following identity:
$$
\begin{array}{r c l}
\partial_t(\bos{w}-\hat{\bos{w}}')+\bos{H}_{\frac{1}{\lambda}}(\bos{w}-\hat{\bos{w}}')&=&-\frac{1}{\lambda}\triangle \hat{\bos{Mod}}_{L,\frac{1}{\lambda}}+\bos{R}'\\
&=&-\partial_t(\bos{\xi}+\bos{\xi}')+\bos{R}'-\bos{R},
\end{array}
$$
$\bos{R}'$ being a remainder with the following size:
$$
\parallel R^{'(1)}_{s_L}\parallel_{L^2}+\parallel R^{'(2)}_{s_L-1}\parallel_{L^2}\leq C\frac{b_1^{L+1+(1-\delta_0)(1+\frac{\eta}{2})}}{\lambda^{s_L-s_c+1}}(\sqrt{\triangle_r \hat{\mathcal{E}}_{s_L}}+\underset{1\leq i \leq L}{\text{sup}}b_1^{-i}|\triangle \hat{b}_i|).
$$
With the previous relations, we perform the following integration by parts in time:
$$
\begin{array}{r c l}
&\frac{1}{\lambda}\int (w^{(1)}-\hat{w}^{'(1)})\mathcal{L}_{\frac{1}{\lambda}}^{s_L}\triangle \hat{Mod}^{(1)}_{L,\frac{1}{\lambda}}+\frac{1}{\lambda}\int (w^{(2)}-\hat{w}^{'(2)})\mathcal{L}_{\frac{1}{\lambda}}^{s_L-1}\triangle \hat{Mod}^{(2)}_{L,\frac{1}{\lambda}}\\
=&\partial_t\Bigl{[} \int (w^{(1)}-\hat{w}^{'(1)})\mathcal{L}_{\frac{1}{\lambda}}^{s_L}(\xi^{(1)}+\xi^{'(1)})+ \int (w^{(2)}-\hat{w}^{'(2)})\mathcal{L}_{\frac{1}{\lambda}}^{s_L-1}(\xi^{(2)}+\xi^{'(2)})\\
&+\frac{1}{2}\int (\xi^{(1)}+\xi^{'(1)})\mathcal{L}_{\frac{1}{\lambda}}^{s_L}(\xi^{(1)}+\xi^{'(1)})+ \frac{1}{2}\int (\xi^{(2)}+\xi^{'(2)})\mathcal{L}_{\frac{1}{\lambda}}^{s_L-1}(\xi^{(2)}+\xi^{'(2)}) \Bigr{]}\\
&- \int (w^{(1)}-\hat{w}^{'(1)})\partial_t(\mathcal{L}_{\frac{1}{\lambda}}^{s_L})(\xi^{(1)}+\xi^{'(1)})+ \int (w^{(2)}-\hat{w}^{'(2)})\partial_t(\mathcal{L}_{\frac{1}{\lambda}}^{s_L-1})(\xi^{(2)}+\xi^{'(2)})\\
&-\frac{1}{2}\int (\xi^{(1)}+\xi^{'(1)})\partial_t(\mathcal{L}_{\frac{1}{\lambda}}^{s_L})(\xi^{(1)}+\xi^{'(1)})+ \frac{1}{2}\int (\xi^{(2)}+\xi^{'(2)})\partial_t(\mathcal{L}_{\frac{1}{\lambda}}^{s_L-1})(\xi^{(2)}+\xi^{'(2)})\\
&+O\left( \frac{b_1^{2L+1+2(1-\delta_0)(1+\eta)}}{\lambda^{2(s_L-s_c)+1}}(\triangle_r \hat{\mathcal{E}}_{s_L}+|\underset{1\leq i \leq L}{\text{sup}}b_1^{-i}|\triangle \hat{b}_i||^2) \right).
\end{array}
$$
Using the degeneracy of the derivative in time of the potential \fref{linearized:eq:degenerescence scaling} one has the bound for the third and fourth terms in the previous identity:
$$
\begin{array}{r c l}
&\Bigl{|} \int (w^{(1)}-\hat{w}^{'(1)})\partial_t(\mathcal{L}_{\frac{1}{\lambda}}^{s_L})(\xi^{(1)}+\xi^{'(1)})+ \int (w^{(2)}-\hat{w}^{'(2)})\partial_t(\mathcal{L}_{\frac{1}{\lambda}}^{s_L-1})(\xi^{(2)}+\xi^{'(2)})\\
&-\frac{1}{2}\int (\xi^{(1)}+\xi^{'(1)})\partial_t(\mathcal{L}_{\frac{1}{\lambda}}^{s_L})(\xi^{(1)}+\xi^{'(1)})+ \frac{1}{2}\int (\xi^{(2)}+\xi^{'(2)})\partial_t(\mathcal{L}_{\frac{1}{\lambda}}^{s_L-1})(\xi^{(2)}+\xi^{'(2)}) \bigr{|}\\
\leq& C  \frac{b_1^{2L+1+2(1-\delta_0)(1+\eta)}}{\lambda^{2(s_L-s_c)+1}}(\triangle_r \hat{\mathcal{E}}_{s_L}+|\underset{1\leq i \leq L}{\text{sup}}b_1^{-i}|\triangle \hat{b}_i||^2).
\end{array}
$$
Hence we can write:
$$
\begin{array}{r c l}
&\frac{1}{\lambda}\int (w^{(1)}-\hat{w}^{'(1)})\mathcal{L}_{\frac{1}{\lambda}}^{s_L}\triangle \hat{Mod}^{(1)}_{L,\frac{1}{\lambda}}+\frac{1}{\lambda}\int (w^{(2)}-\hat{w}^{'(2)})\mathcal{L}_{\frac{1}{\lambda}}^{s_L-1}\triangle \hat{Mod}^{(2)}_{L,\frac{1}{\lambda}}\\
=&\partial_t\Bigl{[} \int (w^{(1)}-\hat{w}^{'(1)})\mathcal{L}_{\frac{1}{\lambda}}^{s_L}(\xi^{(1)}+\xi^{'(1)})+ \int (w^{(2)}-\hat{w}^{'(2)})\mathcal{L}_{\frac{1}{\lambda}}^{s_L-1}(\xi^{(2)}+\xi^{'(2)})\\
&+\frac{1}{2}\int (\xi^{(1)}+\xi^{'(1)})\mathcal{L}_{\frac{1}{\lambda}}^{s_L}(\xi^{(1)}+\xi^{'(1)})+ \frac{1}{2}\int (\xi^{(2)}+\xi^{'(2)})\mathcal{L}_{\frac{1}{\lambda}}^{s_L-1}(\xi^{(2)}+\xi^{'(2)}) \Bigr{]}\\
&+O\left( \frac{b_1^{2L+1+2(1-\delta_0)(1+\eta)}}{\lambda^{2(s_L-s_c)+1}}(\triangle_r \hat{\mathcal{E}}_{s_L}+|\underset{1\leq i \leq L}{\text{sup}}b_1^{-i}|\triangle \hat{b}_i||^2 \right).
\end{array}
$$
We now take the previous equation, inject the bound \fref{variete:highsobo:modulation:estimation radiations} for the terms integrated in time, it gives the intermediate identity \fref{variete:highsobo:modulation:L} we had to prove. 

\end{proof}


To control the local term in \fref{variete:eq:high sobo}, we study a Morawetz type quantity localized near the origin. We recall that $\phi_A$ is defined by \fref{thetrapped:eq:def phiA}. We define the following quantity:
\begin{equation}\label{thetrapped:eq:definition M}
\begin{array}{r c l}
\triangle \mathcal{M}&=& -\int \left[\nabla \phi_A .\nabla (\varepsilon^{(1)}-\hat{\varepsilon}^{'(1)})_{s_L-1}+\frac{(1-\delta)\Delta \phi_A}{2}(\varepsilon^{(1)}-\hat{\varepsilon}^{'(1)})_{s_L-1}\right] \\
&&\times (\varepsilon^{(2)}-\hat{\varepsilon}^{'(2)})_{s_L-1} .
\end{array}
\end{equation}
$\triangle \mathcal{M}$ is controlled by the high Sobolev norm of the difference:
\begin{equation}\label{variete:eq:morawetz controle par high sobo}
|\triangle \mathcal{M} |\leq  C(A,M) \triangle \mathcal{E}_{s_L}
\end{equation}

At the linear level of the dynamics \fref{variete:eq:evolution difference epsilon} of $\bos{\varepsilon}-\hat{\bos{\varepsilon}}'$, this quantity controls the local term $\triangle \hat{\mathcal{E}}_{s_L,\text{loc}}$. Indeed, from Lemma \ref{thetrapped:lem:control du morawetz} one has:

\begin{equation} \label{variete:morawetz:eq:controle lineaire}
\begin{array}{r c l}
&  \int [\nabla \phi_A .\nabla (\varepsilon^{(1)}-\hat{\varepsilon}^{'(1)})_{s_L-1}+\frac{(1-\delta)\Delta \phi_A}{2}(\varepsilon^{(1)}-\hat{\varepsilon}^{'(1)})_{s_L-1}] (\mathcal{L}(\varepsilon^{(1)}-\hat{\varepsilon}^{'(1)})_{s_L-1}) \\
&-\int [\nabla \phi_A .\nabla (\varepsilon^{(2)}-\hat{\varepsilon}^{'(2)})_{s_L-1}+\frac{(1-\delta)\Delta \phi_A}{2}(\varepsilon^{(2)}-\hat{\varepsilon}^{'(2)})_{s_L-1}] (\varepsilon^{(2)}-\hat{\varepsilon}^{'(2)})_{s_L-1}\\
\geq &\frac{\delta}{2N^{\delta}} \triangle \mathcal{E}_{s_L,loc} -\frac{C(M)}{A^{\delta}}\triangle \mathcal{E}_{s_L} .
\end{array}
\end{equation}

This control remains in the full non linear equation. We have the following result:

\begin{lemma}[Control of the local term by a Morawetz type identity]\label{variete:lem:morawetz}

One has the following lower bound on the evolution of $\triangle \mathcal{M}$:

\begin{equation} \label{variete:eq:morawetz}
\begin{array}{r c l}
\frac{d}{ds}\triangle \mathcal{M}&\geq & \frac{\delta}{2N^{\delta}} \triangle \mathcal{E}_{s_L,\text{loc}}-\frac{C(M)}{A^{\delta}}\triangle \mathcal{E}_{s_L}\\
&& -C(A)\sqrt{\triangle \mathcal{E}_{s_L}}b_1^{L+1+(1-\delta_0)+O(\eta)}(\sqrt{\triangle_r \mathcal{E}_{\sigma}}+\underset{1\leq i \leq L}{\text{sup}}b_1^{-i}|\triangle \hat{b}_i|) .
\end{array}
\end{equation}

\end{lemma}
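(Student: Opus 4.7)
The strategy is to adapt the proof of Proposition \ref{thetrapped:pr:morawetz} to the difference setting, replacing everywhere $\bos{\varepsilon}$ by $\bos{\varepsilon}-\hat{\bos{\varepsilon}}'$ and making crucial use of the preparatory estimates already established for the difference. First I will expand $\frac{d}{ds} \triangle \mathcal{M}$ using the evolution equation \fref{variete:eq:evolution difference epsilon} for $\bos{\varepsilon}-\hat{\bos{\varepsilon}}'$, which produces a linear contribution, a scale-change contribution, an error contribution from $\tilde{\bos{\psi}}_b - \frac{d\hat s'}{ds}\hat{\bos{\psi}}_{\hat b'}$, a modulation contribution from $-\tilde{\bos{Mod}}+\hat{\bos{Mod}}'$, small linear and nonlinear contributions, plus a "time-error" contribution proportional to $(1-\frac{d\hat s'}{ds})\bos{H}(\hat{\bos{\varepsilon}}')$. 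The linear contribution gives exactly the positive lower bound $\frac{\delta}{2N^{\delta}}\triangle \hat{\mathcal{E}}_{s_L,\text{loc}} - \frac{C(M)}{A^{\delta}} \triangle \hat{\mathcal{E}}_{s_L}$ via \fref{variete:morawetz:eq:controle lineaire}, so the task reduces to showing that all remaining contributions are absorbed in the RHS of \fref{variete:eq:morawetz}.

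The crucial observation is that $\phi_A$ is compactly supported in $y \leq 2A$, and in the trapped regime $A \ll B_0 \ll B_1$, so that in all the integrations we are in the "central zone" where $\chi_{B_1} = \chi_{\hat{B}_1'} = 1$ and the improved estimates apply. Concretely, for the scale-change term I use $|\lambda_s/\lambda| \lesssim b_1$ together with the coercivity of $\triangle \hat{\mathcal{E}}_{s_L}$ on compacts, gaining a factor $b_1$ which is negligible for $b_1$ small. For the error term I apply the pointwise bound \fref{variete:eq:asymptotique diff psib leqB0} of Lemma \ref{variete:lem:asymptotique differences de profils yleqB0} on $\psi_b - \psi_{\hat b'}$ (valid precisely in the zone $y \leq 2B_0$), yielding a contribution of order $C(A) \sqrt{\triangle \hat{\mathcal{E}}_{s_L}}\, b_1^{L+3}\sup b_1^{-i}|\triangle \hat b_i|$, which fits. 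For the small linear term $\bos{L}-\hat{\bos{L}}'$ I split as in \fref{variete:highsobo:eq:estimation L expression} into $(\tilde Q_b^{p-1}-\tilde Q_{\hat b'}^{p-1})\varepsilon^{(1)}$ and $(Q^{p-1}-\tilde Q_{\hat b'}^{p-1})(\varepsilon^{(1)}-\hat\varepsilon^{'(1)})$, using \fref{variete:eq:asymptotique diff Qp-j} for the former and $|Q^{p-1}-\tilde Q_{\hat b'}^{p-1}| \lesssim b_1$ on compacts for the latter. For the nonlinear contribution, since all integrals are localized to $y \leq 2A$, I use pointwise bounds together with the $L^\infty$ estimate on $\varepsilon^{(1)}$ from Lemma \ref{annexe:lem:interpolation varepsilon} and the factorization $\varepsilon^{(1)k}-\hat\varepsilon^{'(1)k} = (\varepsilon^{(1)}-\hat\varepsilon^{'(1)})\sum \varepsilon^{(1)i}\hat\varepsilon^{'(1)(k-1-i)}$ to produce a bound with gain $b_1$ over the Morawetz control.

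The modulation term is handled via \fref{variete:eq:def triangle hatmodi}: since we are in the zone $y \leq 2A$, the profiles $\chi_{B_1}(T_L + \partial_{b_L}S_{L+2})$ and their analogues coincide with the (tame) profiles $T_L$ etc. without the cutoff. I split each $\triangle \hat{\bos{Mod}}_i$ as in \fref{variete:highsobo:modulation:eq:leqL-1 expression} into a part where the coefficient difference is controlled by Lemma \ref{variete:lem:modulation diff} and a part where the profile difference is controlled by \fref{variete:eq:difference polynomes parametres}; here the exact power obtained fits the RHS since $A$ is fixed. The main obstacle is the time-error term $(1-\frac{d\hat s'}{ds})\bos{H}(\hat{\bos{\varepsilon}}')$, since $\hat{\bos{\varepsilon}}'$ itself has no smallness in the $s_L-1$ adapted norm on compacts—only the modulation estimate \fref{variete:modulation:eq:dhats (triangle b)} provides the gain. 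Combining it with Cauchy--Schwarz and coercivity on the compact support of $\phi_A$ yields the factor $b_1^{L+(1-\delta_0)(1+\eta/2)}(\sqrt{\triangle_r \hat{\mathcal{E}}_{s_L}} + \sup b_1^{-i}|\triangle \hat b_i|)$ multiplied by $\sqrt{\triangle \hat{\mathcal{E}}_{s_L}}$ restricted to $y \leq 2A$, which, after re-expressing in the normalizations of the lemma, produces exactly a term of the claimed shape. Gathering all estimates and choosing $b_1$ (equivalently $s_0$) small enough so that the unwanted contributions are absorbed finishes the proof.
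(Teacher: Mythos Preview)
Your plan follows the paper's approach and the overall structure is correct, but there are two specific points where the argument as written does not close.

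First, in the time-error term $(1-\frac{d\hat s'}{ds})\bos{H}(\hat{\bos{\varepsilon}}')$: the modulation estimate \fref{variete:modulation:eq:dhats (triangle b)} alone is not enough. After Cauchy--Schwarz you must control $\|(\mathcal{L}\hat\varepsilon^{'(1)})_{s_L-1}\|_{L^2(y\leq 2A)} = \|\hat\varepsilon^{'(1)}_{s_L+1}\|_{L^2(y\leq 2A)}$ and $\|\nabla(\hat\varepsilon^{'(2)})_{s_L-1}\|_{L^2(y\leq 2A)}$, which live at the $s_L{+}1$ level. Coercivity (Corollary \ref{annexe:cor:coercivite mathcalEsL}) only reaches level $s_L$. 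The missing ingredient is precisely the extra regularity assumption \fref{variete:eq:estimation supplementaire varepsilon'}, $\hat{\mathcal{E}}'_{s_L+1}\lesssim b_1^{2L+2+2(1-\delta_0)(1+\eta)}$, which supplies the additional $b_1$ that turns your claimed factor $b_1^{L+(1-\delta_0)(1+\eta/2)}$ into the required $b_1^{L+1+(1-\delta_0)+O(\eta)}$. This is the reason the whole section works under the hypothesis of Proposition \ref{variete:prop:parametres lipschitz}.

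Second, in the modulation contribution: for $i=L$ the coefficient bound \fref{variete:modulation:eq:L (triangle b)} is only $b_1^{L+(1-\delta_0)(1+\eta/2)}(\dots)$, one power of $b_1$ short. What saves you on the support of $\phi_A$ is the algebraic cancellation $(T_i)_{s_L-1}=0$ for all $i\leq L$ (since $(T_i)_i=\pm A\Lambda^{(1)}Q=0$ once one passes $\Lambda Q$), so that only the $\partial_{b_i}S_j$ contributions survive, and these carry an extra $b_1$ on compacts. Without invoking this cancellation your splitting via \fref{variete:highsobo:modulation:eq:leqL-1 expression} would leave the $L$-th modulation term too large by a factor $b_1^{-1}$. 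The paper uses this cancellation explicitly; you should too.
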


\begin{proof}[Proof of Lemma \ref{variete:lem:morawetz}]

To prove the identity of the lemma, we first compute the time evolution of $\triangle \mathcal{M}$, use the control \fref{variete:morawetz:eq:controle lineaire} obtained at the linear level, and show that the other terms are negligible. The time evolution of $\triangle \mathcal{M}$ is:

\be \label{variete:morawetz:eq:expression dtM}
\begin{array}{r c l}
&\frac{d}{ds}\triangle \mathcal{M}\\
=& -\int \nabla \phi_A .\nabla \Bigl{[} (\frac{\lambda_s}{\lambda}\Lambda^{(1)}(\varepsilon^{(1)}-\hat{\varepsilon}^{'(1)})+\varepsilon^{(2)}-\hat{\varepsilon}^{'(2)}-\tilde{\psi}_b^{(1)}+\tilde{\psi}_{\hat{b}'}^{(1)}-\tilde{Mod}^{(1)}\\
&+\hat{Mod}^{'(1)}+(\frac{d\hat{s}'}{ds}-1)(\tilde{\psi}_{\hat{b}'}^{(1)}-\hat{\varepsilon}^{'(2)})  \Bigr{]}_{s_L-1} (\varepsilon^{(2)}-\hat{\varepsilon}^{'(2)})_{s_L-1} \\
&- \int \frac{(1-\delta)\Delta \phi_A}{2}\Bigl{[} (\frac{\lambda_s}{\lambda}\Lambda^{(1)}(\varepsilon^{(1)}-\hat{\varepsilon}^{'(1)})+\varepsilon^{(2)}-\hat{\varepsilon}^{'(2)}-\tilde{\psi}_b^{(1)}+\tilde{\psi}_{\hat{b}'}^{(1)}-\tilde{Mod}^{(1)}\\
&+\hat{Mod}^{'(1)}+(\frac{d\hat{s}'}{ds}-1)(\tilde{\psi}_{\hat{b}'}-\hat{\varepsilon}^{'(2)})  \Bigr{]}_{s_L-1} (\varepsilon^{(2)}-\hat{\varepsilon}^{'(2)})_{s_L-1} \\
&-\int \nabla \phi_A .\nabla (\varepsilon^{(1)}-\hat{\varepsilon}^{'(1)})_{s_L-1}\Bigl{[}-\mathcal{L}(\varepsilon^{(1)}-\hat{\varepsilon}^{'(1)})-\frac{\lambda_s}{\lambda}\Lambda^{(2)}(\varepsilon^{(2)}-\hat{\varepsilon}^{'(2)})\\
&-\tilde{\psi}_b^{(2)}+\tilde{\psi}_{\hat{b}'}^{(2)}-\tilde{Mod}^{(2)}+\hat{Mod}^{'(2)}\\
&+L-\hat{L}'+NL-\hat{NL}'+(\frac{d\hat{s}'}{ds}-1)(\tilde{\psi}_{\hat{b}'}^{(2)}+\mathcal{L}\hat{\varepsilon}^{'(1)}-\hat{L}'-\hat{NL}')\Bigr{]}_{s_L-1} \\
&- \int \frac{(1-\delta)\Delta \phi_A}{2}(\varepsilon^{(1)}-\hat{\varepsilon}^{'(1)})_{s_L-1} \Bigl{[}-\mathcal{L}(\varepsilon^{(1)}-\hat{\varepsilon}^{'(1)})-\frac{\lambda_s}{\lambda}\Lambda^{(2)}(\varepsilon^{(2)}-\hat{\varepsilon}^{'(2)})\\
&-\tilde{\psi}_b^{(2)}+\tilde{\psi}_{\hat{b}'}^{(2)}-\tilde{Mod}^{(2)}+\hat{Mod}^{'(2)}\\
&+L-\hat{L}'+NL-\hat{NL}'+(\frac{d\hat{s}'}{ds}-1)(\tilde{\psi}_{\hat{b}'}^{(2)}+\mathcal{L}\hat{\varepsilon}^{'(1)}-\hat{L}'-\hat{NL}')\Bigr{]}_{s_L-1} .
\end{array}
\ee

We now compute everything in the right hand side. The linear part produces exactly the control we want thanks to the identity \fref{variete:morawetz:eq:controle lineaire}:

\begin{equation} \label{variete:morawetz:eq:estimation lineaire}
\begin{array}{r c l}
&  \int [\nabla \phi_A .\nabla (\varepsilon^{(1)}-\hat{\varepsilon}^{'(1)})_{s_L-1}+\frac{(1-\delta)\Delta \phi_A}{2}(\varepsilon^{(1)}-\hat{\varepsilon}^{'(1)})_{s_L-1}] (\mathcal{L}(\varepsilon^{(1)}-\hat{\varepsilon}^{'(1)})_{s_L-1}) \\
&-\int [\nabla \phi_A .\nabla (\varepsilon^{(2)}-\hat{\varepsilon}^{'(2)})_{s_L-1}+\frac{(1-\delta)\Delta \phi_A}{2}(\varepsilon^{(2)}-\hat{\varepsilon}^{'(2)})_{s_L-1}] (\varepsilon^{(2)}-\hat{\varepsilon}^{'(2)})_{s_L-1}\\
\geq &\frac{\delta}{2N^{\delta}} \triangle \mathcal{E}_{s_L,loc} -\frac{C(M)}{A^{\delta}}\triangle \mathcal{E}_{s_L} .
\end{array}
\end{equation}
Now $\phi_A$ is of compact support. Hence by integrating by parts and using coercivity we can control the scale changing term:
\be \label{variete:morawetz:eq:estimation lambdavarepsilon}
\begin{array}{r c l}
&\int [\nabla \phi_A .\nabla(\frac{\lambda_s\Lambda^{(1)}(\varepsilon^{(1)}-\hat{\varepsilon}^{'(1)})_{s_L-1}}{\lambda})+\frac{(1-\delta)\Delta \phi_A\lambda_s \Lambda^{(1)}(\varepsilon^{(1)}-\hat{\varepsilon}^{'(1)})_{s_L-1}}{2\lambda}](\varepsilon^{(2)}-\hat{\varepsilon}^{'(2)})_{s_L-1} \\
&+\int [\nabla \phi_A. \nabla(\varepsilon^{(1)}-\hat{\varepsilon}^{'(1)})_{s_L-1})+\frac{(1-\delta)\Delta \phi_A(\varepsilon^{(1)}-\hat{\varepsilon}^{'(1)})_{s_L-1}}{2}]\frac{\lambda_s(\Lambda^{(2)}\varepsilon^{(2)}-\hat{\varepsilon}^{'(2)})_{s_L-1}}{\lambda} \\
= & O(b_1 C(A) \triangle \mathcal{E}_{s_L}) ,
\end{array}
\ee
As we work on a compact set, we do not see the bad tail of the error terms. Hence their contribution is:
\be \label{variete:morawetz:eq:estimation psib}
\begin{array}{r c l}
&\left|\int [\nabla \phi_A .\nabla(\tilde{\psi}^{(1)}_b-\tilde{\psi}_{\hat{b}'})_{s_L-1})+\frac{(1-\delta)\Delta \phi_A}{2}(\tilde{\psi}^{(1)}_b-\tilde{\psi}_{\hat{b}'}^{(1)})_{s_L-1})](\varepsilon^{(2)}-\hat{\varepsilon}^{'(2)})_{s_L-1}\right| \\
&+\left|\int [\nabla \phi_A .\nabla(\varepsilon^{(1)}-\hat{\varepsilon^{'(1)}}_{s_L-1})+\frac{(1-\delta)\Delta \phi_A}{2}(\varepsilon^{(1)}-\hat{\varepsilon}^{'(1)})_{s_L-1}](\tilde{\psi}^{(2)}_b-\tilde{\psi}_{\hat{b}'}^{(2)})_{s_L-1}\right|\\
\leq& C(A) \sqrt{\triangle \mathcal{E}}_{\sigma} b_1^{L+3} \underset{1\leq i \leq L}{\text{sup}}b_1^{-i}|\triangle \hat{b}_i|.
\end{array}
\ee
For the small linear terms we use the decomposition:
$$
L-\hat{L}'=p(\tilde{Q}_{b}^{(1)(p-1)}-\tilde{Q}_{\hat{b}'}^{(1)(p-1)})\varepsilon^{(1)}+p(\tilde{Q}_{\hat{b}}^{(1)(p-1)}-Q^{p-1})(\varepsilon^{(1)}-\hat{\varepsilon}^{'(1)}).
$$
From \fref{variete:eq:asymptotique diff Qp-j} one has for the first term in this decomposition:
$$
\begin{array}{r c l}
&\Bigl{|}\int [\nabla \phi_A .\nabla(\varepsilon^{(1)}-\hat{\varepsilon}^{'(1)})_{s_L-1}+\frac{(1-\delta)\Delta \phi_A}{2}(\varepsilon^{(1)}-\hat{\varepsilon}^{'(1)})_{s_L-1}]\\
&\times((\tilde{Q}_{b}^{(1)(p-1)}-\tilde{Q}_{\hat{b}'}^{(1)(p-1)})\varepsilon^{(1)})_{s_L-1}\Bigr{|}\\
\leq & C(A)\sqrt{\triangle \hat{\mathcal{E}}_{s_L}}b_1^{L+2-\delta_0+O(\eta)}\underset{1\leq i \leq L}{\text{sup}}b_1^{-i}|\triangle \hat{b}_i|.
\end{array}
$$
For the second term, as $|\partial_y^k(\tilde{Q}_{\hat{b}'}^{(1)(p-1)}-Q^{p-1)}|\leq C(A,k)b_1$ because of the compactness of the support, one gets:
$$
\begin{array}{r c l}
&\Bigl{|}\int [\nabla \phi_A .\nabla(\varepsilon^{(1)}-\hat{\varepsilon}^{'(1)})_{s_L-1}+\frac{(1-\delta)\Delta \phi_A}{2}(\varepsilon^{(1)}-\hat{\varepsilon}^{'(1)})_{s_L-1}]\\
&\times ((\tilde{Q}_{\hat{b}}^{(1)(p-1)}-Q^{p-1})(\varepsilon^{(1)}-\hat{\varepsilon}^{'(1)})_{s_L-1}\Bigr{|}\\
\leq & C(A)\triangle \hat{\mathcal{E}}_{s_L}b_1.
\end{array}
$$
Hence the contribution of the small linear terms is:
\be \label{variete:morawetz:eq:estimation L}
\begin{array}{r c l}
&\left|\int [\nabla \phi_A .\nabla(\varepsilon^{(1)}-\hat{\varepsilon}^{'(1)})_{s_L-1})+\frac{(1-\delta)\Delta \phi_A}{2}(\varepsilon^{(1)}-\hat{\varepsilon}^{'(1)})_{s_L-1}](L-\hat{L}')_{s_L-1}\right|\\
\leq& C(A)b_1^{2L+3-2\delta_0+O(\eta)}\sqrt{\triangle _r \hat{\mathcal{E}}_{s_L}} (\sqrt{\triangle _r\hat{\mathcal{E}}_{s_L}}+ \underset{1\leq i \leq L}{\text{sup}}b_1^{-i}|\triangle \hat{b}_i|) .
\end{array}
\ee
For the nonlinear terms we use the bound \fref{variete:highsobo:eq:estimation NL 4} we derived in the proof of the monotonicity of the adapted high Sobolev norm to find:
\be \label{variete:morawetz:eq:estimation NL}
\begin{array}{r c l}
&\left|\int [\nabla \phi_A .\nabla(\varepsilon^{(1)}-\hat{\varepsilon}^{'(1)})_{s_L-1}+\frac{(1-\delta)\Delta \phi_A}{2}(\varepsilon^{(1)}-\hat{\varepsilon}^{'(1)})_{s_L-1}](NL-\hat{NL}')_{s_L-1}\right|\\
\leq & C(A)\sqrt{\mathcal{E}_{s_L}}b_1^{L+2-\delta_0+\frac{\alpha}{2L}+O\left( \frac{\sigma-s_c}{L},\eta\right)}(\sqrt{\triangle_r \hat{\mathcal{E}}_{s_L}}+\sqrt{\triangle_r \hat{\mathcal{E}}_{\sigma}}+\underset{1\leq i \leq L}{\text{sup}}b_1^{-i}|\triangle \hat{b}_i|).
\end{array}
\ee
Using the bounds \fref{thetrapped:highsobo:eq:estimation psib}, \fref{thetrapped:highsobo:eq:estimation L}, \fref{thetrapped:highsobo:eq:estimation NL} we derived in the proof of Proposition \ref{trappedregime:pr:high sobo} plus the assumption \fref{variete:eq:estimation supplementaire varepsilon'} and the bound \fref{variete:modulation:eq:dhats (triangle b)} on $\frac{d\hat{s}'}{ds}-1$ one gets for the terms involving the evolution of the time difference:
\be \label{variete:morawetz:eq:estimation dhats'-1}
\begin{array}{r c l}
&\Bigl{|}(\frac{d\hat{s}'}{ds}-1)\int [\nabla \phi_A .\nabla(\varepsilon^{(1)}-\hat{\varepsilon}^{'(1)})_{s_L-1}+\frac{(1-\delta)\Delta \phi_A}{2}(\varepsilon^{(1)}-\hat{\varepsilon}^{'(1)})_{s_L-1}]\\
&\times(\tilde{\psi}_{\hat{b}'}^{(2)}+\mathcal{L}\hat{\varepsilon}^{'(1)}-\hat{L}'+\hat{NL}')_{s_L-1}\Bigr{|}\\
&+\left|(\frac{d\hat{s}'}{ds}-1)\int [\nabla \phi_A .\nabla(\tilde{\psi}_{\hat{b}'}^{(1)}-\hat{\varepsilon}^{'(2)})_{s_L-1}+\frac{\Delta \phi_A(\tilde{\psi}_{\hat{b}'}^{(1)}-\hat{\varepsilon}^{'(2)})_{s_L-1}}{2}](\varepsilon^{(2)}-\hat{\varepsilon}^{'(2)})_{s_L-1}\right|\\
\leq & C(A)\sqrt{\triangle \mathcal{E}_{s_L}}b_1^{L+1+(1-\delta_0)(1+\frac{3}{2}\eta)}(\underset{1\leq i \leq L}{\text{sup}}b_1^{-i}|\triangle \hat{b}_i|+b_1^{L+(1-\delta_0)(1+\eta)}\sqrt{\triangle_r \hat{\mathcal{E}}_{s_L}}).
\end{array}
\ee
To finish the proof it remains to estimate the modulation terms. We just compute for one difference of modulation terms located in the second coordinate, that is to say a term of the form:
$$
\int [\nabla \phi_A .\nabla(\varepsilon^{(1)}-\hat{\varepsilon}^{'(1)})_{s_L-1}+\frac{(1-\delta)\Delta \phi_A}{2}(\varepsilon^{(1)}-\hat{\varepsilon}^{'(1)})_{s_L-1}]\triangle \hat{Mod}^{(2)}_i)_{s_L-1}
$$
where we recall that $\triangle \hat{Mod}_i$ is defined by \fref{variete:eq:def triangle hatmodi}. We suppose also that $i$ is odd. We claim that the same computations yield the same result for the other modulation terms. As we work on a compact support, we do not see the two cut off $\chi_{B_1}$ and $\chi_{\hat{B}'_1}$. So the profile $T_i$ cancels as $(T_i)_{s_L-1}=0$. Therefore the quantity we have to estimate simplifies into:
$$
\begin{array}{r c l}
&(\triangle \hat{Mod}^{(2)}_i)_{s_L-1}\\
=& (b_{i,s}+(i-\alpha)b_1b_i-b_{i+1}-(\hat{b}'_{i,s}+\frac{d\hat{s}'}{ds}((i-\alpha)\hat{b}'_1\hat{b}'_i+\hat{b}'_{i+1}))) \underset{j=i+1, \ j \ \text{odd}}{\overset{L+2}{\sum}} \frac{\partial S_j}{\partial b_i} \\
&+(\hat{b}'_{i,s}+\frac{d\hat{s}'}{ds}((i-\alpha)\hat{b}'_1\hat{b}'_i+\hat{b}'_{i+1})) \underset{j=i+1, \ j \ \text{odd}}{\overset{L+2}{\sum}} \frac{\partial S_j}{\partial b_i}-\frac{\partial \hat{S}'_j}{\partial b_i}
\end{array}
$$
for $y\leq 2A$. Therefore, using the modulation bounds \fref{variete:modulation:eq:leqL-1 (triangle b)} and \fref{variete:modulation:eq:L (triangle b)} one gets that the contribution of this term is:
$$
\begin{array}{r c l}
&\left| \int [\nabla \phi_A .\nabla(\varepsilon^{(1)}-\hat{\varepsilon}^{'(1)})_{s_L-1}+\frac{(1-\delta)\Delta \phi_A}{2}(\varepsilon^{(1)}-\hat{\varepsilon}^{'(1)})_{s_L-1}](\triangle \hat{Mod}^{(2)}_i)_{s_L-1} \right| \\
\leq & C(A) \sqrt{\triangle \hat{\mathcal{E}}_{s_L}}b_1^{L+2-\delta_0+O(\eta)}(\underset{1\leq i \leq L}{\text{sup}}b_1^{-i}|\triangle \hat{b}_i|+\sqrt{\triangle_r \hat{\mathcal{E}}_{s_L}}).
\end{array}
$$
For the other terms involved in the modulation terms, the same reasoning yield the same estimate, hence:
\be \label{variete:morawetz:eq:estimation mod}
\begin{array}{r c l}
&\Bigl{|}(\int [\nabla \phi_A .\nabla(\varepsilon^{(1)}-\hat{\varepsilon}^{'(1)})_{s_L-1}+\frac{(1-\delta)\Delta \phi_A}{2}]\\
&\times(\tilde{Mod}^{(2)}-\hat{Mod}^{'(2)})_{s_L-1}\Bigr{|}\\
&+\Bigl{|}\int [\nabla \phi_A .\nabla(\tilde{Mod}^{(1)}-\hat{Mod}^{'(1)})_{s_L-1}+\frac{(1-\delta)\Delta \phi_A}{2}(\tilde{Mod}^{(1)}-\hat{Mod}^{'(1)})_{s_L-1}]\\
&\times (\varepsilon^{(2)}-\hat{\varepsilon}^{'(2)})_{s_L-1}\Bigr{|}\\
\leq & C(A) \sqrt{\triangle \hat{\mathcal{E}}_{s_L}}b_1^{L+2-\delta_0+O(\eta)}(\underset{1\leq i \leq L}{\text{sup}}b_1^{-i}|\triangle \hat{b}_i|+\sqrt{\triangle_r \hat{\mathcal{E}}_{s_L}}).
\end{array}
\ee
Now, gathering together all the bounds we have proven: the control on the linear terms \fref{variete:morawetz:eq:estimation mod}, the bounds on the error terms \fref{variete:morawetz:eq:estimation psib}, on the scale changing terms \fref{variete:morawetz:eq:estimation lambdavarepsilon}, on the small linear and non linear terms \fref{variete:morawetz:eq:estimation L} and \fref{variete:morawetz:eq:estimation NL}, and the time difference terms \fref{variete:morawetz:eq:estimation dhats'-1} and on the modulation terms \fref{variete:morawetz:eq:estimation mod} one gets the bound \fref{variete:eq:morawetz} claimed in the lemma.
\end{proof}


\subsubsection{Study of the coupled dynamical system, end of the proof of Proposition \fref{variete:prop:parametres lipschitz}}

So far in this section, we introduced new variables $(\hat{\bos{\varepsilon}}',\hat{b}')$ that we could compare with the other solution $(\bos{\varepsilon},b)$. We then computed the time evolution of the difference of relevant quantities. In the Lemmas \ref{variete:lem:modulation diff} and \ref{variete:lem:improved modulation} we calculated the time evolution of the difference of the parameters, and in Lemma \ref{variete:lem:high sobo} we related the time evolution of the adapted high Sobolev norm of the difference of errors to the difference of parameters. The two other Lemmas \ref{variete:lem:low sobo} and \ref{variete:lem:morawetz} for the low Sobolev norm and for the Morawetz quantity are just additional tools to close an estimate for the previous norm.\\
\\
Thus, at this point we found a quite complicated coupled dynamical system for the differences of the variables of the two solutions $b_i-\hat{b}'_i$ for $1\leq i \leq L$, $s-\hat{s}'$ and $\bos{\varepsilon}-\hat{\bos{\varepsilon}}'$. In the following lemma we analyse this dynamical system, and find that it is only weakly coupled. Namely: the difference of the unstable parameters evolves according to an repelling linear dynamic plus a smaller feedback from the difference of the stable parameters and errors, the difference of stable parameters evolves according to an attractive linear dynamic plus a smaller feedback from the difference of the unstable parameters and errors and the dynamics of the difference of the errors is also stable.

\begin{lemma} \label{variete:lem:bootstrap differences}
For any $0<\kappa \ll 1$, there exists universal constants $\tilde{C}$, $(C_i)_{\ell +1\leq i \leq L}$, $C_1$, $C_{\triangle \hat{s}}$, $0<\kappa_1<\kappa$, $0<\kappa_i<\kappa$ for $\ell+1 \leq i\leq L$ and $\bar{s}$ such that if $s_0\geq \bar{s}$ the following holds for $s_0\leq s$:
\begin{itemize}
\item[(i)] \emph{Estimates on the stable parameters:} for $\ell+1\leq i \leq L$ one has
\be \label{variete:eq:bootstrap V1}
\begin{array}{r c l}
|\triangle \hat{V}_1(s)| &\leq& C_1\left( \underset{\ell+1\leq i \leq L}{\text{sup}}|\triangle \hat{U}_i(s_0)| + |\triangle \hat{V}_1(s_0)|+\sqrt{\triangle_r \hat{\mathcal{E}}_{\sigma}(s_0)}+\sqrt{\triangle_r \hat{\mathcal{E}}_{s_L}(s_0)}\right)\\
&&+ \kappa_1 \underset{s_0\leq s' \leq s, \ 2\leq i \leq \ell}{\text{sup}}|\triangle \hat{V}_i|,
\end{array}
\ee
\be \label{variete:eq:bootstrap Ui}
\begin{array}{r c l}
|\triangle \hat{U}_i(s)| &\leq& C_i\left( \underset{\ell+1\leq i \leq L}{\text{sup}}|\triangle \hat{U}_i(s_0)| + |\triangle \hat{V}_1(s_0)|+\sqrt{\triangle_r \hat{\mathcal{E}}_{\sigma}(s_0)}+\sqrt{\triangle_r \hat{\mathcal{E}}_{s_L}(s_0)}\right)\\
&&+ \kappa_i \underset{s_0\leq s' \leq s, \ 2\leq i \leq \ell}{\text{sup}}|\triangle \hat{V}_i|, \ \ \text{for} \ \ell+1\leq i \leq L.
\end{array}
\ee
For the difference of renormalized times there holds:
\be \label{variete:eq:bootstrap s-hats'}
\begin{array}{r c l}
\frac{|s-\hat{s}'(s)|}{s\text{log}(s)} &\leq& C_{\triangle \hat{s}}\Bigl{(} \underset{\ell+1\leq i \leq L}{\text{sup}}|\triangle \hat{U}_i(s_0)| + |\triangle \hat{V}_1(s_0)|+\sqrt{\triangle_r \hat{\mathcal{E}}_{\sigma}(s_0)}+\sqrt{\triangle_r \hat{\mathcal{E}}_{s_L}(s_0)}\\
&& + \underset{s_0\leq s' \leq s, \ 2\leq i \leq \ell}{\text{sup}}|\triangle \hat{V}_i|\Bigr{)}.
\end{array}
\ee
\item[(ii)] \emph{Estimates on the difference of errors:} One has the bounds:
\be \label{variete:eq:bootstrap triangle varepsilon low}
\begin{array}{r c l}
\sqrt{\triangle_r \hat{\mathcal{E}}_{\sigma}(s)}  &\leq& \tilde{C}\Bigl{(} \underset{\ell+1\leq i \leq L}{\text{sup}}|\triangle \hat{U}_i(s_0)| + |\triangle \hat{V}_1(s_0)|+\sqrt{\triangle_r \hat{\mathcal{E}}_{\sigma}(s_0)}+\sqrt{\triangle_r \hat{\mathcal{E}}_{s_L}(s_0)}\\
&&+  \underset{s_0\leq s' \leq s, \ 2\leq i \leq \ell}{\text{sup}}|\triangle \hat{V}_i|\Bigr{)},
\end{array}
\ee
\be \label{variete:eq:bootstrap triangle varepsilon high}
\begin{array}{r c l}
\sqrt{\triangle_r \hat{\mathcal{E}}_{s_L}(s)} &\leq& \tilde{C}\Bigl{(} \underset{\ell+1\leq i \leq L}{\text{sup}}|\triangle \hat{U}_i(s_0)| + |\triangle \hat{V}_1(s_0)|+\sqrt{\triangle_r \hat{\mathcal{E}}_{\sigma}(s_0)}+\sqrt{\triangle_r \hat{\mathcal{E}}_{s_L}(s_0)}\\
&&+  \underset{s_0\leq s' \leq s, \ 2\leq i \leq \ell}{\text{sup}}|\triangle \hat{V}_i|\Bigr{)}.
\end{array}
\ee
\end{itemize}

\end{lemma}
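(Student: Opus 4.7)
The plan is to integrate the coupled system derived in the previous lemmas, treating the unstable differences $\triangle \hat{V}_i$ for $2\leq i \leq \ell$ as given inputs (they appear as sup-in-time on the right-hand sides) and bounding every other quantity in terms of them and the initial data. The strategy mirrors Section \ref{end:sec:end of the proof}: first close estimates on the differences of errors with a Lyapunov-type argument, then integrate the modulation ODEs for the stable parameters. The key structural fact exploited throughout is that the sources controlling stable quantities carry either a decaying-in-$s$ factor or an extra $b_1^{\text{gain}}$, so time-integration against the $1/s$ weight produces a constant depending on $s_0$ that can be made $\leq \kappa$ by taking $s_0$ large.

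First I would establish \fref{variete:eq:bootstrap triangle varepsilon low} and \fref{variete:eq:bootstrap triangle varepsilon high}. Following the proof of Lemma \ref{endoftheproof:lem:integration des equations pour les normes}, I combine the monotonicity formula \fref{variete:eq:high sobo} with a large multiple of the Morawetz identity \fref{variete:eq:morawetz} to absorb the local term $\triangle_r \hat{\mathcal{E}}_{s_L,\text{loc}}$, then choose $N$ and $A$ large enough that $C/N^{\delta_0/2} + C/A^{\delta_0}$ becomes arbitrarily small. Integration in time against $b_1 \lambda^{-1}$ gives $\triangle_r\hat{\mathcal{E}}_{s_L}(s) \lesssim \triangle_r\hat{\mathcal{E}}_{s_L}(s_0) + (\text{sup over }[s_0,s]\text{ of }b_1^{-i}|\triangle \hat{b}_i|)^2$, and I split $\sup_i b_1^{-i}|\triangle \hat{b}_i|$ into stable ($i=1$ and $\ell+1\leq i \leq L$) and unstable ($2\leq i\leq \ell$) contributions. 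The same scheme applied to \fref{variete:eq:lowsobo} yields the low-Sobolev bound, with the nonlinear feedback closed by the $b_1^{\alpha/(2L)}$ gain.

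Next I would integrate the modulation equations for the stable parameters. For $\ell+1 \leq i \leq L-1$, estimate \fref{variete:modulation:eq:leqL-1 (triangle b)} rewrites as
\begin{equation*}
\bigl|\partial_s(s^{(i-\alpha)c_1}\triangle\hat{b}_i) - s^{(i-\alpha)c_1}\triangle\hat{b}_{i+1}\bigr| \leq Cs^{(i-\alpha)c_1} b_1^{L+1+(1-\delta_0)(1+\frac{\eta}{2})}\bigl(\sqrt{\triangle_r\hat{\mathcal{E}}_{s_L}} + b_1^{\frac{\eta}{2}(1-\delta_0)}\sup_i b_1^{-i}|\triangle\hat{b}_i|\bigr),
\end{equation*}
which I integrate inductively from $i=L$ down to $i=\ell+1$ using that $(i-\alpha)c_1 > i$; the improved equation \fref{variete:eq:improved modulation} together with the pointwise bound \fref{variete:eq:gain improved modulation} provides the needed gain at $i=L$ after integration by parts in time. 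For $\triangle \hat{V}_1$, I apply the diagonalization matrix $P_\ell$ of Lemma \ref{lem:linearisationsystemdyn}: the linearized dynamics gives $\partial_s \triangle\hat{V}_1 + s^{-1}\triangle\hat{V}_1 = (\text{small forcing}) + q_1 s^{\ell} \triangle \hat{b}_{\ell+1}$, which is stable and integrates directly by the method already used in Lemma \ref{endoftheproof:lem: controle des modes stables}. The time-difference estimate \fref{variete:eq:bootstrap s-hats'} follows from \fref{variete:modulation:eq:dhats (triangle b)}: plugging in $\triangle \hat{b}_1 = \triangle\hat{V}_1/s + (\text{unstable})/s$ and $\hat{b}_1' \sim c_1/s$ gives $\bigl|\frac{d\hat{s}'}{ds} - 1\bigr| \lesssim (\text{RHS of bootstrap})/s$, and time-integration produces the $\log(s)$ factor.

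The main obstacle is propagating smallness of $\kappa_i$ through the whole coupled system. On the one hand, every forcing term from $\triangle \hat{V}_i$ ($2\leq i\leq \ell$) into the stable equations and into the energy estimates must be multiplied by a factor that shrinks as $s_0\to\infty$ — either $b_1^{\alpha/L}$ from the nonlinear/Morawetz analysis, or $b_1^{(1-\delta_0)(\eta/2)}$ from the improved modulation gain, or simply the polynomial decay when integrating $\int s^{(i-\alpha)c_1-j-1}ds$ for $j<(i-\alpha)c_1$. On the other hand, the $\triangle \hat{V}_1$ coefficient inside $\frac{d\hat{s}'}{ds}-1$ feeds back into every modulation equation via $s-\hat{s}'$, so one must order the estimates carefully so that no circular dependence occurs. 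I expect this to work, provided $\eta$ is fixed small enough (to ensure that all the $b_1^{O(\eta)}$-type losses remain harmless), then $N,A$ fixed large, and finally $s_0$ taken large enough depending on $\kappa$, exactly as in the original trapped-regime bootstrap.
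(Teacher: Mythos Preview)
Your proposal is correct and follows essentially the same approach as the paper: a bootstrap/continuity argument in which all the bounds of the lemma are assumed simultaneously on $[s_0,s_1]$, each is shown to be strict at $s_1$ using the monotonicity formulas \fref{variete:eq:lowsobo}, \fref{variete:eq:high sobo}, \fref{variete:eq:morawetz} and the modulation estimates of Lemmas \ref{variete:lem:modulation diff} and \ref{variete:lem:improved modulation}, and the resulting constraints on $\tilde{C},C_1,C_{\triangle\hat s},(C_i)_{\ell+1\leq i\leq L},(\kappa_i)$ are then checked to be simultaneously satisfiable for $s_0$ large. The paper organizes the steps in the order (time difference, $V_1$, $U_i$ for $\ell+1\leq i\leq L-1$, $U_L$, low Sobolev, high Sobolev) rather than errors-first-then-parameters as you outline, but the logic is identical; in particular your identification of the $\log(s)$ factor from integrating $|\triangle\hat U_1|/s$, of the inductive descent $i=L\to\ell+1$ for the stable modes, and of the Morawetz absorption of $\triangle_r\hat{\mathcal E}_{s_L,\mathrm{loc}}$ matches the paper exactly.
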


\begin{proof}[Proof of Lemma \fref{variete:lem:bootstrap differences}]
The proof is based on a bootstrap technique: we inject the bounds of the lemma in the evolution equations, and find that they can be bootstrapped. From now on we fix the constants of the Lemma \fref{variete:lem:bootstrap differences}: $\kappa$ is small $\kappa\ll 1$, and the $C$'s are large. We just allow us to increase $\bar{s}$ if necessary. The bounds of the lemma are verified at least on a small interval of time $[s_0,s']$, so we define $s_1$ as the supremum of times $s'$ such that all the bounds of the Lemma are verified on $[s_0,s_1[$. If $s_1=+\infty$ the lemma is proven. So we now assume $s_1<+\infty$ and look for a contradiction.\\
\\
We recall that we have the following relation: $ \tilde{\eta}\ll \eta \ll 1 $. We first state the following identity:
\be \label{variete:bootstrap:eq:expression trianglehatbi}
\begin{array}{r c l}
\triangle \hat{b}_i= b_i-\hat{b}_i'=\frac{c_i}{s^i}+\frac{U_i}{s^i}-\frac{c_i}{(\hat{s}')^i}-\frac{\hat{U}'_i}{s^i}=c_i\frac{(\hat{s}')^i-s^i}{s^i (\hat{s}')^i}+\frac{\triangle \hat{U}_i}{s^i}.
\end{array}
\ee
To ease notations, we let:
$$
D_{\text{stab}}(s_0)= \underset{\ell+1\leq i \leq L}{\text{sup}}|\triangle \hat{U}_i(s_0)| + |\triangle \hat{V}_1(s_0)|+\sqrt{\triangle_r \hat{\mathcal{E}}_{\sigma}(s_0)}+\sqrt{\triangle_r \hat{\mathcal{E}}_{s_L}(s_0)}.
$$

\underline{Step 1:} \emph{the time difference}. We recall that because the two solutions we are studying are in the trapped regime one has: $b_1\sim s^{-1}$ and $|U_i|+|\hat{U}_i|\lesssim s^{-\tilde{\eta}}$. We inject the identity \fref{variete:bootstrap:eq:expression trianglehatbi} in the time evolution of $s-\hat{s}'$ given by \fref{variete:modulation:eq:dhats (triangle b)}:
\be \label{variete:bootstrap:eq:expression dhats'}
\frac{d}{ds}\left(\frac{s-\hat{s}'}{s} \right)=O\left(\frac{|s-\hat{s}'|}{s^{2+\tilde{\eta}}}+\frac{|\triangle \hat{U}_1|}{s}+\frac{|\triangle U|+\sqrt{\triangle_r \hat{\mathcal{E}}_{s_L}}}{s^{L+(1-\delta_0)(1+\frac{\eta}{2})+1}} \right),
\ee
the constant in the $O()$ being independent on the constants of the Lemma we are proving. We integrate till $s_1$. As $\triangle \hat{U}_1$ is a linear combination of the $\triangle \hat{V}_i$ for $1\leq i \leq \ell$, injecting the bounds \fref{variete:eq:bootstrap Ui}, \fref{variete:eq:bootstrap V1}, \fref{variete:eq:bootstrap s-hats'} and \fref{variete:eq:bootstrap triangle varepsilon high} gives:
$$
\begin{array}{r c l}
&\left| \int_{s_0}^{s_1}O\left(\frac{|s-\hat{s}'|}{s^{2+\tilde{\eta}}}+\frac{|\triangle \hat{U}_1|}{s}+\frac{|\triangle U|+\sqrt{\triangle_r \hat{\mathcal{E}}_{s_L}}}{s^{L+(1-\delta_0)(1+\frac{\eta}{2})+1}} \right) \right| \\
\leq& C\text{log}(s_1) \left( \frac{\text{log}(s_0)}{s_0^{\tilde{\eta}}}C_{\triangle \hat{s}}+C_1+\frac{\tilde{C}+\sum_{\ell+1}^L C_i}{s_0^{L+(1-\delta_0)(1+\frac{\eta}{2})}} \right)D_{\text{stab}}(s_0)\\
&+C\text{log}(s_1)\left( 1+\frac{\tilde{C}}{s_0^{L+(1-\delta_0)(1+\frac{\eta}{2})}}+ \frac{\text{log}(s_0)}{s_0^{\tilde{\eta}}}C_{\triangle \hat{s}} \right)  \underset{s_0\leq s' \leq s, \ 2\leq i \leq \ell}{\text{sup}}|\triangle \hat{V}_i| ,
\end{array}
$$
for some constant $C$ independent of the bootstrap constants (the $\kappa_i$'s do not appear as they are small, $\kappa_i\ll 1$). Now we recall that at initial time $\hat{s}'(s_0)=s_0$. Hence when integrating \fref{variete:bootstrap:eq:expression dhats'}:
$$
\begin{array}{r c l}
|s_1-\hat{s}'(s_1))|& \leq &  s_1\text{log}(s_1) \left( CC_1+O(s_0^{-\frac{\tilde{\eta}}{2}})(C_{\triangle \hat{s}}+\sum_{\ell+1}^L C_i+\tilde{C}) \right)D_{\text{stab}}(s_0)\\
&&+s_1\text{log}(s_1)\left( C+O(s_0^{-\frac{\tilde{\eta}}{2}})(\tilde{C}+ C_{\triangle \hat{s}}) \right)  \underset{s_0\leq s' \leq s, \ 2\leq i \leq \ell}{\text{sup}}|\triangle \hat{V}_i|.
\end{array}
$$
It means, as $C_1$ is a big constant and $1\ll s_0$, that the inequality \fref{variete:eq:bootstrap s-hats'} is strict at time $s_1$  provided:
\be \label{variete:bootstrap:condition hats'}
C_{\triangle \hat{s}}> CC_1+O(s_0^{-\frac{\tilde{\eta}}{2}})(\tilde{C}+\sum_{\ell+1}^L C_i),
\ee
where the constant $C$ and the constants hidden in the $O()$ are independent of the other constants of the Lemma we are proving.\\

\underline{Step 2:} \emph{the parameter $V_1$}. The identity \fref{variete:eq:identity hatb'} implies that for $1\leq i \leq \ell$:
\be
\begin{array}{r c l}
&b_{i,s}+(i-\alpha)b_1b_i-b_{i+1}-(\hat{b}'_{i,s}+\frac{d\hat{s}'}{ds}((i-\alpha)\hat{b}'_1\hat{b}'_i-\hat{b}'_{i+1}))\\
=& \frac{1}{s^i}(\triangle \hat{U}_{i,s}-\frac{(A_{\ell}\triangle \hat{U})_i}{s}+O(s^{-1-\tilde{\eta}}(\frac{|\hat{s}'-s|}{s}+|\triangle \hat{U}|+|\frac{d\hat{s}'}{ds}-1|))).
\end{array}
\ee
We now inject it in \fref{variete:modulation:eq:leqL-1 (triangle b)} using the bound \fref{variete:modulation:eq:dhats (triangle b)} on $\frac{d\hat{s}'}{ds}-1$ to find:
\be \label{variete:eq:expression dttriangleUi}
\triangle \hat{U}_{i,s}=\frac{(A_{\ell}\triangle \hat{U})_i}{s}+O(s^{-1-\tilde{\eta}}(|\frac{\hat{s}'-s}{s}|+|\triangle \hat{U}|))+O(s^{-L-(1-\delta_0)(1+\frac{\eta}{2})+i}\sqrt{\triangle_r \hat{\mathcal{E}}_{s_L}}),
\ee
the constants in the $O()$ being independent of the constants of the Lemma we are proving. As $\triangle \hat{V}_1$ is a linear combination of the $\triangle \hat{U}_i$ for $1\leq i \leq \ell$ only, see \fref{thetrapped:eq:def Vi}, and because of the shape of the matrix $A_{\ell}$, see \fref{linearized:eq:diagonalisation}, the previous identity yields:
$$
\triangle \hat{V}_{1,s}=\frac{-\triangle \hat{V}_1}{s}+\frac{q_1\triangle U_{\ell+1}}{s}+O(|\frac{\hat{s}'-s}{s^{2+\tilde{\eta}}}|+\frac{|\triangle \hat{U}|}{s^{1+\tilde{\eta}}}))+O(s^{-L-(1-\delta_0)(1+\frac{\eta}{2})+\ell}\sqrt{\triangle_r \hat{\mathcal{E}}_{s_L}}),
$$
for some coefficient $q_1$ coming from the change of variable. This can be rewritten the following way:
\be \label{variete:bootstrap:eq:expression V1}
\frac{d}{ds}(s\triangle \hat{V}_1)=q_1\triangle U_{\ell+1}+O(s^{-\tilde{\eta}}(|\frac{\hat{s}'-s}{s}|+|\triangle \hat{U}|))+O(s^{-L-(1-\delta_0)(1+\frac{\eta}{2})+\ell+1}\sqrt{\triangle_r \hat{\mathcal{E}}_{s_L}}).
\ee
We now integrate till $s_1$ this identity. Injecting the bootstrap bounds \fref{variete:eq:bootstrap Ui}, \fref{variete:eq:bootstrap V1}, \fref{variete:eq:bootstrap s-hats'} and \fref{variete:eq:bootstrap triangle varepsilon high} one finds:
$$
\begin{array}{r c l}
&\frac{1}{(s_1-s_0)}\left|\underset{s_0}{\overset{s_1}{\int}}q_1\triangle U_{\ell+1}+O(|\frac{\hat{s}'-s}{s^{1+\tilde{\eta}}}|+\frac{|\triangle \hat{U}|}{s^{\tilde{\eta}}})+O(s^{-L-(1-\delta_0)(1+\frac{\eta}{2})+\ell+1}\sqrt{\triangle_r \hat{\mathcal{E}}_{s_L}}) \right| \\
\leq & (q_1C_{\ell+1}+C(\frac{\text{log}(s_0)}{s_0^{\tilde{\eta}}}C_{\triangle \hat{s}}+\frac{C_1+\sum_{i=\ell+1}^L C_i}{s_0^{\tilde{\eta}}}+\frac{\tilde{C}}{s_0^{L+(1-\delta_0)(1+\frac{\eta}{2})-\ell-1}}))D_{\text{stab}}(s_0)\\
&+(q_1\kappa_{\ell+1}+C(\frac{\text{log}(s_0)C_{\triangle \hat{s}}+\kappa_1+\sum_{i=\ell+1}^L \kappa_i}{s_0^{\tilde{\eta}}}+\frac{\tilde{C}}{s_0^{L+(1-\delta_0)(1+\frac{\eta}{2})-\ell-1}}))\underset{s_0\leq s' \leq s, \ 2\leq i \leq \ell}{\text{sup}}|\triangle \hat{V}_i|.
\end{array}
$$ 
So after integrating \fref{variete:bootstrap:eq:expression V1} one obtains:
$$
\begin{array}{r c l}
&|\triangle \hat{V}_1(s_1)|\\
\leq& (1+q_1C_{\ell+1}+C(\frac{\text{log}(s_0)}{s_0^{\tilde{\eta}}}C_{\triangle \hat{s}}+\frac{C_1+\sum_{i=\ell+1}^L C_i}{s_0^{\tilde{\eta}}}+\frac{\tilde{C}}{s_0^{L+(1-\delta_0)(1+\frac{\eta}{2})-i-1}}))D_{\text{stab}}(s_0)\\
&+(q_1\kappa_{\ell+1}+C(\frac{\text{log}(s_0)C_{\triangle \hat{s}}+\kappa_1+\sum_{i=\ell+1}^L \kappa_i}{s_0^{\tilde{\eta}}}+\frac{\tilde{C}}{s_0^{L+(1-\delta_0)(1+\frac{\eta}{2})-i-1}}))\underset{s_0\leq s' \leq s, \ 2\leq i \leq \ell}{\text{sup}}|\triangle \hat{V}_i|.
\end{array}
$$
As $\ell\ll L$ and $1\ll s_0$ the inequality \fref{variete:eq:bootstrap V1} is thus strict at time $s_1$ provided:
\be \label{variete:bootstrap:condition V1}
\begin{array}{l l}
C_1>2+2q_1C_{\ell+1}+O(s_0^{-\frac{\tilde{\eta}}{2}})(C_{\triangle \hat{s}}+\sum_{i=\ell+1}^L C_i+\tilde{C}), \\
\kappa_1 > 2q_1\kappa_{\ell+1}+O(s_0^{-\frac{\tilde{\eta}}{2}})(C_{\triangle \hat{s}}+\sum_{i=\ell+1}^L \kappa_i+\tilde{C}).
\end{array}
\ee

\underline{Step 3:}  \emph{the parameters $U_i$ for $\ell+1\leq i \leq L-1$}. Pick $i$ satisfying $\ell+1\leq i \leq L-1$. One has the identity:
$$
\begin{array}{r c l}
&b_{i,s}+(i-\alpha)b_1b_i-b_{i+1}-(\hat{b}'_{i,s}+\frac{d\hat{s}'}{ds}((i-\alpha)\hat{b}'_1\hat{b}'_i-\hat{b}'_{i+1}))\\
=& \frac{1}{s^i}(\triangle \hat{U}_{i,s}-\frac{(i-(i-\alpha)c_1)\triangle \hat{U}_i+\triangle \hat{U}_{i+1}}{s}+O(s^{-1-\tilde{\eta}}(\frac{|\hat{s}'-s|}{s}+|\triangle \hat{U}|+|\frac{d\hat{s}'}{ds}-1|))).
\end{array}
$$
Hence, using the bound \fref{variete:modulation:eq:dhats (triangle b)}, the modulation equation \fref{variete:modulation:eq:leqL-1 (triangle b)} can be rewritten as:
$$
\begin{array}{r c l}
\triangle \hat{U}_{i,s}&=&\frac{(i-(i-\alpha)c_1)\triangle \hat{U}_i}{s}+\frac{\triangle \hat{U}_{i+1}}{s}\\
&&+O(s^{-1-\tilde{\eta}}(\frac{|s-\hat{s}'|}{s}+|\triangle \hat{U}|)+s^{-L-1-(1-\delta_0)(1+\frac{\eta}{2})+i}\sqrt{\triangle_r \hat{\mathcal{E}}_{s_L}}).
\end{array}
$$
As $i-(i-\alpha)c_1<0$, we can inject the bootstrap bounds \fref{variete:eq:bootstrap Ui}, \fref{variete:eq:bootstrap V1}, \fref{variete:eq:bootstrap s-hats'} and \fref{variete:eq:bootstrap triangle varepsilon high} in the previous equation, and integrate till time $s_1$ as we did in the previous steps to find that:
$$
\begin{array}{r c l}
|\triangle \hat{U}_{i}(s_1)| &\leq& (1+CC_{i+1}+O(s_0^{\frac{-\tilde{\eta}}{2}})(C_{\triangle \hat{s}}+C_1+\underset{j=\ell+1}{\overset{L}{\sum}} C_j+\tilde{C}))D_{\text{stab}}(s_0)\\
&&+(C\kappa_{i+1}+O(s_0^{\frac{-\tilde{\eta}}{2}})(C_{\triangle \hat{s}}+\kappa_1+\underset{j=\ell+1}{\overset{L}{\sum}} \kappa_i+\tilde{C}))\underset{s_0\leq s' \leq s, \ 2\leq i \leq \ell}{\text{sup}}|\triangle \hat{V}_i|.
\end{array}
$$
Thus the inequality \fref{variete:eq:bootstrap Ui} is strict at time $s_1$ provided:
\be \label{variete:bootstrap:condition Ui}
\begin{array}{l l}
C_i>2+CC_{i+1}+O(s_0^{-\frac{\tilde{\eta}}{2}})(C_1+C_{\triangle \hat{s}}+\sum_{j=\ell+1, \ j\neq i}^L C_j+\tilde{C}), \\
\kappa_i > C\kappa_{i+1}+O(s_0^{-\frac{\tilde{\eta}}{2}})(\kappa_1+C_{\triangle \hat{s}}+\sum_{j=\ell+1, \ j \neq i}^L \kappa_j+\tilde{C}),
\end{array}
\ee
the constant $C$ being independent on the constants of the Lemma.\\

\underline{Step 4:}  \emph{the last parameter $U_L$}. Similarly, we rewrite \fref{variete:eq:improved modulation} as:
$$
\begin{array}{r c l}
&\left| \frac{d}{ds}\left(s^{(L-\alpha)c_1-L)}\triangle \hat{U}_L+O(s^{(L-\alpha)c_1-L-\frac{\eta}{2}(1-\delta_0)}(\sqrt{\triangle_r \hat{\mathcal{E}_{s_L}}}+\frac{|\hat{s}'-s|}{s}+|\triangle \hat{U}|)) \right)\right| \\
\leq & Cs^{(L-\alpha)c_1-L-1}(s^{-\frac{\eta}{2}(1-\delta_0)}\sqrt{\triangle_r \hat{\mathcal{E}_{s_L}}}+s^{-\tilde{\eta}}(\frac{|\hat{s}'-s|}{s}+|\triangle \hat{U}|))
\end{array}
$$
because of the bound \fref{variete:eq:gain improved modulation} (the constant in the $O()$ being independent on the other constants of the lemma we are proving). Because $(L-\alpha)c_1-L>0$, when integrating this equation till time $s_1$ one gets:
$$
\begin{array}{r c l}
|\triangle \hat{U}_L(s_1)| &\leq& (1+O(s_0^{\frac{-\tilde{\eta}}{2}})(C_{\triangle \hat{s}}+C_1+\sum_{j=\ell+1}^L C_j+\tilde{C}))D_{\text{stab}}(s_0)\\
&&+O(s_0^{\frac{-\tilde{\eta}}{2}})(C_{\triangle \hat{s}}+\kappa_1+\sum_{i=\ell+1}^L \kappa_i+\tilde{C})\underset{s_0\leq s' \leq s, \ 2\leq i \leq \ell}{\text{sup}}|\triangle \hat{V}_i|.
\end{array}
$$
Thus the inequality \fref{variete:eq:bootstrap Ui} is strict at time $s_1$ provided:
\be \label{variete:bootstrap:condition UL}
\begin{array}{l l}
C_L>2+O(s_0^{-\frac{\tilde{\eta}}{2}})(C_1+C_{\triangle \hat{s}}+\sum_{j=\ell+1}^{L-1} C_j+\tilde{C}), \\
\kappa_L > O(s_0^{-\frac{\tilde{\eta}}{2}})(\kappa_1+C_{\triangle \hat{s}}+\sum_{j=\ell+1}^{L-1} \kappa_j+\tilde{C}),
\end{array}
\ee
the constants in the $O()$ being independent on the constants of the Lemma.\\

\underline{Step 5:}  \emph{the low Sobolev norm}. We consider the time evolution of the low Sobolev norm of the difference of the errors given by \fref{variete:eq:lowsobo}. Because $\lambda^{2(\sigma-s_c)}\sim c b_1^{2(\sigma-s_c)(1+\nu)}$ for some constant $c>0$ one can rewrite it as:
$$
\begin{array}{r c l}
\frac{d}{ds}\left\{ \frac{\triangle \mathcal{E}_{\sigma}}{\lambda^{2(\sigma-s_c)}} \right\} \leq Cb_1^{1+\frac{\alpha}{2L}} \left( \triangle_r \hat{\mathcal{E}}_{\sigma}+\triangle _r \hat{\mathcal{E}}_{s_L}+ (\underset{1\leq i \leq L}{\text{sup}}b_1^{-i}|\triangle \hat{b}_i|)^2\right).
\end{array}
$$
Now, in a similar way as we did in all the previous step, we inject the bootstrap bounds, and integrate this identity till time $s_1$, to find that the bound \fref{variete:eq:bootstrap triangle varepsilon low} is strict at time $s_1$ provided:
\be \label{variete:bootstrap:condition mathcalEsigma}
\tilde{C}>2+O(s_0^{-\frac{\alpha}{4L}})(C_1+C_{\triangle \hat{s}}+\sum_{j=\ell+1}^{L-1} C_j), 
\ee
the constants in the $O()$ being independent on the constants of the Lemma.\\

\underline{Step 6:}  \emph{the high Sobolev norm}. We consider the time evolution of the adapted high Sobolev norm of the difference of the errors given by \fref{variete:eq:high sobo}. We inject the control on the local term given by the Morawetz estimate \fref{variete:eq:morawetz}, knowing $|\mathcal{M}|\lesssim \triangle \hat{\mathcal{E}}_{s_L}$, and rewrite it as (taking $s_0$ large enough and using Young's inequality):
$$
\begin{array}{r c l}
&\left| \frac{d}{ds}\left\{ \frac{\triangle \hat{\mathcal{E}}_{s_L}}{\lambda^{2(s_L-s_c)}}+O \left( \frac{b_1^{2L+2(1-\delta_0)(1+\eta)}}{\lambda^{2(s_L-s_c)}}(\triangle_r \hat{\mathcal{E}}_{s_L}+|\underset{1\leq i \leq L}{\text{sup}}b_1^{-i}|\triangle \hat{b}_i||^2 \right) \right\} \right| \\
\leq& \frac{Cb_1^{2L+2(1-\delta_0)(1+\frac{\eta}{2})+1}}{\lambda^{2(s_L-s_c)}}\Bigl{[}\frac{\triangle \hat{\mathcal{E}}_{s_L}}{A^{\delta}}C(N) \\
&+\frac{C}{N^{\frac{\delta_0}{2}}}(\triangle_r \hat{\mathcal{E}}_{s_L}+\triangle_r \hat{\mathcal{E}}_{\sigma})+C(N,A))b_1^{\frac{\eta}{2}(1-\delta_0)}(\underset{1\leq i \leq L}{\text{sup}}b_1^{-i}|\triangle \hat{b}_i|)^2\Bigr{]}. \\
\end{array}
$$
We inject the bootstrap bounds \fref{variete:eq:bootstrap Ui}, \fref{variete:eq:bootstrap V1}, \fref{variete:eq:bootstrap s-hats'}, \fref{variete:eq:bootstrap triangle varepsilon high} and \fref{variete:eq:bootstrap triangle varepsilon low} in the previous identity and integrate this identity till time $s_1$ (we recall that $b_1\sim \frac{c}{s}$ and $\lambda \sim \frac{c}{s^{\frac{\ell}{\ell-\alpha}}}$):
\be \label{variete:bootstrap:condition mathcalEsL}
\begin{array}{r c l}
\triangle_r \hat{\mathcal{E}}_{s_L}(s_1) &\leq& C(D_{\text{stab}}(s_0)+\underset{s_0\leq s' \leq s, \ 2\leq i \leq \ell}{\text{sup}}|\triangle \hat{V}_i|) \Bigl{[} 1+\left(\frac{1}{N^{\frac{\delta_0}{2}}}+\frac{C(N)}{A^{\delta}}\right)\tilde{C}^2 \\
&&+O\left(\frac{\text{log}(s_0)}{s_0^{\frac{\eta}{2}(1-\delta_0)}}\right)(C_1^2+\sum_{\ell+1}^{L}C_i^2+C_{\triangle \hat{s}}^2)\Bigr{]}.
\end{array}
\ee
The $\kappa$'s do not appear as they are small. The constant $C$ is independent on the other constants. Thus, the bound \fref{variete:eq:bootstrap triangle varepsilon high} is strict at time $s_1$ provided:
$$ 
\tilde{C}^2>C\left[1+\left(\frac{1}{N^{\frac{\delta_0}{2}}}+\frac{C(N)}{A^{\delta}}\right)\tilde{C}^2 +O\left(\frac{\text{log}(s_0)}{s_0^{\frac{\eta}{2}(1-\delta_0)}}\right)(C_1^2+\sum_{\ell+1}^{L}C_i^2+C_{\triangle \hat{s}}^2)\right], 
$$
the constants in the $O()$ being independent on the other constants. Taking $s_0$, $N$, then $A$ large enough, the previous inequality is met if:
\be \label{variete:bootstrap:condition mathcalEsL}
\tilde{C}^2> C\left[1+O\left(\frac{\text{log}(s_0)}{s_0^{\frac{\eta}{2}(1-\delta_0)}}\right)(C_1^2+\sum_{\ell+1}^{L}C_i^2+C_{\triangle \hat{s}}^2)\right], 
\ee
for some constant $C$ independent on the other constants.\\
\underline{Step 7:} \emph{end of the proof}. We have seen that the bootstrap inequalities \fref{variete:eq:bootstrap Ui}, \fref{variete:eq:bootstrap V1}, \fref{variete:eq:bootstrap s-hats'}, \fref{variete:eq:bootstrap triangle varepsilon low} and \fref{variete:eq:bootstrap triangle varepsilon high} are strict at time $s_1$ provided that the conditions \fref{variete:bootstrap:condition hats'}, \fref{variete:bootstrap:condition V1}, \fref{variete:bootstrap:condition Ui}, \fref{variete:bootstrap:condition UL}, \fref{variete:bootstrap:condition mathcalEsigma} and \fref{variete:bootstrap:condition mathcalEsL} are met. Now, if one takes $s_0$ large enough, one can see that there exists constants $C_1$, $\tilde{C}$, $C_{\triangle \hat{s}}$, $(C_i)_{\ell+1\leq i \leq L}$, $\kappa_1\leq \kappa$, $(\kappa_i)_{1\leq i \leq L}$ with $\kappa_i\leq \kappa$ that satisfies all these conditions. Thus, if the time $s_1$ were finite, all the bootstrap bounds would be strict at this time, which is impossible from a continuity argument.
\end{proof}

Thanks to the previous Lemma we can now end the proof of Proposition \fref{variete:prop:parametres lipschitz}.

\begin{proof}[Proof of Proposition \fref{variete:prop:parametres lipschitz}] Let $\bos{U}$ and $\bos{U}'$ be two solutions satisfying the assumptions of Proposition \fref{variete:prop:parametres lipschitz}. We recall that $\triangle \hat{V}_{\text{uns}}$ is defined by \fref{variete:eq:def Vuns}. At time $s_0$ one has: $\triangle \hat{V}_{\text{uns}}=\triangle V_{\text{uns}}$. Let $i$ be an integer, $2\leq i \leq \ell$. As $\triangle \hat{V}_i$ is a linear combination of the $\triangle \hat{U}_j$ for $1\leq i \leq \ell$ only, see \fref{thetrapped:eq:def Vi}, and because of the shape of the matrix $A_{\ell}$, see \fref{linearized:eq:diagonalisation}, the identity \fref{variete:eq:expression dttriangleUi} gives that the time evolution of $\triangle \hat{V}_i$ is:
\be \label{variete:eq:evolution trianglehatVi}
\triangle \hat{V}_{i,s}=\mu_i \frac{\triangle \hat{V}_1}{s}+q_i\frac{\triangle U_{\ell+1}}{s}+O(|\frac{\hat{s}'-s}{s^{2+\tilde{\eta}}}|+\frac{|\triangle \hat{U}|}{s^{1+\tilde{\eta}}})+O(s^{-L-(1-\delta_0)(1+\frac{\eta}{2})+i}\sqrt{\triangle_r \hat{\mathcal{E}}_{s_L}}),
\ee
where $\mu_i>0$ denotes the $i$-th eigenvalue of the matrix $A_{\ell}$, see Lemma \ref{lem:linearisationsystemdyn}, and $q_i$ is some constant coefficient coming from the change of variables from $\triangle \hat{U}$ to $\triangle \hat{V}$. Now let $\mu:=\underset{2\leq i \leq \ell }{\text{min}}\mu_i$ and $q:=\underset{2\leq i \leq \ell }{\text{max}}|q_i|$. Using Cauchy-Schwarz inequality, the identity \fref{variete:eq:evolution trianglehatVi} gives for the evolution of the unstable parameters:
\be \label{variete:eq:dtVuns}
\begin{array}{r c l}
&\frac{d}{ds}|\triangle \hat{V}_{\text{uns}}|^2\\
\geq& \frac{|\triangle \hat{V}_{\text{uns}}|}{s}(\frac{\mu}{2} |\triangle \hat{V}_{\text{uns}}|-q|\triangle \hat{U}_{\ell+1}|-\frac{1}{s^{\frac{\tilde{\eta}}{2}}}(|\frac{\hat{s}'-s}{s}|+|\triangle \hat{V}_1+|\sum_{\ell+1}^L|\triangle \hat{U}_i|+\sqrt{\triangle_r \hat{\mathcal{E}}_{s_L}}) ),
\end{array}
\ee
if one has chosen $s_0$ big enough. Now, as $q$ and $\mu$ are fixed constants of the problem, one can ask that:
\be \label{variete:eq:choix kappa}
q\kappa<\frac{\mu}{10}.
\ee
Let the constants $\tilde{C}$, $C_1$, $(C_i)_{\ell +1\leq i \leq L}$, $C_{\triangle \hat{s}}$, $0<\kappa_1<\kappa$, $0<\kappa_i<\kappa$ for $\ell+1 \leq i\leq L$ and $\bar{s}$ be such that the previous Lemma \fref{variete:lem:bootstrap differences} holds. In particular, one can take $s_0$ big enough such that:
\be \label{variete:eq:choix s}
\frac{1}{s^{\frac{\tilde{\eta}}{2}}}(\text{log}(s)C_{\triangle \hat{s}}+\tilde{C}+\kappa_1+\sum_{i=\ell+1}^L \kappa_i)\leq \frac{\mu}{10}
\ee
We now argue by contradiction. Suppose one has at initial time:
\be \label{variete:eq:condition initiale Vuns}
|\triangle V_{\text{uns}}(s_0)| > \frac{10}{\mu}(C_1+\tilde{C}+C_{\triangle \hat{s}}+qC_{\ell+1}+\sum_{\ell+2}^L|\triangle \hat{U}_i|)  D_{\text{stab}}(s_0).
\ee
We are going to show that this leads to a contradiction. Indeed, \fref{variete:eq:dtVuns} implies that at initial time the differences of unstable modes are growing:
\be \label{variete:eq:croissance Vuns}
\frac{d}{ds}|\triangle \hat{V}_{\text{uns}}|^2>0.
\ee
Let $s_1$ denote the supremum of all times $s$ with $s_0\leq s$ such that \fref{variete:eq:croissance Vuns} holds on $[s_0,s_1]$. We are going to prove that $s_1=+\infty$. Indeed, suppose $s_1$ were finite. Then at time $s_1$ one has:
$$
\underset{s_0\leq s' \leq s_1, \ 2\leq i \leq \ell}{\text{sup}}|\triangle \hat{V}_i| \leq |\triangle \hat{V}_{\text{uns}}(s_1)|
$$
because of the monotonicity \fref{variete:eq:croissance Vuns} on $[s_0,s_1]$. Injecting the bounds \fref{variete:eq:bootstrap triangle varepsilon high}, \fref{variete:eq:bootstrap s-hats'}, \fref{variete:eq:bootstrap V1} and \fref{variete:eq:bootstrap Ui} in \fref{variete:eq:dtVuns} give, because of the inequalities \fref{variete:eq:choix kappa} and \fref{variete:eq:choix s} between the constants:
$$
\begin{array}{r c l}
&\frac{d}{ds}|\triangle \hat{V}_{\text{uns}}|^2\\
\geq& \frac{\mu|\triangle \hat{V}_{\text{uns}}|}{2s}\Bigl{(}  |\triangle \hat{V}_{\text{uns}}|(1-\frac{2q\kappa_{\ell+1}}{\mu}-\frac{2}{\mu s^{\frac{\tilde{\eta}}{2}}}(\text{log}(s)C_{\triangle \hat{s}}+\tilde{C}+\kappa_1+\sum_{i=\ell+1}^L \kappa_i))\\
&-\frac{2}{\mu}(qC_{\ell+1}+\frac{1}{s^{\frac{\tilde{\eta}}{2}}}(\text{log}(s)C_{\triangle \hat{s}}+\tilde{C}+C_1+\sum_{i=\ell+1}^L C_i))D_{\text{stab}}(s_0)  \Bigr{)} \\
\geq &\frac{\mu|\triangle \hat{V}_{\text{uns}}|}{2s}\Bigl{(}  |\triangle \hat{V}_{\text{uns}}|\frac{1}{2} \\
&-\frac{2}{\mu}(qC_{\ell+1}+\frac{1}{s^{\frac{\tilde{\eta}}{2}}}(\text{log}(s)C_{\triangle \hat{s}}+\tilde{C}+C_1+\sum_{i=\ell+1}^L C_i))D_{\text{stab}}(s_0)  \Bigr{)}.
\end{array}
$$
But because $|\triangle \hat{V}_{\text{uns}}|$ is increasing on $[s_0,s_1]$, and because at initial time \fref{variete:eq:condition initiale Vuns} holds, one has:
$$
|\triangle \hat{V}_{\text{uns}}(s_1)|\frac{1}{2}-\frac{2}{\mu}(qC_{\ell+1}+\frac{1}{s^{\frac{\tilde{\eta}}{2}}}(\text{log}(s)C_{\triangle \hat{s}}+\tilde{C}+C_1+\sum_{i=\ell+1}^L C_i))D_{\text{stab}}(s_0)>0
$$
which in turn implies that at time $s_1$: $\frac{d}{ds}|\triangle \hat{V}_{\text{uns}}|^2>0$, contradicting the definition of $s_1$. Hence $s_1=+\infty$. But if $s_1=+\infty$, that means that $|\triangle \hat{V}_{\text{uns}}|$ does not converge toward $0$. This is the desired contradiction, because as $\bos{U}$ and $\bos{U}'$ stay in the trapped regime, this should be true.
\end{proof}


\subsection{Removal of extra assumptions, end of the proof of Theorem \ref{variete:thm;variete}}

In the proof of Proposition \ref{prop:bootstrap}, we have seen that in order to control the projection of a solution on the first $L$ iterates of the kernel of $\bos{H}$, one needs to control the $k_0+1+L$ adapted derivative of $\bos{\varepsilon}$. Therefore, we will decompose only on the first $L-1$ modes, which will allow us to work with the $k_0+L$-th adapted derivative, while keeping the bound \fref{variete:eq:estimation supplementaire varepsilon'} for the $k_0+1+L$-th one. It will allow us to remove the regularity assumption \fref{variete:eq:estimation supplementaire varepsilon'} in Proposition \ref{variete:prop:parametres lipschitz}. An other extra assumption in this proposition was the fact that the two solutions started with the same scale, what we will also remove. Our main result is the following improvement of Proposition \fref{variete:prop:parametres lipschitz}:

\begin{proposition}
\label{variete:prop:parametres lipschitz 2}
Suppose $\boldsymbol{U}(s_0)=\Bigl{(}\tilde{\boldsymbol{Q}}_{b,\frac{1}{\lambda}}+\boldsymbol{w}\Bigr{)}(s_0)$, $\boldsymbol{U'}(s_0)=\Bigl{(}\tilde{\boldsymbol{Q}}_{b',\frac{1}{\lambda'}}+\boldsymbol{w'}\Bigr{)}(s_0)$ are two initial data whose solutions stay in the trapped regime described by Proposition \ref{prop:bootstrap}. Suppose that they are close initially, that is to say that:
\be
b(s_0)=b^e(s_0)+\left(\frac{U_1(s_0)}{s_0},...,\frac{U_L(s_0)}{s_0^L}\right), b'(0)=b^e(s_0)+\left(\frac{U'_1(s_0)}{s_0},...,\frac{U'_L(s_0)}{s_0^L}\right).
\ee
Suppose that the scales are close to one:
\be \label{variete:removal:eq:lambda sim 1}
|\lambda(s_0)-1|+|\lambda'(s_0)-1|\leq s_0^{-L}
\ee
Then there exists $C>0$ such that for $s_0$ small enough the following bound holds:
\be
\begin{array}{r c l}
|\triangle V_{\text{uns}}(s_0)|&\leq & C\Bigl( |\triangle V_1(s_0)|+\sum_{\ell+1}^L|\triangle U_i(s_0)|+|\lambda'(s_0)-\lambda(s_0)|\\
&& C(s_0) \parallel \bos{w}(s_0)-\bos{w}'(s_0)  \parallel_{\dot{H}^{\sigma}\cap\dot{H}^{s_L}\times \dot{H}^{\sigma-1}\cap\dot{H}^{s_L-1}}   \Bigr).
\end{array}
\ee
\end{proposition}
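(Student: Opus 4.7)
The plan is to reduce to the situation of Proposition \ref{variete:prop:parametres lipschitz} by performing two preliminary adjustments: first a common rescaling so that the two solutions start at the same scale, and second a lower-order modulation decomposition that absorbs the last parameter $b_L$ into the error term, thereby producing automatically the extra regularity \fref{variete:eq:estimation supplementaire varepsilon'} required for the previous proposition.

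\emph{Step 1: common rescaling.} Since $|\lambda(s_0)-\lambda'(s_0)|\le 2s_0^{-L}$, I would rescale $\bos{U}'$ by a fixed factor $\lambda_{\ast}=\lambda(s_0)/\lambda'(s_0)=1+O(s_0^{-L})$ to obtain a new solution $\bos{U}''$ of (NLW) with $\lambda''(s_0)=\lambda(s_0)$. Because the scaling is a symmetry of (NLW) and of our adapted norms up to harmless powers of $\lambda_{\ast}$, the initial data $\bos{U}''(s_0)$ differs from $\bos{U}'(s_0)$ only by $O(|\lambda'(s_0)-\lambda(s_0)|)$ in any Sobolev norm controlled by the bootstrap. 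In particular, both the differences of the stable and unstable parameters and the difference of the errors at time $s_0$ are modified only by $O(|\lambda'(s_0)-\lambda(s_0)|+C(s_0)\|\bos{w}(s_0)-\bos{w}'(s_0)\|)$, so it suffices to prove the Lipschitz bound between $\bos{U}$ and $\bos{U}''$.

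\emph{Step 2: lower-order modulation.} I would redo the whole modulation analysis of Subsubsection 3.1.2 using only the first $L-1$ orthogonality conditions $\langle\bos{\varepsilon},\bos{H}^{\ast i}\bos{\Phi}_M\rangle=0$ for $0\le i\le L-1$. The implicit function theorem then yields a decomposition $\bos{U}=(\tilde{\bos{Q}}_{\overline{b}}+\overline{\bos{\varepsilon}})_{1/\overline{\lambda}}$ with $\overline{b}=(\overline{b}_1,\dots,\overline{b}_{L-1})$ and $\overline{\bos{\varepsilon}}\perp\{\bos{H}^{\ast i}\bos{\Phi}_M\}_{0\le i\le L-1}$. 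The point is that with $L-1$ modulation parameters the error is projected out of $L-1$ directions only, so the relation between the two decompositions is $\overline{\bos{\varepsilon}}=\bos{\varepsilon}+b_L\bos{T}_L+(\text{homogeneous terms of degree }\ge L)$ modulo localisation, and all previous norms $\mathcal{E}_{s_L}$, $\mathcal{E}_{s_L+1}$ of $\overline{\bos{\varepsilon}}$ are controlled by $b_L+\sqrt{\mathcal{E}_{s_L}}+\sqrt{\mathcal{E}_{s_L+1}}$ of the original $\bos{\varepsilon}$. Since in the trapped regime $|b_L|\lesssim s^{-L-\tilde\eta}$, the required extra regularity bound \fref{variete:eq:estimation supplementaire varepsilon'} for $\overline{\bos{\varepsilon}}'$ holds automatically with the same powers of $b_1$ up to a multiplicative constant.

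\emph{Step 3: application of Proposition \ref{variete:prop:parametres lipschitz} at level $L-1$.} For the $(L-1)$-parameter decomposition I rerun verbatim the proof of Proposition \ref{variete:prop:parametres lipschitz}: the modulation, improved modulation, low Sobolev, high Sobolev and Morawetz lemmas all have their $(L-1)$-analogues proved exactly the same way, since $L$ was only required to be large enough. This gives the Lipschitz bound
$$|\triangle\overline{V}_{\text{uns}}(s_0)|\le C\Bigl(|\triangle\overline{V}_1(s_0)|+\sum_{\ell+1}^{L-1}|\triangle\overline{U}_i(s_0)|+\sqrt{\triangle_r\overline{\mathcal{E}}_\sigma(s_0)}+\sqrt{\triangle_r\overline{\mathcal{E}}_{s_{L-1}+1}(s_0)}\Bigr).$$

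\emph{Step 4: translation back.} Finally I would express $\triangle V_{\text{uns}}$, $\triangle V_1$, $\triangle U_i$ for $\ell+1\le i\le L$ in the original $L$-decomposition as Lipschitz functions of the $(L-1)$-quantities plus $\triangle b_L$, using that the map $(b,\bos{\varepsilon})\mapsto(\overline{b},\overline{\bos{\varepsilon}})$ is an explicit polynomial/linear change of variables with Lipschitz inverse controlled by the bootstrap. The Sobolev norms $\overline{\mathcal{E}}_\sigma$, $\overline{\mathcal{E}}_{s_{L-1}+1}$ of the new error difference are comparable to $\mathcal{E}_\sigma$, $\mathcal{E}_{s_L}$ of the old one plus $|\triangle b_L|$, which yields the claimed bound.

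The main obstacle I expect is Step 2: one has to check carefully that reducing the number of orthogonality conditions from $L$ to $L-1$ does not destroy the coercivity and modulation estimates at the level $k_0+L$ instead of $k_0+L+1$. In particular the improved modulation equation for the new last parameter $\overline{b}_{L-1}$ must provide a time-integrable gain analogous to Lemma \ref{trappedregime:improvedmodulation:lem:improvedmodulation}, which requires the profile $\bos{H}^{L-1}\overline{\bos{\varepsilon}}$ to be controlled in $L^2_{\text{loc}}$ by $\overline{\mathcal{E}}_{s_{L-1}+1}$ rather than $\overline{\mathcal{E}}_{s_{L-1}}$; this is exactly why working one derivative lower than the maximal regularity of the solutions is the correct level to use.
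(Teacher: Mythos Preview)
Your overall strategy matches the paper's: pass to the lower-order decomposition at level $\bar L=L-1$ so that the ``extra regularity'' assumption \fref{variete:eq:estimation supplementaire varepsilon'} at level $s_{\bar L}+1=s_L$ is exactly the bound already provided by the trapped regime at level $L$, then apply Proposition~\ref{variete:prop:parametres lipschitz} and translate back. One remark on Step~2: your sentence that $\overline{\mathcal E}_{s_L}$, $\overline{\mathcal E}_{s_L+1}$ are controlled by $\mathcal E_{s_L}+\mathcal E_{s_L+1}$ of the original $\bos\varepsilon$ is not what you want---we do not have $\mathcal E_{s_L+1}$ for the original error, and the whole point is that we do not need it. What you need (and what you correctly write in Steps~3--4) is that $\overline{\mathcal E}_{s_{\bar L}}$ and $\overline{\mathcal E}_{s_{\bar L}+1}=\overline{\mathcal E}_{s_L}$ are controlled by the original $\mathcal E_\sigma$, $\mathcal E_{s_L}$ and $|b_L|$; this is the content of Lemma~\ref{variete:lem:borne sur changement epsilon epsilonbar}.

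The genuine difference is your Step~1. The paper does \emph{not} rescale; instead, after passing to the lower-order decomposition, it lets the second solution evolve forward in time until the instant $\bar s_1'$ at which $\bar\lambda'(\bar s_1')=\bar\lambda(s_0)$, and then applies Proposition~\ref{variete:prop:parametres lipschitz} to $\overline{\bos U}(s_0)$ and $\overline{\bos U}'(\bar s_1')$. To control how much $\overline{\bos U}'$ moved between $s_0$ and $\bar s_1'$, the paper bounds $\tfrac{d}{d\bar s'}(\bar w'^{(1)}_{s_{\bar L}},\bar w'^{(2)}_{s_{\bar L}-1})$ in $L^2$, and this is precisely where the extra derivative $\overline{\mathcal E}_{s_{\bar L}+1}$ is used (to control the linear part $\mathcal L\bar w'^{(1)}$, which does not cancel here as it did in the energy identity). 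Your fixed-rescaling approach is plausible but not self-evident: after rescaling by $\lambda_\ast$ you must check that the rescaled solution, with its shifted renormalized time, still satisfies the trapped-regime bounds \fref{eq:bootstrap modes instables}--\fref{eq:bootstrap estimations sur epsilon} relative to the \emph{same} $s_0$, and you must quantify how the modulation parameters move under this rescaling. The paper's time-evolution device avoids this bookkeeping and makes the role of the extra regularity transparent.
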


\subsubsection{Lower order decomposition}

We start by lowering the number of modes on which we project on the manifold of approximate solutions $(\tilde{\bos{Q}}_{b,\lambda})_{b,\lambda}$. We let:
\be
\bar{L}=L-1.
\ee

\begin{definition}[Lower order decomposition] \label{variete:def:decomposition adaptee varepsilon}
Suppose $\boldsymbol{U}=\tilde{\boldsymbol{Q}}_{b,\frac{1}{\lambda}}+\boldsymbol{w}=(\tilde{\boldsymbol{Q}}_{b}+\boldsymbol{w})_{\frac{1}{\lambda}}$ is a solution satisfying the assumptions of Proposition \ref{variete:prop:parametres lipschitz 2}. We define the $\bar L$-tuple of real numbers $\bar{b}$, the scale $\bar{\lambda}$, and the error terms $\overline{\bos{\varepsilon}}$ and $\overline{\bos{w}} $ by:
\be
\bos{U}(t)=\tilde{\bos{Q}}_{\bar{b},\frac{1}{\bar{\lambda}}}+\overline{\bos{w}}(t)= (\tilde{\bos{Q}}_{\bar{b}}+\overline{\bos{\varepsilon}}(\bar{s}))_{\frac{1}{\bar{\lambda}}},
\ee
where $\overline{\bos{\varepsilon}}$ satisfies the $L$ orthogonality conditions:
\be
\langle \overline{\bos{\varepsilon}},\bos{H}^{*i}\bos{\Phi}_M \rangle= 0, \ for \ 0 \leq i \leq L-1.
\ee
The renormalized time is given by:
\be 
\bar{s}:=s_0+\int_0^t \frac{1}{\bar{\lambda}(\tau)}d\tau.
\ee
This decomposition is possible for $\bos{U}$ because as it is a solution given by Proposition \ref{prop:bootstrap}, the result of subsubsection \ref{thetrapped:subsubsection:modulation} applies for the integer $\bar L$. We then define the tuples of parameters $\bar{U}$ and $\bar{V}$ as ($\bar{P}_{\ell}$ being the analogue of $P_{\ell}$ defined by \fref{linearized:eq:def P}):
\be
\bar{U}_i:=\bar{s}^i (\bar{b}_i-\frac{c_i}{\bar{s}^i}), \ \text{for} \ 1\leq i \leq L, \ \text{and} \ \bar{V}:=\bar{P}_{\ell}(\bar{U}).
\ee
\end{definition}

We introduce the following notation for the norms of $\overline{\bos{\varepsilon}}$:
\be 
\overline{\mathcal{E}}_{\sigma}:=\int |\nabla^{\sigma}\bar{\varepsilon}^{(1)}|^2 +|\nabla^{\sigma-1}\bar{\varepsilon}^{(2)}|^2, \
\overline{\mathcal{E}}_{i}:=\int |\bar{\varepsilon}^{(1)}_{i}|^2 +|\nabla^{\sigma-1}\bar{\varepsilon}^{(2)}_{i-1}|^2, \ i=s_{\bar{L}},s_{\bar{L}}+1.
\ee
This decomposition works as follows: we have approximately $\bar{b}\sim(b_1,...,b_{L-1})$ and $\overline{\bos{\varepsilon}}\sim \bos{\varepsilon}+b_L\bos{T}_L$. The bounds of the trapped regime for the original decomposition transform into bounds for the lower order decomposition. This way we obtain a solution of the trapped regime (with respect to the integer $\bar{L}$ instead of $L$) with the extra higher regularity bound \fref{variete:eq:estimation supplementaire varepsilon'}: this is the type of solution for which we proved a primary Lipschitz bound in Proposition \ref{variete:prop:parametres lipschitz}.

\begin{lemma}[Bounds for the lower order decomposition]
\label{variete:lem:borne sur changement epsilon epsilonbar}
We keep the assumptions and notations of Definition \ref{variete:def:decomposition adaptee varepsilon}. The following estimates for $0\leq t < T$ hold:
\begin{itemize}
\item[(i)] \emph{Global closeness for the parameters:} The renormalized time satisfies: 
\be 
\bar{s}=s+O\left(\frac{1}{s_0^{L}}\right).
\ee
For all $1\leq i \leq L-1$ there holds:
\be \label{variete:lower:proximite U}
|U_i-\bar{U}_i|= O(\bar{s}^{-1}).
\ee
These two bounds imply in particular that:
\be 
\bar{b}_1\sim b_1.
\ee
\item[(ii)] \emph{Bounds for the high adapted derivatives:} for $i=0,1$ one has
\be  \label{variete:lower:eq:proximite high}
\overline{\mathcal{E}}_{s_{\bar{L}}+i}\leq C(L,M)K_2 \bar{b}_1^{\bar{L}+i+(1-\delta_0)(1+\eta)}.
\ee
\item[(iii)] \emph{Bound at $\sigma$ level of regularity:}
\be 
\overline{\mathcal{E}}_{\sigma}\leq C(L,M)K_1\bar{b}_1^{2(\sigma-s_c)(1+\nu)}.
\ee
\end{itemize}
\end{lemma}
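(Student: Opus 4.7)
My plan is to relate the two decompositions via the identity
\begin{equation*}
(\tilde{\bos{Q}}_{\bar b}+\overline{\bos{\varepsilon}})_{\frac{1}{\bar\lambda}}=(\tilde{\bos{Q}}_b+\bos{\varepsilon})_{\frac{1}{\lambda}}
\end{equation*}
and exploit the fact that the new decomposition simply absorbs the $b_L$-mode and the $b_L$-dependent parts of the $\bos{S}_j$'s into the error. Existence and uniqueness of $(\bar\lambda,\bar b,\overline{\bos\varepsilon})$ at initial time follows from the implicit function theorem exactly as in Subsubsection \ref{thetrapped:subsubsection:modulation}, applied this time to the $L$-parameter map $(\lambda,b_1,\dots,b_{L-1})\mapsto(\langle\tilde{\bos{Q}}_{(b_1,\ldots,b_{L-1},0),\frac1\lambda},\bos{H}^{*i}\bos{\Phi}_M\rangle)_{0\leq i\leq L-1}$; the hypothesis \fref{variete:removal:eq:lambda sim 1} guarantees one is inside the chart. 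Once this is done, the $C^1$ propagation of the decomposition uses the modulation Lemma \ref{lem:modulation} for $\bar L=L-1$ instead of $L$.

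For part (i), I would project the identity above against $\bos{H}^{*i}\bos{\Phi}_M$, $0\leq i\leq L-1$. The orthogonality conditions \fref{eq:condition d'ortho pour epsilon} for $\bos\varepsilon$ and for $\overline{\bos\varepsilon}$ kill the error contributions, so one obtains a linear system on $(\bar\lambda-\lambda,\bar b_i-b_i)_{1\leq i\leq L-1}$ whose leading part is (via \fref{thetrapped:eq:orthogonalite PhiM}) the matrix $\langle\chi_M\bos{\Lambda}\bos{Q},\bos{\Lambda}\bos{Q}\rangle\,\mathrm{Id}$, and whose right-hand side collects all the $b_L$-dependent pieces of $\tilde{\bos{Q}}_b$: since $\bos{H}^{*i}\bos{\Phi}_M$ is orthogonal to $\bos{T}_j$ for $j\neq i$, the dominant contribution is $O(b_L)$ from the $\bos S_j$ terms that depend on $b_L$, together with the quadratic corrections from $\tilde{\bos{Q}}_b-\tilde{\bos{Q}}_{(b_1,\ldots,b_{L-1},0)}$. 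This yields $|\bar\lambda-\lambda|+\sum_{1\leq i\leq L-1}s^i|\bar b_i-b_i|\lesssim |b_L|\lesssim s^{-L}$, which is precisely \fref{variete:lower:proximite U}. The bound on $\bar s-s$ follows by integrating $\frac{d}{dt}(\bar s-s)=\lambda^{-1}(1-\lambda/\bar\lambda)=O(\lambda^{-1}\cdot s^{-L})$ in $t$ and using the asymptotic $\lambda\sim(T-t)^{\ell/\alpha}$ from Section \ref{end:sec:end of the proof}; taking $L$ large makes the integral bounded of order $s_0^{-L}$.

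For parts (ii) and (iii), I would rewrite
\begin{equation*}
\overline{\bos\varepsilon}=\bos\varepsilon_{\lambda/\bar\lambda}+\bigl(\tilde{\bos{Q}}_{b,\frac{\lambda}{\bar\lambda}}-\tilde{\bos{Q}}_{\bar b}\bigr)
\end{equation*}
and bound each piece separately. The first term is controlled by the bootstrap bounds \fref{eq:bootstrap estimations sur epsilon} of Proposition \ref{prop:bootstrap} (with trivial loss from the close-to-identity scaling $\lambda/\bar\lambda=1+O(s^{-L})$). The second term is handled by splitting it into the contribution of $b_L\chi_{B_1}\bos T_L$ (the piece dropped from the $L$-term sum in \fref{eq:def:Qb}), the contributions of the $\bos S_j$'s that depend on $b_L$, and a smooth remainder proportional to $(b_i-\bar b_i)$ for $i<L$. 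Each of these is estimated exactly as in the proof of Proposition \ref{pr:profilapprochecoupe}: using the homogeneity Lemma \ref{lem:calculsurfonctionsadmissibles}, the adapted derivatives of $b_L\chi_{B_1}\bos T_L$ at level $s_{\bar L}+i$ are computed via \fref{eq:actiondeAA*surchif} and the asymptotic \fref{linearized:eq:asymptotique infini fonction admissible}, and the cutoff at $B_1$ provides the same $b_1^{(1-\delta_0)(1+\eta)}$ gain as in \fref{linearized:eq:estimation finale psib}. Since $|b_L|\lesssim s^{-L}$, this produces the bound \fref{variete:lower:eq:proximite high} for the high Sobolev energy, and an analogous computation in $\dot H^\sigma\times\dot H^{\sigma-1}$ gives part (iii).

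The main obstacle is keeping the estimate on the second piece sharp at the highest level of regularity $s_{\bar L}+1=s_L$: there $\bos\varepsilon$ is controlled only by $K_2 b_1^{2(L+(1-\delta_0)(1+\eta))}$, so one cannot afford any loss, and in particular the cross terms $(b_i-\bar b_i)\partial_{b_i}\tilde{\bos{Q}}$ must be shown to contribute only at the $b_L$-level and not worse. This forces one to use that in part (i) the parameter differences satisfy the improved bound $|\bar b_i-b_i|\lesssim b_1^L$ (not merely $b_1^{L-i}$), which is exactly what the orthogonality-based system delivers since the $\bos{H}^{*i}\bos{\Phi}_M$ are almost orthogonal to $\bos T_j$ for $j<L$. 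The rest of the proof is a routine repetition of the computations in Propositions \ref{pr:constructionprofilnoncoupe} and \ref{pr:profilapprochecoupe}, now applied to the $\bar L$-tuple $\bar b$.
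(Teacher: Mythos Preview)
Your approach to part (i) and to part (ii) at level $s_{\bar L}+1=s_L$ is essentially the paper's, and those pieces go through. The gap is in your treatment of part (ii) at level $s_{\bar L}$ (the case $i=0$), where the direct estimation you propose does not close.

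Concretely, the dominant profile contribution to $\overline{\bos\varepsilon}$ at that level is $b_L\chi_{B_1}\bos T_L$. Since $T_L$ is admissible of degree $(L,L,1)$, one has $(T_L)_{s_{\bar L}-1}=(T_L)_{k_0+L-1}=O(y^{-\gamma-k_0})$, hence
\[
\parallel b_L(\chi_{B_1}T_L)_{s_{\bar L}-1}\parallel_{L^2}^2\lesssim |b_L|^2\,B_1^{2\delta_0}\lesssim b_1^{\,2L+2\tilde\eta-2\delta_0(1+\eta)}.
\]
The target bound \fref{variete:lower:eq:proximite high} at $i=0$ is $b_1^{\,2\bar L+2(1-\delta_0)(1+\eta)}=b_1^{\,2L-2\delta_0+2\eta(1-\delta_0)}$; matching exponents forces $\tilde\eta\geq\eta$, which contradicts the choice $\tilde\eta<\eta(1-\delta_0)$ made in Section~\ref{end:sec:end of the proof}. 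Moreover, the $\bos\varepsilon$ contribution itself is problematic: you only control $\parallel\varepsilon^{(1)}_{s_L}\parallel_{L^2}$ and $\parallel\varepsilon^{(2)}_{s_L-1}\parallel_{L^2}$ via $\mathcal E_{s_L}$, not one derivative lower, so there is no direct $s_{\bar L}$-bound for $\bos\varepsilon$ either.

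The paper handles this not by sharpening the static estimates but dynamically: it checks that at the initial time $s_0$ the stronger decay $|b_L(s_0)|\lesssim s_0^{-(L-\alpha)c_1}$ from \fref{eq:estimation initiale des modes stables} puts $\overline{\bos\varepsilon}(s_0)$ inside the trapped regime of Proposition~\ref{prop:bootstrap} for the integer $\bar L$; then, since the unstable modes $\bar V_2,\dots,\bar V_\ell$ are already controlled for all times by part (i), the bootstrap of Section~\ref{end:sec:end of the proof} (applied with $\bar L$ in place of $L$) propagates the $s_{\bar L}$-energy bound for free. Your discussion of the ``main obstacle'' misplaces it at level $s_L$ (where the cutoff localization of $(\chi_{B_1}T_L)_{s_L-1}$ to $B_1\leq y\leq 2B_1$ in fact gives a spare $b_1^{2\tilde\eta}$) and misses this dynamical ingredient entirely.
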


We denote the canonical projection from $\mathbb{R}^{L}$ to $\mathbb{R}^{\bar{L}}$ by:
\be 
\pi \ : \ (b_1,...,b_L)\mapsto (b_1,...,b_{L-1}).
\ee
The difference between $\tilde{\bos{Q}}_b$ and $\tilde{\bos{Q}}_{\pi(b)}$ is denoted by:
\be \label{variete:eq:def barSL+1}
\tilde{\bos{Q}}_b=\tilde{\bos{Q}}_{\pi(b)}+\chi_{B_1}(b_L \bos{T}_L+\bos{S}_{L+2}+\bos{S}_{L+1}-\overline{\bos{S}}_{L+1})
\ee
where $\bos{S}_{L+1}$ is given by Proposition \textbf{mettre renvoi} for the L-tuple $b$ and $\overline{\bos{S}}_{L+1}$ is the profile given by the same proposition, but for the $L-1$ tuple $\pi (b)$.

\begin{proof}[Proof of Lemma \ref{variete:lem:borne sur changement epsilon epsilonbar}]
\underline{Proof of (i):}\\
$\bullet $ \emph{Step 1:} primary bound. We claim that for all $0\leq t <T$:
\be \label{variete:eq:borne meme temps decomposition lambda} 
\left| \frac{\bar{\lambda}(t)}{\lambda(t)}-1 \right|\leq C(L,M)b_1(t)^{L+1},
\ee
\be \label{variete:eq:borne meme temps decomposition b}
\left| b_i(t)-\bar{b}_i(t) \right|\leq C(L,M)b_1(t)^{L+1}.
\ee
We start by proving these two estimates. $\overline{\bos{\varepsilon}}$ is given by:
\be \label{variete:eq:expression barvarepsilon}
\begin{array}{r c l}
\overline{\bos{\varepsilon}}&=&\bos{\varepsilon}_{\frac{\bar{\lambda}}{\lambda}}+(\tilde{\bos{Q}}_{\frac{\bar{\lambda}}{\lambda}}-\bos{Q})+(\tilde{\bos{\alpha}}_{\pi(b),\frac{\bar{\lambda}}{\lambda}}-\tilde{\bos{\alpha}}_{\bar{b}})\\
&&+(\chi_{B_1}(b_L\bos{T}_L+\bos{S}_{L+2}+\bos{S}_{L+1}-\overline{\bos{S}}_{L+1}))_{\frac{\bar{\lambda}}{\lambda}}.
\end{array}
\ee
We take the scalar product between $\overline{\bos{\varepsilon}}$ and $\bos{H}^{*i}\bos{\Phi}_M$ for $0\leq i \leq \bar{L}$. For $i=0$ we obtain a bound for the scaling.
$$
-\langle \bos{Q}_{\frac{\bar{\lambda}}{\lambda}}-\bos{Q},\bos{\Phi}_M \rangle = \langle \bos{\varepsilon}_{\frac{\bar{\lambda}}{\lambda}}+ \tilde{\bos{\alpha}}_{\pi(b),\frac{\bar{\lambda}}{\lambda}}-\tilde{\bos{\alpha}}_{\bar{b}}+(b_L\bos{T}_L+\bos{S}_{L+2}+\bos{S}_{L+1} -\overline{\bos{S}}_{L+1})_{\frac{\bar{\lambda}}{\lambda}},\bos{\Phi}_M \rangle.
$$ 
The left hand side is:
$$
\langle \bos{Q}_{\frac{\bar{\lambda}}{\lambda}}-\bos{Q},\bos{\Phi}_M \rangle=(\frac{\bar{\lambda}}{\lambda}-1)\langle \bos{\Lambda} \bos{Q},\bos{\Phi}_M \rangle+O(|\frac{\bar{\lambda}}{\lambda}-1|^2)
$$
We now look at the terms in the right hand side. Performing a change of variables:
$$
\langle \bos{\varepsilon}_{\frac{\bar{\lambda}}{\lambda}},\bos{\Phi}_M\rangle=\left( \frac{\lambda}{\bar{\lambda}}\right)^{d-\frac{4}{p-1}}\langle \bos{\varepsilon},\bos{\Phi}_{M,\frac{\lambda}{\bar{\lambda}}}\rangle=O\left( b_1^{L+(1-\delta_0)(1+\eta))}(\frac{\bar{\lambda}}{\lambda}-1) \right).
$$
For the second term we decompose:
$$
\langle \tilde{\bos{\alpha}}_{\pi(b),\frac{\bar{\lambda}}{\lambda}}-\tilde{\bos{\alpha}}_{\bar{b}},\bos{\Phi}_M \rangle= \langle \tilde{\bos{\alpha}}_{\pi(b),\frac{\bar{\lambda}}{\lambda}}-\tilde{\bos{\alpha}}_{\pi (b)}+\tilde{\bos{\alpha}}_{\pi (b)}-\tilde{\bos{\alpha}}_{\bar{b}},\bos{\Phi}_M \rangle.
$$
There holds for the first part:
$$
\langle \tilde{\bos{\alpha}}_{\pi (b),\frac{\bar{\lambda}}{\lambda}}-\tilde{\bos{\alpha}}_{\pi(b)},\bos{\Phi}_M\rangle = O\left( b_1^2(\frac{\bar{\lambda}}{\lambda}-1) \right).
$$
For the second part, because of the orthogonality property \fref{thetrapped:eq:orthogonalite PhiM}:
$$
\langle \tilde{\bos{\alpha}}_{\pi (b)}-\tilde{\bos{\alpha}}_{\bar{b}},\bos{\Phi}_M \rangle =\left\langle \sum_{i=2}^{L-2} \bos{S}_i(\pi(b))-\bos{S}_i(\bar{b})+\overline{\bos{S}}_{L+1}(\pi (b))-\overline{\bos{S}}_{L+1}(\bar{b}),\bos{\Phi}_M\right\rangle,
$$
where we recall that $\overline{\bos{S}}_{L+1}$ is defined in \fref{variete:eq:def barSL+1}. All these terms are of the form:
$$
\int \Phi_M f (\prod_1^{L-1}b_i^{J_i}-\prod_1^{L-1}\bar{b}_i^{J_i})
$$
where $|J|_2\geq 2$ (the notation for the tupples are defined in \textbf{mettre ref}) and $f$ is bounded. The bound \fref{variete:eq:difference polynomes parametres} on the difference of polynomials of the $b_i$'s then gives:
$$
\langle \tilde{\bos{\alpha}}_{\pi (b)}-\tilde{\bos{\alpha}}_{\bar{b}},\bos{\Phi}_M \rangle=O(b_1\text{sup}(|b_i-\bar{b}_i|)).
$$
The last term gives:
$$
\langle (\chi_{B_1}(b_L\bos{T}_L+\bos{S}_{L+2}+\bos{S}_{L+1}-\overline{\bos{S}}_{L+1}))_{\frac{\bar{\lambda}}{\lambda}},\bos{\Phi}_M \rangle=O(b_1^{L+1}).
$$
Put together, all the previous computations yield:
\be \label{variete:eq:controle diff lambda decomposition}
(\frac{\bar{\lambda}}{\lambda}-1)=O(b_1^{L+1})+O(b_1 sup(|b_i-\bar{b}_i|)).
\ee
Similarly, taking the scalar product of \fref{variete:eq:expression barvarepsilon} with $\bos{H}^{*i}\bos{\Phi}_M$ for $1\leq i\leq \bar L$ yields:
\be \label{variete:eq:controle diff b decomposition}
(b_i-\bar{b}_i)=O(b_1^{L+1})+O(b_1sup(|b_i-\bar{b}_i|))+O\left(\left(b_1+|\frac{\bar{\lambda}}{\lambda}-1|\right)|\frac{\bar{\lambda}}{\lambda}-1|\right) .
\ee
By summing \fref{variete:eq:controle diff b decomposition} and \fref{variete:eq:controle diff lambda decomposition} one obtains the primary bounds we claimed: \fref{variete:eq:borne meme temps decomposition b} and \fref{variete:eq:borne meme temps decomposition lambda}.\\
$\bullet$ \emph{Step 2:} integration of the primary bounds. Equation \fref{variete:eq:borne meme temps decomposition lambda} gives a control on the renormalized time difference:
$$
\frac{d\bar{s}}{ds}= \frac{d \bar{s}}{dt} \frac{dt}{ds}=\frac{\lambda}{\bar{\lambda}}=1+O(b_1^{L+1}).
$$
As $b_1\lesssim s^{-1}$ an integration in time yields:
$$
\bar{s}=s+O\left( \frac{1}{s_0^L}\right).
$$
This implies in particular that for $1\leq i \leq L$:
$$
b_i^e=\bar{b}^e_i+O(s^{-(i+1)}),
$$
which, combined with the primary bound \fref{variete:eq:borne meme temps decomposition b} ends the proof of (i).

\underline{Proof of (ii):} We proved in the previous step that $s \sim \bar{s}$ and $b_1 \sim \bar{b}_1$. We first prove the bound at the level of regularity $s_{\bar{L}}+1=s_L$. We have to compute the adapted norm of the right hand side of \fref{variete:eq:expression barvarepsilon}. We just show here the computations for the second coordinate $\bar{\varepsilon}^{(2)}$, because the estimate for the first one can be proven using the very same calculations. As $\varepsilon$ satisfies the result of Proposition \ref{prop:bootstrap}, and as $\bar{\lambda}\sim \lambda$, see \fref{variete:eq:borne meme temps decomposition lambda}, there holds:
$$
\int |(\varepsilon^{(2)}_{\frac{\bar{\lambda}}{\lambda}})_{s_L-1}|^2\leq CK_2\bar{b}_1^{2L+2(1-\delta_0)(1+\eta)}
$$
with  $C$ independent of the other constants. For the second term we decompose:
$$
\int |(\tilde{\alpha}_{\pi (b),\frac{\bar{\lambda}}{\lambda}}^{(2)}-\tilde{\alpha}_{\bar{b}}^{(2)})_{s_L-1}|^2\lesssim \int |(\tilde{\alpha}_{\pi (b),\frac{\bar{\lambda}}{\lambda}}^{(2)}-\tilde{\alpha}_{\pi (b)}^{(2)})_{s_L-1}|^2+|(\tilde{\alpha}_{\pi (b)}^{(2)}-\tilde{\alpha}_{\bar{b}}^{(2)})_{s_L-1}|^2.
$$
The first term of the right hand side satisfies:
$$
(\tilde{\alpha}_{\pi (b),\frac{\bar{\lambda}}{\lambda}}^{(2)}-\tilde{\alpha}_{\pi (b)}^{(2)})=(\frac{\bar{\lambda}}{\lambda}-1)\int_0^1 \frac{1}{1-\theta+\theta\frac{\bar{\lambda}}{\lambda}}(\Lambda^{(2)}\tilde{\alpha}^{(2)}_{\pi (b)})_{1-\theta+\theta\frac{\bar{\lambda}}{\lambda}}d\theta.
$$
And as:
$$
\int |(\Lambda^{(2)}\tilde{\alpha}^{(2)}_{\pi (b')})_{1-\theta+\theta\frac{\bar{\lambda}}{\lambda'},s_L-1}|^2<+\infty,
$$
we conclude using \fref{variete:eq:borne meme temps decomposition lambda} that:
$$
\int |(\tilde{\alpha}_{\pi (b),\frac{\bar{\lambda}}{\lambda}}^{(2)}-\tilde{\alpha}_{\pi (b)}^{(2)})_{s_L-1}|^2\leq |\frac{\bar{\lambda}}{\lambda}-1|^2\lesssim \bar{b}_1^{2L+2}\lesssim \bar{b}_1^{2L+2(1-\delta_0)(1+\eta)}.
$$
For the other term we compute:
$$
\begin{array}{r c l}
 |(\tilde{\alpha}_{\pi (b)}^{(2)}-\tilde{\alpha}_{\bar{b}}^{(2)})_{s_L-1}|^2 &\lesssim&  \sum_{i=2, \ \text{odd}}^{L-1} |((b_i\chi_{B_1}T_i-\bar{b}_i\chi_{\bar{B}_1}T_i)_{s_L-1}|^2\\
 &&+\sum_{i=2, \ \text{odd}}^{L+1}|(\chi_{B_1}S_i(\pi(b)-\chi_{\bar{B}_1}S_i(\bar{b}))_{s_L-1}|^2.
\end{array}
$$
We have:
$$
\int |((b_i\chi_{B_1}T_i-\bar{b}_i\chi_{\bar{B}_1}T_i)_{s_L-1}|^2\lesssim \int |(b'_i(\chi_{B_1}-\chi_{\bar{B}_1})T_i)_{s_L-1}|^2+|((b_i-\bar{b}_i)\chi_{\bar{B}_1}T_i)_{s_L-1}|^2
$$
and we estimate the two parts:
$$
\int |(b_i(\chi_{B_1}-\chi_{\bar{B}_1})T_i)_{s_L-1}|^2\lesssim \bar{b}_1^{2L+2(1-\delta_0)(1+\eta)},
$$
$$
\int |((b_i-\bar{b}_i)\chi_{\bar{B}_1}T_i)_{s_L-1}|^2 \lesssim \bar{b}_1^{2L+2(1-\delta_0)(1+\eta)+2(L+1-i)},
$$
where we used \fref{variete:eq:borne meme temps decomposition b} for the second inequality. A similar argument gives a similar control for the $S_i$'s contribution, hence yielding to:
$$
\int |(\tilde{\alpha}_{\pi (b)}^{(2)}-\tilde{\alpha}_{\bar{b}}^{(2)})_{s_L-1}|^2\lesssim \bar{b}_1^{2L+2(1-\delta_0)(1+\eta)}.
$$
We go on, estimating the next term. From the asymptotic of $T_L$, see Lemma \ref{lem:profilsTi}:
$$
\int b_L^{2} |((\chi_{B_1}T_L)_{\frac{\bar{\lambda}}{\lambda}})_{s_L-1}|^2\leq |b_L|^2 b_1^{2(1+\eta)(1-\delta_0)}\leq C\bar{b}_1^{2L+2(1-\delta_0)(1+\eta)},
$$
with a constant $C$ that just depends on the bootstrap constant\footnote{remember that $\epsilon_L$ quantify the size of $b_L$, see \fref{eq:bootstrap modes stables}.} $\epsilon_L$ and on $L$, but which, if $L$ is fixed, is uniformly bounded in $\epsilon_L$. Similarly, from \fref{linearized:eq:degre Si}:
$$
\begin{array}{r c l}
\int |((\chi_{B_1}S_{L+2})_{\frac{\bar{\lambda}}{\lambda}})_{s_L-1}|^2&\leq&
\left\{ \begin{array}{l l}
C \bar{b}_1^{2L+2(1-\delta_0)-C'\eta+2g'} \ \text{if} \ 2\delta_0+2-2g'>0 \\
C \bar{b}_1^{2L+4} \ \text{if} \ 2\delta_0+2-2g'<0
\end{array} \right. \\
&\leq& C \bar{b}_1^{2L+2(1-\delta_0)(1+\eta)},
\end{array}
$$
for $\eta$ small enough, (we recall the assumption $0<\delta_0$). All the previous estimates show the bound (ii) for the second coordinate:
$$
\int |\bar{\varepsilon}^{(2)}_{s_L-1}|^2\leq C(K_1,\epsilon_L,L,M)\bar{b}_1^{2L+2(1-\delta_0)(1+\eta)}.
$$
We claim that the estimate for the first coordinate can be shown making verbatim the same computations. For the sake of completeness, we just show how to deal with the term involving the soliton. We compute first:
\be \label{variete:eq:expression Qscale-Q}
Q_{\frac{\bar{\lambda}}{\lambda}}-Q= (\frac{\bar{\lambda}}{\lambda}-1) \int_0^1 \frac{1}{1-\theta+\theta (\frac{\bar{\lambda}}{\lambda})}(\Lambda^{(1)} Q)_{1-\theta+\theta (\frac{\bar{\lambda}}{\lambda'})}d\theta.
\ee
As for all $\theta$, $\int |((\Lambda Q)_{1-\theta+\theta (\frac{\bar{\lambda}}{\lambda'})})_{s_L}|^2<+\infty$, using \fref{variete:eq:borne meme temps decomposition lambda} and because $0<\delta_0$ we get:
$$
\int |(Q_{\frac{\bar{\lambda}}{\lambda}}-Q)_{s_L}|^2\leq C\bar{b}_1^{L+1}\leq \bar{b}_1^{2L+2(1-\delta_0)(1+\eta))}
$$
for $\eta$ small enough. This way we get the bound (ii) for $i=1$. To prove (ii) for $i=0$ we need to use the energy estimate we used to control the error in the proof of Proposition \ref{prop:bootstrap}. In the proof of this proposition, we saw (see Section \ref{end:sec:end of the proof}) that if a solution started in the trapped regime, the only way to escape it was by having unstable mode growing too big. Here the unstable modes are under control from the previous bounds \fref{variete:lower:proximite U}. So if it starts in the trapped regime described by proposition \ref{prop:bootstrap} associated to the integer $\bar{L}$, it will imply the control \fref{variete:lower:eq:proximite high} for $i=0$. We compute the adapted $s_{\bar{L}}$ norm of the right hand side of \fref{variete:eq:expression barvarepsilon} at initial time $s_0$. One has for the error by interpolation of \fref{eq:estimation initiale de epsilon0}:
$$
\parallel \varepsilon^{(1)}(s_0)_{\frac{\bar{\lambda}}{\lambda},s_{\bar{L}}} \parallel_{L^2} +\parallel \varepsilon^{(2)}(s_0)_{\frac{\bar{\lambda}}{\lambda},s_{\bar{L}-1}} \parallel_{L^2} \leq Cb_1(s_0)^{\bar{L}+2+(1-\delta_0)(1+\eta)}.
$$
For the $L$-th mode one has using the bound \fref{eq:estimation initiale des modes stables}:
$$
\parallel b_L(s_0)(\chi_{B_1}T_{L})_{s_{\bar{L}-1}} \parallel_{L^2}\leq C|b_L(s_0)|b_1^{-\delta_0(1+\eta)}\leq C b_1^{\bar{L}+1-\delta_0+\alpha\frac{L-\ell}{\ell-\alpha}+O(\eta))}.
$$
We claim that for all the other terms in the right hand side of \fref{variete:eq:expression barvarepsilon}, the same computations we did for the proof of (ii) in the case $i=1$ yield similar results. Hence at initial time one has:
$$
\parallel \bar{\varepsilon}^{(1)}(s_0)_{s_{\bar{L}}} \parallel_{L^2} +\parallel \bar{\varepsilon}^{(2)}(s_0)_{s_{\bar{L}-1}} \parallel_{L^2} \leq Cb_1(s_0)^{\bar{L}+(1-\delta_0)(1+\eta)}.
$$
Hence we use the result Remark \ref{end:rem:condition sortie}: as the unstable modes are under control from \fref{variete:lower:proximite U}, we get the desired bound for all time:
$$
\parallel \bar{\varepsilon}^{(1)}_{s_{\bar{L}}} \parallel_{L^2} +\parallel \bar{\varepsilon}^{(2)}_{s_{\bar{L}-1}} \parallel_{L^2} \leq Cb_1^{\bar{L}+(1-\delta_0)(1+\eta)}.
$$

\underline{Proof of (iii):} The estimate for $\overline{\mathcal{E}}_{\sigma}$ can be done by direct computation as we did for (ii) using similar computations. We estimate again all the terms in the right hand side of \fref{variete:eq:expression barvarepsilon}. We only show the estimate for the first coordinate, as the proof for the second one relies on similar computations. From $\bar{\lambda}\sim \lambda$, and as $\varepsilon$ satisfies the bound \fref{eq:bootstrap estimations sur epsilon} one gets:
$$
\int |\nabla^{\sigma}\varepsilon_{\frac{\bar{\lambda}}{\lambda}}^{(1)}|^2 |\nabla^{\sigma-1}\varepsilon_{\frac{\bar{\lambda}}{\lambda}}^{(2)}|^2\leq = \left|\frac{\bar{\lambda}}{\lambda}\right|^{2(\sigma-s_c)}\mathcal{E}_{\sigma}\leq CK_1 \bar{b}_1^{2(\sigma-s_c)(1+\nu)}
$$
for a constant $C$ independent of the other constants. For the soliton term, we use the expression \fref{variete:eq:expression Qscale-Q} and Fubini to estimate:
$$
\int |\nabla^{\sigma}(Q_{\frac{\bar{\lambda}}{\lambda}}-Q)|^2 \leq \left|\frac{\bar{\lambda}}{\lambda}-1\right|^2 \underset{\theta\in [0,1] }{sup} \left|1-\theta+\theta\frac{\bar{\lambda}}{\lambda}\right|^{2(\sigma-s_c)-2} \int |\nabla^{\sigma}(\Lambda^{(1)}Q)|^2\leq C \bar{b}_1^{2L+2}.
$$
We used the bound $|\frac{\bar{\lambda}}{\lambda}-1|\lesssim \bar{b}_1^{L+1}$ and the fact that $ \int |\nabla^{\sigma}(\Lambda^{(1)}Q)|^2 <+\infty$ from the asymptotic \fref{eq:degenerescencesoliton}. For the following term, we decompose:
$$
\tilde{\alpha}_{\pi(b),\frac{\bar{\lambda}}{\lambda}}^{(1)}-\tilde{\alpha}_{\bar{b}}^{(1)})= \tilde{\alpha}_{\pi(b),\frac{\bar{\lambda}}{\lambda}}^{(1)}-\tilde{\alpha}_{\pi(b)}^{(1)}+ \tilde{\alpha}_{\pi(b)}^{(1)}-\tilde{\alpha}_{\bar{b}}^{(1)}.
$$
For the first part, using the analogue of formula \fref{variete:eq:expression Qscale-Q}:
$$
\begin{array}{r c l}
\int |\nabla^{\sigma} \tilde{\alpha}_{\pi(b),\frac{\bar{\lambda}}{\lambda}}^{(1)}-\tilde{\alpha}_{\pi(b)}^{(1)}|^2&\leq& \left|\frac{\bar{\lambda}}{\lambda}-1\right|^2 \underset{\theta\in [0,1] }{sup} \left|1-\theta+\theta\frac{\bar{\lambda}}{\lambda}\right|^{2(\sigma-s_c)-2}\\
&&\times \int |\nabla^{\sigma}(\Lambda^{(1)}\tilde{\alpha}_{\pi(b)}^{(1)})|^2\leq C(L,M)\bar{b}_1^{2L+2}
\end{array}
$$
because $\int |\nabla^{\sigma}(\Lambda^{(1)}\tilde{\alpha}_{\pi(b)}^{(1)})|^2<+ \infty$ from the asymptotic \fref{linearized:eq:degre Si} and Lemma \ref{lem:profilsTi}. For the other part, \fref{variete:eq:difference polynomes parametres}, \fref{variete:eq:borne meme temps decomposition b} and again the same asymptotics yield:
$$
\int |\nabla^{\sigma} (\tilde{\alpha}_{\pi(b)}^{(1)}-\tilde{\alpha}_{\bar{b}}^{(1)} )|^2\leq \bar{b}_1^4.
$$
Putting together the last two estimates gives:
$$
\int |\nabla^{\sigma}(\tilde{\alpha}_{\pi(b),\frac{\bar{\lambda}}{\lambda}}^{(1)}-\tilde{\alpha}_{\bar{b}}^{(1)})|^2\lesssim \bar{b}_1^4.
$$
The last remaining term is estimated similarily:
$$
\int |\nabla^{\sigma}((S_{L+1}-\overline{S}_{L+1})_{\frac{\bar{\lambda}}{\lambda}})|^2\lesssim \bar{b}_1^4
$$
The estimate we have done for each term of the right hand side of \fref{variete:eq:expression barvarepsilon} give:
$$
\int |\nabla^{\sigma}\bar{\varepsilon}^{(1)}|^2\leq C(K_2) \bar{b}_1^{2(\sigma-s_c)(1+\nu)}.
$$
Using the very same method, one finds the same estimation for the second coordinate, leading to the result (iii).
\end{proof}

The same lower order decomposition also applies for the other solution $\bos{U}'$, and we have the analogue of the previous lemma. What we want to do now is to apply the Proposition \ref{variete:prop:parametres lipschitz} associated to the integer $\bar{L}$ to these two new solutions in the trapped regime associated to the integer $\bar{L}$. There remains two steps: we have to check that the differences between the parameters and errors under the lower order decomposition can be related to the original decomposition, and we have to deal with a possible scale difference at initial time. We use the following notations for the lower order decomposition associated to $\bos{U}'$ by the Definition \fref{variete:def:decomposition adaptee varepsilon} :
\be
\bos{U}'(t)=\tilde{\bos{Q}}_{\bar{b}',\frac{1}{\bar{\lambda}'}}+\overline{\bos{w}}'(t)= (\tilde{\bos{Q}}_{\bar{b}'}+\overline{\bos{\varepsilon}}(\bar{s}'))_{\frac{1}{\bar{\lambda}'}},
\ee
where $\overline{\bos{\varepsilon}}'$ satisfies the $L$ orthogonality conditions:
\be
\langle \overline{\bos{\varepsilon}}',\bos{H}^{*i}\bos{\Phi}_M \rangle= 0, \ for \ 0 \leq i \leq \bar L.
\ee
Similarly we define ($\bar{P}_{\ell}$ being the analogue of $P_{\ell}$ defined by \fref{linearized:eq:def P}):
\be 
\bar{s}':=s_0+\int_0^t \frac{1}{\bar{\lambda}'(\tau)}d\tau,
\ee
\be
\bar{U}_i':=(\bar{s}')^i (\bar{b}'_i-\frac{c_i}{(\bar{s}')^i}), \ \text{for} \ 1\leq i \leq L, \ \text{and} \ \bar{V}:=\bar{P}_{\ell}(\bar{U}).
\ee
We use the following notations for the differences under lower order decomposition:
$$
\triangle \bar{b}:=\bar{b}_i-\bar{b}_i', \ \triangle \bar{U}_i:=\bar{U}_i-\bar{U}'_i, \ \triangle \bar \lambda:=\bar \lambda - \bar \lambda '.
$$
We make now a slight change regarding the former norm notations. They now concern $\bos{w}$ instead of $\bos{\varepsilon}$:
$$
\triangle \overline{\mathcal{E}}_{\sigma}:=\parallel \overline{\bos{w}}-\overline{\bos{w}}' \parallel_{\dot{H}^{\sigma}\times \dot{H}^{\sigma-1}}^2, \ \triangle_r \overline{\mathcal{E}}_{\sigma}:=b_1^{-2(\sigma-s_c)(1+\nu)}\triangle \overline{\mathcal{E}}_{\sigma},
$$
$$
\triangle \overline{\mathcal{E}}_{s_\bar{L}}:=\int (\bar{w}^{(1)}-\bar{w}^{'(1)})_{s_{\bar{L}}}^2+(\bar{w}^{(2)}-\bar{w}^{'(2)})_{s_{\bar{L}}}^2, \ \triangle_r \overline{\mathcal{E}}_{s_\bar{L}}:=b_1^{-2\bar{L}-(1-\delta_0)(2+\eta)}\triangle \overline{\mathcal{E}}_{s_\bar{L}}.
$$

In the following lemma we relate the differences between the lower order decomposition and the original decomposition at initial time. Basically, the differences of the two solutions in lower or higher order are almost the same.

\begin{lemma}[Bounds for the differences at initial time] \label{variete:lem: estimation difference ini}
We keep the assumptions and notations from Definitions \ref{variete:def:decomposition adaptee varepsilon} and Proposition \ref{variete:prop:parametres lipschitz 2}. There holds initially:
\begin{itemize}
\item[(i)] \emph{bounds on the parameters:} For $1 \leq i \leq \bar{L}$:
\be \label{variete:removal:eq:diff parametres}
\begin{array}{r c l}
\triangle \bar{U}_i(s_0) &=&\triangle U_i (s_0) + O[b_1|\triangle U(s_0)|+b_1^{(1-\delta_0)(1+\eta)}|\triangle \lambda|]\\
&&+O(C(s_0) \parallel \bos{w}-\bos{w}'\parallel_{\dot{H}^{\sigma}\cap \dot{H}^{s_L}\times \dot{H}^{\sigma-1}\cap \dot{H}^{s_L-1}}),
\end{array}
\ee
\item[(ii)] \emph{bounds on the errors:}
\be \label{variete:removal:eq:diff high}
\sqrt{\triangle_r \overline{\mathcal{E}}_{s_{\bar{L}}}}\leq C(s_0)\parallel \bos{w}-\bos{w}'\parallel_{\dot{H}^{\sigma}\cap \dot{H}^{s_L}\times \dot{H}^{\sigma-1}\cap \dot{H}^{s_L-1}}+C(b_1 |\triangle U|+b_1^{\frac{\eta}{2}}|\triangle \lambda|),
\ee
\be \label{variete:removal:eq:diff low}
\sqrt{\triangle_r \overline{\mathcal{E}}_{\sigma}}\leq C(s_0)\parallel \bos{w}-\bos{w}'\parallel_{\dot{H}^{\sigma}\cap \dot{H}^{s_L}\times \dot{H}^{\sigma-1}\cap \dot{H}^{s_L-1}}+Cb_1^{\bar{L}}(|\triangle U|+|\triangle \lambda|).
\ee
\item[(iii)] \emph{bound on the scales:}
\be \label{variete:removal:eq:diff scale}
\begin{array}{r c l}
\triangle \bar{\lambda} (s_0)&=&\triangle \lambda (s_0) + O[b_1|\triangle U(s_0)|+b_1^{(1-\delta_0)(1+\eta)}|\triangle \lambda|]\\
&&+O( C(s_0)\parallel \bos{w}-\bos{w}'\parallel_{\dot{H}^{\sigma}\cap \dot{H}^{s_L}\times \dot{H}^{\sigma-1}\cap \dot{H}^{s_L-1}}),
\end{array}
\ee
\end{itemize}
for some constant $C$ independent of the other constants.

\end{lemma}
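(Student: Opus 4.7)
The strategy is to exploit the explicit identity \fref{variete:eq:expression barvarepsilon} derived in the proof of Lemma \ref{variete:lem:borne sur changement epsilon epsilonbar} expressing $\overline{\bos{\varepsilon}}$ in terms of $\bos{\varepsilon}$, together with its primed analogue for $\overline{\bos{\varepsilon}}'$. Subtracting the two gives
$$
\overline{\bos{\varepsilon}} - \overline{\bos{\varepsilon}}' = \bigl(\bos{\varepsilon}_{\bar\lambda/\lambda} - \bos{\varepsilon}'_{\bar\lambda'/\lambda'}\bigr) + \bigl[\text{differences of rescaled } \tilde{\bos{Q}}, \tilde{\bos{\alpha}}\bigr] + \bigl[\text{cut-off tail with } b_L \bos{T}_L, \bos{S}_{L+2}, \bos{S}_{L+1} - \overline{\bos{S}}_{L+1}\bigr].
$$
This is the master identity from which (i), (ii), (iii) are to be read off.

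For (i) and (iii), the plan is to test this identity against $\bos{H}^{*i}\bos{\Phi}_M$ for $0 \leq i \leq \bar L$ and exploit the orthogonality conditions satisfied by both $\overline{\bos{\varepsilon}}$ and $\overline{\bos{\varepsilon}}'$. By \fref{thetrapped:eq:orthogonalite PhiM}, the linear map that sends $(\triangle \bar\lambda - \triangle \lambda, \triangle\bar U_1 - \triangle U_1, \dots, \triangle\bar U_{\bar L}-\triangle U_{\bar L})$ to the vector of these $L$ scalar products is, at leading order, a triangular perturbation of $\langle \bos{\Lambda Q}, \chi_M \bos{\Lambda Q}\rangle \cdot \text{Id}$, hence invertible uniformly in $s_0$. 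The right-hand sides then split naturally into three pieces: the contribution from $\bos{\varepsilon} - \bos{\varepsilon}'$, which on the compact support of $\bos{\Phi}_M$ is dominated by $C(s_0)\|\bos{w}-\bos{w}'\|_{\dot H^\sigma\cap \dot H^{s_L}\times \dot H^{\sigma-1}\cap \dot H^{s_L-1}}$ once the scaling $\lambda(s_0)^{-1}$ is undone (this is the source of the loss $C(s_0)$); the contributions from differences of $\tilde{\bos{Q}}$ and $\tilde{\bos{\alpha}}$ evaluated by Taylor expansion in the parameters, controlled through \fref{variete:eq:difference polynomes parametres} and \fref{variete:eq:formulation integrale diff chi} by $b_1|\triangle U| + b_1^{(1-\delta_0)(1+\eta)}|\triangle \lambda|$, with the weaker power attached to $|\triangle \lambda|$ because the scaling alters the $\tilde{\bos{\alpha}}$ sector on its full support up to $B_1$; and the $b_L$-type cut-off tail, which is already of order $b_1^{L+1}$ by the bootstrap size of $b_L$ and is thus absorbed into the first term.

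For (ii), I would work directly at the $\bos{w}$ level using
$$
\overline{\bos{w}} - \overline{\bos{w}}' = (\bos{w} - \bos{w}') + \bigl[(\tilde{\bos{Q}}_{b,1/\lambda} - \tilde{\bos{Q}}_{\bar b, 1/\bar\lambda}) - (\tilde{\bos{Q}}_{b',1/\lambda'} - \tilde{\bos{Q}}_{\bar b', 1/\bar\lambda'})\bigr],
$$
and measure each piece in $\dot H^\sigma\times \dot H^{\sigma-1}$ and in the adapted $s_{\bar L}$ norm. The first piece gives the $C(s_0)\|\bos{w}-\bos{w}'\|$ term. For the bracket, parts (i) and (iii) already proven above, combined with the calculus rules for homogeneous profiles from Lemma \ref{lem:calculsurfonctionsadmissibles} and the asymptotics \fref{linearized:eq:asymptotique infini fonction admissible} and \fref{linearized:eq:degre Si}, give the remaining contribution. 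The high Sobolev norm scales like $b_1^{\bar L + (1-\delta_0)(1+\eta/2)}$, so a single power of $b_1$ gained on $|\triangle U|$ and only $b_1^{\eta/2}$ gained on $|\triangle \lambda|$ suffices (the latter weakness reflecting again that rescaling touches every scale up to $B_1$); the $\dot H^\sigma$ norm is softer and the $b_L \bos{T}_L$ contribution is compactly supported at scale $B_1$, allowing the much stronger gain $b_1^{\bar L}$ on both $|\triangle U|$ and $|\triangle \lambda|$.

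The main obstacle in this plan is the bookkeeping in the $\dot H^{s_L}$ estimates: one must carefully separate the parameters contributing through $\tilde{\bos{\alpha}}$ (with their natural $b_1^{(1-\delta_0)(1+\eta)}$ size coming from localization near $y \sim B_1$) from the scale contribution $\triangle\lambda$ (which sees the full profile up to $B_1$ and hence only gains $b_1^{(1-\delta_0)(1+\eta)}$ or $b_1^{\eta/2}$ depending on the norm), and to ensure that the lack of the $L$-th orthogonality condition on $\overline{\bos{\varepsilon}}$ does not spoil the extraction of $\triangle \bar U_i - \triangle U_i$ for $i \leq \bar L$; this is precisely guaranteed by the triangular structure of \fref{thetrapped:eq:orthogonalite PhiM} and the bootstrap smallness of $b_L$ given by \fref{eq:estimation initiale des modes stables}.
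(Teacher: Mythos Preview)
Your plan is correct and follows essentially the same route as the paper: test the identity \fref{variete:lower:eq:expression} against $(\bos{H}^{*i}\bos{\Phi}_M)_{1/\lambda}$ to extract $\triangle\bar\lambda-\triangle\lambda$ and $\triangle\bar b_i-\triangle b_i$, then use the $\bos{w}$-level decomposition $\overline{\bos w}-\overline{\bos w}'=(\bos w-\bos w')+[\tilde{\bos Q}\text{-differences}]$ for (ii). Two technical devices the paper adds that you would need when filling in details: first, since $\overline{\bos w}'$ satisfies orthogonality at scale $\bar\lambda'$ rather than $\bar\lambda$, the paper introduces an auxiliary $\overline{\bos v}'$ (a finite-rank correction of $\overline{\bos w}'$) orthogonal at scale $\bar\lambda$, with $\|\overline{\bos w}'-\overline{\bos v}'\|$ controlled by $|\triangle\bar\lambda|$ times small powers of $b_1$; second, the paper organizes the $\tilde{\bos Q}$-differences via the four-term telescoping \fref{variete:removal:eq:Q expression} and a ``difference of differences of monomials'' estimate $|b^J-b'^J-(\bar b^J-\bar b'^J)|\lesssim b_1^{|J|_2}|\triangle U-\triangle\bar U|+b_1^{L+1}|\triangle b|$, which is the sharpened version of \fref{variete:eq:difference polynomes parametres} needed here.
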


\begin{remark}
\label{variete:rem:dependance high sobo lambda}
In all the previous computations, $\bos{w}$ and $\bos{w}'$, or $\overline{\bos{w}}$ and $\overline{\bos{w}}'$ were always at the same scale: there was no confusion regarding orthogonality conditions or adapted norms. Now, in the case of Lemma \fref{variete:lem: estimation difference ini}, each error has a different scale: $\lambda$, $\lambda'$, $\bar \lambda$ and $\bar \lambda '$. From \fref{variete:removal:eq:lambda sim 1} and  \fref{variete:eq:borne meme temps decomposition lambda} they are all close to one:
$$
|\lambda-1|+|\lambda'-1|+|\bar{\lambda}-1|+|\bar{\lambda}'-1|\lesssim b_1^{L}.
$$
From coercivity (see \fref{thetrapped:highsobo:eq:estimation lambdaL}) we obtain that for $\bos{f}\in \dot{H}^{\sigma}\cap \dot{H}^{s_L}\times \dot{H}^{\sigma-1}\cap \dot{H}^{s_L-1}$ satisfying the orthogonality conditions \fref{eq:condition d'ortho pour epsilon} and $\tilde{\lambda}$ close enough to $1$:
$$
\parallel f^{(1)}_{s_L}-((f_{\tilde{\lambda}}^{(1)})_{s_L})_{\frac{1}{\tilde{\lambda}}} \parallel_{L^2}+\parallel f^{(2)}_{s_L}-((f_{\tilde{\lambda}}^{(2)})_{s_L})_{\frac{1}{\tilde{\lambda}}} \parallel_{L^2} \lesssim |\tilde{\lambda}-1| (\parallel f^{(1)}_{s_L} \parallel_{L^2}+\parallel f^{(2)}_{s_L-1} \parallel_{L^2}),
$$
from what we deduce that the scale does not matter for this adapted norm:
$$
\parallel ((f_{\tilde{\lambda}}^{(1)})_{s_L})_{\frac{1}{\tilde{\lambda}}} \parallel_{L^2}+\parallel ((f_{\tilde{\lambda}}^{(2)})_{s_L})_{\frac{1}{\tilde{\lambda}}} \parallel_{L^2} \sim \parallel f^{(1)}_{s_L} \parallel_{L^2}+\parallel f^{(2)}_{s_L-1} \parallel_{L^2}.
$$
\end{remark}

\begin{proof}[Proof of Lemma \ref{variete:lem: estimation difference ini}]. To ease notations, we do not mention the dependence with respect to time: all objects are taken at time $s_0$. At this initial time, one has:
\be \label{variete:lower:eq:expression}
\tilde{\bos{Q}}_{b,\frac{1}{\lambda}}-\tilde{\bos{Q}}_{b',\frac{1}{\lambda '}}-(\tilde{\bos{Q}}_{\bar{b},\frac{1}{\bar{\lambda}}}-\tilde{\bos{Q}}_{\bar{b}',\frac{1}{\bar{\lambda} '}})+\bos{w}-\bos{w}'-(\overline{\bos{w}}-\overline{\bos{w}}')=0.
\ee
We introduce the following the notation:
$$
D=\sum_i^{\bar{L}}|\triangle b_i-\triangle \bar{b}_i|.
$$
Throughout the proof we will use Remark \ref{variete:rem:dependance high sobo lambda} and the fact that from \fref{variete:lower:proximite U} the parameters have the same size:
$$
b_i\approx b_i'\approx \bar{b}_i\approx \bar{b}'_i \ \text{for} \ 1\leq i \leq \bar{L}.
$$
$\overline{\bos{w}}'$ satisfies the orthogonality conditions \fref{eq:condition d'ortho pour epsilon}, but at the scale $\frac{1}{\bar \lambda '}$. To deal with the problem of the scale in orthogonality conditions and adapted norms, we introduce:
\be
\overline{\bos{v}}':=\overline{\bos{w}}'-\sum_0^{\bar L}\frac{\langle \overline{\bos{w}}',(\bos{H}^{*i}\bos{\Phi}_M)_{\frac{1}{\bar \lambda}}\rangle}{\langle \Lambda^{(1)}Q,\chi_M \Lambda^{(1)}Q \rangle}\chi_{B_1}\bos{T}_i.
\ee
Thus, $\overline{\bos{v}}'$ satisfies the orthogonality conditions \fref{eq:condition d'ortho pour epsilon} at the scale $\bar{\lambda}$. It is very close to $\overline{\bos{w}}'$ and one has the estimates:
\be
\parallel \overline{\bos{w}}'-\overline{\bos{v}}'\parallel_{\dot{H}^{\sigma}\times \dot{H}^{\sigma-1}}\leq |\triangle \bar \lambda|b_1^{\alpha+1-\delta_0+O(\eta,\sigma-s_c)},
\ee
\be \label{variete:eq:estimation high v}
\parallel (\overline{w}^{'(1)}-\overline{v}^{'(1)})_{s_{\bar{L}}}\parallel_{L^2}+\parallel (\overline{w}^{'(2)}-\overline{v}^{'(2)})_{s_{\bar{L}}-1}\parallel_{L^2}\leq |\triangle \bar \lambda|b_1^{\bar{L}+2(1-\delta_0)(1+\eta)},
\ee
\be \label{variete:eq:estimation infty v}
\parallel \overline{\bos{w}}'-\overline{\bos{v}}'\parallel_{L^{\infty}\times L^{\infty}(y\leq 2M)}\leq C|\triangle \bar \lambda|b_1^{\bar{L}+(1-\delta_0)(1+\eta)},
\ee

\underline{Step 1:} \emph{Difference of differences of polynomials of parameters}. We claim that for any $L$-tuple $J$ there holds:
\be \label{variete:removal:eq:difference polynomes parametres}
|b^J-\hat{b}^{'J}-(\bar{b}^J-\bar{b}^{'J})|\leq C (b_1^{|J|_2} |\triangle U-\triangle \bar{U}|+b_1^{L+1}|\triangle b|).
\ee
We show this bound by iteration on $|J|_1=i$. It is obviously true for $i=0$. We take $i\geq 1$ and $J$ satisfying $|J|_1=i$ and suppose it is true for all $J'$ with $|J'|_1\leq i-1$. Let $j$ be the first coordinate for which $J$ is non null and write $b^J=b_jb^{J'}$ with $|J'|= i-1$. We decompose:
$$
\begin{array}{r c l}
b^J-\hat{b}^{'J}-(\bar{b}^J-\bar{b}^{'J})&=&\bar{b}_j(b^{J'}-b^{'J'}-(\bar{b}^{J'}-\bar{b}^{'J'}))+\bar{b}^{'J'}(b_j-b'_j-(\bar{b}_j-\bar{b}'_j))\\
&&+(b_j-\bar{b}_j)(b^{J'}-b^{'J'})+(b^{'J'}-\bar{b}^{'J'})(b_j-b_j').
\end{array}
$$
From \fref{variete:lower:proximite U} one gets for the last two terms $|(b_j-\bar{b}_j)(b^{J'}-b^{'J'})+(b^{'J'}-\bar{b}^{'J'})(b_j-b_j')|\leq b_1^{L+1}|\triangle b|$. For the first two terms we apply the iteration hypothesis for $J'$ and conclude. \\

\underline{step 2: the scale}. We claim the first bound:
\be \label{variete:removal:eq:scale}
\triangle \bar{\lambda}=\triangle \lambda +O(b_1D)+ O[b_1^{L+1}|\triangle U|+b_1^{\bar{L}+(1-\delta_0)(1+\eta)}|\triangle \lambda|)]+O(b_1^{L}\sqrt{\triangle \overline{\mathcal{E}_{s_{\bar{L}}}}}).
\ee
We prove it by taking the scalar product of \fref{variete:lower:eq:expression} with $(\bos{\Phi}_M)_{\frac{1}{\lambda}}$. For the part on the manifold of approximate solutions one has the following decomposition:
\be \label{variete:removal:eq:Q expression}
\begin{array}{r c l}
\tilde{\bos{Q}}_{b,\frac{1}{\lambda}}-\tilde{\bos{Q}}_{b',\frac{1}{\lambda '}}-(\tilde{\bos{Q}}_{\bar{b},\frac{1}{\bar{\lambda}}}-\tilde{\bos{Q}}_{\bar{b}',\frac{1}{\bar{\lambda} '}})&=&(\tilde{\bos{Q}}_b-\tilde{\bos{Q}}_{b'}-(\tilde{\bos{Q}}_{\bar{b}}-\tilde{\bos{Q}}_{\bar{b}'}))_{\frac{1}{\bar{\lambda}}}\\
&&+((\tilde{\bos{Q}}_b-\tilde{\bos{Q}}_{b'})_{\frac{1}{\lambda}}-(\tilde{\bos{Q}}_b-\tilde{\bos{Q}}_{b'})_{\frac{1}{\bar{\lambda}}})\\
&&+((\tilde{\bos{Q}}_{b'}-\tilde{\bos{Q}}_{\bar{b}'})_{\frac{1}{\lambda}}-(\tilde{\bos{Q}}_{b'}-\tilde{\bos{Q}}_{\bar{b}'})_{\frac{1}{\lambda '}})\\
&&+(\tilde{\bos{Q}}_{\bar{b}',\frac{1}{\lambda}}-\tilde{\bos{Q}}_{\bar{b}',\frac{1}{\lambda '}}-(\tilde{\bos{Q}}_{	\bar{b}',\frac{1}{\bar{\lambda}}}-\tilde{\bos{Q}}_{\bar{b}',\frac{1}{\bar{\lambda} '}})).
\end{array}
\ee
We aim at estimating the contribution of each term in the right hand side. For the first term, from the orthogonality conditions \fref{thetrapped:eq:proprietes PhiM} and the localization \fref{thetrapped:eq:localisation PhiM}:
$$
\begin{array}{r c l}
&\langle(\tilde{\bos{Q}}_b-\tilde{\bos{Q}}_{b'}-(\tilde{\bos{Q}}_{\bar{b}}-\tilde{\bos{Q}}_{\bar{b}'}))_{\frac{1}{\bar{\lambda}}},\bos{\Phi}_{M,\frac{1}{\lambda}} \rangle \\
=& \langle(\tilde{Q}_b^{(1)}-\tilde{Q}_{b'}^{(1)}-(\tilde{Q}_{\bar{b}}^{(1)}-\tilde{Q}_{\bar{b}'}^{(1)})),\Phi_M^{(1)}\rangle \\
&+O(b_1^{L}\parallel \tilde{Q}^{(1)}_b-\tilde{Q}_{b'}^{(1)}-(\tilde{Q}_{\bar{b}}^{(1)}-\tilde{Q}_{\bar{b}'}^{(1)}) \parallel_{ L^2(\leq 2M)}) \\
=& \langle(S_{L+2}-S'_{L+2}+S_{L+1}-\bar{S}_{L+1}-(S'_{L+1}-\bar{S}'_{L+1}),\Phi_M^{(1)}\rangle\\
&+O(\parallel \sum_{i=1, \ \text{even}}^{L} S_i- S_i'-(\overline{S}_i-\overline{S}'_i) \parallel_{L^2(\leq 2M)})\\
 &+O(b_1^{L}\parallel \tilde{Q}^{(1)}_b-\tilde{Q}_{b'}^{(1)}-(\tilde{Q}_{\bar{b}}^{(1)}-\tilde{Q}_{\bar{b}'}^{(1)}) \parallel_{ L^2(\leq 2M)})
\end{array}
$$
Now, one decomposes the profiles $S_i$'s for $1\leq i \leq L$ as a finite sum $S_i=\sum b^J f$ with $|J|_2=i$ and $f$ a $C^{\infty}$ function. Applying \fref{variete:removal:eq:difference polynomes parametres} gives (we recall that $D$ is defined at the begining of the proof):
$$
\parallel S_i- S_i'-(\overline{S}_i-\overline{S}'_i) \parallel_{L^2(\leq 2M)}=O(b_1D)+O(b_1^{L+1}|\triangle b|).
$$
So for the first term in \fref{variete:removal:eq:Q expression} we obtain:
$$
\langle\tilde{\bos{Q}}_b-\tilde{\bos{Q}}_{b'}-(\tilde{\bos{Q}}_{\bar{b}}-\tilde{\bos{Q}}_{\bar{b}'})_{\frac{1}{\bar{\lambda}}},\bos{\Phi}_{M,\frac{1}{\lambda}}\rangle=O(b_1 D)+O(b_1^{L+1}|\triangle U|).
$$
From \fref{variete:eq:borne meme temps decomposition lambda} we get for the second:
$$
\langle(\tilde{\bos{Q}}_b-\tilde{\bos{Q}}_{b'})_{\frac{1}{\lambda}}-(\tilde{\bos{Q}}_b-\tilde{\bos{Q}}_{b'})_{\frac{1}{\bar{\lambda}}},\bos{\Phi}_{M,\frac{1}{\lambda}}\rangle=O(b_1^{L+1}|\triangle b|).
$$
For the third from \fref{variete:lower:proximite U} one has:
$$
\langle(\tilde{\bos{Q}}_{b'}-\tilde{\bos{Q}}_{\bar{b}'})_{\frac{1}{\lambda}}-(\tilde{\bos{Q}}_{b'}-\tilde{\bos{Q}}_{\bar{b}'})_{\frac{1}{\lambda '}},\bos{\Phi}_{M,\frac{1}{\lambda}}\rangle=O(b_1^{L+1}|\triangle \lambda|).
$$
For the fourth we decompose, use \fref{variete:lower:proximite U} and \fref{variete:eq:borne meme temps decomposition lambda} to find:
$$
\begin{array}{r c l}
&\langle \tilde{\bos{Q}}_{\bar{b}',\frac 1 \lambda}-\tilde{\bos{Q}}_{\bar{b}',\frac{1}{\lambda '}}-(\tilde{\bos{Q}}_{\bar{b}',\frac{1}{\bar{\lambda}}}-\tilde{\bos{Q}}_{\bar{b}',\frac{1}{\bar{\lambda} '}}),\bos{\Phi}_{M,\frac{1}{\lambda}}\rangle \\
=& \langle (\tilde{\bos{Q}}_{\bar{b}',\frac{1}{\lambda}}-\tilde{\bos{Q}}_{\bar{b}',\frac{1}{\lambda '}})-(\tilde{\bos{Q}}_{\bar{b}',\frac{1}{\lambda}}-\tilde{\bos{Q}}_{\bar{b}',\frac{1}{\lambda '}})_{\frac{\lambda}{\bar{\lambda}}},\bos{\Phi}_{M,\frac{1}{\lambda}}\rangle +\langle \tilde{\bos{Q}}_{\bar{b}',\frac{1}{\bar{\lambda}'}}-\tilde{\bos{Q}}_{\bar{b}',\frac{\lambda }{\bar{\lambda}\lambda'}},\bos{\Phi}_M\rangle \\
=& O(b_1^{L+1}|\triangle \lambda |)-(\bar{\lambda}'-\frac{\lambda' \bar{\lambda}}{\lambda})(\langle \chi_M \Lambda^{(1)}Q,\Lambda^{(1)}Q\rangle +O(b_1)) \\
=& (\triangle \lambda-\triangle \bar{\lambda} )(\langle \chi_M \Lambda^{(1)}Q,\Lambda^{(1)}Q\rangle +O(b_1))+O(b_1^{L+1}|\triangle \lambda|)
\end{array}
$$
from the identity $\bar{\lambda}'-\frac{\lambda'\bar{\lambda}}{\lambda}=\triangle \bar{\lambda}-\triangle \lambda+\frac{1}{\lambda}(\lambda-\bar{\lambda})(\lambda'-\lambda)$. The decomposition \fref{variete:removal:eq:Q expression} and the four previous equations give for the contribution of the difference of differences of approximate profiles in \fref{variete:lower:eq:expression}:
\be \label{variete:removal:eq:Q}
\begin{array}{r c l}
&\langle \tilde{\bos{Q}}_{b,\frac{1}{\lambda}}-\tilde{\bos{Q}}_{b',\frac{1}{\lambda '}}-(\tilde{\bos{Q}}_{\bar{b},\frac{1}{\bar{\lambda}}}-\tilde{\bos{Q}}_{\bar{b}',\frac{1}{\bar{\lambda} '}}),\bos{\Phi}_{M,\frac{1}{\lambda}}\rangle\\
 =& (\triangle \lambda-\triangle \bar{\lambda} )(\langle \chi_M \Lambda^{(1)}Q,\Lambda^{(1)}Q\rangle +O(b_1))+O[b_1^{L+1}(|\triangle U|+|\triangle \lambda|)]+O(b_1 D).
\end{array}
\ee
We now turn to the contribution of the difference of differences of errors in \fref{variete:lower:eq:expression}. We compute using the orthogonality conditions \fref{eq:condition d'ortho pour epsilon} and the hypothesis $|1-\lambda|\leq b_1^{L}$:
$$
\langle \bos{w}-\bos{w}',\bos{\Phi}_{M,\frac{1}{\lambda}} \rangle = -\langle \bos{w}',\bos{\Phi}_{M,\frac{1}{\lambda}}-\bos{\Phi}_{M,\frac{1}{\lambda'}} \rangle = O(b_1^{L+(1-\delta_0)(1+\eta)}|\triangle \lambda |).
$$
Using the variable $\overline{\bos{v}}'$ introduced at the begining of the proof, one can use coercivity thanks to Remark \fref{variete:rem:dependance high sobo lambda}:
$$
\begin{array}{r c l}
\langle \overline{\bos{w}}-\overline{\bos{w}}',\bos{\Phi}_{M,\frac{1}{\lambda}} \rangle &=& \langle \overline{\bos{w}}-\overline{\bos{v}}',\bos{\Phi}_{M,\frac{1}{\lambda}} \rangle+\langle \overline{\bos{v}}'-\overline{\bos{w}}',\bos{\Phi}_{M,\frac{1}{\lambda}} \rangle \\
&=& \langle \overline{\bos{w}}-\overline{\bos{v}}',\bos{\Phi}_{M,\frac{1}{\lambda}}-\bos{\Phi}_{M,\frac{1}{\bar{\lambda}}} \rangle+\langle \overline{\bos{v}}'-\overline{\bos{w}}',\bos{\Phi}_{M,\frac{1}{\lambda}} \rangle \\
 &=& O(b_1^{L}\sqrt{\triangle \overline{\mathcal{E}}_{s_\bar{L}}})+O(b_1^{\bar{L}+(1-\delta_0)(1+\eta)}|\triangle \bar{\lambda} |).
\end{array}
$$
where we used the estimates \fref{variete:eq:estimation high v} and \fref{variete:eq:estimation infty v}. We put the two previous estimates for the contribution of the errors and \fref{variete:removal:eq:Q} in \fref{variete:lower:eq:expression}, it gives the estimate \fref{variete:removal:eq:scale} we claimed in this step 2.\\

\underline{step 3: the parameters}. We claim that the techniques employed in the previous step adapts when we consider the scalar product between \fref{variete:lower:eq:expression} and $(\bos{H}^{*i}\bos{\Phi}_M)_{\frac{1}{\lambda}}$ for $1\leq i \leq L$, yielding:
$$
\begin{array}{r c l}
\triangle \bar{b}_i&=&\triangle b_i +O(b_1D)+ O[b_1^{L+1}(|\triangle U|+|\triangle \lambda|)]+O(b_1^{L}\sqrt{\triangle \overline{\mathcal{E}_{s_{\bar{L}}}}})\\
&&+O(b_1^{\bar{L}+(1-\delta_0)(1+\eta)}|\triangle \bar{\lambda}|).
\end{array}
$$
Injecting the bound \fref{variete:removal:eq:parametres}, the previous equation simplifies into:
\be \label{variete:removal:eq:parametres}
\triangle \bar{b}_i=\triangle b_i +O(b_1D)+ O[b_1^{L+1}(|\triangle U|)+b_1^{\bar{L}+(1-\delta_0)(1+\eta)}|\triangle \lambda|)]+O(b_1^{L}\sqrt{\triangle \overline{\mathcal{E}_{s_{\bar{L}}}}})
\ee

\underline{Step 4:improving the bounds}. We sum the previous identity \fref{variete:removal:eq:parametres} from $i=1$ to $\bar{L}$, it gives:
$$
D=O[b_1^{L+1}(|\triangle U|)+b_1^{\bar{L}+(1-\delta_0)(1+\eta)}|\triangle \lambda|)]+O(b_1^{L}\sqrt{\triangle \overline{\mathcal{E}_{s_{\bar{L}}}}})+O(b_1^{L}\sqrt{\triangle \mathcal{E}_{s_L}}).
$$
Putting back this bound in \fref{variete:removal:eq:scale} and \fref{variete:removal:eq:parametres} yield:
\be \label{variete:removal:eq:scale 2}
\triangle \bar{\lambda}=\triangle \lambda + O[b_1^{L+1}(|\triangle U|)+b_1^{\bar{L}+(1-\delta_0)(1+\eta)}|\triangle \lambda|)]+O(b_1^{L}\sqrt{\triangle \overline{\mathcal{E}_{s_{\bar{L}}}}}),
\ee
\be \label{variete:removal:eq:parametres 2}
\triangle \bar{b}_i=\triangle b_i + O[b_1^{L+1}(|\triangle U|)+b_1^{\bar{L}+(1-\delta_0)(1+\eta)}|\triangle \lambda|)]+O(b_1^{L}\sqrt{\triangle \overline{\mathcal{E}_{s_{\bar{L}}}}}).
\ee

\underline{step 5: the error terms}. The difference between the two error terms in lower order decomposition is:
$$
\overline{\bos{w}}-\overline{\bos{w}}'=\tilde{\bos{Q}}_{b,\frac{1}{\lambda}}-\tilde{\bos{Q}}_{b',\frac{1}{\lambda '}}-(\tilde{\bos{Q}}_{\bar{b},\frac{1}{\bar{\lambda}}}-\tilde{\bos{Q}}_{\bar{b}',\frac{1}{\bar{\lambda} '}})+\bos{w}-\bos{w}'.
$$
Injecting the bounds \fref{variete:removal:eq:scale 2} and \fref{variete:removal:eq:parametres 2} in the decomposition \fref{variete:removal:eq:Q expression} gives:
$$
\begin{array}{r c l}
\sqrt{\triangle \overline{\mathcal{E}}_{s_{\bar{L}}}}=\sqrt{\triangle \mathcal{E}}_{s_{\bar{L}}}+O[b_1^{L+1}(|\triangle U|)+b_1^{\bar{L}+(1-\delta_0)(1+\eta)}|\triangle \lambda|)]+O(b_1^{L}\sqrt{\triangle \overline{\mathcal{E}_{s_{\bar{L}}}}}),
\end{array}
$$
Now, as:
$$
\sqrt{\triangle \mathcal{E}}_{s_{\bar{L}}}\lesssim \parallel \bos{w}-\bos{w}' \parallel_{\dot{H}^{\sigma}\cap\dot{H}^{s_L}\times \dot{H}^{\sigma-1}\cap \dot{H}^{s_L-1}}
$$
it gives:
$$
\sqrt{\triangle_r \overline{\mathcal{E}}_{s_{\bar{L}}}}\leq C(s_0)\parallel \bos{w}-\bos{w}' \parallel_{\dot{H}^{\sigma}\cap\dot{H}^{s_L}\times \dot{H}^{\sigma-1}\cap \dot{H}^{s_L-1}}+\sqrt{\triangle_r \mathcal{E}}_{\sigma})+C(b_1 |\triangle U|+b_1^{\frac{\eta}{2}}|\triangle \lambda|).
$$
We turn back to the previous identities \fref{variete:removal:eq:scale 2} and \fref{variete:removal:eq:parametres 2}, inject the bound we just found to obtain:
$$
\triangle \bar{\lambda}=\triangle \lambda + O[b_1^{L+1}|\triangle U|+b_1^{\bar{L}+(1-\delta_0)(1+\eta)}|\triangle \lambda|]+O( \parallel \bos{w}-\bos{w}' \parallel_{\dot{H}^{\sigma}\cap\dot{H}^{s_L}\times \dot{H}^{\sigma-1}\cap \dot{H}^{s_L-1}}),
$$
$$
\triangle \bar{b}_i=\triangle b_i + O[b_1^{L+1}|\triangle U|+b_1^{\bar{L}+(1-\delta_0)(1+\eta)}|\triangle \lambda|]+O(\parallel \bos{w}-\bos{w}' \parallel_{\dot{H}^{\sigma}\cap\dot{H}^{s_L}\times \dot{H}^{\sigma-1}\cap \dot{H}^{s_L-1}}),
$$
where the constant in the $O()$ depends on $s_0$. These two bounds allow us to compute the last norm of $\overline{\bos{w}}-\overline{\bos{w}}'$:
$$
\sqrt{\triangle \overline{\mathcal{E}}_{\sigma}}\leq C(b_1^{L+1}|\triangle U|+b_1^{\bar{L}+(1-\delta_0)(1+\eta)}|\triangle \lambda|)+C(s_0)\parallel \bos{w}-\bos{w}' \parallel_{\dot{H}^{\sigma}\cap\dot{H}^{s_L}\times \dot{H}^{\sigma-1}\cap \dot{H}^{s_L-1}}.
$$
The four last bounds directly imply the bounds of the lemma we had to prove.

\end{proof}

We are now ready to end the proof of Proposition \ref{variete:prop:parametres lipschitz 2}.

\begin{proof}[Proof of Proposition \ref{variete:prop:parametres lipschitz 2}] 
Let $\bos{U}$ and $\bos{U}'$ be two solutions described by the Proposition \ref{variete:prop:parametres lipschitz 2}. We associate to the two solutions their lower order decomposition described by Definition \ref{variete:def:decomposition adaptee varepsilon}. Without loss of generality, we can assume $\bar{\lambda}(s_0)\leq \bar{\lambda}'(s_0)$, which means that in lower order decomposition, the second solution starts at a higher scale than the first one.\\
\\
We let the second solution evolve with time and define $\bar{s}'_1$ as the time at which its scale is the same as the initial scale of the first solution in lower order decomposition: $\bar{\lambda}'(\bar{s}'_1)=\bar{\lambda}(s_0)$. We now estimate the difference between the second solution, in lower order decomposition, taken at these two times. From the equation \fref{thetrapped:eq:modulation leq L-1} governing the time evolution of the scale one has:
$$
|\bar{s}'_1-s_0|\leq Cb_1^{-1}|\bar{\lambda}'(s_0)-\bar{\lambda}(s_0)|.
$$
We can then estimate, from \fref{thetrapped:eq:modulation leq L-1} and \fref{thetrapped:eq:improved modulation} the time variation of the parameters:
$$
|\bar{b}'_i(\bar{s}'_1)-\bar{b}'_i(s_0)|\leq Cb_1^i |\bar{\lambda}'(s_0)-\bar{\lambda}(s_0)|.
$$
Let us now quantify how the error changed. In the proof of the energy estimate for the high adapted Sobolev norm \fref{thetrapped:eq:high sobo}, we computed the size of everything in the right hand side of \fref{eq:evolution W}. We computed also the influence of the scale changing in \fref{thetrapped:highsobo:eq:estimation lambdaL}. The form of this energy estimate was meant to cancel the linear part, see \fref{thetrapped:highsobo:eq:estimation lineaire}. But we have here the additional regularity \fref{variete:eq:estimation supplementaire varepsilon'} for the second solution under the lower order decomposition. Thus all these estimates yield:
$$
\parallel \frac{d}{d\bar{s}'}[(((\bar{w}^{'(1)}_{\bar{\lambda}'})_{s_{\bar{L}}})_{\frac{1}{\bar{\lambda}'}}],\frac{d}{d\bar{s}'}[(((\bar{w}^{'(2)}_{\bar{\lambda}'})_{s_{\bar{L}-1}})_{\frac{1}{\bar{\lambda}'}}]\parallel_{L^2\times L^2} \leq C b_1^{\bar{L}+1+(1-\delta_0)(1+\eta)}.
$$
From that we deduce (using Remark \ref{variete:rem:dependance high sobo lambda}):
$$
\begin{array}{r c l}
&\parallel \bar{w}^{'(1)}_{s_{\bar{L}}}(\bar{s}'_1)-((\bar{w}^{'(1)}_{\frac{\bar{\lambda}'}{\bar{\lambda}}}(s_0))_{s_{\bar{L}}})_{\frac{\bar{\lambda}}{\bar{\lambda}'}},\bar{w}^{'(2)}_{s_{\bar{L}}-1}(\bar{s}'_1)-((\bar{w}^{'(2)}_{\frac{\bar{\lambda}'}{\bar{\lambda}}}(s_0))_{s_{\bar{L}}-1})_{\frac{\bar{\lambda}}{\bar{\lambda}'}}\parallel_{L^2\times L^2} \\
\leq& C b_1^{\bar{L}+(1-\delta_0)(1+\eta)}|\bar{\lambda}'(s_0)-\bar{\lambda}(s_0)|.
\end{array}
$$
A similar result holds for the low regularity Sobolev norm:
$$
\parallel \bar{w}^{'(1)}(\bar{s}'_1)-\bar{w}^{'(1)}(s_0),\bar{w}^{'(2)}(\bar{s}'_1)-\bar{w}^{'(2)}(s_0)\parallel_{\dot{H}^{\sigma}\times \dot{H}^{\sigma-1}} \leq C b_1^{(\sigma-s_c)(1+\nu)}|\bar{\lambda}'(s_0)-\bar{\lambda}(s_0)|.
$$
We now apply the result of Proposition \ref{variete:prop:parametres lipschitz} to $\overline{\bos{U}}(s_0)$ and $\overline{\bos{U}}'(\bar{s}'_1)$. It gives the primary Lipschitz bound (using Remark \ref{variete:rem:dependance high sobo lambda}):
$$
\begin{array}{r c l}
&|\bar{V}_{\text{uns}}(s_0)-\bar{V}'_{\text{uns}}(\bar{s}'_1)|\\
\leq& C\Bigl( | \bar{V}_1(s_0)-\bar{V}'_1(\bar{s}'_1)|+\sum_{\ell+1}^L| \bar{U}_i(s_0)-\bar{U}'_i(\bar{s}'_1)| \\
&+b_1^{-(\sigma-s_c)(1+\nu)}\parallel \overline{\bos{w}}(s_0)-\overline{\bos{w}}'(\bar{s}'_1) \parallel_{\dot{H}^{\sigma}\times\dot{H}^{\sigma-1}}\\
&+b_1^{-\bar{L}-(1-\delta_0)(1+\eta)}\parallel \bar{w}^{(1)}_{s_{\bar{L}}}(s_0)-\bar{w}^{'(1)}_{s_{\bar{L}}}(\bar{s}'_1),\bar{w}^{(2)}_{s_{\bar{L}}-1}(s_0)-\bar{w}^{'(1)}_{s_{\bar{L}}-1}(\bar{s}'_1)  \parallel_{L^2\times L^2} \Bigr)
\end{array}
$$
for the variables under lower order decomposition and at different times $s_0$ and $\bar{s}'_1$. We now use the four previous bounds that link the variables for the second solution under lower order decomposition between the times $s_0$ and $\bar{s}'_1$ to obtain from the previous equation:
$$
\begin{array}{r c l}
&|\bar{V}_{\text{uns}}(s_0)-\bar{V}'_{\text{uns}}(s_0)|\\
\leq& C\Bigl( | \bar{V}_1(s_0)-\bar{V}'_1(s_0)|+\sum_{\ell+1}^L| \bar{U}_i(s_0)-\bar{U}'_i(s_0)|+|\bar{\lambda}(s_0)-\bar{\lambda}'(s_0)| \\
&+b_1^{-(\sigma-s_c)(1+\nu)}\parallel \overline{\bos{w}}(s_0)-\overline{\bos{w}}'(s_0) \parallel_{\dot{H}^{\sigma}\times\dot{H}^{\sigma-1}}\\
&+b_1^{-\bar{L}-(1-\delta_0)(1+\eta)}\parallel \bar{w}^{(1)}_{s_{\bar{L}}}(s_0)-\bar{w}^{'(1)}_{s_{\bar{L}}}( s_0),\bar{w}^{(2)}_{s_{\bar{L}}-1}(s_0)-\bar{w}^{'(1)}_{s_{\bar{L}}-1}(s_0)  \parallel_{L^2\times L^2} \Bigr),
\end{array}
$$
where we used again Remark \ref{variete:rem:dependance high sobo lambda}. The previous identity is the Lipschitz aspect under lower order decomposition. To relate it to the original higher order decomposition, we use the bounds \fref{variete:removal:eq:diff parametres}, \fref{variete:removal:eq:diff high}, \fref{variete:removal:eq:diff low} and \fref{variete:removal:eq:diff scale} of the previous Lemma \fref{variete:lem: estimation difference ini}, and we obtain the result of the Proposition.

\end{proof}

We can now end the proof of the main Theorem \ref{variete:thm;variete} of this section.

\begin{proof}[Proof of Theorem \fref{variete:thm;variete}]
We let $\bos{X}:=\left(\dot{H}^{\sigma}\cap \dot{H}^{s_L} \right)\times \left(\dot{H}^{\sigma-1}\cap \dot{H}^{s_L-1} \right)$ and $\bos{U}_0\in \bos{X}$ be a solution leading to a type II blow up as described by Proposition \ref{prop:bootstrap}. Without loss of generality we can assume that its scale is $1$. We then write:
$$
\bos{U}_0=\tilde{\bos{Q}}_{b_0}+\bos{w}_0,
$$
with $b_0=b^e(s_0)+(\frac{U_1(s_0)}{s_0},...,\frac{U_L(s_0)}{s_0^L})$ according to the decomposition explained in Subsubsection \ref{thetrapped:subsubsection:modulation}.\\

\underline{Step 1:} Flattening the non linear coordinates. Let $\bos{U}'_0\in \bos{X}$ be another initial datum. It can be written as:
\be \label{variete:eq:decomposition lineaire}
\bos{U}'_0=\bos{U}_0+\delta \lambda \frac{\partial}{\partial \lambda}(\tilde{\bos{Q}}_{b_0,\frac{1}{\lambda}})_{| \lambda=1}+\sum_1^L \frac{\delta U_i}{s_0^i} \frac{\partial}{\partial b_i}(\tilde{\bos{Q}}_{b})_{| b=b_0}+\delta \bos{w},
\ee
where $\delta \bos{w}\in X$ satisfies fixed orthogonality conditions at scale $1$: $\langle \delta \bos{w},\bos{H}^{*i}\bos{\Phi}_M\rangle=0$ for $0\leq i \leq M$. We have seen that for the parameters one had stable directions of perturbation $V_1,U_{\ell+1},...,U_L$, unstable ones $V_2,...V_{\ell}$ and that the error $\bos{w}$ was a stable perturbation. We recall the notation $V_i=\sum_1^{\ell}p_{i,j}U_j$. With the decomposition we just stated we can define the stable and unstable spaces of linearized directions of perturbation:
$$
\begin{array}{l l l}
\bos{X}_s&:=\Bigl\{& \delta \lambda \frac{\partial}{\partial \lambda}(\tilde{\bos{Q}}_{b_0,\frac{1}{\lambda}})_{| \lambda=1}+\delta V_1(\underset{1}{\overset{\ell}{\sum}} \frac{p_{1,j}}{s_0^j}\frac{\partial}{\partial b_j}(\tilde{\bos{Q}}_{b})_{| b=b_0})+\underset{\ell+1}{\overset{L}{\sum}} \frac{\delta U_i}{s_0^i} \frac{\partial}{\partial b_i}(\tilde{\bos{Q}}_{b})_{| b=b_0}+ \bos{w},\\
&&(\delta \lambda, \delta V_1,\delta U_{\ell+1},...,\delta U_L)\in \mathbb{R}^{L-\ell+2},\\
&&  \delta \bos{w}\in \bos{X}, \ \langle \delta \bos{w},\bos{H}^{*i}\bos{\Phi}_M\rangle=0 \ \text{for} \ 0\leq i \leq M  \Bigr\},\\
\bos{X}_u&:=\Bigl\{ & \sum_2^{\ell} \delta V_i (\sum_1^{\ell} \frac{p_{i,j}}{s_0^j} \frac{\partial}{\partial b_j}(\tilde{\bos{Q}}_{b})_{| b=b_0}),  \ (\delta V_2,...,\delta V_{\ell})\in \mathbb{R}^{\ell-1} \Bigr\}.
\end{array}
$$
So that we decompose in an affine way $\bos{X}=\bos{U}_0+(\bos{X}_u \oplus\bos{X}_s )$. \\

\underline{Step 2:} From linear to adapted coordinates. To be able to use the results of Proposition \ref{prop:bootstrap} and Proposition \ref{variete:prop:parametres lipschitz 2} we consider the following mapping:
$$
\begin{array}{l l l l}
\bos{\phi}:&\bos{X}&\rightarrow &\bos{X} \\
&\bos{U}&\mapsto & \tilde{\bos{Q}}_{b+\delta b,\frac{1}{1+\delta \lambda}}+\tilde{\bos{w}} 
\end{array}
$$
where, using the decomposition \fref{variete:eq:decomposition lineaire}, we define $\delta b$ as $\delta b:=(\frac{\delta U_1}{s_0},...,\frac{\delta U_L}{s_0^L})$ and:
\be \label{variete:eq:decomposition non lineaire}
\tilde{\bos{w}}:=\bos{w}+\delta \bos{w} -\sum_0^L \frac{\langle \bos{w}+\delta \bos{w},(\bos{H}^{*i}\bos{\Phi}_M)_{\frac{1}{1+\delta \lambda}} \rangle}{\langle \bos{T}_{i,\frac{1}{1+\delta \lambda}},(\bos{H}^{*i}\bos{\Phi}_M)_{\frac{1}{1+\delta \lambda}} \rangle} \bos{T}_{i,\frac{1}{1+\delta \lambda}}
\ee
satisfies the orthogonality conditions \fref{eq:condition d'ortho pour epsilon} at the scale $\frac{1}{1+\delta \lambda}$: 
$$
\langle \delta \bos{w},(\bos{H}^{*i}\bos{\Phi}_M)_{\frac{1}{1+\delta \lambda}}\rangle=0 \ \text{for} \ 0\leq i \leq M.
$$
$\bos{\phi}$ is a $C^{\infty}$ diffeomorphism that preserves $\bos{U}_0$: $\bos{\phi}(\bos{U}_0)=\bos{U}_0$.\\

\underline{Step 3:} the Lipschitz manifold properties. Let 
$$
 \delta \lambda \frac{\partial}{\partial \lambda}(\tilde{\bos{Q}}_{b_0,\frac{1}{\lambda}})_{| \lambda=1}+ \delta V_1(\sum_1^{\ell} \frac{p_{1,j}}{s_0^j}\frac{\partial}{\partial b_j}(\tilde{\bos{Q}}_{b})_{| b=b_0})+\sum_{\ell+1}^L \frac{\delta U_i}{s_0^i} \frac{\partial}{\partial b_i}(\tilde{\bos{Q}}_{b})_{| b=b_0}+ \bos{w}:=\delta \bos{U}_s\in X_s
$$
be small enough. We apply the result of Proposition \ref{prop:bootstrap} to $\bos{\phi}(\bos{U}_0+\delta \bos{U}_s)$. There exists a choice of unstable modes $\delta V_2,...,\delta V_{\ell}$ such that $\tilde{\bos{U}}:= \tilde{\bos{Q}}_{b+\delta b,\frac{1}{1+\delta \lambda}}+\tilde{\bos{w}} $ is an initial datum leading to a blow up as described in this Proposition, where $\delta b:=(\frac{\delta U_1}{s_0},...,\frac{\delta U_L}{s_0^L})$, and $\delta V_i:=\sum_1^{\ell} p_{i,j}U_j$ for $1\leq i \leq \ell$. Moreover, from Proposition \fref{variete:prop:parametres lipschitz 2} the $\ell-1$-tuple $\delta V_2,...,\delta V_{\ell}$ is unique. We then have:
$$
\bos{\phi}^{-1}(\tilde{\bos{U}})=\bos{U}_0+\delta \bos{U}_s+\delta \bos{U}_u,
$$
with $\delta \bos{U}_u:=\sum_2^{\ell} \delta V_i (\sum_1^{\ell} \frac{p_{i,j}}{s_0^j} \frac{\partial}{\partial b_j}(\tilde{\bos{Q}}_{b})_{| b=b_0})\in \bos{X}_u$. Let $\bos{\mathcal{O}}$ be a small enough open set of $\bos{X}$ with $\bos{0}\in \bos{\mathcal{O}}$. We define the application $\bos{f}$ as:
$$
\begin{array}{l l l l}
\bos{f}:& \delta \bos{\mathcal{O}}\cap \bos{X}_s & \rightarrow & \delta \bos{X}_u \\
&\bos{U}_s & \mapsto & \bos{U}_u
\end{array}
$$
with $\bos{X}_u$ being defined by the previous construction. For $\bos{U}_s\in \bos{X}_s\cap \bos{\mathcal{O}}$, the function $\bos{\phi}(\bos{U}_0+\delta \bos{U}_s+\bos{f}(\delta \bos{U}_s)) $ yields a type II blow up as described by Proposition \ref{prop:bootstrap}. Moreover, Proposition \ref{variete:prop:parametres lipschitz 2} implies that $\bos{f}$ is a Lipsichitz mapping. Let $\bos{\mathcal{M}}$ denote the set of initial data described by Proposition \ref{prop:bootstrap}. We just have proved that $\bos{\phi}^{-1}(\bos{\mathcal{M}}\cap (\bos{U}_0+\bos{\mathcal{O}}))$ is the graph of the Lipschitz mapping $\bos{f}:\bos{X}_s \cap \bos{\mathcal{O}}\rightarrow \bos{X}_u$ with $\bos{X}=\bos{X}_u\oplus\bos{X}_s $ and $\bos{X}_u$ of dimension $\ell-1$. This means that $\bos{\phi}^{-1}(\bos{\mathcal{M}}\cap (\bos{U}_0+\bos{\mathcal{O}}))$ is a Lipschitz manifold of codimension $\ell-1$. As $\bos{\phi}$ is a $C^{\infty}$ diffeomorphism, it implies that $\bos{\mathcal{M}}\cap (\bos{U}_0+\bos{\mathcal{O}}))$ is a Lipschitz manifold of codimension $\ell-1$. Hence $\bos{\mathcal{M}}$ is a locally Lipschitz manifold of codimension $\ell-1$ in $\bos{X}$.

\end{proof}


\begin{appendix}

\section{Properties of the stationnary state}

We state here the fundamental decomposition for the asymptotic of the stationary state Q. These results are now standard, see \cite{YiLi} \cite{GNW} for exemple, and see also \cite{MRRod2} for its role in type II blow-up involving $Q$ for the Schr\"odinger equation. An important fact, the non nullity of the second term in the expansion, is however not proven in these works. We therefore prove it hereafter.

\begin{lemma}\label{lem:expansion_soliton}
(\emph{Asymptotic expansion for the stationary state:})
We have the expansion:
\begin{equation}
\partial_y^k Q(y)=\partial_y^k\left( \frac{c_{\infty}}{y^{\frac{2}{p-1}}}+\frac{a_1}{y^{\gamma}}\right)+O\left( \frac{1}{y^{\gamma+g+k}} \right) \ \ \ \text{as} \ y \ \text{goes} \ \text{to} \ +\infty ,
\end{equation}
with $a_1$ being a strictly negative (in particular $a_1\neq 0$) coefficient:
\be \label{annexe:eq:a1 strict negatif}
a_1<0
\ee
\end{lemma}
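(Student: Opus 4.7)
The expansion \fref{linearized:eq:asymptotique soliton} up to the error $O(y^{-\gamma-g})$, including the indicial exponents $2/(p-1)$ and $\gamma$, is classical: it is obtained by linearising $-\Delta Q - Q^p = 0$ around the singular stationary solution $Q_\infty(y) := c_\infty y^{-2/(p-1)}$ and noting that the operator $\mathcal{L}_\infty := -\Delta - pQ_\infty^{p-1}$ has radial indicial roots $\gamma$ and $\gamma_2 := (d-2+\sqrt{\triangle})/2 > \gamma$, both real and positive under $p > p_{JL}$; the remainder estimate then follows by bootstrap on the nonlinear Emden equation. What the plan must add is the non-degeneracy statement $a_1 < 0$.

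The strategy is to work with the Emden--Fowler reduction $\phi(t) := e^{\frac{2t}{p-1}}Q(e^t)$, $t := \log y$. A direct computation converts the profile equation into the autonomous second order ODE
$$
\phi'' + \Bigl(d-2-\tfrac{4}{p-1}\Bigr)\phi' - \tfrac{2}{p-1}\Bigl(d-2-\tfrac{2}{p-1}\Bigr)\phi + \phi^p = 0,
$$
which admits $\phi\equiv c_\infty$ as an equilibrium. Linearising at that point gives the characteristic polynomial $\mu^2 + (d-2-\tfrac{4}{p-1})\mu + (p-1)c_\infty^{p-1} = 0$, whose roots are exactly $-\alpha$ and $-\alpha_2$ with $\alpha,\alpha_2$ as in \fref{intro:eq:def alpha}. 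For $p > p_{JL}$ one has $0 < \alpha < \alpha_2$, so $c_\infty$ is a stable node with slow eigendirection associated to $-\alpha$; the correspondence $a_1\, y^{-\gamma} \longleftrightarrow a_1 \, e^{-\alpha t}$ translates the sought statement $a_1 < 0$ into: the trajectory $\phi^\star$ corresponding to the soliton $Q$ approaches $c_\infty$ along the slow direction, and from below.

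The plan is then to combine two inputs. First, the classical pointwise inequality $Q(y) < Q_\infty(y)$ for all $y > 0$ when $p > p_{JL}$: this is obtained by phase plane analysis of the above ODE using the Lyapunov functional
$$
\mathcal{H}(\phi,\phi') := \tfrac{1}{2}(\phi')^2 - \tfrac{1}{p-1}\Bigl(d-2-\tfrac{2}{p-1}\Bigr)\phi^2 + \tfrac{1}{p+1}\phi^{p+1},
$$
whose dissipation along the flow shows that $\phi^\star$ leaves the equilibrium $\phi = 0$, enters the strip $\{0 < \phi < c_\infty\}$ and stays there until it is absorbed by the node $c_\infty$. This gives $\phi^\star(t) < c_\infty$ for every $t$, hence $a_1 \leq 0$. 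Second, I must exclude $a_1 = 0$. By standard stable manifold theory the fast mode $e^{-\alpha_2 t}$ corresponds to a one-dimensional analytic invariant manifold $W^f$ through $c_\infty$, and $a_1 = 0$ would force $\phi^\star \subset W^f$. A backwards continuation of $W^f$ is then carried out: using the monotonicity of $\mathcal{H}$ and the absence of other equilibria in the strip $\{0<\phi<c_\infty\}$, the unique orbit on $W^f$ lying in this strip exits through $\phi = c_\infty$ rather than arriving at $\phi = 0$ as $t\to -\infty$. Since $\phi^\star$ emerges from $\phi = 0$ (this corresponds to the regularity $Q(0) = 1$), one concludes $\phi^\star \not\subset W^f$, hence $a_1 \neq 0$, and combined with $a_1 \leq 0$ this yields \fref{annexe:eq:a1 strict negatif}.

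The main obstacle is the rigorous backwards analysis of $W^f$: the local stable manifold theorem only gives information in a neighbourhood of $c_\infty$, and gluing this to the boundary behaviour at $\phi = 0$ requires a global phase plane argument that excludes the orbit on $W^f$ from hitting $\phi = 0$. This is exactly where the assumption $p > p_{JL}$, guaranteeing both the reality of $\alpha,\alpha_2$ and the node structure, enters essentially, and it is the step that must be argued with the most care.
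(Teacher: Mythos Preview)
Your approach differs substantially from the paper's. The paper excludes $a_1=0$ by a functional-analytic argument rather than phase-plane dynamics: it shows, via a recurrence $a_i=P_i(a_1)$ with $P_i(0)=0$ for all $1\le i\le k$, that $a_1=0$ would force every subsequent coefficient $a_2,\dots,a_k$ in the expansion of $Q$ to vanish, whence $\partial_y\Lambda^{(1)}Q=O(\log(y)/y^{\frac{2}{p-1}+\alpha_2+1})\in L^2$. This would give $\Lambda^{(1)}Q\in\dot H^1$ with $\mathcal L\Lambda^{(1)}Q=0$, contradicting the strict positivity $\mathcal L\ge\delta/y^2$ on $\dot H^1$ (quoted from \cite{KaS}). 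The sign $a_1<0$ is then read off directly from $\Lambda^{(1)}Q>0$ together with the leading term $\Lambda^{(1)}Q\sim -\alpha a_1\, y^{-\gamma}$, not from the comparison $Q<Q_\infty$. So the paper never enters a backwards-in-time analysis of the fast manifold.

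Your Emden--Fowler reduction and the deduction $a_1\le 0$ from $\phi^\star<c_\infty$ are correct. The gap is precisely where you flag it: the exclusion of $a_1=0$ via backwards continuation of $W^f$. You assert that the branch of $W^f$ entering $\{0<\phi<c_\infty\}$ must, going backwards, exit through $\phi=c_\infty$ rather than limit to the origin, but monotonicity of $\mathcal H$ and the absence of equilibria in the open strip do not by themselves decide this: the origin is itself an equilibrium on the boundary, and ruling out that the $W^f$ orbit connects to it as $t\to-\infty$ requires a comparison of the $\mathcal H$-levels at the two equilibria together with a separatrix analysis that you have not supplied. The paper's recurrence-plus-integrability argument bypasses this issue entirely and is both shorter and self-contained; your route could likely be completed, but as written the key step remains a claim rather than a proof.
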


In \cite{GNW} and references therein, the authors show the expansion, but they do not show that $a_1\neq 0$. This appendix is devoted to prove this fact. In the paper the authors show the following result:

\begin{lemma}[Gui Ni Wang, \cite{GNW}, Theorem 2.5]
We recall that $0<\alpha_1<\alpha_2$ are the roots of the polynomial:
\begin{equation}
X^2-\left(d-2-\frac{4}{p-1}\right)X+2\left(d-2-\frac{2}{p-1}\right) .
\end{equation}
Then the following expansion is true.
\begin{itemize}
\item[(i)] \emph{If $\frac{\alpha_2}{\alpha_1}\notin \mathbb{N}$}, then for all $k_1, \ k_2 \in \mathbb{N}$, as $y\rightarrow +\infty$ one has:
\begin{equation}
Q(y)=\frac{c_{\infty}}{y^{\frac{2}{p-1}}}+\sum_{i,j=1}^{k_1,k_2}\frac{a_{i,j}}{y^{\frac{2}{p-1}+k_1\alpha_1+k_2\alpha_2}}+O\left(\frac{1}{y^{\frac{2}{p-1}+(k_1+1)\alpha_1}} \right) .
\end{equation}
\item[(ii)] \emph{If $\frac{\alpha_2}{\alpha_1}=k+1\in \mathbb{N}$:} then as $y\rightarrow +\infty$ one has::
\begin{equation}
Q(y)=\frac{c_{\infty}}{y^{\frac{2}{p-1}}}+\sum_{i=1}^{k+1}\frac{a_i}{y^{\frac{2}{p-1}+i\alpha_1}}+\frac{a_k \text{log}(y)+a_k'}{y^{\frac{2}{p-1}+k\alpha_1}}+O \left( \frac{1}{y^{\frac{2}{p-1}+(k+1)\alpha_1}} \right).
\end{equation}
As in the previous case the expansion can be continued to higher terms, but it does not matter for the analysis of the present paper.
\item[(iii)] This expansion adapts for higher derivatives of $Q$.
\end{itemize}
\end{lemma}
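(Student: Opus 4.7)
The plan is to derive the expansion by a direct perturbative construction starting from the leading order $c_\infty y^{-2/(p-1)}$, and then to solve the resulting inhomogeneous ODE order by order, reading off the resonance structure from the indicial equation.

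First I would write $Q(y) = c_\infty y^{-2/(p-1)} + R(y)$ and plug into $-\Delta Q = Q^p$. Using $(1+t)^p = \sum \binom{p}{k}t^k$ on $R/(c_\infty y^{-2/(p-1)})$ (purely formal for now, since we only need the behaviour at infinity) gives
$$
-\Delta R - p c_\infty^{p-1} y^{-2} R = \mathcal{N}(y,R),
$$
where $\mathcal{N}$ collects the quadratic-and-higher terms in $R$. In radial coordinates the linearized operator $\mathcal{L}_\infty := -\partial_{yy} - \tfrac{d-1}{y}\partial_y - pc_\infty^{p-1} y^{-2}$ is of Euler type, and a monomial ansatz $y^{-\mu}$ gives the indicial equation
$$
\mu^2 - (d-2)\mu + p c_\infty^{p-1}= 0.
$$
By construction this is precisely the polynomial whose roots are $\tfrac{2}{p-1}+\alpha_1$ and $\tfrac{2}{p-1}+\alpha_2$; equivalently, if we look for corrections of the form $y^{-2/(p-1)-\beta}$, the relevant indicial polynomial is $P(\beta) = \beta^2 - (d-2-\tfrac{4}{p-1})\beta + 2(d-2-\tfrac{2}{p-1})$, whose roots are $\alpha_1<\alpha_2$. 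This is the algebraic backbone of the whole expansion.

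Next I would run a formal-series construction. In the non-resonant case $\alpha_2/\alpha_1\notin\mathbb{N}$, I postulate $Q\sim c_\infty y^{-2/(p-1)} + \sum_{i,j\ge 0,(i,j)\ne(0,0)} a_{ij} y^{-2/(p-1)-i\alpha_1-j\alpha_2}$. Substituting into the equation and isolating the coefficient at the power $y^{-2/(p-1)-2-i\alpha_1-j\alpha_2}$, each $a_{ij}$ is determined by dividing a polynomial in the previously constructed coefficients by $P(i\alpha_1+j\alpha_2)$, which is nonzero away from $(1,0)$ and $(0,1)$ precisely because no nontrivial integer combination of $\alpha_1,\alpha_2$ produces $\alpha_1$ or $\alpha_2$ again; the first coefficient $a_1:=a_{1,0}$ is free, and higher coefficients are then determined. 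In the resonant case $\alpha_2=(k+1)\alpha_1$ the denominator $P((k+1)\alpha_1)$ vanishes, so the algebraic solvability fails at order $k+1$ in $\alpha_1$. Standard Frobenius theory then forces the introduction of a $\log y$ correction: replacing the ansatz at that order by $(a_k\log y + a_k')y^{-2/(p-1)-k\alpha_1}$ produces exactly one extra source term from the $\log'$ which cancels the obstruction; this accounts for the second formula in the lemma.

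The third step is to promote the formal expansion to a genuine asymptotic. I would truncate the formal series at a large index $N$, set $Q=Q_N+r$, and rewrite the equation for $r$ as a fixed-point problem in a Banach space of functions decaying like $y^{-2/(p-1)-(N+1)\alpha_1}$ at infinity. The Green's function for $\mathcal{L}_\infty$ is explicit (it is generated by the two power-law solutions $y^{-2/(p-1)-\alpha_1}$ and $y^{-2/(p-1)-\alpha_2}$), so variation of constants gives an integral operator whose contraction property on $\{y\ge y_0\}$ for $y_0$ large follows from the weighted decay of $Q_N^{p-1}-c_\infty^{p-1}y^{-2}$ and of the source terms produced by truncation. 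This yields (i) and (ii).

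Finally, for (iii), I would either differentiate the equation $\Delta Q = -Q^p$ and bootstrap, using that $\partial_y$ shifts the decay weight by one and the Green's function argument applies to the derivative as well; or, more directly, observe that the remainder $r$ solves a second-order ODE, so estimates on $r$ and $\Delta r = -Q^p + \mathcal{L}_\infty Q_N + \text{l.o.t.}$ automatically upgrade to estimates on $\partial_y r, \partial_{yy} r$ by the ODE itself, and higher derivatives follow by induction. The main technical obstacle is cleanly organizing the resonant case: tracking which exponents $i\alpha_1+j\alpha_2$ collide with $\{\alpha_1,\alpha_2\}$ as the Frobenius construction proceeds, and verifying that a single logarithmic correction at each resonant level suffices to resolve the obstruction without creating new ones. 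Everything else is either algebraic bookkeeping or a standard Green's-function contraction.
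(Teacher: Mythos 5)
Two remarks before the substance: the paper does not actually prove this lemma — it is quoted as Theorem 2.5 of Gui--Ni--Wang \cite{GNW} (see also \cite{YiLi}), and the only part proved in the paper (Appendix A) is the non-vanishing $a_1\neq 0$, via the Emden--Fowler variable $W(t)=y^{\frac{2}{p-1}}Q(y)-c_{\infty}$, $y=e^{t}$, and the variation-of-constants identity \fref{eq:formuledonnantW}. Your computation with the Euler-type operator in the variable $y$ is exactly that same machinery written before the change of variables $y=e^{t}$: your indicial polynomial $P(\beta)$ coincides (up to the sign of the variable) with the characteristic polynomial of the linear part of \fref{eq:equadiffdeW}, and your formal double series in $y^{-i\alpha_1-j\alpha_2}$ is the iteration of the Duhamel formula. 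So at the formal level your route is the standard one.

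The genuine gap is in your third step. The truncate-and-contract argument produces, on $\{y\geq y_0\}$, \emph{some} solutions of the ODE of the form $Q_N+r$ with $r$ in your weighted space; it does not show that the ground state $Q$ is one of them. To apply it to $Q$ you must already know that $R=Q-c_{\infty}y^{-\frac{2}{p-1}}$ is perturbative at infinity — at the very least $y^{\frac{2}{p-1}}Q(y)\to c_{\infty}$ together with an initial algebraic decay rate for $R$, enough to place $Q-Q_N$ (after adjusting the free coefficients $a_{1,0}$, $a_{0,1}$) in the contraction space. As written, your argument either assumes this (making the fixed point circular) or only proves existence of solutions admitting the expansion, which is weaker than the statement. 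This a priori information is precisely the substantial content of \cite{GNW}, \cite{YiLi} — it is the input quoted as items (ii) and (iv) of the auxiliary lemma in the paper's Appendix A — and it is where $p>p_{JL}$ enters: that condition makes $\alpha_1,\alpha_2$ real, whereas for supercritical $p$ below $p_{JL}$ they are complex, the approach of $Q$ to the singular solution is oscillatory, and no expansion in real powers of $y^{-\alpha_1}$ exists. The standard repair is the route of \cite{GNW}: first prove $W(t)\to 0$, then bootstrap $W=O(e^{-\alpha_1 t})$ from the linearization of \fref{eq:equadiffdeW}, and iterate the Duhamel formula to any order; your Green's-function step then becomes an expansion of the given solution rather than a construction of a new one. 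A minor inconsistency to fix as well: your own bookkeeping locates the resonance at order $(k+1)\alpha_1=\alpha_2$, so the logarithmic correction should be attached to the exponent $\frac{2}{p-1}+(k+1)\alpha_1$; you reproduced the indexing of the displayed statement (apparently a typo), but your Frobenius argument should be stated consistently with where the solvability actually fails.
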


This proves the expansion of Lemma \ref{lem:expansion_soliton}. The rest of this section is devoted to the proof that $a_1$ is strictly negative.

\begin{proof}[Proof of the assertion \fref{annexe:eq:a1 strict negatif}]
As a consequence of the previous lemma we get that, noting $k:=E[\frac{\lambda_2}{\lambda_1}]$ if $\frac{\alpha_2}{\alpha_1}\notin \mathbb{N}$, and $k:=\frac{\alpha_2}{\alpha_1}-1$ if $\frac{\alpha_2}{\alpha_1}\in \mathbb{N}$ we have in both cases:
\begin{equation}\label{eq:expansionlambdaQ}
\Lambda^{(1)}Q= \sum_{i=1}^{k}-i\alpha_1 \frac{a_1}{y^{\frac{2}{p-1}+i\alpha_1}}+O\left( \frac{\text{log}(y)}{y^{\frac{2}{p-1}+\alpha_2}}\right) ,
\end{equation}
and:
\begin{equation}\label{eq:expansion_nablalambdaQ}
\partial_y \Lambda^{(1)} Q= \sum_{i=1}^{k}(i\alpha_1)\left(\frac{2}{p-1}+i\alpha_1\right) \frac{a_1}{y^{\frac{2}{p-1}+i\alpha_1+1}}+O\left( \frac{\text{log}(y)}{y^{\frac{2}{p-1}+\alpha_2+1}}\right) .
\end{equation}

The key point is that the coefficient $a_i$ are linked with a reccurence relation:

\begin{lemma}\label{lem:relationrecurrencecoefasymptotique}
For $1\leq i \leq k$, $a_i$ is given by $a_i=P_i(a_1)$ where $P_i$ is a polynomial such that $P_i(0)=0$ for all $1\leq i \leq k$.
\end{lemma}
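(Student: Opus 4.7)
\medskip

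My plan is to derive the recurrence by plugging the formal expansion of $Q$ into the equation $\Delta Q+Q^p=0$ and then matching powers of $y$ term by term. I would start by writing $U(y):=c_\infty y^{-\frac{2}{p-1}}$ for the singular stationary profile, which satisfies $\Delta U+U^p=0$ identically on $y>0$, and then set $Q=U(1+v)$ so that $W:=Uv=Q-U$ solves
\begin{equation*}
\Delta W+\frac{pc_\infty^{p-1}}{y^2}W=-U^p\bigl[(1+v)^p-1-pv\bigr]=-U^p\sum_{j\geq 2}\binom{p}{j}v^j.
\end{equation*}
Since $p$ is an integer the right hand side is a finite sum of monomials in $v$ of degree at least $2$, and by the Gui--Ni--Wang expansion we may write $v(y)=\sum_{i\geq 1}b_iy^{-i\alpha_1}+(\text{lower order})$ with $b_i:=a_i/c_\infty$, so $W(y)=\sum_{i\geq 1}a_iy^{-\beta_i}+(\text{lower order})$ where $\beta_i:=\tfrac{2}{p-1}+i\alpha_1$.

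Next I would compute the action of the linearized operator on these tails. For radial powers one has
\begin{equation*}
\Bigl(\Delta+\frac{pc_\infty^{p-1}}{y^2}\Bigr)y^{-\beta}=\bigl[\beta^2-(d-2)\beta+pc_\infty^{p-1}\bigr]y^{-\beta-2},
\end{equation*}
and the bracket factors as $(\beta-\gamma)(\beta-\gamma')$ where $\gamma,\gamma'=\tfrac{d-2}{2}\mp\tfrac{\sqrt\triangle}{2}$ are the two indicial roots. Using $\gamma=\tfrac{2}{p-1}+\alpha_1$ and $\gamma'=\tfrac{2}{p-1}+\alpha_2$ (from Lemma \ref{annexe:lem:valeur dalpha}), I get
\begin{equation*}
\Bigl(\Delta+\frac{pc_\infty^{p-1}}{y^2}\Bigr)y^{-\beta_i}=(i-1)\alpha_1\bigl(i\alpha_1-\alpha_2\bigr)\,y^{-\beta_i-2}.
\end{equation*}
The prefactor vanishes for $i=1$ (consistent with $a_1$ being a free parameter) and is nonzero whenever $2\leq i\leq k$, because by the very definition of $k$ in each of the two cases considered we have $i\alpha_1<\alpha_2$; this is the only nontrivial resonance/non-resonance check, and it is the reason the statement is restricted to $i\leq k$.

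To close the recurrence I match the $y^{-\beta_i-2}$ coefficients on both sides. Since $U^p=c_\infty^py^{-\beta_i-2}\cdot y^{(i-1)\alpha_1}\cdot(\text{shift})$ and $v^j$ contributes powers $y^{-(i_1+\cdots+i_j)\alpha_1}$ with $i_m\geq 1$, the monomial $y^{-\beta_i-2}$ arises only from $2\leq j\leq i$ with $i_1+\cdots+i_j=i$, forcing every $i_m\leq i-1$. Hence
\begin{equation*}
(i-1)\alpha_1(i\alpha_1-\alpha_2)\,a_i=-c_\infty^{p}\sum_{j=2}^{i}\binom{p}{j}\sum_{\substack{i_1+\cdots+i_j=i\\ i_m\geq 1}}b_{i_1}\cdots b_{i_j},
\end{equation*}
which expresses $a_i$ as a polynomial in $a_1,\dots,a_{i-1}$ with no constant term. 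An immediate induction on $i$ (base case $a_2=-\tfrac{c_\infty^{p-2}\binom{p}{2}}{\alpha_1(2\alpha_1-\alpha_2)}a_1^2$) then shows $a_i=P_i(a_1)$ with $P_i$ a polynomial satisfying $P_i(0)=0$, as desired.

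The main obstacle, which one must treat with some care, is justifying that the formal matching of powers is actually valid, i.e.\ that no additional contributions arise at order $y^{-\beta_i-2}$ from either (a) the $O\bigl(y^{-\frac{2}{p-1}-\alpha_2}\log y\bigr)$ remainder in Gui--Ni--Wang's expansion or (b) cross-terms in $(1+v)^p$ involving the remainder. Both concerns are handled by the fact that, for $i\leq k$, one has $\beta_i<\tfrac{2}{p-1}+\alpha_2$ strictly, so the remainder is genuinely lower order and does not pollute the $y^{-\beta_i-2}$ coefficient; this is precisely the content of the non-resonance hypothesis built into the definition of $k$.
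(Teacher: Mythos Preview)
Your argument is correct and arrives at the same recurrence as the paper, but via a somewhat different packaging. The paper changes variables to $t=\log y$, writes the autonomous ODE
\[
W_{tt}+\Bigl(d-2-\tfrac{4}{p-1}\Bigr)W_t+2\Bigl(d-2-\tfrac{2}{p-1}\Bigr)W+P(W)=0,
\]
and then bootstraps the expansion order by order using the Duhamel integral formula $W(t)=ae^{-\alpha_1 t}+be^{-\alpha_2 t}+\frac{1}{\alpha_2-\alpha_1}\int_{T_0}^t(e^{\alpha_2(s-t)}-e^{\alpha_1(s-t)})P(W)\,ds$, computing each integral explicitly and showing by induction that the next coefficient is a polynomial in $a_1$ vanishing at $0$. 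You instead stay in the $y$-variable, take the Gui--Ni--Wang expansion as given, substitute directly into $\bigl(\Delta+pc_\infty^{p-1}y^{-2}\bigr)W=-U^p\sum_{j\ge 2}\binom{p}{j}v^j$, and match powers, which exhibits the indicial factor $(i-1)\alpha_1(i\alpha_1-\alpha_2)$ in one line and makes the non-resonance condition $i\alpha_1<\alpha_2$ for $i\le k$ transparently the mechanism that makes the recurrence solvable. Your route is more direct and algebraically cleaner; the paper's integral-formula route tracks the remainder more explicitly at each inductive step, so the justification that lower-order terms do not pollute is built in rather than argued separately at the end as you do. Both are valid.
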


This lemma is proved later. Hence we have the following alternative:
\begin{equation}
either \ a_1 \neq 0 \ or \ \partial_y \Lambda^{(1)} Q= O\left( \frac{log(y)}{y^{\frac{2}{p-1}+\alpha_2+1}}\right) .
\end{equation}

The remainder term of \fref{eq:expansion_nablalambdaQ} is in $L^2$. Indeed, we compute:
$$
d-2\frac{2}{p-1}-2\alpha_2-2=-\sqrt{\triangle}<0 .
$$
So If $a_1=0$ then $\Lambda^{(1)}Q \in \dot{H}^1$. The term associated to $a_1$ is not in $L^2$ because $d-2\frac{2}{p-1}-2\alpha_1-2=\sqrt{\triangle}>0$, see \fref{defgamma}.
\\But we know from \cite{KaS} that $\mathcal{L}$ is positive definite on $\dot{H}^1$, and that $\mathcal{L}\Lambda^{(1)} Q=0$. We then must have $\Lambda^{(1)}Q \notin \dot{H}^1$. Considering what was said previously, this implies $a_1\neq 0$.\\
\\
We also know from \cite{KaS} that $\Lambda^{(1)}Q>0$. From the expansion \fref{eq:expansionlambdaQ} This implies that $a_1$ is strictly negative.
\end{proof}

We now give the proof of the recurrence relation between the $a_i$'s stated in Lemma \ref{lem:relationrecurrencecoefasymptotique}.

\begin{proof}[Proof of Lemma \ref{lem:relationrecurrencecoefasymptotique}]
We use here the ideas developped in \cite{YiLi}. In this paper or in references therin, the following facts are proven:
\begin{lemma}[\cite{YiLi} Lemmas 4.3 and 4.4] The following holds:
\begin{itemize} 
\item[(i)] the solitary wave exists and has $C^{\infty}$ regularity.
\item[(ii)] $y^{\frac{2}{p-1}}Q(y)$ has a limit as $y\rightarrow +\infty$, denoted $c_{\infty}$.
\item[(iii)] If we renormalise the space variable by $y=e^t$ and define:
\begin{equation}
W(t)=y^{\frac{2}{p-1}}Q(y)-c_{\infty} .
\end{equation}
 $W$ then satisfies the differential equation for $t$ large:
\begin{equation}\label{eq:equadiffdeW}
W_{tt}+\left(d-2-\frac{4}{p-1}\right)+2\left(d-2-\frac{2}{p-1}\right)W+P(W)=0 ,
\end{equation}
where $P$ denotes the polynomial:
\be \label{annexe:eq:expression P}
(X+c_{\infty})^p-c_{\infty}^p-pc_{\infty}^{p-1}X.
\ee
\item[(iv)] $W$ has the following begining of expansion at infinity:
\begin{equation}
W(t)=\left\{ \begin{array}{l l} 
a_1 e^{-\alpha_1t}+O(e^{-\alpha_2 t}) \ \text{if} \ \alpha_2<2\alpha_1 \\
a_1 e^{-\alpha_1t}+O(te^{-\alpha_2 t}) \ \text{if} \ \alpha_2=2\alpha_1 \\
a_1 e^{-\alpha_1t}+O(e^{-2\alpha_1 t}) \ \text{if} \ \alpha_2>2\alpha_1 .
\end{array}
\right.
\end{equation}
\end{itemize}
\end{lemma}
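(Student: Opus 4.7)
The plan is to treat the four statements in the order listed, relying on classical ODE methods and a stable-manifold analysis for the asymptotic expansion.

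For (i), I would pose the Cauchy problem for the radial profile of $-\Delta Q - Q^p = 0$, namely $Q'' + \frac{d-1}{y} Q' + |Q|^{p-1} Q = 0$ with $Q(0) = 1$, $Q'(0) = 0$. Local existence, uniqueness, and real-analyticity near $y = 0$ follow from a Picard fixed-point argument on the integral reformulation $Q(y) = 1 - \int_0^y s^{1-d} \int_0^s r^{d-1} |Q|^{p-1} Q\, dr\, ds$, in which the factor $r^{d-1}$ absorbs the coordinate singularity at the origin. Global existence on $[0,+\infty)$ together with $Q > 0$ and $Q$ bounded is standard under $p > p_{JL}$, and follows from monitoring the energy $E(y) = \tfrac12 (Q'(y))^2 + \tfrac{1}{p+1} Q^{p+1}(y)$, which satisfies $E'(y) = -\tfrac{d-1}{y}(Q'(y))^2 \leq 0$, ruling out blow-up, combined with the Gidas--Spruck type barriers that prevent $Q$ from reaching zero in finite $y$ in the supercritical regime. $C^\infty$ regularity away from the origin is automatic, and near the origin follows from the convergent power-series representation.

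For (ii), I would introduce $V(t) := e^{2t/(p-1)} Q(e^t)$ and study the associated autonomous planar dynamical system for $(V, \dot V)$. The equilibria are $V = 0$ and $V = c_\infty$, where $c_\infty^{p-1} = \tfrac{2}{p-1}(d-2-\tfrac{2}{p-1})$ is precisely the value that makes the explicit singular solution $c_\infty y^{-2/(p-1)}$ solve the radial equation. The crucial step is to show that the trajectory issued from the regular ground state $Q$ converges to $(c_\infty, 0)$ as $t \to +\infty$; the assumption $p > p_{JL}$ enters here because it makes the linearization at $(c_\infty, 0)$ have two real negative eigenvalues $-\alpha_1, -\alpha_2$ (the discriminant $\triangle > 0$ of Lemma \ref{annexe:lem:valeur dalpha}), which rules out oscillatory convergence. \textbf{This convergence is the main obstacle}: it requires trapping the trajectory in the basin of attraction of $(c_\infty, 0)$ and ruling out return to $V=0$ or escape to infinity. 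I would build a Lyapunov-type functional from the Hamiltonian of the equation combined with the positive damping coefficient $(d-2-\tfrac{4}{p-1})$ produced by the change of variables, both of which are positive under the standing assumption.

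For (iii), the ODE for $W$ is obtained by direct chain-rule computation. Using $y = e^t$ gives $y\,\partial_y = \partial_t$ and $y^2 \partial_y^2 + y\,\partial_y = \partial_t^2$. Writing $Q = (W + c_\infty)\, e^{-2t/(p-1)}$ and substituting into $y^2 Q'' + (d-1)\, y Q' + y^2 Q^p = 0$, the common factor $e^{-2t/(p-1)}$ cancels and one obtains
\begin{equation*}
\ddot W + \left(d-2-\tfrac{4}{p-1}\right) \dot W + (W+c_\infty)^p - \tfrac{2}{p-1}\left(d-2-\tfrac{2}{p-1}\right)(W + c_\infty) = 0.
\end{equation*}
The defining relation $c_\infty^p = \tfrac{2}{p-1}(d-2-\tfrac{2}{p-1})\, c_\infty$ cancels the constant term, and the identity $p c_\infty^{p-1} - \tfrac{2}{p-1}(d-2-\tfrac{2}{p-1}) = 2(d-2-\tfrac{2}{p-1})$ identifies the coefficient of the linear term, yielding the stated equation \fref{eq:equadiffdeW} with nonlinearity $P(W) = (W+c_\infty)^p - c_\infty^p - p c_\infty^{p-1} W$.

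For (iv), I linearize \fref{eq:equadiffdeW} around $W=0$. The characteristic polynomial of the linear part is exactly $X^2 + (d-2-\tfrac{4}{p-1})X + 2(d-2-\tfrac{2}{p-1})$, whose roots are $-\alpha_1, -\alpha_2$ with $0 < \alpha_1 < \alpha_2$ in the range $p > p_{JL}$. Since $W(t) \to 0$ by step (ii) and $P(W) = O(W^2)$ near $0$, standard local stable-manifold theory (either Hartman's theorem or a direct contraction argument on the Duhamel formulation with the Green's function of the linear part) provides an expansion $W(t) = a_1 e^{-\alpha_1 t} + a_2 e^{-\alpha_2 t} + \text{quadratic corrections}$. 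The three cases of the statement correspond to the competition between the linear correction $e^{-\alpha_2 t}$ and the quadratic contribution produced by $P(W) \sim \tfrac{p(p-1)}{2} c_\infty^{p-2} a_1^2 e^{-2\alpha_1 t}$ through Duhamel: $\alpha_2 < 2\alpha_1$ means the linear correction dominates, $\alpha_2 > 2\alpha_1$ means the quadratic one does, and the resonance $\alpha_2 = 2\alpha_1$ generates a particular solution of the form $t\, e^{-\alpha_2 t}$, accounting for the logarithmic factor.
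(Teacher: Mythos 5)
This lemma is not proved in the paper: it is quoted verbatim from \cite{YiLi} (Lemmas 4.3 and 4.4, see also \cite{GNW}, \cite{KaS}), and the paper's own contribution in Appendix A starts only afterwards, with the proof that $a_1\neq 0$. So there is no in-paper argument to compare against; what you have written is a sketch of the cited result itself. Your route — radial Cauchy problem for $Q$, Emden--Fowler change of variables $V(t)=e^{2t/(p-1)}Q(e^t)$, convergence of the trajectory to the equilibrium $(c_\infty,0)$, then a linearization/Duhamel analysis at that equilibrium — is essentially the classical route of those references, and your part (iv) is exactly the variation-of-constants mechanism that the paper itself exploits later through \fref{eq:formuledonnantW} to compute the higher-order coefficients. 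Your algebra in (iii) is correct, and in fact it recovers the damping term $\left(d-2-\frac{4}{p-1}\right)W_t$ that is missing (a typo) in the displayed equation \fref{eq:equadiffdeW}; the identification $pc_\infty^{p-1}-\frac{2}{p-1}\left(d-2-\frac{2}{p-1}\right)=2\left(d-2-\frac{2}{p-1}\right)$ is also right, and it is consistent with $-\alpha_1,-\alpha_2$ being the roots of the characteristic polynomial, as the paper's definition of $\alpha,\alpha_2$ requires.

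The one place where your sketch is genuinely incomplete is step (ii), which you correctly flag as the main obstacle but do not close. Energy decay of $E(y)=\frac12(Q')^2+\frac{1}{p+1}|Q|^{p+1}$ in (i) rules out blow-up but not a zero crossing, and in the $t$-variable the damped Hamiltonian argument only shows that the $\omega$-limit set of the trajectory consists of equilibria, i.e. $V\to 0$ or $V\to c_\infty$. The equilibrium $V=0$ is a saddle (the product of the eigenvalues of the linearization there is $-\frac{2}{p-1}\left(d-2-\frac{2}{p-1}\right)<0$), so $V\to 0$ would mean the ground state lies on its one-dimensional stable manifold, i.e. $Q\sim c\,y^{-(d-2)}$; excluding this fast-decay scenario is where the supercriticality $p>\frac{d+2}{d-2}$ really enters, via a Pohozaev-type identity ruling out positive finite-energy solutions, and the same ingredient is what actually guarantees positivity in (i) rather than "Gidas--Spruck type barriers" alone. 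If you intend this as a self-contained proof rather than a pointer to \cite{YiLi}, that exclusion argument has to be spelled out; note also that $p>p_{JL}$ is only needed to make $\alpha_1,\alpha_2$ real in (iv), not for the convergence in (ii), which holds in the whole supercritical range.
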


We will now compute the other coefficients of the expansion. As $W$ is a solution of \fref{eq:equadiffdeW}, basic ODE theory states that there exists two coefficients $a$ and $b$ such that:
\begin{equation}\label{eq:formuledonnantW}
W(t)=ae^{-\alpha_1 t}+be^{-\alpha_2 t} +\frac{1}{\alpha_2-\alpha_1}\int_{T_0}^t(e^{\alpha_2(s-t}-e^{\alpha_1(s-t)})P(W)ds .
\end{equation}
We now prove lemma \ref{lem:relationrecurrencecoefasymptotique} by iteration. Our iteration hypothesis is the following for $1\leq j\leq k-1$:
\be \label{annexe:hp de reccurence}
\mathcal{H}(j): \ \begin{array}{ll} W(t)=\sum_{i=1}^{j} a_ie^{-i\alpha_1 t}+O(e^{-(j+1)\alpha_1 t}), \ \text{with} \ a_i=P_i(a_1), \\
P_i \ \text{being a polynomial such that} \  P_i(0)=0.\end{array}
\ee
\underline{Initialization:} For $i=1$, $a_1=P_1(a_1)$ with $P_1=X$ and of course $P_1(0)=0$. Because of the preliminary expansion (iv), the property is true for $j=1$.\\
\\
\underline{Heredity:} We now suppose it is true for $1\leq j \leq k-1$. We then plug the expansion \fref{annexe:hp de reccurence} into \fref{eq:formuledonnantW}. It gives the following expression for $W$:
\begin{equation}\label{eq:formule_W_pour_obtention_coefficients}
W(t)=ae^{-\alpha_1 t}+\frac{1}{\alpha_2-\alpha_1}+\int_{T_0}^t(e^{\alpha_2(s-t}-e^{\alpha_1(s-t)})P(W)ds+O(e^{-(j+1)\alpha_1 t}) ,
\end{equation}
since $(j+1)\alpha_1<\alpha_2$ (because $1\leq j\leq k-1$). But with the definition \fref{annexe:eq:expression P} of $P$ and the hypothesis \fref{annexe:hp de reccurence} on the $a_i$ for $i\leq j$ we have that:
$$
P(W(t))=\sum_{i=2}^{j+1}\tilde{a_i}e^{-i\alpha_1 t}+O(e^{-(j+2)\alpha_1 t}) ,
$$
where $\tilde{a_i}=\tilde{P}_i(a_1)$ with $\tilde{P}_i$ being a polynomial such that $\tilde{P}_i(0)=0$. We now put this expression in \fref{eq:formule_W_pour_obtention_coefficients} and compute the integral of the right hand side. For $2\leq i \leq j+1$:
$$
\begin{array}{r c l}
\int_{T_0}^t(e^{\alpha_2(s-t}-e^{\alpha_1(s-t)})e^{-i\alpha_1 s}ds
&=& \frac{1}{\alpha_2-i\alpha_1}e^{-\alpha_2 t}-\frac{1}{(i-1)\alpha_1}e^{-\alpha_1 t} \\
&& +\left(\frac{1}{\alpha_2-i\alpha_1}+\frac{1}{(i-1)\alpha_1}\right)e^{-i\alpha_1 t},
\end{array}
$$
and:
\begin{equation}\label{eq:calcul_iteration_remainder}
\begin{array}{r c l}
\int_{T_0}^t(e^{\alpha_2(s-t}-e^{\alpha_1(s-t)})O(e^{-(j+2)\alpha_1 s})ds &=& e^{-\alpha_2 t} \int_{T_0}^t O(e^{(\alpha_2-(j+2)\alpha_1) s})ds \\
&&-e^{-\alpha_1 t} \int_{T_0}^t O(e^{-(j+1)\alpha_1 s})ds . \\
\end{array}
\end{equation}
Since $\alpha_2>(j+2)\alpha_1$ the first integral diverges, the second term is integrable. Hence:
$$
\begin{array}{r c l}
\int_{T_0}^t(e^{\alpha_2(s-t}-e^{\alpha_1(s-t)})O(e^{-(j+2)\alpha_1 s})ds &=& e^{-\alpha_2 t}O(\int_{T_0}^t e^{(\alpha_2-(j+2)\alpha_1) s}ds ) \\
&& - e^{-\alpha_1 t} \big{(}\int_{T_0}^{+\infty} O(e^{-(j+1)\alpha_1}) \\
&& -\int_t^{+\infty}O(e^{-(j+1)\alpha_1}\big{)} \\
&=& Ce^{-\alpha_1 t}+O(e^{-(j+2)\alpha_1 t}) .
\end{array}
$$
So we finally get for a constant $C$:
\begin{equation}
\begin{array}{r c l}
W(t)&=& Ce^{-\alpha_1 t}+\sum_{i=2}^{j+1} \tilde{a_i}\frac{1}{\alpha_2-\alpha_2}\left(\frac{1}{\alpha_2-i\alpha_1}-\frac{1}{-(i-1)\alpha_1}\right)e^{-i\alpha_1 t} \\
&+& O(e^{-(i+2)\alpha_1 t}) .
\end{array}
\end{equation}
By identifying this last identity with the expansion \fref{annexe:hp de reccurence} given by the induction hypothesis, one finds that in fact $C=a_1$ and $a_i=\tilde{a_i}$ for $i\leq j$. Therefore the property $\mathcal{H}(j) $ is true for $j+1$.\\
\\
By induction, we have proved that \fref{annexe:hp de reccurence} is valid for $j=k-1$. To finish the proof one needs to do the same computation that we did before for the case $j=k-1$
\begin{itemize}
\item[(i)] If $\frac{\alpha_2}{\alpha_1}\neq \mathbb{N}$. Then the only things that changes is that we do not have $e^{-\alpha_2 t}=O(e^{-(k+1)\alpha_1 t})$, so we cannot throw away the terms involving $e^{-\alpha_2 t}$ and we get:
$$
W(t)=Ce^{-\alpha_1 t}+\sum_{i=2}^{k} \tilde{a_i} e^{-i\alpha_1 t}+O(e^{-(k+1)\alpha_1 t} ).
$$
\item[(ii)] If $\frac{\alpha_2}{\alpha_1}$ is an integer, and $k=\frac{\alpha_2}{\alpha_1}-1$ to go from $k-1$ to $k$ we also do the same computations as before. Now what changes is that we have a $t$ corrective term in \fref{eq:calcul_iteration_remainder}:
$$
\int_{T_0}^t e^{\alpha_2(s-t)}O(e^{-(k+1)\alpha_1 t})=O(te^{-\alpha_2 t}) .
$$
which is what produces the log term in the expansion of $Q$ in that case.
\end{itemize}

\end{proof}


\section{Equivalence of norms}

In this subsection we show that the notion of degree for admissible functions (see Definition \ref{linearized:def:fonctions admissibles}) is equivalent for usual derivatives and adapted ones. We also show that the weighted usual Sobolev norms are equivalent, to some extent, to the weighted adapted ones.

\begin{lemma}\label{annexe:lem:equivalence degre}\emph{(equivalence of the degree)}
Let $p_2$ be a real number and $f$ a $C^{\infty}$ radial function. We recall that $f_k$ is the $k$-th adapted derivative defined in \fref{linearized:eq:def derivees adaptees}. The two following proposition are equivalents:
\begin{itemize}
\item[(i)] $\forall k\geq 0$, $\partial_y^k f=O\left( \frac{1}{y^{p_2+k}}\right) \ \text{as} \ y \rightarrow +\infty$ .
\item[(ii)] $\forall k\geq 0$, $f_k=O\left( \frac{1}{y^{p_2+k}}\right) \ \text{as} \ y\rightarrow +\infty$ .
\end{itemize} 
Let $a\in \mathbb{R}$. For any $u\in C^{\infty}_{rad}$ there holds:\footnote{the quantity need not be finite. By $x\sim y$ we mean here $\frac{x}{c}\leq y \leq c x $ for $c>0$.}
\be \label{annexe:eq:equivalence derivees adaptees et usuelles hors origine}
\sum_{i=0}^k \int_{y\geq 1} \frac{|\partial_y^i u|^2}{1+y^{2k-2i+2a}}\sim \sum_{i=0}^k \int_{y\geq 1} \frac{|u_i|^2}{1+y^{2k-2i+2a}} .
\ee

\end{lemma}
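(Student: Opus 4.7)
The plan is to prove both statements from a single stronger bidirectional assertion, namely that the following three properties are equivalent for any real $p_2$ and any $C^{\infty}$ radial $f$:
\begin{itemize}
\item[(a)] for all $k\geq 0$, $\partial_y^k f = O(y^{-p_2-k})$ as $y\to+\infty$,
\item[(b)] for all $k\geq 0$, $f_k = O(y^{-p_2-k})$ as $y\to+\infty$,
\item[(c)] for all $i,j\geq 0$, $\partial_y^i f_j = O(y^{-p_2-i-j})$ as $y\to+\infty$.
\end{itemize}
Clearly (c)$\Rightarrow$(a) (take $j=0$) and (c)$\Rightarrow$(b) (take $i=0$). The real content is (a)$\Rightarrow$(c) and (b)$\Rightarrow$(c). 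The common engine is the asymptotic expansion \fref{linearized:eq:asymptotique W} of the potential $W$ from Lemma \ref{lem:asymptoticdupotentiel}, which gives $\partial_y^k W = O(y^{-1-k})$ at infinity for every $k\geq 0$.

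First I would prove (a)$\Rightarrow$(c) by induction on $j$, the case $j=0$ being (a). Given the bound at step $j$, I write $f_{j+1} = \pm \partial_y f_j + (W + \tfrac{d-1}{y}\mathbf{1}_{j\text{ odd}}) f_j$ from the definitions \fref{linearized:eq:def derivees adaptees} of $A$ and $A^*$, and differentiate $i$ times using Leibniz. The derivatives of $W$ and of $(d-1)/y$ all decay like $y^{-1-\ell}$, so each term in the expansion carries a total asymptotic weight $y^{-p_2-(i+j+1)}$. This closes the induction.

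Next I would prove (b)$\Rightarrow$(c) by a symmetric induction, this time extracting $\partial_y f_j$ algebraically from $f_{j+1}$ via
\[
\partial_y f_j = \mp f_{j+1} \pm \bigl(W + \tfrac{d-1}{y}\mathbf{1}_{j\text{ odd}}\bigr) f_j,
\]
and iterating $i$ times. Again all the correction terms decay by at least one extra power of $y$ per $\partial_y$ applied to $W$ or $1/y$, so a straightforward double induction on $(i,j)$ gives $\partial_y^i f_j = O(y^{-p_2-i-j})$. I expect this direction to be the main bookkeeping step, because one must carefully track that the lower-order-in-$j$ adapted derivatives always appear weighted by enough powers of $1/y$; the $O(y^{-1-\ell})$ decay of $\partial_y^\ell W$ is precisely what makes this possible, so no additional analytic input beyond Lemma \ref{lem:asymptoticdupotentiel} is needed.

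For the weighted $L^2$ statement \fref{annexe:eq:equivalence derivees adaptees et usuelles hors origine}, my plan is to show by induction on $k$ a pair of pointwise identities on $y\geq 1$:
\[
u_k = (-1)^{\lceil k/2\rceil}\partial_y^k u + \sum_{j=0}^{k-1} a_{k,j}(y)\,\partial_y^j u,
\qquad
\partial_y^k u = (-1)^{\lceil k/2 \rceil} u_k + \sum_{j=0}^{k-1} b_{k,j}(y)\, u_j,
\]
where $a_{k,j}, b_{k,j}$ are smooth on $y\geq 1$ and satisfy $a_{k,j}(y), b_{k,j}(y) = O(y^{-(k-j)})$ together with analogous bounds on their derivatives. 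These triangular expressions follow from the induction step applied to $A$ or $A^*$, combined with the asymptotics of $W$ and $(d-1)/y$ and, in the inverse direction, a Neumann/triangular inversion of the triangular system. Once such expressions are established, the equivalence \fref{annexe:eq:equivalence derivees adaptees et usuelles hors origine} follows by Cauchy-Schwarz: each cross term $\int_{y\geq 1} a_{k,j}(y)^2 |\partial_y^j u|^2 /(1+y^{2k-2i+2a})$ is absorbed into $\int_{y\geq 1} |\partial_y^j u|^2/(1+y^{2k-2j+2a})$ because $a_{k,j}^2 = O(y^{-2(k-j)})$, and symmetrically for the reverse direction. The only care needed is at the lower endpoint $y=1$, but this is harmless since all coefficients are smooth there.
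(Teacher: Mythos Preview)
Your proposal is correct and follows essentially the same approach as the paper. The paper proves (i)$\Rightarrow$(ii) by exactly your induction establishing $\partial_y^k f_j = O(y^{-p_2-j-k})$ for all $k,j$ (your property (c)), using the asymptotics of $W$ from Lemma \ref{lem:asymptoticdupotentiel}, and then dismisses the converse as ``similar''; for the weighted $L^2$ equivalence the paper likewise extracts the triangular identities $\partial_y^i f = \sum_{j\le i} a_{i,j} f_j$ and $f_i = \sum_{j\le i} \tilde a_{i,j}\partial_y^j f$ with coefficients $O(y^{-(i-j)})$ at infinity, which is precisely your plan.
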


\begin{proof}[Proof of Lemma \ref{annexe:lem:equivalence degre}] We just show that (i) implies (ii), the other implication being similar. So we suppose:
$$
f\in C^{\infty}_{rad}, \ \text{with} \ \forall k\geq 0, \ f_k=O\left( \frac{1}{y^{p_2+k}}\right) \ \text{as} \ y\rightarrow + \infty .
$$
We are going to show to following property by induction: for $i$ an integer:, for all $0\leq j \leq i$ and $k\in\mathbb{N}$ there holds: 
$$
\mathcal{H}(i) \ \ \ \partial_y^k f_j =O\left( \frac{1}{y^{p_2+j+k}}\right)  \ \text{for all} \ 0\leq j \leq i \ \text{and} \ k\in\mathbb{N} .
$$
The property $\mathcal{H}(0)$ is obviously true from the supposition on $f$. Suppose now $\mathcal{H}(i)$ is true for $i$, and let $k\in \mathbb{N}$, suppose in addition that $i$ is odd. Then:
$$
\partial_y^k f_{i+1}=\partial_y^k(\mathcal{A^*}f_i)=\partial_y^k\left[\partial_y f_i+\left(\frac{d-1}{y}+W \right)f_i\right] .
$$
As $\partial_y^l\left(\frac{d-1}{y}+W \right)=O\left( \frac{1}{y^{l+1}}\right) $ the property $\mathcal{H}(i+1)$ is then true. If $i$ is even, then replacing $A^*$ by $A$ leads to the same result as they have the same structure (they divide or multiply by a potential similar to $y^{-1}$) at infinity. We have proven that if $\mathcal{H}(i)$ is true then so is $\mathcal{H}(i+1)$. Hence we have showed the first proposition of the lemma by induction.\\
\\
For the equivalence of the weighted norms away from the origin, we note that what we have just proven is the fact that for any integer $i$:
$$
\partial_y^i f=\sum_{j=0}^{i} a_{i,j}f_j \ \ \text{and } \ \ f_i=\sum_{j=0}^{i} \tilde{a}_{i,j}\partial_y^j f ,
$$
the functions $a_{i,j}$ and $\tilde{a}_{i,j}$ being radial and $C^{\infty}$ outside the origin, with $a_{i,j}=O(y^{-(i-j)})$ and $\tilde{a}_{i,j}=O(y^{-(i-j)})$ as $y\rightarrow +\infty$. This implies \fref{annexe:eq:equivalence derivees adaptees et usuelles hors origine}.
\end{proof}

We recall that the Laplace based derivatives of a $C^{\infty}$ functions are:
$$
D^{2k}u:=\Delta^k u, \ \text{and} \ D^{2k+1}u:=\partial_y \Delta^{k}u .
$$
\begin{lemma}\label{equivalence of norms}
\emph{(Equivalence of weighted adapted norms)}
There holds for all $u\in C^{\infty}$ radial function and integer $k$:
\be \label{annexe:equivalencenormes:eq:equivalences normes}
\sum_{i=0}^k \int \frac{u_i^2}{1+y^{2k-2i}}\sim \sum_{i=0}^k \int \frac{|D^i u|^2}{1+y^{2k-2i}} .
\ee
\end{lemma}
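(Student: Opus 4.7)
The plan is to show that the adapted derivative $u_i$ and the Laplace-based derivative $D^i u$ differ by lower-order terms whose coefficients have exactly the right algebraic decay at infinity (and are bounded near the origin), so that the two weighted sums control each other.

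First I would establish the following identities by induction on $i$. Since $\bos{H}^{2k}$ acts as $(-1)^k\mathcal{L}^k$ on each coordinate, one has $u_{2k}=(-1)^k\mathcal{L}^k u$ and $u_{2k+1}=(-1)^kA\mathcal{L}^ku$. Expanding $\mathcal{L}=-\Delta+V$ and using $A=-\pa_y+W$, a direct commutator computation gives
\begin{equation}\label{eq:expansion-adapted}
u_i=(-1)^{\lceil i/2\rceil}D^i u+\sum_{j=0}^{i-1}c_{i,j}(y)\,D^j u,
\end{equation}
where each coefficient $c_{i,j}$ is a polynomial expression in $V,W$ and their derivatives. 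From the asymptotics of Lemma \ref{lem:asymptoticdupotentiel}, namely $|\pa_y^m V|\lesssim (1+y)^{-2-m}$ and $|\pa_y^m W|\lesssim (1+y)^{-1-m}$ globally (with the bounded-at-origin parts absorbed by the constant), each factor of $V^{(m)}$ contributes decay $(1+y)^{-2-m}$ and each $W^{(m)}$ decay $(1+y)^{-1-m}$. Since in each term of $c_{i,j}$ the missing derivative count is exactly $i-j$, a bookkeeping argument gives the uniform bound $|c_{i,j}(y)|\lesssim (1+y)^{-(i-j)}$. An entirely analogous induction, solving the triangular system \eqref{eq:expansion-adapted} in the reverse direction, yields
\begin{equation}\label{eq:expansion-laplace}
D^iu=(-1)^{\lceil i/2\rceil}u_i+\sum_{j=0}^{i-1}\tilde c_{i,j}(y)\,u_j,\qquad |\tilde c_{i,j}(y)|\lesssim (1+y)^{-(i-j)}.
\end{equation}

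Once \eqref{eq:expansion-adapted} and \eqref{eq:expansion-laplace} are in hand, the equivalence \eqref{annexe:equivalencenormes:eq:equivalences normes} follows by a standard Cauchy-Schwarz estimate combined with the elementary weight identity $(1+y^{2k-2i})(1+y^{2i-2j})\gtrsim 1+y^{2k-2j}$ valid for $0\leq j\leq i\leq k$. Concretely, squaring \eqref{eq:expansion-adapted}, multiplying by $(1+y^{2k-2i})^{-1}$ and integrating gives
\begin{equation*}
\int\frac{|u_i|^2}{1+y^{2k-2i}}\lesssim \int\frac{|D^iu|^2}{1+y^{2k-2i}}+\sum_{j=0}^{i-1}\int\frac{|D^ju|^2}{(1+y^{2k-2i})(1+y^{2(i-j)})}\lesssim \sum_{j=0}^i\int\frac{|D^ju|^2}{1+y^{2k-2j}},
\end{equation*}
and summing over $0\leq i\leq k$ yields one direction. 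The reverse inequality is obtained identically from \eqref{eq:expansion-laplace}.

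The only genuine point to watch is the uniform control of the coefficients near $y=0$: there $V$ and $W$ are only $C^\infty$ and bounded, so the decay estimates on $c_{i,j}$ degenerate to boundedness, but this is harmless since the weight $(1+y^{i-j})^{-1}$ is itself bounded on compact sets. Thus the only substantive work is the verification of the polynomial expansion \eqref{eq:expansion-adapted} together with its decay bookkeeping, which is the step I would expect to absorb most of the writing, though it is essentially algorithmic.
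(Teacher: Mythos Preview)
Your overall strategy matches the paper's: both establish triangular expansions between the $u_i$ and the $D^ju$ with coefficients decaying like $(1+y)^{-(i-j)}$, and then deduce the equivalence by the weight inequality you wrote. The far-field bookkeeping is correct. The gap is in your treatment of the origin.

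You assert that the $c_{i,j}$ are polynomials in $V,W$ and their $\partial_y$-derivatives, so that near $y=0$ boundedness is automatic. This is not quite true: the induction inevitably produces $\tfrac{1}{y}$ factors, coming from $A^*=\partial_y+W+\tfrac{d-1}{y}$ and from the radial identity $\partial_y D^ju=D^{j+1}u-\tfrac{d-1}{y}D^ju$ for $j$ odd. Concretely, in the step from $u_i$ to $u_{i+1}$ one picks up a term $\tfrac{(d-1)c_{i,j}}{y}D^ju$ precisely when $i-j$ is odd, and a priori this is singular at the origin. What makes the argument close is a parity condition that must be carried through the induction: $c_{i,j}$ is an ``odd'' smooth radial function (i.e.\ $\tfrac{c_{i,j}}{y}\in C^\infty$) whenever $i-j$ is odd. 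This holds initially and propagates because $W(0)=0$ (since $W=\partial_y\log\Lambda^{(1)}Q$ with $\Lambda^{(1)}Q$ smooth radial) and because derivatives and division by $y$ flip parity in the expected way. The paper tracks exactly this condition explicitly (its hypothesis ``$\tfrac{1}{y}\tilde V_{i,j}\in C^\infty$ for $i-j$ odd''). Once you add this parity bookkeeping to your induction, the proof is complete; without it, the claim that the coefficients are bounded at $0$ is unjustified.
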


\begin{proof}[Proof of Lemma \ref{equivalence of norms}]
\underline{step 1:} \emph{Leibnitz rule}. Let $f$ and $u$ be $C^{\infty}$ radial, with:
$$
\partial_y^k f =O\left( y^{a-k} \right) \ \text{as} \ y\rightarrow +\infty ,
$$
for some real number $a$. We will show the following property by induction: for any integer $i$:
$$
\mathcal{H}(i): \ \ \ (fu)_i=\sum_{j=0}^i V_{i,j}(f)u_j ,
$$
$V_{i,j}(f)\in C^{\infty} $ depending just on $f$, with $\partial_y^k V_{i,j}(f)\sim y^{a-(j-i)-k}$, and with the regularity $\frac{V_{i,j}(f)}{y}\in C^{\infty}$ for $i-j$ odd.\\
\\
he property $\mathcal{H}(0)$ is obviously true. Suppose now it is true for $i$ odd:
$$
\begin{array}{r c l}
(fu)_{i+1}&=&A^*((fu)_i)=\sum_{j=0, \ j \ \text{even}}^i A^*(V_{i,j}u_j)+\sum_{j=0, \ j \  \text{odd}}^iA^*(V_{i,j}u_j) \\
&=& \sum_{j=0, \ j \ \text{even}}^i \left( -A+2W+\frac{d-1}{y}\right) (V_{i,j}u_{j+1})\\ &&+\sum_{j=0, \ j \ \text{odd}}^i \partial_y V_{i,j}u_j+V_{i,j}u_{j+1} \\
&=& \sum_{j=0, \ j \ \text{even}}^i V_{i,j}u_{j+1}+\left( \partial_y V_{i,j} +2WV_{i,j}+\frac{(d-1)V_{i,j}}{y}   \right)u_j \\
&&+\sum_{j=0, \ j \ \text{odd}}^i \partial_y V_{i,j}u_j+V_{i,j}u_{j+1} \\
&=& \sum_{j=0, \ (i+1-j) \text{even}} \left(\partial_y V_{i,j}+2W V_{i,j}+\frac{d-1}{y}V_{i,j}+V_{i,j-1}\right) u_j \\
&& +\sum_{j=0, \ (i+1-j) \text{odd}} \partial_y V_{i,j}u_j +V_{i,j-1}u_j .
\end{array}
$$
For the terms in the first sum we have: $\partial_y V_{i,j}+2W V_{i,j}+\frac{d-1}{y}V_{i,j}+V_{i,j-1}\in C^{\infty}$ because of the property for $i$, and it satisfies the decay propriety:
$$
\partial_y^k\left(\partial_y V_{i,j}+2W V_{i,j}+\frac{d-1}{y}V_{i,j}+V_{i,j-1}\right)=O\left( y^{a-(i+1-j)-k} \right) .
$$
For the second one the asymptotic property is also true from the induction hypothesis $\mathcal{H}(i)$, and we have indeed: $\frac{1}{y}(\partial_y(V_{i,j})+V_{i,j-1})\in C^{\infty}$. We have showed that if $\mathcal{H}(i)$ is true for $i$ odd, then $\mathcal{H}(i+1)$ is true. For $i$ even a similar reasoning gives also that $\mathcal{H}(i)$ implies $\mathcal{H}(i+1)$. Consequently, the proprerty $\mathcal{H}(i)$ holds for all $i\in \mathbb{N}$.\\
\\
\underline{Step 2:} \emph{passing from one derivation to the other:} We now claim that for any integer $i$ another property holds:
$$
\mathcal{H}'(i) \ \ \ D^iu=\sum_{j=0}^i \tilde{V_{i,j}} u_j ,
$$
with $V_{i,j}\in C^{\infty}$ satisfaying $\partial_y^k V_{i,j}\sim y^{-(i-j)-k}$, and for $j-i$ odd $\frac{1}{y}\tilde{V}_{i,j}\in C^{\infty}$. We show this property also by induction. It is true for $i=0,1,2$. Suppose now it is true for $i\geq 2$. Suppose $i$ even, then:
$$
\begin{array}{r c l}
D^{i+1}u&=&\partial_y(D^iu)=\sum_{j=0, \ j \ \text{even}}^i(-A+W)(V_{i,j}u_j) \\
&& + \sum_{j=0, \ j \ \text{odd}}^i (A^*-W-\frac{d-1}{y})(V_{i,j}u_j) \\
&=& \sum_{j=0, \ j \ \text{even}}^i -V_{i,j}u_{j+1}+\partial_y V_{i,j}u_j \\
&& + \sum_{j=0, \ j \ \text{odd}}^i V_{i,j}u_{j+1}+(\partial_y V_{i,j}-WV_{i,j}-\frac{d-1}{y}V_{i,j})u_j .
\end{array}
$$
The asymptotic behavior of the potentials is easily checked from the induction hypothesis. For $i+1-j$ odd we have: $\tilde{V}_{i+1,j}=\partial_y V_{i,j}+V_{i,j-1}$, which verifies indeed $\frac{1}{y}\tilde{V}_{i+1,j}\in C^{\infty}$ from the induction hypothesis $\mathcal{H}'(i)$. Hence $\mathcal{H}'(i+1)$ is true. We have shown $\mathcal{H}(i)$ implies $\mathcal{H}'(i+1)$ for $i$ even and claim that for $i$ odd a very similar proof shows the heredity. Therefore, the propriety $\mathcal{H}'(i)$ is true for any integer $i$\\
\\
This implies:
$$
\int |D^i u|^2 \leq C \sum_{j=0}^i \int \frac{u_j^2}{1+y^{2(i-j)}} ,
$$
which implies the control of the Laplace derivatives by adapted derivatives in the Lemma. The other inequality of the equivalence can be proved exactly the same way. The opposite formula holds indeed also:
$$
u_i=\sum_{j=0}^i \tilde{V}'_{i,j} D^{j}u ,
$$
with $\tilde{V}'_{i,j}\in C^{\infty}$, $\partial_y^k \tilde{V}'_{i,j}\sim y^{-(i-j)-k}$ and $\frac{1}{y}\tilde{V}'_{i,j}\in C^{\infty}$ if $i-j$ odd. The proof is left to the reader.

\end{proof}


\section{Hardy inequalities}

In this subsection we recall the standard Hardy estimates we used in the paper, in order to make this paper self contained. We use them to derive Hardy type estimates for the adapted norms, see next subsection. These analysis results, used to relate a norm that is adapted to a linear flow to the standard $L^2$ norms for usual derivatives, is now used in a canonical way in some works about blow-up, see  for exemple \cite{Rsc3} in a more subtle critical setting, \cite{MRRod2} in a supercritical setting.
\begin{lemma}\emph{(Hardy inequality with best constant)}\label{lemme:annexe:hardy1}
\begin{itemize}
\item[(i)]\emph{Hardy near the origin:} Let $u\in \cap_{0<r<1} H^1(\mathcal{C}(r,1))$, then:\footnote{Note that the quantities can be infinite.}
\begin{equation}
\int_{y\leq 1} |\partial_y u|^2y^{d-1}dy\geq \frac{(d-2)^2}{4}\int_{y\leq 1} \frac{u^2}{y^2}y^{d-1} dy -C(d) u^2(1) .
\end{equation}
\item[(ii)]\emph{Hardy away from the origin, non critical exponent:} Let $p>0$, $p\neq \frac{d-2}{2}$, and $u\in \cap_{1<R} H^1(\mathcal{C}(1,R))$. If $p$ is supercritical, $p>\frac{d-2}{2}$ then:\footnote{Note that the quantities can be infinite.}
\begin{equation}
\int_{y\geq 1} \frac{|\partial_y u|^2}{y^{2p}}y^{d-1}dy\geq \left( \frac{d-(2p+2)}{2}\right)^2 \int_{y\geq 1} \frac{u^2}{y^{2p+2}}y^{d-1}dy-C(d,p)u^2(1) ,
\end{equation}
\be \label{annexe:hardy:identite surcritique}
\frac{(2p+2-d)^2}{4} \int_1^R \frac{u^2}{y^{2p+2}}y^{d-1}dy \leq \int_1^R \frac{|\partial_y u|^2}{y^{2p}}y^{d-1}dy +C(d,p)u^2(1).
\ee
If $p$ is subcritical, $0<p<\frac{d-2}{2}$, if:\footnote{we need integrability this time, a constant function violates this rule for example.}
\begin{equation}\label{annexe:eq:condition d'inte A*}
\int_{y\geq 1} \frac{|u |^2}{y^{2p+2}}y^{d-1}dy <+\infty ,
\end{equation}
then:
\begin{equation}
\int_{y\geq 1} \frac{|\partial_y u|^2}{y^{2p}}y^{d-1}dy\geq \left( \frac{d-(2p+2)}{2}\right)^2 \int_{y\geq 1} \frac{u^2}{y^{2p+2}}y^{d-1}dy .
\end{equation}
\end{itemize}

\end{lemma}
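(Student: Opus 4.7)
The plan is to handle both (i) and (ii) within a single framework based on the non-negativity
$$0 \leq \int_I \Bigl|\partial_y u - \frac{a}{y}u\Bigr|^2 y^{d-1-\beta}\,dy$$
for a real parameter $a$ and weight exponent $\beta$. Expanding the square, the cross term $-2a\int_I y^{d-2-\beta}u\,\partial_y u\,dy$ equals $-a\int_I y^{d-2-\beta}\partial_y(u^2)\,dy$, and one integration by parts turns it into a boundary contribution $-a[y^{d-2-\beta}u^2]_{\partial I}$ plus $a(d-2-\beta)\int_I y^{d-3-\beta}u^2\,dy$. Collecting the terms one gets, for every $a\in\RR$,
$$\int_I y^{d-1-\beta}|\partial_y u|^2\,dy + \bigl(a^2+a(d-2-\beta)\bigr)\int_I y^{d-3-\beta}u^2\,dy \geq a\,[y^{d-2-\beta}u^2]_{\partial I}.$$
The coefficient $a^2+a(d-2-\beta)$ is minimal and equal to $-(d-2-\beta)^2/4$ at $a=-(d-2-\beta)/2$, which produces the sharp Hardy constant. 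The whole proof then reduces to a case-by-case analysis of the boundary contributions.

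For (i) one takes $\beta=0$, $I=(0,1)$ and $a=-(d-2)/2$. The boundary at $y=1$ gives the tolerated error $-\tfrac{d-2}{2}u^2(1)$, while the contribution at $y=0$, namely $\lim_{y\to 0}y^{d-2}u^2(y)$, is handled by density: the inequality is trivial when the right-hand side is infinite, and otherwise one first proves it for $u\in C^\infty_c((0,1])$ where the limit is zero, then passes to general $u$ by controlling both sides via the finite $\int u^2 y^{d-3}\,dy$. For (ii) one takes $\beta=2p$, $I=(1,R)$ and $a=(2p+2-d)/2$; the boundary at $y=1$ again gives the term $C(d,p)u^2(1)$, while the behaviour at $y=R$ is where the sub/supercritical dichotomy enters.

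In the supercritical range $p>(d-2)/2$ one has $a>0$ and $R^{d-2-2p}\to 0$, so the boundary contribution $aR^{d-2-2p}u^2(R)\geq 0$ has the favorable sign and can simply be dropped from the inequality on $[1,R]$: this yields \fref{annexe:hardy:identite surcritique} directly, and letting $R\to\infty$ along any sequence $R_n$ with $R_n^{d-2-2p}u^2(R_n)\to 0$ gives the first inequality. In the subcritical range $p<(d-2)/2$ one has $a<0$ and $R^{d-2-2p}\to+\infty$, so the boundary at $R$ carries the wrong sign; the integrability assumption \fref{annexe:eq:condition d'inte A*} is then exploited through the elementary averaging identity
$$c\,R^{d-2-2p}\min_{y\in[R,2R]}u^2(y)\leq \int_R^{2R}u^2 y^{d-3-2p}\,dy \xrightarrow[R\to\infty]{}0,$$
which produces a sequence $R_n\to\infty$ along which $R_n^{d-2-2p}u^2(R_n)\to 0$. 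Passing to the limit on this subsequence eliminates the bad boundary term, and because $a<0$ the remaining contribution at $y=1$ is of the form $-au^2(1)\geq 0$, so it may simply be discarded to yield the claim without any $u^2(1)$ defect.

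The principal obstacle is precisely the control of the limit behavior of $y^{d-2-2p}u^2$ at the degenerate endpoint in each case, which is where the integrability hypothesis in the subcritical case is genuinely needed. Once the averaging lemma provides a good sequence $R_n$, and once density arguments take care of the singular endpoint $y=0$ in (i), the remainder of the proof reduces to the one-parameter optimization over $a$ described above, and the sharp constant follows.
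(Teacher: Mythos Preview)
Your proof is correct and follows the standard completing-the-square approach. The paper itself does not give a proof of this lemma: its proof environment simply reads ``A proof of this lemma can be found in \cite{MRRod2}.'' So you have supplied what the paper omits, and your argument --- expanding $|\partial_y u - \tfrac{a}{y}u|^2$, integrating the cross term by parts, optimizing in $a$, and then handling the boundary contributions case by case via averaging in the subcritical regime --- is precisely the classical route. One minor remark: in part (i) you can bypass the density step entirely, since on the truncated interval $(\epsilon,1)$ the boundary term $\tfrac{d-2}{2}\epsilon^{d-2}u^2(\epsilon)$ already has the favorable sign and can be dropped before letting $\epsilon\to 0$ by monotone convergence; this also shows directly that finiteness of $\int_0^1|\partial_y u|^2 y^{d-1}$ forces finiteness of $\int_0^1 u^2 y^{d-3}$, so the $\infty-\infty$ ambiguity never arises. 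Incidentally, the paper does carry out an inline proof of a closely related weighted Hardy inequality inside the Morawetz lemma (Lemma~\ref{thetrapped:lem:control du morawetz}), using the same integration-by-parts identity followed by Cauchy--Schwarz and Young with the optimal $\epsilon$, which is an equivalent formulation of your parameter optimization.
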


\begin{proof}[Proof of Lemma \ref{lemme:annexe:hardy1}]
A proof of this lemma can be found in \cite{MRRod2}.
\end{proof}

We now state a useful refined version of Hardy inequality for arbitrary weight function and number of derivatives. We denote by $x:=(x_1,...,x_d)$ an element $x\in \mathbb{R}^d$. We introduce a notation for the partial derivatives of a function:
\be \label{annexe:hardyfrac:notations derivees}
\partial^{\kappa} f=\frac{\partial f}{\partial_{x_1}^{\kappa_1}...\partial_{x_d}^{\kappa_d} }
\ee
for a $d$-tuple $\kappa:=(\kappa_1,...,\kappa_d)$ with $|\kappa|_1=\sum_{i=1}^d \kappa_i$.
\begin{lemma}\label{annexe:lem: hardy frac a poids}
\emph{(Weighted Fractional Hardy :)} Let:
$$
0<\nu<1, \ k\in \mathbb{N} \ \text{and} \ 0<\alpha \ \text{satisfying} \ \alpha+\nu+k <\frac{d}{2} ,
$$
and let $f$ be a smooth function with decay estimates:
\begin{equation} \label{annexe:hardyfrac:eq:condition non radiale}
|\partial^{\kappa} f(x)|\leq \frac{C(f)}{1+|x|^{\alpha+i}}, \ \text{for} \ |\kappa|_1=i, \ i=0,1,...,k+1 , 
\end{equation}
then for $\varepsilon\in \dot{H}^{\alpha+k+\nu}$, there holds $\varepsilon f\in \dot{H}^{\nu+k} $ with:
\begin{equation} \label{annexe:hardyfrac:eq:hardyfrac}
\parallel \nabla^{\nu+k} (\varepsilon f)\parallel_{L^2} \leq C(C(f),\nu,k,\alpha,d) \parallel \nabla^{\alpha+k+\nu}\varepsilon \parallel_{L^2} .
\end{equation}
If $f$ is a smooth radial function satisfying:
\be \label{annexe:hardyfrac:eq:condition radiale}
|\partial_{|x|}^i f(|x|)|\leq \frac{C(f)}{1+|x|^{\alpha+i}}, \ i=0,1,...,k+1,
\ee
then \fref{annexe:hardyfrac:eq:hardyfrac} holds.
\end{lemma}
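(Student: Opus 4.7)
The plan is to reformulate this as a multiplier-type estimate on the homogeneous Sobolev scale. Setting $s:=k+\nu$ and $\tau:=\alpha+s$, the hypothesis reads $0<\tau<d/2$, and what must be shown is that multiplication by $f$ sends $\dot H^\tau$ continuously into $\dot H^s$. The main analytic ingredient throughout will be the Stein--Weiss/Hardy--Sobolev inequality $\||x|^{-\beta}g\|_{L^2}\lesssim \|\nabla^\beta g\|_{L^2}$ valid for $0<\beta<d/2$, which under our assumption is available for every value $\beta=\alpha+|\kappa|+\nu$ that will arise later with $|\kappa|\le k$.

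I would first treat the base case $k=0$, where I write $|\nabla|^\nu(\varepsilon f)=f\,|\nabla|^\nu\varepsilon+[\,|\nabla|^\nu,f\,]\varepsilon$. The pointwise decay $|f|\lesssim\langle x\rangle^{-\alpha}$, combined with $\langle x\rangle^{-\alpha}\le |x|^{-\alpha}$ and Hardy--Sobolev applied to $g=|\nabla|^\nu\varepsilon$, immediately handles the first summand:
\[
\|f\,|\nabla|^\nu\varepsilon\|_{L^2}\lesssim \bigl\|\,|x|^{-\alpha}|\nabla|^\nu\varepsilon\,\bigr\|_{L^2}\lesssim \|\nabla^{\alpha+\nu}\varepsilon\|_{L^2}.
\]
For the commutator I would use the singular-integral representation
\[
[\,|\nabla|^\nu,f\,]\varepsilon(x)=c_{d,\nu}\int_{\mathbb{R}^d}\frac{(f(x)-f(y))\,\varepsilon(y)}{|x-y|^{d+\nu}}\,dy
\]
and split according to $|x-y|\le|x|/2$ versus $|x-y|\ge|x|/2$. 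On the diagonal zone the mean-value bound $|f(x)-f(y)|\lesssim |x-y|\langle x\rangle^{-\alpha-1}$, which follows from $|\nabla f|\lesssim\langle\cdot\rangle^{-\alpha-1}$ together with $|y|\sim|x|$ on this zone, turns the integrand into $\langle y\rangle^{-\alpha-1}|x-y|^{1-d-\nu}|\varepsilon(y)|$, i.e.\ a weighted Riesz potential of order $1-\nu$; its $L^2$ norm is controlled by Hardy--Littlewood--Sobolev followed by Stein--Weiss once the weight is absorbed into $\varepsilon$. On the off-diagonal zone the triangle-inequality bound $|f(x)-f(y)|\lesssim\langle x\rangle^{-\alpha}+\langle y\rangle^{-\alpha}$ reduces the expression to two weighted fractional integrals, again dominated via Stein--Weiss.

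For the general case $k\ge 1$, I would apply the integer Leibniz formula inside $\nabla^s=|\nabla|^\nu\nabla^k$ to obtain
\[
\nabla^k(\varepsilon f)=\sum_{|\kappa|\le k}C_\kappa\,(\partial^\kappa f)(\partial^{k-|\kappa|}\varepsilon),
\]
and then apply the $k=0$ argument to each summand with $\tilde f:=\partial^\kappa f$, which by hypothesis satisfies $|\partial^\gamma\tilde f|\lesssim\langle x\rangle^{-(\alpha+|\kappa|+|\gamma|)}$ for $|\gamma|\le k+1-|\kappa|$, and $\tilde\varepsilon:=\partial^{k-|\kappa|}\varepsilon\in\dot H^{\alpha+|\kappa|+\nu}$. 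The radial case \fref{annexe:hardyfrac:eq:condition radiale} then reduces to the general one, since radial bounds on $\partial_{|x|}^i f$ imply the corresponding Cartesian partial-derivative bounds via the chain rule. The principal technical obstacle I anticipate is the commutator estimate in the diagonal zone: when $\nu$ is close to $1$ the kernel $|x-y|^{-d-\nu}$ is quite singular, and extracting exactly $\alpha+\nu$ derivatives of regularity from $\varepsilon$ requires a delicate splitting (or, alternatively, a paraproduct/Kato--Ponce commutator argument in Lorentz spaces), together with a verification that the strict inequality $\alpha+k+\nu<d/2$ is sufficient to rule out borderline logarithmic losses at each application of Stein--Weiss.
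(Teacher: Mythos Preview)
Your approach matches the paper's: reduce to $k=0$ via the integer Leibniz rule applied to $\nabla^k(\varepsilon f)$, then pass from the radial decay hypothesis to the Cartesian one. The paper defers the $k=0$ case entirely to a reference rather than sketching your commutator argument, and for the radial reduction it writes $f(|x|)=\phi(|x|^2)$ with $\phi$ smooth, which makes the Cartesian decay bounds manifest and sidesteps the apparent chain-rule singularities at the origin that your one-line justification glosses over.
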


\begin{proof}[Proof of Lemma \ref{annexe:lem: hardy frac a poids}]
We first proove for $f$ satisfying the non radial condition \fref{annexe:hardyfrac:eq:condition non radiale}, and show after that for a radial function, this condition is equivalent to \fref{annexe:hardyfrac:eq:condition radiale} the radial condition mentionned in the Lemma.\\

 \underline{Step 1:} case for $k=0$. A proof of the case $k=0$ can be found in \cite{MRRod2} for example.\\

\underline{Step 2:} Proof for $k\geq 1$. Let $f$, $\varepsilon$, $\alpha$, $\nu$ and $k$ satisfying the conditions of the lemma, with $k\geq 1$. Using Liebnitz rule for the integer part of the derivation:
\be \label{annexe:hardyfrac:eq:kgeq1 decomposition}
\parallel \nabla^{\nu+k}(\varepsilon f) \parallel_{L^2}^2\leq C \sum_{(\kappa,\tilde{\kappa}), |\kappa||_1+|\tilde{\kappa}|_1=k} \parallel \nabla^{\nu} (\partial^{\kappa_k}\varepsilon\partial^{\tilde{\kappa}_k}f \parallel_{L^2}^2
\ee
We can now apply the result obtained for $k=0$ to the norms $\parallel \nabla^{\nu} (\partial^{\kappa_k}\varepsilon\partial^{\tilde{\kappa}_k}f \parallel_{L^2}^2$ in \fref{annexe:hardyfrac:eq:kgeq1 decomposition}. We have indeed that $\partial^{\kappa_k}\varepsilon\in \dot{H}^{\alpha+k_2+\nu} $, and that $\partial^{\tilde{\kappa}}$ satisfyies the decay property from \fref{annexe:hardyfrac:eq:condition non radiale}. It implies that for all $\kappa,\tilde{\kappa}$:
$$
\parallel \nabla^{\nu} (\partial^{\kappa_k}\varepsilon\partial^{\tilde{\kappa}_k}f \parallel_{L^2}^2\leq C \parallel \nabla^{\nu+\alpha+k} \varepsilon \parallel_{L^2}^2
$$
which implies the result: $\parallel \nabla^{\nu+k}(\varepsilon f) \parallel_{L^2}^2\leq C(C(f),\nu,d,k,\alpha) \parallel \nabla^{\nu+\alpha+k} \varepsilon \parallel_{L^2}^2$.\\

 \underline{Step 3:} equivalence between the decay properties. We want to show that  \fref{annexe:hardyfrac:eq:condition non radiale} and \fref{annexe:hardyfrac:eq:condition radiale} are equivalents for radial smooth functions, therefore implying the last assertion of the lemma. Suppose that $f$ is smooth, radial, and satisfies  \fref{annexe:hardyfrac:eq:condition non radiale}. Then one has:
$$
\partial_y^{i} f(y)= \frac{\partial f}{\partial^{i}_{x_1}}(|y|e_1)
$$
where $e_1$ stands for the unit vector $(1,...,0)$ of $\mathbb{R}^d$. From this formula, we see that the condition \fref{annexe:hardyfrac:eq:condition non radiale} on $\frac{\partial f}{\partial^{i}_{x_1}}(|y|e_1) $ implies the radial condition \fref{annexe:hardyfrac:eq:condition radiale}. We now suppose that $f$ is a smooth radial function satisfying the radial condition \fref{annexe:hardyfrac:eq:condition radiale}. Then there exists a smooth radial function $\phi$ such that:
$$
f(y)=\phi(y^2).
$$
With a proof by iteration left to the reader one has that the decay property \fref{annexe:hardyfrac:eq:condition radiale} for $f$ implies the following decay property for $\phi$:
$$
|\partial_y^i \phi(y)|\leq \frac{C(f)}{1+y^{\frac{\alpha}{2}+i}}, \ i=0,1,...,k+1,
$$
Now the standard derivatives of $f$ are easier to compute with $\phi$. We claim that for all $d$-tuple $\kappa$ there exists a finite number of polynomials $P_i(x):=C_ix_1^{i_1}...x_d^{i_d}$, for $1\leq i \leq l(\kappa)$, such that:
$$
\partial^{\kappa}f(x)= \sum_{i=1}^{l(\kappa)} P_i(x)\partial_{|x|}^{q(i)} \phi (|x|^2) 
$$
with for all $i$, $2q(i)-\sum_{j=1}^d i_j=|\kappa|_1$. This fact is also left to the reader. The decay property for $\phi $ then implies:
$$
|P_i(x)\partial_{|x|}^{q(i)} \phi (|x|^2) |\leq \frac{C}{1+y^{\alpha+2q(i)-\sum_{j=1}^{d}i_j}}= \frac{C}{1+y^{\alpha+|\kappa|_1}},
$$
which implies the property \fref{annexe:hardyfrac:eq:condition non radiale}.
\end{proof}


\section{Coercivity of the adapted norms}

Here we derive Hardy type inequalities for the operators $A$, $A^*$ and $\mathcal{L}$. Such quantities are easier to manipulate for the linear flow of the operator $\bos{H}$ (defined in \fref{eq:def:H}). As for the previous section of the Appendix, this kind of bounds is now standard and we refer to the papers quoted therein for the use of similar techniques. We start with $A^*$, then $A$, and after that we are able to deal with the coercivity of the adapted norms.\\
\\
We recall that the profile $\bos{\Phi}_M$ is defined by equation \fref{thetrapped:eq:def:phi M}. Its main properties that we will use in this section are its localization on the first coordinate and its non-orthogonality with respect to $\bos{\Lambda Q}$ (from \fref{thetrapped:eq:localisation PhiM} and \fref{thetrapped:eq:proprietes PhiM}):
\be \label{annexe:coercivitenormes:eq:proprietes PhiM}
\bos{\Phi}_M=\begin{pmatrix} \Phi_M \\ 0  \end{pmatrix}, \ \langle \bos{\Phi}_M,\bos{\Lambda Q}\rangle=\langle \Phi_m,\Lambda^{(1)}Q\rangle \sim CM^{2k_0+2\delta_0}>0 \ (C>0).
\ee
We also recall the structure of the two first order differential operators on radial functions $A$ and $A^*$:
\be \label{annexe:coercivitenormes:eq:def A}
A^*=\partial_y+\left(\frac{d-1}{y}+W\right), \ A=-\partial_y +W,
\ee
where $W$ is a smooth radial function with the asymptotic at infinity from \fref{linearized:eq:asymptotique W}:
\be \label{annexe:coercivitenormes:eq:asymptotique W}
W=\frac{-\gamma}{y}+O\left( \frac{1}{y^{1+g}}  \right) \ \text{as} \ y \ \rightarrow +\infty
\ee

\begin{lemma}\label{lem:coercivite pour A*}
\emph{(Weighted coercivity for $A^*$)}. Let $p$ be a non negative real number. Then there exists a constant $c_p>0$ such that for all radial $u\in H^1_{\text{loc}}(\mathbb{R}^d)$ there holds\footnote{The quantities need not be finite.}:
\begin{equation} \label{annexe:coerciviteA*:eq:coerciviteA*}
\int \frac{|A^*u|^2}{1+y^{2p}}\geq c_p \left[ \int \frac{u^2}{y^2(1+y^{2p})}+\int \frac{|\partial_y u|^2}{1+y^{2p}}  \right] .
\end{equation}
\end{lemma}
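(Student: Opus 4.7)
The plan is to exploit the factorization of $A^{*}$ on radial functions coming from Lemma \ref{lem:factorisation} and the formula for $A^{*}$ written in Lemma \ref{lem:calculdelinversedeL}, namely
\[
A^{*}u \;=\; \frac{1}{y^{d-1}\Lambda^{(1)}Q}\,\partial_{y}\!\bigl(y^{d-1}\Lambda^{(1)}Q\,u\bigr),
\]
which turns the inequality into a one-dimensional weighted Hardy inequality. First, I would set $v:=y^{d-1}\Lambda^{(1)}Q\, u$, so that (passing to the radial measure $y^{d-1}dy$)
\[
\int\frac{|A^{*}u|^{2}}{1+y^{2p}}\,y^{d-1}dy \;=\; \int_{0}^{\infty}\frac{|v'|^{2}}{y^{d-1}(\Lambda^{(1)}Q)^{2}(1+y^{2p})}\,dy,
\]
while
\[
\int\frac{u^{2}}{y^{2}(1+y^{2p})}\,y^{d-1}dy \;=\; \int_{0}^{\infty}\frac{v^{2}}{y^{d+1}(\Lambda^{(1)}Q)^{2}(1+y^{2p})}\,dy.
\]
Thus the first half of \eqref{annexe:coerciviteA*:eq:coerciviteA*} reduces to the weighted Hardy inequality $\int \frac{v^{2}}{y^{2}\omega(y)}dy \leq C_{p}\int \frac{(v')^{2}}{\omega(y)}dy$ with weight $\omega(y):=y^{d-1}(\Lambda^{(1)}Q)^{2}(1+y^{2p})$.

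Next I would exploit that, thanks to Lemma \ref{lem:Q} and \eqref{eq:degenerescencesoliton}, $\Lambda^{(1)}Q$ is smooth and bounded below on $[0,1]$ and behaves like $c/y^{\gamma}$ at infinity, so $\omega(y)\sim y^{d-1}$ near $0$ and $\omega(y)\sim y^{d-1-2\gamma+2p}$ at infinity. I would then split the integral at $y=1$ and use Lemma \ref{lemme:annexe:hardy1}: assertion (i) on $\{y\leq 1\}$ gives the bound on the origin piece with the explicit constant $(d-2)^{2}/4$, while assertion (ii) on $\{y\geq 1\}$ provides the bound on the outer piece with the constant $(d-2\gamma-2+2p)^{2}/4$, supercritical since Lemma \ref{annexe:lem:valeur dalpha} yields $\gamma<(d-2)/2$ and $p\geq 0$. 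The unavoidable boundary contribution at $y=1$ produced by both halves is of the form $C(d,p)\,v(1)^{2}$, and I would absorb it by noting that $v(1)^{2}=(\Lambda^{(1)}Q(1))^{2}u(1)^{2}$ is controlled (using a trace/interpolation argument on the annulus $\{1/2\leq y \leq 2\}$) by the combination of $\int_{1/2\leq y\leq 2}u^{2}y^{d-1}dy$ and $\int_{1/2\leq y\leq 2}|\partial_{y}u|^{2}y^{d-1}dy$, both of which are bounded by $C\int\frac{|A^{*}u|^{2}}{1+y^{2p}}y^{d-1}dy$ on that annulus via the pointwise identity $\partial_{y}u = A^{*}u-(\frac{d-1}{y}+W)u$.

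Finally, for the $|\partial_{y}u|^{2}$ piece of \eqref{annexe:coerciviteA*:eq:coerciviteA*} I would simply write $\partial_{y}u = A^{*}u - \bigl(\frac{d-1}{y}+W\bigr)u$ and apply the elementary bound
\[
|\partial_{y}u|^{2}\leq 2|A^{*}u|^{2} + 2\Bigl(\tfrac{d-1}{y}+W\Bigr)^{2}u^{2},
\]
using that $|\tfrac{d-1}{y}+W|\leq C/y$ globally — this follows from \eqref{linearized:eq:asymptotique W} at infinity and from the smoothness of $W=\partial_{y}\log \Lambda^{(1)}Q$ at the origin (where $\Lambda^{(1)}Q(0)>0$ so the singular part of $(d-1)/y+W$ comes only from the $(d-1)/y$ term). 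The $u^{2}/y^{2}$ term is then controlled by the first half already established, concluding the proof.

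The main obstacle I expect is the uniform handling of the boundary contribution at $y=1$ when gluing the two Hardy inequalities across regimes: one has to ensure that the $u(1)^{2}$ term produced by Lemma \ref{lemme:annexe:hardy1} does not prevent closing the estimate. This is the step where some care is required to bootstrap the pointwise value of $u$ at $y=1$ from the $A^{*}u$-norm on a neighborhood, via a Sobolev trace combined with the triangle inequality for $\partial_{y}u=A^{*}u-(\frac{d-1}{y}+W)u$.
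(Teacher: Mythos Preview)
Your approach via the exact factorization $A^{*}u=\frac{1}{y^{d-1}\Lambda^{(1)}Q}\partial_{y}(y^{d-1}\Lambda^{(1)}Q\,u)$ and the substitution $v=y^{d-1}\Lambda^{(1)}Q\,u$ is natural and is in fact the exact version of what the paper does asymptotically (it sets $v=y^{d-1-\gamma}u$ for $y\geq 1$, using only the leading order of $\Lambda^{(1)}Q$). Both routes reduce the problem to a weighted one-dimensional Hardy inequality and both produce a boundary term of the type $Cu(1)^{2}$ together with lower-order terms when one splits at $y=1$.

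The gap is in your proposed absorption of this boundary term. Your claim that $\int_{1/2\leq y\leq 2}u^{2}\,y^{d-1}dy$ is controlled by $\int\frac{|A^{*}u|^{2}}{1+y^{2p}}\,y^{d-1}dy$ via the identity $\partial_{y}u=A^{*}u-(\tfrac{d-1}{y}+W)u$ is circular: that identity only gives $|\partial_{y}u|^{2}\leq 2|A^{*}u|^{2}+Cu^{2}/y^{2}$, which feeds the $u^{2}$ term back into itself. Concretely, the function $\Gamma$ (the second element of $\ker\mathcal{L}$) satisfies $A^{*}\Gamma=0$ and is smooth on the annulus $\{1/2\leq y\leq 2\}$ with $\Gamma(1)\neq 0$, so no purely local argument on the annulus can bound $u(1)^{2}$ by the $A^{*}u$-norm. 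What excludes $\Gamma$ is the global hypothesis $u\in H^{1}_{\mathrm{loc}}(\mathbb{R}^{d})$, since $\Gamma\sim c/y^{d-2}$ at the origin makes $\int_{y\leq 1}\Gamma^{2}/y^{2}\,y^{d-1}dy=+\infty$.

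The paper handles this by first proving only a \emph{subcoercivity} estimate
\[
\int\frac{|A^{*}u|^{2}}{1+y^{2p}}\;\geq\; c\Bigl[\int\frac{u^{2}}{y^{2}(1+y^{2p})}+\int\frac{|\partial_{y}u|^{2}}{1+y^{2p}}\Bigr]-\frac{1}{c}\Bigl[u(1)^{2}+\int\frac{u^{2}}{1+y^{2p+2+g}}\Bigr],
\]
where the error terms are compact perturbations (point evaluation, and extra $y^{-g}$ decay). It then upgrades to full coercivity by a compactness/contradiction argument: a normalized minimizing sequence $u_{n}$ with $\int|A^{*}u_{n}|^{2}/(1+y^{2p})\to 0$ would converge weakly in $H^{1}_{\mathrm{loc}}$ to a nonzero $u_{\infty}$ with $A^{*}u_{\infty}=0$, hence $u_{\infty}=c\Gamma$, which contradicts $\int_{y\leq 1}u_{\infty}^{2}/y^{2}<\infty$. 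Your direct approach could be salvaged this way: keep the weighted Hardy computation with your exact $v$, accept the residual boundary/lower-order terms as a subcoercivity estimate, and then run the same compactness step.
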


\begin{proof}[Proof of Lemma \ref{lem:coercivite pour A*}]
We take $u$ satisfying the conditions of the lemma.\\

\underline{Step 1:} \emph{Subcoercivity for $A^*$}. We claim the subcoercivity lower bound:
\begin{equation}\label{annexe:eq:estimation de sous coercivite A*}
\begin{array}{r c l}
\int \frac{|A^*u|^2}{1+y^{2p}}&\geq& c\left[ \int \frac{u^2}{y^2(1+y^{2p})}+\int \frac{|\partial_y u|^2}{1+y^{2p}}  \right] \\
&& -\frac{1}{c}\left[ u^2(1) +\int \frac{u^2}{1+y^{2p+g}}\right] ,
\end{array}
\end{equation}
for a universal constant $c=c(d,p)>0$. We introduce the operator: $\tilde{W}:=W+\frac{d-1}{y}$. First we estimate close to the origin:
\be \label{annexe:coerciviteA*:eq: pres}
\begin{array}{r c l}
\int_{y\leq 1}|A^*u|^2 &=& \int_{y\leq 1} (|\partial_y u|^2+\tilde{W}^2u^2+2\tilde{W}u\partial_y u) \\
&=& \int_{y\leq 1} |\partial_y u|^2 +\int_{y\leq 1}u^2\left(\tilde{W}^2-\frac{1}{y^{d-1}}(y^{d-1}\tilde{W})\right)+W(1)^2u(1)^2 \\
&\geq & \int_{y\leq 1} |\partial_y u|^2 +\int_{y\leq 1}u^2(\frac{(d-1)^2-(d-1)(d-2)}{y^2}+O(1))\\
&=& \int_{y\leq 1} |\partial_y u|^2 +(d-1)\int_{y\leq 1}\frac{u^2}{y^2}+O(\int_{y\leq 1} u^2) .
\end{array}
\ee
Away from the origin, from the asymptotic \fref{annexe:coercivitenormes:eq:asymptotique W}:
\be \label{annexe:coerciviteA*:eq:loin intermediaire}
\begin{array}{r c l}
\int_1^R \frac{|A^*u|^2}{y^{2p}}&=& \int_1^R \frac{1}{y^{2p}}(\partial_y u+\frac{d-1-\gamma}{y}u+O(\frac{1}{y^{1+g}})u)^2 \\
&=& \int_1^R \frac{1}{y^{2p}}[\partial_y u+\frac{d-1-\gamma}{y}u]^2  + \int_1^R uO\left( \frac{1}{y^{2p+1+g}} \right)\left( \partial_y u +uO\left(\frac{1}{y} \right) \right)\\
&=& \int_1^R \frac{1}{y^{2p+2(d-1-\gamma)}}|\partial_y(y^{d-1-\gamma}u)|^2  + \int_1^R uO\left( \frac{1}{y^{2p+1+g}} \right)\left( \partial_y u +uO\left(\frac{1}{y} \right) \right) .
\end{array}
\ee
Let $v=y^{d-1-\gamma}u$, and $p'=p+d-1-\gamma$. We have: $2p'-(d-2)=2p+d-2\gamma>0$.
Hence we can apply the identity \fref{annexe:hardy:identite surcritique}:
$$
\begin{array}{r c l}
\int_1^R \frac{1}{y^{2p+2(d-1-\gamma)}}|\partial_y(y^{d-1-\gamma}u)|^2&=& \int_1^R\frac{1}{y^{2p'}}|\partial_y v|^2 \geq C(d,p) \int_1^R \frac{v^2}{y^{2p'+2}}-C'v^2(1) \\
&=& \int_1^R \frac{u^2}{y^{2p+2}}-C'u^2(1) .
\end{array}
$$
We have by developing the expression, using Cauchy Schwarz and Young's inequality:
$$
\begin{array}{r c l}
\underset{1}{\overset{R}{\int}} \frac{1}{y^{2p+2(d-1-\gamma)}}|\partial_y(y^{d-1-\gamma}u)|^2 &\geq& \int_1^R \frac{|\partial_y u|^2}{y^{2p}}+C\frac{u^2}{y^{2p+2}}
-C' \left( \int_1^R \frac{|\partial_y u|^2}{y^{2p}}\right)^{\frac{1}{2}} \left( \int_1^R \frac{u^2}{y^{2p+2}} \right)^{\frac{1}{2}} \\
&\geq& (1-\frac{\epsilon}{2} C')\int_1^R \frac{|\partial_y u|^2}{y^{2p}}+(C-\frac{C'}{2\epsilon}) \int_1^R \frac{u^2}{y^{2p+2}} .
\end{array}
$$
Combining the last two estimates gives:
\be \label{annexe:coerciviteA*:eq:controle loin}
\int_1^R \frac{1}{y^{2p+2(d-1-\gamma)}}|\partial_y(y^{d-1-\gamma}u|^2 \geq c\left(  \int_1^R \frac{u^2}{y^{2p+2}}+\int_1^R \frac{|\partial_y u|^2}{y^{2p}}\right)-C'u^2(1) ,
\ee
for a constant $c>0$. We come back to \fref{annexe:coerciviteA*:eq:loin intermediaire} and inject the bound \fref{annexe:coerciviteA*:eq:controle loin}, it yields:
\be \label{annexe:coerciviteA*:eq:loin intermediaire 2}
\begin{array}{r c l}
\int_1^R \frac{|A^*u|^2}{y^{2p}} &\geq& c\left(  \int_1^R \frac{u^2}{y^{2p+2}}+\int_1^R \frac{|\partial_y u|^2}{y^{2p}}\right)-\frac{1}{c}u^2(1) \\
&&+ \int_1^R uO\left(\frac{1}{y^{2p+1+g}} \right)\left( \partial_y u +uO\left(\frac{1}{y} \right)\right).
\end{array}
\ee
We now use Cauchy-Schwarz and Young inequalities on better decaying term:
$$
\begin{array}{r c l}
&\left|\int_1^R uO\left(\frac{1}{y^{2p+1+g}} \right)\left( \partial_y u +uO\left(\frac{1}{y} \right)\right) \right| \\
\leq& C\epsilon \int_1^{R} \frac{|\partial_y u|^2}{y^{2p}}+\frac{C}{\epsilon}  \int_1^{R} \frac{|u|^2}{y^{2p+2+2g}}+C \int_1^{R} \frac{|u|^2}{y^{2p+2+g}}.
\end{array}
$$
Taking $\epsilon$ small enough and combining this bound with \fref{annexe:coerciviteA*:eq:loin intermediaire 2} gives for a constant $c>0$:
$$
\int_1^R \frac{|A^*u|^2}{y^{2p}} \geq c\left(  \int_1^R \frac{u^2}{y^{2p+2}}+\int_1^R \frac{|\partial_y u|^2}{y^{2p}}\right)-\frac{1}{c}\left(u^2(1)+ \int_1^R \frac{u^2}{y^{2p+2+g}}\right) \\
$$
Because of the additional decay in the last term we have that if $\frac{u^2}{y^{2p+2}}$ or $\frac{|\partial_y u|^2}{y^{2p}}$ is non integrable at infinity, then going to the limit $R\rightarrow 0$ gives that $\frac{|A^*u|^2}{y^{2p}}$ is non integrable. Therefore in that case all quantities in \fref{annexe:coerciviteA*:eq:coerciviteA*} are infinite and the inequality is proven. Now, if they are integrable, then going to the limit $R\rightarrow +\infty$ in the last inequality and combining it with the estimate close to the origin \fref{annexe:coerciviteA*:eq: pres}we proved earlier gives the subcoercivity bound \fref{annexe:eq:estimation de sous coercivite A*}.\\

\underline{Step 2:} Coercivity. We argue by contradiction. We suppose that there exists a sequence of functions $(u_n)_{n\in\mathbb{N}}$ such that, up to a renormalization:
\begin{equation}\label{annexe:eq:estimation un dans argument par l'absurde A*}
\int \frac{|A^*u|^2}{1+y^{2p}}\leq \frac{1}{n}, \ \ \text{and} \ \int \frac{u^2}{y^2(1+y^{2p})}+\int \frac{|\partial_y u|^2}{1+y^{2p}}=1
\end{equation}
From the subcoercivity estimate \fref{annexe:eq:estimation de sous coercivite A*} it implies that:
$$
u_n(1)^2+\int \frac{u_n^2}{1+y^{2p+2+g}}\gtrsim 1 .
$$
And by \fref{annexe:eq:estimation un dans argument par l'absurde A*} we have that $u_n$ is uniformly bounded in $H^1[r,R]$. Hence by compacity and by an extraction argument there exists a limit profile $u_{\infty}\in H^1_{loc}$ such that up to a subsequence,
$$
u_n \rightharpoonup u_{\infty} \ \text{in} \ H^1_{\text{loc}} .
$$
From continuity of functions in $H^1$ in one dimension, and from compactness of the injection $H^1\hookrightarrow L^2$ on compact sets we have also:
$$
u_n \rightarrow u_{\infty} \ \text{in} \ L^2_{\text{loc}}, \ u_n(1)\rightarrow u_{\infty}(1) .
$$
We now show that $u_\infty \neq 0$. We have that $u_n^2(1)\rightarrow u_{\infty}^2(1)$. Indeed the continuity of the $H^1_{loc}$ functions in 1 dimension, the strong convergence $L^2$ and of the equi-continuity of the family $\{ u_n \}$ implies the convergence in $L^{\infty}$. If $u_{\infty}^2(1)\neq 0$, then $u_{\infty} \neq 0$. If $u_{\infty}(1)=0$ then the subcoercivity bound implies that $\int \frac{u_n^2}{1+y^{2p +2+g}}\gtrsim 1$. The local $L^2$ convergence, and the fact that $\int \frac{u_n^2}{y^2(1+y^{2p})}$ is uniformly bounded implies that:
$$
\int \frac{u_n^2}{1+y^{2p+2+g}}\rightarrow \int \frac{u_{\infty}^2}{1+y^{2p+2+g}}.
$$
Hence $\int \frac{u_{\infty}^2}{1+y^{2p+2+g}}>0$ so $u_{\infty}\neq 0$. In any cases we have found: $u_{\infty}\neq 0$. On the other hand from semi-continuity again we have that:
$$
A^* u_{\infty}=0 .
$$
This equation has for unique solution in $H^1$ the function $\Gamma$ up to multiplication by a scalar. Hence:
$$
u_{\infty}=c\Gamma .
$$ 
$c$ is non zero because $u_{\infty}$ is non zero. But:
$$
\int_{y\leq 1} \frac{\Gamma^2}{y^2}\gtrsim \int_{y\leq 1} \frac{y^{d-1}}{y^{2(d-2)+2}}dy=+\infty ,
$$
which contradicts \fref{annexe:eq:condition d'inte pour A}.
\end{proof}

We now focus on the coercivity of the operator $A$.

\begin{lemma}\label{annexe:lem:coercivite A}
\emph{(Weighted coercivity for $A$:)} Let $p$ be a non negative real number. Let $k_0$ and $\delta_0$ be defined by \fref{intro:eq:def k0} ($\delta_0>0$)). Then:
\begin{itemize}
\item[(i)]\emph{case $p$ small:} if $0\leq p < k_0+\delta_0-1$, then there exists a constant $c_p>0$ such that for all $u\in H^1_{\text{rad},\text{loc}}(\mathbb{R}^d)$ satisfying:
\begin{equation}\label{annexe:eq:condition d'inte pour A}
\int_{y\geq 1} \frac{u^2}{y^{2p+2}}<+\infty ,
\end{equation}
there holds the coercivity:\footnote{the quantities in the coercivity estimate need not be finite.}
\begin{equation}
\int \frac{|Au|^2}{1+y^{2p}}\geq c_k \left[ \int \frac{|\partial_y u |^2}{1+y^{2p}}+\frac{u^2}{y^2(1+y^{2p})}   \right] .
\end{equation}
\item[(ii)]\emph{case $p$ large:} let $p> k_0+\delta_0-1$, let $M$ be large enough (depending on $d$ and $p$ only), then there exists $c_{M,p}>0$ such that if $u\in H^1_{\text{rad},\text{loc}}$ satisfies:
\begin{equation}
\langle u, \Phi_M \rangle=0 .
\end{equation}
then:\footnote{idem.}
\begin{equation}
\int \frac{|Au|^2}{1+y^{2p}}\geq c_{M,p} \left[ \int \frac{|\partial_y u|^2}{1+y^{2p}} +\frac{u^2}{y^2(1+y^{2p})}\right] .
\end{equation}
\end{itemize}
\end{lemma}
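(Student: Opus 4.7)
The plan is to adapt the two-step strategy of the proof of Lemma~\ref{lem:coercivite pour A*}: first establish a subcoercivity bound, then argue by contradiction using compactness, with the case distinction arising naturally from whether the zero mode $\Lambda^{(1)}Q$ of $A$ is eliminated by the integrability hypothesis in case~(i) or by the orthogonality condition in case~(ii).

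First I would prove the subcoercivity estimate
\begin{equation*}
\int \frac{|Au|^2}{1+y^{2p}} \geq c\left[\int \frac{|\partial_y u|^2}{1+y^{2p}} + \int\frac{u^2}{y^2(1+y^{2p})}\right] - \frac{1}{c}\left[u^2(1) + \int \frac{u^2}{1+y^{2p+2+g}}\right],
\end{equation*}
valid for all admissible $u$ (and in case~(i) requiring \fref{annexe:eq:condition d'inte pour A}). Near $y=0$, expanding $|Au|^2 = |\partial_y u|^2 - 2Wu\partial_y u + W^2 u^2$ and integrating the cross-term by parts against the radial measure $y^{d-1}dy$ produces $W(1)u^2(1) - \int_{y\leq 1}u^2[\partial_y W + (d-1)W/y]y^{d-1}dy$; combined with the factorization identity $W^2 + \partial_y W + (d-1)W/y = -V$ of Lemma~\ref{lem:factorisation} (with $V$ bounded near $0$) and the classical Hardy inequality of Lemma~\ref{lemme:annexe:hardy1}(i), this controls the $L^2$ term up to $u^2$ and $u^2(1)$ lower-order contributions. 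Away from the origin, the asymptotic \fref{annexe:coercivitenormes:eq:asymptotique W} allows the rewriting $Au = -y^{-\gamma}\partial_y(y^{\gamma}u) + O(y^{-1-g})u$; setting $v := y^{\gamma}u$, the leading term becomes $\int_1^R |\partial_y v|^2 y^{d-1-2(p+\gamma)}dy$, to which one applies the weighted Hardy of Lemma~\ref{lemme:annexe:hardy1}(ii). The condition $p \neq k_0 + \delta_0 - 1$ is exactly $p + \gamma \neq (d-2)/2$ (using $d = 2\gamma + 2k_0 + 2\delta_0$), so one is always in either the sub- or super-critical regime: in case~(i) one is sub-critical and the integrability hypothesis \fref{annexe:eq:condition d'inte pour A} is precisely what is needed to apply Hardy without a boundary term, while in case~(ii) one is super-critical and picks up the harmless boundary term $v^2(1) = u^2(1)$. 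The cross terms of order $O(y^{-1-g})$ are absorbed into $\epsilon \int |\partial_y u|^2/y^{2p} + C\int u^2/y^{2p+2+g}$ via Cauchy--Schwarz and Young.

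Next, I would argue by contradiction. Assuming the coercivity fails, one extracts a normalized sequence $u_n$ with $\int |Au_n|^2/(1+y^{2p}) \to 0$ and right-hand side equal to $1$. The subcoercivity forces $u_n^2(1) + \int u_n^2/(1+y^{2p+2+g}) \gtrsim 1$, and uniform $H^1_{\mathrm{loc}}$ bounds yield, up to a subsequence, $u_n \rightharpoonup u_\infty$ weakly in $H^1_{\mathrm{loc}}$ and strongly in $L^2_{\mathrm{loc}}$, with $u_n(1) \to u_\infty(1)$. Passing to the limit in the integral $\int u_n^2/(1+y^{2p+2+g})$ is legitimate because the extra $y^g$ decay gives tightness from the uniform bound on $\int u_n^2/(1+y^{2p+2})$; this shows $u_\infty \not\equiv 0$. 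By weak lower-semicontinuity, $Au_\infty = 0$, so $u_\infty = c\,\Lambda^{(1)}Q$ with $c \neq 0$.

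The contradiction then splits according to the regime. In case~(i), the uniform bound $\int_{y\geq 1} u_n^2/y^{2p+2}\,y^{d-1}dy \lesssim 1$ and Fatou give $\int_{y\geq 1} u_\infty^2/y^{2p+2}\,y^{d-1}dy < +\infty$; but a direct computation using $\Lambda^{(1)}Q \sim c_*/y^{\gamma}$ and $d = 2\gamma + 2k_0 + 2\delta_0$ shows $\int_{y \geq 1}|\Lambda^{(1)}Q|^2\,y^{d-1-2p-2}dy \sim \int y^{2k_0+2\delta_0 - 2p - 3}dy = +\infty$ precisely when $p < k_0 + \delta_0 - 1$, forcing $c=0$, a contradiction. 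In case~(ii), $\Lambda^{(1)}Q$ is admissible but the orthogonality condition $\langle u_n, \Phi_M\rangle = 0$ passes to the limit (since $\Phi_M$ is compactly supported, local $L^2$ convergence suffices), yielding $c \langle \Lambda^{(1)}Q, \Phi_M\rangle = 0$; the non-degeneracy \fref{annexe:coercivitenormes:eq:proprietes PhiM} then forces $c = 0$, again a contradiction. The main technical obstacle is in the subcoercivity step: one must carefully choose the exponent factorization $v = y^\gamma u$ to fit the Hardy regimes on both sides of the critical exponent $p = k_0 + \delta_0 - 1$, and ensure the cross terms from the $O(y^{-1-g})$ correction of $W$ are absorbed without losing coercivity.
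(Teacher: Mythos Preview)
Your proposal is correct and follows essentially the same approach as the paper: the same subcoercivity--then--compactness scheme, the same change of variables $v=y^{\gamma}u$ away from the origin to reduce to the standard weighted Hardy inequality of Lemma~\ref{lemme:annexe:hardy1}(ii) on either side of the critical exponent $p+\gamma=(d-2)/2$, and the same dichotomy in the contradiction step (non-integrability of $\Lambda^{(1)}Q$ in case~(i), orthogonality to $\Phi_M$ in case~(ii)). The only cosmetic difference is that near the origin you invoke the factorization identity $W^2+\partial_y W+(d-1)W/y=-V$ explicitly, while the paper writes the cross term as $u\partial_y u\,O(1)$ and absorbs it directly; and in the subcritical case the paper keeps the boundary term at $R$ and kills it along a sequence $R_n\to\infty$ using the integrability assumption, whereas you appeal directly to the subcritical Hardy statement (which already encodes that argument).
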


\begin{proof}[Proof of Lemma \ref{annexe:lem:coercivite A}] 
As for $A^*$ we first show a subcoercivity bound and then show that if we want to violate the Hardy type inequality, one must get closer and closer to the zero of $A$ which is $\Lambda^{(1)}Q$, but this is impossible due to integrability conditions in the case $p$ small and due to the orthogonality condition for the case $p$ large.\\

 \underline{Step 1:} \emph{subcoercivity}. Let $p\geq 0$. Then we claim that if $u$ satisfies \fref{annexe:eq:condition d'inte pour A}:
\begin{equation}
\int \frac{|Au|^2}{1+y^{2p}}\geq c\left[ \int \frac{|\partial_y u|^2}{1+y^{2p}}+\frac{u^2}{y^2(1+y^{2p})}\right]- \frac{1}{c} \left[ u^2(1)+\int \frac{u^2}{1+y^{2p+2+g}} \right] ,
\end{equation}
for a universal constant $c>0$. We start by computing close to the origin using \fref{annexe:coercivitenormes:eq:def A}, with the help of the Hardy inequality close to the origin \fref{lemme:annexe:hardy1}:
$$
\begin{array}{r c l}
\int_{y\leq 1}|Au|^2 &= & \int_{y\leq 1} |\partial_y u|^2+\int_{y\leq 1} O(u^2)+\int u\partial_yu O(1) \\
&\geq& c\left( \int_{y\leq 1} |\partial_y u|^2+\frac{u^2}{y^2} \right)-\frac{1}{c}\left(u^2(1)+\int_{y\leq 1}u^2\right)+\int u\partial_yu O(1)  .
\end{array} 
$$
We apply Cauchy-Schwarz and Young inequality to control the last term:
$$
\left| \int u\partial_yu O(1) |\leq \epsilon C \int_{y\leq 1} \right| \partial_y u|^2+\frac{C}{\epsilon}\int_{y\leq 1}u^2.
$$
Taking $\epsilon$ small enough gives close to the origin:
\be \label{annexe:coerciviteA:eq:expression pres}
\int_{y\leq 1}|Au|^2\geq c\left( \int_{y\leq 1} |\partial_y u|^2+\frac{u^2}{y^2} \right)-\frac{1}{c}\left(u^2(1)+\int_{y\leq 1}u^2\right).
\ee
Away from the origin, we use the asymptotics \fref{annexe:coercivitenormes:eq:asymptotique W} of the potential $W$ to derive:
\be \label{annexe:coerciviteA:eq:expression loin}
\begin{array}{r c l}
\int_1^R \frac{|Au|^2}{y^{2p}}&=&\int_2^R \frac{1}{y^{2p}}\left[ \partial_y u +\frac{\gamma}{y}u+O\left( \frac{u^2}{y^{1+g}}\right)\right]^2 \\
&= & \int_1^R \frac{1}{y^{2p}}\left[ \partial_y u +\frac{\gamma}{y}u\right]^2  + \int_1^R O\left(\frac{u}{y^{2p+1+g}}\right)\left( \partial_y u+uO\left(\frac{1}{y}\right)\right) .
\end{array}
\ee
This time we let $v=y^{\gamma}u$, and $2p'=2p+2\gamma$. We observe: $2p'-(d-2)=2p-2k_0+2-2\delta_0<0$ in the case $p$ small and $>0$ in the case $p$ large. For $p$ small we have from \fref{annexe:hardy:eq:calcul sous critique}:
\be \label{annexe:coerciviteA:eq:controle loin}
\begin{array}{r c l}
\int_1^R \frac{1}{y^{2p}}\left[ \partial_y u +\frac{\gamma}{y}u\right]^2 = \int_1^R \frac{|\partial_y v|^2}{y^{2p'}} &\geq & c\int_1^R \frac{v^2}{y^{2p'+2}}-\frac{R^{d-2p'-2}}{d-2-2p'}v^2(R) \\
&=& c\int_1^R \frac{u^2}{y^{2p+2}}-\frac{R^{d-2-2k}}{d-2-2p}u^2(R) .
\end{array}
\ee
As we did in the proof of the sub-coercivity estimate for $A^*$, the identity \fref{annexe:coerciviteA:eq:expression loin} and the control \fref{annexe:coerciviteA:eq:controle loin} imply using Cauchy-Schwarz and Young inequality:
$$
\int_1^R \frac{|Au|^2}{y^{2p}} \geq c'\left( \int_1^R \frac{u^2}{y^{2p+2}}+\frac{|\partial_y u|^2}{y^{2p}}\right) -\frac{1}{c}\left(\frac{R^{d-2-2p}}{d-2-2p}u^2(R) +\int_1^R \frac{u^2}{y^{2p+2+g}} \right).
$$
The integrability condition \fref{annexe:eq:condition d'inte pour A} gives that along a sequence $R_n$ the $u(R_n)$ term goes to zero. This allow us to conclude that if $\frac{|\partial_y u|^2}{y^{2p}}$ is not integrable, then $\frac{|A^*u|^2}{y^{2p}}$ is not integrable neither. This gives the Hardy inequality in the case the quantities are infinite. We can now suppose that the involved quantities are finite. We go to the limit in the previous equation along $R_n$ and combine it with \fref{annexe:coerciviteA:eq:expression pres} to obtain the subcoercivity estimate.\\
\\
For $p$ large we are in the supercritical case in the standard Hardy inequality for $v$. We can do verbatim the same reasoning we did for the proof of the subcoercivity estimate for $A^*$.\\

\underline{Step 2:} \emph{Coercivity}. We argue by contradiction. If the hardy inequality we want to show was wrong, there would exist a sequence $(u_n)_{n\in \mathbb{N}}$, such that:
$$
\int \frac{|\partial_y u_n|^2}{1+y^{2p}}+\frac{u_n^2}{y^2(1+y^{2p})}=1, \ \int \frac{|Au|^2}{1+y^{2p}}\rightarrow 0 .
$$
From the subcoercivity estimate implies:
$$
u_n^2(1)+\int \frac{u_n^2}{1+y^{2p+2+g}}\gtrsim 1 ,
$$
and $u_{n}\rightharpoonup u_{\infty}$ in $H^1_{\text{loc}}(]0,+\infty[)$. The quantities go the same way to the limit and we find that $u_{\infty}$ is not zero and must satisfy:
$$
Au=0 .
$$
This implies $u_{\infty}=c\Lambda^{(1)} Q$, $c\neq 0$.\\
\\
If $k\geq k_0$ then the orthogonality condition goes to the limit with the weak topology and we find $\langle u_{\infty}, \Phi_M\rangle =0$ which violates \fref{annexe:coercivitenormes:eq:proprietes PhiM}. If $k\leq k_0-1$, we have from lower semi continuity that:
$$
\int \frac{u_{\infty}^2}{1+y^{2p+2}}<+\infty ,
$$
but $\Lambda^{(1)}Q$ does not satisfy this inequality because as $-2\gamma-2p-2+d=2(k_0-p)-2(1-\delta_0)>0$ we have:
$$
\int \frac{\Lambda^{(1)}Q^2}{1+y^{2p+2}}=+\infty .
$$
In both cases there is a contradiction. Hence the lemma are proven.
\end{proof}

Once the coercivity properties of $A$ and $A^*$ have been established, we can turn to the core of this part: the coercivity estimates for the adapted norms provided some orthogonality conditions are satisfied.

\begin{lemma}[Coercivity of $\mathcal{E}_k$]\label{annexe:lem:coercivite des normes adaptees}
We still assume $\delta_0\neq 0$. $k$ denotes an integer. We recall that $u_j$, the $j$-th adapted derivative of $u$, is defined in \fref{linearized:eq:def derivees adaptees}.
\begin{itemize}
\item[(i)] \emph{case $k$ small} Let $0\leq k \leq k_0$ and $0\leq \delta< \delta_0$. Then there exists a constant $c_{k,\delta}>0$ such that for all $u\in H^{k}_{\text{rad},\text{loc}}(\mathbb{R}^d)$ satisfying:
\begin{equation}\label{annexe:eq:condition int coercivite normes adaptees}
\sum_{p=0}^{k} \int \frac{u_p^2}{1+y^{2k-2p}}<+\infty ,
\end{equation}
there holds:
\begin{equation}
\int \frac{u_k^2}{1+y^{2\delta}} \geq c_k \sum_{p=0}^{k-1} \int \frac{u_p^2}{1+y^{2k-2p+2\delta}} .
\end{equation}
\item[(ii)] \emph{case $k$ large} Let $k\geq k_0+1$ and $0\leq \delta< \delta_0$, let $j=E(\frac{k-k_0}{2})$. Then for $M=M(k)$ large enough, there exists $c_{M,k}>0$ such that for all $H^{k}_{\text{loc},\text{rad}}(\mathbb{R}^d)$ satisfying:
\begin{equation} \label{annexe:eq:condition inte pour coercivite}
\sum_{p=0}^{k} \int \frac{u_p^2}{1+y^{2k-2}}<+\infty \ \text{and} \ \langle u, \mathcal{L}^p \Phi_M \rangle=0, \ \text{for} \ 0\leq p \leq j-1,
\end{equation}
there holds:
\begin{equation}
\int \frac{u_k^2}{1+y^{2\delta}} \geq c_{M,k} \sum_{p=0}^{k-1} \int \frac{u^2_p}{1+y^{2k-2p+2\delta}} .
\end{equation}
\end{itemize}
\end{lemma}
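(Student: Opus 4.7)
The plan is to iterate the weighted coercivity estimates for $A$ and $A^\ast$ established in Lemmas \ref{lem:coercivite pour A*} and \ref{annexe:lem:coercivite A}, descending through the adapted derivatives $u_k, u_{k-1}, \dots, u_0$ one step at a time. The key input is the defining identity \fref{linearized:eq:def derivees adaptees}, which writes $u_{m+1}$ as $A u_m$ (when $m$ is even) or $A^\ast u_m$ (when $m$ is odd). Applying the corresponding coercivity to $u_{k-m}$ at weight exponent $p = m-1+\delta$ yields
\begin{equation*}
\int \frac{u_{k-m+1}^2}{1+y^{2(m-1)+2\delta}} \;\gtrsim\; \int \frac{u_{k-m}^2}{1+y^{2m+2\delta}} \;+\; \int \frac{|\partial_y u_{k-m}|^2}{1+y^{2(m-1)+2\delta}},
\end{equation*}
and chaining these bounds for $m=1,\dots,k$ telescopes $\int u_k^2/(1+y^{2\delta})$ into a sum producing every summand $\int u_p^2/(1+y^{2(k-p)+2\delta})$ on the right-hand side. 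The passage between the quantity $\int u_{k-m}^2 / [y^2(1+y^{2p})]$ coming out of Lemmas \ref{lem:coercivite pour A*}--\ref{annexe:lem:coercivite A} and the desired $\int u_{k-m}^2/(1+y^{2p+2})$ near the origin is handled using Lemma \ref{equivalence of norms}, which controls the adapted derivatives of a smooth radial function near $0$ in terms of its Laplace-based derivatives.

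In the small case $k\le k_0$, the weight exponent $p=m-1+\delta$ satisfies $p\le k-1+\delta < k_0+\delta_0-1$ at every descent step, so the small-$p$ version of Lemma \ref{annexe:lem:coercivite A} suffices whenever $A$ is inverted. The only input it requires (beyond the unconditional coercivity of $A^\ast$) is the integrability condition \fref{annexe:eq:condition d'inte pour A} applied to $u_{k-m}$ at weight $2p+2 = 2m+2\delta$, which is exactly the standing assumption \fref{annexe:eq:condition int coercivite normes adaptees}. In the large case $k\ge k_0+1$, the last few iteration steps eventually force $p\ge k_0+\delta_0-1$ and thus call for the large-$p$ version of the coercivity of $A$, which demands the orthogonality $\langle u_{k-m},\Phi_M\rangle=0$ precisely at those descent steps where the index $k-m$ is even. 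Since $\mathcal{L}=A^\ast A$ and the pairing is against the radial Lebesgue measure, repeated integration by parts gives $\langle u,\mathcal{L}^p\Phi_M\rangle = \langle u_{2p},\Phi_M\rangle$, so the hypothesis $\langle u,\mathcal{L}^p\Phi_M\rangle=0$ for $0\le p\le j-1$ (with $j=E[(k-k_0)/2]$) supplies exactly the orthogonalities on even-indexed adapted derivatives that the large-$p$ coercivity of $A$ requires in the range $p\geq k_0+\delta_0-1$.

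The main obstacle is the careful parity bookkeeping: one must keep track at each descent step of whether $A$ or $A^\ast$ is being inverted (dictated by the parity of the index being reconstructed), compare the running weight exponent $m-1+\delta$ against the threshold $k_0+\delta_0-1$ to detect when the large-$p$ coercivity of $A$ is needed, and verify that the orthogonalities provided by the hypothesis cover exactly the even adapted-derivative indices where $A$-coercivity-with-orthogonality is invoked. The identity $\langle u,\mathcal{L}^p\Phi_M\rangle = \langle u_{2p},\Phi_M\rangle$ used in this translation is legitimate because $\Phi_M$ has compact support (so there are no boundary terms at infinity) and $u$ is locally $H^k$ with enough regularity at the origin; the local equivalence of norms from Lemma \ref{equivalence of norms} also justifies treating the even-indexed adapted derivative $u_{2p}$ as $(-1)^p\mathcal{L}^p u$ in these pairings. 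The spurious gradient term $\int(\partial_y u_{k-m})^2/(1+y^{2(m-1)+2\delta})$ produced at each application of coercivity is positive and can simply be dropped, so it causes no loss in the descent.
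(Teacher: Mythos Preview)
Your approach is correct and matches the paper's proof: both iterate the weighted coercivity of $A$ and $A^\ast$ (Lemmas \ref{lem:coercivite pour A*} and \ref{annexe:lem:coercivite A}) step by step, tracking when the running weight exponent crosses the threshold $k_0+\delta_0-1$ and invoking the orthogonality hypothesis (via $\langle u_{2p},\Phi_M\rangle = \langle u,\mathcal{L}^p\Phi_M\rangle$) exactly at the $A$-inversion steps in the large-weight regime. Two cosmetic remarks: the passage from $\int u_{k-m}^2/[y^2(1+y^{2p})]$ to $\int u_{k-m}^2/(1+y^{2p+2})$ is a direct pointwise inequality (Lemma \ref{equivalence of norms} is not needed there), and from the definition \fref{linearized:eq:def derivees adaptees} one has $u_{2p}=\mathcal{L}^p u$ with no sign $(-1)^p$.
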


\begin{corollary}[Coercivity of $\mathcal{E}_{s_L}$] \label{annexe:cor:coercivite mathcalEsL}
Let $L$ and $\sigma$ be defined by \fref{linearized:eq:def L} and \fref{thetrapped:eq:def sigma}  ($L$ is odd) and $0\leq \delta<\delta_0$. Then there exists a constant $c>0$ such that for all radial $\bos{\varepsilon}\in \dot{H}^{s_L}\times \dot{H}^{s_L-1}\cap \dot{H}^{\sigma}\times \dot{H}^{\sigma-1}$ satisfying:
\be \label{annexe:coercivitenormes:eq:conditions ortho}
\langle \bos{\varepsilon},\bos{H}^{*i}\bos{\Phi}_M \rangle=0  \ \text{for} \ 0\leq i \leq L,
\ee
there holds:
\be \label{annexe:coercivitemathcalE:eq:coercivitemathcalE}
\sum_{p=0}^{s_L-1} \int \frac{|\varepsilon^{(1)}_p|^2}{1+y^{2s_L-2p+2\delta}}+\sum_{p=0}^{s_L-2} \int \frac{|\varepsilon^{(2)}_p|^2}{1+y^{2s_L-2-2p+2\delta}}\leq c\left(  \int \frac{|\varepsilon^{(1)}_{s_L}|^2}{1+y^{2\delta}}+ \int \frac{|\varepsilon^{(2)}_{s_L-1}|^2}{1+y^{2\delta}} \right)
\ee
\be \label{annexe:coercivitenormes:eq:mathcalEsL sim HsL}
\parallel \bos{\varepsilon} \parallel_{ \dot{H}^{s_L}\times \dot{H}^{s_L-1}}^2 \leq c \mathcal{E}_{s_L} <+\infty,
\ee
the adapted derivatives $u_k$ being defined by \fref{linearized:eq:def derivees adaptees} and $\mathcal{E}_{s_L}$ being defined by \fref{thetrapped:eq:def mathcalEsL}.
\end{corollary}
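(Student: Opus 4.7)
The plan is to reduce to the scalar coercivity statement of Lemma \ref{annexe:lem:coercivite des normes adaptees} applied separately to each coordinate, and then to convert the adapted-derivative control into a control of the usual Sobolev norm via Lemma \ref{equivalence of norms}.

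First, I would unpack the orthogonality conditions \fref{annexe:coercivitenormes:eq:conditions ortho}. Using the explicit formula \fref{thetrapped:eq:puissances de H} for the powers of $\bos{H}$ (equivalently for $\bos{H}^*$) together with the localisation $\bos{\Phi}_M=\begin{pmatrix}\Phi_M\\ 0\end{pmatrix}$ from \fref{thetrapped:eq:localisation PhiM}, the vectors $\bos{H}^{*2j}\bos{\Phi}_M$ are located on the first coordinate and proportional to $\mathcal{L}^j\Phi_M$, whereas $\bos{H}^{*(2j+1)}\bos{\Phi}_M$ are located on the second coordinate and proportional to $\mathcal{L}^j\Phi_M$. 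Since $L$ is odd, writing $L=2\bar j+1$ with $\bar j=\tfrac{L-1}{2}$, the conditions \fref{annexe:coercivitenormes:eq:conditions ortho} read
\[
\langle \varepsilon^{(1)},\mathcal{L}^p\Phi_M\rangle = 0 \quad\text{for } 0\le p\le \bar j,
\qquad
\langle \varepsilon^{(2)},\mathcal{L}^p\Phi_M\rangle = 0 \quad\text{for } 0\le p\le \bar j.
\]

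Next, I would apply Lemma \ref{annexe:lem:coercivite des normes adaptees} twice, once with $k=s_L=k_0+1+L$ to $\varepsilon^{(1)}$ and once with $k=s_L-1=k_0+L$ to $\varepsilon^{(2)}$. For $\varepsilon^{(1)}$, the integer $j=E[(k-k_0)/2]=\tfrac{L+1}{2}=\bar j+1$, so the hypothesis requires $\langle \varepsilon^{(1)},\mathcal{L}^p\Phi_M\rangle=0$ for $0\le p\le \bar j$, which is exactly what we have. For $\varepsilon^{(2)}$, $j=E[L/2]=\bar j$ and the hypothesis requires orthogonality for $0\le p\le \bar j-1$, which is again met (in fact we have one extra orthogonality). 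The $\delta$-weighted version is obtained by applying the lemma to the cut-offs or directly by the same proof scheme (the extra polynomial weight $(1+y^{2\delta})^{-1}$ is harmless since $\delta<\delta_0$, which is the borderline built into the statement of the lemma). Summing the two resulting estimates yields precisely \fref{annexe:coercivitemathcalE:eq:coercivitemathcalE}, provided the integrability hypothesis \fref{annexe:eq:condition inte pour coercivite} holds for each coordinate; but this is automatic from $\bos{\varepsilon}\in\dot H^{s_L}\times\dot H^{s_L-1}$ combined with the already-proved weighted control, by a standard bootstrap starting from the top derivative.

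Finally, to obtain \fref{annexe:coercivitenormes:eq:mathcalEsL sim HsL} I would invoke the equivalence between adapted and Laplace-based derivatives provided by Lemma \ref{equivalence of norms}, which gives
\[
\sum_{i=0}^{s_L}\int \frac{|D^i\varepsilon^{(1)}|^2}{1+y^{2s_L-2i}}\;\sim\;\sum_{i=0}^{s_L}\int \frac{|\varepsilon^{(1)}_i|^2}{1+y^{2s_L-2i}},
\]
and similarly at level $s_L-1$ for $\varepsilon^{(2)}$. Combining with \fref{annexe:coercivitemathcalE:eq:coercivitemathcalE} (taken with $\delta=0$) and using that for radial functions $|D^{s_L}u|$ is pointwise equivalent to $|\nabla^{s_L}u|$, the top-order terms $\int|\nabla^{s_L}\varepsilon^{(1)}|^2$ and $\int|\nabla^{s_L-1}\varepsilon^{(2)}|^2$ are controlled by $\mathcal{E}_{s_L}$, giving \fref{annexe:coercivitenormes:eq:mathcalEsL sim HsL}. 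The only mildly subtle point is the verification of the integrability hypothesis \fref{annexe:eq:condition inte pour coercivite} required by Lemma \ref{annexe:lem:coercivite des normes adaptees}, but this follows by interpolating the two ends (finite $\dot H^{s_L}$ and finite $\dot H^\sigma$, with $\sigma<k_0+1$) against the Hardy inequalities already at our disposal.
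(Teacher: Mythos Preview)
Your proposal is correct and follows essentially the same approach as the paper: unpack the orthogonality conditions via \fref{thetrapped:eq:puissances de H} and \fref{thetrapped:eq:localisation PhiM} to get scalar conditions $\langle\varepsilon^{(1)},\mathcal{L}^p\Phi_M\rangle=\langle\varepsilon^{(2)},\mathcal{L}^p\Phi_M\rangle=0$ for $0\le p\le(L-1)/2$, apply Lemma~\ref{annexe:lem:coercivite des normes adaptees} componentwise, then convert to the Sobolev norm via Lemma~\ref{equivalence of norms}. The only cosmetic difference is that the paper handles the finiteness/integrability verification up front (its Step~1), using the fractional Hardy bound of Lemma~\ref{annexe:lem: hardy frac a poids} for $0\le i\le\sigma$ and interpolation for $\sigma\le i\le s_L$, whereas you defer this check to the end; and you are in fact more explicit than the paper in spelling out the second application (to $\varepsilon^{(2)}$ at level $k=s_L-1$) and verifying the count $j=E[L/2]=(L-1)/2$ there.
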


\begin{proof}[Proof of Corollary \fref{annexe:cor:coercivite mathcalEsL}]
\underline{Step 1:} Proof that $\mathcal{E}_{s_L}<+\infty$. From the equivalence between Laplace derivatives and adapted ones, \fref{annexe:equivalencenormes:eq:equivalences normes}, one has:
$$
\int |\varepsilon^{(1)}_{s_L}|^2 \leq C\sum_{i=0}^{s_L} \int \frac{|D^i\varepsilon^{(1)}|^2}{1+y^{2s_L-2i}}.
$$
For $\sigma \leq i \leq s_L$ one has by interpolation $\int |D^i\varepsilon^{(1)}|^2<+\infty$, hence $\int \frac{|D^i\varepsilon^{(1)}|^2}{1+y^{2s_L-2i}}<+\infty$. For $0\leq i \leq \sigma$ one has $ \frac{D^i\varepsilon^{(1)}}{1+y^{\sigma-i}}\in L^2$ from the Hardy inequality \fref{annexe:hardyfrac:eq:hardyfrac}. Consequently in that case we also have $\frac{D^i\varepsilon^{(1)}}{1+y^{s_L-i}} \in L^2$. This proves:
$$
\int |\varepsilon^{(1)}_{s_L}|^2<+\infty.
$$
Similarily one has $\int |\varepsilon^{(2)}_{s_L-1}|^2<+\infty$, implying $\mathcal{E}_{s_L}<+\infty$. \underline{Step 2:} Proof of the coercivity estimate. We want to apply the previous Lemma \ref{annexe:lem:coercivite des normes adaptees} for $k=s_L$. We have seen in the previous step 1 that the integrability condition \fref{annexe:eq:condition inte pour coercivite} is met. Now from the formula \fref{thetrapped:eq:puissances de H} giving the powers of $H^{*}$ we compute that the orthogonality condition \fref{annexe:coercivitenormes:eq:conditions ortho} implies:
$$
\langle \varepsilon^{(1)},\mathcal{L}^i \Phi_M \rangle=\langle \varepsilon^{(2)}, \mathcal{L}^i \Phi_M \rangle=0 \  \text{for} \ 0\leq i \leq \frac{L-1}{2}.
$$
We compute: $E\left[\frac{k-k_0}{2}\right]=E\left[\frac{L+k_0+1-k_0}{2}\right]=\frac{L+1}{2}$. Therefore the Lemma \ref{annexe:lem:coercivite des normes adaptees} applies and gives the bound \fref{annexe:coercivitemathcalE:eq:coercivitemathcalE}. Now we use the equivalence between Laplace and adapted derivatives \fref{annexe:equivalencenormes:eq:equivalences normes}, with the bound we just proved for \fref{annexe:coercivitemathcalE:eq:coercivitemathcalE} for $\delta=0$ and it yields \fref{annexe:coercivitenormes:eq:mathcalEsL sim HsL}.
\end{proof}

\begin{proof}[Proof of Lemma \ref{annexe:lem:coercivite des normes adaptees}]
\underline{case $k$ small:} We suppose $1\leq k\leq k_0$, and that $u$ is a function satisfying the conditions of the lemma. We have, depending on the parity of $k$:
$$
u_k=A u_{k-1} \ \text{or} \ u_k=A^* u_{k-1} .
$$
In both cases, the conditions required to apply to $u_{k-1}$ Lemma \ref{annexe:lem:coercivite A} or Lemma \ref{lem:coercivite pour A*} are fulfilled. Consequently:
$$
\int \frac{u_{k}^2}{1+y^{2\delta}} \gtrsim \int \frac{u_{k-1}^2}{1+y^{2+2\delta}} .
$$
If $k-1=0$ we have finished. If not, then again, $u_{k-1}=Au_{k-2}$ or $u_{k-1}=A^* u_{k-2}$ and in both cases we can apply Lemma \ref{annexe:lem:coercivite A} or Lemma \ref{lem:coercivite pour A*} which gives:
$$
\int \frac{u_k^2}{1+y^{2\delta}} \gtrsim \int \frac{u_{k-1}^2}{1+y^{2+2\delta}} \gtrsim \int \frac{u_{k-2}^2}{1+y^{4+2\delta}} .
$$
We can iterate $k$ times what we did previously to obtain:
$$
\int \frac{u_k^2}{1+y^{2\delta}} \gtrsim \int \frac{u_{k-1}^2}{1+y^{2+2\delta}} \gtrsim ... \gtrsim \int \frac{u_1^2}{1+y^{2(k-2)+2\delta}} \gtrsim \int \frac{u^2}{1+y^{2k+2\delta}} ,
$$
which gives the result in that case.\\

\underline{Case $k$ large:} Suppose first that $k\geq k_0+1$ and that $j=\frac{k-k_0}{2} \in \mathbb{N}^*$, so $k=k_0+2j$. We can apply the result for $k$ small we just showed to derive:
$$
\int \frac{u_k^2}{1+y^{2\delta}} \gtrsim \int \frac{u_{k-k_0}^2}{1+y^{2k_0+2\delta}}= \int \frac{u_{2j}^2}{1+y^{2k_0+2\delta}} .
$$
Since $2j$ is even we know that: $u_{2j}=A^*A...A^*Au=A^* u_{2j-1}$ and we can apply Lemma \ref{lem:coercivite pour A*} to find:
$$
\int \frac{u_{2j}^2}{1+y^{2k_0+2\delta}}\gtrsim \int \frac{u_{2j-1}^2}{1+y^{2k_0+2+2\delta}}= \frac{Au_{2j-2}^2}{1+y^{2k_0+2+2\delta}} .
$$
We need an orthogonality condition for $u_{2j-2}$ in order to go on. This is given by the orthogonality condition on $u$. Indeed:
$$
\langle u_{2j-2},\Phi_M \rangle= \langle u, \mathcal{L}^{j-1}\Phi_M \rangle=0 .
$$
Hence:
$$
\int \frac{u_{2j-1}^2}{1+y^{2(k_0+1+\delta)}}\gtrsim \int \frac{u_{2j-2}^2}{1+y^{2(k_0+2+\delta)}} .
$$
We need exactly the $j$ orthogonality conditions to iterate like that till we reach $0$.\\
\\
Suppose now that $k=k_0+2j+1$. Then it works the same, indeed without use of orthogonality conditions:
$$
\int \frac{u_k^2}{1+y^{2\delta}} \gtrsim \int \frac{u_{k-1}^2}{1+y^{2+2\delta}}\gtrsim... \gtrsim \frac{u_{k-k_0}^2}{1+y^{2k_0+2\delta}}= \int \frac{|Au_{2j}|^2}{1+y^{2k_0+2\delta}} .
$$
We have exactly $j$ orthogonality conditions to go down to zero as we did before:
$$
\int \frac{|Au_{2j}|^2}{1+y^{2k_0+2\delta}}\gtrsim \int \frac{u_{2j}^2}{1+y^{2k_0+2\delta}}\gtrsim... \gtrsim \int \frac{u^2}{1+y^{2k+2\delta}} .
$$
This ends the proof.
\end{proof}


\section{Specific bounds for the analysis}

We make use here of the tools established in the last subsection to control $\bos{\varepsilon}$. Again, the use of such estimate is standard in blow-up issues, and we refer to the papers quoted in Appendix C. Although their proofs are not very hard to write once one has the previous results, we put it here for the reader's convenience. aAs the non-linearity just acts on $\varepsilon^{(1)}$ we just state results for this coordinate.

\begin{lemma}\label{annexe:lem:interpolation varepsilon}
Under the bootstrap conditions \fref{eq:bootstrap estimations sur epsilon} of Proposition \ref{prop:bootstrap} and provided that $\varepsilon$ satisfies the orthogonality conditions \fref{eq:condition d'ortho pour epsilon} there holds ($\mathcal{E}_{s_L}$ and $\mathcal{E}_{\sigma}$ being defined in \fref{thetrapped:eq:def mathcalEsL} and \fref{thetrapped:eq:def mathcalEsigma}):
\begin{itemize}
\item[(i)] \emph{Improved Hardy inequality:} For $j\in \mathbb{N}$ and $p>0$ satisfying $\sigma\leq j+p\leq s_L$:
\begin{equation}
\int_{y\geq 1} \frac{|\partial_y^j \varepsilon^{(1)}|^2}{1+y^{2p}} \leq C(M)\mathcal{E}_{\sigma}^{\frac{s_L-(j+p)}{s_L-\sigma}} \mathcal{E}_{s_L}^{\frac{j+p-\sigma}{s_L-\sigma}} ,
\end{equation}
\item[(ii)]\emph{$L^{\infty}$ control:}
\begin{equation}
\parallel \epsilon^{(1)} \parallel_{L^{\infty}} \leq C(K_1,K_2,M) \sqrt{\mathcal{E}_{\sigma}}b_1^{\left( \frac{d}{2}-\sigma\right)+\frac{2\alpha}{(p-1)L} +O\left( \frac{\sigma-s_c}{L} \right) } ,
\end{equation}
\item[(iii)] \emph{Weighted $L^{\infty}$ bound:} for $0<a<\frac{d}{2}$
\begin{equation}
\left\Vert \frac{\epsilon^{(1)}}{1+x^a} \right\Vert_{L^{\infty}} \leq C(K_1,K_2,M) \sqrt{\mathcal{E}_{\sigma}}b_1^{a+\left( \frac{d}{2}-\sigma\right)+\frac{(\frac{2}{p-1}+a)\alpha}{L}+ O\left( \frac{\sigma-s_c}{L} \right) } .
\end{equation}
\end{itemize}
\end{lemma}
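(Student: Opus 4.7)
The three estimates are essentially one-shot interpolation results: one endpoint is controlled by the low Sobolev norm $\mathcal{E}_\sigma$ (via the definition and a fractional Hardy inequality), the other by the high adapted norm $\mathcal{E}_{s_L}$ (via the coercivity Corollary \ref{annexe:cor:coercivite mathcalEsL} combined with the equivalence between adapted and Laplace-based derivatives of Lemma \ref{equivalence of norms}), and the general statement follows by interpolation. The bootstrap bounds \fref{eq:bootstrap estimations sur epsilon} then convert the resulting mixed quantity into a power of $b_1$.

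For part (i), I would first establish the two endpoints: at $j+p=s_L$, Corollary \ref{annexe:cor:coercivite mathcalEsL} combined with \fref{annexe:equivalencenormes:eq:equivalences normes} gives $\int_{y\geq 1}|\partial_y^j \varepsilon^{(1)}|^2/(1+y^{2p})\leq C(M)\mathcal{E}_{s_L}$; at $j+p=\sigma$, the weighted fractional Hardy inequality of Lemma \ref{annexe:lem: hardy frac a poids} (applied to the smooth radial weight $(1+y^{2p})^{-1/2}$, which satisfies the decay hypothesis \fref{annexe:hardyfrac:eq:condition radiale} with exponent $p$) yields $\int_{y\geq 1}|\partial_y^j\varepsilon^{(1)}|^2/(1+y^{2p})\leq C\mathcal{E}_\sigma$, using that $\sigma<d/2$. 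Complex interpolation in the parameter $j+p\in[\sigma,s_L]$ produces the stated geometric mean.

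For part (ii), I would use the Gagliardo--Nirenberg/Sobolev interpolation $\|u\|_{L^\infty}^2\leq C\|u\|_{\dot{H}^\sigma}^{2\theta}\|u\|_{\dot{H}^{s_L}}^{2(1-\theta)}$ with $\theta\sigma+(1-\theta)s_L=d/2$, applied to $\varepsilon^{(1)}$. Since $\mathcal{E}_\sigma$ dominates $\|\varepsilon^{(1)}\|_{\dot{H}^\sigma}^2$ and $\mathcal{E}_{s_L}$ dominates $\|\varepsilon^{(1)}\|_{\dot{H}^{s_L}}^2$ (by Corollary \ref{annexe:cor:coercivite mathcalEsL}), I rewrite the right-hand side as $\sqrt{\mathcal{E}_\sigma}\cdot (\mathcal{E}_{s_L}/\mathcal{E}_\sigma)^{(1-\theta)/2}$, and then insert the bootstrap bounds. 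The exponent of $b_1$ in the second factor is $(1-\theta)[L+(1-\delta_0)(1+\eta)-(\sigma-s_c)(1+\nu)]$. Using $1-\theta=(d/2-\sigma)/(s_L-\sigma)$, $s_L=L+k_0+1$, $d/2=\gamma+k_0+\delta_0$, and $\alpha=\gamma-2/(p-1)$, a direct expansion of $A/B=1+(A-B)/B$ with $A=L+(1-\delta_0)(1+\eta)-(\sigma-s_c)(1+\nu)$ and $B=L+k_0+1-\sigma$ gives $A-B=\alpha+O(\eta,\sigma-s_c)$, hence $(1-\theta)A=(d/2-\sigma)+(d/2-\sigma)\alpha/L+O((\sigma-s_c)/L)$. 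Finally replacing $d/2-\sigma=2/(p-1)-(\sigma-s_c)$ converts the correction into $2\alpha/((p-1)L)+O((\sigma-s_c)/L)$, matching the claim.

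For part (iii), the same Gagliardo--Nirenberg-type inequality with an additional weight, namely $\|y^{-a}u\|_{L^\infty}^2\lesssim \|u\|_{\dot{H}^\sigma}^{2\theta'}\|u\|_{\dot{H}^{s_L}}^{2(1-\theta')}$ with $\theta'\sigma+(1-\theta')s_L=d/2+a$ (valid for $0<a<d/2-\sigma$ by combining the fractional Hardy estimate of Lemma \ref{annexe:lem: hardy frac a poids} with the $L^\infty$ Sobolev embedding), produces the result by the same mechanism as in (ii). The algebraic bookkeeping of the $b_1$-exponent proceeds identically, with the shift $d/2\to d/2+a$ producing the extra weight $a$ in the exponent and the correction term $(\tfrac{2}{p-1}+a)\alpha/L+O((\sigma-s_c)/L)$. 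The main (and only nontrivial) obstacle is this final bookkeeping step: one must verify that the linear correction in $1/L$ coming from the discrepancy $A-B=\alpha+O(\eta,\sigma-s_c)$ does indeed reproduce the precise coefficient $2\alpha/((p-1)L)$ (resp.\ $(\tfrac{2}{p-1}+a)\alpha/L$), which is what makes these estimates sharp enough for the nonlinear-term control carried out in Sections 3 and 4.
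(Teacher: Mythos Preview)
Your proposal is correct and follows essentially the same scheme as the paper: two endpoint bounds (Hardy/fractional Hardy for the low endpoint, coercivity plus the equivalence of Lemma~\ref{equivalence of norms} for the high endpoint) and then interpolation, with the $b_1$-exponent bookkeeping matching the paper's computation. The only minor deviations are presentational---the paper uses a Fourier low/high frequency split in (ii) where you invoke Gagliardo--Nirenberg directly, and in (iii) the paper routes through (i) applied to $\varepsilon^{(1)}/(1+y^a)$ at integer Sobolev levels $E[d/2\pm 1]$ and then interpolates with (ii) to cover the full range $0<a<d/2$ (your weighted GN inequality as stated only covers $a<d/2-\sigma$, so you would still need that final interpolation step).
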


\begin{proof}[Proof of Lemma \ref{annexe:lem:interpolation varepsilon}]
\underline{Proof of (i):} Let $j\in \mathbb{N}$ and $p$ satisfying $\sigma\leq j+p\leq s_L$. For a slow decaying potential, ie if $p$ satisfies in addition $p<\frac{d}{2}$ then the equivalence between Laplace derivatives and $\partial_y$ ones away from the origin, together with the weighted Hardy inequality (Lemma \ref{annexe:lem: hardy frac a poids}) gives:
$$
\int \frac{|\partial_y^j \varepsilon^{(1)}|^2}{1+y^{2p}}\leq C \int |\nabla^{j+p}\varepsilon^{(1)}|^2 ,
$$
and we conclude by interpolation. We claim now that:
$$
\sum_{i=0}^{s_L} \int_{y\geq 1} \frac{|\partial_y^i \varepsilon^{(1)}|^2}{1+y^{2(s_L-i)}}\leq C(M) \mathcal{E}_{s_L} .
$$
Indeed, from the equivalence between $\partial_y$ and adapted derivatives (Lemma \ref{annexe:lem:equivalence degre}), and from coercivity we have:
$$
\sum_{i=0}^{s_L} \int_{y\geq 1} \frac{|\partial_y^i \varepsilon^{(1)}|^2}{1+y^{2(s_L-i)}} \sim \sum_{i=0}^{s_L} \int_{y\geq 1} \frac{|\varepsilon_i^{(1)}|^2}{1+y^{2(s_L-i)}} \leq C(M) \mathcal{E}_{s_L} .
$$
This claim implies that for a fast decaying potential, ie $p=s_L-j$:
$$
\int \frac{|\partial_y^j \varepsilon^{(1)}|^2}{1+y^{2p}}\leq \mathcal{E}_{s_L} .
$$
Now, for $\frac{d}{2}\leq p\leq s_l-j$ we interpolate the last two results, as for $a\leq b\leq c$:
$$
\frac{|\varepsilon^{(1)}|^2}{1+y^{2b}}\sim \left( \frac{|\varepsilon|^2}{1+y^{2a}}\right)^{\frac{c-b}{c-a}} \left( \frac{|\varepsilon|^2}{1+y^{2b}}\right)^{\frac{b-a}{c-a}}
$$
and this gives (i).\\

\underline{Proof of (ii)}. We prove it for $\varepsilon^{(1)}$, the proof for the second coordinate being similar. By the coercivity bound \fref{annexe:coercivitenormes:eq:mathcalEsL sim HsL} we have that:
$$
\parallel \nabla^{s_L} \varepsilon^{(1)} \parallel_{L^2}^2 \leq C(M) \mathcal{E}_{s_L} .
$$
We have by interpolation that for all $\sigma \leq k \leq s_L$, $\nabla^{k}\varepsilon^{(1)}\in L^2$ with the control
$$
\parallel \nabla^{k}\varepsilon^{(1)} \parallel_{L^2}^2 \leq C(M) \mathcal{E}_{\sigma}^{\frac{s_L-k}{s_L-\sigma}} \mathcal{E}_{s_L}^{\frac{k-\sigma}{s_L-\sigma}} .
$$ 
Denoting by $\hat{\varepsilon^{(1)}}$ the Fourier transform of $\varepsilon^{(1)}$ we have:
$$
|\varepsilon^{(1)}(y)|\leq \int_{|\xi|\leq 1} \frac{|\hat{\varepsilon^{(1)}}||\xi|^{\frac{k_1}{2}}}{|\xi|^{\frac{k_1}{2}}}+\int_{|\xi|\geq 1} \frac{|\hat{\varepsilon^{(1)}}||\xi|^{\frac{k_2}{2}}}{|\xi|^{\frac{k_2}{2}}} \lesssim \parallel \nabla^{k_1}\varepsilon^{(1)} \parallel_{L^2} +\parallel \nabla^{k_2}\varepsilon^{(1)} ,\parallel_{L^{2}}
$$
with $\sigma<k_1<\frac{d}{2}<k_2<s_L$. Using the interpolation bound previously derived and taking $k_1,k_2\rightarrow\frac{d}{2}$ gives:
$$
\begin{array}{ r c l}
|\varepsilon^{(1)}(y)|^2&\leq& C\mathcal{E}_{\sigma}^{\frac{s_L-\frac{d}{2}}{s_L-\sigma}} \mathcal{E}_{s_L}^{\frac{\frac{d}{2}-\sigma}{s_L-\sigma}} \leq C\mathcal{E}_{\sigma} b_1^{2\left(L+(1-\delta_0)(1+\eta)-(\sigma-s_c)\frac{\ell}{\ell-\alpha}\right)(\frac{d}{2}-\sigma)\left( \frac{1}{s_L}+\frac{\sigma}{s_L^2}+O(\frac{1}{L^3)}\right)} \\
&=& C\mathcal{E}_{\sigma} b_1^{(\frac{d}{2}-\sigma)2+\frac{\frac{2}{p-1}(2\eta(1-\delta_0)+2\alpha)}{L}+O\left(\frac{(\sigma-s_c)}{L} \right)}\leq  C\mathcal{E}_{\sigma} b_1^{(\frac{d}{2}-\sigma)2+\frac{\frac{2}{p-1}\alpha}{L}+O\left(\frac{(\sigma-s_c)}{L} \right)}
\end{array}
$$
which gives the result.\\

\underline{Proof of (iii)} Take $a\geq 1$, $\alpha\leq a\ll s_L$. Then from (i):
$$
\begin{array}{r c l}
\parallel\nabla^{E\left[\frac{d}{2}+1 \right]}\frac{\varepsilon^{(1)}}{1+y^{a}}\parallel_{L^2}^2 &\sim& \int \left|D^{E\left[\frac{d}{2}+1 \right]}\left(\frac{\varepsilon^{(1)}}{1+{a}} \right)\right|^2 \leq  C(M)\mathcal{E}_{\sigma}^{\frac{s_L-E\left[\frac{d}{2}+1 \right]-a}{s_L-\sigma}} \mathcal{E}_{\sigma}^{\frac{E\left[\frac{d}{2}+1 \right]+a-\sigma}{s_L-\sigma}} .
\end{array}
$$
And we estimate the same way $\parallel \nabla^{E\left[\frac{d}{2}-1 \right]}\frac{\varepsilon^{(1)}}{1+y^{a}} \parallel_{L^2}^2$. We can the interpolate this two estimations to have an estimate for $\parallel \frac{\varepsilon^{(1)}}{1+y^{a}} \parallel_{L^{\infty}}$. By calculating the exponents the same way we did for the proof of (ii) we get the result of the lemma for $a$. Now we can interpolate this result with (ii) to conclude for any exponent $0\leq a \leq s_L$.
\end{proof}

\end{appendix}

 \frenchspacing
\bibliographystyle{plain}

\end{document}